\documentclass[a4paper,11pt, twoside]{amsart}

\usepackage[english]{babel}
\usepackage{siunitx}
\usepackage{amsmath}
\usepackage{amssymb}
\usepackage{amsthm}
\usepackage{todonotes}
\usepackage{mathtools}
\usepackage{enumitem}
\usepackage{multirow}
\usepackage[all]{xy}
\usepackage[normalem]{ulem}
\usepackage{tikz-cd}
\usepackage{color}
\usepackage[thicklines]{cancel}

\usepackage{mathtools}

\usepackage[colorlinks=true, pdfstartview=FitV, linkcolor=black, citecolor=black, urlcolor=black]{hyperref}

\usepackage{changepage}

\usepackage{tabularx} 
\usepackage{amsmath, amssymb}
\usepackage[
    paper=a4paper,
    inner=23mm,   
    outer=23mm,   
    top=20mm,     
    bottom=38mm   
]{geometry}

\DeclareMathOperator{\Pic}{Pic}
\DeclareMathOperator{\rank}{rank}
\DeclareMathOperator{\Tr}{Tr}
\DeclareMathOperator{\Aut}{Aut}
\DeclareMathOperator{\Bir}{Bir}
\DeclareMathOperator{\PGL}{PGL}
\DeclareMathOperator{\GL}{GL}
\DeclareMathOperator{\Cr}{Cr}
\DeclareMathOperator{\J}{J}
\DeclareMathOperator{\car}{char}
\DeclareMathOperator{\Ker}{Ker}
\DeclareMathOperator{\Img}{Im}

\newcommand{\et}{\textrm{\'et}}
\newcommand{\QQ}{\mathbb{Q}}
\newcommand{\PP}{\mathbb{P}}
\newcommand{\ZZ}{\mathbb{Z}}

\newcommand{\kk}{\textbf{k}}
\newcommand{\KK}{\mathbf{K}} 
\newcommand{\set}[2]{\left\{\,#1 \ | \ #2\,\right\}}
\newcommand{\Bigset}[2]{\left\{\,#1 \ \Big| \ #2\,\right\}}

\newcommand*{\DashedArrow}[1][]{\mathbin{\tikz [baseline=-0.25ex,-latex, dashed,#1] \draw [#1] (0pt,0.5ex) -- (1.3em,0.5ex);}}

\theoremstyle{theorem}
\newcounter{lettretheo}
\newcolumntype{M}[1]{>{\centering\arraybackslash}m{#1}}

\newtheorem{theolettre}[lettretheo]{Theorem}
\newtheorem{theo}{Theorem}[section]
\newtheorem{prop}[theo]{Proposition}
\newtheorem{lemme}[theo]{Lemma}
\newtheorem{corr}[theo]{Corollary}
\newtheorem{defi}[theo]{Definition}
\newtheorem{ex}[theo]{Example}
\newtheoremstyle{bolditalic}
  {3pt}{3pt}
  {\itshape} 
  {}
  {\bfseries\itshape} 
  {.} 
  {.5em} 
  {}

\theoremstyle{bolditalic}

\theoremstyle{remark}
\newtheorem{remark}[theo]{Remark}
\newtheorem{example}[theo]{Example}

\usepackage{pdfpages}
\usepackage{fancyhdr}
\makeatletter

\@addtoreset{section}{part}

\@addtoreset{subsection}{section}

\makeatother
\title{$p$-elementary non-cyclic subgroups of the Cremona group of the plane}
\author{Mani Esna Ashari}
\address{Mani Esna Ashari, Universit{\"a}t Basel,
Departement Mathematik und Informatik,
Spiegelgasse 1,
CH-4051, Basel,
Switzerland}
\email{esnamani@gmail.com}
\begin{document}

\newgeometry{top=28mm, bottom=28mm, left=31mm, right=31mm}
\maketitle

\noindent \textsc{Abstract.}
\noindent We classify, up to conjugacy, the subgroups of the Cremona group of the plane isomorphic to $(\mathbb{Z}/p\mathbb{Z})^r$, where $p$ is prime and $r \geq 2$, over an algebraically closed field $\mathbf{k}$ of characteristic not equal to $p$.
It is known (\cite{beauville}) that:
\begin{itemize}
    \item If $p \geq 5$, then $r \leq 2$.
    \item If $p = 3$, then $r \leq 3$.
    \item If $p = 2$, then $r \leq 4$.
\end{itemize}

\noindent The main contribution of this article is a concrete description of these groups. In fact, we give an explicit list of representatives via a set of 20 families consisting of subgroups of the de Jon\-quiè\-res group and subgroups of automorphisms of del Pezzo surfaces, and we study the possible conjugacy by birational map between these families.

\noindent Finally, we give some results on subgroups of the Cre\-mo\-na group of the plane isomorphic to $(\mathbb{Z}/p\mathbb{Z})^r$, where $p$ is prime and $r$ is an integer, over an al\-ge\-braically closed field $\mathbf{k}$ of cha\-ra\-cteristic equal to $p$.

\restoregeometry

\newpage

\tableofcontents
\normalsize
\newpage

\part{Introduction} \label{part1} $ $


\bigskip\bigskip

\section{Notations and conventions} $ $





\noindent We use the notation $\ZZ/n$ for the cyclic group of order $n$.
\\

\noindent We fix an algebraically closed field $\kk$ and a prime number $p$.
\\

\noindent A $p$-elementary group is a group that is isomorphic to $(\ZZ/p)^r$ for some integer $r \in \mathbb{N}_{\geq 0}$.
\\

\noindent A Klein group \textit{resp} subgroup is a group \textit{resp} subgroup isomorphic to $(\ZZ/2)^2$. \\

\noindent The dihedral group of order $2n$ is denoted $D_{2n}$. \\

\noindent All varieties and morphisms are defined over the field $\kk$.
\\

\noindent Curves and surfaces are always projective in this article.
\\

\noindent $\KK \supseteq \kk$ denotes a field extension.
\\

\noindent The symbols $i, j, \epsilon, \xi \in \kk$ denote primitive 4th, 3rd, 5th, and $p$-th roots of unity in $\kk$.
\\

\noindent The notation $[a_0 : \dots : a_n]$ will often be used for the following automorphism of $\mathbb{P}^n$: $$[x_0 : \dots : x_n] \mapsto [a_0 x_0 : \dots : a_n x_n]$$

\bigskip

\noindent The $p$-torsion subgroup of the diagonal torus of $\PGL(n,\KK)$ is the subgroup composed of diagonal elements of order $p$ and of identity:
$$\langle [\xi : 1 : \dots : 1],  \dots ,  [1 : \dots :  1 : \xi : 1 : \dots : 1] , \dots , [1 : \dots : 1 : \xi]\rangle \simeq (\ZZ/p)^{n-1}$$

\bigskip

\noindent The Cremona group, denoted $\Cr$, is the group of birational maps of the plane $\PP^2$. \\

\noindent Recall that the Fermat cubic is the hypersurface in $\PP^3$ given by the following equation: $$W^3 + X^3 + Y^3 + Z^3 = 0$$

\bigskip

\noindent The de Jonquières group is the subgroup $\J$ in $\Cr$ given by
conjugating the following group of birational maps of $\mathbb{A}^2$
\[
    \J = \Bigset{ (x,y) \DashedArrow 
    \left(\dfrac{ax+b}{cx+d} , \dfrac{\alpha(x)y + \beta(x)}{\gamma(x)y + \delta(x)} \right)}{ 
    \begin{array}{l}
    a,b,c,d \in \kk \, , \\ 
    \alpha, \beta, \gamma, \delta \in \kk(x) \, , \\ 
    ad-bc \neq 0 \, , \ \alpha \delta - \beta \gamma \neq 0
    \end{array}}
\]
via the embedding $\mathbb{A}^2 \to \PP^2$, $(x,y) \mapsto [y:x:1]$.
Observe that $\J$ is the semi-direct product $\PGL(2, \kk(x)) \rtimes \PGL(2, \kk)$.
\\

\noindent If $S, S'$ are two surfaces, then we call two subgroups 
$G \subseteq \Bir(S)$ and $G' \subseteq \Bir(S')$ conjugate if there is a birational
map $\phi \colon S \dasharrow S'$ such that $G' = \phi G \phi^{-1}$. \\

\newpage

\section{Context} $ $

\subsection{Previous results on $p$-elementary subgroups of the Cremona group} $ $

Among the recent results on $p$-elementary subgroups of the Cremona group, we can cite the thesis of Jérémy Blanc \cite{blancthesis}, in which, along with other findings, he determined the isomorphism classes of finite abelian subgroups of the Cremona group over the field of complex numbers. In particular, he showed that the isomorphism class of maximal $p$-elementary subgroups are: 
\[
\hspace{2.5cm} \hfill (\mathbb{Z}/2)^4, \quad  (\mathbb{Z}/3)^3, \quad  (\mathbb{Z}/p)^2 \hfill \makebox[2.5cm][r]{ \quad when $p > 3$.}
\]

Arnaud Beauville classified maximal $p$-elementary subgroups up to birational map (see \cite{beauville}).

Finally, Jérémy Blanc classified cyclic subgroups of the Cremona group over the field of complex numbers (see \cite{blancarticle}) up to birational conjugacy.

Hence the question of classifying up to conjugation by birational the "intermediate" $p$-elementary subgroups of the Cremona group, \textit{i.e.} not cyclic and not maximal, in practice: $$(\ZZ/2)^2, \quad (\ZZ/2)^3, \quad (\ZZ/3)^2,$$ was still open.






\subsection{Results} $ $

There are three theorems, labelled  \ref{theorem1},  \ref{theorem2} and  \ref{theorem3}.
\begin{itemize}
\item
In Theorem \ref{theorem1} we give an exhaustive list of the $p$-elementary non-cyclic subgroups of $\Cr$ up to conjugation, when $\car(\kk) \neq p$.
\item In Theorem \ref{theorem2} we give further information about the conjugation of $p$-elementary subgroups in $\Cr$ when $\car(\kk) \neq p$.
\item We give also some results about the case $\car(\kk) = p$ in Theorem \ref{theorem3}: We explore the $p$-elementary subgroups of the de Jonquières group, and determine the Klein subgroups of $\Aut(\mathbb{P}^2)$.
\end{itemize}

\subsection{Open problems and perspectives} $ $

A natural next step would be to extend these results to more general settings:

\begin{itemize}
\item Study of the case where $\car(\kk) = p$. While Theorem \ref{theorem3} provides some results for $p$-elementary subgroups, a comprehensive classification for the full Cremona group in characteristic $p$ remains an open challenge.
\item Generalization to non-algebraically closed fields. The results in this article are established over an algebraically closed field $\kk$. It would be of interest to extend this classification to more general fields.
\end{itemize}
 

\newpage

\section{Main results} $ $

\subsection{Theorem \ref{theorem1}: A list of representatives} $ $

\medskip


\begin{theolettre} \label{theorem1} $ $
We assume $\car(\kk) \neq p$.

\medskip

Let $G \simeq (\ZZ /p)^r$ be a $p$-elementary non-cyclic subgroup of the 
Cremona group $\Cr$.

Then $G$ is isomorphic to one of the following groups:
\begin{center}
$(\ZZ/2)^2$,  \quad $(\ZZ/2)^3$,  \quad $(\ZZ/2)^4$  \quad $(\ZZ/3)^2$,  \quad $(\ZZ/3)^3$,  \quad   $(\ZZ/p)^2$ \ with $p > 3$.
\end{center}

\medskip

Furthermore:  
\end{theolettre}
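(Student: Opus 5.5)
The isomorphism-type part of the statement reduces to bounding $r$ in terms of $p$. The bound for each prime then gives exactly the listed groups, since $r \geq 2$ by the non-cyclicity assumption. I would follow the strategy of \cite{beauville}, working over an arbitrary algebraically closed $\kk$ with $\car(\kk)\neq p$.

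The first step is regularization. A finite subgroup $G\subseteq \Cr$ is conjugate to a group of automorphisms of a smooth projective rational surface $S$. Running a $G$-equivariant minimal model program, which is valid in any characteristic for surfaces, we may assume $(S,G)$ is $G$-minimal. By the Manin–Iskovskikh classification, one of two cases then holds:
\begin{itemize}
\item $S$ is a del Pezzo surface with $\rank \Pic(S)^G=1$;
\item $S$ carries a $G$-equivariant conic bundle $\pi\colon S\to\PP^1$ with $\rank\Pic(S)^G=2$.
\end{itemize}

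In the conic bundle case, $G$ sits in an exact sequence $1\to G_F\to G\to G_B\to 1$. Here $G_B\subseteq \PGL(2,\kk)$ acts on the base, and $G_F$ acts on the generic fibre, a conic over $\kk(x)$, hence $G_F\subseteq \PGL(2,\kk(x))$.
\begin{itemize}
\item A $p$-elementary subgroup of $\PGL(2,K)$ with $p\neq \car$ has rank at most $1$ if $p$ is odd: it lifts to a commutative diagonalizable subgroup of a torus. It has rank at most $2$ if $p=2$.
\item For $p$ odd this gives $r\leq 2$.
\item For $p=2$ we get $r\leq 4$ immediately.
\end{itemize}

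In the del Pezzo case, I would use the faithful action of $G$ on $K_S^\perp\subseteq \Pic(S)$, a lattice of rank $9-d$ with Weyl group $W(E_{9-d})$. This holds for $d\leq 5$; for $d\geq 6$ one treats $\PP^2$, $\PP^1\times\PP^1$ and the degree $6$ surface directly.
\begin{itemize}
\item On $\PP^2$, a $p$-elementary subgroup of $\PGL(3,\kk)$ has rank $\leq 2$. The only exception is $p=3$, where the Heisenberg group image $(\ZZ/3)^2$ is still rank $2$.
\item On $\PP^1\times\PP^1$ we get rank $\leq 2+2+1$ for $p=2$, but minimality plus the commutator constraints reduce this to $4$. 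For $p$ odd the rank is $\leq 2$.
\item For $d\leq 5$, elementary abelian $p$-subgroups of $W(E_n)$, $n\leq 8$, have known ranks. Combining this with the fact that $G$ must fix no nonzero vector of $K_S^\perp$ (minimality) gives the bounds. In particular, $(\ZZ/3)^3$ is realized on the Fermat cubic ($d=3$), and $(\ZZ/2)^4$ on degree $4$ del Pezzo surfaces.
\item For $p\geq 5$, $p$ must divide $|W(E_n)|$, which forces $p\in\{5,7\}$. The $p$-Sylow subgroups there are small, giving $r\leq 2$; for $p=7$ the rank is $1$.
\end{itemize}

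The main obstacle is the del Pezzo analysis in low degree, $d=1,2,3$. There the Weyl groups are large, so one must exclude a priori possible subgroups such as $(\ZZ/2)^5$ or $(\ZZ/3)^4$ of $W(E_8)$ that have no invariant vectors. I would first try the Lefschetz fixed point argument: every nontrivial element $g$ of order $p\neq\car$ has trace on $H^2$ equal to $\chi(S^g)-2$. The fixed loci of commuting elements constrain each other, since a $(\ZZ/p)^2$ acts on the fixed curve of any element. Alternatively, one can use the concrete equations: double cover of $\PP^2$ branched in a quartic, cubic in $\PP^3$, and double cover of a quadric cone. Then $G$ lifts to linear groups in which the $p$-elementary rank is bounded by diagonalization. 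In characteristic $\neq p$, Maschke's theorem keeps this argument valid.
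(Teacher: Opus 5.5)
The skeleton of your proof is the paper's: a $G$-minimal pair, the dichotomy of Proposition~\ref{prop1}, and in the conic bundle case the sequence $1\to G_F\to G\to G_B\to 1$ with $G_F\subseteq\PGL(2,\kk(x))$ and $G_B\subseteq\PGL(2,\kk)$, which is exactly what Proposition~\ref{theo2} uses. Your $\PP^2$, $\PP^1\times\PP^1$ and $p\geq 5$ arguments are also fine.

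The gap is in the del Pezzo case, which is where the real content lies. Your first claim there is that the $p$-ranks of $W(E_n)$, together with the absence of invariant vectors in $K_S^\perp$, give the bounds. This is false, as you half-concede:
\begin{itemize}
\item reflections in the mutually orthogonal roots of $A_1^7\subset E_7$ give $(\ZZ/2)^7\subset W(E_7)$;
\item reflections in those of $A_1^8\subset E_8$ give $(\ZZ/2)^8\subset W(E_8)$;
\item the order-$3$ rotations of the factors of $A_2^4\subset E_8$ give $(\ZZ/3)^4\subset W(E_8)$.
\end{itemize}
None of these fixes a nonzero vector. So the lattice argument settles every case except $(d,p)\in\{(2,2),(1,2),(1,3)\}$. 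For exactly those cases you only list two things you would try, and give no argument.

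Your second option, explicit models, is the paper's route, but the mechanism you state (lift to a linear group and diagonalize) is not the right one. A $p$-elementary subgroup of $\PGL(n,\kk)$ need not lift to an abelian group when $p\mid n$: Klein groups in $\PGL(2,\kk)$ and $(\ZZ/2)^4\subset\PGL(4,\kk)$ are examples. What is actually needed is to identify a small kernel:
\begin{itemize}
\item For $d=2$, $\Aut(S)=\Aut(\Gamma)\times\langle\mathrm{Geiser}\rangle$ with $\Aut(\Gamma)\subseteq\PGL(3,\kk)$ (Lemma~\ref{structureautdp2}). Hence $r\leq 1+2$ for $p=2$.
\item For $d=1$, the kernel of the action of $\Aut(S)$ on the pencil $|-K_S|$ is $\mu_2$ or $\mu_2\times\mu_3$ (Lemma~\ref{1lemme0}). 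Hence $r\leq 1+2$ for $p=2$ and $r\leq 1+1$ for $p=3$, using only the $\PGL(2,\kk)$ bounds you already have.
\end{itemize}

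The equations you quote (a double plane branched along a quartic, a double cover of the quadric cone in Weierstrass form) presuppose $\car\kk\neq 2$. ``Maschke'' does not cover $\car\kk=2$, $p=3$, $d=1$, and that case genuinely occurs (family~\ref{E}). The paper handles it with Dolgachev--Martin (Proposition~\ref{resultdp1char2}). Alternatively, note that the kernel embeds in $\Aut(E_\eta,O)$ of the generic anticanonical curve, whose $2$- and $3$-elementary subgroups are cyclic in every characteristic.

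Finally, you never use the other half of Proposition~\ref{prop1}, namely $\deg S\mid |G|$. This is what lets the paper discard at once degree $6$, degree $5$ for $p\neq 5$, degree $3$ for $p=2$, and degrees $2,4$ for $p=3$. In particular it disposes of the degree-$6$ surface, which you mention but never treat.
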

\medskip

\begin{enumerate}[label=\textbf{(\arabic*)}, leftmargin=2em, labelsep=1em]

\item \textbf{If} {\boldmath $G \simeq (\ZZ/2)^2$}, then $G$ is conjugate to one of the following subgroups:
\begin{enumerate}[label=(\Alph*), start=1]
\item  \label{I} The subgroup of the de Jonquières group generated by:
\[
    (x,y) \mapsto (-x,y) \, , \ 
    (x,y) \mapsto \left(x,\dfrac{g(x^2)}{y}\right) 
\]
where $g \in \kk[x]$ is a monic polynomial with simple roots with at least two roots not equal to $0$.

\item  \label{J} The subgroup of the de Jonquières group generated by:
\[
    (x,y) \mapsto \left(x,\dfrac{g(x)}{y}\right) \, , \
    (x,y) \mapsto \left(x,\dfrac{a(x)y-b(x)g(x)}{b(x)y-a(x)}\right) 
 \] 
 where $g \in \kk[x]$ is a monic polynomial with simple roots of degree at least three and 
 $a,b \in \kk(x)$ are rational functions such that $a^2 \neq g b^2$.
\item  \label{G} The subgroup:
\[ \{ [W:X:Y:Z] \mapsto [ \pm W: \pm X:Y:Z] \} \]
of the automorphism group of 
a del Pezzo surface of degree $2$ given by 
$$\left\{W^2 = X^4 + L_2(Y,Z) X^2 + L_4(Y,Z) \right\} \subset \mathbb{P}(2,1,1,1),$$ where $L_2$ \textit{resp} $L_4$ are forms of degree $2$ \textit{resp} $4$ 
such that the surface is smooth.
\item  \label{H} The subgroup:
\[ 
    \{ [W:X:Y:Z] \mapsto [ \pm W: \pm X:Y:Z]  \}
\]
of the automorphism group of a del Pezzo surface of degree $1$ given by
$$ \quad \quad \quad \left\{W^2 = Z^3 + L_1(X^2,Y^2) Z^2 + L_2 (X^2,Y^2)Z + L_3 (X^2,Y^2)\right\} \subset \mathbb{P}(2,1,1,3),$$ where $L_1$ \textit{resp} $L_2$ \textit{resp} $L_3$ is a form of degree $1$ \textit{resp} $2$ \textit{resp} $3$ such that the surface is smooth.

 \item \label{Iprime}  The subgroups:
 \[ \begin{array}{ll}
 & \left\{ (x,y) \mapsto (\pm x, \pm y) \right\} \\
  & \left\{ (x,y) \mapsto (\pm x^{\pm 1},y) \right\}  \\
\end{array}\]
of the automorphism group of $\mathbb{P}^1 \times \mathbb{P}^1$.

\end{enumerate}
\bigskip
\item
\textbf{If} {\boldmath $G \simeq (\ZZ/2)^3$}, then $G$ is conjugate to one of the following subgroups:
\begin{enumerate}[label=(\Alph*), start=6]
\item  \label{M} The subgroup of the de Jonquières group generated by:
\[  \quad \quad \quad \quad 
(x,y) \mapsto (-x,y) \ , \ 
(x,y) \mapsto \left(\frac{1}{x},y\right) \, , \
(x,y) \mapsto \left(x,\dfrac{g(x^2+\frac{1}{x^2})}{y}\right)
\]
where $g$ is a monic polynomial with simple roots with at least one root not equal to $\pm 2$.
\item  \label{N} The subgroup of the de Jonquières group generated by:
\[ \quad \quad \quad \quad 
(x,y) \mapsto (-x,y) \ , \ 
(x,y) \mapsto \left(x,\dfrac{g(x^2)}{y}\right) \, , \
(x,y) \mapsto  \left(x,\dfrac{a(x^2)y-b(x^2)g(x^2)}{b(x^2)y-a(x^2)}\right)
\]
where $g \in \kk[x]$ is a monic polynomial with simple roots with at least two roots not equal to $0$ and 
$a,b \in \kk(x)$ are rational functions such that $a^2 \neq g b^2$.
\item  \label{K} The subgroup:
\[
    \left\{ [X_0:X_1:X_2:X_3:X_4] \mapsto [ \pm X_0: \pm X_1: \pm X_2:X_3:X_4] \right\}
\]
of the automorphism group of a quartic del Pezzo surface given by \[ \left\{ \sum_{k=0}^4 X_k^2 = \sum_{k=0}^4 a_k X_k^2 = 0 \right\} \subset \PP^4 \] 
\item  \label{L} The subgroup: 
\[
    \{ [W:X:Y:Z] \mapsto [ \pm W: \pm X: \pm Y:Z] \}
\]
of the automorphism group of a del Pezzo surface of degree $2$ given by 
\[ \quad \left\{W^2 = X^4 + Y^4 + Z^4 + d X^2 Y^2 + e X^2 Z^2 + f Y^2 Z^2 \right\} \subset \mathbb{P}(2,1,1,1),\] where $d,e,f \in \kk \setminus \{-2,2\}$ such that $ 4 - f^2 - d^2 - e^2 + def \neq 0$.

Two such subgroups are conjugate by an isomorphism if and only if there exist $\epsilon_d, \epsilon_e \in \{ -1, 1\}$ such that $\{d',e',f'\} = \{\epsilon_d d, \epsilon_e e, \epsilon_d \epsilon_e f \}$.

\item \label{Mprime} The subgroups: \[\begin{array}{ll}
 & \langle (x,y) \mapsto (-x,-y),(x,y) \mapsto (\frac{1}{x},\frac{1}{y}), (x,y) \mapsto (y,x) \rangle  \\
 & \langle (x,y) \mapsto (-x,y), (x,y) \mapsto (\frac{1}{x},y), (x,y) \mapsto (x,-y) \rangle \\
& \langle (x,y) \mapsto (\frac{1}{x},-y), (x,y) \mapsto (x,\frac{1}{y}), (x,y) \mapsto (-x,y) \rangle \\
\end{array}\]
of the automorphism group of $\mathbb{P}^1 \times \mathbb{P}^1$.

\end{enumerate}


\item \textbf{If} {\boldmath $G \simeq (\ZZ/2)^4$}, then $G$ is conjugate to one of the following subgroups:
\begin{enumerate}[label=(\Alph*), start=11]
\item \label{Q} The subgroup of the de Jonquières group generated by:
\begin{eqnarray*}
(x,y) &\mapsto& (-x,y) \, , \\
(x,y) &\mapsto& \left(\frac{1}{x},y\right) \, , \\
(x,y) &\mapsto& \left(x,\dfrac{g(x^2+\frac{1}{x^2})}{y}\right) \, , \\
(x,y) &\mapsto& 
\left(x,\dfrac{a(x^2+\frac{1}{x^2})y-
b(x^2+\frac{1}{x^2})g(x^2+\frac{1}{x^2})}{b(x^2+\frac{1}{x^2})y-a(x^2+\frac{1}{x^2})}\right)
\end{eqnarray*}
where $g \in \kk[x]$ is a monic polynomial with simple roots with at least one root not equal to $\pm 2$. and 
$a,b \in \kk(x)$ are rational functions such that $a^2 \neq g b^2$.
\item \label{P} The subgroup:
\[
    \left\{ [X_0:X_1:X_2:X_3:X_4] \mapsto [ \pm X_0: \pm X_1: \pm X_2: \pm X_3:\pm X_4] \right\}
\]
of the automorphism group of a quartic del Pezzo  surface given by 
\[ \left\{ \sum_{k=0}^4 X_k^2 = \sum_{k=0}^4 a_k X_k^2 = 0 \right\} \subset \PP^4,\] 
where the $a_k \in \kk$  are pairwise distinct.
\item \label{Qprime}
The subgroup:
\[ \left\{ (x,y) \mapsto (\pm x^{\pm 1},\pm y^{\pm 1}) \right\} \]
of the automorphism group of $\mathbb{P}^1 \times \mathbb{P}^1$.

\end{enumerate}


\bigskip

\item \textbf{If} {\boldmath $G \simeq (\ZZ/3)^2$}, then $G$ is conjugate to one of the following subgroups:

\begin{enumerate}[label=(\Alph*), start=14]
\item \label{A} The $3$-torsion subgroup of the diagonal torus of $\PGL(3,\kk) \subseteq \Cr$.
\item \label{B} The subgroup of $\PGL(3, \kk) \subseteq \Cr$ generated by:
\[
    [X:Y:Z] \mapsto [X:j Y: j^2 Z] \, , \ [X:Y:Z] \mapsto [Z:X:Y] \, .
\]
\item \label{C} The subgroup of the automorphism group of the Fermat cubic
generated by:
\[ \quad \quad \quad
        {[W:X:Y:Z]}  \mapsto  {[jW:X:Y:Z]} \, , 
        {[W:X:Y:Z]}  \mapsto  {[W:jX:Y:Z]} \, . 
\]
\item  \label{D} The subgroup of the automorphism group of the cubic surface given by $$\left\{W^3 + X^3 + Y^3 + Z^3 + \mu X Y Z = 0\right\} \subset \PP^3$$ where $\mu \in \kk$ such that $\mu^3 \neq -27$, generated by:
 \[ \quad \quad \quad \quad
      {[W:X:Y:Z]} \mapsto {[jW:X:Y:Z]} \, ,  
      {[W:X:Y:Z]} \mapsto {[W:jX:j^2 Y:Z]} \, .
\]
Two such subgroups are conjugate by an isomorphism if and only if $\mu^3 = \mu'^3$.

\item  \label{E} If $\car(\kk) \neq 2$: The subgroup of the automorphism group of 
the del Pezzo surface of degree $1$ given by $$\left\{W^2 = Z^3 + X^6 + Y^6 +  c X^3 Y^3 = 0 \right\} \subset \mathbb{P}(3,1,1,2)$$
where $c \in \kk \setminus \{ -2,2\}$,
generated by:
\[ \quad  \quad  \quad
        {[W:X:Y:Z]}  \mapsto  {[W:jX:Y:Z]} \, , 
        {[W:X:Y:Z]}  \mapsto  {[W:X:jY:Z]} \, .
\] 
Two such subgroups are conjugate by an isomorphism if and only if $c = \pm c'$.

If $\car(\kk) = 2$: The subgroup of the automorphism group of 
the del Pezzo surface of degree $1$ given by:
$$\quad \quad \left\{ (u,v,x,y) | 0 = y^2 + uv(u+v) y + x^3 + (e + \sqrt{e}) (u^5 v + u v^5) + e u^3 v^3\right\},$$
inside $\mathbb{P}(1,1,2,3)$, where $e \in \kk \setminus \{0,1\}$,
generated by:
\[ 
    \begin{array}{rcl}
        {[u:v:x:y]} & \mapsto & {[u:v:j x : y]} \, , \\ 
        {[u:v:x:y]} & \mapsto & {[v:u+v:x:y+\sqrt{e}(u^2 v + u v^2 + v^3)]} \, .
    \end{array}
\]
\end{enumerate}
\bigskip
\item \textbf{If} {\boldmath $G \simeq (\ZZ/3)^3$}, then $G$ is conjugate to the $3$-torsion subgroup of the diagonal torus in the automorphism group of the Fermat cubic.
\bigskip\bigskip
\item \textbf{If} {\boldmath $G \simeq (\ZZ/p)^2$} \textbf{with} {\boldmath $p>3$}, then $G$ is conjugate to the $p$-torsion subgroup of the diagonal torus of $\PGL(3,\kk) \subseteq \Cr$.
\end{enumerate}

\bigskip

The subgroups in families \ref{I}, \ref{J}, \ref{M}, \ref{N}, \ref{Q} are subgroups of the de Jonquières group. We will denote the families in the case by $\mathbb{C} \mathbb{A}$.
Every subgroup from such a family has at least one non-trivial element fixing a non-rational curve.
\medskip

The subgroups in families  \ref{G}, \ref{H}, \ref{K}, \ref{L}, \ref{P}, \ref{A}, \ref{B}, \ref{C}, \ref{D}, 
\ref{E} have an invariant Picard rank equal to $1$. We will denote the families in this case by $\mathbb{D}$.
\medskip

The subgroups in families \ref{Iprime}, \ref{Mprime}, \ref{Qprime} are subgroups of the del Pezzo surface $\mathbb{P}^1 \times \mathbb{P}^1$. We will denote the families in this case by $\mathbb{C} \mathbb{P}$.
No non-trivial element in these subgroups fixes a non-rational curve.

\bigskip
\bigskip

\begin{remark} $ $
In the statement of Theorem \ref{theorem1}, the case $\mathbb{D}$ denotes the subgroups that act on a del Pezzo surface with Picard rank $1$. The case $\mathbb{C} \mathbb{A}$ denotes the subgroups of the de Jonquières group, we use this notation because such subgroups act on the \textbf{a}ffine space $\mathbb{A}^2$. The case $\mathbb{C} \mathbb{P}$ denotes the subgroups of $\Aut(\mathbb{P}^1 \times \mathbb{P}^1)$, we use this notation because such subgroups act on the \textbf{p}rojective space $\mathbb{P}^1 \times \mathbb{P}^1$.
\end{remark}

\newpage

\subsection{Theorem \ref{theorem2}: Conditions for possible conjugations between the representatives} \label{amazingandsinequanon} $ $

\bigskip
\bigskip
\bigskip

We start this subsection with an important introductory example, to which we will refer as \textbf{\textit{the amazing result}}:

\bigskip\bigskip

\noindent \textbf{\textit{\hypertarget{Amazing}{The amazing result.}}} \textit{
We assume $\car(\kk) \neq 2$.} \bigskip

\textit{
Let $t \in \kk \setminus\{0,1,-1\}$. Let $S_t$ be the quartic del Pezzo surface given by:
\[S_t=\left\{X_0^2+X_1^2+X_2^2+X_3^2+X_4^2=X_1^2+ t X_2^2 - t X_3^2+X_4^2=0\right\} \subset \mathbb{P}^4 \]
and let $G\subset \mathrm{\Aut}(S_t)$ be the Klein group generated by:
 \[ \begin{array}{ll}
 &[X_0:X_1:X_2:X_3:X_4]\mapsto [-X_0:X_1:X_2:X_3:X_4]\\
  & [X_0:X_1:X_2:X_3:X_4]\mapsto [X_0:X_4:X_3:X_2:X_1]. \\
\end{array}\]
Then $rk(\mathrm{Pic}(S_t)^G)=2$ and there are exactly two conic bundles $\pi_1,\pi_2\colon S_t\to \mathbb{P}^1$ that are $G$-invariant (Proposition \ref{2elemconj2}). This gives two subgroups of the de Jonquières group, which are not conjugate as one is acting trivially on the base (and thus belongs to $\PGL(2,\kk(x))$) whereas the other does not (again Proposition \ref{2elemconj2}). The two groups then are in families \ref{I} and \ref{J} respectively (see Theorem \ref{theorem1}).}


\bigskip
\bigskip
\bigskip

We now state the main result about conjugations between $p$-elementary non-cyclic subgroups of the Cremona group:

\bigskip\bigskip

\begin{theolettre} \label{theorem2} $ $
We assume $\car(\kk) \neq p$.
\bigskip

\begin{enumerate}[label=\textbf{(\arabic*)}, leftmargin=2em, labelsep=1em]
\item \label{theorem21} The only possible conjugation by a birational map between two subgroups in two different families is between a subgroup in family \ref{I} and a subgroup in family \ref{J}.
Such a conjugation exists if and only if the two groups are birationally conjugate to the group of 
\hyperlink{Amazing}{\textbf{\textit{the amazing result}}} 
(see above).
\item \label{theorem22} Two subgroups in the same family in the case $\mathbb{D}$ are conjugate by a birational map if and only if they are conjugate by an isomorphism.
\item \label{theorem23} Two subgroups in the same family in the case $\mathbb{C}\mathbb{A}$ are conjugate by a birational map if and only if they are conjugate by a de Jonquières map.
\item \label{theorem24} Two subgroups in the same family in the case $\mathbb{C}\mathbb{P}$ are conjugate by a birational map if and only if they are equal.
\item \label{theorem25} Two $2$-elementary subgroups of $\J$ of the form
$G= L \times R, G' = L' \times R'$, where $L,L' \subset \PGL(2,\kk(x)), R,R' \subset \PGL(2,\kk)$ from the list of Proposition \ref{theo2} are conjugate in $\J$ if and only if $R = R'$ and $L, L'$ have the same determinant in $\kk(x)^\times$ modulo the action of the normalizer of $R$ in $\PGL(2,\kk)$.
\end{enumerate}
\end{theolettre}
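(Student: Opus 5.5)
The plan is to work throughout in the $G$-equivariant Sarkisov program: regularize $G$ to a $G$-Mori fibre space, and then decompose any birational conjugacy between two such models into elementary $G$-links. The conjugacy invariants will be of two kinds. The first is the set of non-rational curves fixed pointwise by non-trivial elements of $G$, together with their normalizations and genera; by the remark after Theorem~\ref{theorem1}, this data distinguishes the case $\mathbb{C}\mathbb{A}$ from the cases $\mathbb{C}\mathbb{P}$ and $\mathbb{D}$. The second is the invariant Picard rank and the existence of $G$-fixed points, which controls which links can start from a given model.

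For item \ref{theorem22}, I will take $S$ to be a del Pezzo surface with $\rank \Pic(S)^G = 1$ in the case $\mathbb{D}$. By the classification of links for $G$-minimal del Pezzo surfaces, a link starting from $S$ blows up a $G$-orbit of size $d$ with $d < K_S^2$. So I will check family by family that no such orbit exists, or that every link ends at a surface isomorphic to $S$, i.e.\ a Bertini or Geiser type involution; this makes $S$ $G$-birationally superrigid, and a birational conjugacy factors into isomorphisms. For degree $1$ and $2$ this is automatic. For cubic and quartic surfaces, and for $\PP^2$ with the groups \ref{A} and \ref{B}, I will use that a $p$-group acts on an orbit of size $p^k$. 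Then:
\begin{itemize}
\item a link from $\PP^2$ would need an orbit of size $\le 8$ in general position, which the action of $(\ZZ/3)^2$ forces to have size $1$, $3$ or $9$;
\item the fixed points of \ref{A} lie on invariant lines, so they produce a link back to $\PP^2$;
\item \ref{B} has no fixed point, and an orbit of size $3$ is non-collinear, so it yields only the standard quadratic involution, which normalizes $G$.
\end{itemize}
The same computations will also separate different families in $\mathbb{D}$, since each family has its own degree or isomorphism type of action.

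For items \ref{theorem23} and \ref{theorem24}, I will take conic bundle models and study the links between them: elementary transformations at $G$-orbits in fibres, and links to del Pezzo surfaces. In $\mathbb{C}\mathbb{A}$, the fixed hyperelliptic curve forces every link to preserve the fibration, so conjugations lie in $\J$. In $\mathbb{C}\mathbb{P}$, the action on $\Pic(\PP^1\times\PP^1)$ and the absence of fixed points on the fibres rule out elementary links, so only automorphisms remain. I will then check that the listed subgroups are pairwise non-conjugate in $\Aut(\PP^1\times\PP^1)$, which amounts to showing that they are distinct up to the $\PGL_2^2\rtimes\ZZ/2$-conjugacy already used to normalize them.

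Item \ref{theorem25} is essentially algebraic. Since $R$ is the image of $G$ in $\PGL(2,\kk)$, it is determined up to conjugacy in $\PGL(2,\kk)$, and a conjugating element of $\J$ must normalize $R$. For the part $L$ in $\PGL(2,\kk(x))$, a Klein group is determined by its quaternion algebra, and a cyclic $\ZZ/2$ by the class of $g$ modulo squares. These are exactly the determinant data.

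The hard step is the first half of item \ref{theorem21}: showing that the $(\ZZ/2)^2$ cases $\ref{I}$ and $\ref{J}$ are the only cross-family conjugations, and identifying them. My approach is to follow a chain of links between a conic bundle in \ref{I} and one in \ref{J}. The base action must change from trivial to non-trivial, so the chain has to pass through a del Pezzo surface with two $G$-conic bundle structures. Comparing the fixed curves then forces this surface to be a degree $4$ surface with invariant Picard rank $2$, and the computation of Proposition~\ref{2elemconj2} should then identify it with $S_t$.
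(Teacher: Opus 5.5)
The main gap is in items \ref{theorem23} and \ref{theorem24}: both rest on rigidity claims that are false, and that is where the real work lies.

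For $\mathbb{C}\mathbb{A}$ you assert that the fixed curve of positive genus forces every link to preserve the fibration. It does not. Proposition \ref{preli} only yields $K_S^2\le 6-2g\le 4$, and several links that change the fibration can then occur. For $K_S^2\in\{1,2,4\}$ there are links of type IV: the Bertini or Geiser involution, or the switch between the two $G$-invariant fibrations of a quartic. For $K_S^2=3$ there is a link of type III to a quartic of invariant rank $1$, followed by a link of type I to a cubic with another fibration. The amazing result is exactly such a link, and there $[-1:1:1:1:1]$ fixes an elliptic curve. Your own sketch of item \ref{theorem21} relies on such links, which contradicts your argument for item \ref{theorem23}.

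What you need is the analysis of Proposition \ref{2elemconj2}:
\begin{itemize}
\item Bertini and Geiser links are replaced by isomorphisms, because the involution commutes with $\Aut$ (Lemma \ref{bertinigeiser}).
\item A type III link followed by a type I link through a quartic is replaced by an isomorphism, because the normalizer of $G$ permutes the four fixed points (Lemma \ref{conjcubickleinredbis}).
\item On a quartic of invariant rank $2$, the two invariant fibrations are exchanged by an element of the normalizer of $G$ unless $(G,S)=(G_{49},S_t)$ (Lemmas \ref{propquarticlink} and \ref{propquarticlinkbis}).
\end{itemize}
The only surviving link switches between trivial and non-trivial action on the base, and that property is constant within a family. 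This gives item \ref{theorem23}. The same analysis is what your sketch of item \ref{theorem21} needs, including to exclude families \ref{M} versus \ref{N}.

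For $\mathbb{C}\mathbb{P}$, elementary links are not ruled out, and non-conjugacy in $\Aut(\PP^1\times\PP^1)$ proves nothing. Two examples from the paper:
\begin{itemize}
\item $G_{81}=\langle(-x,y),(x,-y)\rangle$ has four fixed points and is birationally conjugate to $\langle(-x,-y),(y,x)\rangle$, which exchanges the factors.
\item The de Jonqui\`eres map $(x,y)\mapsto\bigl(x,\frac{y+x^2}{y+x^{-2}}\bigr)$ conjugates $\langle(-x,y),(\frac{1}{x},y)\rangle$ to $\langle(-x,y),(\frac{1}{x},\frac{1}{y})\rangle$. These are not conjugate in $\Aut(\PP^1\times\PP^1)$: the images in the two factors have orders $4$ and $2$, against $4$ and $1$.
\end{itemize}
The invariant that works, as in Proposition \ref{detailedclassificationsubgsF0}, is the birational invariance of having a fixed point for abelian groups \cite{kollar}, applied to $G$ and to its Klein subgroups. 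Thus $G_{81}$ has fixed points and $G_{82}$ has none, while $G_{83}$, $G_{84}$, $G_{85}$ have different numbers of Klein subgroups without fixed points.

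Several secondary claims also need correcting.
\begin{itemize}
\item Non-rational fixed curves do not separate $\mathbb{C}\mathbb{A}$ from $\mathbb{D}$. The remark after Theorem \ref{theorem1} only compares $\mathbb{C}\mathbb{A}$ with $\mathbb{C}\mathbb{P}$. The Geiser involution in family \ref{G}, the Bertini involution in family \ref{H}, and $[-1:1:1:1:1]$ in families \ref{K} and \ref{P} fix curves of genus $3$, $4$ and $1$.
\item $\mathbb{D}$ must be separated from the other cases by rigidity, as in Proposition \ref{lemmetheorem2} via Propositions \ref{conjdplowdeg} and \ref{conjofbig2elemquartic}: a conjugacy from such a pair of degree at most $4$ to a minimal pair can be replaced by an isomorphism. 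Your degree count gives this, but a link ending at a surface isomorphic to $S$ is not enough. You need the Bertini or Geiser involution of the blown-up surface to commute with the lift of $G$ (Lemma \ref{bertinigeiser}), so that the link conjugates $G$ by an isomorphism. These pairs are in general rigid, not superrigid.
\item Your bullet on \ref{A} is wrong: blowing up a fixed point is a type I link to $\mathbb{F}_1$, and \ref{A} is in fact conjugate to a de Jonqui\`eres group. This is harmless only because \ref{A} and \ref{B} are single groups, \ref{A} versus \ref{B} is settled by fixed points (Proposition \ref{p21}), and \ref{A} versus the cubic and degree $1$ families is settled by starting the chain of links on the rigid side.
\item In item \ref{theorem25} the quaternion algebra carries no information, since every quaternion algebra over $\kk(x)$ is split by Tsen's theorem. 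The invariant of $L$ is the image of $\det$ in $\kk(x)^\times/(\kk(x)^\times)^2$, as in Proposition \ref{dublin1}.
\end{itemize}
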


\bigskip
\bigskip

\begin{remark} $ $

The term {\textbf{\textit{amazing result}}} was first introduced by Jérémy Blanc (see \cite[Lemma 8.1.12]{blancthesis}). However, the uniqueness that truly makes this result 'amazing' is established here in Theorem \ref{theorem2} Assertion \ref{theorem21}, confirming its central role in our classification.
\end{remark}
\newpage

\subsection{Theorem \ref{theorem3}: An insight into the case $\car(\kk) = p$}
$ $

\bigskip
\bigskip
\bigskip

The study is different when $\car(\kk)=p$. We do not give the full classification but illustrate some aspects different from the case $\car(\kk)\neq p$ in Theorem \ref{theorem3}.

\bigskip
\bigskip

\begin{theolettre} \label{theorem3} $ $
We assume $\car(\kk) = p$.
\bigskip

\begin{enumerate}[label=\textbf{(\arabic*)}, leftmargin=2em, labelsep=1em]
\item \label{theorem31} Let $G$ be a $p$-elementary subgroup of the de Jonquières group. Then $G$ is conjugate in the de Jonquières group to a subgroup of the form $L \times R$
where:
\begin{itemize}
\item $R$ is a subgroup of $\{\left[\begin{smallmatrix}  1 & t   \\ 0 & 1  \end{smallmatrix} \right] | t \in \kk \} \subset \PGL(2,\kk)$.
\item $L$ is a subgroup of
 $\{\left[\begin{smallmatrix}  1 & t   \\ 0 & 1  \end{smallmatrix} \right] | t \in \kk(x)^R \} \subset \PGL(2,\kk(x))$

or of $\{\left[\begin{smallmatrix}  a & bg   \\ b & a  \end{smallmatrix} \right] | a,b \in \kk(x)^R, a^2 + b^2 g \neq 0 \} \subset \PGL(2,\kk(x))$ (if $p=2$), where $g \in \kk[X]^*$ is a monic polynomial with simple roots.
\end{itemize}
\bigskip

\item \label{theorem32} We assume $\car(\kk) = p = 2$. Every Klein subgroup of $\Aut(\mathbb{P}^2)$ is conjugate in $\Aut(\mathbb{P}^2)$ to one of the following subgroups: $$\quad \quad G_{1} = \langle \left[\begin{smallmatrix}  1 & 0 & 0   \\
 0 & 1 & 1  \\
 0 & 0 & 1
\end{smallmatrix} \right]  , \left[\begin{smallmatrix}  1 & 0 & 1   \\
 0 & 1 & 0  \\
 0 & 0 & 1
\end{smallmatrix} \right] \rangle  , \quad G_{2} = \langle \left[\begin{smallmatrix}  1 & 1 & 0   \\
 0 & 1 & 0 \\
 0 & 0 & 1
\end{smallmatrix} \right] , \left[\begin{smallmatrix}  1 & 0 & 1  \\
 0 & 1 & 0 \\
 0 & 0 & 1
\end{smallmatrix} \right] \rangle, \quad G_{3,t} = \langle \left[\begin{smallmatrix}  1 & 1 & 0  \\
 0 & 1 & 0 \\
 0 & 0 & 1
\end{smallmatrix} \right] , \left[\begin{smallmatrix}  1 & t & 0  \\
 0 & 1 & 0 \\
 0 & 0 & 1
\end{smallmatrix} \right] \rangle ,$$
Furthermore:
\begin{itemize} 
\item
The subgroups $G_1, G_2, G_{3,t}$ are pairwise not conjugate in $\Aut(\mathbb{P}^2)$.
\item
The subgroups $G_{3,t}, G_{3,t'}$ are conjugate in $\Aut(\mathbb{P}^2)$ if and only if:
$$t' \in \left\{t,t+1,\frac{1}{t},\frac{t+1}{t},\frac{1}{t+1},\frac{t}{t+1}\right\}$$
\item
For every $t \in \kk \setminus \{ 0,1\}$, the subgroups $G_1$ and $G_{3,t}$ are conjugate by a birational map.
\item
The subgroups $G_1$ and $G_2$ are not conjugate by a birational map.
\end{itemize}
\end{enumerate}
\end{theolettre}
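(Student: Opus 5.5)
The statement immediately above is Theorem \ref{theorem3}, so I plan two independent arguments, one per assertion.

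\textbf{Assertion \ref{theorem31}.} I use the semidirect decomposition $\J=\PGL(2,\kk(x))\rtimes\PGL(2,\kk)$. Let $R$ be the image of $G$ in $\PGL(2,\kk)$ and $L=G\cap\PGL(2,\kk(x))$. In characteristic $p$, an element of order $p$ in $\PGL(2,\kk)$ is unipotent: its eigenvalues satisfy $\lambda^p=\mu^p$, hence $\lambda=\mu$. Commuting unipotents share their unique fixed point, so after conjugating by an element of $\PGL(2,\kk)$ the group $R$ lies in the upper triangular unipotent group $\{x\mapsto x+t\}$.

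For $L\subset\PGL(2,\kk(x))$, consider an order-$p$ element $M$. Either $M$ is unipotent over $\kk(x)$, or $p=2$ and $M$ is a non-split involution. The second case occurs because $\det$ need not be a square: a trace-zero matrix $\left[\begin{smallmatrix} a & bg \\ b & a\end{smallmatrix}\right]$ up to the usual normal form. In the unipotent case, all elements of $L$ fix a common $\kk(x)$-point of $\PP^1$; I move it to $\infty$ by an element of $\PGL(2,\kk(x))$.

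The main obstacle is making this normalization $R$-equivariant, so that $G$ actually splits as $L\times R$. Here the key input is that $R$ acts on the relevant cohomology. The common fixed point of $L$ is unique, hence $R$-invariant up to the twisted action. I will then show that a lift of $R$ can be chosen commuting with $L$ and acting trivially on the fiber coordinate.

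To produce this lift I use an additive Hilbert 90 / Artin–Schreier argument for the cyclic extension $\kk(x)/\kk(x)^R$. Its vanishing $H^1$ for the additive group, and for $\PGL_2$-torsors of the relevant type, lets me straighten the cocycle $R\to\PGL(2,\kk(x))$ describing how $R$ lifts. Straightening it forces the entries of $L$ into $\kk(x)^R$. In the $p=2$ non-split case the analogous step uses the conic given by $g$; I choose $g$ with simple roots by absorbing square factors into the coordinate.

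\textbf{Assertion \ref{theorem32}.} Involutions of $\PGL(3,\kk)$ in characteristic $2$ are transvections $I+N$ with $N^2=0$ of rank $1$. For a Klein group $\langle I+N_1,I+N_2\rangle$, commutation gives $N_1N_2=N_2N_1$. Pairwise products must also be of order $2$, so all nonzero $N$ in the span $V=\langle N_1,N_2\rangle$ have square $0$, and $V$ is a $2$-dimensional $\mathbb F_2$-space of rank-one nilpotents.

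I then classify $V$ by the geometry of kernels and images, which are lines and points. Either all elements share a common image point with different kernels, giving $G_1$ by duality of the case, or the reverse, giving $G_2$. Otherwise all share the same kernel and the same image: then $N=t\,e_{12}$, and $V=\{0,e_{12},te_{12},(1+t)e_{12}\}$ with $t\notin\mathbb F_2$, which is $G_{3,t}$. Normalizing by the stabilizer acts on $t$ through $\GL(2,\mathbb F_2)\simeq S_3$ on the basis choices of $V$, which yields exactly the six listed values.

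For non-conjugacy in $\Aut(\PP^2)$, I use fixed-point loci: the invariants are the union of fixed lines and the common fixed set. For the birational statements, I conjugate $G_{3,t}$ to $G_1$ by an explicit quadratic or de Jonquières map preserving the pencil of lines through the fixed point. To show $G_1$ and $G_2$ are not birationally conjugate, I compare a birational invariant, namely whether some element fixes pointwise a curve: in $G_2$ every element fixes a line, and I would check that the pointwise-fixed curves behave differently under blow-ups. This last part I expect to be delicate, and I would try the invariant ``the group fixes a curve pointwise'' first.
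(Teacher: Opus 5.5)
There are two genuine gaps, one in each part.

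In part (1), your reduction does work when $L$ is a nontrivial unipotent group. Uniqueness of the fixed point of $L$ on $\PP^1$ forces the cocycle $r\mapsto m_r$ of a complement of $L$ in $G$ into the Borel subgroup of $\PGL(2,\kk(x))$. Multiplicative plus additive Hilbert 90 for $\kk(x)/\kk(x)^R$ then kill it. (This extension has group $(\ZZ/p)^b$ with $b$ arbitrary, so it is not cyclic.) For that case this is a genuinely different, Tsen-free argument.

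But when $L=\{I\}$, and in the non-split case $p=2$, nothing confines the cocycle, and you need $H^1(R,\PGL(2,\kk(x)))=1$ outright. This does not follow from Artin--Schreier or Hilbert 90. The obstruction lies in $H^2(R,\kk(x)^\times)$, a relative Brauer group. Its vanishing is the fact that $\kk(x)^R\cong\kk(u)$ is a $C_1$ field (L\"uroth plus Tsen), i.e.\ Lemma~\ref{cohomologie2}. Your phrase ``$\PGL_2$-torsors of the relevant type'' hides exactly this input, and ``the conic given by $g$'' is not an argument.

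The paper uses this vanishing to split off $R$ \emph{first} (Proposition~\ref{cohomologie0}). Then $L$ commutes with $\{I\}\times R$, so $L\subset\PGL(2,\kk(x)^R)$. Proposition~\ref{dublin1} applied over the field $\kk(x)^R$ then yields both normal forms at once, with $g,a,b\in\kk(x)^R$.

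In part (2), classifying via transvections $I+N$ with $N$ of rank one is a valid alternative to the paper's orbit argument, once you fill two small steps. Commutation in $\PGL(3,\kk)$ lifts to $N_1N_2=N_2N_1$, because a unipotent matrix is not conjugate to $c$ times itself for $c\neq 1$. For rank-one nilpotents this forces $N_1N_2=0$, which is what makes the group equal to $\{I+N : N\in V\}$.

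Your labels are swapped: a common centre with distinct axes is $G_2$, and a common axis with distinct centres is $G_1$. Fixed loci also do not separate $G_1$ from $G_{3,t}$: in both groups the three involutions share one axis, which is the whole common fixed set. Use the centres instead (three versus one), or, as the paper does, the orbit of four points in general position.

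The serious gap is $G_1\not\sim G_2$, because your invariant is not birational. Every element of $G_1$ and of $G_2$ fixes a line. Moreover $G_2$ acts trivially on $T_{[1:0:0]}\PP^2$, so after blowing up $[1:0:0]$ the conjugate group $K_2\subset\Aut(\mathbb{F}_1)$ of Lemma~\ref{klein10001} fixes the exceptional curve pointwise, just as $G_1$ fixes a line. Only non-rational fixed curves are birational invariants (Lemma~\ref{ultimatelemma3}). Both groups have fixed points, so the fixed-point criterion used when $\car(\kk)\neq 2$ cannot separate them either.

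The paper instead follows every Sarkisov link out of $(G_2,\PP^2)$ (Proposition~\ref{klein1000}):
\begin{itemize}
\item the link to $dP_5$ needs an orbit of four points in general position, which $G_2$ lacks;
\item base-preserving links starting from $K_2$ return to $K_2$ (Lemma~\ref{klein10000});
\item on $\mathbb{F}_0$ only $R_2$ occurs, and coming back to $\PP^2$ requires an orbit of size two, which $G_1$ and $G_{3,t}$ lack (Lemma~\ref{klein10007}).
\end{itemize}
Finally, you still have to exhibit the conjugacy $G_1\sim G_{3,t}$, e.g.\ $\omega_t\circ\kappa_t$.
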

\bigskip
\bigskip

\begin{remark} $ $

When the characteristic of the field is $p$, the de Jonquières group has subgroups isomorphic to $(\mathbb{Z}/p)^r$ for any $r \in \mathbb{N}$.
\end{remark}
\bigskip
\bigskip

\begin{remark} $ $

The two subgroups $G_1$ and $G_2$ defined in Theorem \ref{theorem3} Assertion \ref{theorem32} are both Klein subgroups of the Cremona group, conjugate to subgroups of de Jonquières group, both having no non-trivial element fixing a non-rational curve, and each of them acts with fixed points. The fixed points then do not determine the conjugacy class in that case, contrary to the case $\car(\kk) \neq p = 2$.
\end{remark}

\newpage

\section{Fundamental lemmas on subgroups of the Cremona group}
\label{appendix0} $ $

The study of finite subgroups of the Cremona group relies on the ability to realize these purely algebraic objects as groups of automorphisms of smooth rational surfaces.
In this section, we recall the fundamental definitions and results regarding minimal pairs and finite subgroups of the Cremona group. In particular we state the important Proposition \ref{prop1} which justifies the split of our study of finite subgroups between Part \ref{part2} and Part \ref{part3}.

\subsection{Minimality of finite subgroups with Picard rank $1$} $ $

\begin{defi} $ $
\noindent Let $S$ be a rational surface and let $G$ be a subgroup of the automorphism group $\Aut(S)$.
We say that the pair $(G,S)$ is minimal if any $G$-equivariant birational morphism 
$\phi \colon S \rightarrow S'$ such that $\phi G \phi^{-1} \subset \Aut(S')$ is an isomorphism.
\end{defi}

\begin{prop} $ $
Let $S$ be a del Pezzo surface and let $G \subset  \Aut(S)$ be a finite subgroup.
If $\rank(\Pic(S)^G) =1$, then $(G,S)$ is minimal.
\end{prop}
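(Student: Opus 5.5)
The plan is to show directly that any $G$-equivariant birational morphism $\phi\colon S\to S'$ with $\phi G\phi^{-1}\subset \Aut(S')$ contracts no curve. Such a $\phi$ is a birational morphism of smooth projective surfaces, so if it is not an isomorphism it factors as a composition of blow-ups of points. Its exceptional locus is then a non-empty union of curves, and among them is at least one $(-1)$-curve $E$: take any irreducible component of the exceptional locus of the last blow-up in the factorization, pulled back to $S$. I will assume $\phi$ is not an isomorphism and derive a contradiction.

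First, the exceptional locus of $\phi$ is $G$-invariant. Indeed, for $g\in G$ we have $\phi\circ g = g'\circ\phi$ with $g'\in\Aut(S')$, so $g$ maps curves contracted by $\phi$ to curves contracted by $\phi$. Consider the finite $G$-orbit $E_1=E,\dots,E_m$ of components of the exceptional locus. Its class $D=\sum E_k$ lies in $\Pic(S)^G$, and it is nonzero since $D\cdot A>0$ for an ample divisor $A$.

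Next, use the hypothesis $\rank(\Pic(S)^G)=1$. Since $S$ is del Pezzo, $-K_S$ is ample and $G$-invariant, so $\Pic(S)^G\otimes\QQ=\QQ K_S$. Hence $D\equiv \lambda(-K_S)$ with $\lambda>0$, because $D\cdot(-K_S)>0$. It follows that $D^2=\lambda^2K_S^2>0$, as $K_S^2\ge 1$ for a del Pezzo surface.

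Finally, contradict this with negative definiteness. The curves contracted by a birational morphism span a subspace of $\Pic(S)\otimes\QQ$ on which the intersection form is negative definite. This holds because the pullbacks of exceptional divisors of the successive blow-ups give an orthogonal basis with squares $-1$, and each contracted curve is a combination of them. So $D^2<0$, contradicting $D^2>0$. Therefore $\phi$ contracts nothing and is an isomorphism.

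There is one point needing care. Rather than working with the orbit of a single $(-1)$-curve, it is cleaner to take $D$ to be the sum of all components of the exceptional locus, which is $G$-invariant by the first step. Then only the negative definiteness fact for exceptional curves of a birational morphism is needed, which I regard as the main technical input, and it is standard (Mumford/Grauert, or directly from the blow-up factorization).
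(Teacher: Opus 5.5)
Your proposal is correct and follows essentially the paper's argument: you build a nonzero $G$-invariant divisor $D$ supported on the exceptional locus, note that rank one forces $D$ to be a positive rational multiple of $-K_S$, hence $D^2>0$, and contradict the negativity of contracted curves. The only difference is that the paper takes the $G$-orbit of a single $(-1)$-curve and asserts, without proof, that distinct contracted $(-1)$-curves are disjoint, which gives $D^2=-r$; you instead invoke negative definiteness of the intersection form on contracted curves directly, and that fact is exactly what justifies the paper's disjointness claim.
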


\begin{proof} $ $
By contradiction, let's assume that $(G,S)$ is not minimal.
Then there exists a $G$-equivariant morphism $\phi \colon S \to S'$
to a smooth surface $S'$ with 
$\phi G \phi^{-1} \subseteq \Aut(S')$. Let $E$ be a $(-1)$-curve contracted by $\phi$. Then 
for all $g \in G$ the $(-1)$-curve $gE$ is contracted by $\phi$ as well.
Now, for $g, g' \in G$ the $(-1)$-curves $gE$, $g'E$ are either disjoint or they coincide.
Hence, there exists a non-zero $G$-invariant divisor $D = \sum_{i=1}^r E_i \in \Pic(S)^G$ which is a sum of disjoint  $(-1)$-curves $E_i$. 
We write $D = \dfrac{a}{b} K_S$ where $a,b \neq 0$ are integers. Then we can produce the 
following contradiction
$0 \geq  - r b^2 = (b D)^2 = (a K_S)^2 = a^2 {K_S}^2 \geq 1$.
\end{proof}

\subsection{A dichotomy result for minimal pairs} $ $

\begin{defi} $ $
\noindent Let $S$ be a rational surface.
Let $\pi : S \rightarrow \mathbb{P}^1$ be a morphism. We say that $(S,\pi)$ is a conic bundle if a general fibre of $\pi$ is isomorphic to $\mathbb{P}^1$ and if there is a finite number of exceptions: these singular fibres are the union of rational curves $F$ and $F'$ such that $F^2 = {F'}^2 = -1$ and $F F' = 1$.
\end{defi}

\begin{prop} \label{prop1} $ $
\begin{enumerate}[left=0pt]
\item \label{Existence} Every finite subgroup of $\Cr$ is conjugate to a subgroup
of the automorphism group of a smooth rational surface.

\item \label{Uniqueness} Let $S$ be a smooth rational surface and let $G$ be a 
finite subgroup of $\Aut(S)$ such that 
$(G,S)$ is minimal.
Then one of the following holds:
\begin{itemize}[left=0pt]
\item $\rank(\Pic(S)^G) = 1$ and the surface $S$ is a del Pezzo surface. Furthermore, if $S$ is not isomorphic to $\mathbb{P}^2$ or $\mathbb{P}^1 \times \mathbb{P}^1$, then $\deg(S)$ divides the order of $G$.
\item $\rank(\Pic(S)^G) = 2$ and the surface $S$ has a conic bundle structure invariant by $G$.
\end{itemize}
\end{enumerate}
\end{prop}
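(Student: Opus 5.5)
The plan follows the classical route of Manin and Iskovskikh, in the form given by Dolgachev--Iskovskikh.

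\emph{Assertion (1).} Let $G\subset\Cr$ be finite and choose a birational map $\phi\colon\PP^2\dashrightarrow\PP^2$ with $G\subset\Bir(\PP^2)$. We regularise the action. Consider the field $\kk(\PP^2)$ with its $G$-action; a standard choice is a projective smooth model on which $G$ acts regularly. To build it, take the normalisation of the graph of the finite set of maps $\{g\}_{g\in G}$. Concretely, let $Y\subset\prod_{g\in G}\PP^2$ be the closure of the image of $x\mapsto (g(x))_{g\in G}$. The group $G$ acts on $Y$ by permuting the factors, so it acts regularly, and $Y$ is birational to $\PP^2$. Next apply $G$-equivariant resolution of singularities for surfaces. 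This is available in any characteristic: normalise, then alternately blow up the (finite, $G$-stable) singular locus and normalise, using Lipman's theorem, which is canonical and hence equivariant. The result is a smooth projective rational surface $S$ with $G\subset\Aut(S)$, conjugate to the original group.

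\emph{Assertion (2), reduction to Mori theory.} Let $(G,S)$ be minimal. We run the $G$-equivariant minimal model program on $S$, working with the $G$-invariant part of the cone of curves. Because $S$ is rational, $K_S$ is not nef, so there is a $K_S$-negative extremal ray $R$ of $\overline{NE}(S)^G$. Mori's classification of extremal contractions of surfaces, carried out equivariantly (as in Dolgachev--Iskovskikh, or Manin's original treatment via $G$-orbits of $(-1)$-curves), gives three possibilities for its contraction.

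\textbf{Case 1.} The contraction is birational. It then contracts a $G$-orbit of pairwise disjoint $(-1)$-curves, and the target is smooth with a regular $G$-action. This contradicts minimality.

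\textbf{Case 2.} The contraction maps to a curve. Then $\rank\Pic(S)^G=2$, and the contraction is a $G$-equivariant morphism $\pi\colon S\to B$ whose general fibre is a smooth conic. Rationality of $S$ forces $B\simeq\PP^1$. The singular fibres are reduced chains of two $(-1)$-curves meeting at a point: minimality excludes configurations that would let us contract a $G$-orbit of components, and $K_S$-negativity excludes non-reduced fibres.

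\textbf{Case 3.} The contraction maps to a point. Then $\rank\Pic(S)^G=1$ and $-K_S$ is ample, because $-K_S$ is positive on the unique invariant ray and $\overline{NE}(S)$ is spanned by $G$-translates. Hence $S$ is del Pezzo.

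\emph{The divisibility claim.} Suppose $S$ is del Pezzo with $\rank\Pic(S)^G=1$ and $S\not\simeq\PP^2,\PP^1\times\PP^1$. Then $S$ is a blow-up of $\PP^2$ and $\Pic(S)=\ZZ^{10-d}$ with $d=K_S^2$. Take $E$ a $(-1)$-curve and let $D=\sum_{g\in G/\mathrm{Stab}(E)} gE$ be the sum over its orbit. The class $D$ is $G$-invariant, so $D=\lambda K_S$ for some $\lambda\in\QQ$. Intersecting with $K_S$ gives
\[
-|GE|=D\cdot K_S=\lambda d .
\]
Here we use $D=-(|GE|/d)K_S$. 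To get integrality, note that $K_S$ is primitive in $\Pic(S)$ when $S$ is a blow-up of $\PP^2$, because $K_S=-3H+\sum E_i$ and $K_S\cdot E_i=-1$. Since $D$ is an integral class and $\Pic(S)^G=\ZZ K_S$ (using rank one together with primitivity), $\lambda\in\ZZ$. Therefore $d$ divides $|GE|$, which divides $|G|$.

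\emph{Main obstacle.} The hardest step is the equivariant minimal-model dichotomy in arbitrary characteristic. One has to check carefully that the $G$-invariant extremal ray has one of the three types above, in particular that in Case 2 the fibration really is a conic bundle in the sense of the paper's definition. I would simply cite Dolgachev--Iskovskikh's Theorem 3.8, whose proof is characteristic-free since it uses only Mori's cone theorem for surfaces and intersection theory.
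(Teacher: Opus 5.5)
Your proposal is correct. The paper gives no argument of its own here. It refers (1) to de Fernex--Ein (Theorem~1.4) or Blanc's thesis (Proposition~2.2.3), and it declares (2) a reformulation of Lemma~4.2.4 and Proposition~2.3.1 of Blanc's thesis.

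What you wrote reconstructs the standard arguments behind those references:
\begin{itemize}
\item regularisation, by taking the closure of the graph of $x\mapsto (g(x))_{g\in G}$ and applying Lipman's canonical, hence $G$-equivariant, resolution;
\item the $G$-equivariant trichotomy for a $K_S$-negative extremal ray of $\overline{\mathrm{NE}}(S)^G$;
\item for the divisibility, the orbit sum $D=\sum_{E'\in GE}E'=-\frac{|GE|}{d}K_S$ together with the primitivity of $K_S$ in $\Pic(S)$.
\end{itemize}
The citation route is shorter. Yours shows that each ingredient (Lipman's theorem, the cone theorem and Castelnuovo's contraction for surfaces, and a lattice computation) is valid over any algebraically closed field. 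This matters because Blanc's thesis works over $\mathbb{C}$, while the paper uses the proposition in arbitrary characteristic. Note, however, that for the dichotomy itself you again defer to Dolgachev--Iskovskikh, who also work over $\mathbb{C}$. So characteristic-independence is asserted there rather than verified.

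If you want the sketch to stand on its own, three justifications should be tightened.

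\emph{Case~2.} The shape of the fibres comes from the contracted ray being $K_S$-negative, not from minimality. Every fibre component $C$ has $K_S\cdot C<0$, so $-K_S\cdot F=2$ leaves only three possibilities: $F$ integral, $F=2C$, or $F=C_1+C_2$.
\begin{itemize}
\item If $F=2C$, then $F^2=0$ gives $C^2=0$, so $K_S\cdot C+C^2=-1$ is odd, contradicting adjunction.
\item If $F=C_1+C_2$, Zariski's lemma and adjunction give $C_i^2=K_S\cdot C_i=-1$, and then $F^2=0$ gives $C_1\cdot C_2=1$.
\end{itemize}

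\emph{Case~3.} Ampleness of $-K_S$ follows cleanly from Kleiman's criterion. For $z\in\overline{\mathrm{NE}}(S)\setminus\{0\}$, the average $\bar z=\frac{1}{|G|}\sum_{g}g_*z$ is non-zero, since it meets a $G$-invariant ample class positively. It lies on the unique invariant ray, and $K_S\cdot z=K_S\cdot\bar z<0$.

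\emph{Integrality.} This step needs no separate saturation argument. $\Pic(S)^G\simeq\ZZ$ is generated by some $v$ with $K_S=mv$, and primitivity of $K_S$ in $\Pic(S)$ forces $m=\pm1$.
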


\begin{proof} $ $

    \eqref{Existence}: This can be found in \cite[Theorem~1.4]{FeEi2002Resolution-of-inde}
    or \cite[Proposition~2.2.3]{blancthesis}.

    \eqref{Uniqueness}:
    This is a reformulation of \cite[Lemma 4.2.4]{blancthesis} and \cite[Proposition 2.3.1]{blancthesis}.
\end{proof}

\subsection{Conservation of non-rational curves by birational maps} $ $

The following Lemma \ref{ultimatelemma3} is used only in the proof of Theorem \ref{theorem2}.

\begin{lemme} \label{ultimatelemma3} $ $
Let $G, G' \subset \Cr$ be two subgroups. We assume they are conjugate by a birational map.
The subgroup $G$ contains a non-trivial element fixing a non-rational curve if and only if the subgroup $G'$ contains a non-trivial element fixing a non-rational curve.
\end{lemme}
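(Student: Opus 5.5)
By symmetry it suffices to prove one direction. Assume $G' = \phi G \phi^{-1}$ for a birational map $\phi$. Assume also that some $g \in G \setminus \{1\}$ fixes pointwise a non-rational curve $C$. Set $g' = \phi g \phi^{-1}$. We want to produce a non-rational curve fixed pointwise by $g'$.

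The natural invariant to use is the set of curves of positive geometric genus fixed pointwise by an element, considered up to birational equivalence. The key steps are as follows.

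\textbf{Step 1 (reduction to a regular action).} Apply Proposition \ref{prop1}\eqref{Existence} to the finite cyclic group $\langle g\rangle$; if $g$ has infinite order, use the finite group generated by $G$ when $G$ is finite, which is the case of interest here. This realizes the group as automorphisms of a smooth rational surface. Concretely, choose a smooth projective rational surface $S$ and a birational map $\psi\colon S \dashrightarrow \PP^2$ such that $\hat g = \psi^{-1} g \psi \in \Aut(S)$. Do the same for $g'$ with a surface $S'$ and a map $\psi'$. Then $\chi = \psi'^{-1}\phi\psi\colon S \dashrightarrow S'$ is birational and satisfies $\chi \hat g = \hat g' \chi$.

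\textbf{Step 2 (transferring the curve to $S$).} The curve $C \subset \PP^2$ is not contained in the finitely many curves contracted by $\psi^{-1}$, since contracted curves are rational. Hence its strict transform $\hat C \subset S$ is a curve birational to $C$, so it is non-rational. It is pointwise fixed by $\hat g$, because $\hat g$ agrees with $g$ on a dense open subset of $\hat C$.

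\textbf{Step 3 (main step).} Factor $\chi$ as $\chi = \sigma' \circ \sigma^{-1}$, where $\sigma\colon Z \to S$ and $\sigma'\colon Z \to S'$ are birational morphisms from a smooth surface $Z$. A birational morphism is a composition of point blow-ups, and every exceptional curve is rational. Therefore the strict transform of $\hat C$ in $Z$ is not contracted by $\sigma'$. So $\hat C$ is not contained in the exceptional or indeterminacy locus of $\chi$, and its image $\hat C' = \overline{\chi(\hat C)}$ is a curve birational to $\hat C$, hence non-rational.

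On a dense open subset $U \subset \hat C$ where $\chi$ is defined and is a local isomorphism, we have $\hat g'(\chi(x)) = \chi(\hat g(x)) = \chi(x)$. So $\hat g'$ fixes a dense subset of $\hat C'$, and since it is a morphism, it fixes all of $\hat C'$. Applying Step 2 in reverse for $\psi'$ gives a non-rational curve in $\PP^2$ fixed pointwise by $g'$. Finally, $g' \neq 1$ because conjugation is injective.

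\textbf{Main obstacle.} The only point requiring care is the statement that every curve contracted by a birational morphism of smooth surfaces is rational. This follows from factoring such a morphism into blow-ups of points, where each exceptional curve is a $\PP^1$ and the strict transforms of earlier exceptional curves remain rational. A cleaner alternative avoids choosing regularizations altogether. A birational map of $\PP^2$ is a local isomorphism outside finitely many curves, and all of these curves are rational, since they are contracted by the map or by its inverse. So $\phi$ maps $C$ birationally onto a non-rational curve $C'$, and the identity $g'\phi = \phi g$ on a dense open subset of $C$ shows that $g'$ fixes $C'$. I would present this second argument, using the fact about contracted curves as the one cited ingredient.
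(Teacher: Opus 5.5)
Your second argument is correct and is essentially the paper's proof, which consists only of the observation that a birational map sends a non-rational curve to a non-rational curve. You spell out usefully why this holds: every curve contracted by $\phi$ or $\phi^{-1}$ is rational, so $C$ meets the locus where $\phi$ is a local isomorphism, and the fixed-point relation transfers to $\overline{\phi(C)}$ by density.

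You are right to drop the regularization route of Steps 1--3. Its treatment of infinite-order $g$ is muddled, since Proposition~\ref{prop1} only applies to finite groups.
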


\begin{proof} $ $
The image of a non-rational curve by a birational map is still a non-rational curve.
\end{proof}

\newpage
\textbf{Organization} 

\bigskip\bigskip\bigskip\bigskip

\begin{enumerate}[label=\textbf{(\arabic*)}, leftmargin=2em, labelsep=1em]
\item \hspace*{1.4em} Part \ref{part1} is dedicated to presenting the main results and providing some preliminary lemmas that we will use at various places in the text. In particular, we state Proposition \ref{prop1} that allows us to reduce the work to the study of subgroups of the de Jonquières group (Part \ref{part2}) and subgroups of automorphisms of del Pezzo surfaces (Part \ref{part3}). 
\bigskip \bigskip

\item \hspace*{1.4em} Part \ref{partnew} serves as a technical preamble, gathering essential results on subgroups of $\PGL(n,\KK)$. By isolating these preliminary results, we establish a robust framework that simplifies the classification of subgroups in the subsequent parts. This self-contained part acts as a reference for the proofs in futures parts. \bigskip \bigskip

\item \hspace*{1.4em} Part \ref{part2} is devoted to the study of $p$-elementary subgroups of the de Jonquières group up to conjugation in this group. This requires various tools including cohomology and linear algebra. A part of this study is also done in the case where the characteristic of the field is equal to $p$. \bigskip \bigskip

\item \hspace*{1.4em} Part \ref{part3} explores the $p$-elementary subgroups of automorphisms of some del Pezzo surfaces. The reader can observe that we don't study all del Pezzo surfaces, rather we focus on del Pezzo surfaces of degree lower or equal than $4$, and on the Hirzebruch surface $\mathbb{P}^1 \times \mathbb{P}^1$. That is because we don't need the results about the other del Pezzo surfaces for our study. Examples of tools used in this part are the Riemann-Hurwitz formula, the Picard rank formula, and group actions. \bigskip \bigskip

\item \hspace*{1.4em} Part \ref{part4} investigates potential conjugations between the representatives established in Parts \ref{part2} and \ref{part3}, primarily through the Iskovskikh-Sarkisov program. Our analysis demonstrates that this list is remarkably robust, 
as we identify only one instance of conjugation between distinct families; this corresponds to
\hyperlink{Amazing}{\textbf{\textit{the amazing result}}} (see Subsection \ref{amazingandsinequanon})

\bigskip \bigskip

\item \hspace*{1.4em} Part \ref{part5} is entirely dedicated to the case where $\car(\kk)=p$. More specifically, we study some $2$-elementary subgroups of $\Aut(\mathbb{P}^2)$ when the characteristic of the field is $2$. This allows us to have a first impression of what can happen when $\car(\kk)=p$, and observe some major differences with the case where $\car(\kk) \neq p$.
This part is independent of Parts \ref{part2}, \ref{part3} and \ref{part4}. \bigskip \bigskip

\item \hspace*{1.4em} In Part \ref{part6} we summarize all intermediate steps from previous parts in order to prove the three theorems. \bigskip \bigskip

\item \hspace*{1.4em} 
Finally, Part \ref{part7} consists of three appendices. 
In Appendix \ref{bratac0} we prove a group-theoretic result used in our classification.
Appendix \ref{appendixB} provides some results on automorphisms of del Pezzo surfaces; it notably covers the embedding of automorphism group for degrees $1$, $2$ and $3$, as well as some properties of Geiser and Bertini involutions, relying on cohomological methods. 
Finally Appendix \ref{appendixA} establishes a formula for the rank of the invariant Picard group, a recurring tool in our study. 
\end{enumerate}

\newpage

\begin{center}
    \begin{minipage}{0.85\textwidth}
        \centering
        \itshape

\vspace{30mm}

This article is the PhD thesis of the author, which was done under the supervision of Jérémy Blanc and Immanuel van Santen at the University of Basel.

        \vspace{24mm}
        
    The author is grateful to Jean-Pierre Serre for his careful reading of the introduction and for his valuable corrections.
        
    \end{minipage}
\end{center}


\newpage

\part{Subgroups of $\PGL(n, \KK)$} \label{partnew} $ $

\bigskip\bigskip

This part is dedicated to presenting and proving several results concerning subgroups of the group $\PGL(n, \KK)$ that will be used throughout this article. As suggested by Proposition  \ref{prop1}, the classification of finite subgroups of the Cremona group requires a preliminary study of finite subgroups of the de Jonquières group and of finite subgroups of automorphism groups of del Pezzo surfaces. Since these groups are closely related to subgroups of $\PGL$, their study is crucial. To ensure a focused presentation, we regroup all necessary preliminary results in this part. The reader may skip this tedious part on a first reading.
\\


We remind the reader that $\KK \supseteq \kk$ is a field extension. This part is organized as follows:

\begin{itemize}
\item We start this part with an easy case : Section \ref{21new} is dedicated to the classification of $3$-elementary non-cyclic subgroups of $\PGL(3,\kk)$ in characteristic not $3$.

\item In Section \ref{22new} we tackle a more difficult case : The study of $p$-elementary subgroups of $\PGL(2,\KK)$ in any characteristic. This will be used for the study of de Jonquières subgroups (Part \ref{part2}).

\item In Section \ref{23new} we give a thorough study of two specific cases used for the study of subgroups of automorphisms of quartic and cubic del Pezzo surfaces (Section \ref{quarticsection} and Section \ref{cubicsection}).

\item We also include Section \ref{24new}, which contains independent results regarding the case where $\car(\KK) = p$, although they are not used in this study.
\end{itemize}

\bigskip\bigskip

We start this part by presenting a general result, stated in Proposition \ref{prop3}:

\begin{prop} \label{prop3} $ $

Let $n \in \mathbb{N}_{\geq 1}$. We assume $p$ does not divide $n$ and $p \neq \car(\kk)$.
Let $G$ be a $p$-elementary subgroup of $\PGL(n,\kk)$.
Then $G$ is conjugate to a subgroup of the diagonal torus of $\PGL(n,\kk)$.
\end{prop}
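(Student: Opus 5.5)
The plan is to lift $G$ to a finite abelian group of commuting diagonalizable matrices in $\GL(n,\kk)$ and then diagonalize simultaneously. Let $\pi \colon \GL(n,\kk) \to \PGL(n,\kk)$ be the projection. For each $g \in G$, choose a lift $A \in \GL(n,\kk)$. Since $g^p = 1$, we have $A^p = \lambda I$ for some $\lambda \in \kk^*$. Because $\kk$ is algebraically closed, we may rescale $A$ by a $p$-th root of $\lambda^{-1}$ and assume $A^p = I$.

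The key input is that $p \nmid n$. Replacing $A$ by $\mu A$ with $\mu^p = 1$ changes $\det A$ by $\mu^n$. As $\mu$ ranges over the $p$-th roots of unity (all $p$ of them exist in $\kk$, since $\car(\kk)\neq p$), the map $\mu \mapsto \mu^n$ is a bijection of $\mu_p$, because $\gcd(n,p)=1$. Also $(\det A)^p = 1$. So there is a unique lift $\tilde g$ with $\tilde g^p = I$ and $\det \tilde g = 1$.

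Next I check that $g \mapsto \tilde g$ defines a group homomorphism $G \to \mathrm{SL}(n,\kk)$. For $g,h \in G$, the element $\tilde g \tilde h$ lifts $gh$ and has determinant $1$. Since $G$ is abelian, $\tilde g$ and $\tilde h$ commute up to a scalar: $\tilde g\tilde h = c\,\tilde h \tilde g$. Taking determinants gives $c^n = 1$. Conjugating $\tilde h$ by $\tilde g$ $p$ times returns $\tilde h$, which gives $c^p = 1$. Hence $c = 1$, as $\gcd(n,p)=1$. So the lifts commute, and then $(\tilde g\tilde h)^p = \tilde g^p \tilde h^p = I$. By uniqueness, $\tilde g \tilde h = \widetilde{gh}$.

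This commutation step, which ensures the lift is an honest abelian group and not merely a projective representation, is the main obstacle. It is exactly where the hypothesis $p \nmid n$ enters. Without it one gets the Heisenberg-type counterexamples, such as family \ref{B} in $\PGL(3,\kk)$ when $p=3$.

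Finally, $\tilde G = \{\tilde g\}$ is a finite abelian subgroup of $\GL(n,\kk)$ whose elements satisfy $X^p = I$. Since $\car(\kk) \neq p$, the polynomial $X^p - 1$ has distinct roots in $\kk$, so each $\tilde g$ is diagonalizable. A family of pairwise commuting diagonalizable matrices is simultaneously diagonalizable: decompose $\kk^n$ into eigenspaces of one element, note each is preserved by the others, and induct. Thus there is $P \in \GL(n,\kk)$ with $P\tilde G P^{-1}$ diagonal. Projecting to $\PGL(n,\kk)$, $\pi(P) G \pi(P)^{-1}$ lies in the diagonal torus, which proves the proposition.
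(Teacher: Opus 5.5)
Your proof is correct. It is essentially the argument the paper defers to (the proof of Beauville's Lemma 3.1): lift to $\mathrm{SL}(n,\kk)$, observe that commutators of lifts are scalars lying in $\mu_n \cap \mu_p = \{1\}$ because $p \nmid n$, and then simultaneously diagonalize. Your normalization $\tilde g^p = I$, $\det \tilde g = 1$ produces an honest section rather than the full preimage, which is a tidy way to get diagonalizability directly from the separability of $X^p - 1$.
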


\begin{proof} $ $
This follows from the proof of \cite[Lemma 3.1]{beauville}.
\end{proof}

\newpage

\section{The $3$-elementary subgroups of $\PGL(3,\kk)$ when $\car(\kk) \neq 3$} \label{21new} $ $

We assume $\car(\kk) \neq p = 3$ in this section.
The goal of this section is to classify the $3$-elementary non-cyclic subgroups of $\PGL(3,\kk)$ in the case of characteristic not equal to $3$. The result of the section is Proposition \ref{p21}. We first give the classification of subgroups isomorphic to $\mathbb{Z}/3$ in the following Lemma \ref{lemmep2}, which will be required for the classification of $3$-elementary subgroups.

\bigskip

\subsection{Subgroups of $\PGL(3,\kk)$ isomorphic to $\ZZ/3$}

\begin{lemme} \label{lemmep2} $ $
Let $G$ be a subgroup of $\PGL(3,\kk)$ isomorphic to $\mathbb{Z}/3$.

Then $G$ is conjugate to one of the two following groups: $$\langle \left[\begin{smallmatrix}  1 & 0 & 0   \\
 0  &    j & 0   \\
0  & 0 & j^2
\end{smallmatrix} \right] \rangle, \quad \langle \left[\begin{smallmatrix}  1 & 0&  0  \\
0  &    1 & 0  \\
 0 & 0 & j
\end{smallmatrix} \right] \rangle$$
\end{lemme}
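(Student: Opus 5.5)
Throughout we assume, as in this section, that $\car(\kk)\neq 3$. Since $3$ divides $n=3$, Proposition \ref{prop3} does not apply, so the plan is to work directly with a lift of a generator to $\GL(3,\kk)$ and use that $\car(\kk)\neq 3$ makes $x^3-c$ separable. This forces the lift to be diagonalizable, and a short analysis of eigenvalues finishes.

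First, let $g$ generate $G$ and choose a matrix $A\in\GL(3,\kk)$ representing $g$. Since $g^3=1$ in $\PGL(3,\kk)$, we have $A^3=c\,I$ for some $c\in\kk^\times$. Because $\kk$ is algebraically closed, we may pick $\lambda$ with $\lambda^3=c$ and replace $A$ by $\lambda^{-1}A$; this gives $A^3=I$. Since $\car(\kk)\neq 3$, the polynomial $x^3-1$ has the three distinct roots $1,j,j^2$. Hence $A$ is annihilated by a separable polynomial and is therefore diagonalizable. After conjugating in $\GL(3,\kk)$, which induces a conjugation in $\PGL(3,\kk)$, we get $A=\mathrm{diag}(a_1,a_2,a_3)$ with each $a_k\in\{1,j,j^2\}$.

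Next, we normalize the eigenvalue multiset. Multiplying $A$ by a scalar in $\{1,j,j^2\}$ and permuting coordinates (permutation matrices lie in $\PGL(3,\kk)$) do not change the conjugacy class of $G$. Since $g\neq 1$, the $a_k$ are not all equal. There are then two cases.
\begin{enumerate}
\item[(i)] The three $a_k$ are pairwise distinct. Then $\{a_1,a_2,a_3\}=\{1,j,j^2\}$, and a permutation gives $\mathrm{diag}(1,j,j^2)$.
\item[(ii)] Exactly two of the $a_k$ coincide. After a permutation, $A=\mathrm{diag}(a,a,b)$ with $b/a\in\{j,j^2\}$. Scaling by $a^{-1}$ gives $\mathrm{diag}(1,1,j)$ or $\mathrm{diag}(1,1,j^2)$.
\end{enumerate}
In case (ii), $\mathrm{diag}(1,1,j^2)$ is the square of $\mathrm{diag}(1,1,j)$, so the two matrices generate the same subgroup. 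Both cases therefore yield one of the two groups in the statement.

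The only real obstacle is the diagonalizability step, which is exactly where the hypothesis $\car(\kk)\neq 3$ is needed. In characteristic $3$ a unipotent Jordan block would give an order-$3$ element that is not diagonalizable. Everything else is bookkeeping of scalars and permutations.

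The two groups are not conjugate to each other: a generator of the second one has a projective line of fixed points, while no nontrivial element of the first one does. The statement only asserts that $G$ is conjugate to one of the two groups, so this observation is not needed for the proof.
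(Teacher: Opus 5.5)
Your proof is correct and takes essentially the same route as the paper. The paper only says the lemma follows from the multiplicities of the eigenvalues of order-$3$ elements, and you supply that argument in full: normalize a lift to $A^3=I$, diagonalize using the separability of $x^3-1$ when $\car(\kk)\neq 3$, then split into the multiplicity patterns $(1,1,1)$ and $(2,1)$.
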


\begin{proof} $ $
It follows directly from the study of the multiplicities of the eigenvalues of elements of order 3 of $\PGL(3,\kk)$.
\end{proof}

\bigskip

\subsection{Subgroups of $\PGL(3,\kk)$ isomorphic to $(\ZZ/3)^r$, $r > 1$}

\begin{prop} \label{p21} $ $
Let $G$ be a $3$-elementary non-cyclic subgroup of $\PGL(3,\kk)$. \newline
Then $G \simeq (\ZZ/3)^2$ and $G$ is conjugate in $\PGL(3,\kk)$ to one of the two following groups: $$\langle \left[\begin{smallmatrix}  1 & 0 & 0   \\
  0 &    j & 0 \\
  0 & 0 & j^2
\end{smallmatrix} \right]  ,\left[\begin{smallmatrix}  0 & 1 &  0 \\
 0 &  0   & 1 \\
1 &  0 & 0 
\end{smallmatrix} \right] \rangle , \quad \langle \left[\begin{smallmatrix}  1 & 0 & 0  \\
 0 &    j & 0 \\
 0 & 0 & j^2
\end{smallmatrix} \right]  , \left[\begin{smallmatrix}  1 &  0& 0   \\
 0 &    1 & 0 \\
 0 &  0 & j
\end{smallmatrix} \right] \rangle$$
Furthermore, these two groups are not conjugate by birational map.
\end{prop}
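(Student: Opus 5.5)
Let $G\subset \PGL(3,\kk)$ be $3$-elementary and non-cyclic. I would split according to whether $G$ lifts to an abelian subgroup of $\GL(3,\kk)$ or $\mathrm{SL}(3,\kk)$.

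\textbf{Reduction via lifts.} Take $g,h\in G$ with lifts $A,B\in \mathrm{SL}(3,\kk)$, which exist because $\kk$ is algebraically closed. Since $gh=hg$ in $\PGL$, we have $ABA^{-1}B^{-1}=\lambda I$ with $\lambda^3=\det=1$. So the commutator pairing $G\times G\to \mu_3$ is well defined, bilinear and alternating. There are two cases.

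If the pairing is trivial, the preimage of $G$ in $\mathrm{SL}(3,\kk)$ is abelian. Each lift $A$ satisfies $A^3$ scalar, and since $\car\kk\neq 3$ it is diagonalizable. Commuting diagonalizable matrices are simultaneously diagonalizable, so $G$ lies in the diagonal torus. The $3$-torsion of the torus of $\PGL(3,\kk)$ is $(\ZZ/3)^2$, so $G$ is that whole group. It contains $\operatorname{diag}(1,1,j)$ and $\operatorname{diag}(1,j,j^2)$, which gives the second representative.

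If the pairing is nontrivial, choose $g,h$ with $ABA^{-1}B^{-1}=jI$ (replacing $h$ by $h^2$ if needed). Diagonalize $A$. The relation $BAB^{-1}=j^{-1}A$ forces the eigenvalues of $A$ to form a full $\langle j\rangle$-orbit, so $A\sim\operatorname{diag}(c,cj,cj^2)$. Then $B$ permutes the eigenspaces cyclically, $B$ is monomial, and conjugating by a diagonal matrix turns $B$ into a scalar multiple of the cyclic permutation matrix. This gives the first representative.

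\textbf{Bounding the rank.} The bound $r\le 2$ comes from the fact that the $\PGL$-centralizer of the Heisenberg pair $\langle g,h\rangle$ is $\langle g,h\rangle$ itself. To see this, note that an element commuting with $g$ in $\PGL$ has a lift $C$ with $CAC^{-1}=\mu A$, with $\mu$ a cube root of unity. Multiplying $C$ by a suitable power of $B$ makes $C$ commute with $A$, so $C$ is diagonal. The same argument with $B$ then forces this diagonal $C$ to be a power of $A$, up to scalars. In the diagonal case the rank is at most $2$ by the torus computation. So $G\simeq(\ZZ/3)^2$ in both cases.

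\textbf{Birational non-conjugacy.} This is the main obstacle, since conjugacy in $\PGL$ is not enough. I would use an invariant preserved by birational conjugation. The diagonal group has fixed points: $\operatorname{diag}(1,1,j)$ fixes a line pointwise, and every element of the group fixes $[1:0:0]$. The Heisenberg group has no element fixing a curve pointwise, because every nontrivial element has three distinct eigenvalues. Fixed curves alone are not a birational invariant, since a fixed line is rational, so I would instead use the fixed-point property for abelian groups. In characteristic $\ne 3$, a result of Kollár–Szabó type says that having a fixed point on a smooth projective model is a birational invariant for abelian groups. The diagonal group fixes $[1:0:0]$. For the Heisenberg group, any fixed point of $g$ is one of the three coordinate points, and $h$ permutes these cyclically, so there is no fixed point.

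If I want to avoid citing that result, I would instead compare the isomorphism classes of the groups' actions via Blanc's cyclic classification applied to each element, or more directly with the following finer argument. For a fixed point of a linearly reductive abelian group, one blows up and uses the cohomological criterion that $H^1(G,\Pic(S))$ is a birational invariant (Bogomolov–Prokhorov). For the diagonal torus group it is $0$, since $\PP^2$ with a linear action in which a Picard-rank computation on a toric model is trivial. For the Heisenberg group I would then compute $H^1(G,\Pic)$ on a $G$-equivariant model, which I expect to be nonzero, or simply fall back on the fixed-point invariant.
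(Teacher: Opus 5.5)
Your proof is correct. For the classification you take a genuinely different route from the paper; for the birational non-conjugacy you use the same argument as the paper.

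\textbf{Classification.} The paper first classifies cyclic subgroups of order $3$ of $\PGL(3,\kk)$ by eigenvalue multiplicities, giving the two types $[1:j:j^2]$ and $[1:1:j]$. It then computes the centralizer of each type explicitly: the diagonal torus extended by a $3$-cycle in the first case, and a $\GL_2$-block in the second. A case analysis on a second generator follows. The paper finishes by noting that the diagonal group is centralized exactly by the diagonal torus, while the Heisenberg group is self-centralizing.

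You instead use the commutator pairing $G\times G\to\mu_3$ on $\mathrm{SL}(3,\kk)$-lifts. A trivial pairing gives simultaneous diagonalizability. A nontrivial pairing forces a Heisenberg pair, and your self-centralizing argument then bounds the rank.

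\textbf{What each route buys.} Your route is more conceptual. It shows that the pairing is exactly the obstruction to the diagonalization of Proposition~\ref{prop3} when $p=n$, and it carries over to $p$-elementary subgroups of $\PGL(p,\kk)$. The paper's route is more hands-on and builds on the cyclic classification with explicit centralizers.

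\textbf{Birational non-conjugacy.} Your main argument is exactly the paper's: the diagonal group fixes $[1:0:0]$, the Heisenberg group has no fixed point, and Koll\'ar--Szab\'o's fixed-point criterion for abelian groups concludes. That criterion is essential, because both fallbacks you mention would fail.

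\begin{itemize}
\item Both groups act on $\PP^2$ itself, where $\Pic=\ZZ$ with trivial action. So $H^1(G,\Pic)=\mathrm{Hom}(G,\ZZ)=0$ for both, and your expectation of a nonzero value for the Heisenberg group is wrong.
\item Elementwise invariants cannot help either. Every nontrivial element of either group is a linear automorphism of order $3$, and all of these are birationally conjugate. For example, $(x,y)\mapsto(x,xy)$ conjugates $(jx,j^2y)$ to $(jx,y)$, and $(x,y)\mapsto(x,y/x)$ conjugates $(j^2x,j^2y)$ to $(j^2x,y)$.
\end{itemize}
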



\begin{proof} $ $
Let $G$ be a 3-elementary non-cyclic subgroup of $\PGL(3,\kk)$.

According to Lemma \ref{lemmep2}, up to conjugation $G$ contains $\langle \left[\begin{smallmatrix}  1 & 0 & 0   \\
 0 &    j & 0 \\
 0 & 0 & j^2
\end{smallmatrix} \right] \rangle$ or $\langle \left[\begin{smallmatrix}  1 & 0 &  0  \\
 0 &    1 & 0 \\
 0 & 0 & j
\end{smallmatrix} \right] \rangle$.
\begin{itemize}
\item Case $ \left[\begin{smallmatrix}  1 & 0 & 0     \\
0 & j & 0   \\
0 & 0 & j^2   \\
\end{smallmatrix}\right] \in G$ :
The centralizer of $\left[\begin{smallmatrix}  1 & 0 & 0     \\
0 & j & 0   \\
0 & 0 & j^2   \\
\end{smallmatrix}\right]  $ is the following subgroup: $$\left\{ \left[\begin{smallmatrix}  a & 0 & 0     \\
0 & b & 0   \\
0 & 0 & c   \\
\end{smallmatrix}\right]  | a,b,c \in \kk^\times \right\} \rtimes \langle \left[\begin{smallmatrix}  0 & 1 & 0     \\
0 & 0 & 1   \\
1 & 0 & 0  \\
\end{smallmatrix}\right]  \rangle$$
The subgroup $G$ is non-cyclic so there exists $g \in G$, $g \notin \langle \left[\begin{smallmatrix}  1 & 0 & 0     \\
0 & j & 0   \\
0 & 0 & j^2   \\
\end{smallmatrix}\right] \rangle$ of order 3. Up to square, \textit{i.e.} replacing $g$ by $g^2$, we can assume $g = \left[\begin{smallmatrix}  a & 0 & 0     \\
0 & b & 0   \\
0 & 0 & c  \\
\end{smallmatrix}\right] $ or $g = \left[\begin{smallmatrix}  0 & a & 0     \\
0 & 0 & b   \\
c & 0 & 0  \\
\end{smallmatrix}\right] $.

If $g$ is diagonal, then since $g$ is of order $3$ and $g \notin \langle \left[\begin{smallmatrix}  1 & 0 & 0     \\
0 & j & 0   \\
0 & 0 & j^2   \\
\end{smallmatrix}\right] \rangle$, we get $g = \left[\begin{smallmatrix}  1 & 0 & 0     \\
0 & 1 & 0   \\
0 & 0 & j   \\
\end{smallmatrix}\right] $ up to conjugation leaving $\left[\begin{smallmatrix}  1 & 0 & 0     \\
0 & j & 0   \\
0 & 0 & j^2   \\
\end{smallmatrix}\right] $ invariant and up to square. So $\langle \left[\begin{smallmatrix}  1 & 0 & 0     \\
0 & j & 0   \\
0 & 0 & j^2   \\
\end{smallmatrix}\right]  , \left[\begin{smallmatrix}  1 & 0 & 0     \\
0 & 1 & 0   \\
0 & 0 & j   \\
\end{smallmatrix}\right] \rangle \subset G$. The centralizer of $\langle \left[\begin{smallmatrix}  1 & 0 & 0     \\
0 & j & 0   \\
0 & 0 & j^2   \\
\end{smallmatrix}\right]  , \left[\begin{smallmatrix}  1 & 0 & 0     \\
0 & 1 & 0   \\ 
0 & 0 & j   \\
\end{smallmatrix}\right] \rangle$ is the diagonal, therefore if an element of order $3$ commutes with $\langle \left[\begin{smallmatrix}  1 & 0 & 0     \\
0 & j & 0   \\
0 & 0 & j^2   \\
\end{smallmatrix}\right]  , \left[\begin{smallmatrix}  1 & 0 & 0     \\
0 & 1 & 0   \\
0 & 0 & j   \\
\end{smallmatrix}\right] \rangle$, it is contained in $\langle \left[\begin{smallmatrix}  1 & 0 & 0     \\
0 & j & 0   \\
0 & 0 & j^2   \\
\end{smallmatrix}\right]  , \left[\begin{smallmatrix}  1 & 0 & 0     \\
0 & 1 & 0   \\
0 & 0 & j   \\
\end{smallmatrix}\right] \rangle$. So $G = \langle \left[\begin{smallmatrix}  1 & 0 & 0     \\
0 & j & 0   \\
0 & 0 & j^2   \\
\end{smallmatrix}\right]  , \left[\begin{smallmatrix}  1 & 0 & 0     \\
0 & 1 & 0   \\
0 & 0 & j   \\
\end{smallmatrix}\right] \rangle$.

If $g$ is not diagonal, then since $g$ is of order $3$ and $g \notin \langle \left[\begin{smallmatrix}  1 & 0 & 0     \\
0 & j & 0   \\
0 & 0 & j^2   \\
\end{smallmatrix}\right] \rangle$, we get $g = \left[\begin{smallmatrix}  0 & 1 & 0     \\
0 & 0 & 1  \\
1 & 0 & 0   \\
\end{smallmatrix}\right] $ up to conjugation leaving $\left[\begin{smallmatrix}  1 & 0 & 0     \\
0 & j & 0   \\
0 & 0 & j^2   \\
\end{smallmatrix}\right] $ invariant and up to square.

So $\langle \left[\begin{smallmatrix}  1 & 0 & 0     \\
0 & j & 0   \\
0 & 0 & j^2   \\
\end{smallmatrix}\right]  , \left[\begin{smallmatrix}  0 & 1 & 0     \\
0 & 0 & 1   \\
1 & 0 & 0   \\
\end{smallmatrix}\right] \rangle \subset G$. The centralizer of $\langle \left[\begin{smallmatrix}  1 & 0 & 0     \\
0 & j & 0   \\
0 & 0 & j^2   \\
\end{smallmatrix}\right] , \left[\begin{smallmatrix}  0 & 1 & 0     \\
0 & 0 & 1  \\
1 & 0 & 0   \\
\end{smallmatrix}\right] \rangle$ is itself, therefore if an element of order $3$ commutes with $\langle \left[\begin{smallmatrix}  1 & 0 & 0     \\
0 & j & 0   \\
0 & 0 & j^2   \\
\end{smallmatrix}\right]  , \left[\begin{smallmatrix}  0 & 1 & 0     \\
0 & 0 & 1   \\
1 & 0 & 0  \\
\end{smallmatrix}\right] \rangle$, it is contained in $\langle \left[\begin{smallmatrix}  1 & 0 & 0     \\
0 & j & 0   \\
0 & 0 & j^2   \\
\end{smallmatrix}\right]  , \left[\begin{smallmatrix}  0 & 1 & 0     \\
0 & 0 & 1   \\
1 & 0 & 0   \\
\end{smallmatrix}\right] \rangle$. So $G = \langle \left[\begin{smallmatrix}  1 & 0 & 0     \\
0 & j & 0   \\
0 & 0 & j^2   \\
\end{smallmatrix}\right]  , \left[\begin{smallmatrix}  0 & 1 & 0     \\
0 & 0 & 1   \\
1 & 0 & 0   \\
\end{smallmatrix}\right] \rangle$.

\item Case $\left[\begin{smallmatrix}  1 & 0 & 0     \\
0 & j & 0   \\
0 & 0 & j^2   \\
\end{smallmatrix}\right]  \in G :$ The centralizer of $\left[\begin{smallmatrix}  1 & 0 & 0     \\
0 & j & 0   \\
0 & 0 & j^2   \\
\end{smallmatrix}\right] $ is the following subgroup:$$\left\{ \left[\begin{smallmatrix}  a & b & 0     \\
c & d & 0   \\
0 & 0 & 1   \\
\end{smallmatrix}\right]  | a,b,c,d,e \in \kk, ad - bc \neq 0 \right\}$$
As the subgroup $G$ is non-cyclic, there exists an element $g = \left[\begin{smallmatrix}  a & b & 0     \\
c & d & 0   \\
0 & 0 & 1   \\
\end{smallmatrix}\right]  \in G$, $g \notin \langle \left[\begin{smallmatrix}  1 & 0 & 0     \\
0 & 1 & 0   \\
0 & 0 & j   \\
\end{smallmatrix}\right] \rangle$. $g$ is of order $3$, so up to conjugation by an element of the form $\left[\begin{smallmatrix}  * & * & 0     \\
* & * & 0   \\
0 & 0 & 1   \\
\end{smallmatrix}\right] $, we get $g = \left[\begin{smallmatrix} j & 0 & 0     \\
0 & j^2 & 0   \\
0 & 0 & 1   \\
\end{smallmatrix}\right] $ or $g = \left[\begin{smallmatrix}  j^2 & 0 & 0     \\
0 & j & 0   \\
0 & 0 & 1  \\
\end{smallmatrix}\right] $. Hence $\langle \left[\begin{smallmatrix}  1 & 0 & 0     \\
0 & j & 0   \\
0 & 0 & j^2   \\
\end{smallmatrix}\right] , \left[\begin{smallmatrix}  1 & 0 & 0     \\
0 & 1 & 0   \\
0 & 0 & j   \\
\end{smallmatrix}\right] \rangle \subset G$. The centralizer of the subgroup $\langle \left[\begin{smallmatrix}  1 & 0 & 0     \\
0 & j & 0   \\
0 & 0 & j^2   \\
\end{smallmatrix}\right]  , \left[\begin{smallmatrix}  1 & 0 & 0     \\
0 & 1 & 0   \\
0 & 0 & j   \\
\end{smallmatrix}\right] \rangle$ is the diagonal, therefore if an element of order $3$ commutes with $\langle \left[\begin{smallmatrix}  1 & 0 & 0     \\
0 & j & 0   \\
0 & 0 & j^2   \\
\end{smallmatrix}\right]  , \left[\begin{smallmatrix}  1 & 0 & 0     \\
0 & 1 & 0   \\
0 & 0 & j   \\
\end{smallmatrix}\right] \rangle$, it is contained in $\langle \left[\begin{smallmatrix}  1 & 0 & 0     \\
0 & j & 0   \\
0 & 0 & j^2   \\
\end{smallmatrix}\right]  , \left[\begin{smallmatrix}  1 & 0 & 0     \\
0 & 1 & 0   \\
0 & 0 & j  \\
\end{smallmatrix}\right] \rangle$. So $G = \langle \left[\begin{smallmatrix}  1 & 0 & 0     \\
0 & j & 0   \\
0 & 0 & j^2   \\
\end{smallmatrix}\right]  , \left[\begin{smallmatrix}  1 & 0 & 0     \\
0 & 1 & 0   \\
0 & 0 & j   \\
\end{smallmatrix}\right] \rangle$.

\end{itemize}



These two groups are not conjugate by birational map because the subgroup $\langle \left[\begin{smallmatrix}  1 & 0 & 0     \\
0 & j & 0   \\
0 & 0 & j^2   \\
\end{smallmatrix}\right]  , \left[\begin{smallmatrix}  1 & 0 & 0     \\
0 & 1 & 0   \\
0 & 0 & j   \\
\end{smallmatrix}\right] \rangle$ has three fixed points whereas the subgroup $\langle \left[\begin{smallmatrix}  1 & 0 & 0     \\
0 & j & 0   \\
0 & 0 & j^2   \\
\end{smallmatrix}\right]  , \left[\begin{smallmatrix}  0 & 1 & 0     \\
0 & 0 & 1   \\
1 & 0 & 0   \\
\end{smallmatrix}\right] \rangle$ has none, \cite[page 1054-1056 Proposition A.2]{kollar}.
\end{proof}

\newpage

\section{Subgroups of $\PGL(2, \KK)$: A thorough study}  \label{22new} $ $

The result of this section is Proposition \ref{dublin1}.

\subsection{A description of centralizers}
\begin{defi} $ $
Let $g \in \KK^\times$.
We define $\mathcal{C}_g$ the centralizer of $\left[ \begin{smallmatrix} 0 & g \\ 1 & 0 \end{smallmatrix} \right]$ in $\PGL(2,\KK)$ and
$\mathcal{C}_g'$  the centralizer of $\left[ \begin{smallmatrix} 0 & g \\ 1 & 0 \end{smallmatrix} \right]$ in $\GL(2,\KK)$.

We also define
$\Delta_\KK = \set{\left[\begin{smallmatrix}  1 & t   \\
0 &    1
\end{smallmatrix} \right]}{t \in \KK} \subset \PGL(2,\KK)$.
\end{defi}

In the following Lemma \ref{commutateur}, we describe explicitely the centralizers 
$\mathcal{C}_g$, $\mathcal{C}'_g$ for $g \in \KK^\times$
and the centralizers of
$\sigma = \left[ \begin{smallmatrix} 1 & 0 \\ 0 & -1 \end{smallmatrix} \right]$,
$\tau = \left[ \begin{smallmatrix} 0 & 1 \\ 1 & 0 \end{smallmatrix} \right]$.

\begin{lemme} \label{commutateur} $ $
Let $g \in \KK^\times$. Then we have:

\begin{enumerate}[left=0pt]

\item \label{commutateurassertion1} 
$\mathcal{C}_g' = \set{ \left[ \begin{smallmatrix} a & bg \\ b & a \end{smallmatrix} \right] }
{a,b \in \KK \, , \ a^2 \neq g b^2}$.

\item \label{commutateurassertion2} $\mathcal{C}_g = 
\set{\left[ \begin{smallmatrix} a & bg \\ b & a \end{smallmatrix} \right]}
    {a,b \in \KK \, , \ a^2 \neq g b^2 } \cup 
    \set{ \left[ \begin{smallmatrix} a & -bg \\ b & -a \end{smallmatrix} \right]}
    {a,b \in \KK \, , \ a^2 \neq g b^2 }$.

\item \label{commutateurassertion3} The set of elements of order $2$ in $\mathcal{C}_{g}$
is equal to 
\[
    \left\{ \left[\begin{smallmatrix} 0 & g \\ 1 & 0 \end{smallmatrix}\right] \right\} 
    \cup \set{ \left[ \begin{smallmatrix} a & -bg \\ b & -a \end{smallmatrix} \right]}
    {a,b \in \KK \, , \ a^2 \neq g b^2 } \, .
\]

\item \label{commutateurassertion4} Two distinct elements in 
$\set{ \left[ \begin{smallmatrix} a & -bg \\ b & -a \end{smallmatrix} \right]}
{a,b \in \KK \, , \ a^2 \neq g b^2 }$ either don't commute or their product is
equal to $\left[\begin{smallmatrix} 0 & g \\ 1 & 0 \end{smallmatrix}\right]$.

\item \label{theo4} \label{commutateursigma}
Assume $\car(\KK) \neq 2$. The centralizer of $\sigma$ in $\PGL(2,\KK)$ is the following group:
\[
    \set{ \left[\begin{smallmatrix}
        1 & 0 \\
        0 & \lambda
        \end{smallmatrix}\right]}
    {\lambda \in \KK^\times} \rtimes \langle \tau \rangle \simeq 
\KK^{\times} \rtimes \mathbb{Z}/2 \, .
\]

\item \label{commutateursigmatau} 
Assume $\car(\KK) \neq 2$. The subgroup $\langle \sigma, \tau \rangle$ of $\PGL(2, \KK)$ is self-centralizing.
\end{enumerate}
\end{lemme}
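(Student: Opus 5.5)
All six assertions are direct matrix computations, and I would do them in the order stated, with $s = \left[\begin{smallmatrix} 0 & g \\ 1 & 0 \end{smallmatrix}\right]$.

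For \eqref{commutateurassertion1}, take $M=\left[\begin{smallmatrix} a & b' \\ c & d \end{smallmatrix}\right]$. Then $Ms=\left[\begin{smallmatrix} b' & ag \\ d & cg \end{smallmatrix}\right]$ and $sM=\left[\begin{smallmatrix} gc & gd \\ a & b' \end{smallmatrix}\right]$. Equality gives $b'=gc$ and $d=a$, so with $b:=c$ we get $M=\left[\begin{smallmatrix} a & bg \\ b & a \end{smallmatrix}\right]$. Invertibility of $M$ is exactly the condition $a^2\neq gb^2$.

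For \eqref{commutateurassertion2}, commuting in $\PGL$ means $Ms=\lambda sM$ for some $\lambda\in\KK^\times$. Comparing determinants gives $\lambda^2=1$, so $\lambda=\pm1$. The case $\lambda=1$ is \eqref{commutateurassertion1}. For $\lambda=-1$ the same entrywise comparison gives $b'=-gc$ and $d=-a$, which is the second family. In characteristic $2$ the two families coincide, and the statement still holds.

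For \eqref{commutateurassertion3}:
\begin{itemize}
\item The square of $\left[\begin{smallmatrix} a & -bg \\ b & -a \end{smallmatrix}\right]$ is $(a^2-gb^2)\,\mathrm{Id}$, so every element of the second family has order $2$ (it is not the identity, since $b=0$ forces $-a=a$ only in characteristic $2$, and there $a^2\neq0$ still yields a genuine involution unless it is scalar, which needs checking).
\item The square of $\left[\begin{smallmatrix} a & bg \\ b & a \end{smallmatrix}\right]$ is $\left[\begin{smallmatrix} a^2+gb^2 & 2abg \\ 2ab & a^2+gb^2 \end{smallmatrix}\right]$. This is scalar iff $ab=0$ (for $\car\neq2$). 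If $b=0$ the element is the identity; if $a=0$ it is $s$.
\end{itemize}
In characteristic $2$ the square is automatically scalar, which is consistent with the families merging. This characteristic-$2$ bookkeeping is the only delicate point, and I would handle it by noting that both sets then describe the same matrices.

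For \eqref{commutateurassertion4}, let $u,v$ be two distinct elements of the second family. Both lie in $\mathcal{C}_g\setminus\mathcal{C}'_g$ (up to scalars), so by multiplicativity of the sign $\lambda$ their product $uv$ lies in the first family, i.e.\ in $\mathcal{C}'_g$. If $u$ and $v$ commute, then $(uv)^2=u^2v^2=1$ in $\PGL$, and $uv\neq1$ since $u\neq v$ and $u=u^{-1}$. Hence $uv$ is an involution of the first family, which by \eqref{commutateurassertion3} must be $s$.

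For \eqref{commutateursigma}, the same $\lambda$-trick applies to $\sigma$: $M\sigma=\pm\sigma M$ forces $M$ to be diagonal ($+$) or antidiagonal ($-$). Writing these as $\left[\begin{smallmatrix} 1 & 0 \\ 0 & \lambda \end{smallmatrix}\right]$ or $\left[\begin{smallmatrix} 1 & 0 \\ 0 & \lambda \end{smallmatrix}\right]\tau$ gives the stated group. It is a semidirect product because $\tau$ inverts $\lambda$.

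For \eqref{commutateursigmatau}, an element $\left[\begin{smallmatrix} 1 & 0 \\ 0 & \lambda \end{smallmatrix}\right]$ or $\left[\begin{smallmatrix} 1 & 0 \\ 0 & \lambda \end{smallmatrix}\right]\tau$ commutes with $\tau$ iff $\lambda=\lambda^{-1}$, i.e.\ $\lambda=\pm1$. This yields exactly $\langle\sigma,\tau\rangle$.
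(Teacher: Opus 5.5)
Your argument is correct when $\car(\KK)\neq 2$. For assertions (1), (2), (3), (5) and (6) it is the same direct computation the paper relies on. The paper only writes ``direct computations'' for the first three. Your observation that comparing determinants in $Ms=\lambda sM$ forces $\lambda=\pm1$ is a tidy way to organize the $\PGL$ case. Your entrywise treatment of $\sigma$ and the deduction of (6) from (5) match the paper.

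The genuinely different step is (4). The paper multiplies two general elements of the second family explicitly. It observes that the two products, taken in either order, have the same diagonal and opposite off-diagonal entries. So if they commute in $\PGL(2,\KK)$, either the off-diagonal or the diagonal part of the product vanishes, i.e.\ the product is $I$ or $s=\left[\begin{smallmatrix} 0 & g \\ 1 & 0 \end{smallmatrix}\right]$. You instead use that $M\mapsto\lambda(M)$, defined by $MsM^{-1}=\lambda(M)s$, is a homomorphism $\mathcal{C}_g\to\{\pm1\}$ whose kernel is the first family. Then a product of two distinct commuting elements of the second family is a non-trivial involution in that kernel, hence equals $s$ by (3). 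Your route avoids the computation and explains why the product lands in the first family. The paper's route is self-contained and does not lean on (3).

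One correction: your characteristic-$2$ remarks are wrong. For $\car(\KK)=2$, assertions (3) and (4) are false, so no bookkeeping rescues them.
\begin{itemize}
\item For $b=0$, the element $\left[\begin{smallmatrix} a & 0 \\ 0 & a \end{smallmatrix}\right]$ of the second family is the identity, so the right-hand side of (3) contains $I$.
\item Since $\mathcal{C}_g$ is then the image of the commutative group $\mathcal{C}'_g$, (4) fails as well. For $g\neq 1$, the elements $u=\left[\begin{smallmatrix} 1 & g \\ 1 & 1 \end{smallmatrix}\right]$ and $s$ both belong to the second family, are distinct and commute. But $us=\left[\begin{smallmatrix} g & g \\ 1 & g \end{smallmatrix}\right]\neq s$ in $\PGL(2,\KK)$.
\end{itemize}
In your proof, this is exactly where the sign $\lambda$ stops separating the two families. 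So (3) and (4) must be read under $\car(\KK)\neq 2$. The paper's computation for (4) tacitly needs this too. These two assertions are only invoked in Assertion~\ref{orsaydublin12} of Proposition~\ref{dublin1}, where $\car(\KK)\neq p=2$. With that hypothesis, your proof is complete.
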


\begin{remark} $ $
    If $\car(\KK) = 2$, then $\mathcal{C}_g$ is equal to 
    $\set{\left[ \begin{smallmatrix} a & bg \\ b & a \end{smallmatrix} \right]}
    {a,b \in \KK \, , \ a^2 \neq g b^2 }$.
\end{remark}

\begin{proof} $ $
\eqref{commutateurassertion1},~\eqref{commutateurassertion2},~\eqref{commutateurassertion3}: 
These are direct computations. \\
\eqref{commutateurassertion4}: Let 
$A =  
\left[\begin{smallmatrix} a & -bg \\ b & -a \end{smallmatrix}\right]
$ and let
$T =
\left[\begin{smallmatrix} s & -tg \\ t & -s \end{smallmatrix}\right]
$. Then we get for the products 
\[
    AA' = \left[\begin{smallmatrix}
        -b g t + a s & & b g s - a g t \\
        b s - a t & & -b g t + a s
        \end{smallmatrix}\right]
    \quad \textrm{and} \quad
    A'A = \left[\begin{smallmatrix}
        -b g t + a s & & -b g s + a g t \\
        -b s + a t & & -b g t + a s
        \end{smallmatrix}\right] \, .
\] 
Assume $A$, $A'$ commute. Then either $AA' = A'A = I$ or 
$AA' = A'A = \left[ \begin{smallmatrix} 0 & g \\ 1 & 0 \end{smallmatrix} \right]$.
\\
\eqref{commutateursigma}: Let 
$A = \left[\begin{smallmatrix}  a & b     \\
c & d   \\
\end{smallmatrix}\right] \in \PGL(2,\KK)$. Then, $A$ commutes with $\sigma$

$\Leftrightarrow \left[\begin{smallmatrix}  a & b     \\
c & d   \\
\end{smallmatrix}\right] \left[\begin{smallmatrix}  1 & 0     \\
0 & -1  \\
\end{smallmatrix}\right] = \left[\begin{smallmatrix}  1 & 0     \\
0 & -1  \\
\end{smallmatrix}\right] \left[\begin{smallmatrix}  a & b     \\
c & d   \\
\end{smallmatrix}\right]$

$\Leftrightarrow \ \left[\begin{smallmatrix}  a & -b     \\
c & -d   \\
\end{smallmatrix}\right] =  \left[\begin{smallmatrix}  a & b     \\
-c & -d   \\
\end{smallmatrix}\right]$

$\Leftrightarrow$ There exists $\lambda \in \KK^\times$ such that $a = \lambda a , - b = \lambda b, c = - \lambda c, - d = - \lambda d$

$\Leftrightarrow$ $a = d = 0$ or $b = c = 0$

$\Leftrightarrow A =  \left[\begin{smallmatrix}  a & 0     \\
0 & d   \\
\end{smallmatrix}\right]$ or $A =  \left[\begin{smallmatrix}  0 & b     \\
c & 0   \\
\end{smallmatrix}\right]$ \\
\eqref{commutateursigmatau}: We use~\eqref{commutateursigma}.
\end{proof}
\subsection{The determinant as a classification tool} $ $

We remind the reader that the determinant induces a group homomorphism
\[
    \det \colon \PGL(2, \KK) \to \KK^\times / (\KK^\times)^2 \, .
\]
We say that two elements or subgroups of $\PGL(2, \KK)$ have the same determinant, if they
have the same image in $\KK^\times / (\KK^\times)^2$ under $\det$.
Note, if two elements or two subgroups of $\PGL(2, \KK)$ are conjugate, 
then they have the same determinant.

\begin{lemme} \label{buda3} $ $
Let $U_\KK = \set{ \left[\begin{smallmatrix}  a & b   \\ 0 &    1 \end{smallmatrix} \right]}{a \in \KK^\times, b \in \KK} \subset \PGL(2,\KK)$.

\begin{enumerate}[left=0pt]
\item \label{bla1} Let $P = \left[\begin{smallmatrix}  t & x  \\
0 &    1
\end{smallmatrix} \right] \in U_\KK$, $A = \left[\begin{smallmatrix}  1 & y   \\
0 &    1
\end{smallmatrix} \right] \in \Delta_\KK$. Then $PAP^{-1} = \left[\begin{smallmatrix}  1 & ty   \\
0 &    1
\end{smallmatrix} \right]$.
\item \label{bla2} Let $A,B \in \Delta_\KK\setminus \{I\}$ and $P \in \PGL(2, \KK)$ such that $P AP^{-1} = B$. Then $P \in U_\KK$.
\item \label{bla3} Let $A \in \Delta_\KK \setminus \{I\}$. If $PAP^{-1} = A$
for some $P \in \PGL(2, \KK)$. Then $P \in \Delta_\KK$.
\end{enumerate}
\end{lemme}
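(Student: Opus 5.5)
All three assertions of Lemma \ref{buda3} come from explicit $2\times 2$ matrix computations in $\PGL(2,\KK)$. The only care needed is that equalities hold up to a scalar factor $\lambda \in \KK^\times$.

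For \eqref{bla1}, we multiply directly. The inverse of $P$ is $P^{-1} = \left[\begin{smallmatrix} t^{-1} & -t^{-1}x \\ 0 & 1 \end{smallmatrix}\right]$, and then
\[
PAP^{-1} = \left[\begin{smallmatrix} t & ty+x \\ 0 & 1 \end{smallmatrix}\right]\left[\begin{smallmatrix} t^{-1} & -t^{-1}x \\ 0 & 1 \end{smallmatrix}\right] = \left[\begin{smallmatrix} 1 & ty \\ 0 & 1 \end{smallmatrix}\right].
\]

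For \eqref{bla2}, write $A = \left[\begin{smallmatrix} 1 & y \\ 0 & 1 \end{smallmatrix}\right]$ and $B = \left[\begin{smallmatrix} 1 & z \\ 0 & 1 \end{smallmatrix}\right]$ with $y, z \neq 0$.

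1. \textbf{Fixed-point argument.} The Möbius transformation associated with $A$ fixes exactly the point $\infty = [1:0]$ of $\PP^1(\KK)$. The eigenvectors of a lift of $A$ are exactly the multiples of $e_1$, since $A - I$ has rank one with kernel $\KK e_1$; this holds in every characteristic. The same is true for $B$.
2. If $PAP^{-1} = B$, then $P$ sends the unique fixed point of $A$ to the unique fixed point of $B$. So $P(\infty) = \infty$, meaning $Pe_1 \in \KK e_1$. Hence the lower-left entry of $P$ vanishes.
3. After rescaling, $P = \left[\begin{smallmatrix} a & b \\ 0 & 1 \end{smallmatrix}\right]$, i.e. $P \in U_\KK$.

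If one prefers to avoid the fixed-point language, the same conclusion follows by writing $P = \left[\begin{smallmatrix} a & b \\ c & d \end{smallmatrix}\right]$. Comparing $PA = \lambda BP$ entrywise, the $(2,1)$ entries give $c = \lambda c$. The $(2,2)$ entries give $cy + d = \lambda d$. Then one excludes $c \neq 0$: that would force $\lambda = 1$ and hence $cy = 0$, a contradiction.

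For \eqref{bla3}, we apply \eqref{bla2} with $B = A$ to get $P = \left[\begin{smallmatrix} t & x \\ 0 & 1 \end{smallmatrix}\right] \in U_\KK$. By \eqref{bla1}, $PAP^{-1} = \left[\begin{smallmatrix} 1 & ty \\ 0 & 1 \end{smallmatrix}\right]$. This must equal $A$ in $\PGL(2,\KK)$. Both matrices are normalized with lower-right entry $1$ and zero lower-left entry, so the scalar must be $1$ and $ty = y$. Since $y \neq 0$, we get $t = 1$, so $P \in \Delta_\KK$.

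The only delicate point is the projective scalar $\lambda$ in \eqref{bla2}. Handling it through the uniqueness of the fixed point (the unique eigenline) avoids any case distinction on the characteristic.
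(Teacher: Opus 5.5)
Your proof is correct and takes essentially the paper's approach: the paper just says all three assertions are direct computations. Your explicit product for (1), the entrywise comparison of $PA=\lambda BP$ for (2) (or equivalently the unique-eigenline argument, with the projective scalar $\lambda$ handled correctly), and the reduction of (3) to (1) and (2) supply exactly those computations.
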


\begin{proof} $ $
These are direct computations.
\end{proof}


\begin{lemme} 
    \label{LemCon} $ $
    Assume $A \in \PGL(2, \KK)$ is of order $2$. Then:
    \begin{enumerate}[left=0pt]
    \item \label{LemCon1} $A$ is conjugate to 
    $\left[\begin{smallmatrix}  0 & g  \\
    1 &    0
    \end{smallmatrix} \right]$ for some $g \in \KK^\times$.
    
    \item \label{LemCon2} For $g, h \in \KK^\times$,
    $\left[\begin{smallmatrix}  0 & g  \\
    1 &    0
    \end{smallmatrix} \right], \left[\begin{smallmatrix}  0 & h  \\
    1 &    0
    \end{smallmatrix} \right]$ are conjugate if and only if $\frac{g}{h}$ is a square.
    \end{enumerate}
\end{lemme}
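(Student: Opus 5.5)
The plan is to work with lifts to $\GL(2,\KK)$ and use the determinant homomorphism $\det \colon \PGL(2,\KK) \to \KK^\times/(\KK^\times)^2$ introduced just above. No assumption on $\car(\KK)$ is needed, and the argument works over any field $\KK$.

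For~\eqref{LemCon1}, I will choose a lift $M \in \GL(2,\KK)$ of $A$. Since $A$ has order $2$ in $\PGL(2,\KK)$, we get $M^2 = \lambda I$ for some $\lambda \in \KK^\times$, and $M$ is not scalar. Because $M$ is not scalar, there is a vector $v \in \KK^2$ that is not an eigenvector of $M$. Then $(v, Mv)$ is a basis of $\KK^2$. In this basis $M$ sends $v \mapsto Mv$ and $Mv \mapsto M^2 v = \lambda v$. So $M$ is conjugate in $\GL(2,\KK)$ to $\left[\begin{smallmatrix} 0 & \lambda \\ 1 & 0 \end{smallmatrix}\right]$, and we may take $g = \lambda$. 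This also covers characteristic $2$, where $A$ may be unipotent: there $\lambda$ is a square, which is consistent.

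For the direct implication in~\eqref{LemCon2}, suppose $\left[\begin{smallmatrix} 0 & g \\ 1 & 0 \end{smallmatrix}\right]$ and $\left[\begin{smallmatrix} 0 & h \\ 1 & 0 \end{smallmatrix}\right]$ are conjugate in $\PGL(2,\KK)$. Then they have the same image under $\det$, as remarked before Lemma~\ref{buda3}. Concretely, a conjugating element lifts, and the two lifts differ by a scalar $c \in \KK^\times$, which multiplies the determinant by $c^2$. This gives $-g = c^2(-h)$, so $g/h$ is a square.

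For the converse, I will write $g = c^2 h$ and conjugate by $\left[\begin{smallmatrix} c & 0 \\ 0 & 1 \end{smallmatrix}\right]$. This gives
\[
\left[\begin{smallmatrix} c & 0 \\ 0 & 1 \end{smallmatrix}\right]\left[\begin{smallmatrix} 0 & h \\ 1 & 0 \end{smallmatrix}\right]\left[\begin{smallmatrix} c^{-1} & 0 \\ 0 & 1 \end{smallmatrix}\right] = \left[\begin{smallmatrix} 0 & ch \\ c^{-1} & 0 \end{smallmatrix}\right].
\]
After rescaling the representative by $c$, this equals $\left[\begin{smallmatrix} 0 & c^2h \\ 1 & 0 \end{smallmatrix}\right] = \left[\begin{smallmatrix} 0 & g \\ 1 & 0 \end{smallmatrix}\right]$ in $\PGL(2,\KK)$.

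The only mildly delicate point is the existence of a cyclic vector $v$ in~\eqref{LemCon1}. This is exactly where we use that $A \neq I$, which holds since $A$ has order $2$. With that in hand, the rest is direct computation.
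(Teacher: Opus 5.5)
Your proposal is correct and matches the paper's proof essentially step for step: the same basis $(v, Mv)$ built from a non-eigenvector for (1), and for (2) the same invariance of the determinant modulo squares together with conjugation by a diagonal matrix. The only thing to tighten is to require $v \neq 0$ explicitly, since otherwise $(v, Mv)$ need not be a basis.
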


\begin{proof} $ $

\eqref{LemCon1}: Let $B \in \GL(2,\KK)$ be a representative of 
$A$. There exists $g \in \KK^\times$ such that $B^2 = g I$. As $B$ 
is not a matrix of the form
$\left[\begin{smallmatrix}  t & 0  \\
    0 & t
\end{smallmatrix} \right]$ for some $t \in \KK^\times$, 
there exists $u_1 \in \KK^2, u_1 \neq 0$ not an eigenvector of $B$. 
For $u_2 = B u_1$ we get $B u_2 = g u_1$.  Then $(u_1, u_2)$ is a basis of $\KK^2$. 
This basis allows to conjugate $B$ to $\left[\begin{smallmatrix}  0 & g \\
1 &    0
\end{smallmatrix} \right]$.

\eqref{LemCon2}:
If 
$\left[\begin{smallmatrix}  0 & g  \\
1 &    0
\end{smallmatrix} \right]$,
$\left[\begin{smallmatrix}  0 & h  \\
1 &    0
\end{smallmatrix} \right]$
are conjugate, then they have the same determinant and hence, $g$, $h$ differ only
by a square in $\KK^\times$. On the other hand, if $r^2 = \frac{g}{h}$ for some $r \in \KK^\times$,
then
$\left[
    \begin{smallmatrix}  
        1 & 0  \\
        0 & r
    \end{smallmatrix} 
\right]
\left[
    \begin{smallmatrix}  
        0 & g  \\
        1 & 0
    \end{smallmatrix} 
\right]
\left[
    \begin{smallmatrix}  
        r & 0  \\
        0 & 1
    \end{smallmatrix} 
\right]
=
\left[
    \begin{smallmatrix}  
        0 & h  \\
        1 & 0
    \end{smallmatrix} 
\right]$.
\end{proof}

\begin{lemme} \label{buda10} $ $
    Assume  $\car(\KK) = p > 0$. Let $A \in \PGL(2,\KK)$ be of order $p$. Then:
    \begin{enumerate}[left=0pt]
    \item \label{Lemakro1} $A$ can't be diagonalized.
    \item \label{Lemakro1.5} If $A$ is triangular, then $A \in \Delta_\KK$.
    \item \label{Lemakro2} Assume $p > 2$. Then $A$ can be trigonalized.
    \item \label{Lemakro3} Assume $p = 2$. Then $A$ can be trigonalized if and only if $\det(A) = 1$.
    \end{enumerate}
\end{lemme}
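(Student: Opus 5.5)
The plan is to work with a representative $B \in \GL(2,\KK)$ of $A$ and use only the characteristic polynomial together with the relation $B^p = cI$ for some $c \in \KK^\times$. The basic tool is the Frobenius identity in characteristic $p$: if $\lambda$ is an eigenvalue of $B$ (in an algebraic closure), then $\lambda^p = c$. Since $x^p - c = (x - c^{1/p})^p$, the polynomial $x^p - c$ has a single root over $\overline{\KK}$, so both eigenvalues of $B$ coincide, say equal to $\mu$.

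\eqref{Lemakro1}: If $A$ were diagonalizable, it would be represented by $\left[\begin{smallmatrix} a & 0 \\ 0 & d \end{smallmatrix}\right]$ with $(a/d)^p = 1$. Since $x^p - 1 = (x-1)^p$ in characteristic $p$, this forces $a = d$, so $A = I$, contradicting the assumption that $A$ has order $p$.

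\eqref{Lemakro1.5}: Write $A = \left[\begin{smallmatrix} a & b \\ 0 & d \end{smallmatrix}\right]$. Then $A^p$ has diagonal entries $a^p$ and $d^p$, and $A^p = I$ in $\PGL$ gives $a^p = d^p$, hence $a = d$ as above. Rescaling to $a = d = 1$ puts $A$ in $\Delta_\KK$.

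\eqref{Lemakro2} and \eqref{Lemakro3}: $A$ is trigonalizable over $\KK$ exactly when some representative $B$ has an eigenvalue in $\KK$. The characteristic polynomial of $B$ is $(x-\mu)^2 = x^2 - \operatorname{tr}(B)x + \det(B)$. Hence $2\mu = \operatorname{tr}(B) \in \KK$ and $\mu^2 = \det(B)$.
\begin{itemize}
\item If $p > 2$, then $\mu = \operatorname{tr}(B)/2 \in \KK$, so an eigenvector exists over $\KK$ and $A$ can be trigonalized.
\item If $p = 2$, then $\mu^2 = \det(B)$, so $\mu \in \KK$ if and only if $\det(B)$ is a square in $\KK$. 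This is exactly the condition $\det(A) = 1$ in $\KK^\times/(\KK^\times)^2$.
\end{itemize}
Rescaling $B$ by $\lambda \in \KK^\times$ multiplies both $\mu$ and $\sqrt{\det B}$ by $\lambda$, so the condition does not depend on the choice of representative.

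For the converse direction in \eqref{Lemakro3}, note that a triangular $A$ lies in $\Delta_\KK$ by \eqref{Lemakro1.5}, and elements of $\Delta_\KK$ have determinant $1$. Since conjugation preserves the determinant class, any trigonalizable $A$ has $\det(A) = 1$.

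The only delicate point is this $p = 2$ case. Trace gives no information there, and one must check that the square-class condition is well defined on $\PGL$, which is what the rescaling argument settles.
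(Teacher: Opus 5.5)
Your proof is correct. For \eqref{Lemakro1} and \eqref{Lemakro1.5} it is essentially the paper's argument: both rest on the fact that a representative $B$ with $B^p=cI$ has a single eigenvalue, because $x\mapsto x^p$ is injective in characteristic $p$.

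For \eqref{Lemakro2} and \eqref{Lemakro3} you take a different and more uniform route. You read the double eigenvalue $\mu$ off the characteristic polynomial $(x-\mu)^2=x^2-\operatorname{tr}(B)x+\det(B)$, so that $\mu=\operatorname{tr}(B)/2\in\KK$ when $p>2$ and $\mu^2=\det(B)$ when $p=2$.

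The paper argues differently in both cases. For $p>2$ it writes $p=2k+1$ and produces the explicit scalar $t=\lambda(\det B)^{-k}\in\KK$ with $t^p=\lambda$, so that $(B-tI)^p=0$ and $B-tI$ is nilpotent. For $p=2$ it first reduces $A$ to the normal form $\left[\begin{smallmatrix}0&g\\1&0\end{smallmatrix}\right]$ via Lemma~\ref{LemCon}. It then exhibits an explicit conjugation between $\left[\begin{smallmatrix}1&1\\0&1\end{smallmatrix}\right]$ and $\left[\begin{smallmatrix}0&1\\1&0\end{smallmatrix}\right]$. Finally it combines the classification of involutions by determinant class (Lemma~\ref{LemCon}) with two facts: all non-trivial elements of $\Delta_\KK$ are conjugate, and they are the only triangular elements of order $2$.

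Your argument is shorter and self-contained, needing neither Lemma~\ref{LemCon} nor Lemma~\ref{buda3}. The paper's argument gives a little more structure: in characteristic $2$, the trigonalizable involutions are exactly the conjugates of $\left[\begin{smallmatrix}1&1\\0&1\end{smallmatrix}\right]$, stated directly in terms of the normal form used in Propositions~\ref{buda35} and~\ref{dublin1}.

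Your final paragraph on the converse of \eqref{Lemakro3} is redundant. The equivalence $\mu\in\KK \Leftrightarrow \det(B)\in(\KK^\times)^2$ already gives both directions.
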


\begin{proof} $ $
Let $B \in \GL(2,\KK)$ be a representative of $A$. Then $B^p = \lambda I$.
Now, all eigenvalues (inside an algebraic closure of $\KK$) are $p$-th roots of $\lambda$. 
Since $\car(\KK) = p$, we get that all eigenvalues of $B$ are the same.

\eqref{Lemakro1}: If $B$ can be diagonalized, then all eigenvalues of $B$
are contained in $\KK$ and they are the same. Thus, $A = I$.

\eqref{Lemakro1.5}: If $B$ is trigonal, then all eigenvalues of $B$ are contained in $\KK$
and they are the same, \textit{i.e.}~$A \in \Delta_\KK$. 

\eqref{Lemakro2}: From  $B^p = \lambda I$ we get 
$(\det B)^p = \lambda^2 $. Since $p > 2$, there exists $k$ with $p = 2k + 1$.
For $t = \lambda (\det B)^{-k} \in \KK^\times$ we get $t^p = \lambda^p \lambda^{-2k} = \lambda$. 
Then $B^p = t^p I$ and using that $\car(\KK) = p$ we get
$(B-tI)^p = 0$, \textit{i.e.}~$B-tI$ is nilpotent. 
Then, $B$ and hence $A$ can be trigonalized.

\eqref{Lemakro3}: By Lemma~\ref{LemCon}\eqref{LemCon1} we may assume
$A = 
    \left[\begin{smallmatrix}  
    0 & g  \\
    1 &    0
    \end{smallmatrix} \right]$.
Note that
$\left[\begin{smallmatrix}  1 & 1  \\
1 &    0
\end{smallmatrix} \right] \left[\begin{smallmatrix}  1 & 1  \\
0 &   1
\end{smallmatrix} \right] \left[\begin{smallmatrix}  1 & 1  \\
1 &    0
\end{smallmatrix} \right]^{-1}  =\left[\begin{smallmatrix}  0 & 1  \\
1 &    0
\end{smallmatrix} \right]$. Hence, the matrix $\left[\begin{smallmatrix}  0 & g  \\
1 &    0
\end{smallmatrix} \right]$ is conjugate to 
$\left[\begin{smallmatrix}  
    1 & 1  \\
    0 & 1
\end{smallmatrix} \right]$ if and only if $g$ is a square (use~Lemma~\ref{LemCon}\eqref{LemCon2}).
Now, the statement follows from the fact, that all matrices in $\Delta_\KK \setminus \{I\}$
are conjugate (use Lemma~\ref{buda3}\eqref{bla1}) and that $\Delta_\KK \setminus \{I\}$
is the set of all triangular matrices of order $2$ in $\PGL(2, \KK)$.
\end{proof}

\begin{lemme} \label{buda32} $ $
We assume $\car(\KK)=2$. Let $g \in \KK^\times$ be not a square. 
Then the restriction of $\det \colon \PGL(2, \KK) \to \KK^\times / (\KK^\times)^2$ 
to the centralizer $\mathcal{C}_g$ 
yields an injective
group homomorphism $\mathcal{C}_g \rightarrow \KK^\times / (\KK^\times)^2$.
\end{lemme}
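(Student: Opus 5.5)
In characteristic $2$ the Remark after Lemma~\ref{commutateur} gives
\[
\mathcal{C}_g=\set{\left[\begin{smallmatrix} a & bg \\ b & a \end{smallmatrix}\right]}{a,b\in\KK,\ a^2\neq gb^2},
\]
so every element of $\mathcal{C}_g$ has a representative of this shape. Its determinant is $a^2-gb^2=a^2+gb^2$. Since $\det\colon\PGL(2,\KK)\to\KK^\times/(\KK^\times)^2$ is already a homomorphism, its restriction to $\mathcal{C}_g$ is a homomorphism too. It therefore suffices to show that the kernel is trivial.

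To that end, suppose $a^2+gb^2=c^2$ for some $c\in\KK^\times$. The goal is to deduce $b=0$, since then the class is $\left[\begin{smallmatrix} a & 0\\ 0& a\end{smallmatrix}\right]=I$. If $b\neq 0$, then in characteristic $2$ we can rewrite the relation as
\[
g=\frac{c^2+a^2}{b^2}=\left(\frac{c+a}{b}\right)^2,
\]
because squaring is additive in characteristic $2$. This contradicts the hypothesis that $g$ is not a square. Hence $b=0$, and the kernel is trivial.

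One point needs checking: the condition $a^2\neq gb^2$ guarantees $c\neq0$, so the computation is well posed. I also need that the determinant class does not depend on the chosen representative. Scaling the matrix by $\lambda$ multiplies the determinant by $\lambda^2$, which is a square, so the class is well defined. I do not expect any real obstacle here. The only substantive point is the identity $c^2+a^2=(c+a)^2$ in characteristic $2$, which turns the norm equation into $g$ being a square.
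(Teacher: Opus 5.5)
Your proposal is correct and follows the paper's proof essentially verbatim: use the characteristic-$2$ description $\mathcal{C}_g=\set{\left[\begin{smallmatrix} a & bg \\ b & a \end{smallmatrix}\right]}{a,b\in\KK,\ a^2\neq gb^2}$, and note that a kernel element gives $a^2+gb^2=c^2$, hence $gb^2=(a+c)^2$. Since $g$ is not a square, this forces $b=0$ and the element is the identity. Your added checks (that the determinant class is well defined and that $c\neq 0$) are harmless extras the paper leaves implicit.
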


\begin{proof} $ $
Let $\left[\begin{smallmatrix}  a & bg  \\
b &    a
\end{smallmatrix} \right] \in \mathcal{C}_g$ be an element of the kernel of this homomorphism.
Then there exists $c \in \KK^\times$ such that $a^2 + b^2 g = c^2$
and thus $b^2 g = (a+c)^2$ because $\car(\KK)=2$.
This yields $b=0$, as $g$ is not a square and thus
$\left[\begin{smallmatrix}  a & bg  \\
b &    a
\end{smallmatrix} \right] = I$.
\end{proof}

\begin{prop} \label{buda35} $ $
We assume $\car(\KK)=2$. Let $G, G'$ be $2$-elementary subgroups of $\PGL(2,\KK)$ 
such that $G$ contains an element that cannot be trigonalized.
Then, $\det(G) = \det(G')$ if and only if $G, G'$ are conjugate.
\end{prop}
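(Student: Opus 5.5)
The converse direction is immediate: conjugate subgroups have the same determinant, as noted before Lemma~\ref{buda3}. For the forward direction, the idea is to conjugate both $G$ and $G'$ into one and the same centralizer $\mathcal{C}_g$, and then use that $\det$ is injective there (Lemma~\ref{buda32}).

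First I normalize $G$. Let $A \in G$ be the element that cannot be trigonalized. It has order $2$, so Lemma~\ref{buda10}\eqref{Lemakro3} gives that $\det(A)$ is not a square. By Lemma~\ref{LemCon}\eqref{LemCon1}, after conjugating $G$ we may assume $A = \left[\begin{smallmatrix} 0 & g \\ 1 & 0 \end{smallmatrix}\right]$ for some $g \in \KK^\times$. Since $\det(A) = -g = g$ in characteristic $2$, the element $g$ is not a square. The group $G$ is abelian, so it commutes with $A$ and hence $G \subseteq \mathcal{C}_g$.

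Next I normalize $G'$ in the same way. Suppose $\det(G) = \det(G')$. Then the class of $g$ lies in $\det(G')$, so there is some $B \in G'$ with $\det(B) \equiv g$ modulo squares. This $B$ is not the identity, because $g$ is not a square. As $G'$ is $2$-elementary, $B$ has order $2$. Its determinant is a non-square, so by Lemma~\ref{buda10}\eqref{Lemakro3} it cannot be trigonalized.

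By Lemma~\ref{LemCon}\eqref{LemCon1}, $B$ is conjugate to $\left[\begin{smallmatrix} 0 & h \\ 1 & 0 \end{smallmatrix}\right]$ for some $h \in \KK^\times$. Comparing determinants gives that $h/g$ is a square. Lemma~\ref{LemCon}\eqref{LemCon2} then lets us conjugate $G'$ so that $B = \left[\begin{smallmatrix} 0 & g \\ 1 & 0 \end{smallmatrix}\right] = A$. As before, $G'$ is abelian, so $G' \subseteq \mathcal{C}_g$.

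Finally, $G$ and $G'$ are now both subgroups of $\mathcal{C}_g$ with the same image under $\det$. Lemma~\ref{buda32} says that $\det$ restricted to $\mathcal{C}_g$ is an injective homomorphism, so $G = \det|_{\mathcal{C}_g}^{-1}(\det G) = G'$. Hence the original subgroups are conjugate.

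The only delicate point is producing the element $B$ in $G'$. Equality of the determinant images must produce an element of $G'$ whose determinant is exactly the class of $g$, rather than merely some element with non-square determinant. This is what allows the normalized element of $G'$ to be matched with $A$ via Lemma~\ref{LemCon}\eqref{LemCon2}, so that both groups land in the same centralizer.
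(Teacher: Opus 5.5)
Your proof is correct and follows the paper's argument essentially step by step. You normalize an untrigonalizable $A\in G$ to $\left[\begin{smallmatrix} 0 & g \\ 1 & 0 \end{smallmatrix}\right]$ with $g$ a non-square, pick $B\in G'$ with the same determinant class, conjugate $B$ onto $A$ via Lemma~\ref{LemCon}, and conclude from the injectivity of $\det$ on $\mathcal{C}_g$ (Lemma~\ref{buda32}); your extra justifications (that $-g=g$ in characteristic $2$, and that $B\neq I$) only spell out details the paper leaves implicit.
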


\begin{proof} $ $
If $G, G'$ are conjugate, then we get $\det(G) = \det(G')$.

Assume $\det(G) = \det(G')$. 
Let $A \in G$ be 
an element that cannot be trigonalized. Then up to conjugation,
we may assume
$A = \left[\begin{smallmatrix}  
    0 & g  \\
    1 & 0
\end{smallmatrix} \right]$ 
for a non-square $g \in \KK^\times$ by Lemma~\ref{LemCon}\eqref{LemCon1} and 
Lemma~\ref{buda10}\eqref{Lemakro3}. 
As $\det(G) = \det(G')$, there exists
$A' \in G'$ such that $\det(A') = \det(A)$. By Lemma~\ref{LemCon}\eqref{LemCon1}
we may assume that 
$A' = \left[\begin{smallmatrix}
    0 & g' \\ 1 & 0
\end{smallmatrix}\right]$ for some $g' \in \KK^\times$. As $\det(A) = \det(A')$, 
the elements $g, g'$ differ only by a square in $\KK^\times$ and thus $A, A'$ are conjugate 
by Lemma~\ref{LemCon}\eqref{LemCon2}. Hence, we may assume that 
$A = A'$ and thus $G, G' \subseteq \mathcal{C}_g$. Thus 
$G = G'$ by Lemma~\ref{buda32}.
\end{proof}

\subsection{Classification of $p$-elementary subgroups of $\PGL(2,\KK)$} $ $

We now state the main result of this section:

\begin{prop} \label{dublin1} $ $
Let $G \simeq (\ZZ/p)^r$ be a subgroup of $\PGL(2,\KK)$ with $r \geq 1$.
\begin{enumerate}[left=0pt]
\item \label{orsaydublin11} If $\car(\KK) \neq p > 2$, then $G$ is conjugate to 
$\langle\left[ \begin{smallmatrix} 1 & 0 \\ 0 & \xi \end{smallmatrix}\right] \rangle$, 
where $\xi^p = 1, \xi \neq 1$.
\item \label{orsaydublin12} If $\car(\KK) \neq p = 2$, then $r \in \{1, 2\}$ and $G$ either conjugate to 
$\langle\left[ \begin{smallmatrix} 0 & g \\ 1 & 0 \end{smallmatrix} \right]\rangle$ or to 
$\langle\left[ \begin{smallmatrix} 0 & g \\ 1 & 0 \end{smallmatrix} \right] , \left[ \begin{smallmatrix} a & - b g \\ b & - a \end{smallmatrix} \right] \rangle$ for some 
$g, a, b \in \KK$ such that $g \neq 0, a^2 \neq b^2 g$.

\begin{enumerate}[label=(\alph*), left=0pt]
\item \label{chargr2inv1} For $g, h \in \KK^\times$ the subgroups $\langle\left[ \begin{smallmatrix} 0 & g \\ 1 & 0 \end{smallmatrix} \right]\rangle, \langle\left[ \begin{smallmatrix} 0 & h \\ 1 & 0 \end{smallmatrix} \right]\rangle$ are conjugate if and only if $\frac{g}{h}$ is a square, \textit{i.e.}~they have the same determinant.

\item \label{chargr2inv2} For $g \in \KK^\times$ the subgroup $\langle\left[ \begin{smallmatrix} 0 & g \\ 1 & 0 \end{smallmatrix} \right]\rangle$ is conjugate to $\langle\left[ \begin{smallmatrix} 1 & 0 \\ 0 & -1 \end{smallmatrix} \right]\rangle$ if and only if $g$ is a square.

\item \label{chargr2inv3} Assume that either $\KK$ is algebraically closed or $\KK = \kk(x)$. 
Then two sub\-groups \newline $\langle\left[ \begin{smallmatrix} 0 & g \\ 1 & 0 \end{smallmatrix} \right] , \left[ \begin{smallmatrix} a & - b g \\ b & - a \end{smallmatrix} \right] \rangle$ and $\langle\left[ \begin{smallmatrix} 0 & g' \\ 1 & 0 \end{smallmatrix} \right] , \left[ \begin{smallmatrix} a' & - b' g' \\ b' & - a' \end{smallmatrix} \right] \rangle$ are conjugate if and only if they have the same determinant.

\item \label{chargr2inv4} If $\KK$ is algebraically closed, then $G$ is conjugate to $\langle\left[ \begin{smallmatrix} 1 & 0 \\ 0 & -1 \end{smallmatrix} \right]\rangle$ or to $\langle\left[ \begin{smallmatrix} 1 & 0 \\ 0 & -1 \end{smallmatrix} \right] , \left[ \begin{smallmatrix} 0 & 1 \\ 1 & 0 \end{smallmatrix} \right] \rangle$.
\end{enumerate}

\item \label{hukfg100} If $\car(\KK) = p$, then exactly one of the following cases occurs:
Either $G$ is conjugate to a subgroup of 
$\Delta_\KK$ containing $\left[\begin{smallmatrix}  1 & 1  \\
0 &   1
\end{smallmatrix} \right]$, or $p = 2$ and $G$ is conjugate to a subgroup of $\mathcal{C}_g$ 
containing $\left[\begin{smallmatrix}  0 & g  \\
1 &    0
\end{smallmatrix} \right]$, where
$g \in \KK^\times$ is not a square.

\begin{enumerate}[label=(\alph*), left=0pt]
\item \label{hukfg1} Furthermore, two subgroups of $\Delta_\KK$ 
are conjugate in $\PGL(2, \KK)$ if and only if they are conjugate by a diagonal element of $\PGL(2, \KK)$.

\item \label{hukfg2} Assume that $G$ is not conjugate to a subgroup
of $\Delta_\KK$. If $G' \subset \PGL(2, \KK)$ is another $2$-elementary subgroup, then 
$G$, $G'$ are conjugate in $\PGL(2, \KK)$ if and only if they have the same determinant.

\item \label{hukfg3} If $\KK$ is algebraically closed, then $G$ is conjugate to a subgroup of 
$\Delta_\KK$.
\end{enumerate}

\end{enumerate}
\end{prop}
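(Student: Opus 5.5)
The plan is to handle the three characteristic regimes separately. Throughout we work with representatives $B\in\GL(2,\KK)$ with $B^p=\lambda I$, and we reduce to centralizer computations via Lemma~\ref{commutateur}, Lemma~\ref{LemCon} and Lemma~\ref{buda10}.

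\textbf{Case \eqref{orsaydublin11}, $\car(\KK)\neq p>2$.} First I show that any order-$p$ element $A$ is diagonalizable over $\KK$. Since $\det(B)^p=\lambda^2$ and $p$ is odd, the argument of Lemma~\ref{buda10}\eqref{Lemakro2} lets us rescale $B$ so that $B^p=I$. The eigenvalues are then $p$-th roots of unity, and they are distinct because $A\neq I$.

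Next I need these eigenvalues to lie in $\KK$. The characteristic polynomial has degree $2$. Its roots $\zeta^a,\zeta^b$ are Galois-conjugate if they are not rational. A conjugation $\zeta\mapsto\zeta^k$ swapping them would give $k^2\equiv1$, so $k\equiv -1$ and $\zeta^b=\zeta^{-a}$. The trace $\zeta^a+\zeta^{-a}$ then lies in $\KK$, but the product $\zeta^{a}\zeta^{-a}=1$ can only be arranged after the rescaling above.

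I expect this to be the main obstacle, and I have not fully resolved it. I would first try to treat it via the projective rescaling freedom. If that fails, I would restrict to fields containing $\xi$, which is the situation actually used later, since $\kk\subseteq\KK$ is algebraically closed and so $\xi\in\KK$.

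With $\xi\in\KK$, $A$ is conjugate to $\mathrm{diag}(1,\xi^a)$. Its centralizer is the diagonal torus together with $\tau$ when $\xi^a=-1$, which does not occur for $p>2$. So the centralizer is the torus $\KK^\times$, whose $p$-torsion is cyclic. Hence $r=1$, and taking a suitable power of the generator gives $\mathrm{diag}(1,\xi)$.

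\textbf{Case \eqref{orsaydublin12}, $\car(\KK)\neq p=2$.} By Lemma~\ref{LemCon}\eqref{LemCon1}, one involution can be conjugated to $\left[\begin{smallmatrix}0&g\\1&0\end{smallmatrix}\right]$. The remaining elements then lie in $\mathcal C_g$, and Lemma~\ref{commutateur}\eqref{commutateurassertion3},\eqref{commutateurassertion4} describe the involutions there. This shows $r\le2$ and gives the stated normal form.

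Part~\ref{chargr2inv1} is Lemma~\ref{LemCon}\eqref{LemCon2}. Part~\ref{chargr2inv2} follows from part~\ref{chargr2inv1} together with $\det\sigma=-1$ and the fact that $-1$ is a square times\dots{} Concretely, $\sigma$ is conjugate to $\left[\begin{smallmatrix}0&1\\1&0\end{smallmatrix}\right]$ via $\left[\begin{smallmatrix}1&1\\1&-1\end{smallmatrix}\right]$.

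For part~\ref{chargr2inv3}, the determinant of a Klein group is the subgroup generated by the classes of $-g$ and $b^2g-a^2$. If two such groups have equal determinant, I would first match one involution in each group having the same determinant class. By Lemma~\ref{LemCon}\eqref{LemCon2} we may then assume both groups contain the same $\left[\begin{smallmatrix}0&g\\1&0\end{smallmatrix}\right]$. The second generators $\left[\begin{smallmatrix}a&-bg\\b&-a\end{smallmatrix}\right]$ must then be conjugated by an element of $\mathcal C_g$.

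Conjugating by $\left[\begin{smallmatrix}c&dg\\d&c\end{smallmatrix}\right]$ multiplies $a+b\sqrt g$ by $(c+d\sqrt g)^2/N(c+d\sqrt g)$. So the question becomes whether the norm classes of $\KK(\sqrt g)/\KK$ can be matched. For $\KK=\kk(x)$ I would use Tsen's theorem: every element of $\kk(x)$ is a norm from $\kk(x)(\sqrt g)$, since that field is $C_1$. This, together with some care about which involution is matched, is where the field hypothesis enters. Part~\ref{chargr2inv4} follows because over an algebraically closed field every $g$ is a square.

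\textbf{Case \eqref{hukfg100}, $\car(\KK)=p$.} Take $A\in G$. If $A$ is trigonalizable, conjugate it into $\Delta_\KK$ using Lemma~\ref{buda10}; by Lemma~\ref{buda3}\eqref{bla1} it becomes $\left[\begin{smallmatrix}1&1\\0&1\end{smallmatrix}\right]$. Lemma~\ref{buda3}\eqref{bla3} then puts the whole of $G$ in $\Delta_\KK$.

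Otherwise $p=2$ and $\det A$ is a non-square, by Lemma~\ref{buda10}\eqref{Lemakro3}. Then $G\subset\mathcal C_g$ with $g$ a non-square. The two cases are exclusive because every element of $\Delta_\KK$ has trivial determinant.

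For part~\ref{hukfg1}, Lemma~\ref{buda3}\eqref{bla2} shows that a conjugator lies in $U_\KK$, and its translation part acts trivially on $\Delta_\KK$, so it can be replaced by a diagonal element. Part~\ref{hukfg2} is Proposition~\ref{buda35}. Part~\ref{hukfg3} holds because over an algebraically closed field every element is a square.
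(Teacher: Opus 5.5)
The main claim of (2), parts (2)(a),(b), and all of (3) agree with the paper's proof. In (3) you use the centralizer $\Delta_\KK$ of $\left[\begin{smallmatrix}1&1\\0&1\end{smallmatrix}\right]$ (Lemma~\ref{buda3}\eqref{bla3}) instead of simultaneous trigonalization, and you add the exclusivity argument via $\det$; both are fine.

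For (1) and (2)(c) the paper simply cites Beauville's Lemma 2.1, whereas you argue directly. In (1) the obstacle you flag is not real. By the paper's standing convention $\kk\subseteq\KK$, with $\kk$ algebraically closed and $\car(\kk)=\car(\KK)\neq p$, so $\xi\in\KK$. With that, your diagonalization and centralizer argument is complete. No rescaling trick could replace this hypothesis: $\left[\begin{smallmatrix}0&-1\\1&-1\end{smallmatrix}\right]$ has order $3$ in $\PGL(2,\QQ)$, but $\PGL(2,\QQ)$ has no diagonal element of order $3$.

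The genuine gap is the last step of (2)(c). Your reduction is correct. Put $L=\KK[t]/(t^2-g)$, so that $\mathcal C'_g\cong L^\times$. The involution $\left[\begin{smallmatrix}a&-bg\\b&-a\end{smallmatrix}\right]$ is $x\mapsto\beta\bar x$ with $\beta=a+bt$, and conjugating by $\alpha\in L^\times$ replaces $\beta$ by $(\alpha/\bar\alpha)\beta$.

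However, Tsen's theorem (every element of $\KK$ is a norm from $L$) does not produce a conjugator. Equal determinants give $N(\beta')=\lambda^2N(\beta)$ for some $\lambda\in\KK^\times$. Hence $u=\beta'/(\lambda\beta)$ has norm $1$, and what you need is $u=\alpha/\bar\alpha$ for some $\alpha\in L^\times$. That is Hilbert's Theorem 90: take $\alpha=1+u$ if $u\neq-1$, and $\alpha=t$ if $u=-1$. It holds over every field of characteristic $\neq2$, split case included, so this is not where any hypothesis on $\KK$ enters.

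The ``care about which involution is matched'' must also actually be carried out.
\begin{itemize}
\item Since $\det(G)=\det(G')$ in $\KK^\times/(\KK^\times)^2$, there is an isomorphism $\psi\colon G\to G'$ with $\det\circ\psi=\det$. This holds because any two surjections from $2$-dimensional $\mathbb F_2$-spaces onto the same image differ by an isomorphism.
\item Conjugate a generator $A$ and $\psi(A)$ to the same $\left[\begin{smallmatrix}0&g\\1&0\end{smallmatrix}\right]$ using Lemma~\ref{LemCon}.
\item Then apply Hilbert 90 to the other generator $B$ and $\psi(B)$.
\end{itemize}

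As written, (c) is not proved, and neither is the case $r=2$ of (d), which depends on it.
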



\begin{proof} $ $

\begin{enumerate}[left=0pt]
\item This can be found in \cite[Lemma~2.1(b)]{beauville}.
\item Let $A \in G \setminus \{ I \}$. 
By Lemma~\ref{LemCon}\eqref{LemCon2}, $A$ is conjugate to 
$\left[\begin{smallmatrix}  0 & g \\
1 &    0
\end{smallmatrix} \right]$, and thus we may assume 
$\left[\begin{smallmatrix}  0 & g \\
    1 &    0
    \end{smallmatrix} \right] \in G \subset \mathcal{C}_g$.
By Lemma~\ref{commutateur}\eqref{commutateurassertion2}\eqref{commutateurassertion3}\eqref{commutateurassertion4} it follows that $r \leq 2$. \\
\ref{chargr2inv1}: This follows form Lemma~\ref{LemCon}\eqref{LemCon2}. \\
\ref{chargr2inv2}: This follows from~\ref{chargr2inv1}, since 
$\left[\begin{smallmatrix}  0 & 1 \\
1 &   0
\end{smallmatrix} \right]$,
$\left[\begin{smallmatrix}  1 & 0 \\
0 &    -1
\end{smallmatrix} \right]$ are conjugate.
\\
\ref{chargr2inv3}: This follows from \cite[Lemma 2.1.(c)]{beauville}.  \\
\ref{chargr2inv4}: This is a consequence of~\ref{chargr2inv2} and~\ref{chargr2inv3}.

\item 
Assume first that every element of $G$ can be trigonalized. 
As $G$ is abelian, we get that $G$ can be trigonalized. Using Lemma~\ref{buda10}\eqref{Lemakro1.5}
and Lemma~\ref{buda3}\eqref{bla1}
we get that $G$ is conjugate to a subgroup of $\Delta_\KK$ that contains
$\left[\begin{smallmatrix}
    1 & 1 \\ 0 & 1
\end{smallmatrix}\right]$. If $G$ contains an element that cannot be
trigonalized, then $p = 2$ (see Lemma~\ref{buda10}\eqref{Lemakro2}) and 
we may assume that $G$ contains $\left[\begin{smallmatrix}
    0 & g \\ 1 & 0
\end{smallmatrix}\right]$ for some non-square $g \in \KK^\times$ 
(see Lemma~\ref{LemCon}\eqref{LemCon1} and Lemma~\ref{buda10}\eqref{Lemakro3}).
In particular, $G$ is contained in $\mathcal{C}_g$. \\
\ref{hukfg1}: If $H, H'$ are non-trivial subgroups of $\Delta_\KK$ with $PHP^{-1} = H'$
for some $P \in \PGL(2, \KK)$, then $P$ is a triangular matrix by Lemma~\ref{buda3}\eqref{bla2}.
Using Lemma~\ref{buda3}\eqref{bla1} we may replace $P$ by a diagonal matrix. \\
\ref{hukfg2}: We have $p = 2$ and we may assume that 
$\left[\begin{smallmatrix}
    0 & g \\ 1 & 0
\end{smallmatrix}\right] \in G$ for a non-square
$g \in \KK^\times$. In particular, $\left[\begin{smallmatrix}
    0 & g \\ 1 & 0
\end{smallmatrix}\right]$ cannot be trigonalized, see 
Lemma~\ref{buda10}\eqref{Lemakro3}.
Proposition~\ref{buda35} implies that $G$, $G'$ are conjugate. \\
\ref{hukfg3}: We use that every element in $\KK$ is a square
if $\KK$ is algebraically closed.
\end{enumerate}
\end{proof}

\newpage

\section{Some subgroups of $\PGL(4, \kk)$ and $\PGL(5, \kk)$: Two specific cases}  \label{23new} $ $

The goal of this section is to classify up to conjugacy:
\begin{itemize}
\item The $3$-elementary non-cyclic subgroups of $\PGL(4,\kk)$ in characteristic not $3$. This is done in Subsection \ref{subsection165} and will be used in Section \ref{cubicsection}.
\item The $2$-elementary non-cyclic subgroups of $\PGL(5,\kk)$ in characteristic not $2$. This is done in Subsection \ref{subsection166} and will be used in Section \ref{quarticsection}.
\end{itemize}

We start this section by introducing the notion of the eigenvalue multiplicity map $\nu_G^n$ (Definition \ref{definition1611}) in Subsection \ref{apowerfultool}, which will serve as a powerful tool in both Subsections \ref{subsection165} and \ref{subsection166}.

\subsection{The multiplicity of the eigenvalues: A powerful tool for the classification of $p$-elementary subgroups of $\PGL(n, \kk)$} \label{apowerfultool} $ $

\begin{defi} \label{definition1611} $ $

Let $\overline{A} \in \PGL(n,\kk)$ where $A \in \GL(n,\kk)$.
We define $\rho^n(\overline{A})$ to be the multiplicities of the eigenvalues of $A$ in decreasing order, which does not depend on the choice of $A$.

Let $G \subset \PGL(n,\kk)$ be a finite subgroup. We define the following map:
$$\footnotesize \begin{array}{ccccc}
\nu^n_G & : & \{ (a_k)_k \mbox{ finite sequence } | \sum a_k = n , a_k \geq a_{k+1} , a_k \in \mathbb{N}_{\geq 1} \} &    \to & \mathbb{N}_{\geq 0} \\

 & & (a_k)_k & \mapsto & |\{ g \in G | \rho^n(g) = (a_k) \}| \\
\end{array}$$
\end{defi}

\begin{ex} $ $
 Let $\overline{A} = [1:1:j:j^2] \in \PGL(4,\kk)$. Then
$\rho^4(A) = (2,1,1)$.
\end{ex}
 
 \begin{remark} \label{laremarque} $ $
 
The tuple $\rho^n(G)$ and the map $\nu^n_G$ are invariant by conjugation inside $\PGL(n,\kk)$.
\end{remark}

\label{multiplicitieseigenvalues}

\begin{remark} $ $

One can observe that the method employed in subsections \ref{subsection165} and \ref{subsection166} to classify $3$-elementary non-cyclic subgroups of $\PGL(4,\kk)$ and $2$-elementary non-cyclic subgroups of $\PGL(5,\kk)$ is more general, as it can be applied to various other subgroups of $\PGL(n,\kk)$. We limit ourselves to these two cases because we will not need more for our study.
\end{remark}

\bigskip

\subsection{The $3$-elementary non-cyclic subgroups of $\PGL(4,\kk)$} \label{subsection165}
$ $

\medskip

We assume $\car(\kk) \neq 3$.
The result of this subsection is Proposition \ref{3elempgl4}.

\begin{defi} $ $

Let $(a,b,c,d),(e,f,g,h) \in \kk^4 \setminus \{(0,0,0,0)\}$.
We denote by $\left[ \begin{smallmatrix} a & b & c & d \\ e & f & g & h \end{smallmatrix} \right]$ the subgroup of $\PGL(4,\kk)$ generated by 
$\left[ \begin{smallmatrix} a & 0 & 0 & 0 \\ 0 & b & 0 & 0 \\ 0 & 0 & c & 0  \\ 0 & 0 & 0 & d \end{smallmatrix} \right]$ and $\left[ \begin{smallmatrix} e & 0 & 0 & 0 \\ 0 & f & 0 & 0 \\ 0 & 0 & g & 0  \\ 0 & 0 & 0 & h \end{smallmatrix} \right]$.
\end{defi}

\begin{defi} \label{defi1620} $ $
We define the following $3$-elementary subgroups of $\PGL(4,\kk)$:
\begin{itemize}

\item $G_{31} = \left[ \begin{smallmatrix} j & j & 1 & 1 \\ 1 & j & j & 1 \end{smallmatrix} \right]  \simeq (\mathbb{Z}/3)^2$

\item $G_{32} = \left[ \begin{smallmatrix} j & 1 & 1 & 1 \\ 1 & j & 1 & 1 \end{smallmatrix} \right] \simeq (\mathbb{Z}/3)^2$

\item $G_{33} = \left[ \begin{smallmatrix} j & 1 & 1 & 1 \\ 1 & j & j^2 & 1 \end{smallmatrix} \right] \simeq (\mathbb{Z}/3)^2$

\item $G_{34} = \langle [j:1:1:1],[1:j:1:1],[1:1:j:1] \rangle \simeq (\mathbb{Z}/3)^3$
\end{itemize}

\end{defi}

\begin{lemme} \label{3elempgl4lemma} $ $
\begin{itemize}
\item The subgroups $\left[ \begin{smallmatrix} j & j & 1 & 1 \\ 1 & j & j & 1 \end{smallmatrix} \right]$, $\left[ \begin{smallmatrix} j & j & 1 & 1 \\ j & 1 & j & 1 \end{smallmatrix} \right]$ and $\left[ \begin{smallmatrix} 1 & j & j & 1 \\ j & 1 & j & 1 \end{smallmatrix} \right]$ are conjugate by permutation matrices of $\PGL(4,\kk)$ and pairwise distinct.

\item The subgroups $\left[ \begin{smallmatrix} j & 1 & 1 & 1 \\ 1 & j & 1 & 1 \end{smallmatrix} \right]$, $\left[ \begin{smallmatrix} j & 1 & 1 & 1 \\ 1 & 1 & j & 1 \end{smallmatrix} \right]$, $\left[ \begin{smallmatrix} j & 1 & 1 & 1 \\ 1 & 1 & 1 & j \end{smallmatrix} \right], \left[ \begin{smallmatrix} 1 & j & 1 & 1 \\ 1 & 1 & j & 1 \end{smallmatrix} \right], \left[ \begin{smallmatrix} 1 & j & 1 & 1 \\ 1 & 1 & 1 & j \end{smallmatrix} \right]$ and $\left[ \begin{smallmatrix} 1 & 1 & j & 1\\ 1 & 1& 1 & j \end{smallmatrix} \right]$ are conjugate by permutation matrices of $\PGL(4,\kk)$
and pairwise distinct.

\item The subgroups $\left[ \begin{smallmatrix} j & 1 & 1 & 1 \\ 1 & j & j^2 & 1 \end{smallmatrix} \right] , \left[ \begin{smallmatrix} 1& j & 1 & 1 \\ 1 & 1& j & j^2 \end{smallmatrix} \right] , \left[ \begin{smallmatrix} 1 & 1 & j & 1 \\ j & j^2 & 1 & 1 \end{smallmatrix} \right] , \left[ \begin{smallmatrix} 1 & 1 & 1 & j \\ 1 & j & j^2 & 1 \end{smallmatrix} \right]$ are conjugate by permutation matrices of $\PGL(4,\kk)$ and pairwise distinct.
\end{itemize}

Furthermore, $G_{31}, G_{32}$ and $G_{33}$ are pairwise non-conjugate in $\PGL(4,\kk)$.
\end{lemme}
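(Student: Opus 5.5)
Within each bullet, conjugacy and distinctness can be checked directly. For distinctness, I will list the group elements: each subgroup is a $(\ZZ/3)^2$ with $8$ nontrivial elements, all diagonal, and two diagonal subgroups of $\PGL(4,\kk)$ coincide exactly when their element sets (taken up to scalars) coincide. For conjugacy, note that a permutation matrix $P_\sigma$ conjugates a diagonal matrix $[a_1:a_2:a_3:a_4]$ to the matrix with permuted entries $[a_{\sigma^{-1}(1)}:\dots:a_{\sigma^{-1}(4)}]$. So conjugating by $P_\sigma$ just permutes the columns of the $2\times 4$ symbol describing the subgroup. It is therefore enough to exhibit, for each listed subgroup, a permutation carrying the first one of its bullet to it, possibly after replacing a generator by its square, by its product with the other generator, or by a scalar multiple.

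For example, $\left[\begin{smallmatrix} j&j&1&1\\ j&1&j&1\end{smallmatrix}\right]$ is obtained from $G_{31}$ by swapping the first two columns. For $\left[\begin{smallmatrix} 1&j&j&1\\ j&1&j&1\end{smallmatrix}\right]$ one can use a $3$-cycle on the first three coordinates. In the third bullet some entries are $j^2$, and the generators must be renormalised. For instance, $[1:1:j:1]$ together with $[j:j^2:1:1]$ must be identified with a permutation of $\{[j:1:1:1],[1:j:j^2:1]\}$ up to the group generated. This is done by checking that the set of elements of the form $[\,\cdot\,]$ with exactly one entry $j$ and three entries $1$ (up to scalar) matches.

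To show that $G_{31},G_{32},G_{33}$ are pairwise non-conjugate, I will use the invariant $\nu^4_G$ of Remark \ref{laremarque}, which counts the elements of each eigenvalue-multiplicity type. The computations are as follows.

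In $G_{31}$ the elements are $[j:j:1:1]$, $[1:j:j:1]$, their squares, and the products $[j:j^2:j:1]$, $[j^2:1:j:1]$ together with their squares. Their types are $(2,2)$ for the first four and $(2,1,1)$ for the last four.

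In $G_{32}$ the elements are $[j:1:1:1]$, $[1:j:1:1]$ and their squares, all of type $(3,1)$. The products $[j:j:1:1]$ and $[j:j^2:1:1]$, with their squares, have type $(2,2)$ and $(2,1,1)$ respectively. This gives counts $4,2,2$.

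In $G_{33}$ the generator $[j:1:1:1]$ and its square have type $(3,1)$, and $[1:j:j^2:1]$ and its square have type $(2,1,1)$. The products $[j:j:j^2:1]\sim[1:1:j:j^2]$ and $[j^2:j:j^2:1]$ both have type $(2,1,1)$, and so do their squares.

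So the value of $\nu^4$ at $(3,1)$ is $0$ for $G_{31}$, $4$ for $G_{32}$ and $2$ for $G_{33}$. These three groups are therefore pairwise non-conjugate.

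The only real obstacle is bookkeeping in the third bullet, where generators must be renormalised. I would handle it systematically by comparing the full sets of $8$ elements, normalised so that the last coordinate is $1$, rather than comparing generators.
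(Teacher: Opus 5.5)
Your proposal is correct and follows the paper's route. Permutation matrices permute the columns, which the paper simply calls trivial, and distinctness within each bullet follows from comparing element sets. Your counts of $\nu^4$ agree with the paper's table, and the value at $(3,1)$, namely $0,4,2$, already separates $G_{31},G_{32},G_{33}$.

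One small point on the third bullet: showing that the elements with a single entry $j$ match does not by itself prove two subgroups are equal. Your fallback of comparing all eight normalised elements is the step that actually settles it.
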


\begin{proof} $ $
The conjugations by permutation matrices are trivial.

In order to prove that $G_{31}, G_{32}$ and $G_{33}$ are pairwise non-conjugate in $\PGL(4,\kk)$, we will use the maps $\nu^4_G$ from Definition \ref{definition1611}.
We observe that:
\begin{center}
    \begin{minipage}[t]{0.3\textwidth}
        \[
        \nu_{G_{31}}^4 : \begin{array}[t]{ccl}
            (4)     & \mapsto & 1 \\
            (2,2)   & \mapsto & 4 \\
            (2,1,1) & \mapsto & 4
        \end{array}
        \]
    \end{minipage}
    \hfill
    \begin{minipage}[t]{0.3\textwidth}
        \[
        \nu_{G_{32}}^4 : \begin{array}[t]{ccl}
            (4)     & \mapsto & 1 \\
            (3,1)   & \mapsto & 4 \\
            (2,2)   & \mapsto & 2 \\
            (2,1,1) & \mapsto & 2
        \end{array}
        \]
    \end{minipage}
    \hfill
    \begin{minipage}[t]{0.3\textwidth}
        \[
        \nu_{G_{33}}^4 : \begin{array}[t]{ccl}
            (4)     & \mapsto & 1 \\
            (3,1)   & \mapsto & 2 \\
            (2,1,1) & \mapsto & 6
        \end{array}
        \]
    \end{minipage}
\end{center}
Hence $G_{31}, G_{32}$ and $G_{33}$ are pairwise non-conjugate in $\PGL(4,\kk)$ using Remark \ref{laremarque}.
Finally we observe that the thirteen subgroups described in the statement are pairwise distinct.
\end{proof}

\begin{prop} \label{3elempgl4} $ $
A $3$-elementary non-cyclic subgroup of $\PGL(4,\kk)$ is conjugate to exactly one of the subgroups 
$G_{31}$, $G_{32}$, $G_{33}$, $G_{34}$ (see Definition \ref{defi1620}).
\end{prop}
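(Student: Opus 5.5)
Since $3$ does not divide $4$ and $\car(\kk)\neq 3$, Proposition \ref{prop3} lets us assume that $G$ lies in the diagonal torus of $\PGL(4,\kk)$, i.e.\ $G$ is a subgroup of the $3$-torsion group
\[
T=\langle [j:1:1:1],[1:j:1:1],[1:1:j:1]\rangle\simeq(\ZZ/3)^3 .
\]
We first check this identification. A diagonal element $[a:b:c:d]$ of order $3$ in $\PGL(4,\kk)$ satisfies $a^3=b^3=c^3=d^3$. After normalizing $d=1$, each coordinate lies in $\{1,j,j^2\}$. So $G\subseteq T$. If $r=3$ then $G=T=G_{34}$, and we are done. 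The group $T$ has order $27$, so $r\le 3$. It remains to treat $r=2$, that is, subgroups $G\simeq(\ZZ/3)^2$ of $T$. Conjugation by permutation matrices acts on $T$ by permuting the four coordinates, so we will classify the $2$-dimensional $\mathbb{F}_3$-subspaces of $T$ up to the action of $S_4$.

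Now we turn this into a combinatorial problem. Write $T\simeq \mathbb{F}_3^4/\mathbb{F}_3(1,1,1,1)$. The eigenvalue multiplicity pattern $\rho^4$ of a nontrivial element is then one of $(3,1)$, $(2,2)$ or $(2,1,1)$. For example, $(2,1,1)$ corresponds to vectors like $(0,0,1,2)$ up to adding constants. A $2$-dimensional subspace $G$ contains four cyclic subgroups. We sort by the multiset of patterns of its $8$ nontrivial elements, and count:
\begin{itemize}
\item If $G$ contains an element of type $(3,1)$, normalize it to $[j:1:1:1]$. Its centralizer is all of $T$. The second generator can be changed modulo $[j:1:1:1]$ and up to squaring, so we may take it of the form $[1:x:y:1]$, with its first coordinate normalized away. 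Up to permutations of coordinates $2,3,4$, the second generator becomes $[1:j:1:1]$ or $[1:j:j^2:1]$. These give $G_{32}$ and $G_{33}$.
\item If $G$ has no element of type $(3,1)$, pick an element of type $(2,2)$ or $(2,1,1)$ and argue similarly. One shows that $G$ is conjugate by a permutation to $G_{31}$. Possibly one also shows that the configuration with only $(2,1,1)$ elements forces a $(3,1)$ element, which would contradict the case assumption.
\end{itemize}
It is cleaner to enumerate directly. $T$ has $(27-1)(27-3)/((9-1)(9-3))=13$ subgroups of order $9$. Lemma \ref{3elempgl4lemma} exhibits $3+6+4=13$ pairwise distinct such subgroups in the three permutation orbits. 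Hence every $(\ZZ/3)^2\subset T$ is permutation-conjugate to $G_{31}$, $G_{32}$ or $G_{33}$. This counting argument is the route I would actually take: it avoids the case analysis entirely, and its only real work is verifying the count $13$ via Gaussian binomials.

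Uniqueness ("exactly one") comes from two facts. First, $G_{31},G_{32},G_{33}$ are pairwise non-conjugate by Lemma \ref{3elempgl4lemma}. Second, $G_{34}$ has order $27$, so it cannot be conjugate to any of them.

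The main subtle step is the reduction into $T$. It relies on Proposition \ref{prop3}, and on checking that every diagonal element of order $3$, after rescaling, has all entries among the cube roots of unity. This holds because the quotients of its entries are cube roots of $1$. The rest is the count above.
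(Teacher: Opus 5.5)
Your proposal is correct and is essentially the paper's proof. Both arguments reduce to the $3$-torsion of the diagonal torus via Proposition \ref{prop3}, observe that the $13$ distinct subgroups listed in Lemma \ref{3elempgl4lemma} exhaust all $13$ subgroups of order $9$, and get uniqueness from that lemma. The only cosmetic difference is that you obtain the count $13$ from the Gaussian binomial, whereas the paper uses duality with the $13$ lines of $\mathbb{F}_3^3$; your abandoned case analysis is not needed.
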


\begin{proof} $ $
Let $G$ be a $3$-elementary  non-cyclic subgroup of $\PGL(4,\kk)$. According to Proposition \ref{prop3}, such a group is conjugate to a subgroup of the diagonal torus of $\PGL(4,\kk)$. So $G \simeq (\mathbb{Z}/p)^r$ where $r=2, 3$. If $r=3$, then $G = G_{34}$. Let's assume now that $r=2$.
By duality, we have a bijection between the set of $\mathbb{F}_3$-vector subspaces of $\mathbb{F}_3^3$ of dimension $1$, and the set of $\mathbb{F}_3$-vector subspaces of $\mathbb{F}_3^3$ of dimension $2$, where $\mathbb{F}_3$ is the field with three elements.
Hence $(\mathbb{Z}/3)^3$ has the same number of subgroups isomorphic to $(\mathbb{Z}/3)^2$ and isomorphic to $\mathbb{Z}/3$.
Hence
$(\mathbb{Z}/3)^3$ has thirteen subgroups isomorphic to $(\mathbb{Z}/3)^2$.
By identifying $(\mathbb{Z}/3)^3$ to the $3$-torsion subgroup of the diagonal torus of $\PGL(4,\kk)$, we get an action of the symmetric group $\mathfrak{S}_4$ on $F$, the set of subgroups of $(\mathbb{Z}/3)^3$ isomorphic to $(\mathbb{Z}/3)^2$. According to Lemma \ref{3elempgl4lemma}, the orbits of $G_{31}, G_{32}, G_{33}$ under this action are distinct and respectively of size at least $3, 6, 4$. Since $3+6+4 = 13$, it shows that the orbits are exactly of size $3, 6, 4$, and that every subgroup of $\PGL(4,\kk)$ isomorphic to $(\mathbb{Z}/3)^2$ is conjugate to $G_{31}, G_{32}$ or $G_{33}$. Furthermore, these three groups are not pairwise conjugate by Lemma~\ref{3elempgl4lemma}.
\end{proof}

\medskip

\subsection{The $2$-elementary non-cyclic subgroups of $\PGL(5,\kk)$} \label{subsection166}
$ $

\medskip

We assume $\car(\kk) \neq 2$.
The result of this subsection is Proposition \ref{quarticz}. 

\begin{defi} \label{defi1623} $ $
We define the following $2$-elementary subgroups of $\PGL(4,\kk)$:
\begin{itemize}[label=\textbullet]
\item $G_{41} = \langle [-1 : 1 : 1 : 1 : 1 ] , [ 1 : -1 : 1 : 1 : 1  ] \rangle \simeq (\mathbb{Z}/2)^2$.
\item $G_{42} = \langle [1 : -1 : -1 : 1 : 1 ] , [ 1 : 1 : 1 : -1 : -1  ] \rangle \simeq (\mathbb{Z}/2)^2$.
\item $G_{43} = \langle [-1 : -1 : 1 : 1 : 1 ] , [ -1 : 1 : -1 : 1 : 1] \rangle \simeq (\mathbb{Z}/2)^2$.
\item $G_{44} =  \langle [1 : -1 : -1 : 1 : 1 ] , [1 : -1 : 1 : -1 : 1 ] , [1 : -1 : 1 : 1 : -1] \rangle \simeq (\mathbb{Z}/2)^3$.
\item $G_{45} =  \langle [-1 : 1 : 1 : 1 : 1] , [1 : -1 : 1 : 1 :1] , [1:1:-1:1:1] \rangle \simeq (\mathbb{Z}/2)^3$.
\item $G_{46} = Z \simeq (\mathbb{Z}/2)^4$.
\end{itemize}
\end{defi}
\begin{prop} \label{quarticz} $ $

Every $2$-elementary non-cyclic subgroup of the diagonal torus of $\PGL(5,\kk)$ is conjugate by a permutation matrix to one of the six subgroups from Definition \ref{defi1623}.
Furthermore these six subgroups are pairwise non-conjugate in $\PGL(5,\kk)$.
\end{prop}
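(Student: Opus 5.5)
We identify the $2$-torsion subgroup $Z$ of the diagonal torus of $\PGL(5,\kk)$ with $\mathbb{F}_2^5/\langle(1,1,1,1,1)\rangle \simeq \mathbb{F}_2^4$. An element $[\epsilon_0:\dots:\epsilon_4]$ with $\epsilon_k = \pm 1$ corresponds to the subset $S \subseteq \{0,\dots,4\}$ of indices where $\epsilon_k=-1$, taken up to complement. Every non-trivial element therefore has a unique representative with $|S|\in\{1,2\}$. We call it of \emph{type $1$} ($\rho^5 = (4,1)$) if $|S|=1$, and of \emph{type $2$} ($\rho^5=(3,2)$) if $|S|=2$. There are $5$ elements of type $1$ and $10$ of type $2$. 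The symmetric group $\mathfrak{S}_5$ acts on $Z$ by permutation matrices, and conjugation by permutation matrices realises this action. The plan is to classify the non-cyclic subgroups $H\subseteq Z$ (of order $4$, $8$, $16$) up to this action. We then separate the resulting classes with the invariant $\nu^5_G$ of Definition~\ref{definition1611}, using Remark~\ref{laremarque}, in the same way as in Lemma~\ref{3elempgl4lemma} and Proposition~\ref{3elempgl4}.

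\textbf{Rank $4$.} Here $H=Z=G_{46}$. \textbf{Rank $2$.} Write $H=\{1,a,b,ab\}$. Two type $1$ elements $\{k\},\{l\}$ multiply to the type $2$ element $\{k,l\}$. A type $1$ element $\{k\}$ and a type $2$ element $\{l,m\}$ multiply to a type $2$ element if $k\in\{l,m\}$, and to the complement of a $3$-set, i.e.\ a type $2$ element, if $k\notin\{l,m\}$. Two type $2$ elements with disjoint supports $\{k,l\},\{m,n\}$ multiply to the complement of the fifth index, which is type $1$. Two type $2$ elements sharing exactly one index multiply to a type $2$ element. So $H$ has at most two type $1$ elements, and the possibilities up to $\mathfrak{S}_5$ are:
\begin{itemize}
\item two type $1$ elements, giving $G_{41}$, with types $(1,1,2)$;
\item one type $1$ element $\{k\}$ together with the disjoint type $2$ elements $\{l,m\},\{n,o\}$, giving $G_{42}$, with types $(1,2,2)$;
\item three type $2$ elements $\{0,1\},\{0,2\},\{1,2\}$, giving $G_{43}$, with types $(2,2,2)$.
\end{itemize}
Within each configuration the transitivity of $\mathfrak{S}_5$ on the relevant data (ordered pairs, partitions of a $4$-set into pairs, $3$-subsets) shows all such $H$ are permutation-conjugate. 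The multiplicity counts of types differ, so $\nu^5$ separates the three classes.

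\textbf{Rank $3$.} We use duality: index-$2$ subgroups of $Z$ are kernels of non-zero characters. Characters of $\mathbb{F}_2^5/\langle \mathbf 1\rangle$ are even subsets $T\subseteq\{0,\dots,4\}$ with $|T|\in\{2,4\}$, acting by $S\mapsto |S\cap T| \bmod 2$. The $\mathfrak{S}_5$-orbits of these characters are $|T|=2$ (10 of them) and $|T|=4$ (5 of them), so there are exactly two orbits of rank-$3$ subgroups.
\begin{itemize}
\item For $T=\{3,4\}$, the kernel contains $\{0\},\{1\},\{2\}$; this is $G_{45}$.
\item For $T=\{1,2,3,4\}$, the kernel consists of the sets $S$ with $|S\cap T|$ even; its type $1$ elements are only $\{0\}$, and its type $2$ elements are those inside $T$. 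This gives $G_{44}$, generated by $\{1,2\},\{1,3\},\{1,4\}$.
\end{itemize}
Counting type $1$ elements gives $3$ for $G_{45}$ and $1$ for $G_{44}$, so $\nu^5$ distinguishes them.

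Subgroups of different ranks are not isomorphic, so they are not conjugate. Since $\nu^5$ is a conjugation invariant in all of $\PGL(5,\kk)$, the six groups are pairwise non-conjugate.

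The only delicate step is the rank-$2$ case analysis. There we must make sure no product of two type $2$ elements was missed. This is settled by splitting on $|S_a\cap S_b|\in\{0,1\}$, as done above.
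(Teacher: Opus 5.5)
Your proof is correct in substance but reaches exhaustiveness by a different route from the paper. One line of your rank-$2$ multiplication table is wrong and needs fixing (second paragraph below).

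\textbf{Comparison with the paper.} The paper counts. It shows that $Z$ has $35$ Klein subgroups and $15$ subgroups of order $8$. It then computes the $\mathfrak{S}_5$-stabilizers of the representatives ($\mathfrak{S}_2\times\mathfrak{S}_3$, $D_8$, $\mathfrak{S}_3\times\mathfrak{S}_2$, $\mathfrak{S}_4$, $\mathfrak{S}_3\times\mathfrak{S}_2$). From $10+15+10=35$ and $5+10=15$ it concludes that the listed orbits exhaust everything, and it separates them with $\nu^5$ exactly as you do. You classify directly instead:
\begin{itemize}
\item in rank $2$, by the possible type patterns of the three non-trivial elements;
\item in rank $3$, by identifying index-$2$ subgroups with non-zero even subsets $T$ and reading off the two orbits $|T|=2$ and $|T|=4$.
\end{itemize}
Your version avoids stabilizer computations and describes each class intrinsically: a $2$-subset, a point with a pairing of the other four indices, a $3$-subset, or a character $T$. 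The paper's version is more mechanical and depends on getting the stabilizers and total counts right.

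\textbf{The wrong product rule.} If $k\in\{l,m\}$, say $k=l$, then $\{k\}\cdot\{k,m\}=\{m\}$ is of type $1$, not type $2$. Only for $k\notin\{l,m\}$ is the product of type $2$, namely the pair complementary to $\{k,l,m\}$. As you wrote it, your table would allow a Klein subgroup containing $\{k\}$ and $\{k,m\}$ with only one type-$1$ element, and that configuration is not on your list.

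With the correct rule, such a subgroup has two type-$1$ elements and falls under $G_{41}$. If $H$ has exactly one type-$1$ element $\{k\}$, then neither type-$2$ element contains $k$. That is exactly what forces the two disjoint pairs of $G_{42}$. After this correction, your classification and the $\nu^5$ separation are complete.
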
 


\begin{proof} $ $
 Let $G$ be a $2$-elementary non-cyclic subgroup of the diagonal torus of $\PGL(5,\kk)$. Let $r \in \{2,3,4\}$ such that $G \simeq (\mathbb{Z}/2)^r$. 
\begin{itemize}
\item Case $r=2$:
Let $F_2$ be the set of subgroups of $(\mathbb{Z}/2)^4$ isomorphic to  $(\mathbb{Z}/2)^2$.
We define the following map:
$$\begin{array}{ccccc}
f_2  & : & \binom{(\mathbb{Z}/2)^4 \setminus \{0\} }{2} & \to & F_2 \\
 & & \{x,y\} & \mapsto & \langle x,y \rangle \\
\end{array}$$ where $\binom{(\mathbb{Z}/2)^4 \setminus \{0\} }{2}$ denotes the set of subsets of $(\mathbb{Z}/2)^4 \setminus \{0\}$ of size two.
The map $f_2$ is surjective and each subgroup in $F_2$ has three antecedents. So: 
$$|F_2| = \dfrac{|\binom{(\mathbb{Z}/2)^4 \setminus \{0\} }{2}|}{3} = 35 .$$
By identifying $(\mathbb{Z}/2)^4$ to the diagonal torus of $\PGL(5,\kk)$, we get an action of $\mathfrak{S}_5$ on $F_2$.
We see that the stabilizer of $G_{41}$ is: $$\langle (12) \rangle \times \{ \sigma | \sigma(1) = 1 , \sigma(2) = 2 \} \simeq \mathfrak{S}_2 \times \mathfrak{S}_3,$$ hence the orbit of $G_{41}$ is of size $\dfrac{|\mathfrak{S}_5|}{| \mathfrak{S}_2 \times \mathfrak{S}_3|} = \dfrac{120}{12} = 10$.
We see that the stabilizer of $G_{42}$ is $\langle (23) , (45) , (24)(35) \rangle \simeq D_8$, hence the orbit of $G_{42}$ is of size $\dfrac{|\mathfrak{S}_5|}{| D_8 |} = \dfrac{120}{8} = 15$.
We see that the stabilizer of $G_{43}$ is: $$ \{\sigma | \{ \sigma(4) ,\sigma(5) \} = \{ 4 , 5 \} \}  \simeq  \mathfrak{S}_3 \times \mathfrak{S}_2,$$ hence the orbit of $G_{43}$ is of size $\dfrac{|\mathfrak{S}_5|}{|\mathfrak{S}_3 \times \mathfrak{S}_2|} = \dfrac{120}{12} = 10$.
Since $10 + 15 + 10 = 35$, every Klein subgroup of $\PGL(5,\kk)$ is conjugate to one of these subgroups.

\item Case $r=3$:
%
%
By duality, we have a bijection between the set of $\mathbb{F}_2$-vector subspaces of $\mathbb{F}_2^4$ of dimension $1$, and the set of $\mathbb{F}_2$-vector subspaces of $\mathbb{F}_2^4$ of dimension $3$.
Hence $(\mathbb{Z}/2)^4$ has the same number of subgroups isomorphic to $(\mathbb{Z}/2)^3$ and isomorphic to $\mathbb{Z}/2$. Hence $(\mathbb{Z}/2)^4$ has fifteen subgroups isomorphic to $(\mathbb{Z}/2)^3$.
By identifying $(\mathbb{Z}/2)^4$ with the diagonal torus of $\PGL(5,\kk)$, we get an action of $\mathfrak{S}_5$ on $F_3$, the set of subgroups of $(\mathbb{Z}/2)^4$ isomorphic to $(\mathbb{Z}/2)^3$.
We see that the stabilizer of $G_{44}$ is $\{\sigma | \sigma(1) = 1 \} \simeq \mathfrak{S}_4$, hence the orbit of $G_{44}$ is of size $\dfrac{|\mathfrak{S}_5|}{|\mathfrak{S}_4|} = \dfrac{120}{24} = 5$.
We see that the stabilizer of $G_{45}$ is: $$\{\sigma | \{ \sigma(4) ,\sigma(5) \} = \{ 4 , 5 \} \} \simeq \mathfrak{S}_3 \times \mathfrak{S}_2,$$ hence the orbit of $G_{45}$ is of size $\dfrac{|\mathfrak{S}_5|}{|\mathfrak{S}_3 \times \mathfrak{S}_2|} = \dfrac{120}{12} = 10$.
Since $10 + 5 = 15$, we get that every subgroup of $\PGL(5,\kk)$ isomorphic to $(\mathbb{Z}/2)^3$ is conjugate to one of these subgroups.
\item Case $r=4$:
Then we have $G = G_{46}$. 

\end{itemize}

In order to prove that $G_{41}, G_{42}, G_{43}, G_{44},  G_{45}$ and $G_{46}$ are pairwise non-conjugate in $\PGL(5,\kk)$, we will use the maps $\nu^5_G$ 
from Definition \ref{definition1611}.
We observe that:
\begin{center}
    \begin{minipage}[t]{0.3\textwidth}
        \[
        \nu_{G_{41}}^5 : \begin{array}[t]{ccl}
            (5)   & \mapsto & 1 \\
            (4,1) & \mapsto & 2 \\
            (3,2) & \mapsto & 1
        \end{array}
        \]
    \end{minipage}
    \hfill
    \begin{minipage}[t]{0.3\textwidth}
        \[
        \nu_{G_{42}}^5 : \begin{array}[t]{ccl}
            (5)   & \mapsto & 1 \\
            (4,1) & \mapsto & 1 \\
            (3,2) & \mapsto & 2
        \end{array}
        \]
    \end{minipage}
    \hfill
    \begin{minipage}[t]{0.3\textwidth}
        \[
        \nu_{G_{43}}^5 : \begin{array}[t]{ccl}
            (5)   & \mapsto & 1 \\
            (3,2) & \mapsto & 3
        \end{array}
        \]
    \end{minipage}
\end{center}

\begin{center}
    \begin{minipage}[t]{0.45\textwidth}
        \[
        \nu_{G_{44}}^5 : \begin{array}[t]{ccl}
            (5)   & \mapsto & 1 \\
            (4,1) & \mapsto & 1 \\
            (3,2) & \mapsto & 6
        \end{array}
        \]
    \end{minipage}
    \hfill
    \begin{minipage}[t]{0.45\textwidth}
        \[
        \nu_{G_{45}}^5 : \begin{array}[t]{ccl}
            (5)   & \mapsto & 1 \\
            (4,1) & \mapsto & 3 \\
            (3,2) & \mapsto & 4
        \end{array}
        \]
    \end{minipage}
\end{center}
Hence our conclusion using Remark \ref{laremarque}.
\end{proof}

\newpage

\section{The $p$-elementary subgroups of $\PGL(3,\kk)$ when $\car(\kk) = p$}  \label{24new} $ $

We assume $\car(\kk) = p > 0$ in this section. 

We will not use this section for our study but we include it for completeness.

\begin{lemme} \label{lis0} 
Let $a,b,c, a', b', c' \in \KK$.

Let $A = \left[\begin{smallmatrix}  1 & a & b  \\
 & 1 & c \\
 & & 1
\end{smallmatrix} \right] \in \GL(3,\KK), A' = \left[\begin{smallmatrix}  1 & a' & b'  \\
 & 1 & c' \\
 & & 1
\end{smallmatrix} \right] \in \GL(3,\KK)$.
Then:
\begin{enumerate}
\item \label{lis1} Let $\alpha, \gamma \in \KK^\times$. Then, $\left[\begin{smallmatrix}  \alpha \gamma & &   \\
 & \gamma & \\
 & & 1
\end{smallmatrix} \right] \left[\begin{smallmatrix}  1 & a & b  \\
 & 1 & c \\
 & & 1
\end{smallmatrix} \right]  \left[\begin{smallmatrix}  \alpha \gamma & &   \\
 & \gamma & \\
 & & 1
\end{smallmatrix} \right]^{-1} = \left[\begin{smallmatrix}  1 & \alpha a & \alpha \gamma b  \\
 & 1 & \gamma c \\
 & & 1
\end{smallmatrix} \right]$.
\item \label{lis2} $A, A'$ commute if and only if $a c' = a' c$.
\item \label{lis3} If $\car(\KK) = p > 2$, then $A^p = I$.
\item \label{lis4} If $\car(\KK) = 2$, then $A^2 = I$ if and only if $a = 0$ or $c = 0$.
\end{enumerate}
\end{lemme}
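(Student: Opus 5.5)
All four assertions follow from direct matrix multiplication; the plan is to write $A = I + N$ with $N$ strictly upper triangular and read everything off from $N$.

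For \eqref{lis1}, write $D = \operatorname{diag}(\alpha\gamma, \gamma, 1)$. Conjugating by $D$ sends the entry at position $(i,j)$ to $d_i d_j^{-1}$ times itself. Hence $a$ is multiplied by $\alpha\gamma/\gamma = \alpha$, $b$ by $\alpha\gamma$, and $c$ by $\gamma$, which is the stated formula.

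For \eqref{lis2}, set $A = I + N$ and $A' = I + N'$. Then $A$ and $A'$ commute if and only if $N$ and $N'$ commute. The product $NN'$ has a single possibly nonzero entry, namely $ac'$ at position $(1,3)$. Likewise $N'N$ has the single entry $a'c$ at position $(1,3)$. So commutation is equivalent to $ac' = a'c$.

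For \eqref{lis3} and \eqref{lis4}, the key observation is that $N^2$ is the matrix with $ac$ in position $(1,3)$ and zeros elsewhere, and $N^3 = 0$. Since $I$ commutes with $N$, the binomial theorem gives
\[ A^p = I + pN + \tbinom{p}{2} N^2 . \]
If $p > 2$, then $p$ divides both $p$ and $\binom{p}{2}$, so $A^p = I$ in characteristic $p$; this proves \eqref{lis3}. If $p = 2$, then $A^2 = I + 2N + N^2 = I + N^2$ in characteristic $2$. This equals $I$ exactly when $ac = 0$, that is, when $a = 0$ or $c = 0$; this proves \eqref{lis4}.

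There is no real obstacle in this lemma. The only point needing care is the binomial-coefficient step in \eqref{lis3}: one must note that $\binom{p}{2} = p(p-1)/2$ is divisible by $p$ precisely because $p$ is odd.
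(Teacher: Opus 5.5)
Your proof is correct and takes the same route as the paper. The paper also treats assertions (1), (2) and (4) as direct computations, and for (3) it proves by induction the closed form $A^n = I + nN + \binom{n}{2}N^2$, written entrywise with $(1,3)$ entry $nb + n(n-1)\frac{ac}{2}$, then sets $n=p$; this is exactly what your binomial expansion of $(I+N)^p$ gives, and your $\binom{p}{2}$ form also avoids the division by $2$ implicit in the paper's entrywise formula.
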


\begin{proof} $ $
Assertions \ref{lis1}, \ref{lis2} and \ref{lis4} are direct computations

 By induction on $n$, for any $n \in \mathbb{N}_{\geq 0}$, $A^n = \left[\begin{smallmatrix}  1 & n a & n b + n(n-1) \frac{ac}{2}  \\
0 & 1 & n c \\
0 & 0 & 1
\end{smallmatrix} \right] $. Hence Assertion \ref{lis3} follows by taking $n=p$.

\end{proof}

\begin{prop} \label{prop242} 
Let $G$ be a $p$-elementary subgroup of $\PGL(3,\kk)$.
Then there exists a subgroup $\tilde{G} \subset \PGL(3,\kk)$ such that $G, \tilde{G}$ are conjugate and such that:
\begin{itemize}
\item If $p =2$, then $\tilde{G}$ is a subgroup of one of the two following groups: \[ \left\{ \left[\begin{smallmatrix}  1 & a & b   \\
 & 1 & 0  \\
 & & 1
\end{smallmatrix} \right] | a,b \in \kk \right\}, \left\{ \left[\begin{smallmatrix}  1 & 0 & b   \\
 & 1 & c  \\
 & & 1
\end{smallmatrix} \right] | b,c \in \kk \right\} \]
\item If $p > 2$, then $\tilde{G}$ is a subgroup of one of the three following groups: \[ \left\{ \left[\begin{smallmatrix}  1 & a & b   \\
 & 1 & 0  \\
 & & 1
\end{smallmatrix} \right] | a,b \in \kk \right\}, \left\{ \left[\begin{smallmatrix}  1 & 0 & b   \\
 & 1 & c  \\
 & & 1
\end{smallmatrix} \right] | b,c \in \kk \right\}, \left\{ \left[\begin{smallmatrix}  1 & a & b   \\
 & 1 & a  \\
 & & 1
\end{smallmatrix} \right] | a,b \in \kk \right\} \]
\end{itemize}
\end{prop}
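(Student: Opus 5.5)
First I will trigonalize $G$. In characteristic $p$ every element of order $p$ in $\PGL(3,\kk)$ is unipotent: a representative $B$ satisfies $B^p=\lambda I$, and since $\kk$ is algebraically closed we can rescale so that $\lambda = 1$. Then $(B-I)^p=B^p-I=0$. Choose such unipotent representatives for a basis $g_1,\dots,g_r$ of $G$. Their commutators in $\GL(3,\kk)$ are scalar matrices of determinant $1$, and they are also unipotent, being commutators of commuting-modulo-scalars unipotents in a nilpotent setting. In characteristic $3$ this needs the remark that the only cube root of unity is $1$. Hence the lifts honestly commute, and a commuting family of unipotent matrices is simultaneously upper unitriangular. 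So after conjugation $G$ is contained in the group of matrices $A=\left[\begin{smallmatrix} 1&a&b\\ &1&c\\ &&1\end{smallmatrix}\right]$.

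Next I split by commutativity. Lemma~\ref{lis0}\eqref{lis2} says that two such elements commute iff $ac'=a'c$. Hence the vectors $(a,c)\in\kk^2$ attached to the elements of $G$ all lie on one line through the origin. There are three cases: the line is $c=0$, the line is $a=0$, or the line is $c=\mu a$ with $\mu\neq 0$ (the zero vector fits every case).

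In the first two cases $G$ already lies in the first or second group of the statement. In the third case, if $p=2$, Lemma~\ref{lis0}\eqref{lis4} forces $a=0$ or $c=0$ for every element. Since $\mu\neq0$, this gives $a=c=0$ for all elements, which is again contained in the first group, so the third case does not occur for $p=2$. If $p>2$, I conjugate by the diagonal matrix of Lemma~\ref{lis0}\eqref{lis1}. It sends $(a,c)$ to $(\alpha a,\gamma c)$, and choosing $\alpha=\mu$, $\gamma=1$ turns $c=\mu a$ into $c'=a'$. This lands in the third group, and Lemma~\ref{lis0}\eqref{lis3} confirms that every element there has order $p$.

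I expect the main obstacle to be the first step, passing from commuting in $\PGL$ to commuting of lifts. For $p\neq3$ it is immediate: the scalar commutator $\omega I$ has $\omega^3=1$ and is unipotent, so $\omega=1$. For $p=3$ the cube roots of unity collapse to $1$ anyway. Once simultaneous trigonalization is available, the rest is the short case analysis above.
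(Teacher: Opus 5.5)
Your proposal is correct and follows essentially the paper's route. You lift to $\GL(3,\kk)$, show the lifts genuinely commute, simultaneously trigonalize into the unitriangular group, and then split cases with the criterion $ac'=a'c$ of Lemma~\ref{lis0}: Lemma~\ref{lis0}\eqref{lis4} removes the mixed case when $p=2$, and a diagonal conjugation normalizes it when $p>2$. Your ``all $(a,c)$ lie on one line'' formulation is a repackaging of the paper's split on a single element with $(a,c)\neq(0,0)$.

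The one step to tighten is $\omega=1$. The appeal to determinants and a ``nilpotent setting'' does not show that $\omega I$ is unipotent, and for $p\neq 3$ the condition $\omega^3=1$ is not enough. The correct reason is short: $B_1B_2B_1^{-1}=\omega B_2$ makes $\omega B_2$ conjugate to the unipotent $B_2$, and comparing eigenvalues gives $\omega=1$ for every $p$. This is exactly what the paper extracts from the first column of $xy=\rho yx$.
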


\begin{proof} $ $
Assume $G$ is non-trivial.
Let $G'$ be the preimage of $G$ by the canonical projection $\GL(3,\kk) \twoheadrightarrow \PGL(3,\kk)$. Let $x \in G'$ be a non-trivial element. By construction, there exists $\lambda \in \kk^\times$ such that $x^p = \lambda I$. As $\kk$ is algebraically closed, $x$ can be trigonalized. Since $\car(\kk) = p$, there exists exactly one $p$-root of $\lambda$, we will denote it $\xi$. So the only eigenvalue of $x$ is $\xi$. Hence up to conjugation, we can assume that $x = \left[\begin{smallmatrix}  \xi & a & b   \\
 & \xi & c  \\
 & & \xi
\end{smallmatrix} \right]$, where $a,b,c \in \kk$.
Let $y = \left[\begin{smallmatrix}  d & e & f   \\
g & h & i  \\
j & k & l
\end{smallmatrix} \right] \in G'$. Since $G$ is abelian, it means there exists $\rho \in \kk^\times$ such that $x y = \rho y x$. So $\left[\begin{smallmatrix}  \xi d + ga + jb & * & *   \\
\xi g + jc & * & *  \\
\xi j & * & *
\end{smallmatrix} \right] = \left[\begin{smallmatrix}   \rho \xi d & * & *   \\
\rho \xi  g  & * & *  \\
\rho \xi   j & * & *
\end{smallmatrix} \right]$. So $j = \rho  j, \xi g + j c = \rho \xi g, \xi d + ga + jb  = \rho \xi d$.
By contradiction, we assume that $x$ and $y$ don't commute, so $\rho \neq 1$.
Hence $j = g = d = 0$. This is a contradiction because $y$ is invertible.

Hence it implies that $G'$ is abelian. So $G'$ can be trigonalized. So $G$ can be trigonalized. Also, every non-trivial element of $G$ is of order $p$ and the characteristic of $\KK$ is $p$, hence no non-trivial element of $G$ can be diagonalized. Hence, up to conjugation we can assume $G \subset \left\{ \left[\begin{smallmatrix}  1 & * & *   \\
 & 1 & *  \\
 & & 1
\end{smallmatrix} \right] \right\} $.
If $G$ is not a subgroup of $\left\{ \left[\begin{smallmatrix}  1 & 0 & *   \\
 & 1 & 0  \\
 & & 1
\end{smallmatrix} \right] \right\} $, then there exists an element $g = \left[\begin{smallmatrix}  1 & a & b   \\
 & 1 & c  \\
 & & 1
\end{smallmatrix} \right] \in G$ with $a \neq 0$ or $c \neq 0$. We conclude by distinguishing three cases:
\begin{enumerate}
\item If $a \neq 0, c = 0$, then using Lemma \ref{lis0} Assertion \ref{lis2} we get:  $$G \subset \left\{ \left[\begin{smallmatrix}  1 & a & b   \\
 & 1 & 0  \\
 & & 1
\end{smallmatrix} \right] | a,b \in \kk \right\} $$
\item
If $a = 0, c \neq 0$, then likewise, using Lemma \ref{lis0} Assertion \ref{lis2} we get: $$G \subset \left\{ \left[\begin{smallmatrix}  1 & 0 & b   \\
 & 1 & c  \\
 & & 1
\end{smallmatrix} \right] | b,c \in \kk \right\} $$
\item
If $a , c \neq 0$, then using Lemma \ref{lis0} Assertion \ref{lis4} we get $p > 2$. Using Lemma \ref{lis0} Assertion \ref{lis1}, we can conjugate $g$ to $ \left[\begin{smallmatrix}  1 & 1 & \frac{b}{ac}   \\
 & 1 & 1  \\
 & & 1
\end{smallmatrix} \right] $ using the diagonal element $ \left[\begin{smallmatrix}  \frac{1}{ac} &  &   \\
 & \frac{1}{c} &   \\
 & & 1
\end{smallmatrix} \right] $ . Using again Lemma \ref{lis0} Assertion \ref{lis2} we get: 
$$G \subset \left\{ \left[\begin{smallmatrix}  1 & a & b   \\
 & 1 & a  \\
 & & 1
\end{smallmatrix} \right] | a,b \in \kk \right\} $$
\end{enumerate}
\end{proof}


\begin{prop} \label{prop243} $ $
Let $G$ be a $p$-elementary non-trivial subgroup of $\PGL(3,\kk)$.
\begin{enumerate}
\item If $G$ is a subgroup of $\left\{ \left[\begin{smallmatrix}  1 & 0 & b   \\
 & 1 & c  \\
 & & 1
\end{smallmatrix} \right] | b,c \in \kk \right\} $, then the fixed points of $G$ are $[x:y:0]$, for $[x:y] \in \mathbb{P}^1$.

\item If $G$ is a subgroup of $\left\{ \left[\begin{smallmatrix}  1 & \lambda a & \lambda b   \\
 & 1 & 0  \\
 & & 1
\end{smallmatrix} \right] | \lambda \in \kk \right\}$ for some $a,b \in \kk$, then the fixed points of $G$ are $[x:y:z]$ where $x,y,z \in \kk$ such that $a y + b z = 0$.

\item If $G$ is a subgroup of $\left\{ \left[\begin{smallmatrix}  1 & a & b   \\
 & 1 & 0  \\
 & & 1
\end{smallmatrix} \right] | a,b \in \kk \right\} $ and not a subgroup of $\{ \left[\begin{smallmatrix}  1 & \lambda a & \lambda b   \\
 & 1 & 0  \\
 & & 1
\end{smallmatrix} \right] | \lambda \in \kk \} $ for any $a,b \in \kk$, then the only fixed point of $G$ is $[1:0:0]$.

\item If $p >2$, if $G$ is a subgroup of $\left\{ \left[\begin{smallmatrix}  1 & a & b   \\
 & 1 & a \\
 & & 1
\end{smallmatrix} \right] | a,b \in \kk \right\} $ such that there exists $\left[\begin{smallmatrix}  1 & a & b   \\
 & 1 & a \\
 & & 1
\end{smallmatrix} \right] \in G$ with $a \neq 0$, then the only fixed point of $G$ is $[1:0:0]$.
\end{enumerate}
\end{prop}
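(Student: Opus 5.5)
The plan is to compute fixed points directly from the linear action. A point $[x:y:z]$ is fixed by a class $\left[\begin{smallmatrix} 1 & a & b \\ & 1 & c \\ & & 1\end{smallmatrix}\right]$ exactly when the vector $(x,y,z)$ is an eigenvector of the unipotent representative. Since the only eigenvalue is $1$, this means
\[ (x+ay+bz,\; y+cz,\; z) = (x,y,z), \quad\text{i.e.}\quad ay+bz=0 \ \text{and}\ cz=0. \]
The fixed locus of $G$ is the intersection of these linear conditions over all elements of $G$. In each case I will determine which coefficients actually occur in $G$ and solve the resulting linear system.

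For (1), every element has $a=0$, so the conditions are $bz=0$ and $cz=0$. Since $G$ is non-trivial, some element has $(b,c)\neq(0,0)$, which forces $z=0$. Conversely, every point $[x:y:0]$ satisfies all the conditions. The fixed set is therefore the line $\{z=0\}$.

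For (2), every element has the form $c=0$, $(a,b)$ replaced by $(\lambda a,\lambda b)$. The conditions reduce to $\lambda(ay+bz)=0$. Non-triviality gives some $\lambda\neq 0$, so the fixed set is $\{ay+bz=0\}$. If $(a,b)=(0,0)$ the group is trivial, so this is a genuine line.

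For (3), again $c=0$ throughout. The pairs $(a,b)$ occurring in $G$ form an additive subgroup of $\kk^2$ that is not contained in a single line $\kk\cdot(a,b)$. Hence there exist two elements with linearly independent vectors $(a_1,b_1)$ and $(a_2,b_2)$. The equations $a_1y+b_1z=0$ and $a_2y+b_2z=0$ then force $y=z=0$, leaving only $[1:0:0]$, which is clearly fixed.

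For (4), take an element with $a=c\neq0$. The condition $cz=0$ gives $z=0$, and then $ay=0$ gives $y=0$. So at most $[1:0:0]$ is fixed, and it is indeed fixed by all upper unitriangular matrices.

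There is no real obstacle. The only subtlety is in case (3): I need to check that "not contained in any line $\{\lambda(a,b)\}$" yields two linearly independent pairs. This holds because a set of vectors containing no two independent ones lies in a single line through the origin. I also need to note that projective fixed points coincide with eigenvectors of the unipotent representative. This is because any eigenvalue equals $1$ in characteristic $p$: the representative is $I+N$ with $N$ strictly upper triangular, so $I+N$ is unipotent.
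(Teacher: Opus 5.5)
Your proposal is correct and follows the same route as the paper: you write the eigenvector condition for each element, observe that the eigenvalue must be $1$, and solve the resulting linear conditions $ay+bz=0$, $cz=0$ case by case, including picking two linearly independent pairs $(a,b)$ in case (3). The only cosmetic difference is that the paper introduces a scalar $\mu$ and deduces $\mu=1$ from the equations, whereas you get it from unipotence of the unitriangular representative; note that unipotence holds in any characteristic, so the appeal to characteristic $p$ is unnecessary.
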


\begin{proof} $ $

\begin{enumerate}
\item 
Any point $[x:y:0]$ is a fixed point of $G$.
Furthermore, let $[x:y:z]$ be a fixed point of $G$. The group $G$ is non-trivial, hence there exists $g = \left[\begin{smallmatrix}  1 & 0 & b   \\
 & 1 & c  \\
 & & 1
\end{smallmatrix} \right]$ with $b \neq 0$ or $c \neq 0$.
Then there exists $\mu \in \kk^\times$ such that $\left\{
\begin{array}{l}
x + b z = \mu x,  \\
y + c z = \mu y \\
z = \mu z
\end{array}.\right.$
So $\mu = 1$, and we get $z = 0$.

\item
Any point $[x:y:0]$ such that $a y + b z = 0$ is a fixed point of $G$.
Furthermore, let $[x:y:z]$ be a fixed point of $G$. The group $G$ is non-trivial, hence there exists $g = \left[\begin{smallmatrix}  1 & a & b   \\
 & 1 & 0  \\
 & & 1
\end{smallmatrix} \right]$ with $a \neq 0$ or $b \neq 0$. 
Then there exists $\mu \in \kk^\times$ such that $\left\{
\begin{array}{l}
x + a y + b z = \mu x,  \\
y  = \mu y \\
z = \mu z
\end{array}.\right.$
So $\mu = 1$, and we get $a y + b z = 0$.

\item 
The point $[1:0:0]$ is a fixed point of $G$.
Furthermore, let $[x:y:z]$ be a fixed point of $G$. Because of the assumptions, there exists $g = \left[\begin{smallmatrix}  1 & a & b   \\
 & 1 & 0  \\
 & & 1
\end{smallmatrix} \right] \in G$, $g' = \left[\begin{smallmatrix}  1 & a' & b'   \\
 & 1 & 0  \\
 & & 1
\end{smallmatrix} \right] \in G$ such that $(a,b) , (a',b')\neq (0,0)$ and $a b' - a' b \neq 0$.
\newline
Then there exists $\mu \in \kk^\times$ such that $\left\{
\begin{array}{l}
x + a y + b z = \mu x,  \\
x + a' y + b' z = \mu x,  \\
y  = \mu y \\
z = \mu z
\end{array}.\right.$ \newline
So $\mu = 1$, and $\left\{
\begin{array}{l}
a y + b z =  0,  \\
a' y + b' z = 0
\end{array}.\right.$
Because $a b' - a' b \neq 0$, we get $y = z = 0$.

\item 
The point $[1:0:0]$ is a fixed point of $G$.
Furthermore, if $[x :y :z]$ is a fixed point of $G$, then there exists $\mu \in \kk^\times$ such that $\left\{
\begin{array}{l}
x + a y + b z = \mu x,  \\
y + a z = \mu y \\
z = \mu z
\end{array}.\right.$
So $\mu =1$ and
$\left\{
\begin{array}{l}
a y + b z = 0,  \\
a z =  0 
\end{array}.\right.$ Hence $y = z = 0$.
\end{enumerate} \end{proof}

\newpage

\part{Subgroups of the de Jonqui\`eres group} \label{part2} $ $

\bigskip\bigskip

 In this part we will give a classification of $p$-elementary subgroups of the de Jonqui\`eres group up to conjugation by elements of the de Jonqui\`eres group. The part is organized as follows:
 \begin{itemize}
 \item In Section \ref{jonquieressection31} we establish a list of representatives of $p$-elementary subgroups of the de Jonquières group up to conjugacy in the de Jonquières group, with no assumption on the characteristic of the field. The result is Proposition \ref{theo2}.
  \item In Section \ref{jonquieressection32} we study the possible conjugations by the de Jonquières map between such subgroups.
  \item Section \ref{jonquieressection33} may appear superfluous at first glance, as Section \ref{jonquieressection31} already provides a list of representatives and Section \ref{jonquieressection32} details their possible conjugations. However, this section is essential; indeed, abelian subgroups with no non-trivial elements fixing a non-rational curve can be conjugated to subgroups of $\operatorname{Aut}(\mathbb{P}^1 \times \mathbb{P}^1)$ (see \cite[Proposition 8.4]{blancarticle2}). Consequently, we primarily require a classification for subgroups of the de Jonquières group that do contain non-trivial elements fixing non-rational curves.
 \end{itemize}

\bigskip\bigskip

We remind of the following facts: \\

\noindent The de Jonquières group is the subgroup $\J$ in $\Cr$ given by
conjugating the following group of birational maps of $\mathbb{A}^2$
\[
    \J = \Bigset{ (x,y) \DashedArrow 
    \left(\dfrac{ax+b}{cx+d} , \dfrac{\alpha(x)y + \beta(x)}{\gamma(x)y + \delta(x)} \right)}{ 
    \begin{array}{l}
    a,b,c,d \in \kk \, , \\ 
    \alpha, \beta, \gamma, \delta \in \kk(x) \, , \\ 
    ad-bc \neq 0 \, , \ \alpha \delta - \beta \gamma \neq 0
    \end{array}}
\]
via the embedding $\mathbb{A}^2 \to \PP^2$, $(x,y) \mapsto [y:x:1]$.
Observe that $\J$ is the semi-direct product $\PGL(2, \kk(x)) \rtimes \PGL(2, \kk)$.
Hence we have the following split exact sequence:
\[
    1 \xhookrightarrow{} \PGL(2,\kk(x)) \xhookrightarrow{} 
    \J \stackrel{\pi}{\twoheadrightarrow} \PGL(2,\kk) \twoheadrightarrow 1 \, ,
\]
where $\pi$ is the projection onto the second coordinate:

\noindent If $j = (\left[\begin{smallmatrix}  \alpha & \beta   \\  \gamma & \delta  \end{smallmatrix} \right], \left[\begin{smallmatrix}  a & b   \\  c & d  \end{smallmatrix} \right]) \in \PGL(2,\kk(x)) \rtimes \PGL(2,\kk) = \J$ then $\pi(j) = \left[\begin{smallmatrix}  a & b   \\  c & d  \end{smallmatrix} \right] \in \PGL(2,\kk)$.

\noindent For a subgroup $G \subset \J$ we get by restriction the following exact sequence:
\[
    1 \xhookrightarrow{} \Ker(\pi|_G) \xhookrightarrow{} G 
    \stackrel{\pi|_G}{\twoheadrightarrow} \Img(\pi|_G) \twoheadrightarrow 1 \, .
\]
If $G$ is a $p$-elementary subgroup of $\J$, then $\Ker(\pi|_G)$ is isomorphic to 
$(\mathbb{Z}/p)^a$ for some $a \in \mathbb{N}_{\geq 0}$ and likewise, $\Img(\pi|_G)$ is isomorphic to 
$(\mathbb{Z}/p)^b$ for some $b \in \mathbb{N}_{\geq 0}$.

\newpage

\section{Explicit representatives of subgroups up to conjugation} \label{jonquieressection31} $ $

Here we do not make assumptions on the characteristic of the field.
The goal of this section is to prove Proposition \ref{theo2}.
To get a list of representatives, we require three consecutive conjugations on a given subgroup $G \subset \J$:
\begin{itemize}
\item The first one being a conjugation of $\pi(G) \subset \text{PGL}(2, \mathbf{k})$. This is done using Section \ref{22new}.
\item Then we perform a conjugation in $\text{PGL}(2, \mathbf{k}(x))$ using the following Subsection \ref{mitterand1}. 
This allows us to assume, up to conjugacy in $\PGL(2, \kk(x))$, that $\{I\} \times \pi(G) \leq G$.
\item Finally, we use Subsection \ref{mitterand2} in order to do a third and last conjugation inside \newline $\text{PGL}(2, \mathbf{k}(x)^{\pi(G)})$ in order to simplify the subgroup $\Ker(\pi|_G) \subset G$.
\end{itemize}
We summarize these three steps in Subsection \ref{mitterand3}.

\subsection{A first cohomology result} \label{mitterand1}

\begin{lemme} \label{cohomologie1} $ $
Let $G$ be a $p$-elementary subgroup of the de Jonquières group.
Then there exists a subgroup $G' \leq G$ such that $\pi|_{G'}  : G' \rightarrow \pi(G)$ is a group isomorphism.
\end{lemme}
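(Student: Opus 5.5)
This is a pure group theory statement, and I would prove it directly, without cohomology, using only that $G$ is $p$-elementary. Since $G \simeq (\ZZ/p)^r$ is an $\mathbb{F}_p$-vector space, the restriction $\pi|_G \colon G \to \pi(G)$ is a surjective homomorphism of $\mathbb{F}_p$-vector spaces. Its kernel is $\Ker(\pi|_G) = G \cap \PGL(2,\kk(x))$.

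First, I note that every subgroup of a $p$-elementary group is again $p$-elementary, hence an $\mathbb{F}_p$-subspace. In particular, $\Ker(\pi|_G)$ is an $\mathbb{F}_p$-subspace of $G$. Every subspace of a vector space admits a complement, so I choose an $\mathbb{F}_p$-subspace $G' \leq G$ with
\[
G = \Ker(\pi|_G) \oplus G' .
\]
Concretely, I would pick a basis $u_1,\dots,u_a$ of the kernel, extend it to a basis $u_1,\dots,u_a,v_1,\dots,v_b$ of $G$, and set $G' = \langle v_1,\dots,v_b \rangle$. In multiplicative notation, $G'$ is a subgroup of $G$ with $G' \cap \Ker(\pi|_G) = \{1\}$ and $G'\cdot\Ker(\pi|_G) = G$.

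Next, $\pi|_{G'}$ is injective because $\Ker(\pi|_{G'}) = G' \cap \Ker(\pi|_G)$ is trivial. It is surjective onto $\pi(G)$ because $\pi(G) = \pi(G'\Ker(\pi|_G)) = \pi(G')$. Therefore $\pi|_{G'} \colon G' \to \pi(G)$ is a group isomorphism.

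There is no real obstacle here. The only point to check is that the splitting of the short exact sequence $1 \to \Ker(\pi|_G) \to G \to \pi(G) \to 1$ exists. This follows because the sequence lives in the category of $\mathbb{F}_p$-vector spaces, where every short exact sequence splits. The argument also works in any characteristic, consistent with the standing assumption of the section. If one prefers cohomological language, the same argument says that $H^2(\pi(G),\Ker(\pi|_G))$ vanishes on the relevant class, because $G$ is abelian of exponent $p$.
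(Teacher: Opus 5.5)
Your proof is correct and is essentially the paper's argument: the paper also views $G$ and $\pi(G)$ as $\mathbb{F}_p$-vector spaces, takes a complement $G'$ of $\Ker(\pi|_G)$, and concludes that $\pi|_{G'}$ is an isomorphism. Your closing cohomological aside is only loosely phrased: $H^2(\pi(G),\Ker(\pi|_G))$ need not vanish (for instance $\ZZ/p^2$ gives a non-split extension of $\ZZ/p$ by $\ZZ/p$). What is trivial is the class of this particular extension, because $G$ is abelian of exponent $p$. This does not affect the proof.
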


\begin{proof} $ $
Let $r, a, b \in \mathbb{N}_{\geq 0}$ such that $G \simeq(\mathbb{Z}/p)^r$, $\pi(G) \simeq (\mathbb{Z}/p)^b$ and $\Ker (\pi|_G) \simeq(\mathbb{Z}/p)^a$.
Then $G$ and $\pi(G)$ are $\mathbb{F}_p$-vector spaces of dimension $r$ and $b$ respectively. $\Ker (\pi|_G)$ is a subvector space of $G$ of dimension $a$. Let $G'$ be a complement of $\Ker(\pi|_G)$. Then $\pi|_{G'} : G' \rightarrow \pi(G)$ is an isomorphism.
\end{proof}

The group $\PGL(2,\kk)$ acts on $\kk(x)$. This action induces an action of $\PGL(2,\kk)$ on $\kk(x)^\times$, $\GL(2,\kk(x))$ and $\PGL(2,\kk(x))$.

\begin{lemme} \label{cohomologie2} $ $
Let $G$ be a finite subgroup of the de Jonquières group.

Then the cohomology set  $H^1(\pi(G) , \PGL(2,\kk(x)))$ is trivial.
\end{lemme}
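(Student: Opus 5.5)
We reduce the vanishing of $H^1(\pi(G),\PGL(2,\kk(x)))$ to Hilbert's Theorem 90 and the triviality of the Brauer group of a function field of a curve over an algebraically closed field (Tsen's theorem). Let $H=\pi(G)$, a finite subgroup of $\PGL(2,\kk)$. It acts faithfully on $\kk(x)$ by field automorphisms (through its action on $\PP^1$), so $\kk(x)/\kk(x)^H$ is a finite Galois extension with group $H$. By Lüroth, $\kk(x)^H = \kk(t)$ for some $t$. The cohomology set in question is the Galois cohomology $H^1(\mathrm{Gal}(\kk(x)/\kk(t)),\PGL(2,\kk(x)))$, where $H$ acts on matrices entrywise.

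Step one is the short exact sequence of $H$-groups
\[
1 \to \kk(x)^\times \to \GL(2,\kk(x)) \to \PGL(2,\kk(x)) \to 1,
\]
with $\kk(x)^\times$ central. Nonabelian cohomology gives an exact sequence of pointed sets
\[
H^1(H,\GL(2,\kk(x))) \to H^1(H,\PGL(2,\kk(x))) \xrightarrow{\ \delta\ } H^2(H,\kk(x)^\times).
\]
By Hilbert 90 in its $\GL_n$ form, $H^1(H,\GL(2,\kk(x)))$ is trivial. Exactness at the middle term then says that the fibre of $\delta$ over the trivial class is the single trivial class. This alone does not give injectivity, since we are dealing with pointed sets.

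To handle the other fibres, we use the twisting argument. For a cocycle $c$, twist the sequence by $c$: the twisted form of $\GL(2,\kk(x))$ is the unit group of a central simple algebra $A_c$ of degree $2$ over $\kk(t)$, namely a form of $M_2(\kk(t))$ split by $\kk(x)$. Hilbert 90 also holds for these twisted groups, since $H^1$ of the units of a central simple algebra vanishes. Hence the fibre of $\delta$ through $[c]$ is also a single point, so $\delta$ is injective.

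Step two is to observe that $H^2(H,\kk(x)^\times)$ is the relative Brauer group $\mathrm{Br}(\kk(x)/\kk(t))$, which embeds into $\mathrm{Br}(\kk(t))$. By Tsen's theorem, $\kk(t)$ is $C_1$ because $\kk$ is algebraically closed, so $\mathrm{Br}(\kk(t))=0$. Combined with the injectivity of $\delta$, this gives that $H^1(H,\PGL(2,\kk(x)))$ is trivial.

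A more geometric route avoids the twisting step. Classes in $H^1(H,\PGL(2,\kk(x)))$ classify forms of $\PP^1_{\kk(t)}$ that become trivial over $\kk(x)$, that is, conics over $\kk(t)$. By Tsen, every conic over $\kk(t)$ has a rational point, so it is isomorphic to $\PP^1_{\kk(t)}$ and the class is trivial.

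The main obstacle is the rigorous identification of the set $H^1$ with the appropriate forms, or equivalently the injectivity of $\delta$ on pointed sets. I would cite Serre's \emph{Galois Cohomology} (or \emph{Local Fields}, Ch. X) for the standard fact that $H^1(\mathrm{Gal}(L/K),\PGL_n(L))$ injects into $\mathrm{Br}(L/K)$. The hypothesis that $\kk$ is algebraically closed is used only in Tsen's theorem. Finiteness of $G$ is what makes $\kk(x)/\kk(x)^H$ a finite Galois extension, so that Galois cohomology applies. No assumption on $\car(\kk)$ is needed.
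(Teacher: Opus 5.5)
Your proof is correct and is essentially the paper's argument: the central extension $1\to\kk(x)^\times\to\GL(2,\kk(x))\to\PGL(2,\kk(x))\to 1$, Hilbert~90 for $\GL(2,\kk(x))$, and the vanishing of $H^2(\pi(G),\kk(x)^\times)$ because $\kk(x)^{\pi(G)}\cong\kk(t)$ is $C_1$ by L\"uroth and Tsen. Your twisting step for the injectivity of $\delta$ is harmless but superfluous. Since $H^2(\pi(G),\kk(x)^\times)$ is trivial, every class lies in the fibre of $\delta$ over the trivial class, which by exactness is the image of the trivial set $H^1(\pi(G),\GL(2,\kk(x)))$; this is exactly how the paper concludes.
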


\begin{proof} $ $
We have the exact sequence:
\begin{center}
$1 \xhookrightarrow{} \kk(x)^\times \xhookrightarrow{} \GL(2,\kk(x)) \twoheadrightarrow \PGL(2,\kk(x)) \twoheadrightarrow 1$.
\end{center}
So according to \cite[Appendix VII, Proposition 2]{serre}, we get the following exact sequence of pointed sets:
\begin{center}
$ H^1(\pi(G) , \GL(2,\kk(x))) \to H^1(\pi(G) , \PGL(2,\kk(x))) \to H^2( \pi(G) , \kk(x)^\times )$
\end{center}


The cohomology set $H^1( \pi(G) , \GL(2,\kk(x)))$ is trivial thanks to \cite[X. \S.1 Proposition 3]{serre}  because the action of $\pi(G)$ on $\kk(x)$ is faithfull and is by field automorphisms.

The cohomology set $H^2( \pi(G) , \kk(x)^\times)$ is trivial thanks to \cite[ X. \S.7 Proposition 10]{serre} because the action of $\pi(G)$ on $\kk(x)$ is faithfull and is by field morphism, and $\kk(x)^{\pi(G)}$ has the $C_1$ property (we have $\kk \subset \kk(x)^{\pi(G)} \subset \kk(x)$, therefore $\kk(x)^{\pi(G)}$ is purely transcendental of degree $1$, by Lüroth's theorem).

Then the cohomology set $H^1(\pi(G) , \PGL(2,\kk(x)))$ is trivial.
\end{proof}

\begin{prop} \label{cohomologie0} $ $
Let $G$ be a $p$-elementary subgroup of $\J$.

Then there exists $\rho \in \PGL(2,\kk(x))$ such that $\{I\} \times \pi(\rho G \rho^{-1}) \leq \rho G \rho^{-1}$.
\end{prop}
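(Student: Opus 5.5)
Take the subgroup $G' \leq G$ from Lemma \ref{cohomologie1}, so that $\pi|_{G'} \colon G' \to \pi(G)$ is an isomorphism. Let $s \colon \pi(G) \to G'$ be its inverse. Using $\J = \PGL(2,\kk(x)) \rtimes \PGL(2,\kk)$, write each element of $G'$ uniquely as $s(h) = (c_h, h)$ with $c_h \in \PGL(2,\kk(x))$, for $h \in \pi(G)$. The multiplication rule in the semi-direct product is $(c_h,h)(c_{h'},h') = (c_h \cdot {}^{h}c_{h'}, hh')$, where ${}^{h}(\cdot)$ denotes the action of $\PGL(2,\kk)$ on $\PGL(2,\kk(x))$ through its action on $\kk(x)$. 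Since $s$ is a homomorphism, this gives $c_{hh'} = c_h \, {}^{h}c_{h'}$. Hence $h \mapsto c_h$ is a $1$-cocycle of $\pi(G)$ with values in $\PGL(2,\kk(x))$. One should fix the left/right convention of the action consistently with how $\J$ acts on $\mathbb{A}^2$; this is routine bookkeeping.

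By Lemma \ref{cohomologie2}, $H^1(\pi(G),\PGL(2,\kk(x)))$ is trivial. So the cocycle is a coboundary: there exists $\rho_0 \in \PGL(2,\kk(x))$ with $c_h = \rho_0^{-1}\, {}^{h}\rho_0$ for all $h \in \pi(G)$, or with the inverse convention depending on the choice above. Set $\rho = \rho_0$, viewed as the element $(\rho_0, I)$ of $\J$. A direct computation in the semi-direct product then gives
\[
(\rho_0,I)(c_h,h)(\rho_0,I)^{-1} = (\rho_0 c_h \, {}^{h}\rho_0^{-1}, h).
\]
Choosing the coboundary form as $c_h = \rho_0^{-1}\,{}^{h}\rho_0$, the first component becomes $I$. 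Hence $\rho G' \rho^{-1} = \{I\} \times \pi(G)$.

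Finally, $\pi(\rho G \rho^{-1}) = \pi(G)$, because $\rho \in \Ker \pi$. Therefore $\{I\} \times \pi(\rho G\rho^{-1}) = \rho G'\rho^{-1} \leq \rho G \rho^{-1}$, which is the statement.

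The only real content is the vanishing of $H^1$, which is already provided by Lemma \ref{cohomologie2}. The main point to check carefully is that the cocycle identity and the coboundary formula match the conventions of the semi-direct product. If the convention comes out reversed, one replaces $\rho_0$ by its inverse, or uses the cocycle $h\mapsto c_h^{-1}$ appropriately; either way the cohomology class is trivial.
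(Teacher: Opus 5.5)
Your proposal is correct and follows the paper's argument. The paper takes the two sections $h \mapsto (I,h)$ and $h \mapsto s(h)$ of $\PGL(2,\kk(x)) \rtimes \pi(G) \to \pi(G)$ and uses the triviality of $H^1(\pi(G),\PGL(2,\kk(x)))$ from Lemma \ref{cohomologie2} to conjugate one into the other by some $\rho \in \PGL(2,\kk(x))$; your explicit cocycle $h \mapsto c_h$ and the coboundary computation in the semi-direct product spell out exactly this step, which the paper leaves implicit.
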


\begin{proof} $ $
Using Lemma \ref{cohomologie1}, let $G'$ be a subgroup of $G$ such that $\pi|_{G'} : G' \rightarrow \pi(G)$ is an isomorphism.
Let:

\begin{center}
$\begin{array}{ccccc}
s & : &  \pi(G) & \to & \PGL(2,\kk(x)) \rtimes \pi(G) \\
& & \pi(g') & \mapsto & (I,\pi(g')) \qquad \mbox{for }g' \in G'
\end{array}$
\end{center}
\begin{center}
$\begin{array}{ccccc}
s' & : &   \pi(G) & \to & \PGL(2,\kk(x)) \rtimes \pi(G) \\
& & \pi(g') & \mapsto & g' \qquad \mbox{for }g' \in G'
\end{array}$
\end{center}
These two maps are sections of the following canonical projection:  $$\pi|_{\PGL(2,\kk(x)) \rtimes  \pi(G)} :\PGL(2,\kk(x)) \rtimes  \pi(G) \rightarrow \pi(G)$$
Using Lemma \ref{cohomologie2}, the cohomology set $H^1(\pi(G),\PGL(2,\kk(x)))$ is trivial. Hence these two sections are conjugate, \textit{i.e.} there exists $\rho \in \PGL(2,\kk(x))$ such that $\rho g' \rho^{-1} = (I,\pi(g'))$ for every $g' \in G'$.
Then for $g \in G$ we choose $g'$ such that $\pi(g') = \pi(g)$. Then we have:
$$(I,\pi(\rho g \rho^{-1}))  = (I , \pi(g))  = (I,\pi(g'))   = \rho g' \rho^{-1}  \in \rho G' \rho^{-1} \leq \rho G \rho^{-1} $$
Hence: $$\{I\} \times \pi(\rho G \rho^{-1}) \leq \rho G \rho^{-1}$$
\end{proof}

\subsection{A description of some subfields of $\kk(x)$ invariant by a subgroup of automorphisms} \label{mitterand2} $ $

\bigskip

\begin{lemme} \label{stabilisationbis} $ $
We assume $\car(\kk) \neq 2$.

\noindent We remind the reader that $\sigma$ \textit{resp} $\tau$ corresponds to the $\kk$-field automorphism of $\kk(x)$ given by $x \mapsto -x$ \textit{resp} $x \mapsto \frac{1}{x}$. We have the following:
\begin{enumerate}
\item The set of elements of  $\kk(x)$ fixed by $\sigma$ is the subfield $\kk(x^2)$.
\item The set of elements of  $\kk(x)$ fixed by $\tau$ is the subfield $\kk(x+\frac{1}{x})$.
\item The set of elements of  $\kk(x)$ fixed by $\sigma$ and $\tau$ is the subfield $\kk(x^2+\frac{1}{x^2})$.

\end{enumerate}
\end{lemme}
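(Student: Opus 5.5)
Each of the three assertions has the same two-step shape: first show that the named subfield is contained in the fixed field, then show equality with a degree count from Galois theory (Artin's theorem). If $H$ is a finite group of field automorphisms of $\kk(x)$, then $[\kk(x):\kk(x)^H] = |H|$. If moreover $F \subseteq \kk(x)^H$ is a subfield with $[\kk(x):F] \le |H|$, then $F = \kk(x)^H$. So in each case it suffices to check three things: the given element is invariant, the group has the expected order, and $x$ satisfies a polynomial of the right degree over the candidate subfield.

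For (1), the element $x^2$ is clearly fixed by $\sigma$. Since $\car(\kk)\neq 2$, $\sigma$ is non-trivial, so $|\langle\sigma\rangle| = 2$. The element $x$ is a root of $T^2 - x^2 \in \kk(x^2)[T]$, hence $[\kk(x):\kk(x^2)] \le 2$, and Artin gives equality $\kk(x)^{\sigma} = \kk(x^2)$.

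For (2), the element $u = x + \frac1x$ is fixed by $\tau$, which is non-trivial, so $|\langle\tau\rangle| = 2$. The element $x$ is a root of $T^2 - uT + 1 \in \kk(u)[T]$, so $[\kk(x):\kk(u)] \le 2$, and we conclude in the same way.

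For (3), note first that $\sigma$ and $\tau$ commute on $\kk(x)$: indeed $x \mapsto -\frac1x$ under either order of composition. The group $\langle\sigma,\tau\rangle$ is therefore a Klein group of order $4$; its elements are distinct because $x$, $-x$, $\frac1x$, $-\frac1x$ are pairwise distinct in $\kk(x)$ when $\car(\kk)\neq 2$. The element $v = x^2 + \frac1{x^2}$ is invariant under both generators. The element $x$ is a root of $T^4 - vT^2 + 1 \in \kk(v)[T]$, so $[\kk(x):\kk(v)] \le 4$, and Artin yields $\kk(x)^{\langle\sigma,\tau\rangle} = \kk(v)$.

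There is no real obstacle. The only point needing care is the hypothesis $\car(\kk)\neq 2$. It is used to guarantee that $\sigma \neq \mathrm{id}$, and that $\tau$ and $\sigma\tau$ are distinct from the other elements, so that the group orders are exactly $2$, $2$ and $4$ as the degree count requires.
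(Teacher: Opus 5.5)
Your proposal is correct and follows essentially the same route as the paper: show the generator is invariant, then compare Artin's count $[\kk(x):\kk(x)^H]=|H|$ with the degree bound coming from an explicit polynomial annihilating $x$. The only difference is in (3), where you bound $[\kk(x):\kk(x^2+\frac{1}{x^2})]\le 4$ directly via $T^4-vT^2+1$, while the paper gets the same bound through the tower $\kk(x)\supset\kk(x^2)\supset\kk(x^2+\frac{1}{x^2})$ by reapplying part (2) with $x^2$ in place of $x$; you are also more careful than the paper in checking that $\langle\sigma,\tau\rangle$ really has order $4$.
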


\begin{proof} $ $
\begin{enumerate}
\item
First, we observe that the elements of $\kk(x^2)$ are fixed by the action of $\left[\begin{smallmatrix}  1 & 0   \\ 0 & -1  \end{smallmatrix} \right] $. Since $x \mapsto - x$ is an element of order two, we get that the degree of $\kk(x)$ as an extension of its subfield of elements invariant by $x \mapsto - x$ is also two. Finally, we have that the degree of $\kk(x)$ as an extension of its subfield $\kk(x^2)$ is also two, hence the result.
\item
The elements of $\kk(x+\frac{1}{x})$ are fixed by the action of $\tau$. Since $x \mapsto \frac{1}{x}$ is an element of order two, the degree of $\kk(x)$ as an extension of its subfield of elements fixed by $x \mapsto \frac{1}{x}$ is also of degree two.
Finally, $x$ has a minimal polynomial of degree at most two over $\kk(x+\frac{1}{x})$ : $x^2 - (x+\frac{1}{x})x+1 = 0$, so the degree of $\kk(x)$ as an extension of its subfield $\kk(x+\frac{1}{x})$ is at most two, hence the result.
\item
By replacing $x$ by $x^2$ in the proof of the previous assertion, we get that the degree of $\kk(x^2)$ as an extension of its subfield $\kk(x^2+\frac{1}{x^2})$ is at most two. Furthermore, $\kk(x^2 + \frac{1}{x^2})$ is trivially a subfield of $\kk(x)^{\langle \sigma,\tau \rangle}$, and $\kk(x)^{\langle \sigma \rangle}$ is an extension of degree two of $\kk(x)^{\langle \sigma,\tau \rangle}$ because $\tau$ is of degree two. Hence the result.
\end{enumerate}
\end{proof}

\subsection{A table of representatives} \label{mitterand3} $ $


We now prove the main result of this section (which is stated on the next page):

\begin{proof}[Proof of Proposition \ref{theo2}] $ $

Let $G$ be a $p$-elementary non-trivial subgroup of the de Jonquières group. Using Proposition \ref{dublin1}, up to conjugation by an element of $\PGL(2,\kk)$, we can assume that $\pi(G)$ is of the form of one of the groups in the column $"R"$.
Let $R = \pi(G)$.
Using Proposition \ref{cohomologie0}, we can assume up to conjugation by an element of $\PGL(2,\kk(x))$ that $\{I \} \times R \subset G$.
Since $G$ is abelian, we get that $\{I \} \times R$ and $\Ker( {\pi}|_G)$ commute. Hence $\Ker ({\pi}|_G) \subset \PGL(2,\kk(x)^{R})$. Finally, using Proposition \ref{dublin1} and Lemma \ref{stabilisationbis}, up to conjugation by an element of $\PGL(2,\kk(x)^R)$, we can assume that $\Ker(\pi|_G)$ is of the form of one of the groups in the column $"L"$.
Let $L = \Ker(\pi|_G)$. Then we have $G = L \times R$.
\end{proof}

\newpage

\begin{prop} \label{theo2} \label{buda5} $ $
(Classification of $p$-elementary subgroups of $\J$) \newline
Let $G \simeq (\mathbb{Z}/p)^r$ be a $p$-elementary non-trivial subgroup of the de Jonquières group. \newline
Then $G$ is conjugate in the de Jonquières group to a subgroup $L \times R$ where:
\begin{itemize}
\item $L \simeq (\mathbb{Z}/p)^a$ is a subgroup of $\PGL(2,\kk(x))$.
\item $R \simeq (\mathbb{Z}/p)^b$ is a subgroup of $\PGL(2,\kk)$. 
\end{itemize}
Furthermore $L$ and $R$ are given in the following table:

\bigskip

\begin{center}
\rotatebox{90}{
\begin{tabular}{ | c | c | c | c | c | c| }
\hline
& $L$ & $a$ & $R$ & $b$ & $r$ \\

\hline

\multirow{3}{*}{$\car(\kk) \neq p \neq 2$} & $\{ I \}$ &0 & $\langle \left[\begin{smallmatrix}  1 & 0   \\ 0 &  \xi  \end{smallmatrix} \right] \rangle$ & 1&1 \\

 & $\langle \left[\begin{smallmatrix}  1 & 0   \\ 0 &  \xi  \end{smallmatrix} \right] \rangle$ & 1 &$\{ I \}$ & 0 & 1 \\

 & $\langle \left[\begin{smallmatrix}  1 & 0   \\ 0 &  \xi  \end{smallmatrix} \right] \rangle$ & 1 &$\langle \left[\begin{smallmatrix}  1 & 0   \\ 0 &  \xi  \end{smallmatrix} \right] \rangle$ & 1 & 2 \\
 
\hline

\multirow{8}{*}{$\car(\kk) \neq p = 2$}  & $\{ I \}$ &0 & $\langle \left[\begin{smallmatrix}  1 & 0   \\ 0 &  -1 \end{smallmatrix} \right] \rangle$ & 1&1 \\

& $\{ I \}$ &0 & $\langle \left[\begin{smallmatrix}  1 & 0   \\ 0 &  -1  \end{smallmatrix} \right] , \left[\begin{smallmatrix}  0 & 1   \\ 1 &  0  \end{smallmatrix} \right] \rangle$ & 2&2 \\

& $\langle \left[\begin{smallmatrix}  0 & g   \\ 1&  0 \end{smallmatrix} \right] \rangle$ &1& $\{I \}$ & 0&1 \\

& $\langle \left[\begin{smallmatrix}  0 & g(x^2)   \\ 1 &  0 \end{smallmatrix} \right] \rangle$ &1 & $\langle \left[\begin{smallmatrix}  1 & 0   \\ 0 &  -1 \end{smallmatrix} \right] \rangle$ & 1&2 \\

& $\langle \left[\begin{smallmatrix}  0 & g(x^2 + \frac{1}{x^2})   \\ 1&  0  \end{smallmatrix} \right] \rangle$ &1 & $\langle \left[\begin{smallmatrix}  1 & 0   \\ 0 &  -1  \end{smallmatrix} \right] , \left[\begin{smallmatrix}  0 & 1   \\ 1 &  0  \end{smallmatrix} \right] \rangle$ & 2&3 \\

& $\langle \left[\begin{smallmatrix}  0 & g   \\ 1&  0 \end{smallmatrix} \right] , \left[\begin{smallmatrix}  a & - bg   \\ b &  -a  \end{smallmatrix} \right] \rangle$ &2 & $\{I \}$ & 0&2 \\

& $\langle \left[\begin{smallmatrix}  0 & g(x^2)   \\ 1&  0 \end{smallmatrix} \right] , \left[\begin{smallmatrix}  a(x^2) & - b(x^2) g(x^2)   \\ b(x^2) &  -a(x^2)  \end{smallmatrix} \right] \rangle$ &2 & $\langle \left[\begin{smallmatrix}  1 & 0   \\ 0 &  -1 \end{smallmatrix} \right] \rangle$ & 1&3 \\

& $\langle \left[\begin{smallmatrix}  0 & g(x^2 + \frac{1}{x^2})   \\ 1&  0 \end{smallmatrix} \right] , \left[\begin{smallmatrix}  a(x^2 + \frac{1}{x^2}) & - b(x^2 + \frac{1}{x^2}) g(x^2 + \frac{1}{x^2})   \\ b(x^2 + \frac{1}{x^2}) &  -a(x^2 + \frac{1}{x^2})  \end{smallmatrix} \right] \rangle $ &2 & $\langle \left[\begin{smallmatrix}  1 & 0   \\ 0 &  -1  \end{smallmatrix} \right] , \left[\begin{smallmatrix}  0 & 1   \\ 1 &  0  \end{smallmatrix} \right] \rangle$ & 2&4 \\

\hline

\multirow{2}{*}{$\car(\kk) = p \neq 2$} & subgroup of  $\{\left[\begin{smallmatrix}  1 & t   \\ 0 & 1  \end{smallmatrix} \right] | t \in \kk(x)^R \}$   & $\in \mathbb{N}$ & subgroup of  $\{\left[\begin{smallmatrix}  1 & t   \\ 0 & 1  \end{smallmatrix} \right] | t \in \kk \}$  & $\in \mathbb{N}$ &$\in \mathbb{N}$ \\

&containing  $\left[\begin{smallmatrix}  1 & 1   \\ 0 & 1  \end{smallmatrix} \right] $ if non-trivial &&containing  $\left[\begin{smallmatrix}  1 & 1   \\ 0 & 1  \end{smallmatrix} \right] $ if non-trivial&& \\

\hline

\multirow{4}{*}{$\car(\kk) = p = 2$} & subgroup of  $\{\left[\begin{smallmatrix}  1 & t   \\ 0 & 1  \end{smallmatrix} \right] | t \in \kk(x)^R \}$   & $\in \mathbb{N}$ & subgroup of  $\{\left[\begin{smallmatrix}  1 & t   \\ 0 & 1  \end{smallmatrix} \right] | t \in \kk \}$  & $\in \mathbb{N}$ &$\in \mathbb{N}$ \\

&containing  $\left[\begin{smallmatrix}  1 & 1   \\ 0 & 1  \end{smallmatrix} \right] $ if non-trivial &&containing  $\left[\begin{smallmatrix}  1 & 1   \\ 0 & 1  \end{smallmatrix} \right] $ if non-trivial&& \\

&subgroup of  $\{\left[\begin{smallmatrix}  a & bg   \\ b & a  \end{smallmatrix} \right] | a,b \in \kk(x)^R, a^2 + b^2 g \neq 0 \}$ & $\in \mathbb{N}$ & subgroup of  $\{\left[\begin{smallmatrix}  1 & t   \\ 0 & 1  \end{smallmatrix} \right] | t \in \kk \}$ & $\in \mathbb{N}$ & $\in \mathbb{N}$  \\

&containing  $\left[\begin{smallmatrix}  0 & g   \\ 1 & 0  \end{smallmatrix} \right] $ if non-trivial &&containing  $\left[\begin{smallmatrix}  1 & 1   \\ 0 & 1  \end{smallmatrix} \right] $ if non-trivial  && \\
\hline

\end{tabular}}
\end{center}
\end{prop}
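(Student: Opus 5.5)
The proof runs in three conjugation steps, following the outline of Subsection \ref{mitterand3}. Let $G \subset \J$ be $p$-elementary and non-trivial, and write $R = \pi(G) \subset \PGL(2,\kk)$.

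First, $R$ is a $p$-elementary subgroup of $\PGL(2,\kk)$ with $\kk$ algebraically closed. By Proposition \ref{dublin1}, assertions \eqref{orsaydublin11}, \ref{chargr2inv4} and \ref{hukfg3}, we conjugate $R$ by an element of $\PGL(2,\kk) \subset \J$. This puts $R$ in one of the forms of the column $R$:
\begin{itemize}
\item $\langle \left[\begin{smallmatrix} 1 & 0 \\ 0 & \xi \end{smallmatrix}\right]\rangle$ when $\car(\kk) \neq p > 2$;
\item $\langle\sigma\rangle$ or $\langle \sigma,\tau\rangle$ when $\car(\kk) \neq p = 2$;
\item a subgroup of $\Delta_\kk$ containing $\left[\begin{smallmatrix} 1 & 1 \\ 0 & 1 \end{smallmatrix}\right]$ when $\car(\kk) = p$.
\end{itemize}
In the first case $R$ is cyclic, since a $p$-elementary subgroup is then diagonalizable and the $p$-torsion of the torus of $\PGL(2,\kk)$ is $\mathbb{Z}/p$.

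Second, we split off $R$. By Lemma \ref{cohomologie1} there is a complement $G' \leq G$ mapping isomorphically onto $R$. Lemma \ref{cohomologie2} gives $H^1(R,\PGL(2,\kk(x))) = 1$, so Proposition \ref{cohomologie0} provides $\rho \in \PGL(2,\kk(x))$ with $\{I\}\times R \leq \rho G\rho^{-1}$. Replacing $G$ by $\rho G \rho^{-1}$ does not change $\pi(G)$. Now $L := \Ker(\pi|_G)$ commutes with $\{I\}\times R$. Commuting with $(I,r)$ means being fixed by the action of $r$ on coefficients, so $L \subset \PGL(2,\kk(x)^R)$ and $G = L \times R$. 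This step is the main obstacle. The vanishing of $H^2(R,\kk(x)^\times)$ relies on the $C_1$ property of $\kk(x)^R$ (Tsen plus Lüroth), and the vanishing of $H^1(R,\GL_2)$ relies on Hilbert 90. One must also check that these still apply when $\car(\kk) = p$ and $R$ is a $p$-group of translations. They do, because both results only require a faithful action by field automorphisms.

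Third, we normalize $L$ by conjugating inside $\PGL(2,\kk(x)^R)$. Such conjugations commute with $\{I\}\times R$, so they preserve $R$. The field $\KK = \kk(x)^R$ is purely transcendental of degree one over $\kk$. When $\car(\kk) \neq 2$, Lemma \ref{stabilisationbis} identifies it with $\kk(x)$, $\kk(x^2)$ or $\kk(x^2+\frac{1}{x^2})$. We then apply Proposition \ref{dublin1} over $\KK$ case by case.
\begin{itemize}
\item If $\car(\kk) \neq p > 2$, part \eqref{orsaydublin11} gives $L = \langle \left[\begin{smallmatrix} 1 & 0 \\ 0 & \xi \end{smallmatrix}\right]\rangle$ or trivial.
\item If $\car(\kk) \neq p = 2$, part \eqref{orsaydublin12} gives $L = \langle \left[\begin{smallmatrix} 0 & g \\ 1 & 0 \end{smallmatrix}\right]\rangle$ or $\langle \left[\begin{smallmatrix} 0 & g \\ 1 & 0 \end{smallmatrix}\right], \left[\begin{smallmatrix} a & -bg \\ b & -a \end{smallmatrix}\right]\rangle$, with $g,a,b \in \KK$. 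Writing elements of $\KK$ as rational functions in the generator $x^2$ or $x^2+\frac{1}{x^2}$ gives the displayed entries. Multiplying $g$ by squares, via Lemma \ref{LemCon}, lets us take $g$ to be a monic polynomial with simple roots.
\item If $\car(\kk) = p$, part \eqref{hukfg100} puts $L$ inside $\Delta_\KK$ containing $\left[\begin{smallmatrix} 1 & 1 \\ 0 & 1 \end{smallmatrix}\right]$, or, for $p = 2$, inside $\mathcal{C}_g$ containing $\left[\begin{smallmatrix} 0 & g \\ 1 & 0 \end{smallmatrix}\right]$ with $g$ a non-square. In the second case the remark after Lemma \ref{commutateur} gives the description of $\mathcal{C}_g$.
\end{itemize}

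Finally, the table's values of $a$, $b$ and $r = a+b$ are read off from these cases.
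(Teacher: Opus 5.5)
Your proposal is correct and follows the paper's proof essentially step for step. You normalize $\pi(G)$ with Proposition~\ref{dublin1}, split off $\{I\}\times R$ with Proposition~\ref{cohomologie0}, and then normalize $\Ker(\pi|_G)$ by conjugation inside $\PGL(2,\kk(x)^R)$, which leaves $R$ unchanged. One point that both you and the paper pass over quickly is that an element of $\PGL(2,\kk(x))$ commuting with $\{I\}\times R$ actually lies in $\PGL(2,\kk(x)^R)$; this follows from Hilbert's Theorem~90, since $H^1(R,\kk(x)^\times)$ is trivial, so there is no real gap.
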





\newpage
\section{Conjugation by de Jonquières maps between the representatives} \label{jonquieressection32} $ $

The goal of this section is to describe when two $p$-elementary subgroups are conjugate in the de Jonqui\`eres group when the characteristic of the field is not equal to $p$.
The main result is Proposition \ref{resultatgeneral}.
We assume $\car(\kk) \neq p$ in this section.

First we observe that in the case $p > 2$, two different subgroups of the previous table of Proposition \ref{theo2} are never conjugate in the de Jonquières group (because their $(a,b)$ are different). Hence we now assume $p=2$.

\subsection{A general result for the conjugacy between $2$-elementary subgroups}

\begin{prop} \label{resultatgeneral} $ $
Let $G = L \times R$ and $G' = L' \times R$ be two 2-elementary subgroups of the de Jonquières group, where $L, L'$ are subgroups of $\PGL(2,\kk(x))$ of the same order, and $R$ is a subgroup of $\PGL(2,\kk)$.
Let $\mathcal{N}'$ be the normalizer of $R$ in $\PGL(2,\kk)$.

Then $G$ and $G'$ are conjugate if and only if there exists $\nu \in \mathcal{N}'$ such that $\nu L \nu^{-1}$ and $L'$ have the same determinant in $\kk(x)^{\times} / {(\kk(x)^{\times})}^2 $.
\end{prop}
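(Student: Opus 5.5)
The plan is to prove the two implications separately, working throughout inside $\J=\PGL(2,\kk(x))\rtimes\PGL(2,\kk)$ and writing an element as $\varphi=(M,\nu)$. Set $K=\kk(x)^R$. As in the proof of Proposition \ref{theo2}, we have $L,L'\subset\PGL(2,K)$, since both commute with $\{I\}\times R$. By Lüroth's theorem, $K\simeq\kk(t)$ (Lemma \ref{stabilisationbis} gives $t$ explicitly). The key observation is that conjugation by $(I,\nu)$ acts on $\PGL(2,\kk(x))$ by applying the field automorphism $\nu$ to the matrix coefficients. This action sends $\det$ to $\nu(\det)$ in $\kk(x)^\times/(\kk(x)^\times)^2$, and we denote the image of $L$ by $\nu L\nu^{-1}$.

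\emph{($\Rightarrow$)} Suppose $\varphi G\varphi^{-1}=G'$ with $\varphi=(M,\nu)$. Since $\pi(G)=\pi(G')=R$, we get $\nu R\nu^{-1}=R$, so $\nu\in\mathcal N'$. Moreover $\varphi$ maps $\Ker(\pi|_G)=L$ onto $\Ker(\pi|_{G'})=L'$, and on $\PGL(2,\kk(x))$ it acts by $A\mapsto M(\nu A\nu^{-1})M^{-1}$. Conjugation by $M$ preserves determinants in $\kk(x)^\times/(\kk(x)^\times)^2$. Hence $\det(\nu L\nu^{-1})=\det(L')$.

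\emph{($\Leftarrow$)} Let $\nu\in\mathcal N'$ be as in the statement. First conjugate $G$ by $(I,\nu)$. Since $\nu$ normalizes $R$, this gives $(\nu L\nu^{-1})\times R$. Also $\nu$ preserves $K$, because it normalizes $R$, so $\nu L\nu^{-1}\subset\PGL(2,K)$. Next we look for $\rho\in\PGL(2,K)$ with $\rho(\nu L\nu^{-1})\rho^{-1}=L'$. Such a $(\rho,I)$ commutes with $\{I\}\times R$, because its coefficients are $R$-invariant, so it conjugates $(\nu L\nu^{-1})\times R$ to $L'\times R$. To produce $\rho$, apply Proposition \ref{dublin1} over the field $K\simeq\kk(t)$. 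For $|L|=2$ use \ref{chargr2inv1}, and for $|L|=4$ use \ref{chargr2inv3}, which is stated for $\KK=\kk(x)$ and transfers to $K$ through the isomorphism $K\simeq\kk(t)$. Each says that conjugacy in $\PGL(2,K)$ is equivalent to equality of determinants in $K^\times/(K^\times)^2$. The case $L$ trivial is immediate.

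The main obstacle is exactly this mismatch between $K^\times/(K^\times)^2$ and $\kk(x)^\times/(\kk(x)^\times)^2$. The natural map between them is not injective when $R\neq\{I\}$. For example, with $R=\langle\sigma\rangle$ and $K=\kk(x^2)$, the class of $x^2$ dies in $\kk(x)$. So $[0\ g(x^2);1\ 0]$ and $[0\ x^2g(x^2);1\ 0]$ have the same determinant in $\kk(x)$, but a priori not in $K$. I would first try to absorb this kernel using the freedom in $\nu$ together with conjugation by non-$R$-invariant $M$, for instance $M=\mathrm{diag}(1,x)$ composed with a suitable element of $\mathcal N'$ or of $\{I\}\times R$.

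If that fails, I would instead read the determinant condition in $K^\times/(K^\times)^2$, which is the group in which $L,L'$ naturally live. Then $(\Leftarrow)$ holds verbatim. For $(\Rightarrow)$, I would replace $\varphi$ by $\varphi$ composed with elements of $G'$ and use the cohomological triviality from Lemma \ref{cohomologie2}. The aim is to arrange that $\varphi$ maps the section $\{I\}\times R$ to itself, which forces $M\in\PGL(2,K)$ up to this normalization. The determinant equality then holds already in $K^\times/(K^\times)^2$.
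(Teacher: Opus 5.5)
Your ($\Rightarrow$) argument is correct and is exactly the paper's. For ($\Leftarrow$), the obstruction you isolate cannot be removed. The proposition is false as stated, and your own pair is a counterexample.

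Take $R=\langle\sigma\rangle$, $K=\kk(x^2)$, $g(u)=u^2-1$, $A=\left[\begin{smallmatrix}0&g(x^2)\\1&0\end{smallmatrix}\right]$, $A'=\left[\begin{smallmatrix}0&x^2g(x^2)\\1&0\end{smallmatrix}\right]$, $L=\langle A\rangle$, $L'=\langle A'\rangle$. Then $\det L=\det L'$ in $\kk(x)^\times/(\kk(x)^\times)^2$.

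The absorption you propose fails. $M_0=\mathrm{diag}(x,1)$, i.e.\ $(x,y)\mapsto(x,xy)$, conjugates $A$ to $A'$. But it sends $(x,y)\mapsto(-x,y)$ to $(x,y)\mapsto(-x,-y)\notin G'$. In general, every $M$ with $MAM^{-1}=A'$ has the form $M_0C$ with $C\in\mathcal{C}_{g(x^2)}$. Requiring $(M,I)$ to send $(I,\sigma)$ to $(I,\sigma)$ or $(A',\sigma)$ amounts to ${}^{\sigma}C\,C^{-1}\in\{\mathrm{diag}(-1,1),\ \mathrm{diag}(-1,1)A\}$, where ${}^\sigma C$ is $C$ with $x$ replaced by $-x$. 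Both targets lie in the coset $\{\left[\begin{smallmatrix}a&-bg\\b&-a\end{smallmatrix}\right]\}$ of Lemma~\ref{commutateur}. On the other hand, ${}^\sigma C\,C^{-1}$ always lies in the index-two subgroup $\{\left[\begin{smallmatrix}a&bg\\b&a\end{smallmatrix}\right]\}$, because $g(x^2)$ is $\sigma$-invariant and so $\sigma$ preserves each coset.

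The same computation, together with Lemma~\ref{LemCon}, handles general $g_1,g_2\in K$. When $g_2=x^2s^2g_1$ with $s\in K$, use $M_0=\mathrm{diag}(xs,1)$. This shows that $(M,I)$ can conjugate $\langle\left[\begin{smallmatrix}0&g_1\\1&0\end{smallmatrix}\right]\rangle\times R$ onto $\langle\left[\begin{smallmatrix}0&g_2\\1&0\end{smallmatrix}\right]\rangle\times R$ only if $g_1/g_2\in(K^\times)^2$. An element $\nu\in\mathcal{N}'$ acts by $x\mapsto\lambda x^{\pm1}$, so it only replaces $g(u)$ by $g(\lambda^2u)$ or $g(\lambda^2/u)$. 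The quotient of either by $ug(u)$ has odd order at $u=0$. Hence $G$ and $G'$ are not conjugate in $\J$.

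The paper's proof has exactly this gap. Its first equivalence treats a $\PGL(2,\kk(x))$-conjugacy between $\nu L\nu^{-1}$ and $L'$ as a $\J$-conjugacy between $L\times R$ and $L'\times R$, without controlling where $\{I\}\times R$ goes. The example also contradicts Theorem~\ref{theorem2}, Assertion~\ref{theorem25}, since $u^2-1$ and $u^3-u$ both occur in the table of Proposition~\ref{theo2}.

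Your fallback does not give a correct statement either. With determinants read in $K^\times/(K^\times)^2$, ($\Leftarrow$) works as you say. Your sketch of ($\Rightarrow$) has a hole, however. Composing $\varphi$ with elements of the abelian group $G'$ does not change where $\{I\}\times R$ is sent. The element $\rho$ from Lemma~\ref{cohomologie2} straightens the section but need not normalize $L'$, so it can change the $K$-determinant of $L'$.

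This actually happens when $|L|=4$. With $g$ as above, let $L'=\langle A,\mathrm{diag}(1,-1)\rangle$ and $\rho=\left[\begin{smallmatrix}1&m\\x&-xm\end{smallmatrix}\right]$ with $m=(x-1)(x-i)$. Since $m\cdot{}^\sigma m=x^4-1$, one checks that ${}^\sigma\rho=\rho A$. So $(\rho,I)$ sends $(A,\sigma)\in G'$ to $(I,\sigma)$ and conjugates $L'\times R$ to $(\rho L'\rho^{-1})\times R$. But $\rho\,\mathrm{diag}(1,-1)\rho^{-1}=\left[\begin{smallmatrix}0&1/x^2\\1&0\end{smallmatrix}\right]$. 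Thus the $K$-determinant classes go from $\{1,x^4-1\}$ to a group of order $4$ containing the class of $x^2$. No $\nu\in\mathcal{N}'$ can undo this, since $\nu$ acts on $K^\times/(K^\times)^2$ by an automorphism.

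By contrast, for $|L|=2$ and $R=\langle\sigma\rangle$ the straightening can be done inside the centralizer of $L'$. This uses surjectivity of the norm $\kk(x)^\times\to K^\times$ and Hilbert~90, so there the $K$-version is correct. So the right invariant is finer than the $\kk(x)$-determinant when $|L|=2$ and coarser than the $K$-determinant when $|L|=4$. The proposition has to be reformulated, and neither your proposal nor the paper's proof establishes it as stated.
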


\begin{proof} $ $
    The subgroups $L \times R $ and $L' \times R$ are conjugate in the de Jonquières group

    $\Leftrightarrow$ $\exists \nu \in \PGL(2,\kk)$ s.t. $\nu R \nu^{-1} = R$, and $\nu L \nu^{-1}$ and $L'$ are conjugate in $\PGL(2,\kk(x))$

    $\Leftrightarrow$ $\exists \nu \in \mathcal{N}'$ such that $det(\nu L \nu^{-1}) = det(L')$ (Proposition \ref{dublin1} Assertion \ref{orsaydublin12}). 
\end{proof}

\subsection{A description of the normalizer of the subgroup $R \subset \PGL(2,\kk)$} $ $

The normalizer of $\langle \sigma \rangle$ is described in Lemma \ref{commutateur} Assertion \ref{commutateursigma}. The normalizer of $\langle \sigma , \tau \rangle$ will be described in Proposition \ref{normalisateur2}.

\begin{remark} \label{remarkorsayautv} $ $ (Nature of the automorphism group of the Klein group)

We have $\Aut(V) \simeq \mathfrak{S}_3$ and $\Aut(V)$ induces all permutations of the set $V \setminus \{I\}$.
\end{remark}

\begin{defi} $ $
Let $V = \langle \sigma , \tau \rangle \subset \PGL(2,\kk)$ and denote by $\mathcal{N}$ 
its normalizer in $\PGL(2,\kk)$.
Let $\alpha = \left[\begin{smallmatrix}  1 & 1   \\  1 & -1  \end{smallmatrix} \right] \in \PGL(2,\kk)$ and $\beta = \left[\begin{smallmatrix}  1 & -i   \\  i & -1  \end{smallmatrix} \right] \in \PGL(2,\kk)$.
We define the following group homomorphism: 
$$\begin{array}{ccccc}
\theta & : & \mathcal{N} & \to & \Aut(V) \\
 & & n & \mapsto & (v \mapsto  nvn^{-1}) \\
\end{array}$$
\end{defi}

\begin{lemme} \label{conjugationJlemme} $ $
We have the following equalities:
$$\alpha^2 = \beta^2 =  id, \quad \alpha \sigma \alpha^{-1}  =  \tau, \quad \beta \sigma \beta^{-1} =  \sigma \tau, \quad \alpha \beta \alpha  =  \beta \alpha \beta $$
\end{lemme}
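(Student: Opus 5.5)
The plan is to verify all four identities by direct matrix multiplication in $\GL(2,\kk)$, and then pass to $\PGL(2,\kk)$ by checking that each pair of products differs only by a nonzero scalar matrix. Throughout, $\car(\kk)\neq 2$ (we are in the case $p=2\neq\car(\kk)$), so $2$ is invertible, and $i$ is a primitive $4$th root of unity, so $i^2=-1$.

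\textbf{Involutions.} First I compute the squares of the chosen representatives:
\[
\left[\begin{smallmatrix} 1 & 1 \\ 1 & -1\end{smallmatrix}\right]^2=\left[\begin{smallmatrix} 2 & 0 \\ 0 & 2\end{smallmatrix}\right],
\qquad
\left[\begin{smallmatrix} 1 & -i \\ i & -1\end{smallmatrix}\right]^2=\left[\begin{smallmatrix} 1-i^2 & 0 \\ 0 & 1-i^2\end{smallmatrix}\right]=\left[\begin{smallmatrix} 2 & 0 \\ 0 & 2\end{smallmatrix}\right].
\]
Both are nonzero scalars, so $\alpha^2=\beta^2=\mathrm{id}$. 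In particular $\alpha^{-1}=\alpha$ and $\beta^{-1}=\beta$ in $\PGL(2,\kk)$, which lets me replace every inverse below by the element itself.

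\textbf{Conjugation of $\sigma$.} Next I compute $\alpha\sigma\alpha$. We have $\alpha\sigma=\left[\begin{smallmatrix} 1 & -1 \\ 1 & 1\end{smallmatrix}\right]$, and multiplying on the right by $\alpha$ gives $\left[\begin{smallmatrix} 0 & 2 \\ 2 & 0\end{smallmatrix}\right]$, which is $\tau$ in $\PGL(2,\kk)$.

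Similarly, $\beta\sigma=\left[\begin{smallmatrix} 1 & i \\ i & 1\end{smallmatrix}\right]$, and multiplying on the right by $\beta$ gives
\[
\left[\begin{smallmatrix} 1+i^2 & -2i \\ 2i & -i^2-1\end{smallmatrix}\right]=\left[\begin{smallmatrix} 0 & -2i \\ 2i & 0\end{smallmatrix}\right].
\]
This is proportional to $\left[\begin{smallmatrix} 0 & -1 \\ 1 & 0\end{smallmatrix}\right]$. On the other hand $\sigma\tau=\left[\begin{smallmatrix} 0 & 1 \\ -1 & 0\end{smallmatrix}\right]$, which is proportional to the same matrix. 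Hence $\beta\sigma\beta^{-1}=\sigma\tau$.

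\textbf{Braid relation.} This is the only step with more bookkeeping, and the main thing to watch is that the two sides agree only up to the scalar $-1$. First,
\[
\alpha\beta=\left[\begin{smallmatrix} 1+i & -(1+i) \\ 1-i & 1-i\end{smallmatrix}\right],
\qquad
\alpha\beta\alpha=\left[\begin{smallmatrix} 0 & 2(1+i) \\ 2(1-i) & 0\end{smallmatrix}\right].
\]
Second, $\beta\alpha=\left[\begin{smallmatrix} 1-i & 1+i \\ i-1 & i+1\end{smallmatrix}\right]$, and multiplying on the right by $\beta$ (using $i^2=-1$) gives
\[
\beta\alpha\beta=\left[\begin{smallmatrix} 0 & -2(1+i) \\ -2(1-i) & 0\end{smallmatrix}\right]=-\,\alpha\beta\alpha .
\]
So $\alpha\beta\alpha=\beta\alpha\beta$ in $\PGL(2,\kk)$.

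\textbf{Consistency check.} As a sanity check, the relations fit Remark \ref{remarkorsayautv}. Under $\theta$, $\alpha$ and $\beta$ act on $V\setminus\{I\}$ as the transpositions $(\sigma\ \tau)$ and $(\tau\ \sigma\tau)$: the second follows because conjugating $\tau=\sigma(\sigma\tau)$ by $\beta$ yields $(\sigma\tau)\cdot\beta(\sigma\tau)\beta^{-1}$, where $\beta(\sigma\tau)\beta^{-1}=\beta(\beta\sigma\beta^{-1})\beta^{-1}=\sigma$ because $\beta^2=\mathrm{id}$, so $\tau\mapsto\sigma\tau\sigma=\tau\sigma\sigma=\tau\cdot\mathrm{id}$ — more simply, $\beta$ swaps $\sigma$ and $\sigma\tau$ and hence fixes $\tau$. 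Together these generate $\mathfrak{S}_3$, which matches the braid relation. This check is not needed for the proof, since the lemma is purely computational.
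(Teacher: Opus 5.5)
Your proof is correct and follows the paper's approach, which simply says ``direct computations''; all your matrix products check out, including that the braid relation holds only up to the scalar $-1$, which is fine in $\PGL(2,\kk)$. The one slip is in the optional sanity check: $\beta$ fixes $\tau$ (as you yourself derive) and so acts as the transposition $(\sigma\ \sigma\tau)$, not $(\tau\ \sigma\tau)$; this does not affect the proof.
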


\begin{proof} $ $
Direct computations.
\end{proof}

\begin{lemme} \label{chineorsay57} $ $
\begin{enumerate}
\item  \label{chineorsay571} The kernel of the group homomorphism $\theta$ is the group $V$.
\item  \label{chineorsay572}
The group $\langle \alpha , \beta \rangle$ is a subgroup of $\mathcal{N}$ isomorphic to $\mathfrak{S}_3$. \newline Furthermore, the map $\theta_{\langle \alpha , \beta \rangle} : \langle \alpha , \beta \rangle \rightarrow \Aut(V)$ is a group isomorphism.
\end{enumerate}
\end{lemme}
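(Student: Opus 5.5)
The plan is to verify both assertions of Lemma \ref{chineorsay57} by explicit matrix computations, together with Lemma \ref{conjugationJlemme} and Lemma \ref{commutateur} Assertion \ref{commutateursigmatau}. Throughout I use $\car(\kk)\neq 2$, as implicitly required for $\sigma$ to be of order $2$ and for $i$ to make sense.

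For \eqref{chineorsay571}, an element $n\in\mathcal{N}$ lies in $\Ker\theta$ exactly when it commutes with every element of $V$, i.e.\ when $n$ lies in the centralizer of $V=\langle\sigma,\tau\rangle$ in $\PGL(2,\kk)$. By Lemma \ref{commutateur} Assertion \ref{commutateursigmatau}, $V$ is self-centralizing, so $\Ker\theta = V$. The inclusion $V\subseteq \Ker\theta$ is automatic because $V$ is abelian, so nothing more is needed.

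For \eqref{chineorsay572}, I first show $\alpha,\beta\in\mathcal{N}$. By Lemma \ref{conjugationJlemme}, $\alpha\sigma\alpha^{-1}=\tau$. Since $\alpha^2=\mathrm{id}$, this also gives $\alpha\tau\alpha^{-1}=\sigma$. Hence $\alpha$ normalizes $V$ and $\theta(\alpha)$ swaps $\sigma$ and $\tau$ while fixing $\sigma\tau$. Similarly, $\beta\sigma\beta^{-1}=\sigma\tau$, and conjugating back with $\beta^2=\mathrm{id}$ gives $\beta\sigma\tau\beta^{-1}=\sigma$. Consequently $\beta\tau\beta^{-1}=\beta\sigma\beta^{-1}\cdot\beta\sigma\tau\beta^{-1}=\sigma\tau\cdot\sigma=\tau$. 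So $\beta$ normalizes $V$ and $\theta(\beta)$ is the transposition of $\sigma$ and $\sigma\tau$.

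Thus $\theta(\alpha)$ and $\theta(\beta)$ are two distinct transpositions of $V\setminus\{I\}$, and they generate $\mathfrak{S}_3\simeq\Aut(V)$ by Remark \ref{remarkorsayautv}. In particular $\theta|_{\langle\alpha,\beta\rangle}$ is surjective, so $|\langle\alpha,\beta\rangle|\ge 6$.

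On the other hand, $\alpha^2=\beta^2=\mathrm{id}$ and the braid relation $\alpha\beta\alpha=\beta\alpha\beta$ (again Lemma \ref{conjugationJlemme}) are exactly the Coxeter presentation of $\mathfrak{S}_3$. Therefore $\langle\alpha,\beta\rangle$ is a quotient of $\mathfrak{S}_3$, so it has order at most $6$. Combining the two bounds, it has order exactly $6$ and is isomorphic to $\mathfrak{S}_3$, and $\theta|_{\langle\alpha,\beta\rangle}$ is a surjection between groups of order $6$, hence an isomorphism.

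The only delicate point is to make sure the relations of Lemma \ref{conjugationJlemme} hold in $\PGL(2,\kk)$, i.e.\ up to scalars. For example, $\alpha^2=2I$ and $\beta^2=2I$ as matrices, which requires $2\neq 0$. These are the routine computations I would not redo.
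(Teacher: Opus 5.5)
Your proposal is correct and follows the paper's own argument. Part (1) is exactly the paper's observation that $\Ker\theta$ is the centralizer of $V$, which equals $V$ by Lemma~\ref{commutateur} Assertion~\ref{commutateursigmatau}; part (2) is the paper's ``direct consequence of Lemma~\ref{conjugationJlemme} and Remark~\ref{remarkorsayautv}'', which you simply make explicit by computing $\theta(\alpha)$ and $\theta(\beta)$ as two distinct transpositions and bounding $|\langle\alpha,\beta\rangle|\le 6$ via the Coxeter presentation (the relations indeed hold in $\PGL(2,\kk)$, e.g.\ $\beta\sigma\beta=-2i\,\sigma\tau$ and $\beta\alpha\beta=-\alpha\beta\alpha$ as matrices).
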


\begin{proof} 
\begin{enumerate}
\item We observe that the kernel of $\theta$ is the centralizer of $V$, and using Lemma \ref{commutateur} Assertion \ref{commutateursigmatau}, this centralizer is $V$ itself.
\item
This is a direct consequence of Lemma \ref{conjugationJlemme} and Remark \ref{remarkorsayautv}.
\end{enumerate}
\end{proof}

%

\begin{prop} \label{normalisateur2} $ $
\begin{enumerate}
\item \label{normalisateur2old}
The normalizer of $V \subset \PGL(2,\kk)$ is: $$\langle \sigma , \tau , \alpha , \beta \rangle = \langle \sigma , \tau \rangle \rtimes \langle \alpha , \beta \rangle \simeq V \rtimes \mathfrak{S}_3 \simeq \mathfrak{S}_4$$
\item \label{chine2} 
The normalizer of $V \subset \PGL(2,\kk)$ induces all permutations of the set $V \setminus \{I\}$.
\end{enumerate}
\end{prop}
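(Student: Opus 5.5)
The plan is to use the homomorphism $\theta \colon \mathcal{N} \to \Aut(V)$ together with Lemma \ref{chineorsay57}, and to obtain the normalizer as an extension of $\Aut(V)$ by $\ker\theta$.

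First, by Lemma \ref{chineorsay57} Assertion \ref{chineorsay571}, $\ker\theta = V$. By Lemma \ref{chineorsay57} Assertion \ref{chineorsay572}, $\langle\alpha,\beta\rangle \subset \mathcal{N}$ and $\theta$ restricted to $\langle \alpha,\beta\rangle$ is an isomorphism onto $\Aut(V)$. In particular $\theta$ is surjective. For any $n\in\mathcal{N}$ there is a unique $s\in\langle\alpha,\beta\rangle$ with $\theta(s)=\theta(n)$. Then $s^{-1}n\in\ker\theta=V$, so $\mathcal{N}=V\langle\alpha,\beta\rangle$. Since $\theta$ is injective on $\langle\alpha,\beta\rangle$ and $\theta(V)=1$, we get $V\cap\langle\alpha,\beta\rangle=\{I\}$. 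As $V$ is normal in $\mathcal{N}$, this yields
\[
\mathcal{N}=\langle\sigma,\tau,\alpha,\beta\rangle = V\rtimes\langle\alpha,\beta\rangle\simeq V\rtimes\mathfrak{S}_3 .
\]
The order is therefore $24$, and the action of $\mathfrak{S}_3$ on $V$ is the faithful one, namely $\mathfrak{S}_3\simeq\Aut(V)=\GL(2,\mathbb{F}_2)$ acting naturally.

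Second, to identify $V\rtimes\GL(2,\mathbb{F}_2)$ with $\mathfrak{S}_4$, I would use the standard fact that $\mathfrak{S}_4 = V_4\rtimes\mathfrak{S}_3$. Here $V_4$ is the normal Klein subgroup $\{e,(12)(34),(13)(24),(14)(23)\}$ and $\mathfrak{S}_3$ is the stabilizer of $4$. Conjugation by $\mathfrak{S}_3$ permutes the three involutions of $V_4$ faithfully, hence realizes the isomorphism $\mathfrak{S}_3\simeq\Aut(V_4)$. Two semidirect products $V\rtimes_\phi H$ with $\phi$ an isomorphism $H\to\Aut(V)$ are isomorphic: compose with an automorphism of $H$ and an identification of $V$ with $V_4$. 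This gives $\mathcal{N}\simeq\mathfrak{S}_4$.

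Assertion \ref{chine2} then follows immediately from the surjectivity of $\theta$ combined with Remark \ref{remarkorsayautv}, which states that $\Aut(V)$ induces all permutations of $V\setminus\{I\}$.

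The only potentially delicate step is the first one, which relies on Lemma \ref{chineorsay57}. In particular, Lemma \ref{conjugationJlemme} must actually produce two non-commuting involutions acting as distinct transpositions on $V\setminus\{I\}$: $\alpha$ swaps $\sigma$ and $\tau$, and $\beta$ swaps $\tau$ and $\sigma\tau$ while fixing $\sigma$, since $\beta\sigma\beta^{-1}=\sigma\tau$ in $\PGL(2,\kk)$. Given that lemma, the rest is formal group theory. This uses $\car(\kk)\neq 2$, which is needed for $i$ and for Lemma \ref{commutateur} Assertion \ref{commutateursigmatau}.
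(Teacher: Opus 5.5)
Your argument is correct and is exactly the paper's route: the paper just states that both assertions are direct consequences of Lemma~\ref{chineorsay57} ($\ker\theta = V$ and $\theta|_{\langle\alpha,\beta\rangle}$ an isomorphism onto $\Aut(V)$). You have simply written out what that citation leaves implicit, namely the splitting $\mathcal{N}=V\rtimes\langle\alpha,\beta\rangle$, the identification with $\mathfrak{S}_4$, and the surjectivity of $\theta$.

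One small slip is in your last paragraph. From $\beta\sigma\beta^{-1}=\sigma\tau$, the involution $\beta$ swaps $\sigma$ and $\sigma\tau$ and fixes $\tau$; it does not swap $\tau$ and $\sigma\tau$ while fixing $\sigma$. This is still a transposition different from the one induced by $\alpha$ (which swaps $\sigma,\tau$ and fixes $\sigma\tau$), so nothing in your argument changes.
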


\begin{proof} 
\begin{enumerate}
\item 
 This is a direct consequence of Lemma \ref{chineorsay57}.
%
%
\item This is a direct consequence of Lemma 
\ref{chineorsay57} Assertion \ref{chineorsay572}.
\end{enumerate}
\end{proof}

\begin{remark} $ $
The group $V \rtimes \Aut(V)$ is called the holomorph of $V$.
\end{remark}

\newpage

\section{Subgroups with non-trivial elements fixing non-rational curve} \label{jonquieressection33} $ $

We assume $\car(\kk) \neq 2 = p$ in this section.
The goal of this section is to give a list of representatives of $2$-elementary subgroups containing at least one non-trivial element fixing a non-rational curve.
We remind the reader that $\sigma = \left[\begin{smallmatrix}  1 & 0   \\  0 & -1  \end{smallmatrix} \right] , \tau = \left[\begin{smallmatrix}  0 & 1   \\  1 & 0  \end{smallmatrix} \right]$.

\subsection{The de Jonquières elements fixing a non-rational curve} $ $

\begin{lemme} \label{ultimatelemma} $ $
Let $j$ be a non-trival de Jonquières involution fixing a non-rational curve.
Then $j \in \Ker(\pi)$. 
\end{lemme}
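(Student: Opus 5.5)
The plan is to argue by contradiction: assume $\pi(j) \neq I$, and show that $j$ cannot fix any non-rational curve. Write $C$ for a non-rational curve fixed pointwise by $j$, and $\pi(j) = \left[\begin{smallmatrix} a & b \\ c & d \end{smallmatrix}\right] \in \PGL(2,\kk)$, which is non-trivial and of order dividing $2$.

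The key observation concerns the rational fibration $(x,y) \mapsto x$, which is preserved by $j$. The map $j$ acts on the base $\mathbb{P}^1$ through $\pi(j)$. A point of $C$ that is fixed by $j$ must lie over a point $x_0$ of the base fixed by $\pi(j)$. Since $\pi(j)$ is a non-trivial automorphism of $\mathbb{P}^1$, it has at most two fixed points on the base. So the curve $C$ is contained in the union of at most two fibres of the fibration, at least for its points lying in the open set where $j$ is regular and the fibration is a morphism.

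To make this rigorous, I would first regularize $j$. By Proposition~\ref{prop1}\eqref{Existence}, together with the standard fact that the fibration can be made $\langle j\rangle$-equivariant, we may take a smooth rational surface $S$ with a conic bundle $S \to \mathbb{P}^1$ on which $j$ acts by automorphisms, compatibly with $\pi(j)$ on the base. The strict transform of $C$ on $S$ is still a non-rational curve fixed pointwise by $j$. Since this curve is irreducible and lies in the preimage of the finite fixed-point set of $\pi(j)$, it is contained in a single fibre. Every fibre of a conic bundle is a union of rational curves, namely $\mathbb{P}^1$ or two $(-1)$-curves. This contradicts the non-rationality of $C$, and hence $\pi(j) = I$.

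The main obstacle is the regularization step: making sure that "fixing a non-rational curve" is a birationally invariant notion, so that we may pass to $S$. I would handle this exactly as in Lemma~\ref{ultimatelemma3}. Under a birational map, the strict transform of $C$ remains non-rational. The map $j$ fixes it pointwise because it does so on a dense open subset, and $j$ is a morphism there. Alternatively, I could avoid regularization entirely: a general point of $C$ lies in the domain of $j$ and in $\mathbb{A}^2$, so its $x$-coordinate is a fixed point of $\pi(j)$. Hence $x$ is constant on $C$, and $C$ is a fibre line $\{x = x_0\}$, which is rational. This direct argument is likely the cleanest, and it is the one I would write.
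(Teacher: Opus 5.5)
Your argument is correct and is essentially the paper's proof. The paper conjugates $\pi(j)$ to $\sigma$, so that the fixed locus of $j$ lies in $\{x=0\}\cup\{x=\infty\}$; you instead use directly that a non-trivial element of $\PGL(2,\kk)$ has at most two fixed points, so $x$ is constant on $C$ and $C$ is a rational fibre line. Your version skips that normalization step, which in the paper relies on $\car(\kk)\neq 2$, and the regularization detour you sketch is not needed, as you note yourself.
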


\begin{proof} $ $
We assume that $j \notin \Ker(\pi)$.
We write $j = (l,r)$ where $r \in \PGL(2,\kk)$ is a non-trivial involution. According to Proposition \ref{dublin1}, up to conjugation by an element of $\PGL(2,\kk)$ we can assume that $r = \sigma$. Hence the fixed locus of $j$ is a subset of $\{ x = 0 \} \cup \{x = \infty \}$. Hence it cannot contains a non-rational curve.
\end{proof}

\begin{lemme} \label{ultimatelemma20} $ $

Let $g \in \kk[X]^*$ be a monic polynomial with simple roots of degree $d \in \mathbb{N}_{\geq 0}$. \newline
The subset $\left\{y^2 = g(x)\right\} \subset \mathbb{A}^2$ contains a non-rational curve if and only if $d \geq 3$.
\end{lemme}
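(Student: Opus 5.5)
The curve $C=\{y^2=g(x)\}\subset\mathbb{A}^2$ is a hyperelliptic-type curve, so the plan is to compute its geometric genus. Throughout, $\car(\kk)\neq 2$ (the standing assumption of this section) and $g$ is monic with simple roots.

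\emph{Step 1: irreducibility.} If $d\geq 1$, then $g$ is not a square in $\kk[x]$, since it has simple roots. Hence $y^2-g(x)$ is irreducible in $\kk(x)[y]$, and since it is monic in $y$ it is irreducible in $\kk[x,y]$. So $C$ is an irreducible affine curve. The partial derivatives $2y$ and $g'(x)$ vanish simultaneously only where $y=0$ and $g(x)=g'(x)=0$, which is impossible because the roots of $g$ are simple. Thus $C$ is smooth. Its function field $\kk(C)=\kk(x)(\sqrt{g})$ is a quadratic extension of $\kk(x)$.

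\emph{Step 2: the small-degree cases.} If $d=0$, then $g=1$ and $C$ is the union of the two lines $y=\pm1$, which are rational. If $d=1$, then $x=y^2-c$ parametrizes $C$, so $C$ is rational. If $d=2$, then $C$ is a smooth affine conic, which is rational: project from one of its points at infinity, or observe that $(y-x')(y+x')=\text{const}$ after completing the square in $x$. In all three cases every curve contained in the subset is rational. Since $C$ is irreducible for $d\ge1$, the only curves it contains are $C$ itself, and for $d=0$ they are the two lines.

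\emph{Step 3: the case $d\geq 3$.} Here I will show that the smooth projective model $\overline{C}$ of $C$ has genus $\lfloor (d-1)/2\rfloor\geq 1$. The double cover $\overline{C}\to\mathbb{P}^1$ given by $x$ is separable, because $\car\neq 2$. It ramifies exactly over the $d$ roots of $g$, and also over $\infty$ when $d$ is odd. The ramification at $\infty$ is read off from the valuation of $g$ at $\infty$, namely $-d$, which is odd precisely when $d$ is odd. All this ramification is tame, so Riemann--Hurwitz gives
\[
2g_{\overline C}-2=2(-2)+r,
\]
where $r=d$ or $d+1$ is the number of branch points, whichever is even. Then $g_{\overline C}=r/2-1\geq 1$ once $d\ge3$. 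So $C$ is a non-rational curve contained in the subset. Alternatively, Lüroth-type reasoning works directly: if $\kk(x,\sqrt g)=\kk(t)$, then the differential argument above gives a contradiction.

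\emph{Main obstacle.} The only point needing care is the branch-point count at $\infty$ together with separability. Both are routine given $\car(\kk)\neq2$, so I expect no real difficulty.
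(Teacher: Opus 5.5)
Your proof is correct and takes the same route as the paper, which simply says the lemma follows from the Riemann--Hurwitz formula. You apply it to the double cover $x\colon \overline{C}\to\mathbb{P}^1$, and you write out the irreducibility and smoothness check and the branch-point count (including $\infty$ for odd $d$) that the paper leaves implicit. The closing ``L\"uroth-type'' aside refers to a differential argument that appears nowhere in your write-up; it is not needed and should be deleted.
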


\begin{proof} $ $
This is a direct consequence of the Riemann-Hurwitz formula.
\end{proof}

\begin{lemme} \label{ultimatelemma2} $ $

Let $g \in \kk[X]^*$ be a monic polynomial with simple roots of degree $d \in \mathbb{N}_{\geq 0}$.
\begin{enumerate}
\item The element $j_1 = (\left[ \begin{smallmatrix} 0 & g \\ 1 & 0 \end{smallmatrix} \right],I) \in \J$ fixes a non-rational curve if and only if $d \geq 3$.
\item The element $j_2 = (\left[ \begin{smallmatrix} 0 & g(x^2) \\ 1 & 0 \end{smallmatrix} \right],I) \in \J$ fixes a non-rational curve if and only if $g$ has at least two distinct roots not equal to $0$.
\item The element $j_3 = (\left[ \begin{smallmatrix} 0 & g(x^2 + \frac{1}{x^2}) \\ 1 & 0 \end{smallmatrix} \right],I) \in \J$ fixes a non-rational curve if and only if $g$ has at least one root not equal to $\pm 2$.
\end{enumerate}
\end{lemme}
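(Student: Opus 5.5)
The element $j_k$ acts as $(x,y)\mapsto (x, h(x)/y)$ with $h$ equal to $g(x)$, $g(x^2)$ or $g(x^2+\frac{1}{x^2})$. Since $\pi(j_k)=I$, every fibre $\{x=c\}$ is invariant. On a fibre where $h(c)\in\kk^\times$, the map $y\mapsto h(c)/y$ is a non-trivial involution of $\PP^1$ with exactly the two fixed points $y=\pm\sqrt{h(c)}$. So the fixed curves of $j_k$ that dominate the base are contained in the closure of the curve $\{y^2=h(x)\}$. Any other fixed curve must be a fibre, because a curve not dominating the base lies in fibres, and fibres are rational. Hence in all three cases, $j_k$ fixes a non-rational curve if and only if the curve $y^2=h(x)$ has a non-rational component. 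Concretely, we reduce $h$ modulo squares in $\kk(x)$ to its squarefree part $\tilde h$, a polynomial with simple roots. The curve $y^2=h$ is birational to $y^2=\tilde h$, and Lemma~\ref{ultimatelemma20} says it is non-rational if and only if $\deg\tilde h\geq 3$. To be careful, I would also note that $y^2=\tilde h$ is irreducible as soon as $\tilde h$ is not a square, and rational when $\deg \tilde h\le 2$. We work in $\car(\kk)\neq 2$.

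For (1), $h=g$ already has simple roots, so Lemma~\ref{ultimatelemma20} gives the claim directly with $d=\deg g$.

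For (2), $h=g(x^2)$. Let $g$ have roots $r_1,\dots,r_d$. A root $r_i\neq 0$ gives the two simple roots $\pm\sqrt{r_i}$ of $g(x^2)$. The root $0$, if present, gives $x^2$, a square. So the squarefree part is $\prod_{r_i\neq0}(x^2-r_i)$, of degree $2m$, where $m$ is the number of non-zero roots. The condition $2m\geq 3$ is equivalent to $m\geq 2$, which is the statement.

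For (3), $h=g(x^2+x^{-2})$. Multiplying by the square $x^{2d}$ changes nothing up to squares, so we may replace $h$ by $\prod_i (x^4-r_ix^2+1)$. If $r_i\neq\pm2$, the quartic $x^4-r_ix^2+1$ has four distinct roots $\pm\sqrt{s},\pm1/\sqrt{s}$ with $s+1/s=r_i$ and $s\neq\pm1$. If $r_i=2$ the factor is $(x^2-1)^2$, and if $r_i=-2$ it is $(x^2+1)^2$; both are squares. For distinct $r_i$ the roots are disjoint, since $x$ determines $x^2+x^{-2}$. So the squarefree part has degree $4m'$, where $m'$ is the number of roots not equal to $\pm2$. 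The condition $4m'\geq 3$ is equivalent to $m'\geq1$.

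The step that needs the most care is the reduction in the first paragraph. We must justify that the fixed locus of $j_k$ on a smooth model consists exactly of the closure of $\{y^2=h\}$ together with possibly some fibre components. We must also handle the birational equivalence with the squarefree model, where only the normalization matters. I would argue generically over the open set of fibres where $h(c)\neq 0,\infty$. The remaining points lie in finitely many rational fibres, so they cannot contribute a non-rational curve.
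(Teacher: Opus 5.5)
Your proposal is correct and is essentially the paper's argument: identify the fixed locus with $\{y^2=h(x)\}$, remove square factors by a birational change $y\mapsto y/q(x)$, and conclude with the degree count of Lemma~\ref{ultimatelemma20}. The paper does the square-stripping case by case. For the roots $\pm 2$ in (3) it writes $x^2\mp 2$, but your factors $(x^2-1)^2$ and $(x^2+1)^2$ are the right ones. Your uniform ``squarefree part'' bookkeeping, and your check that fixed curves not dominating the base lie in rational fibres, make explicit what the paper takes for granted.
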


\begin{proof} $ $

\begin{enumerate}
\item The fixed locus of $j_1$ is $\left\{y^2 = g(x)\right\}$. We can then use Lemma \ref{ultimatelemma20}.
\item The fixed locus of $j_2$ is $\left\{y^2 = g(x^2)\right\}$. If $0$ is not a root of $g$ then $g(x^2)$ has no multiple roots and we can use Lemma \ref{ultimatelemma20}. If $0$ is a root of $g$ then we write $g(x^2) = x^2 h(x)$ where $h$ is a monic polynomial with simple roots of degree $2d-2$. The fixed locus of $j_2$ is $\left\{y^2 = x^2 h(x )\right\}$. Using the map $(x,y) \rightarrow (x,xy)$, this fixed locus is birationally equivalent to $\left\{y^2 = h(x) \right\}$. We can then use Lemma \ref{ultimatelemma20}.
\item Let $r_1, \dots, r_d \in \kk$ be the roots of $g$. The fixed locus of $j_3$ is: $$\left\{y^2 x^{2d}= \Pi_{i=1}^{d} (x^4 -  r_i x^2 + 1) \right\}$$
Using the map $(x,y) \rightarrow (x,x^{-d} y)$, the fixed locus is birationally equivalent to: $$\left\{y^2 = \Pi_{i=1}^{d} (x^4 - r_i x^2 + 1)\right\}$$
We observe that the right part $\Pi_{i=1}^{d} (x^4 -  r_i x^2 + 1)$ is with simple roots if and only if $r_{i} \neq \pm 2$ for all $i$. In that case we can use Lemma \ref{ultimatelemma20}.
If $g(2)=0$ and $g(-2) \neq 0$ \textit{resp} $g(2) \neq 0$ and $g(-2) = 0$ \textit{resp} $g(2)=g(-2)=0$, then using the map $(x,y) \rightarrow (x,(x^2-2)^{-1} y)$ \textit{resp} $(x,y) \rightarrow (x,(x^2+2)^{-1} y)$ \textit{resp} $(x,y) \rightarrow (x,(x^2-2)^{-1} (x^2+2)^{-1} y)$, this fixed locus is birationally equivalent to: $$\left\{y^2 = h(x) \right\},$$
where $h$ is a monic polynomial with simple roots of degree $4d-4$ \textit{resp} $4d-4$ \textit{resp} $4d-8$. We can then use Lemma \ref{ultimatelemma20}.
\end{enumerate}
\end{proof}


\subsection{A list of representatives of $2$-elementary subgroups of the de Jonquières group with non-trivial elements fixing non-rational curve} $ $

\begin{prop} \label{ultimateproposition} $ $
Let $G$ be a $2$-elementary subgroup of the de Jonquières group.
The subgroup $G$ has a non-trivial element fixing a non-rational curve if and only if $G$ is conjugate in the de Jonquières group to one of the following subgroups:
\begin{itemize}
\item $\langle \left[ \begin{smallmatrix} 0 & g \\ 1 & 0 \end{smallmatrix} \right] \rangle \times \langle I \rangle$ where $g \in \kk[X]^*$ is a monic polynomial with simple roots of degree at least three.
\item $\langle \left[ \begin{smallmatrix} 0 & g \\ 1 & 0 \end{smallmatrix} \right],  \left[ \begin{smallmatrix} a & - b g \\ b & - a \end{smallmatrix} \right] \rangle \times \langle I \rangle$ where $g \in \kk[X]^*$ is a monic polynomial with simple roots of degree at least three.
\item $\langle \left[ \begin{smallmatrix} 0 & g(x^2) \\ 1 & 0 \end{smallmatrix} \right] \rangle \times \langle \sigma \rangle$ where $g \in \kk[X]^*$ is a monic polynomial with simple roots with at least two roots not equal to $0$.
\item $\langle \left[ \begin{smallmatrix} 0 & g(x^2) \\ 1 & 0 \end{smallmatrix} \right],  \left[ \begin{smallmatrix} a(x^2) & - b(x^2) g(x^2) \\ b(x^2) & - a(x^2) \end{smallmatrix} \right] \rangle \times \langle \sigma \rangle$ where $g \in \kk[X]^*$ is a monic polynomial with simple roots with at least two roots not equal to $0$.
\item $\langle \left[ \begin{smallmatrix} 0 & g(x^2+ \frac{1}{x^2}) \\ 1 & 0 \end{smallmatrix} \right] \rangle \times \langle \sigma, \tau \rangle$ where $g \in \kk[X]^*$ is a monic polynomial with simple roots of degree with at least one root not equal to $\pm 2$.
\item $\langle \left[ \begin{smallmatrix} 0 & g(x^2+ \frac{1}{x^2}) \\ 1 & 0 \end{smallmatrix} \right],  \left[ \begin{smallmatrix} a(x^2+ \frac{1}{x^2}) & - b(x^2+ \frac{1}{x^2}) g(x^2+ \frac{1}{x^2}) \\ b(x^2+ \frac{1}{x^2}) & - a(x^2+ \frac{1}{x^2}) \end{smallmatrix} \right] \rangle \times \langle \sigma, \tau \rangle$ where $g \in \kk[X]^*$ is a monic polynomial with simple roots with at least one root not equal to $\pm 2$.
\end{itemize}
And where $a, b \in \kk(x)$ are such that $a^2 \neq b^2 g$.
\end{prop}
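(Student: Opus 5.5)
Since $\car(\kk) \neq 2$, Proposition \ref{theo2} reduces the problem: $G$ is conjugate in $\J$ to some $L \times R$ from the rows of the table with $\car(\kk)\neq p=2$. Having a non-trivial element that fixes a non-rational curve is invariant under conjugation (Lemma \ref{ultimatelemma3}), so it suffices to decide, row by row, when $L\times R$ has this property. By Lemma \ref{ultimatelemma}, such an element lies in $\Ker(\pi|_G)=L\times\{I\}$. The two rows with $L=\{I\}$ therefore never qualify, and in the remaining rows we only need to look at the non-trivial elements of $L$.

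\textbf{Normalizing $g$.} Multiplying $g$ by a square in the invariant field $\kk(x)^R$ does not change the conjugacy class (Lemma \ref{LemCon}\eqref{LemCon2}, applied over $\kk(x)^R$, which is $\kk(t)$ with $t=x$, $x^2$ or $x^2+\frac{1}{x^2}$ by Lemma \ref{stabilisationbis}). Using a diagonal conjugation in $\PGL(2,\kk(t))$, we may therefore take $g$ to be a monic squarefree polynomial in $t$. When $L$ is cyclic, Lemma \ref{ultimatelemma2} then gives exactly the stated conditions: $\deg g\geq 3$, resp.\ at least two roots $\neq 0$, resp.\ at least one root $\neq\pm2$.

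\textbf{The rank-two $L$ case.} The main obstacle is the case $L=\langle A,B\rangle$ with $A=\left[\begin{smallmatrix}0&g\\1&0\end{smallmatrix}\right]$ and $B=\left[\begin{smallmatrix}a&-bg\\b&-a\end{smallmatrix}\right]$. The curve may be fixed by $B$ or by $AB$ rather than by $A$, and we need a generator fixing a non-rational curve to be the one written $\left[\begin{smallmatrix}0&g\\1&0\end{smallmatrix}\right]$. Every non-trivial element of $L$ is an involution, conjugate over $\kk(t)$ to some $\left[\begin{smallmatrix}0&h\\1&0\end{smallmatrix}\right]$, with $h$ determined up to squares by the determinant (Lemma \ref{LemCon}). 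The fixed curve of such an element is the double cover $y^2=h$ over $\kk(t)$, pulled back to $x$. Suppose that element $C\in L$ fixes a non-rational curve. We conjugate $C$ to $\left[\begin{smallmatrix}0&h\\1&0\end{smallmatrix}\right]$ with $h$ a monic squarefree polynomial. Then the centralizer description of Lemma \ref{commutateur}\eqref{commutateurassertion3}, \eqref{commutateurassertion4} shows that the rest of $L$ has the form $\left[\begin{smallmatrix}a'&-b'h\\b'&-a'\end{smallmatrix}\right]$. This puts $L$ in the stated normal form with $g:=h$. The non-rationality conditions on $h$ follow as in Lemma \ref{ultimatelemma2}, after checking that the conjugation inside $\PGL(2,\kk(t))$ preserves the fixed curve up to birational equivalence. (It is a fiberwise map over the base, compatible with the $R$-action.)

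\textbf{Converse.} Conversely, each listed group contains $(\left[\begin{smallmatrix}0&g\\1&0\end{smallmatrix}\right],I)$. Under the stated hypotheses on $g$, Lemma \ref{ultimatelemma2} shows that this element fixes a non-rational curve, which completes the equivalence. Throughout, the determinant invariants of Proposition \ref{dublin1}\eqref{orsaydublin12} justify replacing $g$ within its class, so no information is lost.
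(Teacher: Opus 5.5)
Your proposal is correct and follows essentially the paper's argument. In both, one reduces to $\{I\}\times R\subset G$ with kernel in $\PGL(2,\kk(x)^R)$, places the curve-fixing involution in the kernel via Lemma \ref{ultimatelemma}, conjugates it inside $\PGL(2,\kk(x)^R)$ to $\left[\begin{smallmatrix}0&g\\1&0\end{smallmatrix}\right]$ with $g$ monic squarefree, reads the conditions off Lemma \ref{ultimatelemma2}, and recovers the second generator from the centralizer. The only difference is cosmetic: you start from the table of Proposition \ref{theo2} and then re-normalize, whereas the paper chooses the curve-fixing element first, which sidesteps the $B$ versus $AB$ issue you flag.
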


\begin{proof} $ $

$\Rightarrow:$
Let $G \simeq (\mathbb{Z}/2)^r$ be a $2$-elementary subgroup of the de Jonquières group containing a non-trivial element fixing a non-rational curve. We will show that $G$ is conjugate to one of the subgroups of the list.
Let $a, b \in \mathbb{N}_{\geq 0}$ such that $\Ker(\pi|_G) \simeq (\mathbb{Z}/2)^a$ and $\Img(\pi|_G) \simeq (\mathbb{Z}/2)^b$.
Using Proposition \ref{dublin1}, up to conjugation by an element of $\PGL(2,\kk)$, we can assume that $\pi(G)$ is of the following subgroups:
$$\{I\}, \langle \sigma \rangle, \langle \sigma, \tau \rangle$$
Let $R = \pi(G)$.
Using Proposition \ref{cohomologie0}, we can assume up to conjugation by an element of $\PGL(2,\kk(x))$ that $\{I \} \times R \subset G$.
Since $G$ is abelian, we get that $\{I \} \times R$ and $\Ker( {\pi}|_G)$ commute. Hence $\Ker ({\pi}|_G) \subset \PGL(2,\kk(x)^{R})$.
Now we distinguish three cases depending on the value of $b$.
\begin{itemize}
\item $b=0$. We have $G \subset \PGL(2,\kk(x))$. Let $j \in G$ be a non-trivial element fixing a non-rational curve.
According to Proposition \ref{dublin1} we can assume up to conjugation inside $\PGL(2,\kk(x))$ that $j = \left[\begin{smallmatrix}  0 & g   \\  1 & 0  \end{smallmatrix} \right]$ where $g \in \kk[X]^*$ is a monic polynomial with simple roots. Since $j$ fixes a non-rational curve, this implies that the degree of $g$ is at least three (Lemma \ref{ultimatelemma2}).
Hence according to Proposition \ref{dublin1} we have: \begin{center}
$G = \langle \left[ \begin{smallmatrix} 0 & g \\ 1 & 0 \end{smallmatrix} \right] \rangle$
or $G = \langle \left[ \begin{smallmatrix} 0 & g \\ 1 & 0 \end{smallmatrix} \right],  \left[ \begin{smallmatrix} a & - b g \\ b & - a \end{smallmatrix} \right] \rangle$ \end{center}
where $a, b \in \kk(x)$ are such that $a^2 \neq b^2 g$.
\item $b=1$. Using Lemma \ref{stabilisationbis}, we have $\PGL(2,\kk(x)^R) = \PGL(2,\kk(x^2))$. Let $j \in G$ be a non-trivial element fixing a non-rational curve. According to Lemma \ref{ultimatelemma} we have $j \in \Ker(\pi)$. Hence $j \in \PGL(2,\kk(x^2))$. Then according to Proposition \ref{dublin1}, we can assume up to a conjugation inside $\PGL(2,\kk(x^2))$ that $j = \left[\begin{smallmatrix}  0 & g(x^2)   \\  1 & 0  \end{smallmatrix} \right]$ where $g \in \kk[X]^*$ is a monic polynomial with simple roots. Since $j$ fixes a non-rational curve, this implies that the degree of $g$ is at least two (Lemma \ref{ultimatelemma2}).
Hence according to Proposition \ref{dublin1} we have: \begin{center}
$\Ker(\pi|_G) = \langle \left[ \begin{smallmatrix} 0 & g(x^2) \\ 1 & 0 \end{smallmatrix} \right] \rangle$
or $\Ker(\pi|_G) = \langle \left[ \begin{smallmatrix} 0 & g(x^2) \\ 1 & 0 \end{smallmatrix} \right],  \left[ \begin{smallmatrix} a(x^2) & - b(x^2) g(x^2) \\ b(x^2) & - a(x^2) \end{smallmatrix} \right] \rangle$ \end{center}
where $a, b \in \kk(x)$ are such that $a^2 \neq b^2 g$.
Hence: \begin{center} $G = \langle \left[ \begin{smallmatrix} 0 & g(x^2) \\ 1 & 0 \end{smallmatrix} \right] \rangle \times \langle \sigma \rangle$
or $G = \langle \left[ \begin{smallmatrix} 0 & g(x^2) \\ 1 & 0 \end{smallmatrix} \right],  \left[ \begin{smallmatrix} a(x^2) & - b(x^2) g(x^2) \\ b(x^2) & - a(x^2) \end{smallmatrix} \right] \rangle \times \langle \sigma \rangle$ \end{center}
\item $b=2$. Using Lemma \ref{stabilisationbis}, we have $\PGL(2,\kk(x)^R) = \PGL(2,\kk(x^2 + \frac{1}{x^2}))$. Let $j \in G$ be a non-trivial element fixing a non-rational curve. According to Lemma \ref{ultimatelemma} we have $j \in \Ker(\pi)$. Hence $j \in \PGL(2,\kk(x^2+ \frac{1}{x^2}))$. Then according to Proposition \ref{dublin1}, we can assume up to a conjugation inside $\PGL(2,\kk(x^2+ \frac{1}{x^2}))$ that $j = \left[\begin{smallmatrix}  0 & g(x^2+ \frac{1}{x^2})   \\  1 & 0  \end{smallmatrix} \right]$ where $g \in \kk[X]^*$ is a monic polynomial with simple roots. Since $j$ fixes a non-rational curve, this implies that the degree of $g$ is at least one (Lemma \ref{ultimatelemma2}).
Hence according to Proposition \ref{dublin1} we have:
\begin{center}
$\Ker(\pi|_G) = \langle \left[ \begin{smallmatrix} 0 & g(x^2+ \frac{1}{x^2}) \\ 1 & 0 \end{smallmatrix} \right] \rangle$
or $\Ker(\pi|_G) = \langle \left[ \begin{smallmatrix} 0 & g(x^2+ \frac{1}{x^2}) \\ 1 & 0 \end{smallmatrix} \right],  \left[ \begin{smallmatrix} a(x^2+ \frac{1}{x^2}) & - b(x^2+ \frac{1}{x^2}) g(x^2+ \frac{1}{x^2}) \\ b(x^2+ \frac{1}{x^2}) & - a(x^2+ \frac{1}{x^2}) \end{smallmatrix} \right] \rangle$ \end{center}
where $a, b \in \kk(x)$ are such that $a^2 \neq b^2 g$.
Hence: \begin{center} $G = \langle \left[ \begin{smallmatrix} 0 & g(x^2+ \frac{1}{x^2}) \\ 1 & 0 \end{smallmatrix} \right] \rangle \times \langle \sigma, \tau \rangle$ \end{center}

or $G = \langle \left[ \begin{smallmatrix} 0 & g(x^2+ \frac{1}{x^2}) \\ 1 & 0 \end{smallmatrix} \right],  \left[ \begin{smallmatrix} a(x^2+ \frac{1}{x^2}) & - b(x^2+ \frac{1}{x^2}) g(x^2+ \frac{1}{x^2}) \\ b(x^2+ \frac{1}{x^2}) & - a(x^2+ \frac{1}{x^2}) \end{smallmatrix} \right] \rangle \times \langle \sigma, \tau \rangle$. \\ \\
\end{itemize}

$\Leftarrow:$ The converse implication is trivial by construction of this list of subgroups.
\end{proof}

\newpage

\part{Subgroups of automorphisms of del Pezzo surfaces} \label{part3} $ $

 \bigskip\bigskip
 
Now that we have classified $p$-elementary subgroups of the de Jonquières group, we study $p$-elementary subgroups of automorphisms of del Pezzo surfaces. We only study subgroups of automorphisms of del Pezzo of degree lower or equal than $4$, and $\mathbb{P}^1 \times \mathbb{P}^1$. Furthermore, for each of these cases (except $\PP^1 \times \PP^1$), we study only the $p$-elementary subgroups when the degree of the surface $K_S^2$ is a power of $p$. We summarize in the following table the cases we study:
\\

\begin{center}
\begin{tabular}{ | c | c | c | c | }
 \hline 
   Del Pezzo surface & $p$ & $\car(\kk)$ & Corresponding section  \\
   \hline \hline
   $\PP^1 \times \PP^1$ & any & any & Section \ref{orsay41}  \\
   \hline
      Degree 4 (quartic) & 2 & not 2  & Section \ref{quarticsection} \\
   \hline
      Degree 3 (cubic) & 3 & not 3 & Section \ref{cubicsection} \\
   \hline
      Degree 2 & 2 & not 2 & Section \ref{orsay44} \\
   \hline
      Degree 1 & any & not $p$ and not $2$ & Sections \ref{orsay45}   \\
   \hline
   Degree 1 & not $2$ & $2$ & Sections  \ref{orsay46}  \\
   \hline
 \end{tabular} 
 \end{center}
 \bigskip
 
 The six sections of this part are mutually independent.
 
  \bigskip

\begin{remark} $ $

It is in fact not needed for our study to classify the $p$-elementary subgroups of other del Pezzo surfaces because of the important Proposition \ref{prop1}:
\begin{itemize}
\item Del Pezzo surfaces of degree $7$ or $8$ given by the blow-up of a point or two points of $\PP^2$: Their automorphism groups are conjugate to a subgroup of $\Aut(\PP^2)$.
\item Del Pezzo surface of degree $6$: The number $6$ is not the power of a prime number, hence this case can be excluded using Proposition \ref{prop1}.
\item Del Pezzo surface of degree $5$: Its automorphism group does not contain subgroups isomorphic to $(\ZZ/5)^2$. We conclude using Proposition \ref{prop1} again.
\end{itemize}
\end{remark}

\newpage

\section{Automorphisms of the Hirzebruch surface $\mathbb{P}^1 \times \mathbb{P}^1$} \label{orsay41} $ $

The goal of this section is to classify up to conjugation the $p$-elementary non-cyclic subgroups of the automorphism group of $\mathbb{P}^1 \times \mathbb{P}^1$.
This section is organized as follows:
\begin{itemize}
\item We first provide a preliminary lemma in Subsection \ref{orsay411} that we will use in both Subsections \ref{orsay412} and \ref{orsay413}.
\item In Subsection \ref{orsay412} we address the case where $\car(\kk)=p$.
\item In Subsection \ref{orsay413} we show that every $p$-elementary subgroup in characteristic not $p$ is conjugate to a de Jonquières subgroup.
\item Finally in Subsection \ref{orsay414} we investigate more deeply the case of $2$-elementary non-cyclic subgroups in characteristic not $2$ and give a complete list of representatives up to birational map.
\end{itemize}

\begin{defi} $ $
We define the following automorphism of $\mathbb{P}^1 \times \mathbb{P }^1$: $$\begin{array}{ccccc}
s& : & \mathbb{P}^1 \times \mathbb{P }^1& \rightarrow & \mathbb{P}^1 \times \mathbb{P }^1\\
 & & (x,y) & \mapsto & (y , x) \\
\end{array}$$
\end{defi}

\begin{remark} $ $ 
We remind that: $\Aut(\mathbb{P}^1 \times \mathbb{P}^1) \simeq (\Aut(\mathbb{P}^1) \times \Aut(\mathbb{P}^1 )) \rtimes \langle s \rangle$.
\end{remark}

\medskip

\subsection{A first lemma on $2$-elementary subgroups of $\Aut(\mathbb{P}^1 \times \mathbb{P}^1)$} \label{orsay411}
$ $


\begin{lemme} \label{autP1P12elem} $ $

If $G$ is a $2$-elementary subgroup of $\Aut(\mathbb{P}^1 \times \mathbb{P}^1)$ not contained in $\Aut(\mathbb{P}^1) \times \Aut(\mathbb{P}^1 )$.
Then up to conjugation, there exists $G'$ such that $G = \langle G',s \rangle$  where $G'$ is a $2$-elementary subgroup of the diagonal of $\PGL(2,\kk) \times \PGL(2,\kk)$.
\end{lemme}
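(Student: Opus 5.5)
The plan is to pick an element of $G$ outside $\Aut(\mathbb{P}^1)\times\Aut(\mathbb{P}^1)$, conjugate it to $s$ itself, and then use commutativity with $s$ to force everything else onto the diagonal. Write $H = G \cap (\Aut(\mathbb{P}^1)\times\Aut(\mathbb{P}^1))$. This is a subgroup of index $2$ in $G$, since $\Aut(\mathbb{P}^1)\times\Aut(\mathbb{P}^1)$ has index $2$ in $\Aut(\mathbb{P}^1\times\mathbb{P}^1)$ and $G$ is not contained in it. Choose $h \in G\setminus H$.

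\textbf{Step 1: normalise $h$ to $s$.} Write $h = (a,b)\circ s$ with $a,b\in\PGL(2,\kk)$, so that $h(x,y) = (a(y), b(x))$. Then
\[ h^2(x,y) = (a b(x), b a(y)). \]
Since $h$ has order $2$, we get $ab = \mathrm{id}$, so $b = a^{-1}$. Now conjugate by $\phi = (a^{-1},\mathrm{id})$. A short computation gives
\[ \phi h \phi^{-1}(x,y) = \phi(a(y), a^{-1}(x)) = (y, a^{-1}(x)). \]
The first attempt did not give $s$ directly. One can either recompute with $\phi=(\mathrm{id},a)$, or, more robustly, take $\phi=(c,d)$ and impose $\phi h \phi^{-1}=s$. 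This reduces to the conditions $c\,a = d$ and $d\,a^{-1} = c$, which are equivalent. So $c=\mathrm{id}$, $d=a$ works, and after this conjugation $h = s$.

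\textbf{Step 2: elements of $H$ lie on the diagonal.} Let $(u,v)\in H$. Since $G$ is abelian, $(u,v)$ commutes with $s$. We have
\[ s(u,v)s^{-1} = (v,u), \]
so commutation forces $u = v$. Hence $H$ is contained in the diagonal $\{(u,u)\}$ of $\PGL(2,\kk)\times\PGL(2,\kk)$, and $H$ is $2$-elementary as a subgroup of $G$.

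\textbf{Step 3: conclude.} Because $[G:H]=2$ and $s\notin H$, we get $G = \langle H, s\rangle$. Taking $G' = H$ gives the statement. The lemma is stated only up to conjugation, so no normalisation of $G'$ inside the diagonal (for instance via Proposition \ref{dublin1}) is needed.

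The only real obstacle is Step 1. It requires getting the conjugation formula for $(c,d)$ acting on $(a,b)\circ s$ right and keeping track of the composition order. Everything else is immediate.
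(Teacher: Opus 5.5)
Your proof is correct and follows the paper's argument. You conjugate the element outside $\Aut(\mathbb{P}^1)\times\Aut(\mathbb{P}^1)$ to $s$ by an element of $\Aut(\mathbb{P}^1)\times\Aut(\mathbb{P}^1)$, then use that the centralizer of $s$ in $\PGL(2,\kk)\times\PGL(2,\kk)$ is the diagonal; your explicit index-$2$ argument for $G=\langle H,s\rangle$ makes precise a step the paper leaves implicit. One small correction: your first choice $\phi=(a^{-1},\mathrm{id})$, which is exactly the paper's choice, does conjugate $h$ to $s$ (you computed $\phi h$ instead of $\phi h\phi^{-1}$, and the choice satisfies your own condition $ca=d$), so the detour was unnecessary but harmless.
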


\begin{proof} $ $

We know that $G$ contains an element of the form $((A,B),s)$ with $A, B \in \PGL(2,\kk)$. It is of order $2$ so $A = B^{-1}$. We have:  $(A^{-1},I) ((A,B),s) (A,I) = ((I,I),s)$. So we can assume $A = I$. Then $G$ contains $s$. The intersection of the centralizer of $s$ with $\PGL(2,\kk) \times \PGL(2,\kk)$ is the diagonal of $\PGL(2,\kk) \times \PGL(2,\kk)$.
Hence $G$ is of the form $\langle G' , s \rangle$ where $G'$ is a $2$-elementary subgroup of the diagonal of $\PGL(2,\kk) \times \PGL(2,\kk)$.
\end{proof}


\medskip

\subsection{Classification of $p$-elementary subgroups in characteristic $p$}  \label{orsay412}
$ $


\begin{prop} \label{moscou1} $ $ 
We assume $\car(\kk) = p > 0$.

Let $G$ be a non-trivial $p$-elementary subgroup of $\Aut(\mathbb{P}^1 \times \mathbb{P}^1)$.

\begin{itemize}
\item If $p > 2$, then $G$ is conjugate by an automorphism to a subgroup of the following subgroup:
$\left\{ (\left[\begin{smallmatrix}  1 & t   \\
0 &   1
\end{smallmatrix} \right],\left[\begin{smallmatrix}  1 & t'   \\
0 &   1
\end{smallmatrix} \right] ) | t,t' \in \kk \right\}$.

\item If $p = 2$, then $G$ is conjugate by an automorphism to a subgroup $G' = \langle H , s \rangle$ 
where $H \subset \left\{ (\left[\begin{smallmatrix}  1 & t   \\
0 &   1
\end{smallmatrix} \right],\left[\begin{smallmatrix}  1 & t   \\
0 &   1
\end{smallmatrix} \right] ) | t \in \kk \right\} \subset \PGL(2,\kk) \times \PGL(2,\kk)$ and contains $(\left[\begin{smallmatrix}  1 & 1   \\
0 &   1
\end{smallmatrix} \right],\left[\begin{smallmatrix}  1 & 1   \\
0 &   1
\end{smallmatrix} \right] )$ if non-trivial,
or to a subgroup of $\left\{ (\left[\begin{smallmatrix}  1 & t   \\
0 &   1
\end{smallmatrix} \right],\left[\begin{smallmatrix}  1 & t'   \\
0 &   1
\end{smallmatrix} \right] ) | t,t' \in \kk \right\}$.
\end{itemize}
\end{prop}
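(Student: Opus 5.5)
We split according to whether $G \subseteq \Aut(\PP^1)\times\Aut(\PP^1)$ or not, and in each case reduce to the classification of $p$-elementary subgroups of $\PGL(2,\kk)$ in characteristic $p$ from Proposition \ref{dublin1} Assertion \ref{hukfg100}, sub-assertion \ref{hukfg3} (since $\kk$ is algebraically closed, every $p$-elementary subgroup of $\PGL(2,\kk)$ is conjugate to a subgroup of $\Delta_\kk$).

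\textbf{Case $G \subseteq \PGL(2,\kk)\times\PGL(2,\kk)$.} This is automatic if $p>2$, since the quotient of $\Aut(\PP^1\times\PP^1)$ by $\PGL(2,\kk)^2$ has order $2$ and $G$ has exponent $p$. Let $G_1, G_2$ be the images of $G$ under the two projections. They are $p$-elementary subgroups of $\PGL(2,\kk)$. By Proposition \ref{dublin1} \ref{hukfg3}, there exist $P_1, P_2 \in \PGL(2,\kk)$ with $P_iG_iP_i^{-1}\subseteq \Delta_\kk$. Conjugating by $(P_1,P_2)$ puts $G$ inside $G_1\times G_2\subseteq \Delta_\kk\times\Delta_\kk$, which is the required subgroup $\{([\begin{smallmatrix}1&t\\0&1\end{smallmatrix}],[\begin{smallmatrix}1&t'\\0&1\end{smallmatrix}])\}$. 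This settles $p>2$ and the second alternative for $p=2$.

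\textbf{Case $p=2$ and $G\not\subseteq \PGL(2,\kk)^2$.} By Lemma \ref{autP1P12elem}, up to conjugation $G=\langle G',s\rangle$ with $G'$ a $2$-elementary subgroup of the diagonal $\{(A,A)\}$. Its projection $H_0=\{A : (A,A)\in G'\}$ is $2$-elementary in $\PGL(2,\kk)$, so by Proposition \ref{dublin1} \ref{hukfg3} there is $P$ with $PH_0P^{-1}\subseteq\Delta_\kk$. Conjugating by $(P,P)$, which commutes with $s$, preserves the form $\langle H,s\rangle$ and puts $H$ inside $\{(\left[\begin{smallmatrix}1&t\\0&1\end{smallmatrix}\right],\left[\begin{smallmatrix}1&t\\0&1\end{smallmatrix}\right])\}$. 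If $H$ is non-trivial, pick $(\left[\begin{smallmatrix}1&t_0\\0&1\end{smallmatrix}\right],\cdot)\in H$ with $t_0\ne0$. Conjugating by $(D,D)$ with $D=\left[\begin{smallmatrix}t_0^{-1}&0\\0&1\end{smallmatrix}\right]$, using Lemma \ref{buda3} Assertion \ref{bla1}, turns this element into $t_0=1$ while still commuting with $s$. This gives the required normal form containing $(\left[\begin{smallmatrix}1&1\\0&1\end{smallmatrix}\right],\left[\begin{smallmatrix}1&1\\0&1\end{smallmatrix}\right])$.

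\textbf{Main obstacle.} The only subtle point is checking that the conjugations in the second case keep $s$ in the group. That is why we use only diagonal elements $(P,P)$, which centralize $s$, and rely on Lemma \ref{autP1P12elem} to have $s$ itself (not merely some $((A,A^{-1}),s)$) in $G$. The rest is a direct application of the $\PGL(2,\kk)$ classification.
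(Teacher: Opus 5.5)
Your proposal is correct and follows the paper's proof: the same case split on whether $G \subseteq \Aut(\mathbb{P}^1)\times\Aut(\mathbb{P}^1)$, with Lemma \ref{autP1P12elem} used to put $s$ in $G$, and Proposition \ref{dublin1} Assertion \ref{hukfg100}\ref{hukfg3} applied to the projections. You also spell out two points the paper leaves implicit: conjugation by diagonal elements $(P,P)$ centralizes $s$, and a diagonal conjugation normalizes a non-trivial element of $H$ to $t=1$.
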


\begin{proof} $ $
We distinguish three cases:

If $p > 2$ then $G$ is a subgroup of $\Aut(\mathbb{P}^1) \times \Aut(\mathbb{P}^1)$, then we use Proposition \ref{dublin1} Assertion 3 \ref{hukfg3}.

If $p =2$ and $G$ is a subgroup of $\Aut(\mathbb{P}^1) \times \Aut(\mathbb{P}^1)$, then we use Proposition \ref{dublin1} Assertion 3 \ref{hukfg3}.

If $p =2$ and $G$ is not a subgroup of $\Aut(\mathbb{P}^1) \times \Aut(\mathbb{P}^1)$, 
then according to Lemma \ref{autP1P12elem}, $G$ is of the form $\langle G' , s \rangle$ where $G'$ is a $2$-elementary subgroup of the diagonal of $\PGL(2,\kk) \times \PGL(2,\kk)$.
We conclude using Proposition \ref{dublin1} Assertion 3 \ref{hukfg3}.
\end{proof}

\medskip


\subsection{Classification of $p$-elementary subgroups in characteristic not $p$: Conjugation to a de Jonquières subgroup}  \label{orsay413} $ $

\medskip

We assume $\car(\kk) \neq p$ in this subsection. The main result is Proposition \ref{prop27}. The notation: $$(a(x,y),b(x,y))$$ will be used for the following birational map of $\mathbb{P}^1 \times \mathbb{P}^1$: 
$$(x,y) \mapsto (a(x,y),b(x,y))$$

\medskip

\begin{lemme} \label{lemme27} $ $
We assume $\car(\kk) \neq p$.

\noindent Let $G$ be a $p$-elementary subgroup of $\Aut(\mathbb{P}^1~\times \mathbb{P}^1)$ not contained in $\Aut(\mathbb{P}^1)~\times~\Aut(\mathbb{P}^1 )$.
Then $p=2$ and up to conjugation $G$ is one of the following subgroups:
$$\langle (y,x) \rangle, \quad \langle (-x,-y) , (y,x) \rangle, \quad \langle (-x,-y) , (\frac{1}{x} , \frac{1}{y}) , (y,x) \rangle$$
\end{lemme}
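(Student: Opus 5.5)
First I would show that $p=2$. The group $G$ contains an element $((A,B),s)$. Its square is $((AB,BA),\mathrm{id})$, so this element has even order. Since $G$ is $p$-elementary, this forces $p=2$. Next I apply Lemma \ref{autP1P12elem}: up to conjugation, $G=\langle G',s\rangle$, where $G'=G\cap(\PGL(2,\kk)\times\PGL(2,\kk))$ lies in the diagonal $\{(A,A)\}$. Then $G'$ is isomorphic to a $2$-elementary subgroup $H$ of $\PGL(2,\kk)$. Since $\car(\kk)\neq 2$ and $\kk$ is algebraically closed, Proposition \ref{dublin1} Assertion \ref{orsaydublin12}\ref{chargr2inv4} shows that $H$ is trivial, or conjugate in $\PGL(2,\kk)$ to $\langle\sigma\rangle$, or to $\langle\sigma,\tau\rangle$.

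The key point is that conjugating $G$ by a diagonal element $(P,P)$ commutes with $s$. So I can conjugate $G'$ by $(P,P)$ with $P$ realising the conjugacy of $H$ from Proposition \ref{dublin1}, and $s$ stays fixed. In the coordinates $(x,y)$ on $\mathbb{P}^1\times\mathbb{P}^1$, $(\sigma,\sigma)$ is $(x,y)\mapsto(-x,-y)$ and $(\tau,\tau)$ is $(x,y)\mapsto(\frac1x,\frac1y)$. This yields the three groups $\langle (y,x)\rangle$, $\langle(-x,-y),(y,x)\rangle$ and $\langle(-x,-y),(\frac1x,\frac1y),(y,x)\rangle$.

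The step needing the most care is checking that $G=\langle G',s\rangle$ exactly. An element of $G$ outside $\PGL(2,\kk)\times\PGL(2,\kk)$ has the form $((A,B),s)$. Composing with $s$ gives an element of $G'$, so $G=G'\sqcup G's$ and $G$ is generated by $G'$ and $s$. The lemma also needs $s$ to commute with $G'$, which is why $G'$ lies in the diagonal; I would re-check this in the proof of Lemma \ref{autP1P12elem}. Beyond that, the argument is a direct transfer of the classification in $\PGL(2,\kk)$.
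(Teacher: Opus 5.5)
Your proof is correct and follows the paper's route: you use Lemma~\ref{autP1P12elem} to reduce to $G=\langle G',s\rangle$ with $G'$ in the diagonal of $\PGL(2,\kk)\times\PGL(2,\kk)$, then classify $G'$ with Proposition~\ref{dublin1} (case $p=2$, $\kk$ algebraically closed). You also make explicit two points the paper leaves implicit: why $p=2$ (the surjection of $G$ onto $\langle s\rangle\simeq\ZZ/2$ forces even order), and that conjugating by a diagonal element $(P,P)$ fixes $s$.
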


\begin{proof} $ $
If $G$ is not a subgroup of $\Aut(\mathbb{P}^1) \times \Aut(\mathbb{P}^1 )$, then according to Lemma \ref{autP1P12elem}, $G$ is of the form $\langle G' , s \rangle$ where $G'$ is a $2$-elementary subgroup of the diagonal of $\PGL(2,\kk) \times \PGL(2,\kk)$.
According to Proposition \ref{dublin1} Assertion 2 \ref{chargr2inv4}, up to conjugation, $G' = \{ I \}$ or $G' = \langle (-x,-y) \rangle$ or $G' = \langle (-x,-y) , (\frac{1}{x} , \frac{1}{y}) \rangle$.
Then $G$ is conjugate to one of the three following subgroups: \[ \langle (y,x) \rangle, \langle (-x,-y),(y,x) \rangle, \langle (-x,-y),(\frac{1}{x},\frac{1}{y}),(y,x) \rangle\]
\end{proof}

\begin{prop} \label{prop27} \label{p11} $ $
We assume $\car(\kk) \neq p$.

\noindent    Every $p$-elementary non-cyclic subgroup of the automorphism group of $\mathbb{P}^1 \times \mathbb{P}^1$ is $\Cr$-conjugate to a subgroup of the de Jonqui\`eres group.
\end{prop}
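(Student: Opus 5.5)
We split according to whether $G$ is contained in $\Aut(\mathbb{P}^1)\times\Aut(\mathbb{P}^1)$. The de Jonquières group contains $\Aut(\mathbb{P}^1)\times\Aut(\mathbb{P}^1)$ after the standard birational identification $\mathbb{P}^1\times\mathbb{P}^1\dashrightarrow\PP^2$. Indeed, in affine coordinates $(x,y)$ on both surfaces, an element $(A,B)$ acts as $(x,y)\mapsto (A\cdot x, B\cdot y)$. This is of the form $\left(\frac{ax+b}{cx+d},\frac{\alpha y+\beta}{\gamma y+\delta}\right)$ with constant $\alpha,\beta,\gamma,\delta$. So if $G\subseteq \Aut(\mathbb{P}^1)\times\Aut(\mathbb{P}^1)$, it is already (conjugate by the fixed birational map $\mathbb{P}^1\times\mathbb{P}^1\dashrightarrow \PP^2$ to) a subgroup of $\J$, and nothing more is needed.

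If $G\not\subseteq\Aut(\mathbb{P}^1)\times\Aut(\mathbb{P}^1)$, then Lemma~\ref{lemme27} gives $p=2$. It also says that, up to conjugation by an automorphism, $G$ is $\langle (y,x)\rangle$, $\langle(-x,-y),(y,x)\rangle$ or $\langle(-x,-y),(\frac1x,\frac1y),(y,x)\rangle$. The first is cyclic and is excluded by hypothesis, so two groups remain. For these, the plan is to change coordinates birationally so that the swap $s$ preserves a pencil of rational curves. Take $u=x+y$ and $v=xy$, or a variant better adapted to the other generators. The map $(x,y)\dashrightarrow(u,v)$ is not birational, since it is the quotient by $s$, so this exact choice fails. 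Instead, use the birational map $\phi\colon(x,y)\dashrightarrow (u,w)=\left(x+y,\ \frac{x-y}{x+y}\right)$ or a similar one. Under $\phi$, $s$ becomes $(u,w)\mapsto(u,-w)$ and $(-x,-y)$ becomes $(u,w)\mapsto(-u,w)$; both preserve the fibration $u=\mathrm{const}$, so $\langle(-x,-y),s\rangle$ lands in $\J$.

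For the $(\ZZ/2)^3$ group we must also handle $(\frac1x,\frac1y)$. Here it is more convenient to choose a fibration invariant under the whole group, namely the pencil of curves $\{xy=c\}$, i.e. $t=xy$. This pencil is preserved by all three generators: $s$ fixes $t$, while $(-x,-y)$ and $(\frac1x,\frac1y)$ send $t\mapsto t$ and $t\mapsto \frac1t$. Its general member is rational, parametrized by $x$. Hence $\psi\colon(x,y)\dashrightarrow(xy,\,x)$ is birational, with inverse $(t,x)\mapsto(x,t/x)$. In the coordinates $(t,x)$ we get
\[
s\colon(t,x)\mapsto\left(t,\tfrac{t}{x}\right),\qquad (-x,-y)\colon(t,x)\mapsto(t,-x),\qquad \left(\tfrac1x,\tfrac1y\right)\colon(t,x)\mapsto\left(\tfrac1t,\tfrac1x\right).
\]
All three have the de Jonquières form: a Möbius map in $t$ together with a Möbius map in the fibre coordinate whose coefficients lie in $\kk(t)$. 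The same $\psi$ therefore also handles $\langle(-x,-y),(y,x)\rangle$.

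The main obstacle is only finding a $G$-invariant pencil of rational curves, i.e. an invariant conic bundle structure, and this is settled by the explicit choice $t=xy$. All that remains is a routine verification of the three formulas above, so this case requires no invocation of Proposition~\ref{prop1}.
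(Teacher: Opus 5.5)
Your proof is correct and follows the paper's route. The product case is immediate; in the swap case you use Lemma~\ref{lemme27} and then an explicit birational map adapted to a $G$-invariant pencil of rational curves. The paper takes $(x,y)\dashrightarrow(x/y,x)$, i.e.\ the pencil $x/y=\mathrm{const}$, while you take $(x,y)\dashrightarrow(xy,x)$, i.e.\ the pencil $xy=\mathrm{const}$; your three transformed generators check out. The intermediate detour through $\bigl(x+y,\frac{x-y}{x+y}\bigr)$ is harmless but unnecessary.
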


\begin{proof} $ $
We distinguish two cases:

If $G \subset \Aut(\mathbb{P}^1) \times \Aut(\mathbb{P}^1)$ then both conic bundle structures of $\mathbb{P}^1 \times \mathbb{P}^1$ are preserved.

If $G$ is not a subgroup of $\Aut(\mathbb{P}^1) \times \Aut(\mathbb{P}^1 )$, then according to Lemma \ref{lemme27}, $G = \langle (-x,-y) , (y,x) \rangle$ or $G = \langle (-x,-y) , (\frac{1}{x} , \frac{1}{y}),(y,x)\rangle$ up to conjugation. We define the following birational map:
$$\begin{array}{ccccc}
\phi & : & \mathbb{P}^1 \times \mathbb{P }^1& \DashedArrow & \mathbb{P}^1 \times \mathbb{P }^1\\
 & & (x,y) & \DashedArrow & (\dfrac{x}{y} , x) \\
\end{array}$$
Then $\phi$ conjugates $(-x,-y), (\frac{1}{x}, \frac{1}{y}), (y,x)$ respectively on $(x,-y)$, $(\frac{1}{x} , \frac{1}{y})$, $(\frac{1}{x} , \frac{y}{x})$, which generate a de Jonqui\`eres subgroup.
\end{proof}


\medskip

\subsection{Detailed classification of $2$-elementary non-cyclic subgroups in characteristic not $2$: A list of representatives}  \label{orsay414} $ $

\medskip

We assume $\car(\kk) \neq p = 2$ in this subsection.
Like in the previous subsection we use the notation: $$(a(x,y),b(x,y))$$ for the following birational map of $\mathbb{P}^1 \times \mathbb{P}^1$: 
$$(x,y) \mapsto (a(x,y),b(x,y))$$

\medskip

\begin{prop} \label{detailedclassificationsubgsF0} $ $
Let $G$ be a $2$-elementary non-cyclic subgroup of $\Aut(\mathbb{P}^1 \times \mathbb{P}^1)$. Then $G$ is conjugate by a birational map to exactly one of the following subgroups of $\Aut(\mathbb{P}^1 \times \mathbb{P}^1)$:
$$\begin{array}{lll}
G_{81} & = & \langle (-x,y),(x,-y) \rangle \\
G_{82} & =  & \langle (-x,y),(\dfrac{1}{x},y) \rangle  \\
G_{83} & =  & \langle (-x,-y),(\dfrac{1}{x},\dfrac{1}{y}),(y,x) \rangle  \\
G_{84} & =  & \langle (-x,y),(\dfrac{1}{x},y),(x,-y) \rangle \\
G_{85} & = & \langle (\dfrac{1}{x},-y),(x,\dfrac{1}{y}),(-x,y) \rangle \\
G_{86} & =  & \langle (-x,y),(x,-y),(\dfrac{1}{x},y),(x,\dfrac{1}{y}) \rangle
\end{array}$$
No non-trivial element in these six subgroups fixes a non-rational curve.
Furthermore:

 Two subgroups of $\Aut(\mathbb{P}^1 \times \mathbb{P}^1)$ isomorphic to $(\mathbb{Z}/2)^3$ are conjugate by a birational map if and only if they have the same number of Klein subgroups with no fixed points.
 
Two Klein subgroups of $\Aut(\mathbb{P}^1 \times \mathbb{P}^1)$ are conjugate by a birational map if and only if they both have fixed points, or if they both have no fixed points.
\end{prop}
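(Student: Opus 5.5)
Split according to whether $G$ is contained in $\Aut(\PP^1)\times\Aut(\PP^1)$; this yields the six representatives up to automorphism. Then use birational invariants (fixed points of subgroups, and absence of non-rational fixed curves) to prove that no two of them are birationally conjugate.

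\textbf{Case $G \subset \Aut(\PP^1)\times\Aut(\PP^1)$.} The two projections give $2$-elementary subgroups $G_1,G_2\subset \PGL(2,\kk)$. By Proposition \ref{dublin1} Assertion \ref{orsaydublin12}\ref{chargr2inv4}, each is conjugate to $\{I\}$, $\langle\sigma\rangle$ or $\langle\sigma,\tau\rangle$, so $G\subset G_1\times G_2$. Enumerating the subgroups of $V\times V$ up to conjugation by $\mathcal N\times\mathcal N$ and the swap $s$ (using Proposition \ref{normalisateur2}, which says the normalizer permutes $V\setminus\{I\}$ arbitrarily) gives a short list. For the Klein case: $\langle(-x,y),(x,-y)\rangle$, $\langle(-x,y),(1/x,y)\rangle$, and diagonal-type groups such as $\langle(-x,-y),(1/x,1/y)\rangle$ or $\langle(-x,y),(1/x,-y)\rangle$. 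For rank $3$, one gets groups of the form $V\times\langle\sigma\rangle$ and index-two subgroups of $V\times V$. For rank $4$, one gets $V\times V = G_{86}$.

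\textbf{Case $G\not\subset \Aut(\PP^1)\times\Aut(\PP^1)$.} Lemma \ref{lemme27} gives $\langle(-x,-y),(y,x)\rangle$ or $G_{83}$.

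\textbf{Reduction to the six representatives.} To collapse the redundant groups, I will use explicit birational maps of the type in Proposition \ref{prop27}, e.g. $(x,y)\mapsto(x/y,x)$ or $(x,y)\mapsto(x,xy)$, to move each leftover group onto one of $G_{81},\dots,G_{86}$. Which birational maps are needed will be guessed from fixed-point counts. A group with a fixed point, such as $\langle(-x,-y),(y,x)\rangle$ fixing $(0,0)$, should go to $G_{81}$. A fixed-point-free Klein group should go to $G_{82}$. For these reductions it is convenient to pass through conic bundles: each group lies in the de Jonquières group via a $G$-invariant projection. In the kernel $L$ every element has the form $\left[\begin{smallmatrix}0&g\\1&0\end{smallmatrix}\right]$ with $\deg g\le 2$, since there are no non-rational fixed curves. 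Then Theorem \ref{theorem2} Assertion \ref{theorem25}, i.e.\ Proposition \ref{resultatgeneral} with determinant classes, classifies such groups inside $\J$. Their determinants are products of at most two linear factors, and these can be normalized.

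\textbf{Non-rational fixed curves.} For each of the six groups, every non-trivial element is a product of involutions of $\PP^1$. Its fixed locus on $\PP^1\times\PP^1$ is therefore a union of points, fibres, or (for elements involving $s$) the diagonal-type graph of an involution. All of these are rational curves.

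\textbf{Non-conjugacy.} I will use that fixed points of abelian groups, and fixed-point-freeness of subgroups, are birational invariants for finite abelian groups acting on smooth rational surfaces (\cite[Proposition A.2]{kollar}, as used in Proposition \ref{p21}). Hence the number of Klein subgroups without fixed points is invariant under birational conjugacy. For Klein groups, $G_{81}$ has a fixed point while $G_{82}$ has none, which separates them. For rank $3$, I will count fixed-point-free Klein subgroups in $G_{83},G_{84},G_{85}$ by inspecting the seven Klein subgroups of each; the counts should be pairwise distinct. In rank $4$ there is a single group, $G_{86}$. This also proves the two "furthermore" criteria.

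\textbf{Converse direction.} Groups with equal counts must be conjugate. I expect this to be the main obstacle, because the surjectivity of the count onto the list has to come from the explicit reductions in the first cases. If a direct map is hard to find, I will try instead to show that a rank-$3$ group with a given count has an invariant conic bundle whose kernel and image data match those of $G_{84}$ or $G_{85}$, and then apply Proposition \ref{resultatgeneral}.
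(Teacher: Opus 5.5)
Your architecture matches the paper's. You split according to whether $G$ preserves the two rulings, enumerate with Proposition~\ref{dublin1}, the normalizer of $V$ and Lemma~\ref{lemme27}, and separate the survivors using the fixed-point invariance of \cite{kollar} and the count of fixed-point-free Klein subgroups, which comes out as $1,4,6$ for $G_{83},G_{84},G_{85}$.

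The genuine gap is the existence half in the Klein case. Up to automorphism, your enumeration produces, besides $G_{81}$ and $G_{82}$, the groups $\langle(-x,-y),(\frac1x,y)\rangle$, $\langle(-x,-y),(\frac1x,\frac1y)\rangle$ and $\langle(-x,-y),(y,x)\rangle$. Both the ``exactly one of'' claim and the ``if'' direction of the Klein criterion require actual birational maps sending the first two to $G_{82}$ and the third to $G_{81}$. Saying the maps ``will be guessed from fixed-point counts'' only predicts the targets. The paper supplies the maps:
\begin{itemize}
\item $\phi(x,y)=(x,\frac{y+x^2}{y+x^{-2}})$ commutes with $(-x,y)$ and conjugates $(\frac1x,y)$ to $(\frac1x,\frac1y)$; conjugating further by $(\alpha,\alpha)$ gives the first group.
\item $\psi(x,y)=(x,x\frac{y+x^2}{y+x^{-2}})$ conjugates $G_{82}$ to the second group.
\item $([1:x],[1:y])\mapsto[x:y:1]$ makes both $\langle(-x,-y),(y,x)\rangle$ and $G_{81}$ linear on $\PP^2$, where all Klein groups are conjugate by Proposition~\ref{prop3}.
\end{itemize}

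Your fallback through Proposition~\ref{resultatgeneral} would not close this gap as written. First, these groups are not of the form $L\times R$. More seriously, the class of $\det L$ in $\kk(x)^\times/(\kk(x)^\times)^2$ is too coarse an invariant. Take $\langle(-x,y),(x,\frac1y)\rangle$ and $\langle(-x,y),(x,\frac{x^2}{y})\rangle$. Both are $L\times\langle\sigma\rangle$, with determinants $-1$ and $-x^2$, which lie in the same class. Yet the first fixes $(0,\pm1)$, while the second is conjugated by $(x,y)\mapsto(x,\frac yx)$ to the fixed-point-free group $\langle(-x,-y),(x,\frac1y)\rangle$. The relevant square classes live in $\kk(x)^R$, so the recipe ``$\deg g\le 2$, then normalize the determinant'' does not determine the conjugacy class.

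The easy repair is different.
\begin{itemize}
\item For $\langle(-x,-y),(\frac1x,y)\rangle$ and $\langle(-x,-y),(\frac1x,\frac1y)\rangle$, the first projection has trivial kernel and image $V$. Proposition~\ref{cohomologie0} therefore conjugates each of them inside $\J$ directly to $\{I\}\times V=G_{82}$. This is shorter than the paper's explicit $\phi,\psi$.
\item For $\langle(-x,-y),(y,x)\rangle$, use the linearization on $\PP^2$ described above.
\end{itemize}

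Finally, the rank-$3$ converse, which you flag as the main obstacle, is not one. The index-two subgroups of $V\times V$ with both projections surjective form a single class, because the normalizer permutes $V\setminus\{I\}$ arbitrarily and hence acts transitively on the nonzero characters of $V$. So there are exactly three automorphism classes, $G_{83},G_{84},G_{85}$, and their distinct counts already give the criterion without any birational map.
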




\begin{proof} $ $
This is an adaptation of the proofs of \cite[Proposition 6.2.3]{blancthesis} and \cite[Proposition 6.2.4]{blancthesis}.

If $G \not \subset \Aut(\mathbb{P}^1) \times \Aut(\mathbb{P}^1)$ then we use Lemma \ref{lemme27} and we get the following subgroups:
$\quad G_{87}  :=  \langle (-x,-y),(y,x) \rangle, \quad G_{83}  :=   \langle (-x,-y),(\frac{1}{x},\frac{1}{y}),(y,x) \rangle$.


If $G \subset \Aut(\mathbb{P}^1) \times \Aut(\mathbb{P}^1)$ then we define the projections $\pi_1, \pi_2 : \Aut(\mathbb{P}^1) \times \Aut(\mathbb{P}^1) \twoheadrightarrow \Aut(\mathbb{P}^1)$ onto the first and second coordinates. We have $G \subset \pi_1(G) \times \pi_2(G)$. According to Proposition \ref{dublin1} Assertion \ref{orsaydublin12}  \ref{chargr2inv4} we can assume up to conjugation: $$\pi_1(G), \pi_2(G) \in \{ \{ I \}, \langle \left[\begin{smallmatrix}  1 & 0   \\
0 &   -1
\end{smallmatrix} \right] \rangle , \langle \left[\begin{smallmatrix}  1 & 0   \\
0 &   -1
\end{smallmatrix} \right], \left[\begin{smallmatrix}  0 & 1   \\
1 &   0
\end{smallmatrix} \right] \rangle\}$$
We distinguish four cases depending on $\pi_1(G)$ and $\pi_2(G)$:
\begin{itemize}
\item Case $\pi_1(G) = \pi_2(G) = \langle \left[\begin{smallmatrix}  1 & 0   \\
0 &   -1
\end{smallmatrix} \right] \rangle$:

Then $G =  \langle (-x,y),(x,-y) \rangle =: G_{81}$.

\item Case $\pi_1(G) = \langle \left[\begin{smallmatrix}  1 & 0   \\
0 &   -1
\end{smallmatrix} \right], \left[\begin{smallmatrix}  0 & 1   \\
1 &   0
\end{smallmatrix} \right] \rangle , \pi_2(G) = \{ I \}$:

Then $G = \langle (-x,y),(\frac{1}{x},y) \rangle =: G_{82}$.

\item Case $\pi_1(G) = \langle \left[\begin{smallmatrix}  1 & 0   \\
0 &   -1
\end{smallmatrix} \right], \left[\begin{smallmatrix}  0 & 1   \\
1 &   0
\end{smallmatrix} \right] \rangle, \pi_2(G) = \langle \left[\begin{smallmatrix}  1 & 0   \\
0 &   -1
\end{smallmatrix} \right] \rangle$:

If $G \simeq (\mathbb{Z}/2)^3$ then $G = \langle (-x,y),(\frac{1}{x},y),(x,-y) \rangle =: G_{84}$.

If $G \simeq (\mathbb{Z}/2)^2$ then there exist $f,g \in \{ -x, \frac{1}{x}, - \frac{1}{x} \}, f \neq g$ such that $G = \langle (f,-y),(g,y) \rangle$. Using Proposition \ref{normalisateur2} Assertion \ref{chine2} we can assume $f = -x$ and $g = \frac{1}{x}$. Hence $G =   \langle (-x,-y),(\frac{1}{x},y) \rangle =: G_{88}$ 

\item Case $\pi_1(G) = \pi_2(G) = \langle \left[\begin{smallmatrix}  1 & 0   \\
0 &   -1
\end{smallmatrix} \right], \left[\begin{smallmatrix}  0 & 1   \\
1 &   0
\end{smallmatrix} \right] \rangle$:

If $G \simeq (\mathbb{Z}/2)^2$ then there exist $f,g \in \{ -x, \frac{1}{x}, - \frac{1}{x} \}, f \neq g$ such that $G = \langle (f,-y),(g,\frac{1}{y}) \rangle$. Using Proposition \ref{normalisateur2} Assertion \ref{chine2} we can assume $f = -x$ and $g = \frac{1}{x}$. Hence $G =   \langle (-x,-y),(\frac{1}{x},\frac{1}{y}) \rangle =: G_{89}$.

If $G \simeq (\mathbb{Z}/2)^4$ then $G = \langle (-x,y),(x,-y),(\frac{1}{x},y),(x,\frac{1}{y}) \rangle =: G_{86}$.

If $G \simeq (\mathbb{Z}/2)^3$ then there exist $f, g, h \in \{x, -x, \frac{1}{x}, - \frac{1}{x} \}$ such that $h \neq x$ and $G = \langle (f,-y),(g,\frac{1}{y}),(h,y) \rangle$. Using Proposition \ref{normalisateur2} Assertion \ref{chine2} we can assume $h = -x$. Because $\pi_1(G) = \langle \left[\begin{smallmatrix}  1 & 0   \\
0 &   -1
\end{smallmatrix} \right], \left[\begin{smallmatrix}  0 & 1   \\
1 &   0
\end{smallmatrix} \right] \rangle$, we need $f$ or $g$ to be equal to $\frac{1}{x}$ or to $- \frac{1}{x}$. Then using Proposition \ref{normalisateur2} Assertion \ref{chine2} and permuting $-y$ and $\frac{1}{y}$ if necessary, we can assume $f \in \{ \frac{1}{x} , - \frac{1}{x} \}$. Using Proposition \ref{normalisateur2} Assertion  \ref{chine2} and permuting $\frac{1}{x}$ and $- \frac{1}{x}$ we can assume $f = \frac{1}{x}$. Hence we have $G = \langle (\frac{1}{x},-y),(g,\frac{1}{y}),(-x,y) \rangle =: G_{85,g}$. We observe that $G_{85,x} = G_{85,-x}$ and $G_{85,\frac{1}{x}} = G_{85,-\frac{1}{x}}$. And finally, $G_{85,x}$ and $G_{85,\frac{1}{x}}$ are conjugate using the permutation of $\frac{1}{y}$ and $- \frac{1}{y}$ (Proposition \ref{normalisateur2} Assertion \ref{chine2}).
Hence we can assume up to conjugation by an automorphism of $\mathbb{P}^1 \times \mathbb{P}^1$:  $G =   \langle  (\frac{1}{x},-y),(x,\frac{1}{y}),(-x,y) \rangle =: G_{85}$.
\\
\end{itemize}


Now we study the conjugation by birational map between these subgroups.

We first show that $G_{82}, G_{88}, G_{89}$ are conjugate by birational map.
We define the following birational maps:
$$\begin{array}{ccccc}
\phi & : & \mathbb{P}^1 \times \mathbb{P }^1& \DashedArrow & \mathbb{P}^1 \times \mathbb{P }^1\\
 & & (x,y) & \DashedArrow & (x, \frac{y+x^2}{y+x^{-2}}) \\  \\
 \psi & : & \mathbb{P}^1 \times \mathbb{P }^1& \DashedArrow & \mathbb{P}^1 \times \mathbb{P }^1\\ 
 &   & (x,y) & \DashedArrow & (x, x \frac{y+x^2}{y+x^{-2}})
\end{array}$$
We observe that $\phi$ commutes with $(-x,y)$ and $\phi (\frac{1}{x},y) \phi^{-1} = (\frac{1}{x},\frac{1}{y})$. So $\phi G_{82} \phi^{-1} = \langle (-x,y),(\frac{1}{x},\frac{1}{y}) \rangle$. We also see that $(\alpha,\alpha) \langle (-x,y),(\frac{1}{x},\frac{1}{y}) \rangle (\alpha,\alpha)^{-1} = G_{88}$ where $\alpha = \left[\begin{smallmatrix}  1 & 1   \\
1 &   -1
\end{smallmatrix} \right] \in \PGL(2,\kk)$ (see Lemma \ref{conjugationJlemme}).
Finally we observe that $\psi G_{82} \psi^{-1} = G_{89}$.

We will now prove that $G_{87}$ and $G_{81}$ are conjugate by a birational map.
We define the following birational map: $$\begin{array}{ccccc}
\sigma & : & \mathbb{P}^1 \times \mathbb{P }^1& \to & \mathbb{P}^2 \\
 & & ([1:x],[1:y]) & \mapsto & [x:y:1] \\
\end{array}$$
We observe that $\sigma$ conjugates the subgroups $G_{87}$ and $G_{81}$ onto subgroups of automorphisms of $\mathbb{P}^2$. But according to Proposition \ref{prop3}, all Klein subgroups of automorphisms of $\mathbb{P}^2$ are conjugate. Hence $G_{87}$ and $G_{81}$ are conjugate by a birational map.
We also observe that $G_{81}$ has no fixed points whereas $G_{82}$ has fixed points. Hence according to \cite[page 1054-1056 Proposition A.2 ]{kollar} these two subgroups are not conjugate by a birational map.
In conclusion, every Klein subgroup of automorphisms of $\mathbb{P}^1 \times \mathbb{P}^1$ is conjugate by a birational map either to $G_{81}$ or to $G_{82}$, and these two subgroups are not conjugate by a birational map.

It remains to show that $G_{83}, G_{84}, G_{85}$ are pairwise not conjugate by a birational map. We will do that by counting the number of Klein subgroups with or without fixed points of these three groups. 
According to Lemma \ref{lemme27}, any Klein subgroup of $G_{83}$ not contained in $\Aut(\mathbb{P}^1) \times \Aut(\mathbb{P}^1)$ is conjugate to $G_{87}$, so has fixed points. This implies that the only Klein subgroup of $G_{83}$ with no fixed point is $\langle (-x,-y),(\frac{1}{x},\frac{1}{y}) \rangle$.
We observe that $G_{84}$ has at least two subgroups with no fixed points: $\langle (-x,y),(\frac{1}{x},y) \rangle, \langle (-x,y),(\frac{1}{x},-y) \rangle$, and at least two subgroups with fixed points: $\langle (-x,y),(x,-y) \rangle, \langle (\frac{1}{x},y),(x,-y) \rangle$.
If $K$ is a Klein subgroup of $G_{85}$ with fixed points, then $\pi_1(K) = \pi_2(K) \simeq \mathbb{Z}/2$, hence $K$ is of the form $\langle (x,f),(g,y) \rangle$ for some $f,g$ of order two. The only elements of this form are $(x,\frac{1}{y})$ and $(-x,y)$. Hence $K = \langle (x,\frac{1}{y}),(-x,y) \rangle$ is uniquely given, \textit{i.e.} $G_{85}$ has only one Klein subgroup with fixed points.
Since a group isomorphic to $(\mathbb{Z}/2)^3$ has seven Klein subgroups, we can use \cite[page 1054-1056 Proposition A.2 ]{kollar}
and we get that these three groups are pairwise not conjugate by a birational map.
\end{proof}

\newpage

\section{Automorphisms of quartic del Pezzo surfaces} \label{quarticsection} $ $

We assume $\car(\kk) \ne 2$ in this section.
The goal of this section is to prove Proposition \ref{quarticresult}. This proposition gives the classification of $2$-elementary non-cyclic subgroups of automorphisms of quartic del Pezzo surfaces. We will use \cite[Section 3]{dolcubic} many times in this study.

\medskip

\subsection{Notations}
$ $

We denote by $[a:b:c:d:e]$ the element $\left[ \begin{smallmatrix} a & 0 & 0 & 0 & 0 \\ 0 & b & 0 & 0 & 0 \\ 0 & 0 & c & 0 & 0 \\ 0 & 0 & 0 & d & 0 \\ 0 & 0 & 0 & 0 & e \end{smallmatrix} \right]$ of $\PGL(5,\kk)$.

\begin{defi} $ $

Let $Z = \{ [\pm 1 : \pm 1 : \pm 1 : \pm 1 : \pm 1 ]\} \subset \PGL(5,\kk)$ be the $2$-torsion subgroup of the diagonal torus of $\PGL(5,\kk)$.
\end{defi}

We know from \cite[Section 3]{dolcubic} that a quartic del Pezzo surface is isomorphic to the complete intersection of two quadrics in $\mathbb{P}^4$.

Let $S$ be a quartic del Pezzo surface.
According to  \cite[Theorem 3.1]{dolcubic}, after a coordinate change of $\mathbb{P}^4$, there exists a subgroup $P \subset \mathfrak{S}_5$ such that $\Aut(S) = Z \rtimes P$.

\medskip

\subsection{Results} $ $

\begin{prop} \label{quarticresult} $ $

Let $G$ be a $2$-elementary non-cyclic subgroup of automorphism of a quartic del Pezzo surface. We assume $rk(Pic(S)^G) = 1$. Then $G \simeq (\mathbb{Z}/2)^r$ where $r \in \{2,3,4\}$, and there exist $a_0, \dots , a_4 \in \kk$ pairwise distinct such that up to conjugation:
$$S = \left\{ \sum_{n=0}^4 X_n^2 = \sum_{n=0}^4 a_n X_n^2 = 0 \right\} \subset \mathbb{P}^4$$ and:

\begin{itemize}
\item If $r=2$, then $G = \{ [\pm 1 : \pm 1 : 1 : 1 : 1 ]\}$.
\item If $r=3$, then $G = \{ [\pm 1 : \pm 1 : \pm 1 : 1 : 1 ]\}$.
\item If $r=4$, then $G = \{ [\pm 1 : \pm 1 : \pm 1 : \pm 1 : \pm 1 ]\}$.
\end{itemize}
\end{prop}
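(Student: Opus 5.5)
We would first put $S$ in normal form and locate $G$ inside $Z$, then use the rank formula for the invariant Picard group to discard the subgroups of $Z$ that leave rank at least $2$. By \cite[Theorem 3.1]{dolcubic}, after a coordinate change of $\mathbb{P}^4$ we may write $S=\{\sum X_n^2=\sum a_nX_n^2=0\}$ with $a_0,\dots,a_4$ pairwise distinct (smoothness), and $\Aut(S)=Z\rtimes P$ with $P\subset\mathfrak{S}_5$.

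\textbf{Step 1: reduce to $G\subset Z$.} If the image of $G$ in $P$ is non-trivial, it contains a permutation $\pi$ of order $2$ preserving the pencil of quadrics, hence inducing a nontrivial automorphism of $\mathbb{P}^1$ that permutes the five points $a_n$. I would argue that such a $G$ always fixes a class in $\Pic(S)$ beyond $K_S$, so $\rank(\Pic(S)^G)\geq 2$. The mechanism to use is this: $\pi$, being an involution of $\mathbb{P}^1$ with five marked points, fixes one of the $a_n$, say $a_4$. Then $G$ preserves the singular quadric of the pencil with vertex at the point $e_4$. The two rulings of that cone give two conic bundle classes $F,F'$ with $F+F'=-K_S$. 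If $G$ does not swap them, rank $\geq2$. If some element swaps them, I would try to show that $G$ nevertheless preserves one of the other pairs coming from the remaining singular quadrics, using commutativity and the $(\ZZ/2)^r$ structure.

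Since this case analysis is the likeliest place to slip, the cleaner route is to compute directly. I would compute $\rank(\Pic(S)^G)=\frac1{|G|}\sum_{g\in G}\Tr(g\mid\Pic(S))$ via the formula of Appendix \ref{appendixA}, using the traces $\Tr(g)=2+\chi_{\text{fix}}$-type data, equivalently the Lefschetz formula $\Tr(g\mid \Pic)=e(S^g)-2$. Elements of $Z$ of type $(4,1)$ fix an elliptic curve (trace $-2$). Elements of type $(3,2)$ fix four points (trace $2$). Non-diagonal involutions need their fixed loci computed.

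\textbf{Step 2: the subgroups of $Z$.} By Proposition \ref{quarticz}, up to permutation of coordinates (which amounts to relabeling the $a_n$), $G$ is one of $G_{41},\dots,G_{46}$. With identity trace $6$, the averages are as follows:
\[
G_{41}:\tfrac{6-2-2+2}{4}=1,\quad G_{42}:\tfrac{6-2+2+2}{4}=2,\quad G_{43}:\tfrac{6+6}{4}=3,
\]
\[
G_{44}:\tfrac{6-2+12}{8}=2,\quad G_{45}:\tfrac{6-6+8}{8}=1,\quad G_{46}:1.
\]
This leaves exactly $G_{41}=\{[\pm1:\pm1:1:1:1]\}$, $G_{45}=\{[\pm1:\pm1:\pm1:1:1]\}$ and $Z$, as claimed.

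\textbf{Step 3: the main obstacle.} The hardest point is excluding $G\not\subset Z$ with $\rank(\Pic(S)^G)=1$. In this case I would write $G=\langle G\cap Z, \tilde\pi\rangle$. Each non-diagonal involution $z\pi$, with $\pi$ a product of one or two transpositions, has a computable fixed locus. I would show its trace is at least $2$, and that the centralizer constraints force $G\cap Z$ to consist of elements commuting with $\pi$. Then I would check, case by case on the cycle type of $\pi$ and on the size of $G\cap Z$ among $G_{41},\dots,G_{46}$, that the trace average is always at least $2$. If this computation becomes heavy, the fallback is the ruling argument from Step 1 together with Proposition \ref{prop1}.
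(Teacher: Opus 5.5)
Your Step 2 is exactly the paper's argument (Proposition \ref{quarticz} plus the rank formula), and your trace sums for $G_{41},\dots,G_{46}$ are correct. The gap is Step 3. You yourself call it the crux, but you only announce it. Two ingredients are missing.

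First, writing $G=\langle G\cap Z,\tilde\pi\rangle$ presupposes that the image of $G$ in $P$ has order at most $2$, and you never justify this. The paper gets it from Dolgachev's list of possible $P$ together with Corollary \ref{pirouz3}. Alternatively, a Klein four-group in $\PGL(2,\kk)$ has only orbits of even size on $\PP^1$, so it cannot preserve five points. Also, a single transposition cannot occur: as you noted in Step 1, an involution of $\PP^1$ fixes exactly one of the five points, so $\pi$ is a double transposition.

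Second, and more importantly, the value of $\Tr(g\mid\Pic(S))$ for $g\in G\setminus Z$ is only asserted. The paper obtains it by normalizing $S\simeq S_t$ and conjugating $G$ to one of $G_{47},G_{48},G_{48'},G_{49}$; there each such $g$ fixes a smooth conic and two points, so $\chi(S^g)=4$. Your ruling argument from Step 1 cannot serve as a fallback, because it leaves open the case where an element swaps the two rulings. Proposition \ref{prop1} gives nothing here either.

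The missing step can be supplied without normal forms.
\begin{itemize}
\item Lift $g\notin Z$ to $\tilde g\in\GL(5,\kk)$ with $\tilde g^2=I$.
\item $\tilde g^*$ acts on the $2$-dimensional space of quadrics through $S$, and not as a scalar: otherwise $g$ fixes every singular member of the pencil, hence every vertex, hence $g\in Z$.
\item So there are quadrics $q_\pm$ with $q_\pm\circ\tilde g=\pm q_\pm$. The form $q_-$ vanishes on both eigenspaces $E_\pm$ of $\tilde g$, hence $S^g=\{q_+=0\}\cap(\PP(E_+)\sqcup\PP(E_-))$.
\item Since $S$ spans $\PP^4$ and contains no plane, each $\PP(E_\pm)$ has dimension at most $2$. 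The projective dimensions are therefore $2$ and $1$.
\item Smoothness of $S^g$ then yields a smooth conic plus either two points or a line. So $\chi(S^g)=4$ and $\Tr(g)=2$.
\end{itemize}
With this value you need neither the cyclicity nor the case analysis. For $H=G\cap Z$,
\[
|G|\,\rank\Pic(S)^G=|H|\,\rank\Pic(S)^H+2\,|G\setminus H|\ \ge\ |G|+|G\setminus H|,
\]
so $\rank\Pic(S)^G\ge 2$ whenever $G\not\subset Z$. This route avoids the normal form $S_t$, Proposition \ref{subgroupsofSt} and Appendix \ref{bratac0}. The paper's route, by contrast, produces the explicit list $G_{47},\dots,G_{49}$, which it reuses in Section \ref{section44}.
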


\begin{proof} $ $
Up to conjugation, there exist $a_0, \dots , a_4 \in \kk$ pairwise distinct such that: $$S = \left\{X_0^2 + X_1^2 + X_2^2 + X_3^2 + X_4^2 = 0 = a_0 X_0^2 + a_1 X_1^2 + a_2 X_2^2 + a_3 X_3^2 + a_4 X_4^2 = 0 \right\}$$ and $\Aut(S) = Z \rtimes P$ where $P$ is a subgroup of $\mathfrak{S}_5$.

First we assume $G$ is not contained in $Z$. Hence $\Aut(S)$ is not reduced to $Z$. Using Proposition \ref{propquartic5points} Assertion \ref{propquartic5points1} we can assume up to conjugation for a given $t \in \kk \setminus \{ 0,1,-1 \}$: $$S = \left\{X_0^2 + X_1^2 + X_2^2 + X_3^2 + X_4^2 = 0 =  X_1^2 + t X_2^2 - t X_3^2 -  X_4^2 = 0 \right\}$$  Using Proposition \ref{subgroupsofSt} we get $G = G_{4n}$ with $n \in \{7,8,8',9 \}$ up to conjugation.
According to Proposition \ref{quarticpicard} we have $rk(Pic(S)^G) \in  \{2,3\}$, hence our contradiction.

Therefore $G \subset Z$.
According to Proposition \ref{quarticz}, we get up to conjugation by a permutation matrix (potentially permuting the scalars $a_i$): $G = G_{4n}$ with $n \in \{1,2,3,4,5,6 \}$.
We assumed  $rk(Pic(S)^G) = 1$, therefore according to Proposition \ref{quarticpicard} we have: $n \in \{1,5,6\}$. Hence the result.
\end{proof}

\medskip



\subsection{Subgroups of $\Aut(S)$ not contained in $Z$} \label{massy423} $ $

\medskip

\begin{defi} \label{defi323} $ $
Let $t \in \kk \setminus \left\{0,1,-1\right\}$.
\begin{enumerate}
\item \label{defi3231} We define the following quartic del Pezzo surface:
$$S_t = \left\{ X_0^2 + X_1^2 +  X_2^2  + X_3^2 + X_4^2 = X_1^2 + t X_2^2  - t X_3^2 - X_4^2= 0 \right\} \subset \mathbb{P}^4$$
\item \label{defi3232}
We define the following automorphism of $S_t$:
$$\begin{array}{ccccc}
s & : & S_t & \rightarrow& S_t \\
 & & [X_0 : X_1 : X_2 : X_3 : X_4] & \mapsto & [X_0 : X_4 : X_3 : X_2 : X_1]  \\
\end{array}$$
\item \label{defi3233}
We define the following $2$-elementary subgroups of automorphisms of $S_t$:

\begin{itemize}[label=\textbullet]

\item $G_{47} = \langle [1:-1:1:1:-1],[1:1:-1:-1:1],s \rangle \simeq (\mathbb{Z}/2)^3$.

\item $G_{48} =  \langle [1:-1:1:1:-1],s \rangle \simeq (\mathbb{Z}/2)^2$.

\item $G_{48'} =  \langle [1:1:-1:-1:1],s \rangle\simeq (\mathbb{Z}/2)^2$.

\item $G_{49} =  \langle [-1:1:1:1:1],s \rangle \simeq (\mathbb{Z}/2)^2$.
\end{itemize}
\end{enumerate}
\end{defi}

\begin{prop} \label{propquartic5points} $ $
Let $S$ be a quartic del Pezzo surface such that its automorphism group is not reduced to $Z$. Then:
\begin{enumerate}
\item \label{propquartic5points1} There exists $t \in \kk \setminus \left\{ 0,1,-1 \right\}$ such that $S \simeq S_t$.
 \item \label{propquartic5points2} All $2$-elementary non-trivial subgroups of $P$ are cyclic and conjugate in $P$.
\end{enumerate}
\end{prop}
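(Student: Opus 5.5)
We work with the normal form of \cite[Section 3]{dolcubic}: $S=\{\sum X_k^2=\sum a_k X_k^2=0\}$ with pairwise distinct $a_k$, and $\Aut(S)=Z\rtimes P$. Here $P\subset\mathfrak{S}_5$ is the group of permutations $\sigma$ of the five points $a_0,\dots,a_4\in\PP^1$ that are induced by some Möbius transformation $m\in\PGL(2,\kk)$, meaning $m(a_k)=a_{\sigma(k)}$. This description already gives both statements: $S$ has a non-trivial automorphism outside $Z$ exactly when the configuration of five distinct points on $\PP^1$ has a non-trivial projective symmetry.

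\textbf{Step 1: the structure of $P$.} Every non-trivial $\sigma\in P$ comes from an $m\neq \mathrm{id}$, and such an $m$ has at most two fixed points on $\PP^1$. The elements of $P$ preserving five points are known (Dolgachev's list): $P\in\{1,\ZZ/2,\ZZ/4,\mathfrak{S}_3,D_{10}\}$. Assertion \eqref{propquartic5points2} can be read off this list. In each of these groups, every $2$-elementary non-trivial subgroup is cyclic of order $2$, since none of them contains a Klein subgroup. The involutions are all conjugate: $\ZZ/4$ and $\ZZ/2$ have a single involution, while in $\mathfrak{S}_3$ and $D_{10}$ all reflections are conjugate. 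If I want to avoid quoting the list, I can argue directly. An involution $m$ of $\PP^1$ has exactly two fixed points, so on the five points its cycle type is $(2,2,1)$. A Klein subgroup of $P$ would lift to a Klein subgroup $\langle m_1,m_2\rangle$ of $\PGL(2)$, and by Proposition~\ref{dublin1} this is conjugate to $\langle\sigma,\tau\rangle$. Its orbits on $\PP^1$ have size $4$ or $2$ (the orbits of size $2$ being $\{0,\infty\},\{1,-1\},\{i,-i\}$), so no invariant set has size $5$, a contradiction. Conjugacy of the involutions follows from the same list.

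\textbf{Step 2: normal form $S_t$.} Take an involution in $P$. After renumbering it is $\sigma=(14)(23)$ fixing $a_0$, induced by an involution $m$ of $\PP^1$. Conjugating by $\PGL(2)$ and using that the pencil may be rescaled ($a\mapsto\lambda a+\mu$, which changes the second quadric by adding a multiple of the first), we may put the fixed point $a_0$ of $m$ at $0$ and take $m(a)=-a$. The points are then $0,a_1,a_2,-a_2,-a_1$. Scaling gives $a_1=1$, $a_2=t$, so $(a_0,\dots,a_4)=(0,1,t,-t,-1)$. Distinctness forces $t\neq 0,\pm1$. The automorphism $s$ of the quadrics then realizes $\sigma$, and $S\simeq S_t$.

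\textbf{Main obstacle.} The delicate point is Step 2's claim that $m$ must fix one of the five points and swap the other four in pairs. This holds because an involution of $\PP^1$ has exactly two fixed points and $5$ is odd, so the five points contain an odd number of fixed points, hence exactly one. I would also check the rescaling step carefully, because the pencil normalization must keep the first quadric equal to $\sum X_k^2$. This works because affine changes $a\mapsto \lambda a+\mu$ of the pencil parameter preserve that form, and the Möbius transformation normalizing $m$ can be chosen affine: it is the map sending the fixed point $a_0$ of $m$ to $0$ and the other fixed point of $m$ to $\infty$, composed with an affine correction. If the second fixed point is not $\infty$ in the original coordinate, I would instead use a general projective change of the pencil generators, which permutes the $a_k$ by a Möbius map while keeping a diagonal form.
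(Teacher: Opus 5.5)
\textbf{Gap in assertion~(2).} The list $P\in\{1,\ZZ/2,\ZZ/4,\mathfrak{S}_3,D_{10}\}$ that you quote is the characteristic-zero list, but this section only assumes $\car(\kk)\neq 2$.

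\emph{A missing case.} In characteristic $5$, take $a_k=k$, so the five points are $\mathbb{F}_5\subset\PP^1$. They are permuted by the whole affine group $x\mapsto ax+b$ with $a\in\mathbb{F}_5^\times$ and $b\in\mathbb{F}_5$. Hence $P\simeq\ZZ/5\rtimes(\ZZ/5)^\times$, of order $20$, which is not in your list. This is exactly the extra case in the list of \cite{dolcubic} that the paper uses, and it is why Appendix~\ref{bratac0} exists.

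\emph{What still holds and what does not.} Your direct exclusion of Klein subgroups is valid in every characteristic $\neq 2$. However, your sentence ``conjugacy of the involutions follows from the same list'' leaves this group unchecked.

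\emph{How to close it.} The conclusion does hold there: the involutions $x\mapsto -x+b$ are all conjugate to $x\mapsto -x$ by $x\mapsto x+b/2$, which is what Corollary~\ref{pirouz3} records. You can also drop the list entirely. Once $P$ has no Klein subgroup, each Sylow $2$-subgroup of $P$ has a unique involution, because a central involution and any other involution would generate a Klein group. Sylow's theorem then makes all involutions of $P$ conjugate. With the corrected list, your opening move in Step~2, ``take an involution in $P$'', is also justified, since every non-trivial group in that list has even order.

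\textbf{Assertion~(1): a different route.} The paper quotes the normal form of \cite[Equation (3.1)]{dolcubic} for $P\neq 1$, namely $\{X_0^2+X_2^2+X_3^2+aX_4^2=X_1^2+X_2^2+aX_3^2+X_4^2=0\}$. It then takes the sum and difference of the two quadrics and rescales coordinates to reach $S_t$ with $t=\frac{1-a}{1+a}$. You instead normalize an involution of the five-point configuration. Your route is self-contained and explains where $s$ comes from.

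One remark in it is wrong: the normalizing M\"obius map cannot in general be chosen affine. Your fallback is the correct argument. A change of generators of the pencil acts on the $a_k$ by a M\"obius map and keeps a diagonal form after rescaling the $X_k$, provided no $a_k$ is sent to $\infty$. That condition holds here, because the point sent to $\infty$ is the second fixed point of $m$, and $a_0$ is the only fixed point of $m$ among the $a_k$.
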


\begin{proof} $ $

\begin{enumerate}
\item According to \cite[Theorem 3.1]{dolcubic} and \cite[Equation (3.1)]{dolcubic}, if $P$ is non-trivial, then $S$ is isomorphic to: $$\left\{X_0^2 + X_2^2 + X_3^2 + a X_4^2 = X_1^2 + X_2^2 + a X_3^2 + X_4^2 = 0 \right\} \subset \mathbb{P}^4$$ for some $a \in \kk \setminus \{0, -1,1\}$. We observe that $(X_0^2 + X_2^2 + X_3^2 + a X_4^2) - (X_1^2 + X_2^2 + a X_3^2 + X_4^2)  = X_0^2 - X_1^2 + (1-a) X_3^2 + (-1+a) X_4^2$ and $(X_0^2 + X_2^2 + X_3^2 + a X_4^2) + (X_1^2 + X_2^2 + a X_3^2 + X_4^2) = X_0^2 + X_1^2 + 2 X_2^2 + (a+1) X_3^2 + (a+1) X_4^2$, hence  $\{X_0^2 + X_2^2 + X_3^2 + a X_4^2 = 0 = X_1^2 + X_2^2 + a X_3^2 + X_4^2\} = \{X_0^2 - X_1^2 + (1-a) X_3^2 + (-1+a) X_4^2=0 = X_0^2 + X_1^2 + 2 X_2^2 + (a+1) X_3^2 + (a+1) X_4^2\}$. Finally, replacing $X_2, X_3$ and $X_4$ by $c X_2, bX_3$ and $bX_4$ respectively, where $b,c\in \kk^\times, c^2=2, b^2=a+1$, we obtain an isomorphism with the surface $ \{X_0^2 - X_1^2 + \frac{1-a}{1+a} X_3^2 + \frac{-1+a}{1+a} X_4^2=0 = X_0^2 + X_1^2 + X_2^2 +  X_3^2 +  X_4^2\}$, hence the result by taking $t = \frac{1-a}{1+a}$.

\item According to \cite[Theorem 3.1]{dolcubic}, $P$ is isomorphic to one of the following groups: $$\mathbb{Z}/2 , \quad \mathbb{Z}/4 ,  \quad \mathfrak{S}_3,  \quad D_{10},  \quad \mathbb{Z}/5 \rtimes (\mathbb{Z}/5)^{\times}.$$
The result is trivial for the cyclic groups. For the three non-cyclic groups we use Corollary \ref{pirouz3} 
\end{enumerate}
\end{proof}

\begin{prop} \label{subgroupsofSt} $ $
Let $t \in \kk \setminus \{0,1,-1\}$.
%
%
%
%
%

Every $2$-elementary non-cyclic subgroup of $\Aut(S_t)$ not contained in $Z$ is conjugate in $\Aut(S_t)$ to exactly one of the subgroups of Definition \ref{defi323} Assertion \ref{defi3233}. 
\end{prop}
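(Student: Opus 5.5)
The plan is to use the semidirect product structure $\Aut(S_t) = Z \rtimes P$ and reduce everything to a small cohomological computation inside $Z$. Let $G$ be $2$-elementary, non-cyclic and not contained in $Z$. Its image in $P \simeq \Aut(S_t)/Z$ is a non-trivial $2$-elementary subgroup of $P$. By Proposition \ref{propquartic5points} Assertion \ref{propquartic5points2}, this image is cyclic of order $2$ and, after conjugating in $\Aut(S_t)$, it equals the image of $s$. So $G = \langle K, zs\rangle$, where $K = G \cap Z$ and $z \in Z$ satisfies $(zs)^2 = z\,s(z) = 1$ in $\PGL(5,\kk)$. Since $G$ is non-cyclic, $K \neq \{1\}$.

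The first step is to show that we may take $z = 1$. Conjugating by $z' \in Z$ sends $zs$ to $z\, z' s(z')\, s$, so $z$ may be changed by any coboundary $z's(z')$. Since $s$ swaps $X_1 \leftrightarrow X_4$ and $X_2 \leftrightarrow X_3$, the coboundaries are exactly the classes $[1:u:v:v:u]$ with $u,v = \pm 1$, modulo scalars. The cocycle condition $z\,s(z) = \lambda I$ forces $z$ to be $s$-invariant up to a sign. I will check directly that every cocycle has this form. If this were to fail, the leftover cocycles would have to be handled by hand using $\lambda = -1$. It then follows that $s \in G$ after conjugation by an element of $Z$.

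Next, $K$ commutes with $s$, so it lies in the centralizer of $s$ in $Z$. This centralizer consists of the elements $[e_0:e_1:e_2:e_2:e_1]$ modulo scalars, which is the Klein group
\[
B = \{1,\ [-1:1:1:1:1],\ [1:-1:1:1:-1],\ [1:1:-1:-1:1]\}.
\]
Note that the product of the three non-trivial elements is a scalar. Hence $K$ is one of the three subgroups of order $2$ of $B$, or $B$ itself. Adjoining $s$ gives exactly $G_{49}$, $G_{48}$, $G_{48'}$ and $G_{47}$, which proves existence. A further conjugation by an element of $Z$ that keeps $s$ in $G$ does not change $K$, so within this normal form the four groups are already separated by $K$.

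For uniqueness, the sizes settle most cases: $G_{47}$ has order $8$ and the others have order $4$. Also $G_{49} \cap Z$ is generated by an element with eigenvalue multiplicities $(4,1)$, whereas $G_{48} \cap Z$ and $G_{48'} \cap Z$ are generated by elements with multiplicities $(3,2)$. Since conjugation in $\Aut(S_t)$ preserves $Z$, this is an invariant (Remark \ref{laremarque}). The main obstacle is to separate $G_{48}$ from $G_{48'}$. Elements of $P$ normalising $\langle s\rangle$, for example a possible order-$4$ element when $P \simeq \ZZ/4$, might a priori exchange $[1:-1:1:1:-1]$ and $[1:1:-1:-1:1]$. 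My first approach is to compute the normaliser of $\langle s \rangle$ in $P$, using the description of $P$ from \cite[Theorem 3.1]{dolcubic} and the explicit equations of $S_t$, and to check that it fixes the coordinate pair $\{X_1, X_4\}$ setwise, which is not swapped with $\{X_2, X_3\}$ because of the coefficients $1$ and $t$. If that fails, I will instead use a birational invariant: the fixed curves of the involutions $s$, $bs$ and $cs$ (genus computed via Riemann--Hurwitz), or the rank of the invariant Picard group of the two groups.
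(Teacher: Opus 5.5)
Your existence argument is the paper's own. You make the image of $G$ in $P$ equal to $\langle s\rangle$ via Proposition~\ref{propquartic5points}, remove the $Z$-component of the lift of $s$ by conjugating with an element of $Z$, and note that $G\cap Z$ lies in the centralizer $B$ of $s$ in $Z$. Your cocycle check goes through: the $X_0$-entry of $z\,s(z)$ is $z_0^2$, which forces $\lambda=1$, $z_1=z_4$, $z_2=z_3$, so $z$ is a coboundary. You also correctly separate $G_{47}$ by its order and $G_{49}$ by the eigenvalue multiplicities of $G\cap Z$, using that $Z$ is normal. The paper's proof stops after existence and never addresses the word ``exactly''.

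\textbf{The gap: separating $G_{48}$ from $G_{48'}$.} Write $b=[1:-1:1:1:-1]$ and $c=[1:1:-1:-1:1]$. The fact you propose to check is false in general, and the uniqueness claim itself fails when $t^2=-1$. For $t=\pm i$ consider
\[
\phi\colon [X_0:X_1:X_2:X_3:X_4]\mapsto [X_0:X_3:X_1:X_4:X_2].
\]
It preserves $\sum X_k^2$ and multiplies $X_1^2+tX_2^2-tX_3^2-X_4^2$ by $t$, because $-t^{-1}=t$; hence $\phi\in\Aut(S_t)$. A direct check gives $\phi^2=s$ and $\phi b\phi^{-1}=c$, so $\phi G_{48}\phi^{-1}=G_{48'}$. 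This is exactly the order-$4$ element you were worried about, and it does exchange $\{X_1,X_4\}$ with $\{X_2,X_3\}$.

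Your fallback invariants cannot separate the two groups either, even for generic $t$. The involutions $s$, $bs$, $cs$, $bcs$ all fix two points and a smooth conic. Both groups have invariant Picard rank $3$ by Proposition~\ref{quarticpicard}.

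What your first line of attack does prove is the following. A conjugating element sends $G_{48}\cap Z=\langle b\rangle$ to $G_{48'}\cap Z=\langle c\rangle$. So its image in $P$ centralizes $s$ and carries the index pair $\{1,4\}$ to $\{2,3\}$. View $P$ as the M\"obius transformations preserving $\{0,\pm1,\pm t\}$. The elements centralizing $a\mapsto -a$ there have the form $a\mapsto\lambda a$, since the maps $a\mapsto\lambda/a$ send $0$ to $\infty$. Sending $\{\pm1\}$ onto $\{\pm t\}$ then forces $t^4=1$. Hence ``exactly one'' holds if and only if $t^2\neq-1$. The statement needs that hypothesis, or must identify $G_{48}$ with $G_{48'}$ when $t=\pm i$.
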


\begin{proof} $ $
According to \cite[Theorem 3.1]{dolcubic}, let $P_{S_t}$ be a subgroup of $\Aut(S_t)$ isomorphic to a subgroup of $\mathfrak{S}_5$ such that $\Aut(S_t) = Z \rtimes P_{S_t}$. Let $\pi : \Aut(S_t) \twoheadrightarrow P_{S_t}$ be the canonical surjection.
Let $G$ be a $2$-elementary non-cyclic subgroup of $\Aut(S_t)$ not contained in $Z$.
Using Proposition \ref{propquartic5points} Assertion \ref{propquartic5points2}, we get that $\pi(G)$ is conjugate in $P_{S_t}$ to $\langle s \rangle$.
Hence up to conjugation, $G = \langle (z,s) , H \rangle$ where $z = [1 : z_1 : z_2 : z_3 : z_4] \in Z$ and $H$ is a $2$-elementary non-trivial subgroup of $Z$.
The element $(z,s)$ is of order $2$, hence $z_1 = z_4$ and $z_2 = z_3$. Let $z' = [1 : z_1 : z_2  :  1 :  1]$. Then $z' (z,s) z'^{-1} = (id,s)$.
Hence up to conjugation we have $G = \langle s , H \rangle$.
$H$ commutes with $s$, hence $H \subset \langle [1:-1:1:1:-1],[1:1:-1:-1:1] \rangle$.
Hence the result.
\end{proof}

\subsection{The Picard rank of the $2$-elementary non-cyclic subgroups} $ $

\medskip

We use the notations of Definitions \ref{defi1623} and \ref{defi323}.

\begin{remark} $ $
The subgroups $G_{48}$ and $G_{48'}$ are \textit{a priori} not conjugate in $S_t$ but the pairs $(G_{48},S_t)$ and $(G_{48'},S_{\frac{1}{t}})$ are conjugate by the following isomorphism:
$$\begin{array}{cccc}
& S_t & \rightarrow& S_{\frac{1}{t}} \\
& [X_0 : X_1 : X_2 : X_3 : X_4] & \mapsto & [X_0 : X_2 : X_1 : X_4 : X_3]  \\
\end{array}$$
 We will not need this observation, because we will be able to exclude these two subgroups using the Proposition \ref{quarticpicard}.
\end{remark}

\begin{prop} \label{quarticpicard} $ $
Let $S$ be the following quartic del Pezzo surface: $$S =\left\{ \sum_{n=0}^4 X_n^2 = \sum_{n=0}^4 a_n X_n^2 = 0 \right\} \subset \mathbb{P}^4$$ where the $a_k \in \kk$ are pairwise distinct.

Let $G \in \{G_{41} , G_{42} , G_{43} , G_{44} , G_{45} ,G_{46} ,G_{47} ,G_{48}, G_{48'} , G_{49} \}$.

If $G \in \{ G_{47} ,G_{48}, G_{48'} , G_{49} \}$, then we assume $S = S_t$ for some $t \in \kk \setminus \{0,1,-1\}$.

Then we have:
$rk(Pic(S)^{G_{4n}}) = \left\{
\begin{array}{l}
1 \mbox{ if } n \in \{1, 5 , 6\} \\
2 \mbox{ if } n \in \{2, 4 , 7, 9\} \\
3 \mbox{ if } n \in \{3, 8 , 8'\}

\end{array} 
\right.$
\end{prop}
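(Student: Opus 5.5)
I plan to compute each rank with the Lefschetz-type formula of Appendix \ref{appendixA}:
\[\rank(\Pic(S)^G)=\frac{1}{|G|}\sum_{g\in G}\Tr\bigl(g^*\mid \Pic(S)\otimes\QQ\bigr),\]
where $\Pic(S)\otimes\QQ$ has dimension $6$. For an automorphism of a smooth rational surface, $\Tr(g^*)=\chi(S^g)-2$ by the topological (étale) Lefschetz formula, since $H^1=H^3=0$ and $g$ acts trivially on $H^0$ and $H^4$. The work is therefore reduced to computing, for every non-trivial element of the ten groups, the Euler characteristic of its fixed locus on $S$. Here $\chi(S)=8$ and the identity contributes $6$.

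\textbf{Elements of $Z$.} Up to permutation, a non-trivial element of $Z$ is either $[-1:1:1:1:1]$, with $\rho^5=(4,1)$, or $[-1:-1:1:1:1]$, with $\rho^5=(3,2)$.

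For $[-1:1:1:1:1]$, the fixed locus is $S\cap\{X_0=0\}$ together with $S\cap\{X_1=\dots=X_4=0\}$. The second set is empty, because the two quadrics would force $X_0=0$. The first set is a smooth curve of genus $1$, namely the hyperplane section, so $\chi=0$ and the trace is $-2$. (Equivalently, this element is a Geiser-type involution with an elliptic fixed curve.)

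For $[-1:-1:1:1:1]$, the fixed locus is $S\cap\{X_0=X_1=0\}$ together with $S\cap\{X_2=X_3=X_4=0\}$. Each is the intersection of two independent quadrics in $\PP^2$, respectively in $\PP^1$. The first gives $4$ points. The second gives the solutions of $X_0^2+X_1^2=a_0X_0^2+a_1X_1^2=0$, and there are none since $a_0\neq a_1$. So $\chi=4$ and the trace is $2$.

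Summing with the $\nu^5_G$ tables from Proposition \ref{quarticz}:
\begin{itemize}
\item $G_{41}$: $(6-2-2+2)/4=1$;
\item $G_{42}$: $(6-2+2+2)/4=2$;
\item $G_{43}$: $(6+6)/4=3$;
\item $G_{44}$: $(6-2+12)/8=2$;
\item $G_{45}$: $(6-6+8)/8=1$;
\item $G_{46}$: the count is $(6-10+20)/16=1$, using $5$ elements of type $(4,1)$ and $10$ of type $(3,2)$.
\end{itemize}

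\textbf{Elements outside $Z$.} For $G_{47},G_{48},G_{48'},G_{49}$ on $S_t$, the elements not in $Z$ have the form $zs$, with $z$ in the relevant subgroup commuting with $s$. I would find their fixed loci directly. The map $s$ swaps $X_1\leftrightarrow X_4$ and $X_2\leftrightarrow X_3$, so in $\GL(5)$ it has eigenspaces $\{X_1=X_4,X_2=X_3\}$ (dimension $3$) and $\{X_0=0,X_1=-X_4,X_2=-X_3\}$ (dimension $2$). Intersecting with $S_t$:
\begin{itemize}
\item On the $\PP^2$, the second equation $X_1^2+tX_2^2-tX_3^2-X_4^2$ vanishes identically. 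We are left with a conic $X_0^2+2X_1^2+2X_2^2=0$, which has $\chi=2$.
\item On the $\PP^1$, the second equation vanishes identically again. The first gives $2X_1^2+2X_2^2=0$, which is $2$ points.
\end{itemize}
So $\chi=4$ and the trace is $2$.

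The other twisted elements, $[1:-1:1:1:-1]s$, $[1:1:-1:-1:1]s$ and $[-1:1:1:1:1]s$, and their products, are handled the same way. Each time I diagonalize the signed permutation, restrict both quadrics to each eigenspace, and count. I expect the outcomes to be a conic plus points, or a set of isolated points, giving traces in $\{0,2,4\}$.

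Concretely, $[-1:1:1:1:1]s$ has eigenspaces $\{X_1=X_4,X_2=X_3,X_0=0\}$ and $\{X_0\text{ free},X_1=-X_4,X_2=-X_3\}$. Both are positions where the second quadric vanishes, leaving a conic, so the trace is $2$. Then $G_{49}$ gives $(6-2+2+2)/4=2$, as claimed.

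For $G_{48}$, the element $[1:-1:1:1:-1]s$ has eigenspaces where the second quadric becomes $2(X_1^2-\dots)$, producing finitely many points. Together with the trace $+2$ of $[1:-1:1:1:-1]\in Z$, which has type $(3,2)$, this must total $12$ to give rank $3$. I will verify this by this computation.

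\textbf{Expected difficulty.} The main obstacle is exactly this bookkeeping of fixed loci of the twisted involutions $zs$ on $S_t$, where the second quadric degenerates on an eigenspace. I would check, in each case, whether a fixed curve is a smooth conic or degenerates to points. A consistency check is that every result must be an integer in $\{1,2,3\}$.
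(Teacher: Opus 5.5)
Your method is the same as the paper's. The paper tabulates $\chi(S^g)$ and feeds it into Proposition~\ref{rankformula}, which is your trace formula with $\Tr(g^*)=\chi(S^g)-2$. The table is: identity $8$; type $(4,1)$ an elliptic curve with $\chi=0$; type $(3,2)$ four points with $\chi=4$; and every element $zs$ a conic plus two points with $\chi=4$.

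Your computations for the elements of $Z$ are correct, so $G_{41},\dots,G_{46}$ are settled. Your traces for $s$ and $[-1:1:1:1:1]s$ are also correct, so $G_{49}$ is settled.

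The gap is in $G_{47}$, $G_{48}$, $G_{48'}$.
\begin{itemize}
\item You never compute the traces of $[1:-1:1:1:-1]s$ and $[1:1:-1:-1:1]s$.
\item For $G_{48}$ you get the needed value by requiring the total to be $12$, which assumes the result.
\item The one prediction you make is false. The second quadric does not become $2(X_1^2-\dots)$ on an eigenspace of $[1:-1:1:1:-1]s$, and the fixed locus is not finite.
\end{itemize}
Integrality does not rescue this. For $G_{48}$ a trace of $-2$ would also give an integer, namely rank $(6+2+2-2)/4=2$.

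The missing observation applies uniformly to all elements of $Z$ commuting with $s$. In $\PGL(5,\kk)$ these are $z=[1:z_1:z_2:z_2:z_1]$. The map $zs$ acts on the $(X_1,X_4)$-plane and on the $(X_2,X_3)$-plane by involutions with eigenvectors $X_4=\pm X_1$ and $X_3=\pm X_2$. Its two eigenspaces are therefore:
\begin{itemize}
\item the $3$-dimensional space $\{X_4=z_1X_1,\ X_3=z_2X_2\}$;
\item the $2$-dimensional space $\{X_0=0,\ X_4=-z_1X_1,\ X_3=-z_2X_2\}$.
\end{itemize}
On both, $X_1^2+tX_2^2-tX_3^2-X_4^2$ vanishes identically. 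The fixed locus is then the smooth conic $X_0^2+2X_1^2+2X_2^2=0$ together with the two points $2X_1^2+2X_2^2=0$. This gives $\chi=4$ and trace $2$ for every twisted element, which is exactly the last row of the paper's table.

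With this, $G_{47}$ gives $(6+2+2-2+4\cdot 2)/8=2$, and $G_{48}$, $G_{48'}$ give $(6+2+2+2)/4=3$, completing the proof.
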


\begin{proof} $ $
The following table gives us the Euler characteristic $\chi^g$ of the fixed point variety $X^g$ of each element $g$ of these subgroups:

\tiny{$$\begin{tabular}{ | c | c | c | c|  }
 \hline 
   element & fixed points & nature & $\chi$ \\
   \hline
   id &  &  & 8 \\
   \hline
   $[-1:1:1:1:1]$ & $\{ [0:X_1:X_2:X_3:X_4] | \sum_{k=1}^4 X_k^2 = \sum_{k=1}^4 a_k X_k^2 = 0 \}$ & elliptic curve & 0   \\
 \hline  
    $[-1:-1:1:1:1]$ & $\{ [0:0:X_2:X_3:X_4] | \sum_{k=2}^4 X_k^2 = \sum_{k=2}^4 a_k X_k^2 = 0 \}$ & 4 points & 4   \\
 \hline  
    $[X_0:b_4 X_4:b_3 X_3 : b_2 X_2 : b_1 X_1]$ & $\{ [0:1:c i : - c b_2 i : - b_1] | c=\pm 1\} \sqcup$ & 2 points &  \\
    where $b_4 = b_1 = \pm1$, $b_2 = b_3 = \pm 1$ & $  \{ [X_0 : X_1 : X_2 : b_2 X_2 : b_1 X_1 ] | X_0^2 + 2 X_1^2 + 2 X_2^2 = 0\}$  & and a $\mathbb{P}^1$ & 4   \\
 \hline  
 \end{tabular}$$}
 
 \normalsize

 We will prove the nature of the fixed points of these elements:
 \begin{itemize}
\item The curve $\{ [0:X_1:X_2:X_3:X_4] | \sum_{k=1}^4 X_k^2 = \sum_{k=1}^4 a_k X_k^2 = 0 \}$ is the complete intersection of two quadrics hypersurface of $\mathbb{P}^3$. Hence according to  \cite[Exercise I.7.2(d) page 54]{Hartshorne} (We use the adjunction formula and Bezout's Theorem), its genus is $1$.

\item The set $\{ [0:0:X_2:X_3:X_4] | \sum_{k=2}^4 X_k^2 = \sum_{k=2}^4 a_k X_k^2 = 0 \}$ is the set of solutions $[X_2 : X_3 : X_4]$ verifying the equation $ \left[\begin{smallmatrix}  1 & 1 & 1 \\
a_2 &  a_3 & a_4
\end{smallmatrix} \right]  \left[\begin{smallmatrix}  X_2^2  \\
X_3^2 \\
X_4^2
\end{smallmatrix} \right] =  \left[\begin{smallmatrix} 0   \\
0
\end{smallmatrix} \right]$. The matrix $\left[\begin{smallmatrix}  1 & 1 & 1 \\
a_2 &  a_3 & a_4
\end{smallmatrix} \right]$ is of rank $2$ because the $a_k$ are pairwise distinct, hence the set of solutions $(X_2^2, X_3^2, X_4^2)$ is a subvectorspace of dimension $1$. Hence there exist $x , y, z \in \kk^\times$ such that $[X_2 : X_3 : X_4]$ is a solution if and only if there exists $\lambda \in \kk^\times$ such that $X_2^2 = \lambda x, X_3^2 = \lambda y,  X_4^2 = \lambda z$. By taking the square roots of $x, y ,z$, we get our four solutions: $[\pm \sqrt{x} : \pm \sqrt{y} :\sqrt{z}]$.

\item The set $\{ [0:1:c i : - c b_2 i : - b_1] | c=\pm 1\}$ is obviously a set of two points.
The set $\{ [X_0 : X_1 : X_2 : b_2 X_2 : b_1 X_1 ] | X_0^2 + 2 X_1^2 + 2 X_2^2 = 0\}$ is isomorphic to $\mathbb{P}^1$ by the genus-degree formula.
\end{itemize}

Using Proposition \ref{rankformula}, we have the following formula for the Picard rank: $$\rank \Pic(X)^G + 2 = \frac{1}{|G|} \sum_{g \in G} \chi(X^g)$$ Hence we get the result for each subgroup by using this formula with the nature of each set of fixed points.
 \end{proof}

\newpage

\section{Automorphisms of cubic del Pezzo surfaces} \label{cubicsection} $ $

We assume $\car(\kk) \neq 3$ in this section.
A cubic del Pezzo surface is a smooth hypersurface of degree 3 in $\mathbb{P}^3$, \textit{i.e.} the zero set of a cubic polynomial in $\mathbb{P}^3$ that is also smooth. Its automorphism group is always a subgroup of $\PGL(4,\kk)$ (see Lemma \ref{2lemconj40}).
The goal of this section is to classify the 3-elementary non-cyclic subgroups of the automorphism group of cubic del Pezzo surfaces in characteristic not $3$.
The result of this section is the Proposition \ref{resultcubic}.

\medskip

\subsection{Notations}

\begin{defi} \label{defi331} $ $
Let $\mu \in \kk$. We define the following cubic surface:
$$S_{\mu} = \{W^3 + X^3 + Y^3 + Z^3 + \mu X Y Z = 0 \} \subset \mathbb{P}^3$$ 
\end{defi}


\begin{defi} $ $

We define the following $3$-elementary non-cyclic subgroups of $\PGL(4,\kk)$:
\begin{itemize}

\item $G_{31} = \langle [j:j:1:1] , [1:j:j:1] \rangle  \simeq (\mathbb{Z}/3)^2$

\item $G_{32} =  \langle [j:1:1:1] , [1:j:1:1] \rangle   \simeq (\mathbb{Z}/3)^2$

\item $G_{33} =  \langle [j:1:1:1] , [1:j:j^2:1] \rangle   \simeq (\mathbb{Z}/3)^2$

\item $G_{34} =  \langle [j:1:1:1] , [1:j:1:1] , [1:1:j:1] \rangle   \simeq (\mathbb{Z}/3)^3$
\end{itemize}
\end{defi}

\medskip

\subsection{Results}

\begin{prop} \label{resultcubic} $ $

    Let $S$ be a cubic del Pezzo surface, and $G$ a 3-elementary non-cyclic subgroup of automorphisms of $S$. We assume $rk(Pic(S)^G) = 1$.
    Then $G \simeq (\mathbb{Z}/3)^2$ or $G \simeq (\mathbb{Z}/3)^3$. Furthermore:
\begin{enumerate}
\item  
    If  $G \simeq (\mathbb{Z}/3)^3$ then up to conjugation we have $S = \{W^3 + X^3 + Y^3 + Z^3 = 0\}$ and $G$ is the 3-torsion subgroup $G_{34}$ of the diagonal torus of $\PGL(4,\kk)$.
\item
    If $G \simeq (\mathbb{Z}/3)^2$ then up to conjugation we have one of the following:
\begin{itemize}
\item $G= G_{32}$ and  $S = \{W^3 + X^3 + Y^3 + Z^3 = 0\}$.
\item $G=G_{33}$ and $S = S_{\mu}$ where $\mu$ is a scalar such that $\mu^3 \neq -27$.

Let $\mu, \mu' \in \kk$ such that $\mu^3 \neq - 27 \neq \mu'^3$. Then the two pairs $(G,S_{\mu})$ and $(G,S_{\mu'})$ are conjugate by isomorphism if and only if $\mu^3 =  \mu'^3$.
    \end{itemize}
Furthermore these two cases are birationaly distinct. 
\end{enumerate}
\end{prop}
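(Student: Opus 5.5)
Since $G$ acts faithfully on $H^0(S,-K_S)$, we may regard $G$ as a $3$-elementary non-cyclic subgroup of $\PGL(4,\kk)$ preserving $S\subset\PP^3$ (Lemma \ref{2lemconj40}). By Proposition \ref{3elempgl4}, after a linear change of coordinates $G$ is one of $G_{31},G_{32},G_{33},G_{34}$. This already gives $G\simeq(\ZZ/3)^2$ or $(\ZZ/3)^3$. For each case we determine the invariant cubic forms. $G$ lifts to a diagonal abelian subgroup of $\GL(4,\kk)$ (possibly after multiplying generators by cube roots of unity), so the equation $F$ of $S$ is a semi-invariant cubic, i.e. a combination of monomials $W^aX^bY^cZ^d$ with $a+b+c+d=3$ sharing one common character. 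We then require smoothness and impose $\rank\Pic(S)^G=1$, which we compute with Proposition \ref{rankformula}:
\[
\rank\Pic(S)^G+2=\frac{1}{|G|}\sum_{g\in G}\chi(S^g).
\]

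\textbf{Case analysis.}
\begin{itemize}
\item For $G_{34}$, the only monomials with a common character whose zero set is smooth are the pure cubes. After rescaling, $S$ is the Fermat cubic. There the elements $[j:1:1:1]$ etc.\ fix an elliptic curve ($\chi=0$), and the element types $(2,2)$ and $(2,1,1)$ fix finitely many points. Summing gives rank $1$.
\item For $G_{32}=\langle[j:1:1:1],[1:j:1:1]\rangle$, the invariance under the first generator forces $F=W^3+f(X,Y,Z)$, and the second then forces $F=W^3+X^3+h(Y,Z)$. Since $h$ is a binary cubic with distinct roots, it can be diagonalized to $Y^3+Z^3$, giving the Fermat cubic again.
\item For $G_{33}$, the semi-invariants are $W^3$, $X^3,Y^3,Z^3$ and $XYZ$, so $F=W^3+\alpha X^3+\beta Y^3+\gamma Z^3+\delta XYZ$. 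Rescaling gives $S_\mu$, and smoothness is exactly $\mu^3\neq-27$.
\item For $G_{31}$, we expect that either no smooth invariant cubic exists or $\rank\Pic(S)^G\ge 2$. The elements of type $(2,2)$ fix lines or points, and the character sum should give $\ge 2$. This excludes $G_{31}$.
\end{itemize}
In each surviving case we confirm rank $1$ via the fixed-point table. For $G_{32}$ and $G_{33}$ the fixed loci are elliptic curves (from $[j:1:1:1]$), points, or lines, with $\chi$ equal to $0$, the number of points, or $2$ respectively.

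\textbf{Isomorphism classes for $S_\mu$.} The normalizer of $G_{33}$ in $\PGL(4,\kk)$ contains the diagonal scalings $X\mapsto\zeta X$ with $\zeta^3=1$ compensated on $Y,Z$. These send $\mu\mapsto j\mu$, so $\mu^3=\mu'^3$ suffices. For the converse, an isomorphism of pairs normalizes $G_{33}$ in $\PGL(4,\kk)$. It therefore preserves the eigenspace decomposition up to permutation, hence is monomial. Comparing the coefficient of $XYZ$ after renormalizing the cubes then shows that $\mu'=\zeta\mu$ with $\zeta^3=1$.

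\textbf{Birational distinction.} We expect the main obstacle to be showing that $(G_{32},\text{Fermat})$ and $(G_{33},S_\mu)$ are not birationally conjugate. Both pairs are minimal, so the first tool to try is Lemma \ref{ultimatelemma3}, i.e.\ the curves of positive genus fixed by subgroup elements. In both groups $[j:1:1:1]$ fixes an elliptic curve, so this alone may not separate them. We would therefore compare the number of elements of $G$ fixing a non-rational curve: in $G_{32}$ there are four such (type $(3,1)$), in $G_{33}$ two. This is a birational invariant, because fixed curves of positive genus are preserved by birational conjugation.
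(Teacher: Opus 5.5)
Your proposal is correct in substance. For the classification and the isomorphism classes it follows the paper: embedding in $\PGL(4,\kk)$, Proposition~\ref{3elempgl4}, semi-invariant cubics, and Proposition~\ref{rankformula}. For the normalizer the paper computes it in Lemma~\ref{lemmeg33} through the permutation action on the cyclic subgroups. You instead note that the coordinate lines are the simultaneous eigenspaces of $G_{33}$ with pairwise distinct characters, which gives monomiality at once. Comparing coefficients then also handles a permutation moving $W$, since it forces $\mu=\mu'=0$.

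The genuine difference is the birational distinction. The paper observes that $G_{32}$ has three fixed points on the Fermat cubic while $G_{33}$ has none on $S_\mu$, and invokes the Koll\'ar--Szab\'o result that having a fixed point is a birational invariant. You count the elements fixing a curve of positive genus, and this is a valid invariant. Such a curve is not contracted by the blow-downs in a resolution of the conjugating map $\phi$. Hence $g\mapsto \phi g\phi^{-1}$ sends these elements bijectively onto the corresponding elements of the other group. On these surfaces exactly the elements of eigenvalue type $(3,1)$ fix a non-rational curve, namely a smooth plane section. So your invariant is $\nu^4_G((3,1))$, which Lemma~\ref{3elempgl4lemma} already gives as $4$ for $G_{32}$ and $2$ for $G_{33}$. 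Your route needs no external fixed point theorem and refines Lemma~\ref{ultimatelemma3}, which alone cannot separate the two cases, as you noticed. The paper's route is shorter once the citation is granted.

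Two loose ends should be closed.
\begin{itemize}
\item The exclusion of $G_{31}$ is only announced. Its invariant smooth cubics are sums of cubes, so $S$ is the Fermat cubic. There its four elements of type $(2,2)$ fix $6$ points each and its four elements of type $(2,1,1)$ fix $3$ points each. This gives $\rank\Pic(S)^{G_{31}}=(9+24+12)/9-2=3$. In particular your first alternative, that no smooth invariant cubic exists, does not occur.
\item No element of these groups fixes a line of $S$ pointwise. The eigenspace lines are coordinate lines, on which the equations restrict to a sum of two cubes, since $XYZ$ vanishes there. So the lines with $\chi=2$ in your table never arise. For instance, for $G_{32}$ the formula reads $\rank\Pic(S)^{G}+2=(9+4\cdot 0+2\cdot 6+2\cdot 3)/9=3$.
\end{itemize}
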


\begin{proof} $ $

The list of subgroups up to conjugation comes from Proposition \ref{cubicprop111} and Proposition \ref{rkcubic}.

According to Proposition \ref{rkcubic}, $S_0$
has three fixed points under the action of $G_{32}$
and $S_{\mu}$ has no fixed point under the action of $G_{33}$. Hence, using \cite[page 1054-1056 Proposition A.2 ]{kollar}, $(G_{32},S_{0})$
and $(G_{33},S_{\mu})$ are not conjugate by a birational map.
Using Proposition \ref{cubicconj0}, we get the condition on conjugation of couples $(G_{33},S_{\mu})$.
\end{proof}

\medskip

\subsection{A list of representatives up to conjugation of $3$-elementary non-cyclic subgroups of automorphisms of cubic del Pezzo surfaces}
$ $

In this subsection we will give a list of representatives of $3$-elementary non-cyclic subgroups of automorphisms of cubic del Pezzo surfaces. The Proposition \ref{3elempgl4} serves as a foundation for this subsection, since the automorphism group of any cubic del Pezzo surface is a subgroup of $\PGL(4,\kk)$ (see Lemma \ref{2lemconj40}). The result of this subsection is the Proposition \ref{cubicprop111}. We first give a few preliminary lemmas about the smoothness of said surfaces.


\begin{lemme} \label{smoothcubic} $ $
Let $\mu \in \kk$.

The cubic surface $S_{\mu}$ (see Definition \ref{defi331}) is smooth if and only if $\mu^3 \neq -27$.
\end{lemme}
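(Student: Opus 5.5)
The plan is to apply the Jacobian criterion to $F = W^3 + X^3 + Y^3 + Z^3 + \mu XYZ$ and find all singular points explicitly. Since $\car(\kk) \neq 3$, Euler's relation $3F = W F_W + X F_X + Y F_Y + Z F_Z$ implies that $F$ vanishes at any common zero of the partials. So $S_\mu$ is singular exactly when the system
\[
3W^2 = 0,\quad 3X^2 + \mu YZ = 0,\quad 3Y^2 + \mu XZ = 0,\quad 3Z^2 + \mu XY = 0
\]
has a solution in $\PP^3$.

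The first equation forces $W = 0$. If $\mu = 0$, we get $X = Y = Z = 0$, so the Fermat surface is smooth; this agrees with $0 \neq -27$. Assume now $\mu \neq 0$.

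First suppose one of $X, Y, Z$ is zero, say $X = 0$. Then the second equation gives $YZ = 0$, and the remaining two equations force both $Y$ and $Z$ to vanish, which is impossible. The other coordinates are handled the same way by symmetry. Hence $XYZ \neq 0$.

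Multiplying the three equations by $X, Y, Z$ respectively gives
\[
3X^3 = 3Y^3 = 3Z^3 = -\mu XYZ.
\]
So $X^3 = Y^3 = Z^3$. Up to scaling we may take $X = 1$, $Y = \omega$, $Z = \omega'$ with $\omega, \omega'$ cube roots of unity. Then $3 = -\mu\,\omega\omega'$, so $\mu = -3(\omega\omega')^{-1}$ and $\mu^3 = -27$.

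Conversely, if $\mu^3 = -27$, write $\mu = -3\eta$ with $\eta^3 = 1$. The point $[0:1:1:\eta^{-1}]$ satisfies all four equations: for instance $3 + \mu\eta^{-1} = 3 - 3 = 0$, and likewise $3\eta^{-2} + \mu = 3\eta - 3\eta = 0$. So $S_\mu$ is singular.

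One detail needs care. If $\car(\kk) = 2$, the equation $3W^2 = 0$ still gives $W = 0$ because $3 \neq 0$, so the argument goes through unchanged. The only place the hypothesis on the characteristic is used is that $3$ is invertible, which holds since $\car(\kk) \neq 3$. There is no serious obstacle; the step requiring the most care is ruling out solutions where some coordinate vanishes.
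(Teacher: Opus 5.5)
Your proof is correct and follows essentially the same route as the paper: the Jacobian criterion forces $W=0$, normalizing a nonzero coordinate to $1$ gives $Y^3=Z^3=1$ and hence $\mu^3=-27$, and for the converse you exhibit the same singular point $[0:1:1:\eta^{-1}]$. Your use of Euler's relation, and your explicit handling of $\mu=0$ and of a vanishing coordinate among $X,Y,Z$, spell out details that the paper's ``by symmetry we can assume $X=1$'' leaves implicit.
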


\begin{proof} $ $
Let $P(W,X,Y,Z) = W^3 + X^3 + Y^3 + Z^3 + \mu X Y Z$.
We have: 
$$\nabla P(W,X,Y,Z)=\left[\begin{smallmatrix}
3 W^2\\
3 X^2 + \mu YZ \\
3 Y^2 + \mu XZ \\
3 Z^2 + \mu XY
\end{smallmatrix}\right]
$$

Let's assume $
\nabla P(W,X,Y,Z) = \left[\begin{smallmatrix}
0\\
0 \\
0 \\
0
\end{smallmatrix}\right] $ for some $[W:X:Y:Z] \in S$. Then $W=0$ and by symmetry we can assume $X=1$. Then $ \left\{
    \begin{array}{lll}
        3 + \mu Y Z & = & 0 \\
        3 Y^2 + \mu Z & = & 0 \\
        3 Z^2 + \mu Y & = & 0
    \end{array}
\right.
$ then $ \left\{
    \begin{array}{lll}
         \mu Y Z & = & - 3 \\
        3 Y^2  - \dfrac{3}{Y} & = & 0 \\
        3 Z^2  - \dfrac{3}{Z} & = & 0
    \end{array}
\right.
$ then $ \left\{
    \begin{array}{lll}
         \mu Y Z & = & - 3 \\
        Y^3 & = & 1 \\
        Z^3  & = & 1
    \end{array}
\right.
$ then $\mu^3=-27$.

Conversely, let's assume $\mu = -3 j^n$ where $n \in \mathbb{Z}$. Then $[0:1:1:j^{-n}]$ is a singular point of the surface.
\end{proof}

\begin{lemme} \label{lemmacubic1} $ $
Let $g \in \GL(4,\kk)$ and $\overline{g}$ its class in $\PGL(4,\kk)$.
Let $P \in \kk[W,X,Y,Z]_3$ be a homogeneous cubic polynomial. Let $S = \{ P = 0\} \subset \mathbb{P}^3$.
Then:
$$\overline{g}|_{S} \in \Aut(S) \Leftrightarrow \exists \lambda \in \kk^\times, P \circ g = \lambda P$$

\end{lemme}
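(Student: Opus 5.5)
The key input is that a smooth cubic surface is projectively normal and that a hypersurface has exactly one defining equation up to scalars. So I will translate ``$\overline{g}$ preserves $S$'' into ``$P\circ g$ and $P$ cut out the same hypersurface'' and then compare ideals. Throughout I take $P$ to be the defining equation of $S$. If $P$ had a repeated factor, one should replace it by its reduced part, but for the surfaces in this section (smooth cubics, e.g.\ the $S_\mu$ of Lemma~\ref{smoothcubic}) $P$ is irreducible, and I will assume this.

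For the direction ($\Leftarrow$), suppose $P\circ g=\lambda P$ with $\lambda\in\kk^\times$. For a point $v$ with $P(v)=0$ we get $P(gv)=\lambda P(v)=0$, so $\overline{g}$ sends $S$ into $S$. Applying the same computation to $g^{-1}$, using $P\circ g^{-1}=\lambda^{-1}P$, shows that $\overline{g}^{-1}$ also maps $S$ into $S$. Hence $\overline{g}|_S$ is a bijective morphism $S\to S$ whose inverse is again the restriction of a linear map, so it lies in $\Aut(S)$.

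For the direction ($\Rightarrow$), suppose $\overline{g}(S)=S$. Recall that $\overline{g}$ acts as $[v]\mapsto[gv]$, so $v\in S$ exactly when $gv\in S$, i.e.\ when $P(gv)=0$. Thus the zero loci of $P$ and $P\circ g$ in $\PP^3$ coincide. Both polynomials are homogeneous of degree $3$, and $P\circ g$ is irreducible because $P$ is and $g$ is an invertible linear change of coordinates. By the Nullstellensatz, over the algebraically closed field $\kk$, two irreducible polynomials with the same zero set generate the same radical ideal, and that ideal is prime. Hence $P\mid P\circ g$. Since the two have the same degree, $P\circ g=\lambda P$ for some $\lambda\in\kk$, and $\lambda\neq0$ because $P\circ g\neq0$.

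The only delicate point is the irreducibility (or reducedness) of $P$. If $P$ were not reduced, the zero set would only determine the radical, and one would have to run the argument with the reduced equation. For a smooth cubic surface this is automatic: a reducible cubic surface is singular along the intersection of its components. I therefore expect no real obstacle beyond recording this remark.
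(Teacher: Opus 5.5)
Your argument is correct and is exactly the routine verification the paper leaves as ``Trivial'': ($\Leftarrow$) follows by applying the identity to $g$ and $g^{-1}$, and ($\Rightarrow$) follows from the Nullstellensatz together with the irreducibility of $P$. That irreducibility is automatic because $S$ is a smooth cubic, which is the only setting where the lemma is used, e.g.\ in Lemma~\ref{lemmacubic2}.

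Two minor remarks. First, the appeal to projective normality in your opening sentence is never actually used. Second, for a non-reduced $P$ the set-theoretic statement genuinely fails. For instance, with $P=W^2X$ the swap $W\leftrightarrow X$ preserves the zero set but $P\circ g\neq\lambda P$, so passing to the reduced equation would not recover the stated conclusion. Your irreducibility hypothesis is therefore really needed, not just convenient.
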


\begin{proof} $ $
Trivial.
\end{proof}

\begin{defi} $ $
We define the following elements of $\PGL(4,\kk)$:
$$g_1 = [j:1:1:1] , \quad g_2 =[1:j:j:1] , \quad  g_3 = [1:j:j^2:1]$$
\end{defi}

\begin{lemme} \label{lemmacubic2} $ $
Let $P = \sum_{\alpha+\beta+\gamma+\delta=3} p_{\alpha,\beta,\gamma,\delta} W^\alpha X^\beta Y^\gamma Z^\delta \in \kk[W,X,Y,Z]_3$. \newline
Let $S = \{ P = 0 \} \subset \mathbb{P}^3$. We assume $S$ is a cubic del Pezzo surface.
\begin{enumerate}
\item If $g_1 |_{S} \in \Aut(S)$, then $P = p_{3,0,0,0} W^3 + Q(X,Y,Z)$ where $Q$ is a cubic homogeneous polynomial.
\item If $g_2 |_{S} \in \Aut(S)$, then $P = Q_1(X,Y) + Q_2(W,Z)$ where $Q_1, Q_2$ are cubic homogeneous polynomials.
\item If $g_3 |_{S} \in \Aut(S)$, then $P = Q_1(W,Z) + p_{0,3,0,0} X^3 + XY Q_2(W,Z) + p_{0,0,3,0} Y^3$ or \newline $X Q_3(W,Z) + p_{0,2,1,0} X^2 Y + Y^2 Q_4 (W,Z)$ or $X^2 Q_5(W,Z) + Y Q_6(W,Z) + p_{0,1,2,0} X Y^2$ where $Q_1 \in \kk[X,Y]_3$, $Q_3, Q_6 \in \kk[W,Z]_2$, $Q_2, Q_4, Q_5 \in \kk[W,Z]_1$.
\end{enumerate}
\end{lemme}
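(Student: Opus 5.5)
The plan rests on Lemma \ref{lemmacubic1}. If $\overline{g}|_S \in \Aut(S)$ for a diagonal $g=[d_W:d_X:d_Y:d_Z]$, then $P\circ g=\lambda P$ for some $\lambda\in\kk^\times$. Since $g$ is diagonal, each monomial $W^\alpha X^\beta Y^\gamma Z^\delta$ is an eigenvector of $P\mapsto P\circ g$, with eigenvalue $d_W^\alpha d_X^\beta d_Y^\gamma d_Z^\delta$. Hence $P$ is supported on the monomials whose eigenvalue equals $\lambda$. Since $g^3$ is scalar, $\lambda$ is determined up to the normalisation by a power of $j$. So in each case I will list the three eigenspaces of the cubic monomials under the character $(\alpha,\beta,\gamma,\delta)\mapsto$ (weight mod $3$). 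Then I will use smoothness of $S$ to decide which eigenspace can contain $P$.

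For $g_1=[j:1:1:1]$, the weight of a monomial is $\alpha \bmod 3$, so there are three candidate eigenspaces.
\begin{itemize}
\item $\alpha\in\{0,3\}$ gives $p_{3,0,0,0}W^3+Q(X,Y,Z)$, which is the claimed form.
\item $\alpha=1$ forces $P=W\,Q_2(X,Y,Z)$, which is reducible, hence singular.
\item $\alpha=2$ forces $P=W^2 L(X,Y,Z)$, again reducible, and non-reduced.
\end{itemize}
Since $S$ is a smooth (irreducible) cubic surface, only the first case survives.

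For $g_2=[1:j:j:1]$, the weight is $(\beta+\gamma) \bmod 3$. Weight $0$ means $\beta+\gamma\in\{0,3\}$, giving $Q_2(W,Z)+Q_1(X,Y)$. Weight $1$ means $\beta+\gamma=1$, and weight $2$ means $\beta+\gamma=2$. In those two cases $P$ lies in the ideal $(X,Y)^{1}$ or $(W,Z)^{1}$ respectively, with the other pair appearing to degree at least $2$. To rule them out, I will show such a surface contains a line along which it is singular. For example, if $\beta+\gamma=2$ for every monomial, then $P\in(X,Y)^2$, so the line $\{X=Y=0\}$ is a singular line. In the weight $1$ case, $P\in (W,Z)^2$, so the line $\{W=Z=0\}$ is singular. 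This settles assertion (2).

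For $g_3=[1:j:j^2:1]$, the weight is $(\beta+2\gamma) \bmod 3=(\beta-\gamma)\bmod 3$. I will enumerate the monomials in each class.
\begin{itemize}
\item Weight $0$ gives monomials in $W,Z$ only, together with $X^3$, $Y^3$ and $XY\cdot(W,Z)$. This is the first form, and the $Q_1$ there should read $\kk[W,Z]_3$.
\item Weight $1$ gives $X\cdot(W,Z)^2$, $X^2Y$ and $Y^2\cdot(W,Z)$, the second form.
\item Weight $2$ gives $Y\cdot(W,Z)^2$, $XY^2$ and $X^2\cdot(W,Z)$, the third form.
\end{itemize}
Here all three cases are allowed, so no smoothness argument is needed and the statement just records the trichotomy.

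The only step needing care is the exclusion of the non-weight-$0$ classes in parts (1) and (2) via singularity. I expect that to be a short gradient check at the relevant point or line; the rest is bookkeeping of monomial weights.
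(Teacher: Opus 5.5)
Your proposal is correct and follows the paper's proof. The diagonal element acts on cubic monomials by characters, so $P$ must lie in one eigenspace. Smoothness then discards the non-invariant eigenspaces in (1) and (2), while (3) only records the three eigenspaces (with $Q_1\in\kk[W,Z]_3$, as you rightly note). The one cosmetic difference is the singularity check. The paper observes that in the excluded cases some variable occurs only to degree $\le 1$, so the corresponding coordinate point is singular; you use reducibility and singular lines instead, and both arguments work.
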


\begin{proof} $ $
Let $g \in \Aut(S)$. The automorphism $g$ acts linearly on the space of homogeneous polynomials of degree three and the polynomial $P$ needs to be an eigenvector. Hence $P$ is contained in an eigenspace. The eigenspaces are the following:

$$\begin{tabular}{ | c | c |}
\hline
  $g$ & eigenspaces  \\
 \hline
     & $\{ \alpha W^3 + Q(X,Y,Z) | \alpha \in \kk, Q \in \kk [X,Y,Z]_3 \}$ \\
    $g_1$ & $\{ W^2 . Q(X,Y,Z) |  Q \in \kk [X,Y,Z]_1 \}$ \\
    & $\{ W . Q(X,Y,Z) | \ Q \in \kk [X,Y,Z]_2 \}$ \\
\hline    
& $\{ P(X,Y)+ Q(W,Z) | P, Q \in \kk [X,Y,]_3 \}$ \\
    $g_2$ & $\{ X . P(W,Z)+ Y . Q(W,Z) | P , Q \in \kk [W,Z]_2 \}$ \\
    & $\{ W . P(X,Y) + Z . Q(X,Y) | P , Q \in \kk [X,Y]_2 \}$ \\
\hline
     & $\{ P(W,Z) + \alpha X^3 + XY . Q(W,Z) + \beta Y^3 | \alpha, \beta \in \kk, P \in \kk [W,Z]_3 , Q \in \kk[W,Z]_1 \}$ \\
    $g_3$ & $\{ X . P(W,Z) + \alpha X^2 Y + Y^2 . Q(W,Z) |  \alpha \in \kk, P \in \kk [W,Z]_2 , Q \in \kk [W,Z]_1 \}$ \\
    & $\{ X^2 . P(W,Z) + Y . Q(W,Z) + \alpha X Y^2 | \alpha \in \kk, P \in \kk[W,Z]_1, Q \in \kk [W,Z]_2 \}$ \\
\hline
 \end{tabular}$$

We leave the calculations to the reader.
Furthermore, the smoothness of the surface implies that each variable comes with degree at least two, hence our result.
\end{proof}


\begin{prop} \label{propocubic1} $ $
Let $P = \sum_{\alpha+\beta+\gamma+\delta=3} p_{\alpha,\beta,\gamma,\delta} W^\alpha X^\beta Y^\gamma Z^\delta \in \kk[W,X,Y,Z]_3$. Let $S = \{ P = 0 \} \subset \mathbb{P}^3$. We assume $S$ is a cubic del Pezzo surface.
\begin{enumerate}
\item If $G_{31} |_{S} \subset \Aut(S)$, then $P = p_{3,0,0,0} W^3 + p_{0,3,0,0} X^3 + p_{0,0,3,0} Y^3 + p_{0,0,0,3} Z^3$.
\item If $G_{32} |_{S} \subset \Aut(S)$, then $P = p_{3,0,0,0} W^3 + p_{0,3,0,0} X^3 + Q(Y,Z)$ where $Q$ is a cubic homogeneous polynomial.
\item If $G_{33} |_{S} \subset \Aut(S)$, then $P = p_{3,0,0,0} W^3 + p_{0,3,0,0} X^3 + p_{0,0,3,0} Y^3 + p_{0,0,0,3} Z^3 + p_{0,1,1,1} X Y Z$.

\item If $G_{34} |_{S} \subset \Aut(S)$, then $P = p_{3,0,0,0} W^3 + p_{0,3,0,0} X^3 + p_{0,0,3,0} Y^3 + p_{0,0,0,3} Z^3$.
\end{enumerate}
\end{prop}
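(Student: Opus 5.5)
Each group $G_{3n}$ is generated by diagonal elements, so the plan is to intersect the eigenspace constraints of Lemma \ref{lemmacubic2} for the generators of each group. I will use Lemma \ref{lemmacubic1} throughout: $P$ must be an eigenvector for each generator. I will then use smoothness to discard the eigenspaces in which some variable appears with degree at most one in every monomial, exactly as at the end of the proof of Lemma \ref{lemmacubic2}. The key observation is that a smooth cubic must contain, for each variable $V$, a monomial of the form $V^3$ or $V^2 V'$. If all monomials are divisible by a fixed other variable, or have degree at most one in $V$, then the coordinate point of $V$ is singular. So $P$ lies in an eigenspace that contains such "almost pure" monomials for every variable.

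For (4), $G_{34}$ contains $[j:1:1:1]$, $[1:j:1:1]$, $[1:1:j:1]$. By Lemma \ref{lemmacubic2}(1) applied to each of them after permuting coordinates, $P$ is $aW^3+Q(X,Y,Z)$, $bX^3+Q'(W,Y,Z)$ and $cY^3+Q''(W,X,Z)$ simultaneously. Intersecting these forces $P=aW^3+bX^3+cY^3+Q'''(Z)$, which gives the claim.

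For (2), the same argument uses only $[j:1:1:1]$ and $[1:j:1:1]$. Intersecting yields $aW^3+bX^3+Q(Y,Z)$.

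For (1), write $G_{31}=\langle [j:j:1:1],[1:j:j:1]\rangle$. It contains $[j:j:1:1]\cdot[1:j:j:1]^{2}=[j:1:j^2:j^2]\sim[j^2:j:1:1]$ up to scalar. It is easier to note that the group contains, up to scalars and squares, $[j:j:1:1]$, $[1:j:j:1]$ and $[j:1:j:1]$. Each of these is $g_2$ up to a permutation of coordinates, so Lemma \ref{lemmacubic2}(2) applies to each. The first gives $P=Q_1(W,X)+Q_2(Y,Z)$, the second gives $P=Q_3(X,Y)+Q_4(W,Z)$, and their intersection is spanned by pure cubes. Indeed, a monomial like $W^2X$ lies in $Q_1$ but not in $Q_3+Q_4$. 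The relevant eigenspace is the invariant one, since the pure cubes must appear by smoothness.

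For (3), $G_{33}=\langle g_1,g_3\rangle$. Part (1) of Lemma \ref{lemmacubic2} gives $P=aW^3+Q(X,Y,Z)$. The step needing care is part (3), which offers three eigenspaces for $g_3$. The second and third eigenspaces contain no $W^3$ and no $Z^3$ monomial, and the smoothness remark excludes them. More precisely, in the second eigenspace the variable $Z$ only appears in $XP(W,Z)$ and $Y^2Q(W,Z)$, so the point $[0:0:0:1]$ is singular. So $P=Q_1(W,Z)+bX^3+XYQ_2(W,Z)+cY^3$. Intersecting with $aW^3+Q(X,Y,Z)$ kills every monomial containing $W$ except $W^3$. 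This leaves $aW^3+dZ^3+bX^3+eXYZ+cY^3$, as claimed.

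I expect no real obstacle beyond making the smoothness exclusion of eigenspaces rigorous. I would do this by checking, for each discarded eigenspace, that some coordinate point is singular.
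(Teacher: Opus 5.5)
Your strategy matches the paper's: intersect the eigenspace constraints of Lemma~\ref{lemmacubic2} for generators of each group, then discard the remaining eigenspaces by smoothness. Parts (2) and (4) are fine. Part (1) is also fine, but two side remarks in it are wrong and should be deleted.
\begin{itemize}
\item The product is $[j:j:1:1]\cdot[1:j:j:1]^2=[j:1:j^2:1]$.
\item $[j:1:j:1]\notin G_{31}$. The elements of $G_{31}$ are $[j^m:j^{m+n}:j^n:1]$, and $\langle[j:j:1:1],[j:1:j:1]\rangle$ is one of the \emph{other} conjugates listed in Lemma~\ref{3elempgl4lemma}.
\end{itemize}
Since you only use the two generators, whose invariant eigenspaces already meet in the span of $W^3,X^3,Y^3,Z^3$, nothing breaks.

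The genuine gap is in (3), where you discard the second and third eigenspaces of $g_3$ before using $g_1$. Your justification fails.
\begin{itemize}
\item In $XA(W,Z)+\alpha X^2Y+Y^2B(W,Z)$ the monomial $XZ^2$ may have a nonzero coefficient $c$. Then $\partial P/\partial X$ at $[0:0:0:1]$ equals $c\neq 0$, so that point need not be singular.
\item In fact every variable has a monomial of the form $V^2V'$ in this eigenspace ($XW^2$, $X^2Y$, $Y^2W$, $XZ^2$). So your own criterion excludes nothing here, and ``no $W^3$, no $Z^3$'' says nothing about smoothness by itself. This is exactly why Lemma~\ref{lemmacubic2}(3) keeps all three options.
\item Similarly, ``all monomials divisible by another variable $V'$'' does not make the coordinate point of $V$ singular if $V^2V'$ occurs. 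Only the resulting reducibility gives singularity.
\end{itemize}

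The repair is to intersect with $aW^3+Q(X,Y,Z)$ first. The second and third eigenspaces of $g_3$ then leave only $c_1XZ^2+c_2X^2Y+c_3Y^2Z$, resp.\ $c_1X^2Z+c_2YZ^2+c_3XY^2$. Neither involves $W$, so $[1:0:0:0]$ is a singular point of $S$ (a cone).

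Alternatively, without using $g_1$: along the line $X=Y=0$ the gradient of $P$ is $(0,A(W,Z),0,0)$, resp.\ $(0,0,D(W,Z),0)$, with $A,D$ binary quadratic forms. So it vanishes at some point of that line, which lies on $S$.

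With this fix, intersecting the first $g_3$-eigenspace with the $g_1$-eigenspace gives exactly the stated form.
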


\begin{proof} $ $
This is a direct consequence of Lemma \ref{lemmacubic2}.
\end{proof}

\begin{prop} \label{propocubic2} $ $
Let  $S \subset \mathbb{P}^3$ be a cubic del Pezzo surface.
\begin{enumerate}
\item Let $n \in \{1,2,4\}$ such that $G_{3n} |_{S} \subset \Aut(S)$. Then there exists $\Phi \in \PGL(4,\kk)$ such that $\Phi G_{3n} \Phi^{-1} = G_{3n}$ and such that $\Phi(S) = \{ W^3 + X^3 + Y^3 + Z^3 = 0\} \subset \mathbb{P}^3$.
\item If $G_{33} |_{S} \subset \Aut(S)$, then  there exists $\Phi \in \PGL(4,\kk)$ such that $\Phi G_{33} \Phi^{-1} = G_{33}$ and such that $\Phi(S) = \{ W^3 + X^3 + Y^3 + Z^3 + \mu X Y Z= 0\} \subset \mathbb{P}^3$ where $\mu^3 \neq -27$ 
\end{enumerate}
\end{prop}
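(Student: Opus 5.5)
We start from Proposition \ref{propocubic1}, which already gives the shape of the defining polynomial $P$ once $G_{3n}|_S \subset \Aut(S)$. In each case we look for $\Phi$ among the diagonal matrices of $\PGL(4,\kk)$. Diagonal matrices commute with the diagonal group $G_{3n}$, so the condition $\Phi G_{3n}\Phi^{-1} = G_{3n}$ holds automatically. The work therefore reduces to normalising coefficients, with one non-diagonal exception in the case $G_{32}$.

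For $n \in \{1,4\}$ we have $P = aW^3 + bX^3 + cY^3 + dZ^3$. Smoothness (the gradient is $(3aW^2,\dots)$ and $\car(\kk)\neq 3$) forces $abcd \neq 0$. Since $\kk$ is algebraically closed, we rescale each variable by a cube root of the inverse coefficient and obtain the Fermat cubic.

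For $n=2$ we have $P = aW^3 + bX^3 + Q(Y,Z)$. Smoothness again gives $ab\neq0$, and it also forces the binary cubic $Q$ to have three distinct roots: a repeated root would give a singular point with $W=X=0$. Rescaling $W,X$ normalises the first two terms.
\begin{itemize}
\item We then need a linear change in $(Y,Z)$ taking $Q$ to $Y^3+Z^3$. A binary cubic with distinct roots is $\GL_2$-equivalent to $Y^3+Z^3$, since $\PGL_2$ is triply transitive on $\PP^1$ and the roots of $Y^3+Z^3$ are distinct when $\car(\kk)\neq 3$; the scalar is absorbed by a cube root.
\item This $\Phi$ is block diagonal, acting trivially on $W,X$ and by some $A\in\GL_2$ on $(Y,Z)$. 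It commutes with $[j:1:1:1]$ and $[1:j:1:1]$, both of which act as scalars on the $(Y,Z)$ block. So $G_{32}$ is preserved even though $\Phi$ is not diagonal.
\end{itemize}

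For $n=3$ we have $P = aW^3+bX^3+cY^3+dZ^3+eXYZ$. The smoothness check is the same as in Lemma \ref{smoothcubic}: looking at the points with $X,Y,Z$ individually zero gives $abcd\neq0$. Rescaling each variable by $\lambda_i$ with $\lambda_i^3$ equal to the inverse coefficient yields $W^3+X^3+Y^3+Z^3+\mu XYZ$ with $\mu = e\lambda_X\lambda_Y\lambda_Z$. This $\Phi$ is diagonal, so it normalises $G_{33}$. Smoothness of the image together with Lemma \ref{smoothcubic} gives $\mu^3\neq-27$.

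The main obstacle is the $G_{32}$ case. There the normalising map is not diagonal, and we must check both that smoothness forces $Q$ to have distinct roots and that the block change commutes with the group. Everything else is routine rescaling.
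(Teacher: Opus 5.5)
Your proposal is correct and follows essentially the paper's proof. Both arguments take the shape of $P$ from Proposition \ref{propocubic1}, use smoothness to make the relevant coefficients nonzero, and normalise by a diagonal rescaling with cube roots, with Lemma \ref{smoothcubic} giving $\mu^3\neq-27$; for $G_{32}$ both use triple transitivity of $\PGL(2,\kk)$ on the three simple roots of $Q$, via a change of the $(Y,Z)$ coordinates that commutes with $G_{32}$. Your version is slightly more explicit than the paper's, for instance in justifying that $Q$ has simple roots and that the roots of $Y^3+Z^3$ are distinct because $\car(\kk)\neq 3$.
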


\begin{proof} $ $

\begin{itemize}
\item If $n=1,3,4$: 
Using Proposition \ref{propocubic1},
there exist $p_{3,0,0,0}$,  $p_{0,3,0,0}$, $p_{0,0,3,0}$, $p_{0,0,0,3}$, $p_{0,1,1,1}$ $\in \kk$ such that $S = \{ p_{3,0,0,0} W^3 + p_{0,3,0,0} X^3 + p_{0,0,3,0} Y^3 + p_{0,0,0,3} Z^3 + p_{0,1,1,1} X Y Z = 0 \}$, and with $p_{0,1,1,1} = 0$ if $n \neq 3$.
Since $\kk = \overline{\kk}$, we can define $q_1 , q_2, q_3 , q_4 \in \kk$ such that $q_1^3 = p_{3,0,0,0} , q_2^3 = p_{0,3,0,0} , q_3^3 = p_{0,0,3,0} , q_4^3 = p_{0,0,0,3} $.
Since $S$ is smooth, we get that $p_{3,0,0,0}$, $p_{0,3,0,0}$, $p_{0,0,3,0}$, $p_{0,0,0,3}$ $\neq 0$.
Hence the following map: $$\begin{array}{ccccc}
\Phi  & : & \mathbb{P}^3 & \to & \mathbb{P}^3 \\
 & & [W:X:Y:Z] & \mapsto & [q_1W: q_2 X: q_3 Y:q_4 Z] \\
\end{array}$$ is a well-defined automorphism.

We observe that $\Phi(S) = \{ W^3 + X^3 + Y^3 + Z^3 + \dfrac{p_{0,1,1,1}}{q_2 q_3 q_4} X Y Z= 0 \}$. If $n \neq 3$ we have our result.
If $n=3$, we define $\mu = \dfrac{p_{0,1,1,1}}{q_2 q_3 q_4} \in \kk$. According to Lemma \ref{smoothcubic}, we have $\mu^3 \neq -27$, hence our result.

\item If $n=2$:
Using Proposition \ref{propocubic1},
there exist $p_{3,0,0,0} ,  p_{0,3,0,0} \in \kk$ and $Q \in \kk[Y,Z]_3$ such that $S = \{ p_{3,0,0,0} W^3 + p_{0,3,0,0} X^3 + Q(Y,Z) = 0 \}$.
We observe that since $S$ is smooth, $Q$ has 3 distinct roots.
The action of $\PGL(2,\kk)$ on $\mathbb{P}^1$ is $3$-transitive,
hence there exists an automorphism $\Theta \in \PGL(4,\kk)$ acting only on $Y$ and $Z$, thus preserving $G_{32}$, such that
$\Theta (S) = \{ p_{3,0,0,0} W^3 + p_{0,3,0,0} X^3 + Y^3 + Z^3 = 0\}$.
Since $\kk = \overline{\kk}$, we can define $q_1 , q_2 \in \kk$ such that $q_1^3 = p_{3,0,0,0} , q_2^3 = p_{0,3,0,0}$.
Since $S$ is smooth, we get that $p_{3,0,0,0} ,  p_{0,3,0,0} \neq 0$.
Hence the following map:
$$\begin{array}{ccccc}
\Phi  & : & \mathbb{P}^3 & \to & \mathbb{P}^3 \\
 & & [W:X:Y:Z] & \mapsto & [q_1W: q_2 X:  Y: Z] \\
\end{array}$$ is a well-defined automorphism.

We observe that $\Phi( \Theta(S)) = \{ W^3 + X^3 + Y^3 + Z^3 = 0 \}$.
\end{itemize}
\end{proof}

\begin{prop} \label{cubicprop111} $ $
Let $S$ be a cubic del Pezzo surface.
Let $G$ be a $3$-elementary non-cyclic subgroup of automorphisms of $S$.
Then:
\begin{enumerate}
\item $G \simeq (\mathbb{Z}/3)^2$ or $G \simeq (\mathbb{Z}/3)^3$.
\item Up to isomorphism we have: $$(G,S) \in \left\{ (G_{31} , S_0) , (G_{32} , S_0) , (G_{33} , S_{\mu})  , (G_{34} , S_0) | \mu \in \kk , \mu^{3} \neq -27 \right\}.$$
\end{enumerate}
\end{prop}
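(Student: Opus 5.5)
The plan is to reduce to subgroups of $\PGL(4,\kk)$ and then use the earlier normal-form results. By Lemma \ref{2lemconj40}, $\Aut(S)$ embeds in $\PGL(4,\kk)$ via the anticanonical embedding $S \subset \mathbb{P}^3$, so $G$ is a $3$-elementary non-cyclic subgroup of $\PGL(4,\kk)$.

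Since $3 \nmid 4$ and $\car(\kk)\neq 3$, Proposition \ref{prop3} lets us diagonalize $G$. Its rank is then at most $3$, because the $3$-torsion of the diagonal torus of $\PGL(4,\kk)$ is $(\ZZ/3)^3$. This proves assertion (1). For assertion (2), Proposition \ref{3elempgl4} says that, after a projective change of coordinates (which replaces $S$ by an isomorphic cubic surface), $G$ is exactly one of $G_{31},G_{32},G_{33},G_{34}$.

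Next we apply Proposition \ref{propocubic2} to the pair. When $G=G_{3n}$ with $n\in\{1,2,4\}$, there is $\Phi\in\PGL(4,\kk)$ normalizing $G_{3n}$ with $\Phi(S)=S_0$. When $G=G_{33}$, there is $\Phi$ normalizing $G_{33}$ with $\Phi(S)=S_\mu$ for some $\mu$ with $\mu^3\neq -27$. Since $\Phi G_{3n}\Phi^{-1}=G_{3n}$, the isomorphism $\Phi\colon S\to\Phi(S)$ conjugates $(G,S)$ to $(G_{3n},S_0)$ or to $(G_{33},S_\mu)$. Conversely, each listed pair really occurs: $G_{31},G_{32},G_{34}$ consist of diagonal elements with cube-root-of-unity entries and so preserve the Fermat cubic. $G_{33}$ preserves $S_\mu$ because $[1:j:j^2:1]$ multiplies $XYZ$ by $j^3=1$. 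Smoothness of $S_\mu$ is Lemma \ref{smoothcubic}.

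The step needing most care is hidden in Proposition \ref{propocubic2}, namely that the normalizing change of coordinates preserves the group and not just the surface. For $n=2$ this is the $3$-transitivity step acting only on $(Y,Z)$. Beyond that, the main point to check is that the case split produced by Proposition \ref{3elempgl4} is exhaustive. Its conjugation is in $\PGL(4,\kk)$, so we transport $S$ along with $G$, which is harmless since only the isomorphism class of the pair matters.
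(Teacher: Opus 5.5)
Your proposal is correct and follows the paper's own proof. That proof is exactly the combination of Proposition~\ref{3elempgl4} (putting $G$ in normal form inside $\PGL(4,\kk)$) with Proposition~\ref{propocubic2} (a coordinate change normalizing $G_{3n}$ that brings $S$ to $S_0$ or $S_\mu$). Your explicit use of Proposition~\ref{prop3} for assertion (1) and your check that each listed pair actually occurs are harmless additions.
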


\begin{proof} $ $
This is a combination of Proposition \ref{3elempgl4} and of Proposition \ref{propocubic2}.
\end{proof}


\subsection{The Picard rank of $3$-elementary non-cyclic subgroups of automorphisms of cubic del Pezzo surfaces}
$ $

The goal of this subsection is to give the rank of the Picard invariant of the subgroups given in the Proposition \ref{cubicprop111}, in order to exclude the subgroups with Picard invariant not equal to 1.
We will also give the fixed points of these subgroups, in order then to use \cite[page 1054-1056 Proposition A.2 ]{kollar} to study the conjugation between these subgroups.
Recall that $S_{\mu}$ is defined as in Definition \ref{defi331}.

\begin{prop} \label{rkcubic} $ $
Let $\mu \in \kk$ such that $\mu^3 \neq -27$.
\begin{enumerate}
\item We have: $rk(Pic(S_0)^{G_{32}}) = rk(Pic(S_0)^{G_{34}}) = rk(Pic(S_{\mu})^{G_{33}}) = 1$.
\item We have: $rk(Pic(S_0)^{G_{31}}) = 3$.
\item The surface $S_0$ has three fixed points under the action of $G_{32}$.
\item The surface $S_{\mu}$ has no fixed point under the action of $G_{33}$.
\end{enumerate}
\end{prop}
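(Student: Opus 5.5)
For assertions (1) and (2) I will use Proposition \ref{rankformula}, exactly as in the proof of Proposition \ref{quarticpicard}:
\[
\rank \Pic(S)^G + 2 = \frac{1}{|G|}\sum_{g\in G}\chi(S^g),
\]
with $\chi(S)=9$ for the identity, since a cubic surface is $\PP^2$ blown up in six points. Every non-trivial element of $G_{31},\dots,G_{34}$ is diagonal of order $3$, so I only need the Euler characteristic of the fixed locus of each type of diagonal element. Up to permuting coordinates and taking powers, these types are:
\begin{itemize}
\item $[j:1:1:1]$, whose fixed locus is the plane section $\{W=0\}\cap S$ (a smooth plane cubic, $\chi=0$) together with the point $[1:0:0:0]$ if it lies on $S$. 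For a surface containing $W^3$ it does not, so $\chi=0$.
\item $[j:j:1:1]$, which fixes the two lines $\{W=X=0\}$ and $\{Y=Z=0\}$. Each meets $S$ in $3$ points, so $\chi=6$.
\item $[1:j:j^2:1]$ and similar elements, which fix the line $\{X=Y=0\}$ (meeting $S$ in $3$ points) and the points $[0:1:0:0]$ and $[0:0:1:0]$. These two points are not on $S$ because $X^3,Y^3$ appear, so $\chi=3$.
\end{itemize}

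For each group I will list its $8$ or $26$ non-trivial elements by their eigenvalue pattern, which is essentially the map $\nu^4_G$ computed in Lemma \ref{3elempgl4lemma}. For $S_\mu$ I must check that the $XYZ$ term does not create extra fixed points, for example that $[0:1:0:0]\notin S_\mu$; it does not, since $X^3$ has coefficient $1$.

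\begin{itemize}
\item $G_{31}$ on $S_0$: it has four elements of type $(2,1,1)$, namely the $[j:1:1:1]$-like ones, and four of type $(2,2)$. The sum is $9+4\cdot0+4\cdot6=33$, so $\rank\Pic(S_0)^{G_{31}}=33/9-2$. This is not an integer, so the type assignment is wrong. I will recheck it: the elements of $G_{31}$ of pattern $(2,1,1)$ are of the form $[j:1:j^2:1]$, each fixing a line ($3$ points) and two points off $S_0$, giving $\chi=3$. Then $9+4\cdot3+4\cdot6=45$, which gives rank $3$, as claimed.
\item $G_{32}$: two elements of type $(3,1)$ with $\chi=0$, four $[j:j^{\pm1}:1:1]$-type elements, and two elements of type $(2,2)$. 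I will compute each $\chi$ carefully and check that the total is $27$, giving rank $1$.
\item $G_{33}$ and $G_{34}$: the same bookkeeping, aiming for totals $27$ and $81$ respectively.
\end{itemize}

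For (3) and (4), a point is fixed by $G$ exactly when it lies in the intersection of the fixed loci of the generators. For $G_{32}=\langle[j:1:1:1],[1:j:1:1]\rangle$, the common fixed set in $\PP^3$ is the points $[1:0:0:0]$, $[0:1:0:0]$ and the line $\{W=X=0\}$. Only the line meets $S_0$, in the three points $Y^3+Z^3=0$, so there are exactly three fixed points. For $G_{33}$, the fixed locus of $[j:1:1:1]$ is $\{W=0\}\cup\{[1:0:0:0]\}$, and $[1:0:0:0]\notin S_\mu$. Inside $\{W=0\}$, the element $[1:j:j^2:1]$ fixes only the coordinate points $[0:1:0:0]$, $[0:0:1:0]$, $[0:0:0:1]$ and the point $[1:0:0:0]$, none of which lie on $S_\mu$. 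Hence $G_{33}$ has no fixed point on $S_\mu$.

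The main obstacle is getting the fixed loci right for every element type, as the false start above for $G_{31}$ shows. I will systematically tabulate, for each eigenvalue pattern, which eigenspaces meet $S$ and with what Euler characteristic, and then use integrality of the rank as a consistency check.
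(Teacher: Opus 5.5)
Your method is the paper's: Proposition~\ref{rankformula} combined with the Euler characteristics of the fixed loci of the three non-trivial eigenvalue types, namely $\chi=0$ for type $(3,1)$, $\chi=6$ for $(2,2)$ and $\chi=3$ for $(2,1,1)$. Your corrected computation for $G_{31}$ and your arguments for assertions (3) and (4) are fine. In (4) there is a harmless slip: $[1:0:0:0]$ does not lie in $\{W=0\}$, and inside that plane $[1:j:j^2:1]$ fixes only the three coordinate vertices.

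Assertion (1), however, is not actually proved. For $G_{33}$ and $G_{34}$ you only announce the totals you are aiming for. Your census for $G_{32}$ is wrong as written:
\begin{itemize}
\item $G_{32}$ has four elements of type $(3,1)$, namely $[j:1:1:1]^{\pm1}$ and $[1:j:1:1]^{\pm1}$, not two.
\item Your four ``$[j:j^{\pm1}:1:1]$-type'' elements are not of a single type. Two of them are $[j:j:1:1]^{\pm1}$, of type $(2,2)$, and two are $[j:j^2:1:1]^{\pm1}$, of type $(2,1,1)$.
\item Listing two further elements of type $(2,2)$ therefore counts those twice.
\end{itemize}
Your list has eight entries only because two elements are missing and two are duplicated. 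Integrality of the rank, which you used to repair $G_{31}$, is a consistency check, not a substitute for a correct census.

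To close the gap, first justify that $\chi(S^g)$ depends only on the eigenvalue type of $g$:
\begin{itemize}
\item On $S_0$ and $S_\mu$ the four coordinate vertices lie off the surface.
\item Every coordinate line meets the surface in three distinct points, because the term $\mu XYZ$ vanishes on it and $\car(\kk)\neq 3$.
\item The coordinate plane sections that occur are smooth plane cubics. For $\{W=0\}\cap S_\mu$ this is exactly where $\mu^3\neq -27$ enters, which you do not mention.
\end{itemize}
Then use the correct counts, i.e.\ the values of $\nu^4_G$ from Lemma~\ref{3elempgl4lemma}, together with the $1+8+6+12$ split of the $27$ elements of $G_{34}$:
\[
G_{32}:\ 9+4\cdot 0+2\cdot 6+2\cdot 3=27,\qquad G_{33}:\ 9+2\cdot 0+6\cdot 3=27,
\]
\[
G_{34}:\ 9+8\cdot 0+6\cdot 6+12\cdot 3=81.
\]
These give rank $27/9-2=81/27-2=1$ in each case.
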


\begin{proof} $ $

We describe the fixed locus and Euler characteristic of each element of the fixed point set of the diagonal torus acting on $S_\mu$ in the following table:

\tiny$$\begin{tabular}{ | c | c | c | c| c | }
 \hline 
   element & fixed points & nature & $\chi$  \\
   \hline
   $id$ &  &  & 9  \\
   \hline
   $[j:1:1:1]$ & $\{[0:X:Y:Z] \in \mathbb{P}^3 | X^3 + Y^3 + Z^3 + \mu X Y Z = 0\}$ & elliptic curve & 0  \\
 \hline  
    $[j:j:1:1]$ & $\{ [1:-j^{n}:0:0] | n \in \{0,1,2\} \} \sqcup \{ [0:0:1:-j^n]| n \in \{0,1,2\} \} $ & 6 pts & 6   \\
 \hline  
    $[1:j^2 :j :1]$ & $\{ [1:0:0:-j^n] | n \in \{0,1,2\} \} $ & 3 pts & 3   \\
 \hline  
 \end{tabular}$$
\normalsize
%
%
 
 The only result that requires a proof in this table is the nature of the following subset: $$\left\{X^3 + Y^3 + Z^3 + \mu X Y Z = 0\right\} \subset \PP^2$$
 Using the adjunction formula, we get that a smooth irreducible cubic curve in $\mathbb{P}^2$ is an elliptic curve. Hence this subset is an elliptic curve.
 
To conclude our proof we use Proposition \ref{rankformula} which gives the following formula for the Picard rank: $$\rank \Pic(X)^G + 2 = \frac{1}{|G|} \sum_{g \in G} \chi(X^g)$$ Hence we get the result for each subgroup by using this formula with the nature of each set of fixed points.
\end{proof}


\medskip

\subsection{Isomorphic conjugation of $3$-elementary non-cyclic subgroups of automorphisms of cubic del Pezzo surfaces} \label{g33smuconjiso} $ $


In light of the results established in the previous subsections, the only case to address for the conjugacy by isomorphisms of these subgroups is the one involving the pairs $(G_{33}, S_{\mu})$. To this end, we first describe the normalizer of the subgroup $G_{33}$ (see Lemma~\ref{lemmeg33}), before stating the main result of this subsection: Proposition~\ref{cubicconj0}.


\begin{lemme} \label{lemmeg33} $ $

The normalizer $\mathcal{N}$ of $G_{33}$ in $\PGL(4,\kk)$ is the following subgroup of $\PGL(4,\kk)$:
 $$\left\{ \left[\begin{array}{c|c}
1 & 0  \\
\hline
0 & \sigma D
\end{array}\right] | \sigma \text{ is a permutation matrix, } D \text{ is a diagonal invertible matrix} \right\}$$
\end{lemme}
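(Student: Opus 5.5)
The plan is to characterise the normaliser in two steps: first show that it must preserve the decomposition of $\kk^4$ into the line spanned by the $W$-coordinate and the hyperplane spanned by $X,Y,Z$, and then that it must be monomial on that hyperplane. The inclusion in the other direction is a direct check.

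\textbf{Step 1: the $W$-line and the $\{X,Y,Z\}$-hyperplane are preserved.} I use the eigenvalue multiplicity map $\nu^4_{G_{33}}$ from the proof of Lemma \ref{3elempgl4lemma}. It takes the value $2$ on $(3,1)$, so $G_{33}$ has exactly two elements with multiplicity pattern $(3,1)$. These are $g_1=[j:1:1:1]$ and $g_1^2$. For $n\in\mathcal N$, conjugation by $n$ permutes $G_{33}$ and preserves $\rho^4$ by Remark \ref{laremarque}. Hence $n$ normalises $\langle g_1\rangle$. Therefore $n$ permutes the eigenspaces of a lift of $g_1$. The one-dimensional eigenspace $\kk e_W$ must go to itself, since it is the only eigenspace of its dimension. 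Likewise the three-dimensional eigenspace $\langle e_X,e_Y,e_Z\rangle$ must go to itself. So $n$ has block form $\operatorname{diag}(c,A)$, and after rescaling the lift we may take $c=1$.

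\textbf{Step 2: $A$ is monomial.} Write a general element of $G_{33}$ as $[j^a:j^b:j^{2b}:1]$. Its lift has eigen-characters $(a,b)\mapsto a,\ b,\ 2b,\ 0$ on $e_W,e_X,e_Y,e_Z$ respectively, and these four characters are pairwise distinct. Conjugation by $n$ induces an automorphism of $G_{33}$. Lifting to $\GL(4,\kk)$, it carries the decomposition of $\kk^4$ into simultaneous eigenlines of the lifted group to a decomposition of the same kind. The characters may get twisted by a common character, coming from the scalar ambiguity of the lift, but a common twist does not change which lines are the simultaneous eigenlines. Since the eigenlines are exactly the four coordinate lines, $n$ permutes the coordinate lines. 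Combined with Step 1, $A=\sigma D$ with $\sigma$ a $3\times3$ permutation matrix and $D$ diagonal invertible.

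This step is where I expect the main care to be needed. The point to handle correctly is the projective ambiguity: commuting in $\PGL$ only means commuting up to a scalar. I would handle it by fixing the lift of $G_{33}$ in which $Z$ has eigenvalue $1$ and arguing with the set of simultaneous eigenlines, which is independent of that choice.

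\textbf{Step 3: the converse inclusion.} Diagonal matrices commute with $G_{33}$, so it suffices to treat a permutation $\sigma$ of $X,Y,Z$. Since $g_1$ is centralised, it is enough to check $g_3=[1:j:j^2:1]$. Conjugating $g_3$ by $\sigma$ gives $[1:x:y:z]$ with $\{x,y,z\}=\{1,j,j^2\}$. Dividing by $z$, this becomes $[z^{-1}:x/z:y/z:1]$. That element lies in $G_{33}$ exactly when $y/z=(x/z)^2$, i.e.\ when $x^2=yz$. This holds because $yz=xyz/x=1/x$ and $x^3=1$, so $x^2=1/x=yz$. Hence every matrix of the stated form lies in $\mathcal N$, which gives the equality.
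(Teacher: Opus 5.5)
Your proof is correct, and after the first step it takes a somewhat different route from the paper. Your Step 1 is the paper's opening observation: the $(3,1)$ multiplicity pattern forces $\langle g_1\rangle$ to be preserved. The paper (where $a=g_1$) continues as follows.
\begin{enumerate}
\item It lets $\mathcal{N}$ act on the four cyclic subgroups of $G_{33}$ via $\rho$.
\item It shows that the image of $\rho$ is the full $\mathfrak{S}_3$ on the three subgroups other than $\langle a\rangle$, realised by permutation matrices in $X,Y,Z$.
\item It splits $\mathcal{N}=\Ker(\rho)\rtimes\mathfrak{S}_3$.
\item It finds $\Ker(\rho)$ by intersecting the block shapes forced by preserving $\langle a\rangle$, $\langle b\rangle$ and $\langle ab\rangle$, one subgroup at a time.
\end{enumerate}
You apply the eigenspace argument to the whole group at once. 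The coordinate characters $a,b,2b,0$ are pairwise distinct, and stay so after any common scalar twist. So the four coordinate lines are exactly the simultaneous eigenlines of the full preimage of $G_{33}$ in $\GL(4,\kk)$, and any normalising element is monomial. Step 1 then only has to pin down the $W$-line.

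Your route is shorter and more general. It shows that for any diagonal subgroup of $\PGL(n,\kk)$ with pairwise distinct coordinate characters, the normaliser lies in the monomial group. The problem then reduces to deciding which coordinate permutations preserve the subgroup. Your Step 3 also makes the converse inclusion an explicit check, where the paper only asserts that $\Psi_\sigma\in\mathcal{N}$.

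The paper's route gives in addition an explicit description of how $\mathcal{N}$ permutes the cyclic subgroups, and the splitting $\mathcal{N}\simeq\Ker(\rho)\rtimes\mathfrak{S}_3$. It reuses the same template for the normaliser of $G_{11}$ in Lemma~\ref{13norm}.
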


\begin{proof} $ $
Let $a = [j:1:1:1] , b = [1:1:j:j^2] \in G_{33}$.

We define $E$ the set of subgroups of $G_{33}$ isomorphic to $\mathbb{Z}/3$:
$$E = \left\{ \langle a \rangle  ,  \langle b \rangle  ,  \langle ab \rangle  ,  \langle a^2 b \rangle  \right\}$$

We define the following map:
$$\begin{array}{ccccc}
\rho & : & \mathcal{N} & \to & \mathfrak{S}_E \\
 & & \Phi& \mapsto & \left\{ \begin{array}{ccccc}
 &  & E & \to & E \\
 & & H & \mapsto & \Phi H \Phi^{-1}  \\
\end{array} \right\} \\ 
\end{array}$$
Because of the nature of the multiplicities of the eigenvalues of the elements of $G_{33}$, for every $\Phi \in \mathcal{N}$, we have $\rho(\Phi) (\langle a \rangle) = \langle a \rangle$.
Hence we have the inclusion: $$\rho(\mathcal{N}) \quad \subset \quad \mathfrak{S}_{\left\{\langle b \rangle ,  \langle ab \rangle,  \langle a^2 b \rangle \right\}}\simeq \mathfrak{S}_3$$
For $\sigma \in \mathfrak{S}_{\{ \langle b \rangle ,  \langle ab \rangle ,  \langle a^2 b \rangle\}}$, we define:
$$\Psi_{\sigma} = \left[\begin{array}{c|c}
1 & 0_{1 \times 3} \\
\hline
0_{3 \times 1} & \sigma'
\end{array}\right],$$
where $\sigma'$ is a $3 \times 3$ permutation matrix such that $\rho(\Psi_{\sigma}) = \sigma$.
\newline
We observe that $\Psi_{\sigma} \in \mathcal{N}$. Hence we have the equality:
$$\rho(\mathcal{N}) \quad = \quad \mathfrak{S}_{\{\langle b \rangle  ,  \langle ab \rangle ,  \langle a^2 b \rangle \}}\simeq \mathfrak{S}_3$$
We also observe that for a given $\Phi \in \mathcal{N}$, we have $\Psi_{{\rho(\Phi)}^{-1}} \Phi \in \Ker(\rho)$.
Hence we have the equality: $$\mathcal{N} \quad = \quad \Ker(\rho) \rtimes \left\{ \Psi_{\sigma} | \sigma \in \mathfrak{S}_{\{ \langle b \rangle , \langle ab \rangle , \langle a^2 b \rangle \}} \right\} \quad \simeq \quad \Ker(\rho) \rtimes \mathfrak{S}_3$$
Now we compute $\Ker(\rho)$: We observe that if $\psi \in \PGL(4,\kk)$ preserves the subgroup $ \langle a \rangle$, then it is of the form $\left[\begin{array}{c|ccc}
1 & 0 & 0 & 0  \\
\hline
0 & * & * & *  \\
0 & * & * & *  \\
0 & * & * & *  \\
\end{array}\right]$. If it preserves the subgroup $ \langle b \rangle$, then it is of the form $\left[\begin{array}{cc|cc}
* & * & 0 & 0  \\
* & * & 0 & 0  \\
\hline
0 & 0 & * & 0  \\
0 & 0 & 0 & *  \\
\end{array}\right]$ or $\left[\begin{array}{cc|cc}
* & * & 0 & 0  \\
* & * & 0 & 0  \\
\hline
0 & 0 & 0 & *  \\
0 & 0 & * & 0  \\
\end{array}\right]$. If it preserves the subgroup $\langle a b \rangle$, then it is of the form  $\left[ \begin{smallmatrix} * & 0 & * & 0 \\ 0 & * & 0 & 0 \\ * & 0 & * & 0 \\ 0 & 0 & 0 & *     \end{smallmatrix} \right]$
 or $\left[ \begin{smallmatrix} * & 0 & * & 0 \\ 0 & 0 & 0 & * \\ * & 0 & * & 0 \\ 0 & * & 0 & 0     \end{smallmatrix} \right]$.
Hence by taking the intersection of these three subsets, it appears that $\Ker(\rho)$ is the diagonal. Hence our result.
\end{proof}

\begin{remark} $ $
The group $\mathcal{N}$ is isomorphic to ${\kk^\times}^3 \rtimes \mathfrak{S}_3$.
\end{remark}

\begin{prop}  \label{cubicconj0} $ $
Let $\mu$, $\mu' \in \kk$ such that $\mu^3 \neq -27 \neq \mu'^{3}$. 

The following assertions are equivalent:
\begin{enumerate}
\item \label{cubicconj01} There exists $n \in \{0,1,2\}$ such that $\mu = j^n \mu'$.
\item \label{cubicconj02} There exists an isomorphism $S_\mu \rightarrow S_{\mu'}$ that conjugates $G_{33}$ on itself.
\end{enumerate} 
\end{prop}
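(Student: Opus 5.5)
The plan is to prove both implications directly, using Lemma \ref{lemmacubic1} to translate "automorphism of $\mathbb{P}^3$ mapping $S_\mu$ to $S_{\mu'}$" into a condition on polynomials, and Lemma \ref{lemmeg33} to control which maps can normalize $G_{33}$. Throughout, write $P_\mu = W^3+X^3+Y^3+Z^3+\mu XYZ$.

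\eqref{cubicconj01}$\Rightarrow$\eqref{cubicconj02}: Suppose $\mu = j^n\mu'$. Take the diagonal map $\Phi=[1:j^{n}:1:1]$, which lies in the normalizer described in Lemma \ref{lemmeg33}. In fact it commutes with $G_{33}$, so it conjugates $G_{33}$ to itself. A direct substitution gives $P_{\mu'}\circ\Phi = W^3+X^3+Y^3+Z^3+j^n\mu' XYZ = P_\mu$. Hence $\Phi$ maps $S_\mu$ isomorphically onto $S_{\mu'}$.

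\eqref{cubicconj02}$\Rightarrow$\eqref{cubicconj01}: Let $\phi\colon S_\mu\to S_{\mu'}$ be an isomorphism with $\phi G_{33}\phi^{-1}=G_{33}$.
\begin{itemize}
\item First, $\phi$ is the restriction of some $\Phi\in\PGL(4,\kk)$. This holds because isomorphisms of cubic surfaces preserve the anticanonical class, and the anticanonical embedding is the given embedding in $\mathbb{P}^3$; this is the content referenced as Lemma \ref{2lemconj40}.
\item Since $G_{33}$ spans $\mathbb{P}^3$ in the relevant sense and $\Phi$ is determined up to the identification of $G_{33}|_{S}$ with $G_{33}$, we get $\Phi\in\mathcal N$.
\item By Lemma \ref{lemmeg33}, $\Phi$ fixes the $W$ coordinate and acts on $(X,Y,Z)$ as a permutation composed with a diagonal matrix $\mathrm{diag}(d_1,d_2,d_3)$, with the $W$-entry normalized to $1$.
\end{itemize}

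By Lemma \ref{lemmacubic1}, there is $\lambda\in\kk^\times$ with $P_{\mu'}\circ\Phi=\lambda P_\mu$. The left side is
\[
W^3+d_1^3X_{s(1)}^3+d_2^3X_{s(2)}^3+d_3^3X_{s(3)}^3+\mu' d_1d_2d_3XYZ,
\]
since $XYZ$ is invariant under permutations. Comparing the coefficients of $W^3$ gives $\lambda=1$. Comparing the coefficients of the other three cubes gives $d_k^3=1$ for each $k$. Comparing the coefficients of $XYZ$ gives $\mu=\mu' d_1d_2d_3$. Since $d_1d_2d_3$ is a cube root of unity, it equals $j^n$ for some $n\in\{0,1,2\}$, which is \eqref{cubicconj01}.

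The main obstacle is the first bullet together with the step placing $\Phi$ in $\mathcal N$. One has to check carefully that an isomorphism conjugating the restricted group $G_{33}|_{S_\mu}$ to $G_{33}|_{S_{\mu'}}$ comes from a linear map normalizing $G_{33}$ inside $\PGL(4,\kk)$. I would handle this by using that restriction $\PGL(4,\kk)\supset\mathrm{Stab}(S)\to\Aut(S)$ is injective, since $S$ spans $\mathbb{P}^3$. The remaining steps are routine coefficient comparisons.
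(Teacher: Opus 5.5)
Your proof is correct and follows the paper's argument: an explicit diagonal map for (1)$\Rightarrow$(2), and for the converse, linearity of the isomorphism, Lemma~\ref{lemmeg33} to reduce to a permutation times a diagonal matrix, and a coefficient comparison giving $d_k^3=1$ and $\mu=\mu' d_1d_2d_3$. You differ only cosmetically. You treat the permutation directly (using that $XYZ$ is permutation-invariant) instead of first composing with $\sigma^{-1}$, and you make explicit the step $\Phi\in\mathcal N$ that the paper leaves implicit; for the injectivity of restriction there, note that $S$ must be irreducible as well as spanning $\mathbb{P}^3$, which holds for a smooth cubic surface.
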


\begin{proof} $ $

\ref{cubicconj01} $\Rightarrow$ \ref{cubicconj02}:
If $\mu = j^n \mu'$ for some $n \in \{0,1,2\}$, then the following map:
$$\begin{array}{cccc}
 & S_{\mu} & \to & S_{\mu'} \\
  & [W:X:Y:Z] & \mapsto & [W:X:Y:j^n Z] \\
\end{array}$$ is an isomorphism which leaves $G_{33}$ invariant.

\ref{cubicconj02} $\Rightarrow$ \ref{cubicconj01}:
Conversely let's assume there exists an isomorphism $\phi : S_{\mu'} \to S_\mu$ leaving $G_{33}$ invariant. Such an isomorphism is induced by an automorphism of $\mathbb{P}^3$. According to Lemma \ref{lemmeg33}, this automorphism will be of the form $\left[\begin{array}{c|c}
1 & 0_{1 \times 3}  \\
\hline
 0_{3 \times 1} & \sigma D
\end{array}\right]$ where $\sigma$ is a $3 \times 3$ permutation matrix and $D = \left[ \begin{smallmatrix} a & 0 & 0 \\ 0 & b & 0 \\ 0 & 0 & c    \end{smallmatrix} \right]$
is an invertible $3 \times 3$ diagonal matrix. The automorphism $\left[\begin{array}{c|c}
1 &  0_{1 \times 3}  \\
\hline
 0_{3 \times 1}  & \sigma^{-1}
\end{array}\right]$ 
sends $S_{\mu'}$ on itself.
Hence by composition, the automorphism
$\left[\begin{array}{c|c}
1 &  0_{1 \times 3}  \\
\hline
 0_{3 \times 1}  & D
\end{array}\right] = \left[ \begin{smallmatrix} 1 & 0 & 0 & 0 \\ 0 & a & 0 & 0 \\ 0 & 0 & b & 0 \\ 0 & 0 & 0 & c     \end{smallmatrix} \right] \in \PGL(4,\kk)$ sends $S_\mu$ on $S_{\mu'}$, which means:
$\forall [W:X:Y:Z] \in \mathbb{P}^3, W^3 + X^3 + Y^3 + Z^3 + \mu' X Y Z = 0 \Leftrightarrow W^3 + a^3 X^3 + b^3 Y^3 + c^3 Z^3 + \mu a b c X Y Z = 0$.
So $W^3 + X^3 + Y^3 + Z^3 + \mu' X Y Z$ is a multiple of $W^3 + a^3 X^3 + b^3 Y^3 + c^3 Z^3 + \mu a b c X Y Z$. In particular $a^3 = b^3 = c^3 = 1$ and $\mu' = \mu abc$. Thus $abc$ is a cubic root of unity and we get the result.
\end{proof}


\newpage

\section{Automorphisms of del Pezzo surfaces of degree $2$} \label{orsay44} $ $

We assume $\car(\kk) \neq 2$ and $p=2$ in this section.
The main result of this section is Proposition \ref{2results}.
The subsection \ref{subsection443} allows us to get a list of representatives of the $2$-elementary non-cyclic subgroups of automorphisms of such surfaces.
In subsection \ref{subsection444} we go further by studying the conjugation by isomorphism in the specific case of Klein subgroups.

\medskip

\subsection{Notations and context}
$ $

For $a,b,c,d \in \kk^\times$, we denote by $[a:b:c:d]$ the following automorphism of $\mathbb{P}(2,1,1,1)$: $[W:X:Y:Z] \mapsto [a W : b X : c Y : d Z]$. If this map can be restricted to $S$, the notation $[a:b:c:d]$ will also refer to the corresponding automorphism of $S$.
\\

According to \cite[Section 4]{martinodd} in odd characteristic and to
\cite[Theorem III.3.5]{Kol} and \cite[Corollary 3.54]{KoSmCo} in characteristic 0, up to isomorphism, a del Pezzo surface of degree $2$ is a surface of the form: $$S = \left\{ W^2 = F (X,Y,Z) \right\} \subset \mathbb{P}(2,1,1,1)$$
 where $F \in \kk[X,Y,Z]_4$ is a form of degree four.
 \\
 
This surface has a special automorphism called the Geiser involution. We refer to \cite[Section 3]{martin2} for a reference about this automorphism (in all characteristic). In our case of characteristic not $2$, this automorphism is the element $[-1:1:1:1]$.

 \begin{defi} $ $
 We define the following quartic curve: $\Gamma = \left\{ F = 0 \right\} \subset \mathbb{P}^2$.
 \end{defi}
 
\begin{defi} $ $
Let $d, e, f \in \kk$. We define the following surface of degree $2$:
$$S_{def} = \left\{W^2 = X^4 + Y^4 + Z^4 + d X^2 Y^2 + e X^2 Z^2 + f Y^2 Z^2 \right\} \subset \mathbb{P}(2,1,1,1)$$ 
\end{defi}

\medskip

\subsection{Results}

\begin{prop} \label{2results} $ $

    Let $G$ be a 2-elementary non-cyclic subgroup of automorphism of a del Pezzo surface of degree $2$. We assume $rk(Pic(S)^G) = 1$. Then $G \simeq(\mathbb{Z}/2)^2$ or  $G \simeq (\mathbb{Z}/2)^3$.
\begin{itemize}
\item    If $G \simeq (\mathbb{Z}/2)^3$, then up to applying an automorphism of
$\mathbb{P}(2,1,1,1)$, the surface and the group are given by: $$G = G_{21} = \left\{ [ \pm 1: \pm 1: \pm 1: \pm 1] \right\}$$  
     $$S = S_{def} = \left\{W^2 = X^4 + Y^4 + Z^4 + d X^2 Y^2 + e X^2 Z^2 + f Y^2 Z^2 \right\} \subset \mathbb{P}(2,1,1,1)$$ where $d,e,f \in \kk \setminus \{-2,2\}$ such that $ 4 - f^2 - d^2 - e^2 + def \neq 0$. Furthermore, $(G_{21}, S_{def})$ and $(G_{21}, S_{d'e'f'})$ are conjugate if and only if there exist $\epsilon_d, \epsilon_e \in \{ -1, 1\}$ such that $\{d',e',f'\} = \{\epsilon_d d, \epsilon_e e, \epsilon_d \epsilon_e f \}$.

\item    If $G \simeq (\mathbb{Z}/2)^2$, then up to applying an automorphism of
$\mathbb{P}(2,1,1,1)$, the surface and the group are given by: $$G= G_{22} = \left\{ [ \pm 1: \pm 1:1:1] \right\}$$
$$S= \left\{W^2 = X^4 + L_2 (Y,Z) X^2 + L_4 (Y,Z) \right\} \subset \mathbb{P}(2,1,1,1)$$
 where $L_2, L_4$ are forms of degree two and four, respectively.
\end{itemize}
\end{prop}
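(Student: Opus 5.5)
The plan is to follow the template of the quartic and cubic sections. First, reduce $\Aut(S)$ to a group of projective transformations. For a degree $2$ del Pezzo surface in characteristic not $2$, $\Aut(S)=\langle\gamma\rangle\times\Aut(\Gamma)$, where $\gamma=[-1:1:1:1]$ is the Geiser involution and $\Aut(\Gamma)\subset\PGL(3,\kk)$ is induced by linear maps on $X,Y,Z$. I will take this from the appendix on degree $1$, $2$, $3$ automorphism groups.

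The first key step is to show $\gamma\in G$ whenever $\rank(\Pic(S)^G)=1$. If $\gamma\notin G$, then $G\cdot\langle\gamma\rangle$ is still $2$-elementary. Moreover $G$ embeds into $\Aut(\Gamma)$, and by Proposition \ref{prop3} it becomes diagonal in $\PGL(3,\kk)$. Then I will compute $\rank\Pic(S)^G$ with the formula $\rank\Pic(S)^G+2=\frac1{|G|}\sum_g\chi(S^g)$. Since $\gamma$ acts as $-1$ on $K_S^\perp$ and $\chi(S)=10$, it suffices to check that a Klein group of diagonal elements avoiding $\gamma$ gives rank $\ge2$. I expect this reduces to a fixed-locus table analogous to the quartic case.

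Next, write $G=\langle\gamma\rangle\times H$ with $H\subset\PGL(3,\kk)$ diagonal and $2$-elementary. This forces $|H|\le4$ and $G\cong(\ZZ/2)^2$ or $(\ZZ/2)^3$. Up to conjugation, $H=\langle[1:-1:1:1]\rangle$ or $H=\langle[1:-1:1:1],[1:1:-1:1]\rangle$, so $G=G_{22}$ or $G_{21}$. Invariance of $W^2-F$ up to scalar, treated as in Lemma \ref{lemmacubic2}, forces $F$ to be even in $X$ in the first case, and even in $X$ and in $Y$ (hence in all variables) in the second.

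Then normalize the forms. For $G_{22}$, smoothness forces the $X^4$ coefficient to be nonzero, and rescaling $X$ gives $X^4+L_2X^2+L_4$. For $G_{21}$, smoothness forces the coefficients of $X^4,Y^4,Z^4$ to be nonzero, and rescaling gives $S_{def}$. Smoothness is a Jacobian computation: $d,e,f\neq\pm2$ ensures the coordinate lines meet $\Gamma$ properly, and $4-d^2-e^2-f^2+def\neq0$ is the discriminant of the ternary quadratic form in $X^2,Y^2,Z^2$. I will also confirm rank $1$ in both cases via the character formula, using the fixed loci: $\gamma$ fixes the genus $3$ curve with $\chi=-4$, and the other elements fix elliptic or rational curves plus points.

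The main obstacle is the conjugacy criterion for $(G_{21},S_{def})$. Here I will mimic Lemma \ref{lemmeg33}. The normalizer of $G_{21}$ in $\Aut(\PP(2,1,1,1))$ consists of monomial matrices on $X,Y,Z$ together with a scaling of $W$. Requiring an isomorphism between the two normal forms forces the diagonal entries to satisfy $a^4=b^4=c^4$, up to a common factor. The signs $a^2/c^2,\,b^2/c^2\in\{\pm1\}$ then act by $d\mapsto\epsilon_d d$, $e\mapsto\epsilon_e e$, $f\mapsto\epsilon_d\epsilon_e f$, and the permutations permute $\{d,e,f\}$. Matching coefficients gives exactly the stated condition.
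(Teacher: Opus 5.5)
Your proposal is correct and follows essentially the same route as the paper. It uses $\pi\colon \Aut(S)\to \Aut(\Gamma)\subset \PGL(3,\kk)$ with kernel generated by the Geiser involution $\gamma$, and diagonalizes $\pi(G)$ via Proposition \ref{prop3}. It then discards, by the fixed-point formula, the Klein groups not containing $\gamma$ (the paper's $G_{23}$ and $G_{24}$, of invariant Picard rank $2$ and $3$), normalizes $F$ by smoothness, and reads off the conjugacy condition from the monomial normalizer of $G_{21}$.

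One point to watch: there are two non-conjugate lifts of the diagonal Klein group avoiding $\gamma$, so your fixed-locus check must cover both. The relation $\chi(S^{g\gamma}) = 6-\chi(S^g)$, which follows from $\gamma$ acting as $-1$ on $K_S^\perp$, handles them uniformly.
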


\begin{proof} $ $

    Using Proposition \ref{degree2reduction}, we get $G=G_{2n}$, where $n \in \{1,2,3,4\}$ and the corresponding surfaces. Using Proposition \ref{picard2}, we can exclude $n=3$ and $n=4$.
    Now we assume $n=1$. Using Proposition \ref{21surface}, we get that up to conjugation, there exist $d,e,f \in \kk$ such that $S = S_{def}$.
    Using Proposition \ref{21conjugation}, we get that $(G_{21}, S_{def})$ and $(G_{21}, S_{d'e'f'})$ are conjugate if and only if there exist $\epsilon_d, \epsilon_e \in \{ -1, 1\}$ such that $\{d',e',f'\} = \{\epsilon_d d, \epsilon_e e, \epsilon_d \epsilon_e f \}$.
    \end{proof}
    

\subsection{Classification of $2$-elementary non-cyclic subgroups of automorphisms of del Pezzo surfaces of degree $2$} \label{subsection443}
$ $

In this subsection we will give the list of possible subgroups in the Propositions \ref{degree2reduction} and \ref{picard2}.
 
 \begin{lemme} \label{smoothnessgamma} $ $
The surface $S$ is smooth if and only if the curve $\Gamma$ is smooth.
 \end{lemme}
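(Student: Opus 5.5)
We compare the singular points of $S$ with those of $\Gamma$ directly, using the Jacobian criterion on the affine charts of $\mathbb{P}(2,1,1,1)$. Since the weight-$2$ variable $W$ only matters through the equation $W^2 = F(X,Y,Z)$, and $F$ is a quartic form, the surface $S$ avoids the singular point $[1:0:0:0]$ of the weighted projective space. Indeed, at $X=Y=Z=0$ we would get $W^2=0$, which is impossible for a point of weighted projective space. Hence every point of $S$ lies in one of the charts $\{X\neq 0\}$, $\{Y\neq 0\}$, $\{Z\neq 0\}$. Each of these charts is an honest affine space $\mathbb{A}^3$, because the weights of $X,Y,Z$ are $1$. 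For instance on $\{Z=1\}$ the surface is the affine surface $\{w^2 = f(x,y)\}\subset\mathbb{A}^3$ with $f(x,y)=F(x,y,1)$.

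On such a chart the partial derivatives of $w^2-f(x,y)$ are $2w$, $-f_x$ and $-f_y$. Since $\car(\kk)\neq 2$, a point is singular if and only if $w=0$, $f=0$, $f_x=0$ and $f_y=0$. This says exactly that $(x,y)$ is a singular point of the affine curve $\{f=0\}$, which is the corresponding affine chart of $\Gamma$. Conversely, every singular point $p$ of $\Gamma$ lifts to the point with $W=0$ over $p$, which is then singular on $S$. This gives a bijection between the singular points of $S$ and those of $\Gamma$, chart by chart. Since the charts $\{Z\neq0\}$, $\{Y\neq 0\}$, $\{X\neq 0\}$ cover both $\Gamma\subset\mathbb{P}^2$ and $S$, the equivalence follows.

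One subtlety should be mentioned for homogeneous $F$. When $\car(\kk)\neq 2$, Euler's formula $4F = XF_X+YF_Y+ZF_Z$ need not hold usefully if $\car(\kk)$ divides $4$; but that can only happen in characteristic $2$, which is excluded. In any case the chart-by-chart argument avoids Euler's formula altogether, so no separate treatment of characteristic $3$ is needed. The only step requiring care is the first one: checking that $S$ misses the non-smooth point $[1:0:0:0]$ of the ambient space, so that smoothness of $S$ is indeed tested by the Jacobian criterion in ordinary affine charts. This is immediate from the equation, as explained above.
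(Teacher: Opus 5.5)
Your proof is correct and is essentially the paper's argument: cover $S$ by the three affine charts $\{X\neq 0\},\{Y\neq 0\},\{Z\neq 0\}$ of $\mathbb{P}(2,1,1,1)$, then apply the Jacobian criterion to $w^2-f$ using $\car(\kk)\neq 2$, which matches singular points of $S$ with singular points (with $W=0$) of the corresponding chart of $\Gamma$. Your explicit check that $S$ misses $[1:0:0:0]$ supplies a step the paper uses without comment when it writes $S$ as the union of the three charts.
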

 
 \begin{proof} $ $
We define the following open subsets: 
$$U_X = \left\{ [W:X:Y:Z] \in \mathbb{P}(2,1,1,1) | X \neq 0 \right\}$$
$$U_Y = \left\{ [W:X:Y:Z] \in \mathbb{P}(2,1,1,1) | Y \neq 0 \right\}$$
$$U_Z = \left\{ [W:X:Y:Z] \in \mathbb{P}(2,1,1,1) | Z \neq 0 \right\}$$
$$V_X = \left\{ [X:Y:Z] \in \mathbb{P}^2 | X \neq 0 \right\}$$
$$V_Y = \left\{ [X:Y:Z] \in \mathbb{P}^2 | Y \neq 0 \right\}$$
$$V_Z = \left\{ [X:Y:Z] \in \mathbb{P}^2 | Z \neq 0 \right\}$$


%

Let's study the smoothness of $S \cap U_X$.
We define the following isomorphism:
$$\begin{array}{cccc}
\Phi_X : & U_X & \stackrel{\sim}{\longrightarrow} & \mathbb{A}^3 \\
  & [W:1:Y:Z] & \mapsto & (W,Y,Z)\\
\end{array}$$


We restrict this isomorphism to $S \cap U_X$:
$$\begin{array}{cccc}
\Phi_X|_{S \cap U_X} : & S \cap U_X & \stackrel{\sim}{\longrightarrow} &  \{ (w,y,z) \in \mathbb{A}^3 | F(1,y,z) = w^2 \} \\
  & [W:1:Y:Z] & \mapsto & (W,Y,Z)\\
\end{array}$$

Because $\car(\kk) \neq 2$, by writing the derivatives of $w^2 - F(1,y,z)$ in $\mathbb{A}^3$, we observe that $[W:1:Y:Z]$ is a singular point of $S$ if and only if $W = 0$ and if $[1:Y:Z]$ is a singular point of $\Gamma$.
It implies that $S \cap U_X$ is smooth if and only if $\Gamma \cap V_X$ is smooth.
Likewise, $S \cap U_Y$ \textit{resp} $S \cap U_Z$ is smooth if and only if $\Gamma \cap V_Y$ \textit{resp} $\Gamma \cap V_Z$ is smooth.
Since $S = (S \cap U_X) \cup (S \cap U_Y) \cup (S \cap U_Z)$ and $\Gamma = (\Gamma \cap V_X) \cup (\Gamma \cap V_Y) \cup (\Gamma \cap V_Z)$, we get our result.
\end{proof}

\begin{lemme} \label{structureautdp2} $ $
We have: $\Aut(S) = \Aut(\Gamma) \times \langle [-1:1:1:1] \rangle$. 
\end{lemme}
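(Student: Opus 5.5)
We need to prove $\Aut(S) = \Aut(\Gamma) \times \langle [-1:1:1:1] \rangle$ for $S=\{W^2=F(X,Y,Z)\}\subset\mathbb{P}(2,1,1,1)$ smooth, with $\car(\kk)\neq 2$. The plan is to show that every automorphism of $S$ is induced by an automorphism of the weighted projective space of a very restricted shape, and then read off the product structure.

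\textbf{Step 1: automorphisms are linear in the weighted coordinates.} The anticanonical system $|-K_S|$ has dimension $2$; its sections are spanned by $X,Y,Z$, and it defines the double cover $\pi\colon S\to\mathbb{P}^2$, $[W:X:Y:Z]\mapsto[X:Y:Z]$, branched along $\Gamma$. Every $g\in\Aut(S)$ preserves $K_S$, so it acts on $H^0(S,-K_S)$ by a matrix $A\in\GL(3,\kk)$, well defined up to scalar. Hence $\pi\circ g=\bar A\circ\pi$ with $\bar A\in\PGL(3,\kk)$.

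The space $H^0(S,-2K_S)$ has dimension $7$ and is spanned by the six quadratic monomials in $X,Y,Z$ together with $W$. The induced action of $g$ on this space therefore sends $W$ to $\lambda W+Q(X,Y,Z)$ with $\lambda\in\kk^\times$ and $Q$ a quadratic form. So $g$ is the restriction of the automorphism $[W:X:Y:Z]\mapsto[\lambda W+Q:A(X,Y,Z)]$ of $\mathbb{P}(2,1,1,1)$.

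\textbf{Step 2: the equation forces $Q=0$.} Pulling back the equation gives
\[
(\lambda W+Q)^2-F(A(X,Y,Z))=c\,(W^2-F)
\]
for some $c\in\kk^\times$. Comparing the coefficients of $W$ yields $2\lambda Q=0$, so $Q=0$, since $\car(\kk)\neq2$. Comparing the remaining terms yields $F\circ A=\lambda^2F$. Hence $\bar A$ preserves $\Gamma$, and we get a homomorphism $\Aut(S)\to\Aut(\Gamma)$, $g\mapsto\bar A|_\Gamma$.

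\textbf{Step 3: this homomorphism splits, with kernel generated by the Geiser involution.}
\begin{itemize}
\item \emph{Surjectivity.} Since $\Gamma$ is a smooth plane quartic by Lemma \ref{smoothnessgamma}, it is canonically embedded, so every automorphism of $\Gamma$ is induced by some $\bar A\in\PGL(3,\kk)$ preserving $\{F=0\}$. Then $F\circ A=\mu F$. Choosing the matrix representative suitably, we may take $\mu=1$: rescaling $A$ by $t$ multiplies $\mu$ by $t^4$, and $\kk$ is algebraically closed.
\item \emph{Splitting.} Define $s(\bar A)=[1:A]$. Because the normalization $F\circ A=F$ fixes $A$ up to a fourth root of unity $\zeta$, we get $s(\bar A)=[1:\zeta A]=[\zeta^{-2}:A]$, with $\zeta^{-2}=\pm1$. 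So $s$ is well defined only up to the Geiser involution. To obtain an honest homomorphism, I would use the weighted-projective identification $[W:X:Y:Z]=[t^2W:tX:tY:tZ]$. This gives $[1:\zeta A]=[\zeta^{-2}\cdot\zeta^{2}:\zeta A]\sim[1:A]$ after scaling by $t=\zeta^{-1}$, which shows that $[1:A]$ is independent of the choice of $\zeta$. Hence $s$ is a well-defined homomorphism.
\item \emph{Kernel.} If $\bar A$ is trivial, then $g=[\lambda:1:1:1]$ with $\lambda^2=1$, so $g\in\langle[-1:1:1:1]\rangle$.
\item \emph{Direct product.} The element $[-1:1:1:1]$ is central, since it commutes with every map of the form $[\lambda W:A]$. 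Therefore the split extension is a direct product.
\end{itemize}

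\textbf{Main obstacle.} The delicate point is Step 1: justifying that every automorphism acts linearly in the graded sense. This amounts to the statement that $\Aut(S)$ acts on the anticanonical ring $\bigoplus_m H^0(S,-mK_S)$, whose generators in degrees $1$ and $2$ are $X,Y,Z,W$. In positive characteristic $\neq2$ this needs the description of del Pezzo surfaces of degree $2$ cited from \cite{martinodd}. The rest is the routine coefficient comparison of Step 2 and the check of well-definedness of the section in Step 3.
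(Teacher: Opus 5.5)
You correctly establish Steps 1--2, the surjectivity onto $\Aut(\Gamma)$, the kernel computation and the centrality of $[-1:1:1:1]$. (The paper gives no argument of its own here; it cites \cite{martinodd} in odd characteristic and \cite{dolgachevbook} in characteristic $0$.) However, these steps only give a central extension $1\to\langle[-1:1:1:1]\rangle\to\Aut(S)\to\Aut(\Gamma)\to 1$, and your splitting step fails.

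Your section $s(\bar A)=[1:A]$, with $A$ normalized by $F\circ A=F$, is not well defined. The matrix $A$ is only determined up to $\zeta\in\mu_4$, and in $\mathbb{P}(2,1,1,1)$ rescaling by $t$ multiplies the $W$-coordinate by $t^2$. Hence $[1:\zeta A]=[\zeta^{-2}:A]$, exactly as you first computed. For $\zeta=\pm i$ this is $[-1:A]$, i.e.\ $[1:A]$ composed with the Geiser involution. Your subsequent ``fix'' miscounts: scaling by $t=\zeta^{-1}$ turns the first entry $1$ into $\zeta^{-2}$, not back into $1$. Centrality alone does not give a splitting either. For the Klein quartic (in characteristic $0$) one has $\Aut(\Gamma)\cong\mathrm{PSL}(2,7)$, and $\mathrm{SL}(2,7)$ is a non-split central extension of it by $\mathbb{Z}/2$. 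So something specific to $S$ must rule out the non-split case.

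What is missing is a homomorphism $\Aut(S)\to\{\pm1\}$ that is non-trivial on the Geiser involution. One way to obtain it is as follows.
\begin{itemize}
\item For $g=[\lambda W:A]$, set $\chi(g)=\lambda^3/\det(A)^2$. This is unchanged under $(\lambda,A)\mapsto(t^2\lambda,tA)$, it is multiplicative, and $\chi([-1:1:1:1])=-1$.
\item Choose the representative with $F\circ A=F$, so that $\lambda=\pm1$. The discriminant of ternary quartics has degree $27$, hence weight $36$, so $\mathrm{Disc}(F\circ A)=\det(A)^{36}\,\mathrm{Disc}(F)$. Since $\mathrm{Disc}(F)\neq0$ by smoothness of $\Gamma$, this gives $\det(A)^{36}=1$.
\item Hence $\chi$ takes values in $\mu_{18}$, and $\chi^9\colon\Aut(S)\to\{\pm1\}$ is a homomorphism with $\chi^9([-1:1:1:1])=-1$.
\item The kernel of $\chi^9$ is a complement to $\langle[-1:1:1:1]\rangle$ that maps isomorphically onto $\Aut(\Gamma)$, which gives the direct product.
\end{itemize}
Without an argument of this kind, or an appeal to the cited references, your proof establishes the extension but not the product decomposition claimed in the statement.
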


 \begin{proof} $ $
 
If $\car(\kk)$ is odd, we use \cite[Section 4]{martinodd}.

If $\car(\kk) = 0$, we use \cite[Section 8.7.3]{dolgachevbook}.
\end{proof}

%
%
%

\begin{prop} \label{degree2reduction} $ $
Let $G$ be a $2$-elementary non-cyclic subgroup of automorphisms of a del Pezzo surface of degree $2$. Then up to isomorphism, we can assume that $G$ is one of the following:

\begin{itemize}
\item $G_{21} = \left\{ [ \pm 1: \pm 1: \pm 1: \pm 1] \right\} \simeq (\mathbb{Z}/2)^3$. And $F$ is of the form $L_2(X^2,Y^2,Z^2)$.
\item
$G_{22} = \left\{ [ \pm 1: \pm 1:1:1] \right\} \simeq (\mathbb{Z}/2)^2$. And $F$ is of the form $F(X,Y,Z) =  X^4 + L_2(Y,Z) X^2 + L_4(Y,Z)$.
\item
$G_{23} = \left\{ [1: \pm 1: \pm 1: \pm 1] \right\} \simeq (\mathbb{Z}/2)^2$. And $F$ is of the form $L_2(X^2,Y^2,Z^2)$.
\item
$G_{24} = \left\{  [ 1:  1: 1:1] , [- 1: - 1: 1:1] , [- 1: 1: - 1:1], [ 1:  1: 1: - 1] \right\} \simeq (\mathbb{Z}/2)^2$. 
And $F$ is of the form $L_2(X^2,Y^2,Z^2)$.
\end{itemize}

\end{prop}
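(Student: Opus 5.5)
The plan is to use Lemma~\ref{structureautdp2}, which says $\Aut(S) = \Aut(\Gamma) \times \langle [-1:1:1:1] \rangle$. Here $\Aut(\Gamma) \subset \PGL(3,\kk)$ acts linearly on $X,Y,Z$, and the Geiser involution $\gamma = [-1:1:1:1]$ is central.

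Let $G$ be $2$-elementary and non-cyclic, and let $\bar G$ be its image in $\Aut(\Gamma)\subset \PGL(3,\kk)$. Then $\bar G$ is $2$-elementary. Since $\car(\kk)\neq 2$ and $2\nmid 3$, Proposition~\ref{prop3} lets us conjugate by a linear change of $X,Y,Z$ so that $\bar G$ lies in the diagonal torus. Its $2$-torsion is $\{[1:\pm1:\pm1]\}\simeq(\ZZ/2)^2$, so $\bar G$ is trivial, cyclic of order $2$, or this whole Klein group.

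Each element of $G$ can be lifted to $\PP(2,1,1,1)$ as $[\pm1:a:b:c]$ with $a,b,c\in\{\pm1\}$. I will normalize lifts using the weighted scaling $[W:X:Y:Z]\sim[\lambda^2W:\lambda X:\lambda Y:\lambda Z]$ with $\lambda=-1$, so that a diagonal element fixes $W$ up to the Geiser involution. I then split into cases according to $\bar G$ and whether $\gamma\in G$.

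\textbf{Case $\bar G \simeq (\ZZ/2)^2$ and $\gamma\in G$.} Then $G$ is the full group $\{[\pm1:\pm1:\pm1:\pm1]\} = G_{21}$.

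\textbf{Case $\bar G \simeq (\ZZ/2)^2$ and $\gamma\notin G$.} Then $G\simeq(\ZZ/2)^2$ is the graph of a homomorphism $\bar G\to\langle\gamma\rangle$. There are four such homomorphisms. The trivial one gives $G_{23}$. The three nontrivial ones are exchanged by permuting the coordinates $X,Y,Z$ together with the sign normalisation; after permuting, one of them gives $G_{24}$. I need to check that the permutation in $\mathfrak S_3$ indeed acts transitively on the nontrivial characters. It does, because it permutes the three involutions $[1:-1:1]$, $[1:1:-1]$, $[1:-1:-1]$.

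\textbf{Case $\bar G\simeq\ZZ/2$.} Since $G$ is non-cyclic, $\gamma\in G$. After a permutation, the nontrivial element of $\bar G$ is $[-1:1:1]$, so $G=\{[\pm1:\pm1:1:1]\}=G_{22}$.

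\textbf{Case $\bar G$ trivial.} Then $G\subset\langle\gamma\rangle$ is cyclic, which is excluded.

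Next I determine the forms $F$, using the fact that $F$ must be an eigenvector of each element of $\bar G$ acting on quartics. For $[-1:1:1]$, the quartic $F$ is even or odd in $X$. It cannot be odd: then $X\mid F$, so $\Gamma$ contains the line $X=0$ and is reducible, contradicting smoothness (Lemma~\ref{smoothnessgamma}). So $F$ is even in $X$, that is, $F = aX^4+L_2(Y,Z)X^2+L_4(Y,Z)$.

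Moreover $a\neq0$. Otherwise $[1:0:0]\in\Gamma$ and the quartic has the form $X^2L_2+L_4$, which is singular at $[1:0:0]$. We therefore normalise $a=1$ by scaling.

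Applying the same argument for all three sign changes in $\bar G\simeq(\ZZ/2)^2$ gives that $F$ is even in each variable, i.e. $F=L_2(X^2,Y^2,Z^2)$. In the cases where $\gamma\notin G$, the lifts $[\pm1:\ldots]$ only require $F\circ g=F$, which is the same condition.

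The only step I expect to need care is the bookkeeping of the lifts to $\PP(2,1,1,1)$. The weighted scaling by $\lambda=-1$ identifies $[w:a:b:c]$ with $[w:-a:-b:-c]$, so I must make sure that $G_{24}$ as written is a group and represents the nontrivial-character case. I will check this by multiplying its listed elements: $[-1:-1:1:1][-1:1:-1:1]=[1:-1:-1:1]\sim[1:1:1:-1]$, which confirms closure.
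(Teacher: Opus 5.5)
Your proof is correct and follows essentially the same route as the paper. Both arguments use the splitting $\Aut(S)=\Aut(\Gamma)\times\langle[-1:1:1:1]\rangle$, diagonalise the image via Proposition~\ref{prop3}, and split into cases according to the image and whether the Geiser involution lies in $G$. Both reach $G_{24}$ by permuting coordinates: you phrase this as $\mathfrak S_3$ acting transitively on the nontrivial characters, where the paper manipulates explicit elements. Both use the singularity at $[1:0:0]$ to force a nonzero $X^4$ coefficient. Your argument that $F$ cannot be odd in a variable, since the line would then be a component of the smooth curve $\Gamma$, supplies a detail the paper leaves implicit; it is also what guarantees that every element has the form $[\pm1:a:b:c]$.
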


\begin{proof} $ $
We define the map $\pi : \Aut(S) \rightarrow \Aut(\Gamma) \subset \PGL(3,\kk)$. The kernel is $\langle [-1:1:1:1] \rangle \simeq \mathbb{Z}/2$. Furthermore, according to Lemma \ref{structureautdp2} we have $\Aut(S) \simeq \Aut(\Gamma) \times  \langle [-1:1:1:1] \rangle$.
Let $G$ be a $2$-elementary non-cyclic subgroup of automorphism of $S$.
There exist $a, b \in \mathbb{N}_{\geq 0}$ such that $\Ker (\pi|_G) \simeq (\mathbb{Z}/2)^a$ and $\pi(G) \simeq (\mathbb{Z}/2)^b$. We have $\Ker(\pi) \simeq \mathbb{Z}/2$, hence $a \leq 1$. We have $\Img(\pi) \subset \PGL(3,\kk)$, hence $b \leq 2$ according to Proposition \ref{prop3}.

If $(a,b) = (1,2)$ then we can conjugate $G$ to $G_{21}$ using Proposition \ref{prop3}.

If $(a,b) = (1,1)$ then we can conjugate $G$ to $G_{22}$ using Proposition \ref{prop3}.

If $(a,b) = (0,2)$ then
using Proposition \ref{prop3}, $\pi(G)$ is $\PGL(3,\kk)$-conjugate to $\left\{ \left[\begin{smallmatrix}  \pm 1 &   &    \\
 & \pm 1  &   \\
 &   & \pm 1   \\
\end{smallmatrix}\right] \right\}$. Hence up to conjugation inside $\Aut(\mathbb{P}(2,1,1,1))$ we can assume that $\pi(G) = \left\{ \left[\begin{smallmatrix}  \pm 1 &   &    \\
 & \pm 1  &   \\
 &   & \pm 1   \\
\end{smallmatrix}\right] \right\}$.
All elements of $G$ can be written of the form $[\pm 1 :\pm 1 :\pm 1 :\pm 1]$. 
Let's assume that $G \neq G_{23}$. Then there exists $g \in G$ of the form $[-1:\alpha:\beta:\gamma]$ where $\alpha, \beta, \gamma \in \left\{-1,1\right\}$.
Since $[-1:1:1:1] \notin G$, it means that we don't have "$\alpha = \beta = \gamma$".
Up to multiplying by $\lambda = -1$, we can assume that exactly one the scalars among $\alpha, \beta, \gamma$ is equal to $-1$. Up to a conjugation by a permutation matrix, we can assume that $\alpha =-1$. We have $\pi(G) \simeq (\mathbb{Z}/2)^2$, hence there exists $h \in G$ such that $\pi(h) = [1:-1:1]$. Hence $h = [\epsilon:1:-1:1]$ where $\epsilon \in \{-1,1\}$. If $\epsilon = -1$, then $G = G_{24}$. Now we assume that $\epsilon = 1$. We observe that by permuting the last two variables, we can conjugate $G$ and $G_{24}$, hence our result.

Since $G \subset \Aut(S)$, we get that $S$ has to be of the corresponding form in the cases $G_{21}, G_{23}, G_{24}$. For the subgroup $G_{22}$, we get that $F$ is of the form $F(X,Y,Z) =  \tau X^4 + L_2(Y,Z) X^2 + L_4(Y,Z)$ where $\tau \in \kk$. If $\tau = 0$, then $[1:0:0]$ is a singular point of $\Gamma$. According to Lemma \ref{smoothnessgamma}, we will get that $S$ is not smooth, hence a contradiction. Hence $\tau \neq 0$. Hence up to an isomorphism we can assume that $\tau = 1$.
\end{proof}

%
%


\begin{lemme} \label{2fixed} $ $
Let $F$ be of the form $F(X,Y,Z) =  X^4 + L_2(Y,Z) X^2 + L_4(Y,Z)$  where $L_2$ is a second degree homogeneous polynomial and $L_4$ is a fourth degree homogeneous polynomial.
The following table gives us the Euler characteristic $\chi^g$ of the fixed point variety $X^g$ of some automorphisms $g$ of $S$:
\tiny
$$\begin{tabular}{ | c | c | c | c| }
 \hline 
   element & fixed points & nature & $\chi$  \\
   \hline
   id &  &  & $10$ \\
   \hline
   $[1 : -1 : 1 : 1]$ & $\{[\pm 1 : 1 : 0 : 0 ]\} \sqcup \{ [W:0:Y:Z] | W^2 = F(0,Y,Z) \}$ & $2$ pts and an elliptic curve & $2$   \\
 \hline  
    $[-1 : -1 : 1 : 1]$ & $\{ [0:0:Y:Z] | F(0,Y,Z) = 0 \} $ & $4$ pts & $4$  \\
 \hline  
    $[-1 : 1 : 1 : 1]$ & $\Gamma$ & quartic curve of genus $3$ & $-4$  \\
 \hline  
 \end{tabular}$$
\end{lemme}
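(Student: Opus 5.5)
The plan is to compute, for each of the four listed elements $g$, the fixed locus $S^g$ directly from the equation $W^2 = F(X,Y,Z)$ in $\mathbb{P}(2,1,1,1)$. Then we read off its topological Euler characteristic (in the étale sense when $\car(\kk)>0$). For the identity we will simply use that $S$ is the blow-up of $\mathbb{P}^2$ in $7$ points, so $\chi(S)=3+7=10$.

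For $g=[1:-1:1:1]$, a point $[W:X:Y:Z]$ is fixed iff there is $\lambda\in\kk^\times$ with $(W,-X,Y,Z)=(\lambda^2W,\lambda X,\lambda Y,\lambda Z)$. We split into cases.
\begin{itemize}
\item If $\lambda=1$, then $X=0$, which gives the curve $C=\{W^2=F(0,Y,Z)=L_4(Y,Z)\}$.
\item If $\lambda=-1$, then $Y=Z=0$, and we get $W^2=X^4$, i.e.\ the two points $[\pm1:1:0:0]$.
\end{itemize}
No other $\lambda$ is possible, since at least two of the coordinates $X,Y,Z$ would then have to vanish, which forces $W=0$ as well. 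The curve $C$ is a double cover of $\mathbb{P}^1$ branched over the four roots of $L_4$. These roots are simple because otherwise $\Gamma$ would be singular at a point of $\{X=0\}$, contradicting Lemma \ref{smoothnessgamma}. By Riemann--Hurwitz, $C$ then has genus $1$ and $\chi(C)=0$, so $\chi(S^g)=2$.

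For $g=[-1:-1:1:1]$, the same analysis gives two cases.
\begin{itemize}
\item If $\lambda=1$, then $W=X=0$ and $L_4(Y,Z)=0$, giving $4$ points.
\item If $\lambda=-1$, then $Y=Z=0$, and we need $-W=W$ with $W^2=X^4$, which forces $W=X=0$: no points.
\end{itemize}
Any other $\lambda$ forces too many coordinates to vanish, so $\chi(S^g)=4$. For the Geiser involution $[-1:1:1:1]$, the fixed locus is $\{W=0\}\cap S\cong\Gamma$. Here $\Gamma$ is a smooth plane quartic by Lemma \ref{smoothnessgamma}, hence has genus $3$ and $\chi=2-2\cdot3=-4$.

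The only delicate point is handling the weighted projective scaling correctly: one must check that no extra fixed points arise from scalars $\lambda$ with $\lambda^2\neq\pm1$. One must also use smoothness, via Lemma \ref{smoothnessgamma}, to guarantee that $L_4$ has simple roots, so that the curve in the first case really is elliptic. I expect this bookkeeping to be the main, though mild, obstacle.
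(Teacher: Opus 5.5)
Your proof is correct. Its skeleton is the paper's: identify the fixed loci, apply Riemann--Hurwitz to the double cover $\{W^2=F(0,Y,Z)\}\to\PP^1$, and use the genus of a smooth plane quartic for the Geiser involution. Where you differ is in how you obtain the key non-degeneracy input.

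\textbf{The paper's route.} It shows that $\Gamma\cap\{X=0\}$ consists of four distinct points, and that the curve $\{[W:0:Y:Z]\mid W^2=F(0,Y,Z)\}$ is smooth, using one general fact: fixed-point schemes of linearly reductive group actions on smooth varieties are smooth (\cite[Proposition A.8.10]{reductive}). It applies this to $[-1:1:1]$ acting on $\Gamma$ and to $[1:-1:1:1]$ acting on $S$, and combines the first with B\'ezout.

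\textbf{Your route.} You use the Jacobian criterion directly. On $\{X=0\}$ the partial $\partial F/\partial X$ vanishes, while $\partial F/\partial Y$ and $\partial F/\partial Z$ restrict to the partials of $L_4$. So a multiple root $[Y_0:Z_0]$ of $L_4$ would make $[0:Y_0:Z_0]$ a singular point of $\Gamma$, contradicting Lemma \ref{smoothnessgamma}. Smoothness of the elliptic curve then follows from $L_4$ being squarefree in characteristic $\neq 2$. You leave that step implicit, but it is immediate.

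\textbf{Comparison.} Your argument is more elementary and self-contained. It also justifies two things the paper's proof takes for granted: the fixed-locus column, through your explicit treatment of the weighted scaling $\lambda$, and the identity row, through $\chi(S)=3+7$. The paper's argument is more conceptual, and it reuses a tool the paper needs elsewhere anyway, for example in Proposition \ref{preli}.
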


\begin{proof} $ $
We observe that $\Gamma$ is invariant under the action of the involution $[-1:1:1] \in \PGL(3,\kk)$. Hence according to \cite[Proposition A.8.10]{reductive}, $\{ [0:0:Y:Z] | F(0,Y,Z) = 0 \} $ is smooth.
We also know that the curve $\Gamma$ is smooth (Lemma \ref{smoothnessgamma}), hence using Bézout's Theorem we find that the following set: \[ \left\{ [0:0:Y:Z] | F(0,Y,Z) = 0 \right\} \] is composed of exactly four points.

We now prove the nature of the set of fixed points $\left\{ [W:0:Y:Z] | W^2 = F(0,Y,Z) \right\}$:
First we observe that the action of $\langle [1:-1:1:1] \rangle$ is a linearly reductive group action (since $\car(\kk) \neq 2$). Hence according to \cite[Proposition A.8.10]{reductive} applied to the subgroup $\langle [1:-1:1:1] \rangle$, we get that the curve $\{ [W:0:Y:Z] | W^2 = F(0,Y,Z) \}$
is smooth.
Now we will compute its genus.

The map:
$$\begin{array}{ccccc}
& & \{ [W:0:Y:Z] | W^2 = F(0,Y,Z) \} & \to & \mathbb{P}^1 \\
& & [W:X:Y:Z] & \mapsto & [Y:Z] \\
\end{array}$$ is a 2:1 morphism. The branching points of this morphism are the points of the set $\{ [0:0:Y:Z] | F(0,Y,Z) = 0 \} $. Hence it has four branching points. Using Riemann-Hurwitz formula, we get that the curve $\left\{ [W:0:Y:Z] | W^2 = F(0,Y,Z) \right\}$ is elliptic.
\end{proof}

We will now compute the invariant Picard rank of these four subgroups.

\begin{prop} \label{picard2} $ $
We have:

$rk(Pic(S)^{G_{21}}) = rk(Pic(S)^{G_{22}}) = 1$

$rk(Pic(S)^{G_{23}})  = 2$

$rk(Pic(S)^{G_{24}})  = 3$
\end{prop}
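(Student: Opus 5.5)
The plan is to apply the invariant Picard rank formula of Proposition \ref{rankformula},
\[
\rank \Pic(S)^G + 2 = \frac{1}{|G|} \sum_{g \in G} \chi(S^g),
\]
to each of the four groups of Proposition \ref{degree2reduction}. The Euler characteristics of the fixed loci will be read off from Lemma \ref{2fixed}. The first step is to list the elements of each $G_{2n}$ and to identify each one with one of the four types appearing in the table of Lemma \ref{2fixed}.

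A key observation is that in $\mathbb{P}(2,1,1,1)$ the scalar $\lambda=-1$ acts by $[W:X:Y:Z]\mapsto[W:-X:-Y:-Z]$. Hence $[a:b:c:d]=[a:-b:-c:-d]$; for instance $[1:1:-1:-1]=[1:-1:1:1]$ and $[1:-1:-1:1]=[1:1:1:-1]$. Up to this identification and a permutation of $X,Y,Z$, every non-trivial element of the form $[\pm1:\pm1:\pm1:\pm1]$ is of one of three types:
\begin{itemize}
\item the Geiser involution $[-1:1:1:1]$, with $\chi=-4$;
\item an element of type $[1:-1:1:1]$, whose fixed locus is an elliptic curve and two points, with $\chi=2$;
\item an element of type $[-1:-1:1:1]$, whose fixed locus is four points, with $\chi=4$.
\end{itemize}
The identity contributes $\chi(S)=10$.

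The step needing care is justifying that Lemma \ref{2fixed} applies after permuting coordinates, since that lemma is stated for $F = X^4 + L_2(Y,Z)X^2 + L_4(Y,Z)$. For $G_{21},G_{23},G_{24}$ we have $F=L_2(X^2,Y^2,Z^2)$, which has this shape with respect to whichever variable is distinguished. The only remaining point is that the coefficient of the corresponding fourth power is non-zero. This follows as in the proof of Proposition \ref{degree2reduction}: otherwise the corresponding coordinate point is a singular point of $\Gamma$, contradicting Lemma \ref{smoothnessgamma}. After rescaling, that coefficient can be taken equal to $1$.

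With this in hand, the counting is as follows.
\begin{itemize}
\item For $G_{21}$ (order $8$): the identity, the Geiser involution, three elements of type $[1:-1:1:1]$ and three of type $[-1:-1:1:1]$. This gives $(10-4+3\cdot 2+3\cdot 4)/8 = 3$, so the rank is $1$.
\item For $G_{22}$: the elements are $\mathrm{id}$, $[1:-1:1:1]$, $[-1:-1:1:1]$ and $[-1:1:1:1]$. This gives $(10+2+4-4)/4=3$, so the rank is $1$.
\item For $G_{23}$: the three non-trivial elements are all of type $[1:-1:1:1]$ after the identification above. This gives $(10+2+2+2)/4=4$, so the rank is $2$.
\item For $G_{24}$: the non-trivial elements are $[-1:-1:1:1]$, $[-1:1:-1:1]$ (both of the four-point type) and $[1:1:1:-1]$ (elliptic type). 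This gives $(10+4+4+2)/4=5$, so the rank is $3$.
\end{itemize}

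The main obstacle is the correct identification of types under the weighted scaling, in particular not mistaking elements such as $[1:-1:-1:1]$ for new types. Once this is handled, everything else is direct arithmetic.
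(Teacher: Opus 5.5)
Your proposal is correct and follows the paper's proof: the paper also applies the invariant Picard rank formula of Proposition~\ref{rankformula}, with the Euler characteristics taken from Lemma~\ref{2fixed}. Your additional steps fill in details the paper leaves implicit and are sound: identifying elements modulo the weighted scaling $[a:b:c:d]=[a:-b:-c:-d]$, and justifying Lemma~\ref{2fixed} after a coordinate permutation (the fourth-power coefficients of $F=L_2(X^2,Y^2,Z^2)$ are non-zero by smoothness of $\Gamma$). Your element counts and the resulting averages $3,3,4,5$ are all right.
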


\begin{proof} $ $
    Using Proposition \ref{rankformula}, we have the following formula for the Picard rank: $$\rank \Pic(X)^G + 2 = \frac{1}{|G|} \sum_{g \in G} \chi(X^g)$$ Hence we get the result for each subgroup by using this formula with Lemma \ref{2fixed}.
\end{proof}


\subsection{A detailed study of the case $G \simeq (\mathbb{Z}/2)^3$: Simplification of the surface and conjugations by isomorphism} \label{subsection444}
$ $

\medskip

In this subsection we will study the case: $$G = G_{21}  = \left\{ [ \pm 1: \pm 1: \pm 1: \pm 1] \right\} \subset \Aut(S)$$ where: $$S = \left\{ W^2 = a X^4 + b Y^4 + c Z^4 + d X^2 Y^2 + e X^2 Z^2 + f Y^2 Z^2  \right\} \subset \mathbb{P}(2,1,1,1)$$ for some $a,b,c,d,e,f \in \kk$, described in Proposition \ref{degree2reduction}.

We will first show that we can simplify the equation of the surface using the symmetry of the subgroup $G$ (Proposition \ref{21surface}).
Then we will study the possible conjugations by isomorphisms between such couples $(G_{21},S)$ (Proposition \ref{21conjugation}).


\begin{defi} $ $
Let $d,e,f \in \kk$.
We define the del Pezzo surface of degree $1$: $$S_{def} = \left\{W^2 = X^4 + Y^ 4 + Z^4 + d X^2 Y^2 + e X^2 Z^2 + f Y^2 Z^2 \right\} \subset \mathbb{P}(2,1,1,1)$$
\end{defi}

\begin{lemme} \label{21smooth}  $ $
Let $a,b,c,d,e,f \in \kk$.

Let $F(X,Y,Z) = a X^4 + b Y^4 + c Z^4 + d X^2 Y^2 + e X^2 Z^2 + f Y^2 Z^2$.

Then the two following assertions are equivalent:
\begin{enumerate}
\item The curve $\Gamma = \{ F = 0\} \subset \mathbb{P}^2$ is smooth.
\item $abc (d^2 - 4 ab)(e^2 - 4 ac)(f^2 - 4 cb)(4 abc -  a f^2 -  d^2 c -  b e^2 +  d e f ) \neq 0$ 
\end{enumerate}
\end{lemme}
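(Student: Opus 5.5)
The plan is to exploit the $(\ZZ/2)^2$-symmetry of $F$: since $F$ depends only on $X^2,Y^2,Z^2$, the group generated by $X\mapsto -X$, $Y\mapsto -Y$ preserves $\Gamma$. We split the singular points of $\Gamma$ according to how many coordinates vanish. Throughout, we assume $\car(\kk)\neq 2$, as in the rest of the section.

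The gradient of $F$ is
\[
\nabla F = \bigl(2X(2aX^2+dY^2+eZ^2),\ 2Y(dX^2+2bY^2+fZ^2),\ 2Z(eX^2+fY^2+2cZ^2)\bigr).
\]
Singular points are the common zeros of these three components; they automatically lie on $\Gamma$ by Euler's formula. We distinguish cases by the vanishing pattern of $(X,Y,Z)$.

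\emph{Two coordinates vanish.} Take the point $[1:0:0]$. The second and third components vanish there, and the first equals $4a$. So $[1:0:0]$ is singular iff $a=0$. Similarly $[0:1:0]$ is singular iff $b=0$, and $[0:0:1]$ iff $c=0$. This accounts for the factor $abc$.

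\emph{Exactly one coordinate vanishes.} Take $Z=0$ with $X,Y\neq 0$; the third component then vanishes. Singularity means $u=X^2$, $v=Y^2$ with $uv\neq 0$ satisfy
\[
2au+dv=0,\qquad du+2bv=0.
\]
A nonzero solution exists iff $4ab-d^2=0$. If one of $a,b$ is zero, then $d=0$, and such a point is already covered by the factor $abc$. So up to points already counted, this case gives the factor $d^2-4ab$. Since $\kk$ is algebraically closed, we can take square roots to recover $X,Y$. Some care is needed when the kernel vector has a zero coordinate, but then $a$ or $b$ vanishes and the factor $abc$ takes over. The other two choices of vanishing coordinate give $e^2-4ac$ and $f^2-4bc$.

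\emph{No coordinate vanishes.} Now we need a solution of $M(u,v,w)^T=0$ with $u,v,w$ all nonzero, where
\[
M=\begin{pmatrix}2a&d&e\\ d&2b&f\\ e&f&2c\end{pmatrix}.
\]
A direct expansion gives $\det M = 2(4abc - af^2 - bd^2 - ce^2 + def)$, which is the last factor.

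For the implication (2)$\Rightarrow$(1): if $\det M\neq 0$, no nonzero kernel vector exists, so there is no singular point in the torus. Combined with the previous cases, nonvanishing of the full product gives smoothness.

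For the implication (1)$\Rightarrow$(2): if $\det M=0$, there is a nonzero kernel vector $(u,v,w)$. If all its entries are nonzero, taking square roots produces a singular point. If some entry is zero, say $w=0$, then the upper-left $2\times 2$ block has a nonzero kernel vector, so $d^2-4ab=0$. That falls into an earlier case and again gives a singular point, after the same square-root argument applied recursively down to $abc=0$.

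\textbf{Main obstacle.} The hard part is this bookkeeping when kernel vectors have zero entries. It must be checked that vanishing of any single factor genuinely forces a singular point. The cleanest route is to show: $\det M=0$ with no nowhere-zero kernel vector forces one of the $2\times 2$ minors factors to vanish, and each such minor factor vanishing forces a point singularity or $abc=0$. The latter is handled by the two-coordinates-vanish case.
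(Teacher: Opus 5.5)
Your argument follows the paper's route: the Jacobian criterion, a case split by how many coordinates vanish, and in each case a linear system in $(X^2,Y^2,Z^2)$. You are in fact more careful than the paper about kernel vectors with zero entries; the paper's last case tacitly needs the $2\times 2$ factors to be nonzero in order to get a nowhere-zero kernel vector.

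There is one computational slip. The expansion of $\det M$ is wrong: the correct value is
$\det M = 2(4abc - af^2 - cd^2 - be^2 + def)$, since each diagonal entry pairs with the square of the off-diagonal entry outside its row and column. This is exactly the stated last factor. Your $-bd^2-ce^2$ is not, so the claim ``which is the last factor'' is false as written. With this correction the proof is complete.
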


\begin{proof} $ $
The gradient of $F$ is: $$\nabla F (X,Y,Z)=\left[\begin{smallmatrix}
4 a X^3 + 2 d X Y^2 + 2 e X Z^2 \\
4 b Y^3 + 2 d X^2 Y + 2 f Z^2 Y \\
4 c Z^3 + 2 e X^2 Z + 2 f Y^2 Z
\end{smallmatrix}\right]
$$

Since $\car(\kk) \neq 2$, we can use Euler's formula for homogeneous polynomials:
\newline
A point is a singular point of $\Gamma$ if and only if the gradient $\nabla F$ vanishes at this point.

We observe that $\nabla F (X,Y,Z) = 0$ if and only if:

$\left\{
    \begin{array}{r}
2 a X^2 +  d  Y^2 +  e  Z^2 = 0 \mbox{ or } X = 0 \\
2 b Y^2 + d X^2  +  f Z^2 = 0  \mbox{ or } Y = 0 \\
2 c Z^2 +  e X^2  +  f Y^2 = 0 \mbox{ or } Z = 0
    \end{array}
\right.$
\\

If $X = Y = 0$, then $[0:0:1]$ is a singular point of $\Gamma$ if and only if $c=0$. Likewise for $[0:1:0]$ and $[1:0:0]$.

Now we assume $a,b,c \neq 0$. If $X = 0, Y \neq 0, Z \neq 0$, then $[X:Y:Z]$ is a singular point of $\Gamma$ if and only if: $$\left\{
    \begin{array}{r}
2 b Y^2  +  f Z^2 = 0  \\
2 c Z^2   +  f Y^2 = 0
    \end{array}
\right.$$
This system has a non-trivial solution if and only if $f^2 = 4 bc$. Likewise if $X \neq 0, Y =  0, Z \neq 0$ and if $X \neq 0, Y \neq 0, Z = 0$.
\\

Finally, we assume $X, Y, Z \neq 0$:
$$\left\{
    \begin{array}{r}
2 a X^2 +  d  Y^2 +  e  Z^2 = 0 \\
2 b Y^2 + d X^2  +  f Z^2 = 0   \\
2 c Z^2 +  e X^2  +  f Y^2 = 0 
    \end{array}
\right.$$

By viewing this system as a linear system, we can see that it has a non-trivial solution if and only if: $$det \left[\begin{smallmatrix}  2a & d  & e   \\
d & 2b  & f  \\
e & f  & 2c   \\
\end{smallmatrix}\right] = 0,$$ \textit{i.e.} if and only if: $$4 abc -  a f^2 -  d^2 c -  b e^2 +  d e f  =  0$$ \end{proof}

\begin{prop} \label{21surface} $ $

We assume $S$ is smooth.
Up to an isomorphism preserving $G_{21}$, we can assume that $S = S_{def}$ where $d,e,f \in \kk \setminus \{-2,2\}$ such that $ 4 - f^2 - d^2 - e^2 + def \neq 0$.
\end{prop}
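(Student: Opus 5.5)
The plan is to rescale the coordinates $X,Y,Z$ by a diagonal automorphism of $\mathbb{P}(2,1,1,1)$. Such a map commutes with every element of $G_{21}=\{[\pm1:\pm1:\pm1:\pm1]\}$, so it preserves $G_{21}$.

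By Proposition \ref{degree2reduction}, $S=\{W^2=F\}$ with
\[F=aX^4+bY^4+cZ^4+dX^2Y^2+eX^2Z^2+fY^2Z^2.\]
Since $S$ is smooth, Lemma \ref{smoothnessgamma} shows that $\Gamma=\{F=0\}$ is smooth. Lemma \ref{21smooth} then gives
\[abc\,(d^2-4ab)(e^2-4ac)(f^2-4bc)(4abc-af^2-cd^2-be^2+def)\neq 0.\]
In particular $a,b,c\neq 0$. Since $\kk$ is algebraically closed, we choose $\alpha,\beta,\gamma\in\kk^\times$ with $\alpha^4a=\beta^4b=\gamma^4c=1$. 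We then apply
\[\Phi\colon [W:X:Y:Z]\mapsto [W:\alpha^{-1}X:\beta^{-1}Y:\gamma^{-1}Z],\]
which sends $S$ to the surface defined by $F(\alpha X,\beta Y,\gamma Z)$. This surface is $S_{d'e'f'}$ with $d'=\alpha^2\beta^2 d$, $e'=\alpha^2\gamma^2 e$ and $f'=\beta^2\gamma^2 f$. Because $\Phi$ is diagonal, it commutes with $G_{21}$.

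The remaining step is to transport the conditions. We expect $d'^2-4=\alpha^4\beta^4(d^2-4ab)$, and similarly for $e'$ and $f'$. For the last factor, the determinant of the symmetric matrix $\left[\begin{smallmatrix}2&d'&e'\\d'&2&f'\\e'&f'&2\end{smallmatrix}\right]$ equals $(\alpha\beta\gamma)^4$ times the original determinant, since the new matrix is $D M D$ with $D=\mathrm{diag}(\alpha^2,\beta^2,\gamma^2)$. Hence $d',e',f'\neq\pm2$ and $4-d'^2-e'^2-f'^2+d'e'f'\neq0$, up to the factor $2$ coming from the determinant normalization, which is nonzero since $\car(\kk)\neq2$. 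Equivalently, we can apply Lemma \ref{21smooth} directly to $S_{d'e'f'}$, which is smooth as the isomorphic image of $S$, with $a=b=c=1$. That is the cleanest route and avoids the computation.

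No real obstacle is expected. The only point needing care is to check that the determinant condition of Lemma \ref{21smooth} specializes, at $a=b=c=1$, to exactly the stated expression $4-f^2-d^2-e^2+def$.
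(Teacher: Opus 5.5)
Your proposal is correct and is essentially the paper's own proof. Lemma~\ref{smoothnessgamma} and Lemma~\ref{21smooth} give $a,b,c\neq 0$; a diagonal rescaling, which commutes with $G_{21}$, normalizes $a=b=c=1$; and Lemma~\ref{21smooth} applied again at $a=b=c=1$ yields exactly $d,e,f\neq\pm 2$ and $4-f^2-d^2-e^2+def\neq 0$. Your transport computation via $DMD$ is correct but redundant.
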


\begin{proof} $ $
Using Lemma \ref{smoothnessgamma}, we get that $\Gamma$ is smooth. Hence using Lemma \ref{21smooth}, we get $a , b , c \neq 0$. We can then do a conjugation by an isomorphism given by a diagonal element (hence not changing the subgroup $G_{21}$) to get $a = b = c = 1$. Then using again Lemma \ref{smoothnessgamma} and Lemma \ref{21smooth}, we get the conditions on $d, e , f$.
\end{proof}


We will now describe the normalizer of the subgroup $G_{21}$ in Lemma \ref{lemmenormalizerg21}.

\begin{lemme} \label{lemmenormalizerg21lemme} $ $

Let $\phi \in \PGL(3,\kk)$ be an element of the normalizer of $\{ [\pm 1 : \pm 1 : \pm 1]\} \subset \PGL(3,\kk)$.
Then there exists $\sigma$ a permutation matrix and $D$ a diagonal invertible matrix such that $\phi = \sigma . D$.
\end{lemme}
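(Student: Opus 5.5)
Write $V = \{ [\pm 1 : \pm 1 : \pm 1]\} \subset \PGL(3,\kk)$; it is a Klein group with three non-trivial elements
\[
a = [-1:1:1], \quad b = [1:-1:1], \quad c = [1:1:-1].
\]
I would copy the strategy of Lemma \ref{lemmeg33}: first show that a normalizing element permutes the three coordinate points, then read off its matrix.

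\textbf{Step 1: eigenspaces.} We are in characteristic not $2$. Each non-trivial element of $V$ has, up to scalar, exactly two eigenspaces in $\kk^3$, namely a line $L$ and a plane $P$. For $a$ these are $L_a = \langle e_1 \rangle$ and $P_a = \langle e_2, e_3 \rangle$, and similarly for $b$ and $c$. Because the multiplicities are $(2,1)$ rather than $(1,1,1)$, the decomposition is intrinsic to the projective class of the element. Concretely, the line is the eigenspace of the eigenvalue of multiplicity one. This does not depend on the lift to $\GL(3,\kk)$, since rescaling the lift changes both eigenvalues but keeps the multiplicities.

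\textbf{Step 2: $\phi$ permutes the coordinate points.} Take a lift $\Phi \in \GL(3,\kk)$ of $\phi$ and let $v \in V \setminus \{I\}$. Then $\phi v \phi^{-1} = v'$ for some $v' \in V \setminus \{I\}$. Conjugation sends eigenspaces of $v$ to eigenspaces of $v'$ and preserves multiplicities, so $\Phi(L_v) = L_{v'}$. Hence $\Phi$ permutes the three lines $\langle e_1 \rangle$, $\langle e_2 \rangle$, $\langle e_3 \rangle$.

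\textbf{Step 3: the matrix.} A matrix sending each coordinate line to a coordinate line is monomial. Every column of $\Phi$ has exactly one non-zero entry, and these entries sit in distinct rows because $\Phi$ is invertible. So $\Phi = \sigma D$ with $\sigma$ a permutation matrix and $D$ an invertible diagonal matrix, and the same holds for $\phi$ after passing to $\PGL(3,\kk)$.

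\textbf{Main point of care.} The only delicate step is making sure the correspondence $v \mapsto L_v$ is well defined on projective classes, which is the multiplicity argument in Step 1. Alternatively, Step 1 can be replaced by the observation that the fixed-point set of $a$ in $\PP^2$ is the point $[1:0:0]$ together with the line $\{X = 0\}$. The isolated fixed point is then determined intrinsically, and $\phi$ must send the isolated fixed point of $v$ to that of $v'$. This version avoids lifts altogether, and I would write the proof with whichever is shorter.
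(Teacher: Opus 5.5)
Your argument is correct and is essentially the paper's. The paper also uses that $\phi$ permutes the three involutions $[-1:1:1]$, $[1:-1:1]$, $[1:1:-1]$, then composes with the permutation matrix $\sigma$ inducing that permutation so that $\sigma^{-1}\phi$ centralizes the group, and then gets $\sigma^{-1}\phi$ diagonal by intersecting the block-shaped centralizers of the three involutions. Your multiplicity-one eigenline argument merges these two steps into one. It also makes explicit a point the paper leaves implicit: a $\PGL(3,\kk)$-centralizing element must preserve the eigenspace decomposition even though the lifts only commute up to a scalar, because rescaling a lift does not change which eigenspace has multiplicity one.
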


\begin{proof} $ $
The element $\phi$ is a permutation of the following subset: $$\left\{ [-1:1:1], [1:-1:1] , [1:1:-1] \right\} \subset \PGL(3,\kk)$$ Let $\sigma \in \PGL(3,\kk)$ be the permutation matrix inducing the same permutation as $\phi$ on this set. Then by construction, the element $\sigma^{-1} \phi$ belongs to the centralizer of $\left\{ [\pm 1: \pm 1: \pm 1] \right\} \subset \PGL(3,\kk)$. We observe that the centralizer of $[-1:1:1]$ \textit{resp}  $[1:-1:1]$ \textit{resp}  $[1:1:-1]$ is $ \left\{  \left[\begin{smallmatrix}  * & 0  & 0   \\
0 & *  & *  \\
0 & *  & *   \\
\end{smallmatrix}\right]  \right\}$ \textit{resp} $ \left\{  \left[\begin{smallmatrix}  * & 0  & *   \\
0 & *  & 0  \\
* & 0  & *   \\
\end{smallmatrix}\right]  \right\}$ \textit{resp} $ \left\{  \left[\begin{smallmatrix}  * & *  & 0   \\
* & *  & 0  \\
0 & 0  & *   \\
\end{smallmatrix}\right]  \right\}$. Hence the centralizer of $\left\{ [\pm 1: \pm 1: \pm 1] \right\}$ is the set of diagonal invertible matrices, which means there exists $D$ a diagonal invertible matrix such that $\sigma^{-1} \phi = D$, hence our result.
\end{proof}

\begin{defi} $ $
We define the subgroup $\mathcal{P}$ of automorphisms of $\Aut(\mathbb{P}(2,1,1,1))$ as the set of maps of the form: 
$$\begin{array}{ccccc}
& & \mathbb{P}(2,1,1,1) & \to & \mathbb{P}(2,1,1,1) \\
& & [W:X:Y:Z] & \mapsto & [W : bX+cY+dZ : eX+fY + gZ : hX+iY+jZ] \\
\end{array}$$
where
$\left[\begin{smallmatrix}  b & c  & d   \\
e & f  & g  \\
h & i  & j   \\
\end{smallmatrix}\right]\in \GL(3,\kk)$.
\end{defi}

\begin{lemme} \label{lemmenormalizerg21} $ $
The normalizer of $G_{21}$ in $\mathcal{P}$ is:

$ \left\{  \left[\begin{array}{c|c}
1 & 0_{1 \times 3}  \\
\hline
0_{3 \times 1}  & \sigma . D
\end{array}\right]  | \sigma \text{ is a permutation matrix} , D \text{ is a diagonal invertible matrix} \right\}$

\end{lemme}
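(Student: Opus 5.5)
The plan is to reduce to Lemma \ref{lemmenormalizerg21lemme} via the projection $\mathcal{P} \to \PGL(3,\kk)$. First I note that $\mathcal{P} \simeq \GL(3,\kk)$ modulo the scalar $-1$ acting trivially: an element of $\mathcal{P}$ given by a matrix $M$ acts as $[W:X:Y:Z] \mapsto [W : M(X,Y,Z)]$. In $\mathbb{P}(2,1,1,1)$ we have $[W:X:Y:Z] = [\lambda^2 W : \lambda X : \lambda Y : \lambda Z]$. Hence $M$ and $\lambda M$ give the same map if and only if $\lambda^2 = 1$, so $\mathcal{P} \simeq \GL(3,\kk)/\{\pm I\}$. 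Write $\overline{\pi}\colon \mathcal{P} \to \PGL(3,\kk)$ for the natural surjection $M \mapsto [M]$. The group $G_{21} = \{[\pm1:\pm1:\pm1:\pm1]\}$ is contained in $\mathcal{P}$: each $[\epsilon_0:\epsilon_1:\epsilon_2:\epsilon_3]$ equals $[1:\epsilon_0\epsilon_1:\epsilon_0\epsilon_2:\epsilon_0\epsilon_3]$ by rescaling with $\lambda = \epsilon_0$ (using $\epsilon_0^2=1$). Thus $G_{21}$ is the image in $\mathcal{P}$ of the diagonal sign matrices $\{\operatorname{diag}(\pm1,\pm1,\pm1)\}$, and $\overline{\pi}(G_{21}) = \{[\pm1:\pm1:\pm1]\} \subset \PGL(3,\kk)$.

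For the inclusion "$\supseteq$": if $\phi = \sigma D$ with $\sigma$ a permutation matrix and $D$ invertible diagonal, then conjugating a diagonal sign matrix by $\sigma D$ gives the diagonal sign matrix with permuted entries. So $\phi$ normalizes the group of diagonal sign matrices in $\GL(3,\kk)$, and hence normalizes its image $G_{21}$ in $\mathcal{P}$. This is a direct computation.

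For "$\subseteq$": let $\phi \in \mathcal{P}$ normalize $G_{21}$. Then $\overline{\pi}(\phi)$ normalizes $\overline{\pi}(G_{21}) = \{[\pm1:\pm1:\pm1]\}$ in $\PGL(3,\kk)$. By Lemma \ref{lemmenormalizerg21lemme}, $\overline{\pi}(\phi) = [\sigma D]$, so a representative matrix $M$ of $\phi$ satisfies $M = \mu \sigma D$ for some $\mu \in \kk^\times$. Absorbing $\mu$ into $D$ shows that $\phi$ is represented by $\sigma D'$ with $D'$ diagonal invertible, which is of the stated form.

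The only step requiring care is the identification of $G_{21}$ inside $\mathcal{P}$. One must not forget that in the weighted projective space an element with a $-1$ on $W$ is rescaled to one with $+1$ on $W$ and signs on the other coordinates; this is what makes $G_{21} \subset \mathcal{P}$ and lets the problem descend to $\PGL(3,\kk)$. I expect that bookkeeping, rather than any computation, to be the main (mild) obstacle.
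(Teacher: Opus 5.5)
Your reduction along $\overline{\pi}\colon\mathcal{P}\to\PGL(3,\kk)$ to Lemma~\ref{lemmenormalizerg21lemme} works. However, the identification of $G_{21}$ inside $\mathcal{P}$ is wrong as written, and this is precisely the step you flagged.

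In $\mathbb{P}(2,1,1,1)$, rescaling by $\lambda$ multiplies $W$ by $\lambda^2$. So taking $\lambda=\epsilon_0=-1$ does not remove the sign on $W$. You need $\lambda^2=\epsilon_0$, i.e.\ $\lambda=\pm i$, which gives $[-1:\epsilon_1:\epsilon_2:\epsilon_3]=[1:i\epsilon_1:i\epsilon_2:i\epsilon_3]$.

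In particular, the Geiser involution $[-1:1:1:1]$ is the class of $iI$ in $\mathcal{P}\simeq\GL(3,\kk)/\{\pm I\}$. Your formula instead turns it into $[1:-1:-1:-1]$, which is the identity of $\mathbb{P}(2,1,1,1)$. Correspondingly, the image of the diagonal sign matrices in $\mathcal{P}$ has order $4$, not $8$: it is the Klein group $H=\{[1:\pm1:\pm1:\pm1]\}$. The correct statement is that $G_{21}$ is the image of the group generated by the sign matrices and $iI$.

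The repair is one line. Since $\overline{\pi}(iI)$ is trivial, $\overline{\pi}(G_{21})=\{[\pm1:\pm1:\pm1]\}$ still holds, and your ``$\subseteq$'' argument goes through unchanged. For ``$\supseteq$'' you only need to add that $\sigma D$ commutes with the scalar $iI$.

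With that fix your proof is complete, and it takes a different, slightly more economical route than the paper. The paper does not project $G_{21}$ directly. It first uses the fixed-locus data of Lemma~\ref{2fixed} to show that a normalizing $\Phi$ must preserve $H$: the non-trivial elements of $H$ differ from the other elements of $G_{21}$ in the nature of their fixed loci. Only then does it descend to $\PGL(3,\kk)$. In your version that geometric input is unnecessary, because $\overline{\pi}$ kills the Geiser involution and hence $\overline{\pi}(G_{21})=\overline{\pi}(H)$. The paper's route additionally shows that the normalizer preserves $H$, which is not needed here.
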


\begin{proof} $ $
We define $H = \left\{ [1:\pm 1 :\pm 1 : \pm 1] \right\} \subset G_{21}$, Klein subgroup of $G_{21}$.

Let $\Phi = [1:\phi]$ an element of the normalizer of $G_{21}$, where $\phi \in \GL(3,\kk)$. We know that for every $g \in G_{21}$, the nature of the fixed point of $g$ and $\Phi g \Phi^{-1}$ are the same. We also observe that for a given $h \in H$ and $g \in G_{21}$, if the nature of the fixed points of $h$ and $g$ are the same, then $g \in H$ (Lemma \ref{2fixed}).
Hence we have $\Phi H \Phi^{-1} \subset H$, so $\Phi H \Phi^{-1} = H$ by cardinality.
Hence $\phi \left\{ [\pm 1 :\pm 1 : \pm 1] \right\} \phi^{-1} = \left\{ [\pm 1 :\pm 1 : \pm 1] \right\}$ inside $\PGL(3,\kk)$.
Hence according to Lemma \ref{lemmenormalizerg21lemme} there exist  $\sigma$ a permutation matrix and $D$ a diagonal invertible matrix such that $\phi = \sigma D$. Hence the result.

Conversely, if $\Phi =  \left[\begin{array}{c|c}
1 & 0_{1 \times 3}   \\
\hline
0_{3 \times 1} & \sigma . D
\end{array}\right] $, then $\Phi$ belongs to the normalizer of $G_{21}$.
\end{proof}

\bigskip

We will now use the description of the normalizer of the subgroup $G_{21}$ in order to get a necessary and sufficient condition for the conjugation of two such pairs:

\bigskip

\begin{prop} \label{21conjugation} $ $
Let $d,e,f,d',e',f' \in \kk$.
The following are equivalent:
\begin{enumerate}
\item \label{21conjugation1} There exists an isomorphism $S_{def} \rightarrow S_{d'e'f'}$ that conjugates $G_{21}$ to itself.
\item \label{21conjugation2} There exist $\epsilon_d, \epsilon_e \in \{ -1, 1\}$ such that $\{d',e',f'\} = \{\epsilon_d d, \epsilon_e e, \epsilon_d \epsilon_e f \}$.
\end{enumerate}
\end{prop}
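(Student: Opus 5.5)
We prove the two implications separately. In both directions the key input is Lemma \ref{lemmenormalizerg21}.

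\emph{Direction \eqref{21conjugation2} $\Rightarrow$ \eqref{21conjugation1}.} We write down explicit isomorphisms lying in the normalizer of $G_{21}$. A sign change of a single variable does nothing to a quartic form. Instead, we use the diagonal map $[W:X:Y:Z]\mapsto[W:\lambda X:Y:Z]$ with $\lambda^4=1$ and $\lambda^2=-1$, i.e.\ $\lambda=i$. This keeps $X^4$ unchanged and sends $X^2\mapsto -X^2$. Hence it replaces $(d,e,f)$ by $(-d,-e,f)$.

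Scaling $Y$ by $i$ gives $(-d,e,-f)$, and scaling $Z$ by $i$ gives $(d,-e,-f)$. Together these realize every sign pattern $(\epsilon_d d,\epsilon_e e,\epsilon_d\epsilon_e f)$. Permutation matrices on $X,Y,Z$ permute the coefficients of $X^2Y^2$, $X^2Z^2$, $Y^2Z^2$ according to the induced permutation of the pairs. Every permutation of the three pairs arises, and this gives the set equality. Both kinds of maps are diagonal-times-permutation, so they normalize $G_{21}$.

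Strictly, condition \eqref{21conjugation2} should be read as an equality of triples up to simultaneous permutation, which is what these maps produce. If some of $d,e,f$ coincide, the set formulation is weaker. We will remark on this and treat the statement as the natural multiset or ordered-up-to-permutation version.

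\emph{Direction \eqref{21conjugation1} $\Rightarrow$ \eqref{21conjugation2}.} Let $\Phi\colon S_{def}\to S_{d'e'f'}$ be an isomorphism conjugating $G_{21}$ to itself. First, $\Phi$ is induced by an automorphism of $\mathbb{P}(2,1,1,1)$: it commutes with the Geiser involution and respects the anticanonical double cover of $\mathbb{P}^2$. Such an automorphism has the form $[W:X:Y:Z]\mapsto[\lambda W+q(X,Y,Z):\phi(X,Y,Z)]$.

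Compatibility with the equations $W^2=F$ forces $q=0$, since $\car(\kk)\neq2$; rescaling then puts $\Phi$ in $\mathcal{P}$ up to the weighted scaling. By Lemma \ref{lemmenormalizerg21}, $\phi=\sigma D$ with $\sigma$ a permutation and $D=\mathrm{diag}(\alpha,\beta,\gamma)$. After composing with the permutation isomorphism from the first direction, we may assume $\sigma=\mathrm{id}$.

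Then $F'(\alpha X,\beta Y,\gamma Z)=\lambda^2F(X,Y,Z)$. Comparing the coefficients of $X^4,Y^4,Z^4$ gives $\alpha^4=\beta^4=\gamma^4=\lambda^2$. Setting $\alpha^2=\epsilon_1\lambda$, $\beta^2=\epsilon_2\lambda$, $\gamma^2=\epsilon_3\lambda$ with $\epsilon_k\in\{\pm1\}$, the mixed terms give
\[ d=\epsilon_1\epsilon_2 d',\quad e=\epsilon_1\epsilon_3 e',\quad f=\epsilon_2\epsilon_3 f'. \]
The product of the three signs is $1$, which is exactly condition \eqref{21conjugation2} with $\epsilon_d=\epsilon_1\epsilon_2$ and $\epsilon_e=\epsilon_1\epsilon_3$.

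The main obstacle is the first step of this direction: justifying that an abstract isomorphism of the surfaces extends to an element of $\mathcal{P}$ (times a scaling of $W$). We would argue via $|-K_S|$, which gives the double cover to $\mathbb{P}^2$ and is canonical, so every isomorphism induces a linear map on $H^0(-K)$. Lifting to $H^0(-2K)$ then yields the $W$-coordinate up to the terms $q$, which are killed by the equation as described.
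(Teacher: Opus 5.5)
Your proposal is correct and follows the paper's route. For (2)$\Rightarrow$(1) you use explicit diagonal and permutation isomorphisms; for (1)$\Rightarrow$(2) you extend $\Phi$ to $\mathbb{P}(2,1,1,1)$, reduce to $\sigma D$ via Lemma~\ref{lemmenormalizerg21}, and compare coefficients, with your $\lambda$ simply recording the $W$-scaling that the paper normalizes away.

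Your caveat that (2) must be read as equality of triples up to permutation is justified: with repeated values the literal set equality is strictly weaker. For example, $(1,1,3)$ and $(1,3,3)$ in characteristic $0$ satisfy it but are not related by any such isomorphism. The paper's proof tacitly uses the same reading. Your $i$-scalings also realize the sign patterns correctly, whereas the paper's single map $[W:\sqrt{\epsilon_d}X:\sqrt{\epsilon_e}Y:\sqrt{\epsilon_d}\sqrt{\epsilon_e}Z]$ only works after relabeling the signs.
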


\begin{proof} $ $

\ref{21conjugation2} $\Rightarrow$ \ref{21conjugation1}:
Let's assume there exist $\epsilon_d, \epsilon_e \in \{ -1, 1\}$ such that: $$\{d',e',f'\} = \left\{\epsilon_d d, \epsilon_e e, \epsilon_d \epsilon_e f \right\}$$
Applying a permutation of $X,Y,Z$ conjugates $G_{21}$ to itself and permutes $d,e,f$. We may thus assume that $d'=\epsilon_d d$, $e'=\epsilon_e e$ and $f'=\epsilon_d\epsilon_e f$. We then use $[W:X:Y:Z]\mapsto [W:\sqrt{\epsilon_d} X: \sqrt{\epsilon_e} Y: \sqrt{\epsilon_d}\sqrt{\epsilon_e} Z]$.
\\

\ref{21conjugation1} $\Rightarrow$ \ref{21conjugation2}:
Conversely, let's assume there exists an isomorphism $\Phi : S_{def} \rightarrow S_{d'e'f'}$ leaving $G_{21}$ invariant.
Such an isomorphism is induced by an element of the group $\Aut(\mathbb{P}(2,1,1,1))$ (see Lemma \ref{autodp2}). Since it sends $S_{def}$ onto $S_{d'e'f'}$, it means that it is of the form $\left[\begin{array}{c|c}
* & 0_{1 \times 3}  \\
\hline
0_{3 \times 1} & *
\end{array}\right]$, hence it is an element of $\mathcal{P}$. Since it conjugates $G_{21}$ to itself, using Lemma \ref{lemmenormalizerg21}, this automorphism is of the form $\left[\begin{array}{c|c}
1 & 0_{1 \times 3} \\
\hline
0_{3 \times 1} & \sigma D
\end{array}\right]$ where $\sigma$ is a $3 \times 3$ permutation matrix and $D$ is a $3 \times 3$ invertible diagonal matrix.
\newline
Let $\Phi_D = \left[\begin{array}{c|c}
1 & 0_{1 \times 3}  \\
\hline
0_{3 \times 1} &  D
\end{array}\right]$ and $\Phi_\sigma = \left[\begin{array}{c|c}
1 & 0_{1 \times 3} \\
\hline
0_{3 \times 1} &  \sigma
\end{array}\right]$.
\newline
Then we have $\Phi_\sigma^{-1} (S_{d'e'f'}) = \Phi_{\sigma^{-1}} (S_{d'e'f'}) = S_{d''e''f''}$ where $d'',e'',f''$ are scalars such that $\{d'',e'',f''\} = \{d',e',f'\}$. So we have $\Phi_D : S_{def} \rightarrow S_{d''e''f''}$. Let $\alpha, \beta, \gamma \in \kk^\times$ such that $D = \left[\begin{smallmatrix}  1 & 0  & 0 & 0  \\
0 & \alpha  & 0 & 0  \\
0 & 0  & \beta & 0  \\
0 & 0  & 0 & \gamma
\end{smallmatrix}\right]$.
Then the following map: $$\begin{array}{ccccc}
\Phi_D  & : & S_{def} & \to & S_{d''e''f''} \\
 & & [W:X:Y:Z] & \mapsto & [W:\alpha X:\beta Y:\gamma Z] \\
\end{array}$$ is an isomorphism.
\newline
So $W^2 = X^4 + Y^4 + Z^4 + d X^2 Y^2 + e X^2 Z^2 + f Y^2 Z^2 \Leftrightarrow W^2 = \alpha^4 X^4 + \beta^4 Y^4 + \gamma^4 Z^4 + d'' \alpha^2 \beta^2 X^2 Y^2 + e'' \alpha^2 \gamma^2 X^2 Z^2 + f'' \beta^2 \gamma^2 Y^2 Z^2$.
We observe that these two equations are the same up to a multiple. Hence we get:
$$\alpha^4 = \beta^4 = \gamma^4 = 1, d'' \alpha^2 \beta^2 = d, e'' \alpha^2 \gamma^2 = e, f'' \beta^2 \gamma^2 = f$$
Hence our result with $\epsilon_d = \alpha^2 \beta^2 = \pm 1, \epsilon_e = \alpha^2 \gamma^2 = \pm 1$.
\end{proof}

\newpage

\section{Automorphisms of del Pezzo surfaces of degree $1$ when $\car(\kk) \neq 2$} \label{orsay45} $ $

We assume $\car(\kk) \notin \{ 2,p\}$.
In this section we classify up to isomorphism the $p$-elementary non-cyclic subgroups of automorphism of del Pezzo surfaces of degree $1$, when the characteristic of the field is not equal to $p$ or to $2$. The results are given in Propositions \ref{propdegree1pgeq5}, \ref{12result}, \ref{1result}. 
This study is organized into three mutually independent subsections based on the value of $p$:
\begin{itemize}
\item Subsection \ref{subsection452samarcande} covers the case $p > 3$, which is particularly straightforward.
\item Subsection \ref{subsection453samarcande} focuses on the relatively simple case $p = 2$.
\item Subsection \ref{subsection454samarcande} deals with the more complex and extensive case $p = 3$.
\end{itemize}

We first establish some general results in the next Subsection \ref{451tashkent} which will be used across the three following subsections.

\bigskip

\subsection{General results about del Pezzo surfaces of degree $1$} \label{451tashkent}
$ $

\bigskip

According to \cite[5.1]{martinodd} in odd characteristic, and to \cite[Theorem III.3.5]{Kol} and \cite[Corollary 3.54]{KoSmCo} in characteristic 0, up to isomorphism, a del Pezzo surface of degree $1$ is a surface of the form: 
$$\left \{ [W:X:Y:Z] | W^2 = Z^3 + F_2(X,Y) Z^2 + F_4 (X,Y)Z + F_6 (X,Y) \right \} \subset \mathbb{P}(3,1,1,2)$$

where $F_2$, $F_4$ and $F_6$ are forms of degree two, four and six. \\

\begin{defi} $ $
We define the following subgroup of automorphisms of $\mathbb{P}(3,1,1,2)$:
 \[ \mathcal{P} = \left\{
\begin{array}{cccc}
   &  \mathbb{P}(3,1,1,2) & \to & \mathbb{P}(3,1,1,2) \\
 &  [W:X:Y:Z]  & \mapsto & [aW : bX + c Y : dX + e Y : f Z]    \\
\end{array} | \begin{array}{c} a,b,c,d,e,f \in \kk \\ af(be-cd) \neq 0 \end{array}
\right\} \]
\end{defi}

\begin{defi} $ $
We also define the following surjective group homomorphism:
$$\begin{array}{ccccc}
\pi & : & \mathcal{P} & \to & \PGL(2,\kk) \\
 & & ([W:X:Y:Z] \mapsto [aW : bX + c Y : dX + e Y : f Z]) & \mapsto & \left[\begin{smallmatrix}
b & c  \\
d & e  \\
 \end{smallmatrix}\right] \\
\end{array}$$ \\
\end{defi}


The following Lemma \ref{1lemme0} will serve as a foundation for the further study of $p$-elementary subgroups.

\begin{lemme} \label{1lemme0} $ $

\begin{enumerate}

\item \label{1lemme01} $\Aut(S)$ is a subgroup of $\mathcal{P}$.

\item \label{1lemme02} 
$
\Ker(\pi|_{\Aut(S)}) = 
\begin{cases} 
\langle [-1 : 1 : 1 : 1] \rangle & \text{if } F_2 \neq 0 \text{ or } F_4 \neq 0, \\
\langle [-1 : 1 : 1 : 1], [1 : 1 : 1 : j] \rangle & \text{if } F_2 = F_4 = 0.
\end{cases}
$
\end{enumerate}
\end{lemme}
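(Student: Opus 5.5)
The plan is to treat the two assertions separately. The first uses the graded anticanonical ring of $S$. The second is a direct coefficient comparison once the first is known.

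For Assertion~\eqref{1lemme01}, I would use that $S \simeq \operatorname{Proj} \bigoplus_{m\ge 0} H^0(S,-mK_S)$. This ring is generated by $X,Y$ in degree $1$, $Z$ in degree $2$ and $W$ in degree $3$, with the single relation of degree $6$ given by the equation of $S$. Every automorphism of $S$ preserves $K_S$, so it acts on this graded ring and respects each graded piece. Hence it has the form
\[
(X,Y)\mapsto A\cdot(X,Y), \qquad Z\mapsto fZ+q(X,Y), \qquad W\mapsto aW+h(X,Y,Z),
\]
with $A \in \GL(2,\kk)$, $q$ a binary quadric, and $h$ of weighted degree $3$.

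Substituting into the equation, the image of the relation must be a scalar multiple $\lambda$ of the relation itself. On the left side a cross term $2a\,W\,h$ appears. Since $W$ occurs in the relation only through $W^2$ and $\car(\kk)\neq 2$, this forces $h=0$. The term $q$ has to be killed by comparing the $Z^2$-coefficients. Here I would first put the surface in the normal form with $F_2=0$, by completing the cube in $Z$, which is allowed when $\car(\kk)\neq 3$. In that form the $Z^2$-coefficient of the transformed equation is $3f^2q$, which must vanish, so $q=0$. This step is the main obstacle. When $\car(\kk)=3$ and $F_2\neq 0$ the cube cannot be completed, and I expect to need the normal forms from the literature instead. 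For this I would rely on \cite[5.1]{martinodd} in odd characteristic and on \cite[Section 8.8]{dolgachevbook} in characteristic $0$, which give $\Aut(S)\subset\mathcal P$ directly.

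For Assertion~\eqref{1lemme02}, let $g\in\Ker(\pi|_{\Aut(S)})$. Its linear part on $(X,Y)$ is then a scalar $t$. Using the weighted identity $[t^3:t:t:t^2]=\mathrm{id}$ of $\mathbb{P}(3,1,1,2)$, I can normalize so that $g=[a:1:1:f]$. Then $g$ preserves $S$ if and only if there is $\lambda$ with
\[
a^2=\lambda,\quad f^3=\lambda,\quad f^2F_2=\lambda F_2,\quad fF_4=\lambda F_4,\quad F_6=\lambda F_6 .
\]
I would check that $F_6\neq 0$ by smoothness: if $F_6=0$, then the point $[0:x:y:0]$ with $x,y$ not both zero lies on $S$ and is singular. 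Hence $\lambda=1$, so $a=\pm1$ and $f^3=1$. If $F_2\neq0$ or $F_4\neq0$, the corresponding equation forces $f=1$, which gives $\langle[-1:1:1:1]\rangle$. If $F_2=F_4=0$, every cube root of unity $f$ is allowed, which gives $\langle[-1:1:1:1],[1:1:1:j]\rangle$. Conversely, these elements clearly preserve $S$.
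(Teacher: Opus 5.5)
Your route is the paper's: assertion (1) via the graded anticanonical ring (the paper simply invokes its Lemma~\ref{autodp1}, i.e.\ Koll\'ar's argument), and assertion (2) by checking which $[a:1:1:f]$ preserve $S$. You are in fact more careful than the paper. Lemma~\ref{autodp1} passes to the quotients $H^0(X,-2K_X)/\mathrm{Im}(f)$ and $H^0(X,-3K_X)/\mathrm{Im}(g)$, and so silently discards the shifts $q$ and $h$ that you track. The paper also leaves implicit why $\lambda=1$, which you derive from $F_6\neq 0$. One small correction there: when $F_6=0$, only the points $[0:x:y:0]$ with $F_4(x,y)=0$ are singular, but such points exist.

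Your normalisation $F_2=0$ is forced, not just convenient. Work in characteristic $\neq 2,3$ and substitute $Z'=Z+\tfrac{1}{3}XY$ into $W^2=Z'^3+X^6+Y^6$. This gives a surface with $F_2=XY$, on which $[1:-1:1:1]$ becomes $[W:X:Y:Z]\mapsto[W:-X:Y:Z+\tfrac{2}{3}XY]$, which is not in $\mathcal P$. So the lemma must be read in the normal form, where ``$F_2\neq 0$ or $F_4\neq 0$'' just means $F_4\neq 0$. For $\car(\kk)\neq 2,3$ your proof is complete.

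The gap is characteristic $3$, which the standing hypothesis $\car(\kk)\notin\{2,p\}$ allows when $p=2$. There the step ``$3f^2q=0$, hence $q=0$'' fails for every $F_2$, not only when $F_2\neq 0$. No citation can close this, because the statement itself is false in characteristic $3$. Take a nonzero binary quadric $q_0$. The map $[W:X:Y:Z]\mapsto[W:X:Y:Z+q_0]$ preserves $W^2=Z^3-q_0^2Z+F_6$, since $(Z+q_0)^3=Z^3+q_0^3$. It has order $3$ and acts trivially on the base. It lies in $\mathcal P$ for no choice of coordinates, because it acts as a non-trivial unipotent on $H^0(S,-2K_S)$ and so has no eigenvector outside the quadrics in $X,Y$. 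For instance, $q_0=XY$ and $F_6=X^5Y+XY^5$ give a smooth surface, which violates (1) and with it the kernel description in (2).

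So the lemma needs $\car(\kk)\neq 3$, which is already implicit in its use of $j$. The paper's Lemma~\ref{autodp1} shares this blind spot. Once that hypothesis is added, your argument goes through and the appeal to the literature can be dropped.
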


\begin{proof} $ $

    \begin{enumerate}

        \item We use Lemma \ref{autodp1}.
        \item The kernel of $\pi$ is composed of the maps of the form $[a  : 1  :  1 :  b ]$ where $a,b \in \kk^\times$.  If $F_2 \neq 0$ or $F_4 \neq 0$ then such a map belongs to $\Aut(S)$ if and only if it belongs to $\langle [-1 : 1  :  1 :  1] \rangle$. If $F_2 = F_4 = 0$ then such a map belongs to $\Aut(S)$ if and only if it belongs to $\langle [-1 : 1  :  1 :  1] ,  [1 : 1  :  1 :  j ] \rangle$.
    \end{enumerate}
\end{proof}


\subsection{The study of $p$-elementary non-cyclic subgroups of automorphisms of del Pezzo surfaces of degree $1$ when $p > 3$} \label{subsection452samarcande}
$ $

\medskip

\begin{prop} \label{propdegree1pgeq5} 
 $ $
We assume $p > 3$. Let $S$ be a del Pezzo surface of degree $1$.
Then $\Aut(S)$ has no $p$-elementary non-cyclic subgroup.
\end{prop}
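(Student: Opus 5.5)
Suppose for contradiction that $G \simeq (\ZZ/p)^2$ is contained in $\Aut(S)$; it suffices to rule this out, since any non-cyclic $p$-elementary group contains such a subgroup. The approach is to restrict the homomorphism $\pi$ to $G$ and analyse its kernel and image.

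First consider the kernel. By Lemma \ref{1lemme0} Assertion \ref{1lemme01}, $\Aut(S) \subset \mathcal{P}$, so $\pi|_G : G \to \PGL(2,\kk)$ makes sense. By Lemma \ref{1lemme0} Assertion \ref{1lemme02}, $\Ker(\pi|_{\Aut(S)})$ is isomorphic to $\ZZ/2$ or to $\ZZ/2 \times \ZZ/3 \simeq \ZZ/6$. Since $p > 3$, this kernel contains no element of order $p$. Hence $\Ker(\pi|_G)$ is trivial and $\pi(G) \simeq (\ZZ/p)^2$ is a subgroup of $\PGL(2,\kk)$.

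This image is impossible. Since $\car(\kk) \neq p$ and $p > 2$, Proposition \ref{dublin1} Assertion \ref{orsaydublin11} (with $\KK = \kk$) says every $p$-elementary subgroup of $\PGL(2,\kk)$ is cyclic. This contradicts $\pi(G) \simeq (\ZZ/p)^2$. Equivalently, Proposition \ref{prop3} with $n=2$ applies, because $p \nmid 2$: a $p$-elementary subgroup of $\PGL(2,\kk)$ is conjugate into the diagonal torus $\kk^\times$, whose $p$-torsion is $\ZZ/p$.

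There is no real obstacle here. The only point needing care is checking that the kernel computed in Lemma \ref{1lemme0} has order dividing $6$, so that it is prime to $p$. This uses the standing hypothesis $\car(\kk) \neq 2$ through the normal form $W^2 = Z^3 + \dots$, on which that lemma depends.
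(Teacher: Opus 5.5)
Your proof is correct and follows essentially the same route as the paper: you use Lemma \ref{1lemme0} to see that $\Ker(\pi|_{\Aut(S)})$ has order $2$ or $6$, deduce that $\pi$ is injective on $G$, and conclude with Proposition \ref{dublin1} that $\pi(G)$, hence $G$, is cyclic. The only differences are cosmetic: you reduce to a subgroup $(\ZZ/p)^2$ and argue by contradiction, and you add the alternative justification via Proposition \ref{prop3}.
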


\begin{proof} $ $
Let $G$ be a $p$-elementary subgroup of $\Aut(S)$.
According to Lemma \ref{1lemme0}, $\Ker(\pi)$ is isomorphic to $\mathbb{Z}/2$ or $\mathbb{Z}/6$.
$\Ker(\pi|_{G})$ is a $p$-elementary subgroup of $\Ker(\pi)$, so it is trivial.
Hence $G \simeq \pi(G)$.
$\pi(G)$ is a subgroup of $\PGL(2,\kk)$, hence according to Proposition \ref{dublin1}, it is cyclic. So $G$ is cyclic.
\end{proof}

\medskip

\subsection{The study of $2$-elementary non-cyclic subgroups of automorphisms of del Pezzo surfaces of degree $1$} \label{subsection453samarcande} $ $

\medskip


\begin{lemme} \label{lemmefixedpointsg12} $ $
We define the following del Pezzo surface of degree $1$: $$S = \left\{ W^2  = Z^3 + L_1(X^2,Y^2) Z^2 + L_2 (X^2,Y^2)Z + L_3 (X^2,Y^2) \right\} \subset \mathbb{P}(3,1,1,2)$$ 
Where $L_n$ is a form of degree $n$.
Let $a =[1:-1:1:1], b = [1:1:-1:1] \in G_{12} \subset \Aut(S)$.
The following table gives us the Euler characteristic $\chi^g$ of the fixed point variety $X^g$ of the elements $g \in G_{12}$ acting on $S$:

\tiny
$$\begin{tabular}{ | c | c | c | c| }
 \hline 
   element & fixed points & nature & $\chi$  \\
   \hline
   id &  &  & $11$ \\
   \hline
   $a$ & $\{[W:0:Y:Z] \in S  \} \sqcup \{ [0:X:0:Z] \in S \}$  & $3$ pts and a curve of genus $1$& $3$   \\
 \hline  
    $b$ & $\{[W:X:0:Z \in S]\} \sqcup \{ [0:0:Y:Z] \in S \}$ & $3$ pts and a curve of genus $1$ & $3$   \\
 \hline  
    $a b $ & $\{[0:X:Y:Z]  \in S \} \sqcup \{ [1:0:0:1]\}$ & $1$ point and a curve of genus $4$ & $- 5$  \\
 \hline  

 \end{tabular}$$
\end{lemme}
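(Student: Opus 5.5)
The plan is to compute, for each $g \in G_{12}$, the fixed locus of $g$ directly in weighted projective coordinates, identify each component, and read off $\chi$ as the sum over components ($2-2\mathrm{g}$ for a curve of genus $\mathrm{g}$, and $1$ for each point). The identity entry is $\chi(S) = 3 + 8 = 11$, since $S$ is $\mathbb{P}^2$ blown up in $8$ points.

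\emph{The element $a$.} A point $[W:X:Y:Z] \in \mathbb{P}(3,1,1,2)$ is fixed by $a$ exactly when
\[
[W:-X:Y:Z] = [\lambda^3 W : \lambda X : \lambda Y : \lambda^2 Z]
\]
for some $\lambda \in \kk^\times$. Since $\car(\kk) \neq 2$, only $\lambda = \pm 1$ can occur, which splits the fixed locus into two parts.
\begin{itemize}
\item For $\lambda = 1$ we get $X = 0$. This is the curve $C_a = \{W^2 = Z^3 + L_1(0,Y^2)Z^2 + L_2(0,Y^2)Z + L_3(0,Y^2)\}$ in $\mathbb{P}(3,1,2)$. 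It contains the point $[1:0:0:1]$ at $Y = 0$.
\item For $\lambda = -1$ we get $W = 0$ and $Y = 0$. The equation then becomes a cubic in $Z$ (after setting $X = 1$), giving the points $[0:1:0:Z]$.
\end{itemize}
I will show that this cubic has three distinct roots and that $C_a$ is smooth. Since $\langle a\rangle$ is linearly reductive, \cite[Proposition A.8.10]{reductive} shows that the fixed locus of $a$ is smooth, exactly as in Lemma \ref{2fixed}. This smoothness gives the distinctness of the roots: a repeated root would produce a singular point of the zero-dimensional scheme, i.e. a non-reduced point. The same smoothness makes $C_a$ a smooth curve.

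For the genus of $C_a$, I will use the projection $[W:0:Y:Z] \mapsto [Y:Z] \in \mathbb{P}(1,2) \simeq \mathbb{P}^1$. Alternatively, in the chart $Y = 1$ the curve is $W^2 = $ (cubic in $Z$ with distinct roots), completed by a single point at infinity. Either way $C_a$ is a smooth plane-cubic model, so it has genus $1$. This gives $\chi^a = 0 + 3 = 3$. The element $b$ is handled identically with the roles of $X$ and $Y$ swapped.

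\emph{The element $ab$.} Here $ab = [1:-1:-1:1] = [-1:1:1:1]$ in $\mathbb{P}(3,1,1,2)$ (take $\lambda = -1$), which is the Bertini involution. Running the same $\lambda = \pm 1$ analysis gives two parts.
\begin{itemize}
\item $W = 0$: the curve $B = \{Z^3 + L_1 Z^2 + L_2 Z + L_3 = 0\} \subset \mathbb{P}(1,1,2)$.
\item $X = Y = 0$: the isolated point $[1:0:0:1]$.
\end{itemize}
$B$ is smooth by the same linear-reductivity argument. To compute its genus, I will use that $B$ lies in the smooth locus of the quadric cone $\mathbb{P}(1,1,2)$ and is the intersection with a cubic, i.e. a canonical curve of genus $4$ in $\mathbb{P}^3$. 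As a cross-check, $B \to \mathbb{P}^1$, $[X:Y:Z] \mapsto [X:Y]$, is a $3\!:\!1$ cover. Riemann--Hurwitz with the discriminant of the cubic in $Z$, which has degree $12$ in $(X,Y)$, gives $2\mathrm{g} - 2 = 3(-2) + 12 = 6$, so $\mathrm{g} = 4$. This gives $\chi^{ab} = -6 + 1 = -5$.

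\emph{Expected main obstacle.} The delicate step is justifying smoothness and the genus computations uniformly in positive odd characteristic, in particular the simple ramification needed for Riemann--Hurwitz on $B$. I would avoid that issue by relying on the adjunction (canonical-curve) description of $B$ instead of Riemann--Hurwitz, and on the linearly reductive fixed-locus smoothness for all the remaining claims.
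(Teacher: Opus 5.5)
Your argument is correct. Its overall structure matches what the paper's table records: compute the fixed locus of each $g$ in weighted coordinates, split according to $\lambda=\pm1$, and add up $\chi$ over the components. The difference lies in how you obtain the genera.

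The paper argues by adjunction on $S$ itself. In the anticanonical model (Lemma \ref{autodp1}), $X$ and $Y$ are sections of $-K_S$ and $W$ is a section of $-3K_S$. Hence $\{X=0\}$ and $\{Y=0\}$ lie in $|-K_S|$ and $\{W=0\}$ lies in $|-3K_S|$. The formula $2g-2=C\cdot(C+K_S)$ then gives $0$ for the first two curves and $(-3K_S)\cdot(-2K_S)=6K_S^2=6$ for the third.

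You instead identify $C_a$ as a Weierstrass cubic, i.e. a double cover of $\mathbb{P}^1$ branched at the three roots and the point $[1:0:0:1]$. You identify $B$ as a $(2,3)$ complete intersection on the quadric cone in $\mathbb{P}^3$ that avoids the vertex.

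The two routes trade off as follows.
\begin{itemize}
\item The paper's route is shorter and handles all three curves with one formula. However, it only produces arithmetic genera, and it says nothing about the isolated fixed points.
\item Your route needs more chart bookkeeping. In exchange, it supplies two facts the paper leaves implicit in this lemma. First, the fixed curves are smooth, which is what lets you read $\chi=2-2g$ off the arithmetic genus. Second, the cubic in $Z$ giving the isolated fixed points of $a$ and $b$ has three distinct roots. You get both from the smoothness of fixed loci of the linearly reductive group $\langle g\rangle$, exactly as in Lemma \ref{2fixed}.
\end{itemize}
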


\begin{proof} $ $
By adjunction formula, the following curves: $$\left\{ [W:0:Y:Z] | W^2 = Z^3 +  L_1(0,Y^2) Z^2 + L_2(0,Y^2) Z + L_3(0,Y^2) \right\} \subset \mathbb{P}(3,1,1,2)$$ $$\left\{ [W:X:0:Z] | W^2 = Z^3 + L_1(X^2,0) Z^2 + L_2(X^2,0) Z + L_3(X^2,0) \right\} \subset \mathbb{P}(3,1,1,2)$$ are of genus $1$ because they are equivalent to $-K_S$ (using Lemma \ref{autodp1}),
and the curve: $$\left\{ [0:X:Y:Z] | 0 = Z^3 + L_1(X^2,Y^2) Z^2 + L_2(X^2,Y^2) Z + L_3(X^2,Y^2) \right\} \subset \mathbb{P}(3,1,1,2)$$ is of genus $4$ because it is equivalent to $-3 K_S$ (using Lemma \ref{autodp1}).
\end{proof}

\begin{prop} \label{12result} $ $
Let $G$ be a $2$-elementary non-cyclic subgroup of automorphisms of a del Pezzo surface $S$ of degree $1$. Then up to conjugation:
$$G = G_{12} := \left\{  [\pm 1 : \pm 1 : \pm 1 : 1] \right\} \simeq (\mathbb{Z}/2)^2$$
$$S = \left\{ W^2  = Z^3 + L_1(X^2,Y^2) Z^2 + L_2 (X^2,Y^2)Z + L_3 (X^2,Y^2) \right\} \subset \mathbb{P}(3,1,1,2)$$

Where $L_n$ is a form of degree $n$.
Furthermore, $rk(Pic(S)^{G_{12}}) = 1$.
\end{prop}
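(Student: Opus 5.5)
The plan is to use the exact sequence given by $\pi|_{\Aut(S)}$ together with Lemma \ref{1lemme0}, in the same way as in the proof of Proposition \ref{propdegree1pgeq5}.

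\textbf{Reduction to $G_{12}$.} Since $\car(\kk)\neq 2$, the $2$-torsion of $\Ker(\pi|_{\Aut(S)})$ is $\langle [-1:1:1:1]\rangle$ in both cases of Lemma \ref{1lemme0}. Indeed, $[1:1:1:j]$ has order $3$. Hence $\Ker(\pi|_G)$ has order at most $2$.

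Next, $\pi(G)$ is a $2$-elementary subgroup of $\PGL(2,\kk)$. By Proposition \ref{dublin1} Assertion \ref{orsaydublin12} \ref{chargr2inv4}, it is conjugate to $\langle\sigma\rangle$ or to $\langle\sigma,\tau\rangle$. Since $G$ is non-cyclic, there are three possibilities for the pair $(\dim\Ker,\dim\Img)$: $(1,1)$, $(0,2)$ or $(1,2)$.

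\textbf{Excluding $\pi(G)\simeq(\ZZ/2)^2$.} This is the main step. I would show directly that $\pi(\Aut(S))$ contains no Klein group, or at least that no Klein group lifts to a $2$-elementary subgroup. After conjugation we may assume $\pi(G)=\langle\sigma,\tau\rangle$ and lift $\sigma,\tau$ to elements $[a:X:-Y:f]$ and $[a':Y:X:f']$ of $\mathcal{P}$.

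A lift of $\sigma$ must rescale the equation. Comparing the coefficients of $W^2$ and $Z^3$ forces $a^2=f^3$. The $F_6$ term then transforms by $f^3$ while $X\mapsto X$, $Y\mapsto -Y$. The key point is that lifts of $\sigma,\tau$ to $\GL$ anticommute (the matrices $\mathrm{diag}(1,-1)$ and $\left[\begin{smallmatrix}0&1\\1&0\end{smallmatrix}\right]$ satisfy $\sigma\tau=-\tau\sigma$). In weighted projective space, the relation $[aW:\lambda X:\lambda Y:\lambda^2 Z]=[a\lambda^{-3}W:X:Y:Z]$ identifies the commutator of the two lifts with $[-1:1:1:1]$ up to this scaling: the $W$-coordinate acquires the factor $(-1)^3=-1$. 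So any two lifts of $\sigma,\tau$ in $\mathcal{P}$ fail to commute, because their commutator is the Geiser-type element. Hence $G$ cannot be abelian, which excludes both $(0,2)$ and $(1,2)$. I expect this commutator calculation, carried out carefully with the weights $(3,1,1,2)$, to be the crux.

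\textbf{The case $(1,1)$.} Here $G=\langle[-1:1:1:1],g\rangle$ with $\pi(g)$ conjugate to $\sigma$. Multiplying $g$ by the kernel element and rescaling, we may assume $g=[1:-1:1:1]$ up to the weighted scaling, so $G=G_{12}$. Invariance of $S$ under $X\mapsto -X$ with $W,Y,Z$ fixed forces each $F_{2k}$ to be a polynomial in $X^2$ and $Y^2$, giving the stated form.

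\textbf{Picard rank.} Finally, apply Proposition \ref{rankformula} with Lemma \ref{lemmefixedpointsg12}. In Lemma \ref{lemmefixedpointsg12} the elements $a,b,ab$ are $[1:-1:1:1]$, $[1:1:-1:1]$ and $[1:-1:-1:1]=[-1:1:1:1]$ in the weighted projective space. This gives
\[
\rank\Pic(S)^{G_{12}} + 2 = \frac{11+3+3-5}{4}=3,
\]
so the rank is $1$.
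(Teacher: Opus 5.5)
Your proposal is correct and follows the paper's route: reduce via $\pi$ and Lemma~\ref{1lemme0} to $G=G_{12}$ after a linear coordinate change, read off the form of the $F_{2k}$ from invariance under $X\mapsto -X$, and obtain the Picard rank from Lemma~\ref{lemmefixedpointsg12} together with Proposition~\ref{rankformula}. Your commutator computation shows that any lifts of $\sigma,\tau$ to $\mathcal{P}$ have commutator $[-1:1:1:1]$, and this supplies the exclusion of $\pi(G)\simeq(\ZZ/2)^2$, which the paper's one-line reduction leaves implicit; only this lifting statement is true, not the stronger alternative you mention, since $\pi(\Aut(S))$ itself can contain Klein groups (e.g.\ for $F_2=F_4=0$ and $F_6=XY(X^4-Y^4)$).
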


\begin{proof} $ $
Let $G$ be a $2$-elementary non-cyclic subgroup of automorphisms of the following surface: $$S = \{ [W:X:Y:Z] | W^2 = Z^3 + F_2(X,Y) Z^2 + F_4 (X,Y)Z + F_6 (X,Y) \}.$$
\indent Using Lemma \ref{1lemme0}, and doing a linear transformation, we get: $$G = G_{12} = \langle [-1:1:1:1], [1:-1:1:1] \rangle.$$

Since $G$ acts on $S$, we get the conditions on $F_n$.

  The last assertion is a direct consequence of Lemma \ref{lemmefixedpointsg12} and of Proposition \ref{rankformula}.
\end{proof}

\subsection{The study of $3$-elementary non-cyclic subgroups of automorphisms of del Pezzo surfaces of degree $1$} \label{subsection454samarcande}
$ $

\medskip

We assume $\car(\kk) \notin \{2,3\}$ in this subsection.
The goal of this subsection is to give a classification of $3$-elementary non-cyclic subgroups of automorphisms of del Pezzo surfaces of degree $1$ up to isomorphism. The results are given in Proposition \ref{1result}.

\begin{defi} \label{defi353} $ $
Let $c \in \kk$. We define the following surface of degree $1$:
$$S_c = \{W^2 = Z^3 + X^6 + Y^6 + c X^3 Y^3 \} \subset \mathbb{P}(3,1,1,2)$$
We also define the following subgroup of automorphism of $S_c$:
$$G_{11} = \langle  [1:j:1:1] ,  [1:1:j:1] \rangle$$
\end{defi}

\begin{lemme} \label{1smooth} $ $
Let $a,b,c \in \kk$. The two following assertions are equivalent:


\begin{enumerate}
\item
The surface $\{W^2 = Z^3 + a X^6 + b Y^6 + c X^3 Y^3 \} \subset \mathbb{P}(3,1,1,2)$ is smooth.
\item $a b (c^2 - 4 ab) \neq 0$.
\end{enumerate}
\end{lemme}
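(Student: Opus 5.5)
The plan is to mimic the proof of Lemma \ref{21smooth}, working with the Jacobian criterion in the weighted projective space $\mathbb{P}(3,1,1,2)$. Put
\[ P(W,X,Y,Z) = W^2 - Z^3 - aX^6 - bY^6 - cX^3Y^3. \]
The surface avoids the singular points $[1:0:0:0]$ and $[0:0:0:1]$ of $\mathbb{P}(3,1,1,2)$: at $[1:0:0:0]$ we get $P=1\neq 0$, and at $[0:0:0:1]$ we get $P=-1\neq 0$. So every point of the surface has $X\neq 0$ or $Y\neq 0$ and lies in the smooth charts $U_X$ or $U_Y$, which are isomorphic to $\mathbb{A}^3$. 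There, since $\car(\kk)\notin\{2,3\}$, the vanishing of $\partial_W P = 2W$ and $\partial_Z P = -3Z^2$ forces $W=Z=0$. Hence a point $[W:X:Y:Z]$ of the surface is singular if and only if $W=Z=0$ and $[X:Y]\in\mathbb{P}^1$ is a multiple root of the binary sextic $f(X,Y)=aX^6+bY^6+cX^3Y^3$. (In the chart $X=1$ the remaining equation is $g(y)=a+by^6+cy^3=0$ with $g'(y)=0$; symmetrically in the chart $Y=1$.) The statement therefore reduces to showing that $f$ has six distinct roots on $\mathbb{P}^1$ if and only if $ab(c^2-4ab)\neq 0$.

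For the reduction step, I would first observe the cases $a=0$ and $b=0$. If $a=0$, then $X^3$ divides $f$, giving a triple root at $[1:0]$ and hence the singular point $[0:1:0:0]$. The case $b=0$ is symmetric. Now assume $ab\neq 0$. Write $f=aX^6+cX^3Y^3+bY^6=q(X^3,Y^3)$ with the binary quadratic $q(u,v)=au^2+cuv+bv^2$. Neither $[1:0]$ nor $[0:1]$ is a root, so I can set $t=X^3/Y^3$ and obtain $f=Y^6\,q(t,1)$. Each root $t_0\neq 0$ of $q(t,1)$ gives three distinct cube roots, because $\car\neq 3$, and distinct values of $t_0$ give disjoint sets of cube roots. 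Hence $f$ has simple roots if and only if $q(t,1)$ has two distinct roots, that is, if and only if its discriminant $c^2-4ab$ is nonzero. Conversely, if $c^2-4ab=0$, the double root $t_0=-c/(2a)$ is nonzero because $ab\neq 0$, and it produces a double root of $f$ and hence a singular point $[0:X:Y:0]$.

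The only delicate step is the first one: justifying the chart analysis at the weighted points and checking that $W=Z=0$ is forced. The characteristic hypothesis $\car\notin\{2,3\}$ handles the latter. The rest is routine.
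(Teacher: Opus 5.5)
\textbf{Gap: the point $[1:0:0:1]$ is never checked.} Your analysis of the charts $U_X$ and $U_Y$ and of the sextic $f=q(X^3,Y^3)$ is correct. There is one slip: for $a=0$ it is $Y^3$, not $X^3$, that divides $f$, and this is what produces the triple root at $[1:0]$ and the singular point $[0:1:0:0]$. The gap is the sentence ``So every point of the surface has $X\neq 0$ or $Y\neq 0$''. It does not follow from $S$ missing the two singular points $[1:0:0:0]$ and $[0:0:0:1]$ of $\mathbb{P}(3,1,1,2)$. The complement of $U_X\cup U_Y$ is the whole curve $\{X=Y=0\}$, not just those two points. The claim is also false. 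On $X=Y=0$ the equation becomes $W^2=Z^3$, which forces $WZ\neq 0$. Rescaling then gives the single point $[1:0:0:1]=[-1:0:0:1]$, and this point lies on $S$. Your argument never examines $S$ there, so your characterisation ``singular iff $W=Z=0$ and $[X:Y]$ is a multiple root of $f$'' is only established on $U_X\cup U_Y$. The paper treats exactly this point separately: it writes $S=(S\cap U_X)\cup(S\cap U_Y)\cup\{[1:0:0:1]\}$ and handles the last point on its own.

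\textbf{How to repair it.} The point $[1:0:0:1]$ has trivial stabiliser, since $\lambda^3=\lambda^2=1$ forces $\lambda=1$. So it is a smooth point of the ambient space, but you still need a chart in which to test smoothness of $S$. Take the map $\mathbb{A}^3\setminus\{0\}\to U_W\setminus\{[1:0:0:0]\}$, $(x,y,z)\mapsto[1:x:y:z]$. It is the quotient by the free $\mu_3$-action $(x,y,z)\mapsto(jx,jy,j^2z)$, hence \'etale (Lemma \ref{1smoothbis}). The preimage of $S$ is $\{1=z^3+ax^6+by^6+cx^3y^3\}$. This is smooth at $(0,0,1)$ because $\partial_z=3z^2=3\neq 0$, using $\car(\kk)\neq 3$. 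Equivalently, you can use $U_Z$ as the quotient by $\mu_2$ acting by $-1$, where $\partial_w=2w=\pm 2\neq 0$. So $[1:0:0:1]$ is a smooth point of $S$ for every $a,b,c$. With this step added your proof is complete and agrees with the paper's.
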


\begin{proof} $ $

We define $S = \{W^2 = Z^3 + a X^6 + b Y^6 + c X^3 Y^3 \} \subset \mathbb{P}(3,1,1,2)$.

We define the following open subsets: 
$$U_W = \left\{ [W:X:Y:Z] \in \mathbb{P}(3,1,1,2) | W \neq 0 \right\}$$
$$U_X = \left\{ [W:X:Y:Z] \in \mathbb{P}(3,1,1,2) | X \neq 0 \right\}$$
$$U_Y = \left\{ [W:X:Y:Z] \in \mathbb{P}(3,1,1,2) | Y \neq 0 \right\}$$
$$U_Z = \left\{ [W:X:Y:Z] \in \mathbb{P}(3,1,1,2) | Z \neq 0 \right\}$$

Let's study the smoothness of $S \cap U_X$.
We define the following isomorphism:
$$\begin{array}{cccc}
\Phi_X : & U_X & \stackrel{\sim}{\longrightarrow} & \mathbb{A}^3 \\
  & [W:1:Y:Z] & \mapsto & (W,Y,Z)\\
\end{array}$$


We restrict this isomorphism to $S \cap U_X$:
$$\begin{array}{cccc}
\Phi_X|_{S \cap U_X} : & S \cap U_X & \stackrel{\sim}{\longrightarrow} &  \{ (w,y,z) \in \mathbb{A}^3 | z^3 + a  + b y^6 + c  y^3 = w^2 \} \\
  & [W:1:Y:Z] & \mapsto & (W,Y,Z)\\
\end{array}$$

By writing the partial derivatives (and using that $\car(\kk) \notin \{2,3\}$), we observe that $\{ (w,y,z) \in \mathbb{A}^3 | z^3 + a  + b y^6 + c  y^3 = w^2 \}$ is smooth if and only if $a \neq 0$ and $c^2 \neq 4 a b$.
Hence $S \cap U_X$ is smooth if and only if $a \neq 0$ and $c^2 \neq 4 a b$.

Likewise, $S \cap U_Y$ is smooth if and only if $b \neq 0$ and $c^2 \neq 4 a b$.

We observe that $S = (S \cap U_X) \cup (S \cap U_Y) \cup \{ [1:0:0:1]\}$, hence it only remains to study the smoothness of the point $[1:0:0:1] \in S$ to conclude our proof.
We define the following morphism:
$$\begin{array}{cccc}
\Psi_{W} : & \mathbb{A}^3 & \rightarrow & U_W  \\
  & (x,y,z) & \mapsto & [1:x:y:z]\\
\end{array}$$
This is a $3:1$ map. We observe that $U_W$ is then the quotient of $\mathbb{A}^3$ by the action of a cyclic group of order $3$. The action sends $(x,y,z)$ onto $(j x,j y,j^2z)$. The preimage of $[1:0:0:1]$ is $\{ (0,0,1) , (0,0,j), (0,0,j^2)\}$. 
We observe that the only fixed point by the action is $(0,0,0)$.
Let $\pi = \Psi_W |_{\mathbb{A}^3 \setminus \{(0,0,0)\}} : \mathbb{A}^3 \setminus \{(0,0,0)\} \rightarrow U_W \setminus \{ [1:0:0:0]\}$.
Now we observe that $\pi|_{\pi^{-1}(S \cap U_W)} : \pi^{-1}(S \cap U_W) \rightarrow S \cap U_W$ is also étale because $\pi$ is étale (see Lemma \ref{1smoothbis}) and because $(S \cap U_W) \subset (U_W \setminus [1:0:0:0])$ is locally closed.
Hence we have that $(0,0,1) \in \pi^{-1} (S \cap U_W)$ is a smooth point if and only if $\pi((0,0,1)) \in S \cap U_W$ is a smooth point.
But we observe that $\pi^{-1} (S \cap U_W) = \{ (x,y,z) \in \mathbb{A}^3 | [1:x:y:z] \in S \} = \{ (x,y,z) \in \mathbb{A}^3 | 1 = z^3 + a x^6 + b y^6 + c x^3 y^3\}$, hence the point $(0,0,1)$ is smooth because the derivate with respect to $z$ is $3 z^2 = 3 \neq 0$ since $\car(\kk) \neq 3$.
So $\pi((0,0,1)) \in S \cap U_W$ is a smooth point, hence our result since $\pi(0,0,1)= [1:0:0:1]$.
\end{proof}

\begin{lemme} \label{1smoothbis} $ $
With the notations of the proof of Lemma \ref{1smooth}, the morphism $\pi$ is étale.
\end{lemme}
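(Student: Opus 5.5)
The plan is to show that $\pi$ is a quotient map for a free action of a finite group whose order is prime to $\car(\kk)$; such a quotient is étale. Set $\mu_3=\langle j\rangle$. It acts on $\mathbb{A}^3$ by $\zeta\cdot(x,y,z)=(\zeta x,\zeta y,\zeta^2 z)$.

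First, I identify $U_W$ with the quotient $\mathbb{A}^3/\mu_3$. Since $W$ has weight $3$, a point of $U_W$ can be written as $[1:x:y:z]$. This representative is unique up to the substitution $(x,y,z)\mapsto(\lambda x,\lambda y,\lambda^2 z)$ with $\lambda^3=1$. Concretely, $U_W=\operatorname{Spec}\big(\kk[W,X,Y,Z]_{(W)}\big)_0$, and this ring is the ring of invariants $\kk[x,y,z]^{\mu_3}$, via $x=X/W^{1/3}$ etc. (equivalently, the monomials of weight divisible by $3$). Then $\Psi_W$ is the quotient morphism $\mathbb{A}^3\to\mathbb{A}^3/\mu_3$, which is finite and surjective, and $[1:0:0:0]$ is the image of the origin.

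Second, I check that $\mu_3$ acts freely on $V=\mathbb{A}^3\setminus\{(0,0,0)\}$. If $\zeta\neq 1$ and $\zeta\cdot(x,y,z)=(x,y,z)$, then $(\zeta-1)x=(\zeta-1)y=0$ and $(\zeta^2-1)z=0$. Because $\zeta^2\ne1$, this forces $x=y=z=0$, which is excluded from $V$. The open set $V$ is $\mu_3$-stable, and its image is exactly $U_W\setminus\{[1:0:0:0]\}$, so $V/\mu_3$ is this image.

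Third, I invoke the standard result that for a finite group $H$ acting freely on a quasi-projective variety $V$, the quotient $V\to V/H$ is a finite étale Galois cover (Mumford, \emph{Abelian Varieties}, §7, Thm.~of p.~66). As a self-contained alternative, I would check flatness and unramifiedness at each point directly. At a point $v$ with trivial stabilizer, the completed local rings satisfy $\widehat{\mathcal O}_{V/H,\pi(v)}\cong\widehat{\mathcal O}_{V,v}$, which gives étaleness. Here $|H|=3$ is invertible in $\kk$ since $\car(\kk)\neq3$, although freeness already suffices.

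The main subtlety is the first step: making sure that the weighted chart $U_W$ really is the GIT quotient $\mathbb{A}^3/\mu_3$ with $\Psi_W$ as the quotient map. I would verify this at the level of graded rings, by checking that degree-zero elements of the localization by $W$ correspond exactly to $\mu_3$-invariant polynomials. The remaining steps are formal once this identification is in place.
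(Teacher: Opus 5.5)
Your proposal is correct and follows the paper's own argument. The paper likewise observes that $\mathbb{Z}/3$ acts freely on $\mathbb{A}^3\setminus\{(0,0,0)\}$ by $(x,y,z)\mapsto(jx,jy,j^2z)$ and applies the quotient theorem of Mumford, \emph{Abelian Varieties}, \S 7, to conclude that $\pi$ is étale. Your explicit identification of $U_W$ with $\operatorname{Spec}\kk[x,y,z]^{\mu_3}$, spanned by the monomials $x^ay^bz^c$ with $a+b+2c\equiv 0 \bmod 3$, spells out a step the paper only asserts, namely that $\pi$ satisfies the two defining conditions of the quotient in Mumford's theorem.
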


\begin{proof} $ $
Let $X = \mathbb{A}^3 \setminus \{ (0,0,0) \}, Y = U_W, G = \langle (x,y,z) \mapsto (jx,jy,j^2 z) \rangle \simeq \mathbb{Z}/3$.
Since we excluded the point $(0,0,0)$ from $\mathbb{A}^3$, we get that the action of $G$ on $X$ is free and that any orbit $\{ (x,y,z) , (jx,jy,j^2 z), (j^2 x,j^2 y, jz)\}$ is contained in an affine open subset of $X$. Furthermore, we observe that $\pi : X \rightarrow Y$ satisfies the two assertions written in \cite[Chapter 2, Section 7, Theorem]{mumford}.
Hence according to this theorem, the morphism $\pi$ is étale.
\end{proof}

\begin{lemme} \label{fixedpointsg11} $ $
Let $c \in \kk \setminus \{-2,2\}$.

Let $a = [1:1:j:1] , b = [1:j:1:1] \in G_{11} \subset \Aut(S_c)$.
The following table gives us the Euler characteristic $\chi^g$ of the fixed point variety $X^g$ of the elements $g \in G_{11}$ acting on $S_c$:

\tiny
$$\begin{tabular}{ | c | c | c | c|}
 \hline 
   element & fixed points & nature & $\chi$  \\
   \hline
   id &  &  & $11$ \\
   \hline
   $a , a^2$ & $\{[W:X:0:Z] | W^2 = Z^3 + X^6\} \sqcup \{ [\pm 1:0:1:0] \}$  & $2$ pts and a curve of genus $\alpha$& $4 - 2 \alpha$  \\
 \hline  
    $b, b^2$ & $\{[W:0:Y:Z] | W^2 = Z^3 + Y^6\} \sqcup \{ [\pm 1:1:0:0] \}$ & $2$ pts and a curve of genus $\beta$ & $4 - 2 \beta$   \\
 \hline  
    $a b , (a b)^2$ & $\{[W:X:Y:0] | W^2 = X^6 + Y^6 + c X^3 Y^3 \} \sqcup \{ [1:0:0:1]\}$ & $1$ point and a curve of genus $\gamma$ & $3 - 2 \gamma$  \\
 \hline  
     $a^2 b , a b^2$ & $\{ [\pm 1 :0 :1:0 ] , [\pm 1 :1 :0:0 ]  , [ 1 :0 :0:1 ]\}$ & $5$ points & $5$ \\
 \hline  
 \end{tabular}$$
 \normalsize
 Where $\alpha, \beta, \gamma \in \mathbb{N}_{\geq 0}$. 
\end{lemme}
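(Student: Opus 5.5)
We have to verify, for each element of $G_{11}$, the fixed locus on $S_c$ and its Euler characteristic. We work on $\mathbb{P}(3,1,1,2)$ and use the fact that $c\neq\pm2$, so $S_c$ is smooth by Lemma \ref{1smooth}. A point $[W:X:Y:Z]$ is fixed by $g=[1:\lambda:\mu:1]$ if and only if there is $t\in\kk^\times$ with
\[
W=t^3W,\quad \lambda X=tX,\quad \mu Y=tY,\quad Z=t^2Z .
\]
For each element we therefore list the possible values of $t$ compatible with some nonzero coordinates, and intersect the resulting locus with $S_c$.

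Take $a=[1:1:j:1]$ first. The value $t=1$ gives $Y=0$, hence the curve $\{W^2=Z^3+X^6\}$. The value $t=j$ forces $W=0$ (since $j^3=1$ gives $W$ free, we must check this carefully) and $X=0$. We then compare $Z=j^2Z$, which forces $Z=0$. This leaves the points $[W:0:1:0]$ with $W^2=1$, that is, $[\pm1:0:1:0]$. Strictly, $t=j$ gives $t^3=1$, so $W$ is unconstrained, and the equation then yields the two points. The value $t=-1$, or more generally the representation ambiguity of weighted coordinates, also has to be checked: $t$ with $t^3=1$ and $t^2=1$ means $t=1$, and the other cases are handled in the same way. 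The element $b$ is symmetric. For $ab=[1:j:j:1]$, the value $t=1$ forces $X=Y=0$, giving $W^2=Z^3$, which in weighted projective space is the single point $[1:0:0:1]$. The value $t=j$ forces $Z=j^2Z$, so $Z=0$, with $W$ free, giving the curve $W^2=X^6+Y^6+cX^3Y^3$. For $a^2b=[1:j:j^2:1]$, the value $t=1$ gives $[1:0:0:1]$; $t=j$ gives $Y=Z=0$ and the points $[\pm1:1:0:0]$; $t=j^2$ gives $X=Z=0$ and the points $[\pm1:0:1:0]$. This totals 5 isolated points. The squares have the same fixed loci.

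For the Euler characteristics, each fixed curve is smooth: it is the fixed locus of a linearly reductive group acting on a smooth surface, by \cite[Proposition A.8.10]{reductive} as used in Lemma \ref{2fixed}. A smooth curve of genus $g$ has $\chi=2-2g$, and adding the isolated points gives the entries $4-2\alpha$, $4-2\beta$, $3-2\gamma$, and $5$. The statement leaves the genera as unknowns, so they need not be computed here. If desired, double covers of $\mathbb{P}^1$ with Riemann--Hurwitz give $\alpha=\beta=1$ and $\gamma=2$.

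The main obstacle is keeping track of the weighted-projective scaling correctly, in particular the cases $t^3=1$ in which $W$ stays free, so as not to miss or double count isolated points such as $[\pm1:0:1:0]$ versus $[1:0:0:1]$. Since $[W:0:0:Z]$ with $W^2=Z^3$ is a single point, this must be checked explicitly.
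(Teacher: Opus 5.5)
Your proposal is correct and is the same direct computation the paper relies on; its proof reads only ``Direct computations''. Writing fixed points via $(W,\lambda X,\mu Y,Z)=(t^3W,tX,tY,t^2Z)$ and intersecting with $S_c$ gives exactly the listed loci, and smoothness of the fixed curves, from linear reductivity of $\langle g\rangle\simeq\mathbb{Z}/3$, gives the $\chi$ column.

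In the write-up, fix three small points:
\begin{enumerate}
\item Delete the self-contradictory claim that $t=j$ forces $W=0$ for $a$.
\item State explicitly that every other value of $t$ yields at most the points $[1:0:0:0]$ or $[0:0:0:1]$, which do not lie on $S_c$.
\item Justify $\chi(S_c)=11$, e.g.\ because $S_c$ is $\mathbb{P}^2$ blown up in $8$ points.
\end{enumerate}
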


\begin{proof} $ $
Direct computations.
\end{proof}

\begin{lemme} \label{13norm} $ $
The normalizer of $G_{11}$ in $\mathcal{P}$ is the following set:

$\begin{array}{ll} & \left\{[W:X:Y:Z]   \mapsto  [aW : bX :  c Y : d Z]    | a,b,c,d, \in \kk^\times
\right\} \\
\sqcup & \left\{
 [W:X:Y:Z]   \mapsto  [aW : bY :  c X : d Z]    | a,b,c,d, \in \kk^\times
\right\} \end{array}$

\end{lemme}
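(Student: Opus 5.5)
The plan is to push the whole question down to $\PGL(2,\kk)$ through the homomorphism $\pi \colon \mathcal{P} \to \PGL(2,\kk)$. The first component $a$ and the last component $f$ of an element of $\mathcal{P}$ play no role: every map in $\mathcal{P}$ already acts diagonally on $W$ and on $Z$. So the only constraint is on the $2\times 2$ block $\left[\begin{smallmatrix} b & c \\ d & e \end{smallmatrix}\right]$.

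\emph{Easy inclusion.} Every map of the form $[aW:bX:cY:dZ]$ is diagonal, so it commutes with the diagonal group $G_{11}$. The map $[W:X:Y:Z]\mapsto[W:Y:X:Z]$ swaps the generators $[1:j:1:1]$ and $[1:1:j:1]$. Every element of the second set is a diagonal map composed with this swap, so it normalizes $G_{11}$.

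\emph{Reverse inclusion.} Let $\Phi \in \mathcal{P}$ normalize $G_{11}$. Since $\pi$ is a group homomorphism, $\pi(\Phi)$ normalizes $\pi(G_{11})$. In $\PGL(2,\kk)$ we have
\[
\pi([1:j:1:1]) = \left[\begin{smallmatrix} j & 0 \\ 0 & 1 \end{smallmatrix}\right], \qquad \pi([1:1:j:1]) = \left[\begin{smallmatrix} 1 & 0 \\ 0 & j \end{smallmatrix}\right] = \left[\begin{smallmatrix} j & 0 \\ 0 & 1 \end{smallmatrix}\right]^{2},
\]
so $\pi(G_{11}) = \langle h \rangle \simeq \ZZ/3$ with $h = \left[\begin{smallmatrix} j & 0 \\ 0 & 1 \end{smallmatrix}\right]$. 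The element $h$ acts on $\PP^1$ with exactly two fixed points, $[1:0]$ and $[0:1]$, because $j \neq 1$ (here $\car(\kk)\neq 3$).

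Any element normalizing $\langle h\rangle$ sends $h$ to $h$ or $h^2$, and both have the same fixed point set $\{[1:0],[0:1]\}$. Hence $\pi(\Phi)$ preserves this two-point set: it either fixes both points or swaps them. Therefore $\left[\begin{smallmatrix} b & c \\ d & e \end{smallmatrix}\right]$ is diagonal ($c=d=0$) or antidiagonal ($b=e=0$). This means $\Phi$ has the form $[aW:bX:eY:fZ]$ or $[aW:cY:dX:fZ]$, which are exactly the two sets in the statement. The union is disjoint, since an invertible matrix cannot be both diagonal and antidiagonal.

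\emph{Main subtlety.} The only point needing care is that $\pi|_{G_{11}}$ is not injective: its kernel contains $[1:j:j:1] = [1:1:1:j]$, using the weighted rescaling by $j$ in $\mathbb{P}(3,1,1,2)$. So one cannot identify $G_{11}$ with its image. This causes no problem, because only the implication ``$\Phi$ normalizes $G_{11}$ $\Rightarrow$ $\pi(\Phi)$ normalizes $\pi(G_{11})$'' is used, and that holds for any homomorphism. The fixed-point argument in $\PGL(2,\kk)$ is the only real step; the rest is direct verification.
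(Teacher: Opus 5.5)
Your proof is correct, and it takes a different route from the paper's.

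\textbf{The paper's route.} It works with the set $E$ of the four subgroups of order $3$ of $G_{11}$ and the permutation map $\rho\colon \mathcal{N}\to\mathfrak{S}_E$. It uses the fixed-point table of Lemma~\ref{fixedpointsg11} to show that $\rho(\mathcal{N})$ can at most swap $\langle a\rangle$ and $\langle b\rangle$, and realizes this swap by the coordinate permutation $\Psi$. It then asserts ``by computations'' that $\Ker(\rho)$ is the diagonal group, so $\mathcal{N}=\Ker(\rho)\rtimes\langle\Psi\rangle$.

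\textbf{Your route.} You push everything to $\PGL(2,\kk)$ through $\pi$ and note that $\pi(G_{11})$ is generated by $\mathrm{diag}(j,1)$. Its normalizer must preserve the two fixed points $[1:0]$ and $[0:1]$, which forces the $2\times 2$ block to be diagonal or antidiagonal. The easy inclusion is checked directly, so nothing beyond this necessary condition is needed.

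\textbf{What each buys.} Your argument is shorter and stays entirely inside $\mathcal{P}$. The paper's fixed-point data describe the action on the surface $S_c$. Strictly, that step only applies to maps carrying $S_c$ to some $S_{c'}$. This suffices for the application in Proposition~\ref{1result}, but not literally for the normalizer in $\mathcal{P}$ as stated. Your argument also replaces the unspecified kernel computation with an explicit one-line reason.

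What the paper's route gives in addition is how $\mathcal{N}$ permutes the four cyclic subgroups: it fixes $\langle ab\rangle$ and $\langle a^2b\rangle$. This parallels the paper's treatment of $G_{33}$ and $G_{21}$. The projection $\pi$ cannot see this, since $\pi(a)=\pi(a^2b)$, but you do not need it.
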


\begin{proof} $ $
Let $\mathcal{N}$ be the normalizer of $G_{11}$ in $\mathcal{P}$.

Let $a = [1:1:j:1] , b = [1:j:1:1] \in G_{11} \subset \Aut(S_c)$.

Let $E = \{ \langle a \rangle ,  \langle b \rangle , \langle ab \rangle , \langle a^2 b \rangle \}$ be the set of subgroups of $G_{11}$ isomorphic to $\mathbb{Z}/3$.
We define the following map:
$$\begin{array}{ccccc}
\rho & : & \mathcal{N} & \to & \mathfrak{S}_E \\
 & & \Phi& \mapsto & \left\{ \begin{array}{ccccc}
 &  & E & \to & E \\
 & & H & \mapsto & \Phi H \Phi^{-1}  \\
\end{array} \right\} \\ 
\end{array}$$

Because of the nature of the fixed points of the elements of $G_{11}$ given in Lemma \ref{fixedpointsg11}, we observe that for a given $\Phi \in \mathcal{N}$, we have $\rho(\Phi) (\langle a \rangle) \in \{ \langle a \rangle , \langle b \rangle \}$ , $\rho(\Phi) (\langle b \rangle) \in \{ \langle a \rangle , \langle b \rangle \}$ , $\rho(\Phi) (\langle ab \rangle) = \langle ab \rangle$ , $\rho(\Phi) (\langle a^2b \rangle) = \langle a^2b \rangle$.
Hence: $$\rho(\mathcal{N}) \quad \subset \quad \langle \begin{array}{ccccc}
 &  & E & \to & E \\
 & & \langle a \rangle & \mapsto & \langle b \rangle  \\
  & & \langle b \rangle & \mapsto & \langle a \rangle  \\
   & & \langle ab \rangle & \mapsto & \langle ab \rangle  \\
    & & \langle a^2b \rangle & \mapsto & \langle a^2b \rangle  \\
\end{array} \rangle \quad \simeq \quad \mathbb{Z}/2$$

Let $\Psi = \left[\begin{smallmatrix}  1 & 0  & 0 & 0  \\
0 & 0  & 1 & 0  \\
0 & 1  & 0 & 0  \\
0 & 0  & 0 & 1  
\end{smallmatrix}\right] \in \mathcal{P}$.
We observe that $\Psi \in \mathcal{N}$ and $\rho(\Psi)$ is the transposition of $\langle a \rangle$ and $\langle b \rangle$. Hence we have the equality:
$$\rho(\mathcal{N}) \quad = \quad \langle \begin{array}{ccccc}
 &  & E & \to & E \\
 & & \langle a \rangle & \mapsto & \langle b \rangle  \\
  & & \langle b \rangle & \mapsto & \langle a \rangle  \\
   & & \langle ab \rangle & \mapsto & \langle ab \rangle  \\
    & & \langle a^2b \rangle & \mapsto & \langle a^2b \rangle  \\
\end{array} \rangle \quad \simeq \quad \mathbb{Z}/2$$

We also observe that for a given $\Phi \in \mathcal{N} \setminus \Ker(\rho)$, we have $\Psi \Phi \in \Ker(\rho)$.
Hence: $$\mathcal{N} \quad = \quad \Ker(\rho) \rtimes \langle \Psi \rangle \quad \simeq \quad \Ker(\rho) \rtimes \mathbb{Z}/2$$

By computations we get that $\Ker(\rho)$ is the diagonal. Hence the result.
\end{proof}

%
%
\begin{prop} \label{1result} $ $
\begin{enumerate}
\item \label{1result1} Let $G$ be a $3$-elementary non-cyclic subgroup of automorphisms of a del Pezzo surface $S$ of degree $1$. Then
     there exists $c \in \kk \setminus \{-2,2\}$ such that up to conjugation by an isomorphism, $S = S_c$ and $G = G_{11}$.
\item \label{1result2} Let $c \in \kk \setminus \{-2,2\}$. The surface $S_c$ is smooth and $rk(Pic(S_c)^{G_{11}}) = 1$.
\item \label{1resultbis} Let $c, c' \in \kk \setminus \{-2,2\}$. $(G_{11}, S_c)$ and $(G_{11}, S_{c'})$ are conjugate by isomorphism if and only if $c = \pm c'$.
\end{enumerate}
 \end{prop}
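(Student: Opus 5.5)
For \eqref{1result1}, I would argue as in the $p=2$ and $p>3$ cases, using the homomorphism $\pi$ of Lemma~\ref{1lemme0}. Let $G \simeq (\ZZ/3)^r$ with $r \ge 2$. By Proposition~\ref{dublin1} (case $\car(\kk)\neq p>2$), $\pi(G)$ is cyclic. Hence $\Ker(\pi|_G)$ is non-trivial. By Lemma~\ref{1lemme0}, this forces $F_2=F_4=0$ and $\Ker(\pi|_G)=\langle[1:1:1:j]\rangle$, so $r=2$. The surface is then $W^2=Z^3+F_6(X,Y)$.

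Up to conjugation in $\mathcal{P}$ I diagonalise $\pi(G)$ as $\langle[1:\xi]\rangle$, with $\xi$ of order $3$. A lift in $G$ then has the form $[\alpha:1:\xi:\beta]$ (up to scaling in the weighted sense). Multiplying by elements of the kernel and using that the lift has order $3$ lets me normalise it so that $G=\langle[1:1:1:j],[1:1:j:1]\rangle$ up to rescaling. One must check that the weighted identification $[W:X:Y:Z]\sim[\lambda^3W:\lambda X:\lambda Y:\lambda^2Z]$ with $\lambda=j$ turns $[1:1:1:j]$ into $[1:j:j:1]$. Consequently $G=\langle[1:j:1:1],[1:1:j:1]\rangle=G_{11}$. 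Invariance of $F_6$ under $X\mapsto jX$ and $Y\mapsto jY$ forces $F_6=aX^6+bY^6+cX^3Y^3$. By Lemma~\ref{1smooth}, $ab(c^2-4ab)\neq0$, and a diagonal rescaling, which normalises $G_{11}$, gives $a=b=1$ and $c\neq\pm2$.

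For \eqref{1result2}, smoothness is Lemma~\ref{1smooth} with $a=b=1$. For the Picard rank I would apply Proposition~\ref{rankformula} to the table of Lemma~\ref{fixedpointsg11}, after computing the genera. The curves $W^2=Z^3+X^6$ in $\mathbb{P}(3,1,1,2)$ with $Y=0$ lie in the class $-K_S$, so by adjunction as in Lemma~\ref{lemmefixedpointsg12} they have genus $1$; hence $\alpha=\beta=1$. The curve $Z=0$ is the double cover $W^2=X^6+Y^6+cX^3Y^3$ of $\mathbb{P}^1$, branched at $6$ distinct points since $c\neq\pm2$, so $\gamma=2$. The sum is
\[11+2\cdot2+2\cdot2+2\cdot(-1)+2\cdot5=27=3\cdot 9,\]
giving rank $3-2=1$.

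For \eqref{1resultbis}, any isomorphism $S_c\to S_{c'}$ lies in $\mathcal{P}$ by Lemma~\ref{1lemme0}. If it conjugates $G_{11}$ to itself, it lies in the normaliser described in Lemma~\ref{13norm}, i.e.\ it is diagonal possibly composed with $X\leftrightarrow Y$. The swap preserves $S_c$, so I reduce to the diagonal map $[aW:bX:cY:dZ]$ and compare equations up to a scalar. This gives $a^2=d^3=b^6=c^6$ and $c'=c\,(bc)^{-3}\,b^6$-type relations, hence $c'=\pm c$. Conversely, $X\mapsto \zeta X$ with $\zeta^3=-1$ realises $c\mapsto -c$. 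The main obstacle I expect is the normalisation step in \eqref{1result1}, namely handling the weighted-projective scaling correctly so that the kernel element can be traded for the stated generators.
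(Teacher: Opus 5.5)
Your argument for Assertions~\ref{1result1} and \ref{1resultbis} follows the paper's proof. For Assertion~\ref{1result1} that means the kernel of $\pi$ via Lemma~\ref{1lemme0} and Proposition~\ref{dublin1}. For Assertion~\ref{1resultbis} it means the normaliser from Lemma~\ref{13norm} plus a comparison of coefficients. Your relation $c'=c\,(b/c_e)^3$ with $(b/c_e)^6=1$ is correct once you rename the diagonal entry $c_e$ that clashes with the parameter $c$.

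For Assertion~\ref{1result2} you take a different, equally valid route: you compute the genera. You get $\alpha=\beta=1$ because $S\cap\{Y=0\}$ and $S\cap\{X=0\}$ are smooth members of $|-K_S|$. You get $\gamma=2$ because $\{Z=0\}$ is a double cover of $\mathbb{P}^1$ branched at six distinct points when $c\neq\pm2$ and $\car(\kk)\neq 3$. The $\chi$-sum is then $27$. The paper never determines the genera: it writes the rank as $\frac{1}{9}\bigl(25-4(\alpha+\beta+\gamma)\bigr)$ and uses that this is a positive integer to force $\alpha+\beta+\gamma=4$. The paper's trick needs no curve geometry. Yours needs a smoothness/adjunction check but gives the genera as a by-product, and they agree with the paper's constraint.

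There is one step in Assertion~\ref{1result1} whose stated justification does not suffice. Take a lift $[\alpha:1:\xi:\beta]$ of a generator of $\pi(G)$. Order $3$ gives $\alpha^3=\beta^3=1$. Multiplying by powers of $[1:1:1:j]$, the only kernel elements available in $G$, lets you make $\beta=1$. Neither of these changes $\alpha\in\{1,j,j^2\}$. Weighted rescaling also leaves the ratio of the $W$-entry to the cube of the $X$-entry unchanged, so it cannot normalise $\alpha$ either.

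What forces $\alpha=1$ is that the lift lies in $\Aut(S)$. Pulling back $W^2=Z^3+F_6(X,Y)$ and comparing the coefficients of $W^2$ and $Z^3$ gives $\alpha^2=\beta^3$. Hence $\alpha^2=1=\alpha^3$, so $\alpha=1$. This is exactly the step the paper uses (``since $[j^\alpha:1:j:1]$ preserves $S$, we get $\alpha=0$''). With it added, your identification $[1:1:1:j]=[1:j:j:1]$ completes the proof that $G=G_{11}$.

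A small side remark on Assertion~\ref{1resultbis}: that an isomorphism $S_c\to S_{c'}$ lies in $\mathcal{P}$ follows from the argument of Lemma~\ref{autodp1}, not from Lemma~\ref{1lemme0}, which only concerns $\Aut(S)$. The paper also takes this point for granted.
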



\begin{proof} $ $
   \begin{enumerate}
   \item
By contradiction, let's assume $F_2 \neq 0$ or $F_4 \neq 0$. According to Lemma \ref{1lemme0}, we have $\Ker (\pi|_{\Aut(S)}) = \langle [-1 : 1 : 1 : 1] \rangle \simeq \mathbb{Z}/2 $. Hence $G \simeq \pi(G)$. But $\pi(G)$ is a subgroup of $\PGL(2,\kk)$. According to Proposition \ref{dublin1} , any $3$-elementary subgroup of $\PGL(2,\kk)$ is cyclic. Hence $G$ is cyclic. Hence our contradiction.

So $F_2 = F_4 = 0$. According to Lemma \ref{1lemme0}, we have $\Ker(\pi|_{\Aut(S)}) = \langle [-1 : 1  :  1 :  1] , [1 : 1  :  1 :  j ] \rangle \simeq \mathbb{Z}/6$. We also observe that $\pi(G)$ is a subgroup of $\PGL(2,\kk)$ which does not contains non-cyclic $3$-elementary subgroup according to Proposition \ref{dublin1}.
Since $G$ is not cyclic, we get:
$$\pi(G) \simeq \Ker(\pi|_G) \simeq \mathbb{Z}/3$$
Since $\pi(G)$ is a subgroup of $\PGL(2,\kk)$ isomorphic to $\mathbb{Z}/3$, up to conjugation by an element of $\PGL(2,\kk)$ we can assume $\pi(G) = \langle \left[ \begin{smallmatrix} 1 & 0 \\ 0 & j \end{smallmatrix}\right] \rangle$ (Proposition \ref{dublin1}).
Hence there exists an element $g = [a:1:j:b] \in G$ with $a,b \in \kk^\times$.
Since $g^3 = id$ we get $a^3 = b^3 = 1$. Let $\alpha, \beta \in \{0,1,2\}$ such that $g = [j^\alpha:1:j:j^\beta]$.
Since $\Ker(\pi|_{\Aut(S)}) = \langle [-1 : 1  :  1 :  1] , [1 : 1  :  1 :  j ] \rangle \simeq \mathbb{Z}/6$ and $\Ker(\pi|_G) \simeq \mathbb{Z}/3$ we get $g' := [1:1:1:j] \in G$. Hence by composition we also have $[j^\alpha:1:j:1] = g g'^{- \beta}  \in G$. Since $[j^\alpha:1:j:1]$ preserves $S$, we get $\alpha = 0$.
Hence finally, $G  = \langle [1:1:1:j],[1:j:1:1] \rangle = G_{11}$.
 
 The elements of $G$ are maps from $S$ to $S$, so $F_6$ is of the form: $F_6 = a X^6 + b Y^6 + c X^3 Y^3$ where $a , b ,c \in \kk$. Since $S$ is smooth, using Lemma \ref{1smooth}, we get $a, b \neq 0$. Hence up to a linear conjugation preserving the subgroup $G$ we can assume $a = b = 1$. Using again Lemma \ref{1smooth} and the smoothness of the surface $S$, we get $c \neq \pm 2$. 
 \item The smoothness comes from Lemma \ref{1smooth}.
Using  Lemma \ref{fixedpointsg11} and Proposition \ref{rankformula}, we get that:

$\begin{array}{rcl} rk(Pic(S_c)^{G_{11}}) & = & \dfrac{1}{9}(9 + 2 \times (2 - 2 \alpha + 2 - 2 \beta + 1 - 2 \gamma + 3)) \\
& = & \dfrac{1}{9} (25 - 4 (\alpha + \beta + \gamma)), \end{array}$

where $\alpha, \beta , \gamma \in \mathbb{N}$ are the genera of some curves. Since $rk(Pic(S_c)^{G_{11}}) \in \mathbb{N}_{\geq 1}$, the only possibility is hence $\alpha + \beta + \gamma = 4$ and $rk(Pic(S_c)^{G_{11}}) = 1$.
\item Let $c \in \kk \setminus \{-2,2\}$.
We define the following map:
$$\begin{array}{ccccc}
\Phi & : & \mathbb{P}(3,1,1,2) & \to & \mathbb{P}(3,1,1,2) \\
 & & [W:X:Y:Z] & \mapsto & [W:X:-Y:Z]  \\
\end{array}$$

We observe that:
\begin{itemize}
\item $\Phi$ is a well-defined automorphism of $\mathbb{P}(3,1,1,2)$.
\item $\Phi(S_c) = S_{-c}$.
\item $\Phi G_{11} \Phi^{-1} = G_{11}$.
\end{itemize}
Hence $(G_{11}, S_c)$ and $(G_{11}, S_{-c})$ are conjugate by isomorphism.

Conversely, let $c ,c' \in \kk \setminus \{-2,2\}$ such that $(G_{11}, S_c)$ and $(G_{11}, S_{c'})$ are conjugate by an isomorphism $\Phi$, \textit{i.e.}
$\Phi G_{11} \Phi^{-1} = G_{11}$ and $\Phi(S_c) = S_{c'}$.
According to Lemma \ref{13norm} we have: $\Phi = 
\left[\begin{smallmatrix}  \alpha & 0 & 0 & 0   \\
 0 & \beta & 0 & 0  \\
 0 & 0 & \gamma & 0  \\
  0 & 0 & 0 & \delta  
\end{smallmatrix} \right] $ or $\Phi = \
\left[\begin{smallmatrix}  \alpha & 0 & 0 & 0   \\
 0 & 0 & \beta & 0  \\
 0 & \gamma & 0  & 0  \\
  0 & 0 & 0 & \delta  
\end{smallmatrix} \right] $
where $\alpha, \beta, \gamma, \delta \in \kk^\times$.
Let: $$\begin{array}{ccccc}
\Psi & : & \mathbb{P}(3,1,1,2) & \to & \mathbb{P}(3,1,1,2) \\
 & & [W:X:Y:Z] & \mapsto & [W:Y:X:Z]  \\
\end{array}$$
We have:
\begin{itemize}
\item $\Psi$ is a well-defined automorphism of $\mathbb{P}(3,1,1,2)$.
\item $\Psi(S_c) = S_{c}$.
\item $\Psi G_{11} \Psi^{-1} = G_{11}$.
\end{itemize}
Hence, up to composition by $\Psi$, we can assume that $\Phi = 
\left[\begin{smallmatrix}  \alpha & 0 & 0 & 0   \\
 0 & \beta & 0 & 0  \\
 0 & 0 & \gamma & 0  \\
  0 & 0 & 0 & \delta  
\end{smallmatrix} \right] $.
We can also assume $\gamma = 1$.
We then have:
$$\Phi(S_c) = \left\{ \frac{1}{\alpha^2} W^2  = \frac{1}{\delta^3} Z^3+ \frac{1}{\beta^6} X^6 + Y^6 +  \frac{c}{\beta^3} X^3 Y^3   \right\}$$

Hence: $$\quad \quad \left\{ W^2 = Z^3 + X^6 + Y^6 + c' X^3 Y^3 \right\} = \left\{ \frac{1}{\alpha^2} W^2  = \frac{1}{\delta^3} Z^3+ \frac{1}{\beta^6} X^6 + Y^6 +  \frac{c}{\beta^3} X^3 Y^3   \right\}$$ 

Now we take the intersection with $\{ W = Z = 0 \}$:
$$\left\{ X^6 + Y^6 + c' X^3 Y^3 = 0 \right\} = \left\{  \frac{1}{\beta^6} X^6 + Y^6+  \frac{c}{\beta^3} X^3 Y^3  = 0  \right\} \subset \mathbb{P}^1$$ 

Hence $1 = \dfrac{1}{\beta^6}$ and $c' = \dfrac{c}{\beta^3}$.
Hence $c'^2 = c^2$.

 \end{enumerate}
\end{proof}

\newpage

\section{Automorphisms of del Pezzo surfaces of degree $1$ when $\car(\kk) = 2$} \label{orsay46} $ $


We assume $\car(\kk)=2$ and $p \neq 2$ in all this section.

As we will see, the study of del Pezzo surfaces of degree $1$ in characteristic $2$ differs 
from the case in characteristic not $2$, this is why it deserves a separate section.
We will mainly use the article of Gebhard Martin and Igor Dolgachev: \cite{martin2} section $5$ and $6$. The main result of this section is Proposition \ref{resultdp1char2}.

\subsection{Notations} $ $

\begin{defi} \label{defi361} 

Let $e \in \kk$. \newline
We define the following del Pezzo surface of degree $1$, subset of $\mathbb{P}(1,1,2,3)$:
$$S_e = \left\{ [u:v:x:y] | 0 = y^2 + uv(u+v) y + x^3 + (e + \sqrt{e}) (u^5 v + u v^5) + e u^3 v^3\right\}$$
We also define the following maps:
$$\begin{array}{ccccc}
a & : & \mathbb{P}(1,1,2,3)& \to & \mathbb{P}(1,1,2,3) \\
& & [u:v:x:y] & \mapsto & [u:v:j x : y] \\ 
&&&&\\
b & : & \mathbb{P}(1,1,2,3) & \to & \mathbb{P}(1,1,2,3) \\
& & [u:v:x:y] & \mapsto & [v:u+v:x:y+\sqrt{e}(u^2 v + u v^2 + v^3)] \\
\end{array}$$
We define the following group:
$$G_{12}' = \langle a,b \rangle$$
\end{defi}

\subsection{Results} $ $

\begin{prop} \label{resultdp1char2} $ $

Let $S$ be a del Pezzo surface $S$ of degree $1$.
Let $G$ be a $p$-elementary non-cyclic subgroup of automorphisms of the surface $S$.


Then $p=3$, $G \simeq (\mathbb{Z}/3)^2$, there exists $e \in \kk \setminus \left\{0,1\right\}$ such that up to isomorphism, $S = S_e$
and $G = G_{12}' = \langle a,b \rangle$.
Furthermore, we have $rk(Pic(S_e)^G) = 1$.
\end{prop}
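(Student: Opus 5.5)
We follow the strategy of Section \ref{orsay45}, replacing the characteristic-$\neq 2$ normal forms by the characteristic $2$ normal forms of Dolgachev--Martin \cite{martin2}. By \cite[Sections 5, 6]{martin2}, $S$ is a surface in $\mathbb{P}(1,1,2,3)$ of the form
\[
y^2 + a_1(u,v)xy + a_3(u,v)y = x^3 + a_2(u,v)x^2 + a_4(u,v)x + a_6(u,v),
\]
and $\Aut(S)$ acts on the base $\mathbb{P}^1$ with coordinates $[u:v]$, the linear system $|-K_S|$. This gives a homomorphism $\pi\colon \Aut(S)\to \PGL(2,\kk)$ analogous to the one of Lemma \ref{1lemme0}. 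The first step is to describe $\Ker(\pi|_{\Aut(S)})$. In characteristic $2$ the Bertini involution is $y\mapsto y+a_1xy+a_3$, and the remaining kernel elements are the scalings $x\mapsto \omega x$ with $\omega^3=1$, which exist only when $a_1=a_2=a_4=0$. So the kernel is $\ZZ/2$ or $\ZZ/6$, and its only odd-order $p$-part is a $\ZZ/3$ occurring when $p=3$.

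The second step disposes of $p\neq 3$ and fixes the structure of $G$. Since $p\neq 2=\car(\kk)$, Proposition \ref{dublin1} \eqref{orsaydublin11} shows every $p$-elementary subgroup of $\PGL(2,\kk)$ is cyclic. Hence, exactly as in Proposition \ref{propdegree1pgeq5}, $\Ker(\pi|_G)$ is trivial unless $p=3$. Therefore $G$ non-cyclic forces $p=3$ and $\Ker(\pi|_G)\simeq \pi(G)\simeq \ZZ/3$, with $\Ker(\pi|_G)=\langle a\rangle$ and $a_1=a_2=a_4=0$. After conjugating in $\PGL(2,\kk)$, $\pi(G)$ is generated by an element of order $3$. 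It is convenient to take $[u:v]\mapsto[v:u+v]$, which has order $3$ in characteristic $2$ and cyclically permutes the three roots of $uv(u+v)$.

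The third step, which I expect to be the main obstacle, is the normal form. With $a_1=a_2=a_4=0$, smoothness forces $a_3\neq 0$, and $a_3$ is a cubic form on which $\pi(G)$ acts. If $\pi(G)$ fixed a root of $a_3$, one could try to exclude this case using the invariant Picard rank or smoothness. So the plan is to show that $a_3$ has three distinct roots permuted transitively, normalize $a_3=uv(u+v)$, and then use the remaining freedom $y\mapsto y+c_3(u,v)$, $x\mapsto \lambda x$ to reduce $a_6$ to the form $(e+\sqrt e)(u^5v+uv^5)+eu^3v^3$. This follows the classification of pairs $(a_3,a_6)$ up to the action in \cite{martin2}. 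One must then check that the lift of the generator of $\pi(G)$ is exactly $b$, with its correction term $\sqrt e(u^2v+uv^2+v^3)$ in $y$, and that $b$ commutes with $a$ and has order $3$. One must also show that $e\neq 0,1$ is the smoothness condition, by a partial-derivative computation analogous to Lemma \ref{1smooth}.

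Finally, $\rank(\Pic(S_e)^G)=1$ will follow from Proposition \ref{rankformula}. The fixed loci should be computed as in Lemma \ref{fixedpointsg11}:
\begin{itemize}
\item $a$ and $a^2$ fix the curve $\{x=0\}$, which is of genus $1$, together with isolated points;
\item the elements acting nontrivially on the base fix only points lying over the two fixed points of $[u:v]\mapsto[v:u+v]$.
\end{itemize}
Summing the Euler characteristics should give $\frac19\sum_g\chi(S^g)=3$, hence rank $1$. If this bookkeeping is awkward in characteristic $2$, an integrality argument as in Proposition \ref{1result} can serve as a fallback.
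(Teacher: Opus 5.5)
You derive $p=3$ differently from the paper, and that part is legitimate. The paper reads off both $p=3$ and $S\simeq S_e$ from \cite[Theorem 6.8]{martin2}. You instead obtain $p=3$ and $\Ker(\pi|_G)\simeq\pi(G)\simeq\ZZ/3$ from the cyclicity of $p$-elementary subgroups of $\PGL(2,\kk)$, modulo the kernel correction below.

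\textbf{The gap is Step 3.} It carries the actual content of the statement, and you only announce it. Your suggestion to exclude the case where $\pi(G)$ fixes a root of $a_3$ ``using the invariant Picard rank'' is not available: the proposition has no hypothesis on $\rank\Pic(S)^G$, and rank one is part of the conclusion. These cases have to be excluded by the group structure and smoothness. Take $\pi(G)=\langle [u:v]\mapsto[u:jv]\rangle$ and $a$ the scaling $x\mapsto jx$. Commuting with $a$ forces the lift of the generator to be $(u,jv,\nu x,\mu y+q)$ with $q$ a cubic form.
\begin{itemize}
\item If $a_3=u^2v$, semi-invariance gives $\mu=j$ and $\nu^3=j^2$, so the lift has order $9$.
\item If $a_3=u^3$, compare coefficients of $uv^5$, which $q^2+u^3q$ cannot contain. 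This forces that coefficient of $a_6$ to vanish, and then $S$ is singular on the curve $u=0$.
\end{itemize}
You would still have to bring $a_6$ to the one-parameter form, and you defer this to \cite{martin2}. At that point you rely on the same theorem as the paper. That theorem also gives $\Aut(S_e)\simeq\ZZ/6\times\mathfrak{S}_3$, so $G=\langle a,b\rangle$ follows from uniqueness of the Sylow $3$-subgroup (Lemma \ref{finitegroup1bis}). Your alternative, that the lift of $\pi(b)$ in $G$ is exactly $b$, needs $\Ker(\pi|_{\Aut(S_e)})=\langle a\rangle\times\langle\text{Bertini}\rangle$. You have not justified this in characteristic $2$.

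\textbf{Two intermediate claims are also false.}

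First, in characteristic $2$ the kernel of $\pi$ need not be $\ZZ/2$ or $\ZZ/6$. When $a_1=0$, the generic anticanonical curve is supersingular (vanishing $j$-invariant). For instance, take the smooth surface $y^2+u^3y=x^3+a_6$ with nonzero coefficient of $uv^5$ in $a_6$. There the map $(x,y)\mapsto(x+u^2,\,y+ux+ju^3)$ is a kernel element of order $4$. What you actually need is weaker: the kernel embeds into the automorphism group of the generic member of $|-K_S|$, an elliptic curve over $\kk(t)$ with origin the base point. That group has order dividing $24$, so the odd part of the kernel is at most $\ZZ/3$. Also, the Bertini involution is $y\mapsto y+a_1x+a_3$, not $y\mapsto y+a_1xy+a_3$.

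Second, your predicted fixed loci are wrong. The correct ones are:
\begin{itemize}
\item $a^{\pm1}$: the curve $\{x=0\}\cap S_e$ lies in $|-2K_{S_e}|$ and has genus $2$, not $1$. The base point $[0:0:1:1]$ is also fixed, so $\chi=-1$.
\item $b^{\pm1}$: five isolated points, so $\chi=5$.
\item $ab,a^2b^2$ (resp.\ $a^2b,ab^2$): they fix pointwise the whole anticanonical curve over $[1:j^2]$ (resp.\ $[1:j]$), which is the elliptic curve $y^2+y=x^3+\sqrt e$, plus two points. So $\chi=2$.
\end{itemize}
The total is $11-2+10+8=27$, which gives rank $1$. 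The bookkeeping you describe would not produce $27$.

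Incidentally, the table in Lemma \ref{rkg12prime} lists five fixed points for $b$ but uses $\chi=7$. With $\chi=5$ its integrality argument gives $\alpha+2\beta=4$, consistent with $\alpha=2$ and $\beta=1$.
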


\begin{proof} $ $

According to \cite[Theorem 6.8]{martin2}, since $p \neq 2$, if $S$ has a $p$-elementary non-cyclic subgroup of automorphisms, then $p=3$ and up to isomorphism we have $S = S_e$ for a certain $e \in \kk$.
According to Proposition \ref{subgroupse}, we have $G = \langle a,b \rangle$.
We assumed the surface to be smooth, hence according to Lemma \ref{smoothnessse}, we have $e \in \kk \setminus \left\{0,1\right\}$.
According to Lemma \ref{rkg12prime} we have $rk(Pic(S_e)^{G_{12}'})=1$.
\end{proof}

\subsection{A unique $3$-elementary non-cyclic subgroup of automorphisms} \label{massy463} $ $

Let $e \in \kk$.

\begin{lemme} \label{finitegroup1bis}

The group $\ZZ/6 \times \mathfrak{S}_3$ has exactly one $3$-elementary non-cyclic subgroup. \end{lemme}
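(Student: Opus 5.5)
We have to show that $\ZZ/6 \times \mathfrak{S}_3$ has exactly one subgroup isomorphic to $(\ZZ/3)^r$ with $r \geq 2$. The plan is to write $\ZZ/6 \simeq \ZZ/2 \times \ZZ/3$, so that
\[ \ZZ/6 \times \mathfrak{S}_3 \simeq \ZZ/2 \times \ZZ/3 \times \mathfrak{S}_3 , \]
and then locate the $3$-torsion precisely.

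\textbf{Step 1: the Sylow $3$-subgroup.} The group has order $36 = 4 \cdot 9$, so its Sylow $3$-subgroups have order $9$. The subgroup
\[ P = \{0\} \times \ZZ/3 \times A_3 \simeq (\ZZ/3)^2 \]
has order $9$, so it is a Sylow $3$-subgroup. It is normal, since $\ZZ/3$ is central and $A_3$ is normal in $\mathfrak{S}_3$. Hence $P$ is the unique Sylow $3$-subgroup.

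\textbf{Step 2: uniqueness.} Every $3$-elementary subgroup is a $3$-subgroup, so it is contained in the unique Sylow subgroup $P$. Since $P \simeq (\ZZ/3)^2$ has order $9$, any non-cyclic $3$-elementary subgroup has order at least $9$ and therefore equals $P$.

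\textbf{Step 3: existence.} $P$ itself is $3$-elementary and non-cyclic, because it is isomorphic to $(\ZZ/3)^2$. This proves the lemma.

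If one prefers to avoid Sylow theory, there is an alternative route: check directly that every element of order $3$ lies in $\ZZ/3 \times A_3$. An element $(a,b,\sigma)$ has order $3$ exactly when $a = 0$, $b$ is arbitrary, and $\sigma \in A_3$, with $(b,\sigma) \neq (0,\mathrm{id})$, because transpositions have order $2$. There is no real obstacle beyond correctly identifying $P$ as normal.
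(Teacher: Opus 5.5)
Your proof is correct and follows essentially the same route as the paper: both take the product of the unique subgroups of order $3$ in $\ZZ/6$ and in $\mathfrak{S}_3$, observe that it is normal and of order $9$, and conclude that it is the unique Sylow $3$-subgroup, isomorphic to $(\ZZ/3)^2$. You also spell out the final step that the paper leaves implicit, namely that every $3$-subgroup lies in this Sylow subgroup and that a non-cyclic $3$-elementary subgroup has order at least $9$, so it must equal that Sylow subgroup.
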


\begin{proof} $ $
Denote by $L$ \textit{resp} $R$ the unique subgroup of $\ZZ/6$ \textit{resp} $ \mathfrak{S}_3$ isomorphic to $\ZZ/3$.

Hence $L$ \textit{resp} $R$ is a normal subgroup of $\ZZ/6$ \textit{resp} $ \mathfrak{S}_3$.

Hence $L \times R$ is a normal subgroup of $\ZZ/6 \times \mathfrak{S}_3$.

Observe that $L \times R$ is also a $3$-Sylow subgroup of $\ZZ/6 \times \mathfrak{S}_3$.

Hence by Sylow's theorem, $L \times R$ is the unique $3$-Sylow subgroup of $\ZZ/6 \times \mathfrak{S}_3$.

It is then the unique $3$-elementary non-cyclic subgroup of $\ZZ/6 \times \mathfrak{S}_3$.
\end{proof}

%
%

\begin{prop} \label{subgroupse} $ $
The only $3$-elementary non-cyclic subgroup of $\Aut(S_e)$ is $\langle a,b \rangle$.
\end{prop}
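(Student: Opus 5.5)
The plan is to identify $\Aut(S_e)$ with the group $\ZZ/6 \times \mathfrak{S}_3$ and then apply Lemma \ref{finitegroup1bis}. By \cite[Theorem 6.8]{martin2} (and the surrounding discussion in \cite[Sections 5 and 6]{martin2}), the surfaces $S_e$ with $e \in \kk \setminus \{0,1\}$ are exactly the del Pezzo surfaces of degree $1$ in characteristic $2$ whose automorphism group contains a $3$-elementary non-cyclic subgroup. For a generic such $e$, the automorphism group is $\ZZ/6 \times \mathfrak{S}_3$. The $\ZZ/6$ factor is generated by the Bertini involution together with $a \colon x \mapsto jx$. The $\mathfrak{S}_3$ factor acts on the base $\PP^1$ with coordinates $[u:v]$ by permuting the three points $\{uv(u+v)=0\}$, and $b$ is one of its elements of order $3$.

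For the special values of $e$ where $\Aut(S_e)$ might be larger, I would check the list in \cite{martin2} directly. If a larger group occurs, I would argue by reducing to its Sylow $3$-subgroup, which should still be $(\ZZ/3)^2$.

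First I verify the concrete facts about $a$ and $b$. Both preserve the equation of $S_e$: for $a$ this follows from $j^3=1$. For $b$, the change $y \mapsto y+\sqrt{e}(u^2v+uv^2+v^3)$ is designed to absorb the terms produced by $[u:v]\mapsto[v:u+v]$ in $(e+\sqrt e)(u^5v+uv^5)+eu^3v^3$; this is a direct computation in characteristic $2$. I also check that $a$ and $b$ have order $3$ and commute, since $a$ only touches $x$ and $b$ fixes $x$. Finally, $\langle a,b\rangle \simeq (\ZZ/3)^2$, because $b$ acts non-trivially on the base while $a$ acts trivially.

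Next I identify $\Aut(S_e)$ with $\ZZ/6 \times \mathfrak{S}_3$ using \cite{martin2}, or at least establish that the order of $\Aut(S_e)$ has $3$-part exactly $9$. A group of this shape has a unique $3$-elementary non-cyclic subgroup, namely its Sylow $3$-subgroup when that subgroup is normal; this is Lemma \ref{finitegroup1bis}. That subgroup has order $9$ and contains $\langle a,b\rangle$, so the two are equal.

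The main obstacle is the precise identification of $\Aut(S_e)$, in particular making sure no exceptional value of $e$ gives a bigger group with several non-conjugate $(\ZZ/3)^2$ subgroups. If the citation is not sharp enough, I would argue directly. Every automorphism respects the anticanonical pencil, giving a homomorphism $\Aut(S_e)\to \PGL(2,\kk)$. Its kernel is contained in $\{ x\mapsto \lambda x, \ y \mapsto y + \dots\}$, and its $3$-part is $\langle a\rangle$. Its image preserves the three points $uv(u+v)=0$, which are the images of the cuspidal/singular fibers, so it lies in $\mathfrak{S}_3$. Together these show that the Sylow $3$-subgroup has order at most $9$, which suffices.
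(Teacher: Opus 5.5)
Your proposal is correct and follows the paper's proof. You check directly that $a$ and $b$ are commuting automorphisms of order $3$ of $S_e$ generating $(\ZZ/3)^2$, invoke \cite[Theorem 6.8]{martin2} for $\Aut(S_e)\simeq \ZZ/6\times\mathfrak{S}_3$, and conclude with Lemma~\ref{finitegroup1bis}.

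One caution about your fallback: bounding the Sylow $3$-subgroup by order $9$ only shows that every $(\ZZ/3)^2$ is conjugate to $\langle a,b\rangle$, not equal to it, so you also need normality. Your kernel/image description gives this once you note that the kernel is exactly $\ZZ/6$ (the Bertini involution and $a$). Then all $3$-elements lie in the preimage of the order-$3$ subgroup of $\mathfrak{S}_3$, which is normal of order $18$ with a Sylow $3$-subgroup of index $2$. Incidentally, this same computation gives $|\Aut(S_e)|\le 36$ for every $e\notin\{0,1\}$, so no exceptional values of $e$ occur.
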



%
%
%

\begin{proof} $ $
Direct computations show that $a$ and $b$ define automorphisms of $S_e$ that generate a subgroup isomorphic to $(\ZZ/3)^2$. 

According to \cite[Theorem 6.8]{martin2}, the automorphism group of $S_e$ is given by $\Aut(S_e) \simeq \mathbb{Z}/6 \times \mathfrak{S}_3$. By Lemma \ref{finitegroup1bis}, $\Aut(S_e)$ contains a unique $3$-elementary non-cyclic subgroup. Since $\langle a,b \rangle$ is such a subgroup, we conclude that it is the only one.
\end{proof}

\subsection{The smoothness of the surface $S_e$} $ $

Let $e \in \kk$.

\begin{lemme} \label{smoothnessse} $ $
The surface $S_e$ is smooth if and only if $e \in \kk \setminus \left\{0,1\right\}$.

\end{lemme}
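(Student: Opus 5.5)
We prove Lemma \ref{smoothnessse} by the Jacobian criterion on affine charts of $\mathbb{P}(1,1,2,3)$, in characteristic $2$. Write
\[ P(u,v,x,y) = y^2 + uv(u+v)y + x^3 + (e+\sqrt{e})(u^5v+uv^5) + e\,u^3v^3 . \]
First we deal with the points where $u=v=0$. There the equation reduces to $y^2 + x^3 = 0$, so in $\mathbb{P}(1,1,2,3)$ the only such point is $[0:0:1:1]$. This is the singular point of the ambient weighted space of type $\frac{1}{\ldots}$, and it never lies on a general member. Concretely, the chart $x \neq 0$ is the quotient of $\mathbb{A}^3$ (coordinates $u,v,y$ with $x=1$) by $\mu_2$, and $y \neq 0$ is the quotient by $\mu_3$. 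As in Lemma \ref{1smoothbis}, on the chart $y\neq 0$ the $\mu_3$-action $(u,v,x)\mapsto(\zeta u,\zeta v,\zeta^2x)$ is free away from the origin, and $\mu_3$ is étale since $\car(\kk)=2$. Hence smoothness of $S_e$ at $[0:0:1:1]$ is equivalent to smoothness of $\{1+uv(u+v)+x^3+\dots=0\}$ at $(0,0,1)$, where $\partial_x = 3x^2 = x^2 \neq 0$. So this point is always smooth, independently of $e$.

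Next we cover the rest of $S_e$ by the charts $u\neq0$ and $v\neq 0$, which are honest affine spaces $\mathbb{A}^3$. Because $P$ is invariant under $b$, which permutes $u,v,u+v$ cyclically, it suffices to work in the chart $u=1$. There we compute, in characteristic $2$:
\begin{align*}
\partial_y &= v(1+v), \\
\partial_x &= x^2, \\
\partial_v &= (1+v^2)\,y + (e+\sqrt{e})(1+v^4) + e\,v^2 .
\end{align*}
A singular point must therefore have $x=0$ and $v\in\{0,1\}$. The case $v=1$ is mapped to $v=0$ by the symmetry $b$ (or it can be checked directly), so we take $v=0$. Then the equation gives $y^2 = 0$, so $y=0$, and $\partial_v = e+\sqrt{e}$. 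Thus the point $[1:0:0:0]$ is singular exactly when $e+\sqrt{e}=0$, that is $\sqrt{e}\in\{0,1\}$, that is $e\in\{0,1\}$.

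The converse is already contained in this computation: for $e\in\{0,1\}$ the point $[1:0:0:0]$ satisfies the equation and has all partials vanishing, so $S_e$ is singular there.

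The main obstacle is the bookkeeping for the weighted projective charts, in particular the justification at $u=v=0$. We handle it exactly as in Lemma \ref{1smooth}, via the étale quotient map of Lemma \ref{1smoothbis}. We must also check that the characteristic-$2$ derivatives are taken correctly; for instance $\partial_v(v^5) = v^4$ and $\partial_v(v^3) = v^2$.
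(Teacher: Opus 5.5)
Your argument is correct and is essentially the paper's proof. Both apply the Jacobian criterion and split along $uv(u+v)=0$; your $\partial_y=v(1+v)$ in the chart $u=1$ is exactly that condition. There are two organizational differences. You handle the point $[0:0:1:1]$ through the étale $\mu_3$-chart instead of quoting the criterion from \cite{martin2}. You also collapse the paper's three cases $u=0$, $v=0$, $u=v$ into one by using the order-$3$ automorphism $b$; this is legitimate, since one checks that $P\circ b=P$ holds identically for every $e$.

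Two slips need fixing. First, in characteristic $2$ one has $\partial_v\bigl(v(1+v)y\bigr)=(1+2v)y=y$, not $(1+v^2)y$. This is harmless for your proof because you only evaluate $\partial_v$ at $v=0$. However, with your formula the ``direct check'' at $v=1$ would give the wrong condition $e=0$. The correct value there is $\sqrt{e}+e$, since $y=\sqrt{e}$.

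Second, $[0:0:1:1]$ is not a singular point of $\mathbb{P}(1,1,2,3)$: its stabilizer in $\mathbb{G}_m$ is trivial. It also does lie on $S_e$, as your own next lines show. So delete that sentence; the étale $\mu_3$ argument that follows is what actually proves smoothness at that point.
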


\begin{proof} $ $
According to \cite[Proposition 5.2]{martin2}, the surface $S_e$ is not smooth if and only if there exists $(u,v,x,y) \in \kk \setminus \left\{(0,0,0,0)\right\}$ such that:

$$\left\{
    \begin{array}{rll}
         uv(u+v)& = & 0 \\
        x^2 & = & 0 \\
v^2 y + (e+\sqrt{e})(u^4 v + v^5) + e u^2 v^3 & =& 0 \\
u^2 y +  (e+\sqrt{e})(v^4 u + u^5) + e v^2 u^3 & = & 0 \\
y^2 + uv(u+v) y + x^3 + (e + \sqrt{e}) (u^5 v + u v^5) + e u^3 v^3  & =& 0
    \end{array}
\right.$$

We distinguish three cases depending on the solutions of the first equation. 
\begin{enumerate}
\item First case: $u = 0$:

$$\left\{
    \begin{array}{rll}
      x & = & 0 \\
v^2 y + (e+\sqrt{e})v^5  & =& 0 \\
0 & = & 0 \\
y^2  & =& 0
    \end{array}
\right.$$

Hence $x = y = 0$ and $(e+\sqrt{e}) v = 0$.
Observe that $e+\sqrt{e} = 0$ if and only if $e \in \{0,1 \}$. Hence the result in this case.

\item Second case: $v = 0$:
Similar to the first case by symmetry between the variables $u$ and $v$.

\item Third case: $u = v$:

$$\left\{
    \begin{array}{rll}
        x & = & 0 \\
u^2 y  + e u^5  & =& 0 \\
u^2 y  + e u^5 & = & 0 \\
y^2  + e u^6 & =& 0
    \end{array}
\right.$$

We distinguish two subcases depending on the solutions of the second equation $u^2 y  + e u^5   = 0$.
\begin{itemize}
\item If $u=0$:

$$\left\{
    \begin{array}{rll}
        x & = & 0 \\
0 & =& 0 \\
0 & =& 0 \\
y^2  & =& 0
    \end{array}
\right.$$
Hence $x=y=u=v=0$, so there is no non-trivial solution.

\item If $y = e u^3$:

$$\left\{
    \begin{array}{rll}
        x & = & 0 \\
0 & =& 0 \\
0 & = & 0 \\
(e^2  + e) u^6 & =& 0
    \end{array}
\right.$$
Hence we get a non-trivial solution in this subcase if and only if we have $e^2 + e = 0$, \textit{i.e.} $e \in \{0,1\}$.
\end{itemize}
\end{enumerate}
\end{proof}

\subsection{The Picard rank of the subgroup $G_{12}'$ acting on $S_e$}  $ $

Let $e \in \kk \setminus \left\{0,1\right\}$.

 \begin{lemme} \label{rkg12prime} $ $
We have: $rk(Pic(S_e)^{G_{12}'})=1$.
 \end{lemme}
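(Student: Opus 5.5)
The plan is to use the Lefschetz-type formula of Proposition \ref{rankformula}, as in the previous sections:
\[
\rank \Pic(S_e)^{G} + 2 = \frac{1}{|G|}\sum_{g\in G}\chi(S_e^g).
\]
Here $|G_{12}'|=9$ and $\chi(S_e)=11$. In characteristic $2$ with $p=3$ the action is tame, so the formula (an $\ell$-adic trace formula, $\ell\neq 2$) still applies.

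We split $G_{12}'\setminus\{id\}$ into the four cyclic subgroups $\langle a\rangle$, $\langle b\rangle$, $\langle ab\rangle$, $\langle a^2b\rangle$. Each element and its inverse have the same fixed locus.

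\textbf{The element $a$.} It fixes the points with $x=0$, which form the curve $C_a=\{y^2+uv(u+v)y+(e+\sqrt e)(u^5v+uv^5)+eu^3v^3=0\}\subset\mathbb{P}(1,1,3)$. It also fixes points with $u=v=0$; on $S_e$ these are $y^2+x^3=0$ in $\mathbb{P}(2,3)$, i.e. the single base point $[0:0:1:1]$. The curve $C_a$ lies in $|-3K_S|$ restricted to $\{x=0\}\in|-2K|$. Viewed as a double cover of $\mathbb{P}^1_{[u:v]}$ given by an Artin--Schreier-type equation, its ramification comes from the zeros of $uv(u+v)$. I would compute its genus with Riemann--Hurwitz in the wild form, or by adjunction in $\mathbb{P}(1,1,3)$; I expect genus $2$, hence $\chi=-2+1=-1$.

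\textbf{The element $b$.} It acts on $[u:v]$ by an order-$3$ Möbius map with two fixed points $[1:\omega]$, $\omega^2+\omega+1=0$. Over each such point I would solve for the fixed points in the fibre, using the $x$ and $y$ coordinates. The elements $ab$ and $a^2b$ act on the same base with an extra twist $x\mapsto j^{\pm1}x$.

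For these three elements the fixed loci should be finite. I would count them explicitly, substituting $v=\omega u$ and reducing the equations; for non-identity $g$ of order $3$ one can also use the Lefschetz number $\chi(S^g)=2+\Tr(g^*|\Pic)$.

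An alternative route, which I would try first because it avoids the characteristic-$2$ genus computation, is a shortcut like the one in Proposition \ref{1result}. Write $\alpha$ for $\chi(S_e^a)$ and let $m_b,m_{ab},m_{a^2b}$ be the fixed-point counts of the other three subgroups. Then
\[
\rank \Pic(S_e)^G = \tfrac19\bigl(11+2(\alpha+m_b+m_{ab}+m_{a^2b})\bigr)-2.
\]
Combining integrality with the bounds $\rank\geq1$, the genus being non-negative, and the $m$'s being small forces the value $1$. One can also argue that the only $G$-invariant classes are multiples of $K_S$.

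\textbf{Main obstacle.} The step I expect to be hardest is the genus of $C_a$ in characteristic $2$. Ordinary Riemann--Hurwitz fails for Artin--Schreier covers, so I would instead compute the arithmetic genus by adjunction on the weighted plane and check smoothness directly, in the manner of Lemma \ref{smoothnessse}, using $e\neq0,1$.
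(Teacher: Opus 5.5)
Your framework is the paper's (Proposition~\ref{rankformula} applied to $G_{12}'$), and your treatment of $a$ is right. $S_e^{a}$ is the curve $S_e\cap\{x=0\}$ together with the base point $[0:0:1:1]$; this curve lies in $|-2K_{S_e}|$, not $|-3K|$. Adjunction on $S_e$ gives $2g-2=(-2K)\cdot(-K)=2$. Smoothness is automatic because $a$ is tame, so no wild Riemann--Hurwitz is needed, and $\chi(S_e^{a})=-1$.

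\textbf{The gap.} Your expectation that $b$, $ab$, $a^2b$ have finite fixed loci is false. Take a fixed point $[1:\omega]$ of the base action, with $\omega^2+\omega+1=0$. Using $1+\omega=\omega^2$, $\omega+\omega^2+\omega^3=0$ and rescaling by $\lambda=\omega^{-1}$, one gets $b([1:\omega:x:y])=[1:\omega:\omega x:y]$. So $ab$ acts on this fibre by $x\mapsto j\omega x$, which is the identity for $\omega=j^2$. Hence $ab$ fixes pointwise the whole anticanonical curve $S_e\cap\{v=j^2u\}$. This is the smooth elliptic curve $y^2+u^3y+x^3+\sqrt e\,u^6=0$, and it contains the base point. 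In addition, $ab$ fixes the two points $x=0$, $y^2+y+\sqrt e=0$ over $[1:j]$; the situation for $a^2b$ is symmetric. Thus $\chi(S_e^{ab})=\chi(S_e^{a^2b})=2$. Meanwhile $\chi(S_e^{b})=5$: the base point plus two points over each of $[1:j]$, $[1:j^2]$. Then $11+2(-1+5+2+2)=27$ gives rank $1$.

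\textbf{The fallback does not repair this.} In the paper's argument, as in Proposition~\ref{1result}, every point count is computed exactly and only genera are left unknown. The genera enter with a minus sign, so $\rank\ge 1$ bounds them and integrality pins down the answer. Your unknowns $m_b,m_{ab},m_{a^2b}$ enter with a plus sign instead. With $\chi(S_e^a)=-1$ the formula reads $\rank=-1+\tfrac29(m_b+m_{ab}+m_{a^2b})$. Even with the trivial bounds $0\le m\le 11$, the values $1$, $3$, $5$ all remain possible. Asserting that only multiples of $K_S$ are invariant is just the statement itself.

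\textbf{A shorter route you already have.} Apply the formula to $\langle a\rangle$ alone: $\rank\Pic(S_e)^{\langle a\rangle}+2=\tfrac13(11-1-1)=3$. Since $K_{S_e}\in\Pic(S_e)^{G_{12}'}\subseteq\Pic(S_e)^{\langle a\rangle}$, the rank is $1$ without looking at $b$ at all.

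\textbf{On the paper's table.} It lists five fixed points for $b$ but records $\chi=7$. The correct value is $5$. With it, the paper's integrality step yields $\alpha+2\beta=4$ (consistent with $\alpha=2$, $\beta=1$) rather than $5$.
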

 
 \begin{proof} $ $
 The following table gives us the Euler characteristic $\chi^g$ of the fixed point variety $X^g$ of the elements $g \in G_{12}'$ acting on $S_e$:

 \newpage
 

\tiny
\begin{table}[h]
\centering
\renewcommand{\arraystretch}{2.0} 
\begin{tabular}{| c | M{5cm}| c | c |} 
 \hline 
  element & fixed points & nature & $\chi$  \\
  \hline
  id &  &  & 11   \\
  \hline
  $a , a^2$ & $[0:0:1:1] \sqcup (S_e \cap \{x=0\})$  & 1 pts and a curve of genus $\alpha$ & $3 - 2 \alpha $ \\
 \hline  
   $b, b^2$ & $[0:0:1:1] \sqcup$ \newline $\{ [1:j^n : 0 :y] \mid n \in \{1,2\} , 0 = y^2 + y +\sqrt{e} \}$ & 5 pts & 7 \\
 \hline  
   $ab , a^2 b^2 $ & $\{ [1:j : 0 :y] \mid 0 = y^2 + y +\sqrt{e} \} \sqcup$ \newline $\{ [u:j^2 u :x :y] \mid 0 = y^2 + y u^3 + x^3 + \sqrt{e} u^6 \}$ & 2 pts and a curve of genus $\beta$ & $4-2 \beta$ \\
 \hline  
    $a^2 b , ab^2 $ & $\{ [1:j^2 : 0 :y] \mid 0 = y^2 + y +\sqrt{e} \} \sqcup$ \newline $\{ [u:j u :x :y] \mid 0 = y^2 + y u^3 + x^3 + \sqrt{e} u^6 \}$& 2 pts and a curve of genus $\beta$ & $4-2 \beta$ \\
 \hline  
 \end{tabular}
\end{table}
\normalsize
Where $\alpha, \beta, \gamma \in \mathbb{N}_{\geq 0}$.

According to Proposition \ref{rankformula}, we have the following formula for the Picard rank: $$\rank \Pic(X)^G + 2 = \frac{1}{|G|} \sum_{g \in G} \chi(X^g)$$

Hence for $G_{12}'$ we get:
$$rk(Pic(S_e)^{G_{12}'}) = \dfrac{1}{9}(9+2 \cdot (1-2\alpha)+2\cdot 5 + 4 \cdot(2-2\beta)) = 1 + \dfrac{20 - 4 \cdot (\alpha + 2 \beta)}{9}$$
 Since $\alpha+2\beta \geq 0$ and $rk(Pic(S_e)^{G_{12}'}) \geq 1$ we get: $\alpha + 2 \beta \leq 5$.
 Then using that  $\alpha+2\beta \in \mathbb{N}_{\geq 0}$ and $rk(Pic(S_e)^{G_{12}'}) \in \mathbb{N}_{\geq 1}$, we get that the only possibility is $\alpha + 2 \beta = 5$ and $rk(Pic(S_e)^{G_{12}'}) = 1$.
 \end{proof}

\newpage

\part{Conjugation by birational maps of $p$-elementary subgroups}  
\label{part4}
$ $

\bigskip\bigskip

The work in the previous parts has allowed us to give a list of representatives of $p$-elementary non-cyclic subgroups of the Cremona group. Now we would like to study the possible conjugations by birational maps between these representatives.
The goal of this part is to give the necessary results to prove Theorem \ref{theorem2}.
The four sections of this part are almost independent:

\begin{itemize}
\item

In Section \ref{section41} we study a first easy case: The possible conjugations of a subgroup of automorphisms of a del Pezzo surface of degree $1, 2$ or $3$ with Picard rank equal to $1$. We show that a conjugacy to a minimal pair can always be replaced by an isomorphism.
\item

In Section \ref{section42} and \ref{section43} we do a similar study as in Section \ref{section41} but for quartic surfaces. This requires more work, and the conclusion differs: Klein subgroups of automorphisms of quartic del Pezzo surface with Picard rank equal to $1$ are conjugate to de Jonquières subgroups (Proposition \ref{conjcubickleinblue}).
\item

Finally in Section \ref{section44} we study the conjugation between de Jonquières subgroups. This is used to show the 
\hyperlink{Amazing}{\textbf{\textit{amazing result}}} (see Subsection \ref{amazingandsinequanon})
\end{itemize}

\bigskip\bigskip

We start our study by defining links of type $(II,\mathbb{D})$ and $(II,\mathbb{C})$:
\begin{defi} \label{sublinks} $ $

\begin{enumerate}
\item A link of type $(II,\mathbb{D})$ is a link of type $II$ between two del Pezzo surfaces.
\item A link of type $(II,\mathbb{C})$ is a link of type $II$ between two rational surfaces with conic bundle structures.
\end{enumerate}
\end{defi}

The reader can refer to \cite[Theorem 2.6]{isk} for more details about these links.

\newpage

\section{Subgroups of automorphisms of del Pezzo surfaces of low degree} \label{section41}
$ $

In this first section we study the case of a subgroup of automorphisms of a del Pezzo surface of degree $1, 2$ or $3$ with Picard rank equal to $1$.
The result of this section, Proposition \ref{conjdplowdeg}, provides a key step in the proof of Proposition \ref{lemmetheorem2}.
\bigskip

\begin{prop} \label{conjdplowdeg} $ $ 

Let $S$ be a del Pezzo surface of degree 1, 2 or 3. Let $G$ be a subgroup of automorphism of $S$. We assume $rk(Pic(S)^G) = 1$. Let $(G',S')$ be a minimal pair.
We assume $(G,S), (G',S')$ represent the same conjugacy class in the Cremona group.
 Then there exists an isomorphism $\Phi : S \rightarrow S'$ such that $G' = \Phi G \Phi^{-1}$.
%
\end{prop}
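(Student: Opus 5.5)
The natural route is the $G$-equivariant Sarkisov program (Iskovskikh's classification of links, \cite[Theorem 2.6]{isk}). A birational map $\phi \colon S \dasharrow S'$ with $G' = \phi G \phi^{-1}$, between two minimal pairs, decomposes as a composition of elementary $G$-equivariant links followed by an isomorphism. Since $\rank(\Pic(S)^G) = 1$, the pair $(G,S)$ is a $G$-Mori fibre space over a point. It therefore suffices to show that every $G$-link starting from $(G,S)$ is either a birational self-map of $S$ or ends at a surface isomorphic to $S$ (via a $G$-equivariant isomorphism). An induction on the number of links then shows that the end pair $(G',S')$ is $G$-isomorphic to $(G,S)$, giving the isomorphism $\Phi$.

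The key step is the analysis of links starting at a del Pezzo surface of degree $d = K_S^2 \in \{1,2,3\}$. A link from a rank-one $G$-del Pezzo surface is of type I or II, and starts by blowing up a $G$-orbit of $r$ points in general position, with $r < d$ (this is Iskovskikh's numerical condition: the blow-up must remain a surface with $-K$ big and nef, so $d - r \geq 1$). For $d=1$ no orbit is allowed, so there are no links at all and $\phi$ is already an isomorphism. For $d=2$ the only possibility is $r=1$, a $G$-fixed point; for $d=3$ one has $r \in \{1,2\}$. I will use Iskovskikh's table for these cases. Blowing up one point on a degree $2$ surface gives the Bertini involution link back to $S$. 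On a cubic, one point gives the Geiser involution link back to $S$, and two points give a link of type II to a cubic surface that is isomorphic to $S$ (indeed, this link is a composition of such an involution with an isomorphism). In each case the link ends at a del Pezzo surface of the same degree which is $G$-isomorphic to $S$. The conjugated group is again of invariant Picard rank one, so we stay in the same situation and can iterate.

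The main obstacle is verifying that the links in the table genuinely return to an isomorphic pair, and that no link of type I to a conic bundle arises. For type I links from degree $\leq 3$, Iskovskikh's list shows that blowing up $r<d$ points never yields a $G$-conic bundle with relative Picard rank $2$ in these degrees. The one exception would be degree $3$ with $r=1$ if the blow-up were a conic bundle; I expect this is excluded because the result is a degree-$2$ weak del Pezzo surface with invariant rank $2$ whose other extremal contraction is the Geiser-type one. I would check this directly from the list in \cite{isk}, and treat the cubic two-point case by identifying the link explicitly as the composition of the Geiser involutions attached to the two points, which is an automorphism-up-to-isomorphism of $S$.
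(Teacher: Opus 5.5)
The overall route you take is the same as the paper's. You decompose the conjugating map into $G$-Sarkisov links and use \cite[Theorem 2.6]{isk}. From degree $1$ there are no links. From degree $2$ or $3$ the only links are type II links $\sigma_1=\phi\circ\pi\circ\tau\circ\pi^{-1}$, where $\pi\colon X\to S$ blows up a $G$-orbit and $\tau$ is the Bertini or Geiser involution of the degree-$1$ or degree-$2$ surface $X$. You then induct. Your worry about type I links is settled by that list, since none start from degree $\le 3$.

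\textbf{The gap.} It is exactly the step you call the main obstacle and then leave unproved: that each link ends at a pair $G$-isomorphic to $(G,S)$. Iskovskikh's table only gives $S_1\simeq S$ as surfaces. What you need is an isomorphism $\phi$ with $\sigma_1 G\sigma_1^{-1}=\phi G\phi^{-1}$.

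The missing idea is that $\tau$ is central in $\Aut(X)$ (Lemma \ref{bertinigeiser}). Every automorphism of $X$ descends along the morphism given by $|-K_X|$, resp.\ $|-2K_X|$, whose covering involution is $\tau$. Hence every automorphism normalizes $\langle\tau\rangle$, and since this group has order $2$, it centralizes $\tau$. As $\pi^{-1}G\pi\subset\Aut(X)$, you get $\sigma_1G\sigma_1^{-1}=\phi\pi\tau(\pi^{-1}G\pi)\tau\pi^{-1}\phi^{-1}=\phi G\phi^{-1}$. This holds uniformly in all cases, and your induction then goes through. This is precisely the paper's argument.

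\textbf{The cubic two-point case.} Your proposed treatment is wrong and should be dropped. That link is the Bertini involution of the degree-$1$ surface $\mathrm{Bl}_{p_1,p_2}S$. It is not the composition of the Geiser involutions $\gamma_{p_1},\gamma_{p_2}$ at the two points.

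When $\{p_1,p_2\}$ is a $G$-orbit, an element of $G$ swapping the points conjugates $\gamma_{p_1}$ to $\gamma_{p_2}$. So these involutions are not $G$-equivariant and are not $G$-links. Their composition is also not the Bertini involution:
\begin{itemize}
\item for general points, $\gamma_{p_1}\circ\gamma_{p_2}$ is given by $|-4K_S-6p_2-3\gamma_{p_2}(p_1)|$;
\item the Bertini involution is given by $|-5K_S-6p_1-6p_2|$.
\end{itemize}
In general the composition is not even an involution.

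Once you have the centrality of $\tau$, no case-by-case identification of the links is needed.
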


\bigskip

\begin{proof} $ $

Let $\sigma : S \DashedArrow S'$ be a birational map such that $G' = \sigma \circ G \circ \sigma^{-1}$.
Using  \cite[Theorem 2.5]{isk}, we decompose $\sigma = \sigma_n \circ \dots \circ \sigma_1$ where $\sigma_k : S_{k-1} \DashedArrow  S_k$ are elementary links, and $S_0 =S$, $S_{k} = S'$. Let $G_k = \sigma_k \circ \dots \circ \sigma_1 G \sigma_1^{-1} \circ \dots \circ \sigma_k^{-1} \subset \Aut(S_k)$. We distinguish two cases:
\\

If $K_S^2 = 1$, according to \cite[Theorem 2.6]{isk}, there is no possible Sarkisov link for $\sigma_1$.

Now we assume $K_S^2 \in \left\{ 2,3 \right\}$, according to \cite[Theorem 2.6]{isk}, the only possible Sarkisov link for $\sigma_1$ is a link of type $II$ of the following form: $$\sigma_1 =  \phi \circ \pi \circ \tau \circ \pi^{-1}  : S_0 \DashedArrow S_1,$$ where: 

$\left\{
    \begin{array}{l}
     \tau : X \xrightarrow{\sim} X \mbox{ is a Bertini or Geiser involution} \\
X \mbox{ is a del Pezzo surface of degree 1 or 2} \\
\pi : X \twoheadrightarrow S_0 \mbox{ is a blowdown} \\
\phi : S_0 \rightarrow S_1 \mbox{ is an isomorphism}
    \end{array}
\right.$

Then:

$\begin{array}{llr}
G_1 & = \sigma_1 \circ G_0 \circ \sigma_1^{-1} \\ & = \phi \circ \pi \circ \tau \circ \pi^{-1} G_0 \circ \pi \circ \tau \circ \pi^{-1} \phi^{-1}& \\
& = \phi \circ \pi \circ   \pi^{-1} G_0 \circ \pi \circ \tau \circ \tau \circ \pi^{-1} \circ \phi^{-1} & \mbox{ \quad because $\pi^{-1} \circ G_0 \circ \pi \subset \Aut(X)$ }\\
&&  \mbox{      and $\tau$ commutes with $\Aut(X)$} \\
& & \mbox{because of Lemma \ref{bertinigeiser}} \\
& = \phi \circ \pi \circ   \pi^{-1} G_0 \circ \pi \circ \pi^{-1} \circ \phi^{-1} & \\
& = \phi G_0 \phi^{-1}&
\end{array}
$

So  $\phi : S_0 \rightarrow S_1$ is an isomorphism such that $\phi G_0 \phi^{-1} = G_1$.
\\

Hence, we can replace $S_0$ and $G_0$ by $S_1$ and $G_1$.
Assuming $n \geq 2$, we can apply the previous step to the next link $\sigma_2$. Hence by a finite induction we get that we can replace $\sigma$ by an isomorphism, hence the result.
\end{proof}

\newpage



\section{Non-Klein subgroups of automorphisms of quartic surfaces} \label{section42} $ $


This section proves in Proposition \ref{conjofbig2elemquartic} that any conjugacy of a $2$-elementary non-cyclic non-Klein subgroup (\textit{i.e.} isomorphic to $(\mathbb{Z}/2)^r$ where $r > 2$) of automorphisms of a quartic del Pezzo surface with Picard rank equal to $1$, to a minimal pair, can be replaced by an isomorphism. Note that Lemma \ref{replacinglinkbyiso} is invoked in both the present section and Section \ref{section44}.
We assume $\car(\kk) \neq 2, p=2, r > 2$.

\begin{lemme} \label{nofixedpoint2elemquartic} $ $
Let $G \simeq (\mathbb{Z}/2)^r$ be a subgroup of automorphisms a quartic del Pezzo surface $S$.
We assume $rk(Pic(S)^G) = 1$ and $r > 2$.
Then $G$ has no fixed point.
\end{lemme}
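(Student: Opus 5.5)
The plan is to reduce to the explicit normal form given by Proposition \ref{quarticresult} and then compute fixed points directly. Since $rk(Pic(S)^G)=1$ and $r>2$, Proposition \ref{quarticresult} lets us assume, after conjugation, that
\[ S=\left\{\sum_{n=0}^4 X_n^2=\sum_{n=0}^4 a_nX_n^2=0\right\}\subset\PP^4 \]
with the $a_n$ pairwise distinct. It also gives that $G$ is either $G_{45}=\{[\pm1:\pm1:\pm1:1:1]\}$ (the case $r=3$) or $G_{46}=Z$ (the case $r=4$). Since $G_{45}\subset G_{46}$, a fixed point of $G_{46}$ is in particular fixed by $G_{45}$. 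So it suffices to treat $G=G_{45}$.

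Next I will describe the fixed locus of each generator. By the table in the proof of Proposition \ref{quarticpicard}, the fixed locus of $[-1:1:1:1:1]$ on $S$ is the elliptic curve $\{X_0=0\}\cap S$. The other possible component, the isolated point $[1:0:0:0:0]$ of $\PP^4$, does not lie on $S$. Similarly, the fixed loci of $[1:-1:1:1:1]$ and $[1:1:-1:1:1]$ on $S$ are $\{X_1=0\}\cap S$ and $\{X_2=0\}\cap S$, since the coordinate points $e_1$ and $e_2$ are not on $S$ either.

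It follows that a common fixed point of $G_{45}$ must satisfy $X_0=X_1=X_2=0$. The equations of $S$ then reduce to
\[ X_3^2+X_4^2=0,\qquad a_3X_3^2+a_4X_4^2=0. \]
Because $a_3\neq a_4$, this system forces $X_3^2=X_4^2=0$, so there is no point of $\PP^4$ satisfying it. Hence $G_{45}$ has no fixed point on $S$, and therefore neither does $G_{46}$.

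The only subtle step is justifying the reduction to the normal form of Proposition \ref{quarticresult}. That reduction uses $rk(Pic(S)^G)=1$ to exclude the subgroups not contained in $Z$ and the subgroups $G_{44}$. Everything after the reduction is an elementary computation.
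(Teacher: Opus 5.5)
Your proof is correct and follows essentially the same route as the paper's. Both reduce via Proposition~\ref{quarticresult} to the smaller group $\{[\pm1:\pm1:\pm1:1:1]\}$, observe that a common fixed point must satisfy $X_0=X_1=X_2=0$, and conclude from $a_3\neq a_4$ that no such point lies on $S$.
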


\begin{proof} $ $
Using Proposition \ref{quarticresult} we can assume: $$S = \left\{ \sum_{n=0}^4 X_n^2 = \sum_{n=0}^4 a_n X_n^2 = 0 \right\} \subset \mathbb{P}^4$$ where $a_0, \dots , a_4 \in \kk$ are pairwise distinct
and $G = \{ [\pm 1 : \pm 1 : \pm 1 : 1 : 1 ]\}$ or $G = \{ [\pm 1 : \pm 1 : \pm 1 : \pm 1 : \pm 1 ]\}$.
Since the first group is a subgroup of the second, it is enough to show that $\{ [\pm 1 : \pm 1 : \pm 1 : 1 : 1 ]\}$ has no fixed point.
We observe that the set of fixed points of $[-1:1:1:1:1]$ is $\{[0:X_1:X_2:X_3:X_4] \in S \}$.
It implies that if $X$ is a fixed point of $\{ [\pm 1 : \pm 1 : \pm 1 : 1 : 1 ]\}$ then it is of the form $[0:0:0:X_3:X_4]$. Since $a_3 \neq a_4$, this is not a point of the surface.
\end{proof}

\begin{lemme} \label{replacinglinkbyiso} $ $
Let $S$ be a quartic del Pezzo surface. Let $G$ be a $2$-elementary subgroup of automorphisms of $S$. We assume $rk(Pic(S)^G) = 1$.
Let $S'$ be a del Pezzo surface and $G'$ be a $2$-elementary subgroup of automorphisms of $S'$.
We assume there exists $\sigma$ a link of type $(II,\mathbb{D})$ (see Definition \ref{sublinks}) from $(G,S)$ to $(G',S')$.
Then there exists $\phi : S \rightarrow S'$ an isomorphism such that $\phi G \phi^{-1} = G'$.
\end{lemme}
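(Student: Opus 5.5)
We use the Iskovskikh classification of Sarkisov links \cite[Theorem 2.6]{isk}, applied to a link of type $(II,\mathbb{D})$ starting at a quartic del Pezzo surface $S$ with $\rank(\Pic(S)^G)=1$. Such a link has the form $\sigma = \eta\circ\pi^{-1}$. Here $\pi\colon X\to S$ is the blow-up of a $G$-orbit of $d$ points in general position, $X$ is a del Pezzo surface of degree $4-d$, and $\eta\colon X\to S'$ contracts another $G$-orbit of disjoint $(-1)$-curves. From the list in \cite[Theorem 2.6]{isk}, the only possibilities for a degree $4$ source are the following.
\begin{itemize}
\item $d=1$: a $G$-fixed point, giving $X$ of degree $3$ and $S'\simeq S$ of degree $4$.
\item $d=2$: an orbit of size $2$, giving $X$ of degree $2$ and $S'$ of degree $4$; here $\sigma$ is conjugate to the Geiser involution of $X$.
\item $d=3$: an orbit of size $3$, giving $X$ of degree $1$ and $S'$ of degree $4$; here $\sigma$ is conjugate to the Bertini involution of $X$.
\end{itemize}
Since $G$ is $2$-elementary, every orbit has size a power of $2$. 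So $d=3$ is impossible, and only $d\in\{1,2\}$ remain.

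For $d=2$ we argue exactly as in Proposition \ref{conjdplowdeg}. The link is $\sigma=\phi\circ\pi\circ\tau\circ\pi^{-1}$, where $\tau$ is the Geiser involution of $X$ and $\phi\colon S\to S'$ is an isomorphism. The group $\pi^{-1}G\pi$ lies in $\Aut(X)$, and $\tau$ commutes with $\Aut(X)$ by Lemma \ref{bertinigeiser}. Hence $\sigma G\sigma^{-1}=\phi G\phi^{-1}$, which gives the required isomorphism.

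For $d=1$, the point is $G$-fixed, $X$ is a cubic surface, and the second contraction is given by the five-conic-type symmetry in \cite[Theorem 2.6]{isk}. Concretely, the blow-up of a point $q$ on a quartic surface, followed by the link, is the birational involution $\beta_q$ obtained from projection from $q$. We would again check that $\sigma$ factors as $\phi\circ\pi\circ\tau_q\circ\pi^{-1}$. Here $\tau_q$ is an involution of $X$ that commutes with $\pi^{-1}G\pi$ because it is canonically attached to the $G$-invariant data (the point $q$ and the anticanonical model). Its definition is intrinsic: it swaps the exceptional curve $E_q$ with the unique curve in the class $-K_X-E_q$ of the appropriate type. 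So we reduce as before and obtain an isomorphism $\phi$.

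The main obstacle is the $d=1$ case. We need to verify that the involution realizing the link is $\Aut$-equivariant, or more precisely that it centralizes $G$. The cleanest route is to write the link as the Geiser-type involution attached to the conic bundle through $q$, and to use that any automorphism of $X$ fixing $E_q$ commutes with it. If this is hard to make precise, there is a fallback. When $r>2$, Lemma \ref{nofixedpoint2elemquartic} shows that $G$ has no fixed point, so the case $d=1$ does not occur at all. In general one can check via the fixed-point table of Proposition \ref{quarticpicard} that a fixed point of $G$ on $S$ forces $\rank(\Pic(S)^G)\ge 2$, which again excludes $d=1$.
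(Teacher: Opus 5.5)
Your $d=2$ argument is exactly the paper's proof: write $\sigma=\phi\circ\pi\circ\tau\circ\pi^{-1}$ with $\tau$ the Geiser involution, then apply Lemma \ref{bertinigeiser}. Ruling out $d=3$ by orbit size is also valid, though the paper does not need it, since the Bertini involution commutes with $\Aut(X)$ as well. The problem is your case $d=1$, which comes from a misreading of \cite[Theorem 2.6]{isk}: for $K_S^2=4$, the links of type II between del Pezzo surfaces have $d=d'\in\{2,3\}$ only. To see why $d=1$ cannot occur, let $q$ be a $G$-fixed point and $\pi\colon X\to S$ its blow-up. Then $X$ is a cubic surface with $\rank\Pic(X)^G=2$. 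The other extremal ray of the $G$-invariant cone is spanned by $-K_X-E_q$, and $(-K_X-E_q)^2=0$, $-K_X\cdot(-K_X-E_q)=2$. This is the conic bundle given by the planes through the line $E_q$. So the two-ray game starting from a fixed point ends in a conic bundle, i.e.\ it is a link of type I. This is how the paper treats it in Proposition \ref{conjofbig2elemquartic} and Lemma \ref{conjcubickleinredbis}.

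As written, however, you try to handle this case, and neither of your arguments works, so the proof has a hole there. The class $-K_X-E_q$ moves in a pencil of conics and contains no $(-1)$-curve. Hence the involution $\tau_q$ swapping $E_q$ with a curve in that class does not exist. Projection from $q$ only identifies $X$ with the cubic surface in $\mathbb{P}^3$; it is not an involution. Your fallback is false for Klein groups. The group $G=\{[\pm1:\pm1:1:1:1]\}$ has $\rank\Pic(S)^G=1$ by Proposition \ref{quarticpicard}, yet it has four fixed points by Lemma \ref{conjcubickleinred}. Only the case $r>2$ (Lemma \ref{nofixedpoint2elemquartic}) is correct. The repair is to delete the case $d=1$, citing the type I argument above; what remains is the paper's proof.
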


\begin{proof} $ $
According to \cite[Theorem 2.6]{isk}
we have $\sigma = \phi \circ \pi \circ \tau \circ \pi^{-1}  : S \DashedArrow S'$ where $\tau : X \DashedArrow X$ is a Bertini or Geiser involution, $X$ is a del Pezzo surface of degree $1$ or $2$, $\pi : X \twoheadrightarrow S$ is a blowdown, and $\phi : S \rightarrow S'$ is an isomorphism.
Then:

$
\begin{array}{llr}
G' & = \sigma \circ G \circ \sigma^{-1} \\
& = \phi \circ \pi \circ \tau \circ \pi^{-1} G \circ \pi \circ \tau \circ \pi^{-1} \phi^{-1}& \\
& = \phi \circ \pi \circ   \pi^{-1} G \circ \pi  \circ \tau \circ \tau \circ \pi^{-1} \circ \phi^{-1}& \mbox{ \quad because $\pi^{-1} \circ G \circ \pi \subset \Aut(X)$}\\
& & \mbox{ and $\tau$ commutes with $\Aut(X)$ (Lemma \ref{bertinigeiser})} \\
& = \phi \circ \pi \circ   \pi^{-1} G \circ \pi \circ \pi^{-1} \circ \phi^{-1} & \\
& = \phi G \phi^{-1}&
\end{array}
$

\end{proof}

\begin{prop} \label{conjofbig2elemquartic} $ $
Let $(G,S)$ and $(G',S')$ be two minimal pairs such that $G \simeq G' \simeq (\mathbb{Z}/2)^r$, $rk(Pic(S)^G) = 1$, $r \in \mathbb{N}$, and $S$ is a quartic del Pezzo surface. We assume they represent the same conjugacy class in the Cremona group.
If $r > 2$,
then there exists an isomorphism $\Phi : S \rightarrow S'$ such that $\Phi G \Phi^{-1} = G'$.
\end{prop}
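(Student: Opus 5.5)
We will follow the strategy of Proposition \ref{conjdplowdeg}. Let $\sigma\colon S\DashedArrow S'$ be a birational map with $\sigma G\sigma^{-1}=G'$. By \cite[Theorem 2.5]{isk} we decompose $\sigma=\sigma_n\circ\dots\circ\sigma_1$ into $G$-equivariant elementary links $\sigma_k\colon S_{k-1}\DashedArrow S_k$, with $S_0=S$, $S_n=S'$ and $G_k$ the conjugated groups. We argue by induction on $n$. It suffices to show that the first link $\sigma_1$ can be replaced by an isomorphism $S_0\to S_1$ conjugating $G_0$ to $G_1$. Then $(G_1,S_1)$ is again a quartic del Pezzo surface with $rk(Pic(S_1)^{G_1})=1$ and $G_1\simeq(\ZZ/2)^r$, $r>2$, so the induction hypothesis applies to the remaining links. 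At the end, the composition of these isomorphisms gives the required $\Phi$.

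The key step is the analysis of links starting from a quartic del Pezzo surface with invariant Picard rank $1$. By \cite[Theorem 2.6]{isk}, every such link begins with the $G$-equivariant blow-up of a $G$-orbit of points in general position of some size $d<K_S^2=4$. In particular, a link of type $I$ towards a conic bundle corresponds to $d=1$, that is, to blowing up a single $G$-fixed point. The other possibilities are links of type $II$, namely $(II,\mathbb{D})$ links via a Bertini or Geiser involution, or $(II,\mathbb{C})$ links, associated to orbits of size $d\in\{1,2,3\}$. Since $G$ is a $2$-group, every orbit has size a power of $2$, so $d\in\{1,2\}$.

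By Lemma \ref{nofixedpoint2elemquartic}, $G$ has no fixed point, which excludes $d=1$ and hence all links of type $I$. It remains to exclude orbits of size $2$ or to handle them. The blow-up of an orbit of size $2$ yields a del Pezzo surface of degree $2$, and this gives a $(II,\mathbb{D})$ link through the Geiser involution. To handle it we invoke Lemma \ref{replacinglinkbyiso}: it produces an isomorphism $\phi\colon S_0\to S_1$ with $\phi G_0\phi^{-1}=G_1$.

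We should still check in \cite[Theorem 2.6]{isk} that no other link type arises from an orbit of size $2$ on a quartic surface, for instance a link of type $II$ to $\PP^1\times\PP^1$ or to a conic bundle. This check is the main obstacle. If such a link occurred for $d=2$, we would exclude it directly: the orbit would consist of two points swapped by $G$, so the index-$2$ stabilizer, isomorphic to $(\ZZ/2)^{r-1}$ with $r-1\ge 2$, fixes both points. Using the explicit description in Proposition \ref{quarticresult}, the subgroup $\{[\pm1:\pm1:1:1:1]\}$-type stabilizers have fixed loci consisting of finitely many points. We would then verify, as in the proof of Lemma \ref{nofixedpoint2elemquartic}, that no such pair is compatible with the Picard rank condition, or else that the resulting link is the Geiser one. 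With all links reduced to isomorphisms, the statement follows.
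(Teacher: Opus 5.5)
Your argument is essentially the paper's: decompose $\sigma$ into Sarkisov links via \cite[Theorem 2.5]{isk}, exclude type $I$ links because they blow up a $G$-fixed point and Lemma \ref{nofixedpoint2elemquartic} says there is none, replace every $(II,\mathbb{D})$ link by an isomorphism using Lemma \ref{replacinglinkbyiso}, and iterate. The check you call the main obstacle is settled directly by the table in \cite[Theorem 2.6]{isk} that you already cite, which is exactly how the paper uses it. From a quartic del Pezzo surface with $rk(Pic(S)^G)=1$, links of type $III$, type $IV$ and $(II,\mathbb{C})$ cannot start at all, and the only links are the type $I$ link for $d=1$ and the Geiser ($d=2$) and Bertini ($d=3$) links back to $S$. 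You should therefore drop the speculative stabilizer argument for $d=2$, which as written proves nothing, and also drop the incorrect listing of $(II,\mathbb{C})$ links among the possibilities.
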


\begin{proof} $ $
Let $\sigma : S \DashedArrow S'$ be a birational map such that $G' = \sigma \circ G \circ \sigma^{-1}$.
Using  \cite[Theorem 2.5]{isk}, we decompose $\sigma = \sigma_n \circ \dots \circ \sigma_1$ where $\sigma_k : S_{k-1} \DashedArrow  S_k$ are elementary links, and $S_0 =S$, $S_{n} = S'$. Let $G_k = \sigma_k \circ \dots \circ \sigma_1 G \sigma_1^{-1} \circ \dots \circ \sigma_k^{-1} \subset \Aut(S_k)$.
Using \cite[Theorem 2.6]{isk}, there are two possibilities for $\sigma_1$:

\begin{itemize}[label=\textbullet]
\item $\sigma_1$ is a link of type $I$, $S_1$ is a cubic surface and $rk(Pic(S_1)^{G_1} )=2$. In particular $\sigma_1$ is the blowup of a fixed point of $G$. But according to Lemma \ref{nofixedpoint2elemquartic} ($r > 2$), $G$ has no fixed point. Hence this case is not possible.
\item $\sigma_1$ is a link of type $(II,\mathbb{D})$ (see Definition \ref{sublinks}).
According to Lemma \ref{replacinglinkbyiso}, $\sigma_1$ can be replaced by an isomorphism, and the previous step applies again for the next Sarkisov link $\sigma_2$ (assuming $n \geq 2)$.
\end{itemize}
In conclusion, $\sigma$ can be replaced by an isomorphism and we get our result.
\end{proof}

\newpage


\section{Klein subgroups of automorphisms of quartic surfaces} \label{section43} $ $

We assume $\car(\kk) \neq 2, p=2, r = 2$ in this section.
The study will differ from the previous section because in this specific case of Klein subgroups, we have fixed points, which create the possibility for a new Sarkisov link: The blow-up of a fixed point in a quartic surface.
The result of this section is given in Proposition \ref{conjcubickleinblue}.
The Lemma \ref{conjcubickleinredbis} will not be used in the current section but is used in the next Section \ref{section44}.

\subsection{Invariant conic bundles on del Pezzo surfaces} \label{loremsubsectiontocbechanged}

\begin{lemme} \label{433adapted} $ $

Let $S$ be a del Pezzo surface and $G \subset \Aut(S)$ a finite subgroup.
We assume $rk(Pic(S)^G) = 2$ and $S$ admits at least two $G$-invariant conic bundles.
Then $S \simeq \mathbb{P}^1 \times \mathbb{P}^1$ or $K_S^2 \in \{1,2,4\}$ and $S$ admits exactly two $G$-invariant conic bundles.
\end{lemme}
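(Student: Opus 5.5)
The plan is to work in the invariant Picard lattice $N = \Pic(S)^G \otimes \mathbb{Q}$, which has rank $2$ and contains $K_S$. A $G$-invariant conic bundle $\pi\colon S\to\mathbb{P}^1$ gives the class $f$ of a fibre. This class is $G$-invariant, nef, and satisfies $f^2=0$ and $-K_S\cdot f=2$, the latter by adjunction since a general fibre is a smooth rational curve with $f^2=0$. Conversely, the class $f$ determines the morphism $\pi$ as the one defined by $|f|$. So distinct invariant conic bundles give distinct primitive isotropic classes in $N$ with $-K_S\cdot f=2$.

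First I will count these classes. The intersection form on $\Pic(S)$ has signature $(1,\rho-1)$ by the Hodge index theorem, and $K_S^2>0$. It follows that the restriction to the rank-$2$ lattice $N$ has signature $(1,1)$. The isotropic cone in $N\otimes\mathbb{R}$ therefore consists of exactly two lines. Each line contains at most one class with $-K_S\cdot f=2$ on the nef side. Hence there are at most two $G$-invariant conic bundles, and under the hypothesis there are exactly two, with fibre classes $f_1 \neq f_2$. This gives the ``exactly two'' part of the statement.

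Next I will compute the degree. Since $f_1,f_2$ span $N\otimes\mathbb{Q}$ and $K_S\in N$, I write $-K_S = a f_1 + b f_2$. Intersecting with $f_1$ and with $f_2$ gives $2 = b\,(f_1\cdot f_2)$ and $2 = a\,(f_1\cdot f_2)$. Setting $m=f_1\cdot f_2$, which is a positive integer because the fibre classes are distinct nef isotropic classes, we get $a=b=2/m$. Then
\[ K_S^2 = 2ab\,m = \frac{8}{m}. \]
So $K_S^2 \in \{8,4,2,1\}$, according as $m\in\{1,2,4,8\}$.

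The main obstacle is ruling out degree $8$ other than $\mathbb{P}^1\times\mathbb{P}^1$. The surface $\mathbb{F}_1$ (the blow-up of $\mathbb{P}^2$ at a point) has degree $8$ but only one conic bundle at all, since its only nef isotropic class is the fibre of the ruling. Hence it cannot carry two invariant ones, and a degree-$8$ del Pezzo surface with two conic bundles must be $\mathbb{P}^1\times\mathbb{P}^1$. For degree $1$ and $2$, the case $K_S^2=1$ requires $m=8$, and I expect $-K_S=\tfrac14(f_1+f_2)$ not to be integral. This is not a contradiction, since only $\mathbb{Q}$-coefficients are needed, so these degrees simply remain in the list, consistent with the statement. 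I also need to check that $f_i$ is genuinely the class of a conic bundle in the sense of the Definition and not merely of a rational fibration. For this I will rely on the standard fact that on a del Pezzo surface every morphism to $\mathbb{P}^1$ with connected fibres and $-K_S\cdot f=2$ is a conic bundle.
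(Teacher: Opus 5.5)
Your proof is correct, but it is organized differently from the paper's.

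\textbf{The paper's route.} It takes $-K_S$ and one fibre class $f$ as a $\mathbb{Q}$-basis of $\Pic(S)^G$ and writes $f'=-aK_S+bf$. Solving $f'^2=0$ and $-K_S\cdot f'=2$ gives $f'=-aK_S-f$ with $aK_S^2=4$. The integrality input is that $K_S$ is not divisible in $\Pic(S)$ when $S\not\simeq\mathbb{P}^2,\mathbb{P}^1\times\mathbb{P}^1$. This forces $a\in\mathbb{Z}$, hence $K_S^2\in\{1,2,4\}$. ``Exactly two'' then follows because $f'$ is determined by $f$.

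\textbf{Your route.} You use the two fibre classes themselves as the basis and obtain $K_S^2=8/(f_1\cdot f_2)$. Integrality then comes only from $f_1\cdot f_2\in\mathbb{Z}_{>0}$ and $K_S^2\in\mathbb{Z}$, and the count comes from the signature $(1,1)$ of the invariant lattice. The underlying relation is the same: your $f_1+f_2=-\tfrac{m}{2}K_S$ is exactly the paper's $f'=-aK_S-f$ with $a=m/2=4/K_S^2$.

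\textbf{Trade-off.} Your version avoids appealing to the primitivity of $K_S$ in $\Pic(S)$ and needs only intersection numbers. The price is that degree $8$ survives the arithmetic (the case $m=1$). You therefore need the separate observation that $\mathbb{F}_1$ carries only one conic bundle. In the paper, $\mathbb{F}_1$ is excluded automatically, since there $a=1/2\notin\mathbb{Z}$.

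\textbf{Superfluous remarks.} Two of your closing remarks can be dropped. The $f_i$ are fibre classes of conic bundles by hypothesis, so nothing needs checking in that direction; you only use that a conic bundle is determined by its fibre class. The non-integrality of $\tfrac14(f_1+f_2)$ plays no role in the argument.
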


\begin{proof} $ $

Let $\pi, \pi' : S \rightarrow \mathbb{P}^1$ be two distinct $G$-invariant conic bundles. Denote by $f,f'$ the divisor class of a fibre of $\pi$ and $\pi'$ respectively. Then $Pic(S)^G$ is generated by $-K_S$ and $f$ over $\mathbb{Q}$. Now we proceed as in the proof of \cite[Proposition 4.3.3]{blancthesis}:

We write $f' = -a K_S + b f$, with $a,b \in \mathbb{Q}, a \neq 0$. We know that $f'^2 = 0$, whence $f' . K_S = -2$ (by the adjunction formula). These two relations imply that:
\begin{center}
$
\begin{array}{rcl}
a^2 K_S^2 + 4 ab = a (a K_S^2 + 4 b) & = & 0 \\
- a K_S^2 - 2 b & = & -2 
\end{array}
$
\end{center}
As $a \neq 0$, the first equation implies that $a K_S^2 = -4b$. This and the second equation imply that $2b = -2$, so $b = -1$. Thus, we find that $f' = -a K_S -f$ and $a K_S^2 = 4$.
If $S \neq \mathbb{P}^1 \times \mathbb{P}^1$ and $S \neq \mathbb{P}^2$, the canonical divisor is not a multiple in $Pic(S)$, so $a \in \mathbb{Z}$ and the degree $K_S^2$ of the del Pezzo surface is equal to $1, 2$ or $4$.
\end{proof}

\begin{remark} $ $
Let $(G, T)$ be a pair, where $T$ is a cubic del Pezzo surface with $rk(Pic(T)^G) = 2$. Then $T$ has exactly one $G$-invariant conic bundle by Lemma \ref{433adapted}. Hence we may speak of \textit{the} $G$-invariant conic bundle on $T$. This will be used in the following Lemma \ref{conjcubickleinredbis}.
\end{remark}

\begin{lemme} \label{conjcubickleinredbis} $ $

Let $G \simeq (\mathbb{Z}/2)^2$ be a Klein subgroup of automorphisms of a quartic del Pezzo surface $S$.
We assume $rk(Pic(S)^G) = 1$.
We assume there exist two Sarkisov links:
\begin{itemize}
\item $\sigma_1$ a link of type $III$ from $(G_1,S_1)$ to $(G,S)$ where $S_1$ is a cubic del Pezzo surface.
\item $\sigma_2$ a link of type $I$ from $(G,S)$ to $(G_2,S_2)$ where $S_2$ is a cubic del Pezzo surface.
\end{itemize}

Then the composition $\sigma_2 \circ \sigma_1$ can be replaced by an isomorphism, \textit{i.e.} there exists an isomorphism $\phi: S_1 \rightarrow S_2$ that conjugates $G_1$ to $G_2$ and preserves the conic bundle structures of $S_1$ and $S_2$.
\end{lemme}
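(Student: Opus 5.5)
The plan is to describe both links explicitly, using the classification of Sarkisov links \cite[Theorem 2.6]{isk}. A link of type $I$ from $(G,S)$ with $K_S^2=4$ to a cubic surface is the blow-up $\eta_2\colon S_2\to S$ of a $G$-fixed point $q_2\in S$ which is not on any line; indeed, a $G$-orbit of size one is the only way to land on a del Pezzo surface of degree $3$. The new invariant Picard rank is $2$, and the conic bundle on $S_2$ is induced by the conics of $S$ passing through $q_2$. Dually, the type $III$ link $\sigma_1$ is the contraction $\eta_1\colon S_1\to S$ of a $G_1$-invariant $(-1)$-curve $E_1$ onto a $G$-fixed point $q_1$. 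Therefore $\sigma_2\circ\sigma_1=\eta_2^{-1}\circ\eta_1$, and the whole statement reduces to showing that $q_1$ and $q_2$ are related by an element of the normalizer of $G$ in $\Aut(S)$. More precisely, we want an automorphism $\psi\in\Aut(S)$ with $\psi G\psi^{-1}=G$ and $\psi(q_1)=q_2$. Once $\psi$ exists, it lifts to an isomorphism $\phi\colon S_1\to S_2$ satisfying $\eta_2\phi=\psi\eta_1$. This $\phi$ conjugates $G_1$ to $G_2$, and it preserves the unique invariant conic bundles, by the Remark after Lemma \ref{433adapted}.

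The first step is to identify $G$ and its fixed points. By Proposition \ref{quarticresult} we may assume $S=\{\sum X_n^2=\sum a_nX_n^2=0\}$ and $G=\{[\pm1:\pm1:1:1:1]\}$. The points fixed by $G$ are then those with $X_0=X_1=0$, namely $\{[0:0:X_2:X_3:X_4]\}\cap S$. The argument in the proof of Proposition \ref{quarticpicard} shows this is exactly the four points $[\pm\sqrt{x}:\pm\sqrt{y}:\sqrt{z}]$ in the last three coordinates. (The other elements of $G$ fix only these points or an elliptic curve, so no further fixed points arise.) Hence both $q_1$ and $q_2$ belong to this set of four points.

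The key step is transitivity. The sign changes $[1:1:\pm1:\pm1:\pm1]$ lie in $Z$, commute with $G$, preserve $S$, and act transitively on the four points $[0:0:\pm\sqrt{x}:\pm\sqrt{y}:\sqrt{z}]$. This gives the required $\psi$ with $\psi G\psi^{-1}=G$.

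I expect the main obstacle to be the first step. There I must justify carefully from \cite[Theorem 2.6]{isk} that these links are blow-ups of a single fixed point rather than of a larger orbit. I also need to check that $G_1$ and $G_2$ correspond to $G$ under $\eta_1$ and $\eta_2$, so that the lift $\phi$ genuinely conjugates $G_1$ onto $G_2$. Finally, the fixed points must not lie on lines, since otherwise the blow-up would not be del Pezzo; this holds automatically because the links are assumed to exist.
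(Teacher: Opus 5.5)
Your proposal is correct and follows essentially the paper's route: both links are blow-ups of $G$-fixed points, an automorphism of $S$ normalizing (indeed centralizing) $G$ carries one of the four fixed points to the other, and uniqueness of the invariant conic bundle on a cubic surface (Lemma \ref{433adapted}) gives compatibility with the conic bundles. The paper cites Lemma \ref{conjcubickleinred} Assertion \ref{conjcubickleinred2} for the transitivity step, which is proved with the same diagonal sign changes you use. One harmless slip: the conics of $S$ through $q_2$ become lines of $S_2$. The invariant conic bundle is instead the projection from the exceptional line $E$, of class $-K_{S_2}-E$, but your argument never uses the description you gave.
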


\begin{proof} $ $

We observe that
$\sigma_1^{-1}$ and $\sigma_2$ are both blowup of a fixed point of $G$ in $S$.
According to Lemma \ref{433adapted}, the conic bundle is unique on $S_i$ for $i \in \{1,2\}$, as $K_{S_i}^2=3$.
According to Lemma \ref{conjcubickleinred} Assertion \ref{conjcubickleinred2}, up to isomorphism we can assume that these two fixed points are the same. Hence the composition $\sigma_2 \circ \sigma_1$ can be replaced by an isomorphism preserving the $G$-invariant conic bundles.
\end{proof}

\begin{remark} $ $
While Lemma~\ref{433adapted} is employed in this section, it is not strictly necessary for the proof of the main theorems. Its role here is limited to establishing Assertion~\ref{conjcubickleinblue4} of Proposition~\ref{conjcubickleinblue}, whereas only Assertion~\ref{conjcubickleinblue1} is required for our primary results. The core utility of Lemma~\ref{433adapted} lies in the following Section \ref{section44}, specifically through its consequence in Lemma~\ref{conjcubickleinredbis}.
\end{remark}

\newpage

\subsection{Characterization of the smoothness of quartic surfaces}

\begin{lemme} \label{section11smooth} $ $
Let $a_0, \dots , a_4 \in \kk$.
The following quartic surface: \[ \left\{ \sum_{i=0}^4 X_i^2 = \sum_{i=0}^4 a_i X_i^2 = 0\right\} \subset \mathbb{P}^4 \] is smooth if and only if the scalars $a_0, \dots , a_4$ are pairwise distinct.
\end{lemme}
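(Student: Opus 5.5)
We will apply the Jacobian criterion to the complete intersection $S=\{Q_1=Q_2=0\}$, where $Q_1=\sum X_i^2$ and $Q_2=\sum a_iX_i^2$. Throughout we use $\car(\kk)\neq 2$. A point $x\in S$ is singular exactly when the gradients $\nabla Q_1(x)=2(X_0,\dots,X_4)$ and $\nabla Q_2(x)=2(a_0X_0,\dots,a_4X_4)$ are linearly dependent. This is the correct criterion once we know $S$ has the expected dimension $2$. In the converse direction we will not need this, because a point where the Jacobian has rank $<2$ already shows that the intersection fails to be a smooth surface (or even that $S$ has wrong dimension). So the question reduces to finding $x\neq 0$ with $Q_1(x)=Q_2(x)=0$ and $(\lambda a_i+\mu)X_i=0$ for all $i$, for some $(\lambda,\mu)\neq(0,0)$.

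For the direction where the $a_i$ are pairwise distinct, suppose such $x,\lambda,\mu$ exist. If $\lambda=0$, then $\mu X_i=0$ forces $x=0$, which is absurd. So $\lambda\neq 0$, and $X_i\neq 0$ forces $a_i=-\mu/\lambda$. Because the $a_i$ are distinct, at most one coordinate, say $X_k$, is nonzero. Then $Q_1(x)=X_k^2=0$, again a contradiction. Hence $S$ has no singular points. We should also note that $S$ is really a surface: $Q_1$ and $Q_2$ are not proportional, since otherwise all $a_i$ would be equal. Therefore $S$ is a complete intersection of dimension $2$, and the Jacobian criterion applies.

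For the converse, suppose $a_0=a_1$ (up to permuting coordinates). Then $S$ contains the points with $X_2=X_3=X_4=0$ and $X_0^2+X_1^2=0$, namely $[1:\pm i:0:0:0]$. At such a point both gradients are supported on the coordinates $0,1$, and they equal $2(1,\pm i,0,0,0)$ and $2a_0(1,\pm i,0,0,0)$, which are proportional. So the Jacobian has rank $\le 1$ and $S$ is singular there. If even more of the $a_i$ coincide, the same point still works. In the degenerate case where all $a_i$ are equal, $S$ is just a quadric threefold and so is not a smooth surface.

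The only subtle step is the dimension/complete-intersection check needed to justify the Jacobian criterion in the first direction. This is handled by the non-proportionality of $Q_1$ and $Q_2$ together with the irreducibility of the smooth quadric $\{Q_1=0\}$. The rest is routine linear algebra.
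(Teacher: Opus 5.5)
Your proof is correct and takes the same route as the paper. In the distinct case, any point of $S$ has two nonzero coordinates $X_i,X_j$, and $a_i\neq a_j$ makes the two gradients linearly independent; when $a_0=a_1$, the point $[1:\pm i:0:0:0]$ is singular. Your separate handling of the case where all $a_i$ are equal (then $S$ is a smooth quadric threefold, not a singular surface) and your dimension check are slightly more careful than the paper, which passes over both.
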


\begin{proof} $ $

$\Rightarrow:$ If the scalars $a_0, \dots , a_4$ are not pairwise distinct, we can assume $a_0 = a_1$ without loss of generality. Then $[1:i:0:0:0]$ is a singular point of the surface.
\\

$\Leftarrow:$ Now we assume that the scalars $a_0, \dots , a_4$ are pairwise distinct and we will prove that the surface is smooth.
We define $f = \sum_{i=0}^4 X_i^2$ and $g = \sum_{i=0}^4 a_i X_i^2$.
Let $p = [x_0: … : x_4] \in \left\{ \sum_{i=0}^4 X_i^2 = \sum_{i=0}^4 a_i X_i^2 = 0\right\}$.
Then there exist $i \neq j$ such that $x_i$ and $x_j$ are both non-zero. The gradient of $f$ is equal to $(x_0, …, x_4)$ and that one of $g$ is $(a_0 x_0, …, a_4 x_4)$ (up to scalars (recall that $\car(\kk) \neq 2$) ). Since $x_i, x_j$ are both non-zero and $a_i \neq a_j$ for $i \neq j$, we get that both gradients are linearly independent. This implies that the tangent spaces $T_p {f=0}$ and $T_p {g=0}$ are distinct, \textit{i.e.} ${f=0}$ and ${g=0}$ intersect transversally at $p$. This shows that the complete intersection given by $f=g=0$ is smooth at $p$.
\end{proof}

\subsection{Fixed points of Klein subgroups of automorphism of quartic del Pezzo surfaces}

\begin{lemme} \label{conjcubickleinred} $ $
Let $G \simeq (\mathbb{Z}/2)^2$ be a Klein subgroup of automorphisms of a quartic del Pezzo surface $S$.
We assume $rk(Pic(S)^G) = 1$.
Then:
\begin{enumerate}
\item \label{conjcubickleinred1} The subgroup $G$ has exactly four fixed points in $S$.
\item \label{conjcubickleinred2} For any two given fixed points, there exists an automorphism of $S$ preserving the subgroup $G$ permuting these two fixed points.
\item \label{conjcubickleinred3} None of the fixed point of $G$ belong in one of the sixteen lines of $S$.
\end{enumerate}
\end{lemme}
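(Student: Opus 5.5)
By Proposition \ref{quarticresult}, after conjugation we may assume
\[
S=\Bigl\{\textstyle\sum_{n=0}^4 X_n^2=\sum_{n=0}^4 a_nX_n^2=0\Bigr\}\subset\PP^4
\]
with the $a_n$ pairwise distinct, and $G=G_{41}=\{[\pm1:\pm1:1:1:1]\}$. The plan is to treat the three assertions by direct computation in these coordinates.

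For Assertion \eqref{conjcubickleinred1}, the fixed points of $G$ in $\PP^4$ are the points fixed by both $[-1:1:1:1:1]$ and $[1:-1:1:1:1]$. These are the points with $X_0=X_1=0$, together with the two coordinate points $[1:0:0:0:0]$ and $[0:1:0:0:0]$.
\begin{itemize}
\item The two coordinate points are not on $S$, since the first quadric $\sum X_n^2$ does not vanish there.
\item The points with $X_0=X_1=0$ and $\sum_{k=2}^4 X_k^2=\sum_{k=2}^4 a_kX_k^2=0$ are exactly four, namely $[0:0:\pm\sqrt{x}:\pm\sqrt{y}:\sqrt{z}]$ with $x,y,z\neq 0$.
\end{itemize}
The second count is the computation already done in the proof of Proposition \ref{quarticpicard}: the matrix $\left[\begin{smallmatrix}1&1&1\\a_2&a_3&a_4\end{smallmatrix}\right]$ has rank $2$. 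One must also check that $x,y,z\neq0$. The kernel vector is proportional to $(a_4-a_3,\,a_2-a_4,\,a_3-a_2)$, whose entries are nonzero by distinctness.

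For Assertion \eqref{conjcubickleinred2}, the diagonal sign changes $[1:1:\pm1:\pm1:\pm1]$ lie in $Z\subset\Aut(S)$ and commute with $G$. These act simply transitively on the four points $[0:0:\pm\sqrt x:\pm\sqrt y:\sqrt z]$ modulo the overall sign. Hence for any two fixed points there is an element of $Z$ normalising (indeed centralising) $G$ that sends one to the other. To get a genuine permutation (swap) I take the same element: it is an involution, so it exchanges the two points.

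For Assertion \eqref{conjcubickleinred3}, I would argue via the $G$-action on the sixteen lines, avoiding an explicit description of the lines. Suppose a fixed point $q$ lies on a line $\ell$. Then $G$ permutes the lines through $q$. On a quartic del Pezzo surface at most two lines pass through a point, since lines through a point are pairwise meeting $(-1)$-curves with no three concurrent. So the $G$-orbit of $\ell$ has size at most $2$.
\begin{itemize}
\item If the orbit of $\ell$ has size $1$, then $\ell$ is a $G$-invariant curve. Its class is then an invariant class, but it is not proportional to $K_S$ (its self-intersection is $-1$, whereas $(aK_S)^2=4a^2$), contradicting $\rank\Pic(S)^G=1$.
\item If the orbit is $\{\ell,\ell'\}$ with $\ell\cdot\ell'=1$, then $\ell+\ell'$ is invariant with $(\ell+\ell')^2=0$. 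This again contradicts invariant rank $1$, since $(aK_S)^2=4a^2\neq 0$.
\end{itemize}
This gives the contradiction in both cases. The main obstacle is making sure the ``at most two lines through a point'' fact is correct in this setting (it is standard for degree $4$). If it fails, the fallback is to write the lines explicitly and test the four points directly.
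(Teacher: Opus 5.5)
Your proposal is correct. Assertions 1 and 2 follow the paper: the same normal form from Proposition \ref{quarticresult}, the same four points $[0:0:\pm\sqrt{x}:\pm\sqrt{y}:\sqrt{z}]$, and the same sign-change involutions of $Z$. You are in fact more careful than the paper, since you discard $[1:0:0:0:0]$ and $[0:1:0:0:0]$ and check that $x,y,z\neq 0$.

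For Assertion 3 you take a genuinely different route. The paper moves the fixed point to $[0:0:0:0:1]$ by an explicit linear change of coordinates. It then checks directly that a line of $S$ through it would force $q_2=q_3=0$ and then $q_0=q_1=0$. That argument is elementary and self-contained but tied to the normal form. Your argument uses only three facts: $G$ fixes $q$, $G$ permutes the lines through $q$, and $\Pic(S)^G\otimes\QQ=\QQ K_S$. It is coordinate-free and proves more: for any finite $G$ with invariant Picard rank $1$ on a quartic del Pezzo surface, no $G$-fixed point lies on a line.

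Two points need tightening.
\begin{itemize}
\item Your justification of ``at most two lines through a point'' is circular as written. A clean proof: three pairwise meeting lines would give $D=\ell_1+\ell_2+\ell_3$ with $D^2=3$ and $K_S\cdot D=-3$. This violates the Hodge index inequality $D^2K_S^2\le (K_S\cdot D)^2$, since $12>9$. Alternatively, all lines through $q$ lie in the tangent plane $T_qS$, and this plane meets a quadric of the pencil not containing it in a conic, which contains at most two lines.
\item In the orbit-of-size-two case, $4a^2=0$ is not excluded a priori. You must rule out $a=0$, for instance because $K_S\cdot(\ell+\ell')=-2\neq 0$. Equivalently, that equation forces $a=-\tfrac12$, hence $(\ell+\ell')^2=1$, contradicting $(\ell+\ell')^2=0$.
\end{itemize}
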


\begin{proof} $ $
Using Proposition \ref{quarticresult} we can assume: $$S = \left\{ \sum_{n=0}^4 X_n^2 = \sum_{n=0}^4 a_n X_n^2 = 0 \right\} \subset \mathbb{P}^4$$ where $a_0, \dots , a_4 \in \kk$ are pairwise distinct
and $G = \{ [\pm 1 : \pm 1 : 1 : 1 : 1 ]\}$.

\begin{enumerate}
\item 
The fixed points of this group in $S$ are the points of the form $[0:0:X_2:X_3:X_4]$ where $\sum_{n=2}^4 X_n^2 = \sum_{n=2}^4 a_n X_n^2 = 0$. Given that the surface is smooth, we have that $a_2, a_3, a_4$ are pairwise distinct (Lemma \ref{section11smooth}). 
Hence there exists $(x,y,z) \in {\kk^\times}^3 $ such that the fixed points are exactly the following:
$$[0:0:\pm x : \pm y : \pm z]$$
\item 
Now for the result about the permutation of fixed points by automorphisms preserving the subgroup, we observe that for any $\epsilon, \mu, \rho, \nu = \pm 1$, the map  $[X_0:X_1:X_2:X_3:X_4]  \mapsto [X_0 : X_1 : \epsilon \rho X_2 : \mu \nu X_3 : X_4]$ is an automorphism of $S$ preserving the subgroup $G$ and permuting the two fixed points  $[0:0:\epsilon x:\mu  y :z]$ and $[0:0:\rho x:\nu y:z]$.
\item In order to prove this result, we will conjugate our group and surface in order that the fixed point is $[0:0:0:0:1]$. We define the following automorphism of $\mathbb{P}^4$: $$\quad\quad \Phi : [X_0:X_1:X_2:X_3:X_4] \mapsto [X_0:X_1:X_2 + x X_4:X_3+yX_4:zX_4]$$
The inverse of $\Phi$ is of the form:
$$\quad\quad \Phi^{-1} : [X_0:X_1:X_2:X_3:X_4] \mapsto [X_0:X_1:X_2 + r X_4:X_3+sX_4:tX_4],$$
where $r = - \frac{x}{z},s = - \frac{y}{z},t = \frac{1}{z} \in \kk^\times$.
By construction $\Phi^{-1}$ sends the fixed point $[0:0:x:y:z]$ on $[0:0:0:0:1]$.
We have:  $\Phi^{-1}(S) = \{  \sum_{n=0}^3 X_n^2  + (x^2 + y^2 + z^2) X_4^2 + 2 x X_2 X_4 + 2 y  X_3 X_4 = \sum_{n=0}^3 a_n X_n^2 + (x^2 a_2 + y^2 a_3 + z^2 a_4) X_4^2 + 2 x a_2 X_2 X_4 + 2 y a_3 X_3 X_4  = 0 \}$.
But we have: $$x^2 + y^2 + z^2 = a_2 x^2 + a_3 y^2 + a_4 z^2 = 0.$$ Hence the equations can be simplified and we have:
$$\Phi^{-1}(S) = \left\{  \sum_{n=0}^3 X_n^2  + 2 x X_2 X_4 + 2 y  X_3 X_4 =\sum_{n=0}^3 a_n X_n^2 + 2 x a_2 X_2 X_4 + 2 y a_3 X_3 X_4  = 0 \right\}$$
By contradiction we assume there exists a line in $\Phi^{-1}(S)$ through the fixed point $p = [0:0:0:0:1]$.
So there exists a point $q = [q_0:q_1:q_2:q_3:q_4] \in S \setminus \{ p \}$ such that for every $u,v \in \kk, [v q_0:v q_1:v q_2:v q_3:u + v q_4] \in S$, \textit{i.e.}
\begin{equation*}
\begin{array}{rcl}
(q_0^2 + q_1^2 + q_2^2 + q_3^2 + 2 x q_2 q_4 + 2 y q_3 q_4)v^2 + (2 x q_2 + 2 y q_3) uv  &= & 0 \\  (a_0 q_0^2 + a_1 q_1^2 + a_2 q_2^2 + a_3 q_3^2 + 2 x a_2 q_2 q_4 + 2 y a_3 q_3 q_4)v^2 + (2 x a_2 q_2 + 2 y a_3 q_3) uv  & = &  0
\end{array}
\end{equation*}
By isolating the term in $uv$ we get: 
\begin{equation*}
\begin{array}{rcl}
    2xq_2 + 2yq_3 & = & 0 \\
    2xa_2q_2 + 2ya_3q_3 & = & 0
\end{array}
\end{equation*}
This is a $2\times2$ linear system with determinant $4 x y (a_3 - a_2) \neq 0$ ($\car(\kk) \neq 2$).
Hence $q_2 = q_3 = 0$.
Since $q \in S$ we get: 
\begin{equation*}
\begin{array}{rcl}q_0^2 + q_1^2 & =& 0 \\ a_0 q_0^2 + a_1 q_1^2 & =& 0
\end{array}
\end{equation*}
This is $2 \times 2$ linear system with unknowns $(q_0^2,q_1^2)$ and determinant equal to $a_1 - a_0 \neq 0$.
Hence $q_0 = q_1 = 0$.
Then $q = [0:0:0:0:1] = p$, we get our contradiction.
\end{enumerate}
\end{proof}

\subsection{Conjugation of Klein subgroups of automorphisms of quartic del Pezzo surfaces of Picard rank $1$ to de Jonquières subgroups} $ $

We will only use the first assertion of the following Proposition \ref{conjcubickleinblue}.

\begin{prop} \label{conjcubickleinblue} $ $
Let $a_0, \dots , a_4 \in \kk$ be pairwise distinct. Let:
$$S = \left\{ \sum_{n=0}^4 X_n^2 = \sum_{n=0}^4 a_n X_n^2 = 0 \right\} \subset \mathbb{P}^4$$
$$G = \{ [\pm 1 : \pm 1 : 1 : 1 : 1 ]\} \subset \Aut(S)$$

Then there exists $(G', S’)$ such that:
\begin{enumerate}
\item \label{conjcubickleinblue1} $(G,S)$ is conjugate to $(G', S’)$, $S’$ is a cubic del Pezzo surface, and we have: $$rk(Pic(S’)^{G'})=2$$
\item \label{conjcubickleinblue4} $S'$ has a unique $G'$-invariant conic bundle structure.
\end{enumerate}
\end{prop}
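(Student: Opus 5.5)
The construction is a $G$-equivariant blow-up of a fixed point. By Lemma \ref{conjcubickleinred} Assertion \ref{conjcubickleinred1}, $G$ has exactly four fixed points on $S$, namely $[0:0:\pm x:\pm y:\pm z]$. Let $p$ be one of them, say $[0:0:x:y:z]$, and let $\eta\colon S'\to S$ be the blow-up of $p$. The group $G$ fixes $p$, so its action lifts to $S'$; we set $G'=\eta^{-1}G\eta\subset\Aut(S')$, so that $(G,S)$ and $(G',S')$ are conjugate via $\eta$.

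First I check that $S'$ is a cubic del Pezzo surface. The blow-up of a point on a degree $4$ del Pezzo surface is del Pezzo exactly when the point lies on none of the $16$ lines. This is guaranteed by Lemma \ref{conjcubickleinred} Assertion \ref{conjcubickleinred3}. Hence $S'$ is del Pezzo with $K_{S'}^2=3$.

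Next I compute the invariant Picard rank. We have $\Pic(S')=\eta^*\Pic(S)\oplus\mathbb{Z}E$, where $E$ is the exceptional curve. Since $E$ is $G'$-invariant, $\Pic(S')^{G'}=\eta^*\Pic(S)^G\oplus\mathbb{Z}E$, and its rank is $1+1=2$ because $rk(Pic(S)^G)=1$. As a cross-check, one can also use Proposition \ref{rankformula}: each nontrivial element of $G$ fixes $p$, and the effect on Euler characteristics is to replace the point by $E\simeq\mathbb{P}^1$ (or to add fixed points on $E$), which raises the sum by $|G|$. The direct splitting argument suffices, though. This proves Assertion \ref{conjcubickleinblue1}.

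For Assertion \ref{conjcubickleinblue4}, existence comes first. The pair $(G',S')$ has $rk=2$ and the degree $3$ surface is not minimal in the rank-one sense. The extremal contractions of the invariant cone are $\eta$ and a second ray. Concretely, take the class $f=-K_{S'}-E$, i.e.\ the pullbacks of hyperplane sections through $p$ minus $E$. Then $f^2=3-2-1=0$ and $f\cdot K_{S'}=-3+1=-2$. So $f$ is the class of conics on $S'$, which are residual to $E$ in the planes containing $E$, and the linear system $|f|$ gives a $G'$-invariant conic bundle $S'\to\mathbb{P}^1$. Uniqueness then follows from Lemma \ref{433adapted}: if there were two $G'$-invariant conic bundles, either $S'\simeq\mathbb{P}^1\times\mathbb{P}^1$ or $K_{S'}^2\in\{1,2,4\}$, and both are impossible since $K_{S'}^2=3$.

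The main obstacle is making sure that the blow-up really stays del Pezzo. This rests entirely on the fixed points avoiding the lines, which is provided by Lemma \ref{conjcubickleinred}. Verifying that $f$ is base-point free and nef needs a small argument: $f\cdot C\ge0$ for the $(-1)$-curves $C$ of $S'$, and these are $E$, the strict transforms of lines of $S$, and the strict transforms of conics through $p$.
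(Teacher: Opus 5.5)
Your proposal is correct and follows the paper's own argument. You blow up a $G$-fixed point, which lies on none of the sixteen lines by Lemma~\ref{conjcubickleinred}, to get a cubic surface whose invariant Picard rank rises from $1$ to $2$; you then take the conic bundle given by projecting from the exceptional line $E$, and get uniqueness from Lemma~\ref{433adapted}.

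One small slip in a parenthetical: strict transforms of hyperplane sections through $p$ have class $\eta^*(-K_S)-E=-K_{S'}$, not $f$. Your class $f=-K_{S'}-E=\eta^*(-K_S)-2E$ instead comes from hyperplane sections singular at $p$ (those containing $T_pS$). This matches your correct description of $f$ as the residual conics in planes through $E$, so nothing downstream is affected.
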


\begin{proof} $ $

Assertion \ref{conjcubickleinblue1}: 
We first observe that such a surface is smooth (Lemma \ref{section11smooth}).
The points $[0:0:X_2:X_3:X_4]$ where $\sum_{n=2}^4 X_n^2 = \sum_{n=2}^4 a_n X_n^2 = 0$ are fixed points of $G$. Since none of the fixed point of $G$ belongs to a line of $S$ (Lemma \ref{conjcubickleinred} Assertion \ref{conjcubickleinred3}) and since $rk(Pic(S)^G)=1$ (Proposition \ref{quarticpicard}), when blowing-up such a fixed point we get a subgroup of automorphisms of a smooth cubic surface with Picard rank equal to $2$.

Assertion \ref{conjcubickleinblue4}:
Furthermore, the blow-up of a fixed point is a $G'$-invariant line of the cubic surface and the projection from this line gives a conic bundle, and this one is unique by Lemma \ref{433adapted}.
\end{proof}

%
%
%

\newpage

\section{Conjugation of subgroups of the de Jonqui\`eres group} \label{section44} $ $

We assume $\car(\kk) \neq 2$ in this section.
The main result of this section is Proposition \ref{2elemconj2}.
We will use Lemma \ref{conjcubickleinredbis} from the previous section.

\bigskip

\subsection{A bound of the surface's degree depending on the nature of the fixed locus of non-trivial involutions}

\begin{defi} $ $
Let $(S,\pi)$ be a conic bundle. We define the subgroup of automorphisms of $S$ that leave the conic bundle structure invariant:
$$\Aut(S,\pi) = \left\{ \phi \in \Aut(S) | \pi \circ \phi = \pi \right\} \subset \Aut(S)$$
\end{defi}

\begin{prop} \label{preli} $ $
Let $(S,\pi)$ be a conic bundle.

Let $h \in \Aut(S,\pi)$ be a non-trivial involution and assume that $h$ fixes point wise an irreducible smooth curve of genus $g>0$.
Then $K_S^2 \leq 6 - 2 g$.
\end{prop}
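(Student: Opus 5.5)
The plan is to study $h$ through its action on the fibres of $\pi$ and to compare the curve $C$ it fixes with the singular fibres. Since $h$ preserves every fibre ($\pi\circ h=\pi$), on a general fibre $F\simeq\mathbb{P}^1$ it acts as a non-trivial involution. It cannot act trivially on every fibre, because $h$ is non-trivial. As $\car(\kk)\neq 2$, such an involution has exactly two fixed points on $F$. The fixed curve $C$ has genus $g>0$, so it is not a component of a fibre. Hence it is a multisection, and it meets a general fibre in at most two points. It cannot be a section, since then it would be isomorphic to $\mathbb{P}^1$. So $C$ is a double cover $C\to\mathbb{P}^1$ via $\pi$.

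By Riemann–Hurwitz, $2g+2$ branch points are needed. Over a branch point $t$ the two fixed points of $h$ on the fibre $\pi^{-1}(t)$ coincide. If $\pi^{-1}(t)$ were a smooth fibre, $h$ would act on it either trivially or with two distinct fixed points. In the trivial case the fibre would be fixed pointwise, and since $C$ lies in the fixed locus, which is smooth, $C$ and that fibre would be disjoint components, contradicting $C\cdot F=2$. So every branch point lies under a singular fibre $F_1\cup F_2$.

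There I will use the local picture at a singular fibre, following the standard description in \cite{blancthesis}. The involution $h$ either exchanges $F_1$ and $F_2$, or preserves each of them. In the exchanging case the only fixed point on the fibre is the intersection point $F_1\cap F_2$, so the fixed curve passes through it once, and this is exactly a branch point. In the preserving case each $F_i$ carries two fixed points, and a fixed curve transverse to the fibre gives two points. So each branch point uses up one singular fibre, and the number $s$ of singular fibres satisfies $s\ge 2g+2$. Since $K_S^2=8-s$ for a conic bundle (each singular fibre is one blow-up of a $\mathbb{P}^1$-bundle), this gives $K_S^2\le 8-(2g+2)=6-2g$.

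The main obstacle is the local analysis at a singular fibre: I must rule out that $C$ is tangent to a smooth fibre, and show that branching happens exactly at the node $F_1\cap F_2$ when $h$ swaps the components. To settle it I would linearize $h$ at a fixed point. It has eigenvalues $(1,-1)$ along $C$, and its tangent action along the fibre direction is forced. A tangency of $C$ with a fibre at a fixed point would force $h$ to act trivially on that tangent direction, which contradicts the two-eigenvalue structure unless the fibre is fixed pointwise.
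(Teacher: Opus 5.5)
Your proof is correct and follows essentially the same route as the paper. The fixed curve is a double cover of $\mathbb{P}^1$ via $\pi$, it is unramified over smooth fibres, and Riemann--Hurwitz then gives $2g+2\le 8-K_S^2$. The differences are only local: you rule out branching over smooth fibres (and get $C\cdot F=2$) with the tangent-eigenvalue transversality argument of your last paragraph, where the paper uses the local equation of the fixed locus in $B\times\mathbb{P}^1$. Your swap-versus-preserve analysis at singular fibres is more than is needed, since only the bound (number of branch points) $\le$ (number of singular fibres) is used.
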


\begin{proof} $ $
Let $\Gamma$ be the fixed curve.
$\pi(\Gamma)$ is a closed subset of $\mathbb{P}^1$. By contradiction, if $\pi(\Gamma) \neq \mathbb{P}^1$ then we would have $\Gamma \subset F$ where $F$ is a fibre. Then we would have that $\Gamma$ contains a rational curve, hence our contradiction. So $\pi(\Gamma) = \mathbb{P}^1$.
So for a general fibre $F \subset S$, $F \cap \Gamma$ is non empty and is finite. As $h \in \Aut(S,\pi)$, the image of a fibre is a fibre, hence $h(F)$ is a fibre, but $F$ intersects the curve $\Gamma$, so the points of $F \cap \Gamma$ are fixed, and then $h(F)$ is equal to $F$.
We get a non-trivial involution on $F \subset \mathbb{P}^1$ , which then has two fixed points (because $\car(\kk) \neq 2$). This proves that $\Gamma$ intersects a general fibre into one or two points. The first case is impossible as $g>0$, so $F \cap \Gamma$ consists of two points for a general fibre $F$.
\newline
Locally, $\pi : S \rightarrow \mathbb{P}^1$ at a smooth fibre looks like $B \times \mathbb{P}^1 \rightarrow B$. We have:

$\begin{array}{ccccc}
h & : & B \times \mathbb{P}^1 & \to & B \times \mathbb{P}^1  \\
 & & (b,x) & \mapsto & (b,A(b)x) = (b,[a_{11}(b)x_0 + a_{12}(b)x_1 :a_{21}(b)x_0 + a_{22}(b)x_1]) \\
\end{array}$

\noindent
The fixed locus of $h$ in $B \times \mathbb{P}^1$ is given by the following equation:
$$(a_{11}(b)x_0 + a_{12}(b)x_1) x_1 - (a_{21}(b)x_0 + a_{22}(b)x_1) x_0 = 0$$
This is a hypersurface in $B \times \mathbb{P}^1$, hence each irreducible component is of dimension $1$. 
In particular $\Gamma’ := (B \times \PP^1) \cap \Gamma$ is of dimension $1$.
Because of \cite[Proposition A.8.10]{reductive}, the fixed point locus is smooth, hence no fibre of $\pi$ is fixed point wise.
Since $\pi |_{\Gamma} : \Gamma \rightarrow \PP^1$ has exactly two points in all but finitely many fibres and since every non-trivial involution of $\PP^1$ has exactly two fixed points ($\car(\kk) \neq 2$), we conclude that $\Gamma’$ is the fixed point locus of the action of $h$ on $B \times \PP^1$ and $\pi |_{\Gamma} : \Gamma \rightarrow \PP^1$ has no branch point in $B$.

\noindent Hence $\pi|_{\Gamma} : \Gamma \rightarrow \mathbb{P}^1$ is a surjective 2:1 morphism, and the only possible branching points correspond to singular fibres. According to the Riemann-Hurwitz formula, we have: $2g-2 = -4 + b$ where $b$ is the number of branching points.
$b$ is lower or equal than the number of singular fibres, which is $8-K_S^2$. Hence, $b \leq 8 - K_S^2$. Hence, $2g-2 \leq -4 + 8 - K_S^2$, hence the result.
\end{proof}

\bigskip\bigskip

\subsection{A change of coordinates for quartic del Pezzo surfaces}

\begin{defi} \label{defi443} $ $
Let $\xi \in \kk \setminus \{0,-1,1\}$.

We define the following quartic del Pezzo surface:
$$S^{\xi} =\left\{ (1+\xi)x_1^2 - x_3^2 - \xi x_4^2 + \xi (1+\xi) x_5^2= (1+\xi)x_2^2 - \xi x_3^2 -  x_4^2 + \xi (1+\xi) x_5^2 = 0 \right\}$$
\end{defi}
In the following Lemma \ref{conjuisoblanc} we use the notations from Definitions \ref{defi323} and \ref{defi443}.

\begin{lemme} \label{conjuisoblanc} $ $
Let $t \in \kk \setminus \{0,-1,1\}$.

Then there exists an isomorphism $\Phi : S^{\frac{1-t}{1+t}} \rightarrow S_t$ such that: $$\Phi^{-1} \langle [-1:1:1:1:1],s \rangle \Phi = \langle (x_1:x_2:x_3:x_4:-x_5),(x_2:x_1:x_4:x_3:-x_5) \rangle$$
\end{lemme}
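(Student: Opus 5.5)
The plan is to write down an explicit linear change of coordinates of $\mathbb{P}^4$ mapping $S^{\xi}$, with $\xi=\frac{1-t}{1+t}$, onto $S_t$. We then check directly that the two generators correspond under conjugation. The quickest route is to start from the target group and reverse-engineer the coordinates. In $S_t$ the element $[-1:1:1:1:1]$ negates $X_0$, and $s$ swaps $X_1\leftrightarrow X_4$ and $X_2\leftrightarrow X_3$. On $S^{\xi}$ the two involutions are $(x_1:x_2:x_3:x_4:-x_5)$ and $(x_2:x_1:x_4:x_3:-x_5)$. Their product is the swap $(x_2:x_1:x_4:x_3:x_5)$.

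So we look for $\Phi$ with the following properties. It sends $x_5$ to a multiple of $X_0$, so that negating $x_5$ corresponds to negating $X_0$. It sends the pairs $(x_1,x_2)$ and $(x_3,x_4)$ to combinations of the eigencoordinates of $s$. Concretely, set
\[
X_1=\lambda(x_1+x_2),\quad X_4=\lambda(x_1-x_2),\quad X_2=\mu(x_3+x_4),\quad X_3=\mu(x_3-x_4),\quad X_0=\nu x_5 .
\]
The precise matching of $s$ needs care. Here $s$ swaps $X_1,X_4$ and $X_2,X_3$, and the product of the two $S^\xi$ involutions swaps $x_1,x_2$ and $x_3,x_4$. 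The conjugate of $s$ in the $x$-coordinates must be $(x_2:x_1:x_4:x_3:-x_5)$, that is, the swap composed with $x_5\mapsto -x_5$. We therefore adjust the ansatz, for example by having the swap act on the $X$-side as $X_0\mapsto -X_0$ combined with $s$. This is legitimate because $[-1:1:1:1:1]\cdot s$ lies in the group, so only the generated groups have to match, not the individual generators.

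The main computational step is to find $\lambda,\mu,\nu$ (possibly with a further rescaling involving $\sqrt{\xi}$, $\sqrt{1+\xi}$, and a root of $-1$) so that the pencil of quadrics defining $S^\xi$ is carried onto the pencil spanned by $\sum X_i^2$ and $X_1^2+tX_2^2-tX_3^2-X_4^2$. We substitute and take suitable linear combinations of the two defining quadrics of $S^\xi$: their sum and their difference. These give expressions in $x_1^2\pm x_2^2$, $x_3^2\pm x_4^2$ and $x_5^2$. We then match coefficients. The relation $t=\frac{1-\xi}{1+\xi}$, equivalent to $\xi=\frac{1-t}{1+t}$, should appear as the consistency condition for the ratio of coefficients between the $(X_1,X_4)$ block and the $(X_2,X_3)$ block. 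Since $\kk$ is algebraically closed and $\car(\kk)\neq 2$, all needed square roots exist. The hypothesis $t\notin\{0,\pm1\}$ ensures $\xi\notin\{0,\pm1\}$, so the denominators $1+\xi$ and $\xi$ are nonzero.

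The main obstacle is this coefficient matching: getting the pencils to coincide exactly rather than just up to a scalar on each quadric. If the naive ansatz fails, we would first allow independent scalings on each of the four eigencoordinates together with an arbitrary invertible $2\times 2$ change of basis within the pencil, and solve the resulting small system. Once $\Phi$ is fixed, checking that $\Phi^{-1}\langle[-1:1:1:1:1],s\rangle\Phi$ equals the stated group is immediate, because $\Phi$ is diagonal in the eigenbasis of the swaps.
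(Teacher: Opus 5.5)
\textbf{Your ansatz has the wrong shape. It fails both for the group and for the surfaces.}

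\emph{The group.} Take $X_1=\lambda(x_1+x_2)$, $X_4=\lambda(x_1-x_2)$, $X_2=\mu(x_3+x_4)$, $X_3=\mu(x_3-x_4)$. The swap $(x_2:x_1:x_4:x_3:x_5)$ then fixes $X_1,X_2$ and negates $X_3,X_4$. So it is conjugated to the diagonal element $[1:1:1:-1:-1]$, not to $s$: using the eigencoordinates of an involution as new coordinates turns a coordinate swap into a sign change. The whole conjugated group therefore lies in the diagonal subgroup $Z$. But $\langle[-1:1:1:1:1],s\rangle$ contains the non-diagonal elements $s$ and $[-1:1:1:1:1]\,s$. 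Your closing claim (``immediate because $\Phi$ is diagonal in the eigenbasis'') is false.

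\emph{The surfaces.} We have $X_1^2-X_4^2=4\lambda^2x_1x_2$ and $X_2^2-X_3^2=4\mu^2x_3x_4$. So $X_1^2+tX_2^2-tX_3^2-X_4^2$ pulls back to a quadric with cross terms, whereas every quadric in the pencil defining $S^{\xi}$ is diagonal in the $x_k$. Your fallback does not repair this. With independent scalings $X_1=\lambda_1(x_1+x_2)$, $X_4=\lambda_4(x_1-x_2)$, the $x_1x_2$-coefficient of the pullback of $\alpha\sum_k X_k^2+\beta(X_1^2+tX_2^2-tX_3^2-X_4^2)$ is $2\bigl((\alpha+\beta)\lambda_1^2-(\alpha-\beta)\lambda_4^2\bigr)$. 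This cannot vanish for all $(\alpha,\beta)$ unless $\lambda_1=\lambda_4=0$, and the whole pencil must be carried onto the whole pencil. The structural reason is that the five singular quadrics of each pencil are cones with vertices at the coordinate points. Hence any linear isomorphism $S^{\xi}\to S_t$ must be a monomial matrix (a permutation times a diagonal matrix).

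\textbf{The missing idea is to match the swapped pairs coordinate by coordinate.} Send $x_5$ to a multiple of $X_0$, $\{x_1,x_2\}$ to $\{X_1,X_4\}$, and $\{x_3,x_4\}$ to $\{X_2,X_3\}$. Equivalently, identify the eigencoordinates of the two swaps with each other, $x_1\pm x_2\leftrightarrow X_1\pm X_4$, with a common scalar; this is again monomial. The paper takes $\Phi[x_1:x_2:x_3:x_4:x_5]=[\sqrt{2\xi}\,x_5:x_1:i x_3:i x_4:x_2]$. Then:
\begin{itemize}
\item The sum of the two equations of $S^{\xi}$ is $(1+\xi)$ times the pullback of $\sum_k X_k^2$.
\item Their difference is $(1+\xi)$ times the pullback of $X_1^2+\frac{1-\xi}{1+\xi}X_2^2-\frac{1-\xi}{1+\xi}X_3^2-X_4^2$.
\item Hence $\Phi(S^{\xi})=S_{\frac{1-\xi}{1+\xi}}$, and taking $\xi=\frac{1-t}{1+t}$ gives $S_t$, because $\xi\mapsto\frac{1-\xi}{1+\xi}$ is an involution.
\end{itemize}
By construction, $\Phi^{-1}[-1:1:1:1:1]\Phi$ is $x_5\mapsto -x_5$ and $\Phi^{-1}s\Phi$ is the pure swap $(x_2:x_1:x_4:x_3:x_5)$. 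These generate the stated group; as you correctly noted, only the generated groups need to agree.
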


\begin{proof} $ $
We define the following automorphism of $\mathbb{P}^4$:
$$\begin{array}{ccccc}
\Phi & : & \mathbb{P}^4 & \to & \mathbb{P}^4 \\
 & & [x_1 : x_2 : x_3 : x_4 : x_5] & \mapsto & [\sqrt{2 \xi} x_5 : x_1 : i x_3 : i x_4 : x_2] \\
\end{array}$$
By taking the sum and the difference of the equations of $S^{\xi}$, we observe that $S^{\xi}$ is the complete intersection of the following two equations:
$$\begin{array}{rrrlrrr}
(1+\xi) x_1^2 & + (1+\xi) x_2^2 & - (\xi +1) x_3^2 & - (\xi + 1) x_4 ^2  + 2 \xi (1+ \xi) x_5^2  & = &  0 & \\
(1+\xi) x_1^2 & - (1+\xi) x_2^2 & + (\xi - 1) x_3^2 & + (1 - \xi ) x_4 ^2   & =  & 0 &
\end{array}$$

Let $X = \Phi (x) = [X_0 : X_1 : X_2 : X_3 : X_4] = [\sqrt{2 \xi} x_5 : x_1 : i x_3 : i x_4 :x_2]$ for $x \in S^\xi$. Then:

$\begin{array}{ll} & X_0^2 + X_1^2 + X_2^2 + X_3^2 + X_4^2 \\
= & \xi x_5^2 + x_1^2 - x_3^2 - x_4^2 +x_2^2 \\
 = & \frac{1}{1+\xi} ((1+\xi) x_1^2 + (1+\xi) x_2^2 - (\xi +1) x_3^2 - (\xi + 1) x_4 ^2 + 2 \xi (1+ \xi) x_5^2) \\
 = & 0
\end{array}$ \newline
And: 

$\begin{array}{ll} & X_1^2 + \frac{1-\xi}{1+\xi} X_2^2 - \frac{1-\xi}{1+\xi} X_3^2 - X_4^2 \\
= & x_1^2 - \frac{1-\xi}{1+\xi} x_3^2 +\frac{1-\xi}{1+\xi} x_4^2 - x_2^2 \\
= & \frac{1}{1+\xi}((1+\xi) x_1^2 - (1+\xi) x_2^2 + (\xi - 1) x_3^2 + (1 - \xi ) x_4 ^2)  \\
= & 0
\end{array}$
\newline
So $X \in S_{\frac{1-\xi}{1+\xi}}$.
Hence we have: $\Phi(S^\xi) = S_{\frac{1-\xi}{1+\xi}}$.
The map $\xi \mapsto \frac{1-\xi}{1+\xi}$ is an involution of $\kk \setminus \{0,1,-1\}$, hence we have: $\Phi(S^\frac{1-t}{1+t}) = S_t$.

Finally we observe that we have the following equality:

 $\begin{array}{cl} & \langle [x_1:x_2:x_3:x_4:-x_5], [x_2:x_1:x_4:x_3:x_5] \rangle \\ = & \langle [x_1:x_2:x_3:x_4:-x_5], [x_2:x_1:x_4:x_3:-x_5] \rangle \end{array}$
\end{proof}

\bigskip

\subsection{Classification of $2$-elementary subgroups of automorphism of quartic del Pezzo surfaces of Picard rank $2$ and their actions on conic bundle structures}

\begin{lemme} \label{propquarticlink} $ $
We use the notations of Definitions \ref{defi323} and \ref{defi1623} .

Let $S$ be a quartic del Pezzo surface. Let $G$ be a $2$-elementary non-cyclic subgroup of $\Aut(S)$. We assume $rk(Pic(S)^G)=2$. Then:
\begin{enumerate} 
\item  \label{propquarticlink1} Up to isomorphism, there exist $a_0, \dots, a_4 \in \kk$ pairwise distinct such that: $$S = \left\{ X_0^2 + X_1^2 + X_2^2 + X_3^2 + X_4^2 = a_0 X_0^2 + a_1 X_1^2 + a_2 X_2^2 + a_3 X_3^2 + a_4 X_4^2 = 0 \right\}$$ and $G=G_{4n}$ where $n \in \left\{2 , 4 , 7 , 9 \right\}$
\item  \label{propquarticlink2} We can assume $S = S_t$ for some $t \in \kk \setminus \{ 0,1,-1\}$ if $n = 7,9$.

\item  \label{propquarticlink3} The subgroup $G$ fixes exactly two conic bundles structure of $S$.

\item  \label{propquarticlink4} If $n=2, 4, 7$, then there exists an automorphism of $S$ permuting the two conic bundles and belonging to the normalizer of $G$.

\item  \label{propquarticlink5} If $n = 9$, then $G$ acts
trivially on the base of exactly one of the two conic bundles fixed.
\end{enumerate}
\end{lemme}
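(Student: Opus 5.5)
Assertions \eqref{propquarticlink1} and \eqref{propquarticlink2} follow by reusing the classification already done in Section \ref{quarticsection}. Put $S$ in the normal form of \cite[Theorem 3.1]{dolcubic}. If $G\subset Z$, Proposition \ref{quarticz} makes $G$ one of $G_{41},\dots,G_{46}$ up to a permutation matrix, which only permutes the $a_i$. If $G\not\subset Z$, Proposition \ref{propquartic5points} lets us take $S=S_t$, and Proposition \ref{subgroupsofSt} makes $G$ one of $G_{47},G_{48},G_{48'},G_{49}$. Proposition \ref{quarticpicard} then shows that the condition $rk(Pic(S)^G)=2$ singles out exactly $n\in\{2,4,7,9\}$, and the cases $n=7,9$ live on $S_t$.

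For \eqref{propquarticlink3}, note that $Pic(S)^G\otimes\mathbb{Q}$ has rank $2$ and contains $K_S$. Take any $G$-invariant conic bundle class $f$; one exists by Proposition \ref{prop1}, or can be read off directly from the pencils of quadrics. The computation in the proof of Lemma \ref{433adapted} then forces any other invariant class to be $f'=-aK_S-f$ with $aK_S^2=4$, so $a=1$ and $f'=-K_S-f$. The class $-K_S-f$ is again a conic bundle class on a quartic del Pezzo surface: its square is $4-4+0=0$ and its intersection with $K_S$ is $-2$. Hence there are exactly two invariant conic bundles, $f$ and $-K_S-f$, provided at least one exists. To exhibit $f$ concretely, use the ten conic bundle classes of a quartic del Pezzo surface, which come in five pairs $\{f_i,-K_S-f_i\}$ indexed by the five singular quadrics $\{X_i^2\text{-free}\}$ of the pencil. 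Each $Z$-element fixes every pair, and the class of a pair is swapped or fixed according to the sign action on the corresponding rulings. So I would determine which pair $i$ is preserved by each $G_{4n}$ and check that exactly one pair is preserved with both members invariant.

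For \eqref{propquarticlink4} and \eqref{propquarticlink5}, I would work with the ruling picture. The singular quadric of the pencil omitting $X_i$ is a cone over a smooth quadric surface in $\mathbb{P}^3$, whose two rulings give $f_i$ and $-K_S-f_i$. A diagonal element $[\epsilon_0:\dots:\epsilon_4]$ preserves or swaps the two rulings according to the sign of the product of the $\epsilon_j$ with $j\neq i$, i.e.\ according to a determinant. For $n=2$ and $n=4$ I would then find an element of $Z$ that commutes with $G$ and swaps the rulings, giving \eqref{propquarticlink4}. For $n=7$ I would instead look in $Z\rtimes\langle s\rangle$. For \eqref{propquarticlink5}, the clean approach is to move $G_{49}$ to the coordinates of $S^{\xi}$ via Lemma \ref{conjuisoblanc}. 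There the two conic bundles are $[x_1:x_2:x_3:x_4:x_5]\mapsto$ the ratio of the two rulings of $\{x_1^2,x_3^2,\dots\}$-type quadrics, and one checks directly that both generators act trivially on one base but that $(x_2:x_1:x_4:x_3:-x_5)$ acts nontrivially on the other. Equivalently, one can compute the fixed curves: $[-1:1:1:1:1]$ fixes an elliptic curve, and by Lemma \ref{ultimatelemma} it must lie in the kernel of the base action.

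The main obstacle is the explicit bookkeeping in \eqref{propquarticlink4}--\eqref{propquarticlink5}: identifying the invariant pair of rulings for each of the four groups and computing the induced action on both bases. I would do this via the $S^{\xi}$ coordinates, in which the conic bundle maps are the explicit ratios $[x_1\pm \ldots]$ coming from factoring the rank-$3$ quadric $(1+\xi)(x_1^2-x_2^2)+(\xi-1)(x_3^2-x_4^2)$.
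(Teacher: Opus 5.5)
Your proposal is correct. For Assertions~\ref{propquarticlink1}, \ref{propquarticlink2} and \ref{propquarticlink4} it is the paper's argument. The paper's swapping element is $[-1:1:-1:-1:-1]\in Z$, which commutes with $G_{42},G_{44}$ and normalizes $G_{47}$.

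The real difference is in Assertion~\ref{propquarticlink3}. The paper notes that every group on the list contains $g=[1:-1:-1:-1:-1]$. By Blanc's description of the $Z$-action on the five pairs, $g$ fixes only the two conic bundles attached to $X_0$. The paper then gets existence from the minimality of $(\langle g\rangle,S)$ and Proposition~\ref{prop1}, and closes with a parity remark. You instead get the upper bound from the intersection computation of Lemma~\ref{433adapted}: any second invariant class is $-K_S-f$ because $K_S^2=4$. Your lower bound comes from the invariance of $-K_S-f$. Your upper bound is cleaner and independent of the list of groups. However, it does not tell you which pair is the invariant one, and you need that in Assertions~\ref{propquarticlink4}--\ref{propquarticlink5}; the paper's element $g$ pins it down at once.

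A few smaller points:
\begin{itemize}
\item Citing Proposition~\ref{prop1} for existence needs $(G,S)$ to be minimal, which you do not check. Your explicit alternative avoids this.
\item For $n=7,9$, your sign/determinant criterion does not cover $s$. It is a similitude with $s^*Q_0=-Q_0$ for $Q_0=(X_1-X_4)(X_1+X_4)+t(X_2-X_3)(X_2+X_3)$, so you must check by hand that it preserves both rulings. It acts trivially on the base $[X_1-X_4:X_2-X_3]$ and by $[u:v]\mapsto[-u:v]$ on the base $[X_1-X_4:X_2+X_3]$.
\item That computation already proves Assertion~\ref{propquarticlink5} in the $S_t$ coordinates, with no need to pass to $S^{\xi}$. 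The paper passes to $S^{\xi}$ via Lemma~\ref{conjuisoblanc} only in order to quote Blanc's thesis.
\item Your ``equivalent'' fixed-curve argument only shows that $[-1:1:1:1:1]$ acts trivially on both bases. It says nothing about $s$, so it does not replace the direct computation.
\item The quadric $(1+\xi)(x_1^2-x_2^2)+(\xi-1)(x_3^2-x_4^2)$ has rank $4$, not $3$. A rank-$3$ quadric would carry only one family of planes and could not produce two conic bundles.
\end{itemize}
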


\begin{proof} $ $

Assertion \ref{propquarticlink1}  and \ref{propquarticlink2}: This is a direct consequence of Proposition \ref{quarticpicard}.

Assertion \ref{propquarticlink3}: A quartic del Pezzo surface has exactly ten conic bundle structures, that can be separated into five pairs (
\cite[Lemma 9.11 Assertions 1 and 2]{blancarticle2}. Although the article is stated over the field of complex numbers, the proof works over any algebraically closed field of characteristic not $2$.). Furthermore, each element of the automorphism group sends a conic bundle on an other conic bundle. The elements that preserve the pairs are the elements of $Z$. Let $z = [z_0 X_0 : z_1 X_1 : z_2 X_2 : z_3 X_3 : z_4 X_4 ] \in Z$ where $z_n \in \{-1 , 1\}$ and $\prod_{i=0}^4 z_i=1$. Then $z$ exchanges the conic bundles of the $n$-th pair if and only if $z_n = -1$ (\cite[Lemma 9.11 Assertions 1 and 2]{blancarticle2}).
Because of Assertion \ref{propquarticlink1} of this Proposition, we can assume $G \in \{G_{42}, G_{44}, G_{47}, G_{49}\}$. In particular $G$ contains the automorphism $g = [1:-1:-1:-1:-1]$. This element fixes only the conic bundles of the first pair. Hence $G$ fixes at most two conic bundles.
Furthermore we observe that $(\langle g \rangle,S)$ is minimal for the automorphism $g = [1:-1:-1:-1:-1]$, hence $(G,S)$ is also minimal. Since $rk(Pic(S)^G)=2$, it implies that $G$ fixes at least one conic bundle. 
Finally, we observe that all elements of $G$ are involutions, hence $G$ fixes an even number of conic bundles. Hence $G$ fixes exactly two conic bundles: the two conic bundles of the first pair.

Assertion \ref{propquarticlink4}: The element $z = [-1:1:-1:-1:-1] \in Z$ belongs to the normalizer of $G$ and permutes the two conic bundles fixed by $G$ (\cite[Lemma 9.11 Assertions 1 and 2]{blancarticle2}).

Assertion \ref{propquarticlink5}: Using Lemma \ref{conjuisoblanc}, we can assume that up to isomorphism: $$S =\left\{ (1+\xi)x_1^2 - x_3^2 - \xi x_4^2 + \xi (1+\xi) x_5^2= (1+\xi)x_2^2 - \xi x_3^2 -  x_4^2 + \xi (1+\xi) x_5^2 = 0 \right\}$$
$$G= \langle (x_1:x_2:x_3:x_4:-x_5),(x_2:x_1:x_4:x_3:-x_5) \rangle$$
For some $\xi \in \kk \setminus \{ 0,1,-1\}$. Then we use \cite[Lemma 8.1.12]{blancthesis} (The thesis is stated over the field of complex numbers, but the proof works over any algebraically closed field of characteristic not $2$).
\end{proof}


\subsection{Conjugation between $2$-elementary subgroups of automorphism of quartic del Pezzo surfaces of Picard rank $2$}

\begin{lemme} \label{propquarticlinkbis} $ $
Let $S$ be a quartic del Pezzo surface.

Let $G$ be a $2$-elementary non-cyclic subgroup of $\Aut(S)$ such that $rk(Pic(S)^G)= 2$. If the two $G$-invariant conic bundles on $S$ (from Lemma \ref{propquarticlink} Assertion \ref{propquarticlink3}) are non-equivalent, then up to conjugation by an isomorphism we have:
$$S = \left\{ X_0^2 + X_1^2 + X_2^2 + X_3^2 + X_4^2 =   X_1^2 + t X_2^2 - t X_3^2 - X_4^2 = 0 \right\}$$
$$G = \langle [-1:1:1:1:1],[X_0:X_4:X_3:X_2:X_1] \rangle$$
where $t \in \kk \setminus \{-1,0,1\}$.
\end{lemme}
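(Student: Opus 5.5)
We will derive the statement from the classification in Lemma \ref{propquarticlink}. By Assertion \ref{propquarticlink1} there, up to isomorphism $S$ is the intersection of the two diagonal quadrics with pairwise distinct $a_i$, and $G = G_{4n}$ with $n \in \{2,4,7,9\}$; for $n \in \{7,9\}$ we may further take $S = S_t$ (Assertion \ref{propquarticlink2}). By Assertion \ref{propquarticlink3}, $G$ preserves exactly two conic bundle structures on $S$. The plan is to go through the four values of $n$ and show that only $n=9$ can give two non-equivalent $G$-invariant conic bundles.

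For $n \in \{2,4,7\}$ we use Assertion \ref{propquarticlink4}: it gives an automorphism $z$ of $S$ that normalises $G$ and swaps the two $G$-invariant conic bundles. Then $z$ is an isomorphism of $G$-conic bundles from $(S,\pi_1)$ to $(S,\pi_2)$, since it satisfies $z G z^{-1} = G$ and sends fibres of $\pi_1$ to fibres of $\pi_2$. So the two conic bundles are equivalent in these cases. Before relying on this, we need to fix what "equivalent" means: we take it to mean conjugate by an automorphism of $S$ normalising $G$ and carrying one fibration onto the other. This is the sense in which the statement is later used to separate families \ref{I} and \ref{J}.

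The remaining case is $n=9$. Here $S=S_t$ and $G=G_{49}=\langle [-1:1:1:1:1], s\rangle$, which is exactly the pair in the conclusion, with $s$ the map $[X_0:X_1:X_2:X_3:X_4]\mapsto[X_0:X_4:X_3:X_2:X_1]$. To justify that the hypothesis really puts us in this case, we use Assertion \ref{propquarticlink5}: $G$ acts trivially on the base of exactly one of its two invariant conic bundles. An equivalence of the kind above would conjugate the action of $G$ on one base to its action on the other, so triviality on the base is an invariant of the equivalence class. Hence the two bundles are non-equivalent exactly when $n=9$. Combined with the previous paragraph, non-equivalence forces $n=9$, which gives the stated normal form.

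The main obstacle is Assertion \ref{propquarticlink5}, which depends on \cite[Lemma 8.1.12]{blancthesis} after the change of coordinates in Lemma \ref{conjuisoblanc}. If we needed to verify it directly, we would write the two conic bundles of the first pair explicitly, as projections from the pencils of quadrics through the relevant planes. We would then compute how $[-1:1:1:1:1]$ and $s$ act on each base $\mathbb{P}^1$, and check that both act trivially on one base while $s$ acts nontrivially on the other.
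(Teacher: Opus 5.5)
Your argument is correct and is essentially the paper's proof. You use Assertion~\ref{propquarticlink1} to reduce to $G=G_{4n}$ with $n\in\{2,4,7,9\}$, and Assertion~\ref{propquarticlink4} to exclude $n=2,4,7$, which leaves $(G_{49},S_t)$. The appeal to Assertion~\ref{propquarticlink5} only proves the converse (that $n=9$ really gives non-equivalent bundles), which the lemma does not require, so the ``main obstacle'' you flag is not needed for this proof.
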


\begin{proof} $ $

We use the notations of Proposition \ref{subgroupsofSt} and Proposition \ref{quarticz} for the proof.
According to Lemma \ref{propquarticlink} Assertion \ref{propquarticlink1}
we can assume up to isomorphism:
\newline $G \in \{ G_{42}, G_{44}, G_{47}, G_{49} \}$, and if $G \in \{G_{47}, G_{49}  \}$: $$S = \left\{ X_0^2 + X_1^2 + X_2^2 + X_3^2 + X_4^2 =   X_1^2 + t X_2^2 - t X_3^2 - X_4^2 = 0 \right\}$$ where $t \in \kk \setminus \{-1,0,1\}$.
According to Lemma \ref{propquarticlink} Assertion \ref{propquarticlink4}, since the two $G$-invariant conic bundles on $S$ are non-equivalent, we have $G = G_{49} \simeq (\mathbb{Z}/2)^2$.
\end{proof}



\subsection{The \textit{sine qua non} condition for the conjugation of two de Jonquières subgroups outside the de Jonquières group: An \textbf{\textit{Amazing result}}}

\begin{defi} $ $

Let $(S,\pi)$ be a conic bundle and $G \subset \Aut(S,\pi)$ a group of automorphisms.
We say that the triple $(G,S,\pi)$ is minimal if any $G$-equivariant birational morphism 
of conic bundles $\phi \colon S \rightarrow S'$ is an isomorphism.
\end{defi}


\begin{prop} \label{2elemconj2} $ $

Let $(G,S,\pi)$ and $(G',S',\pi')$ be two minimal triples such that $G \simeq G'$ are two $2$-elementary non-cyclic subgroups, $rk(Pic(S)^G) = rk(Pic(S')^{G'}) = 2$, and $G$ contains a non-trivial element fixing a non-rational curve. We assume they represent the same conjugacy class in the Cremona group. The following assertions are equivalent:
\begin{enumerate}
\item \label{2elemconj21} The two triplets $(G,S,\pi)$ and $(G',S',\pi')$ are not conjugate by a de Jonqui\`eres map.
\item \label{2elemconj22} The subgroups $G, G'$ are Klein subgroups and up to 
a de Jonquières map: $$S = S' = \left\{X_0^2 + X_1^2 + X_2^2 + X_3^2 + X_4^2 = X_1^2 + t X_2^2 - t X_3^2 - X_4^2 = 0 \right\} \subset \mathbb{P}^4$$
$$G = \langle [-1:1:1:1:1] , [X_0 : X_4 : X_3 : X_2 : X_1] \rangle \simeq (\mathbb{Z}/2)^2$$ And $\pi, \pi'$ are the two conic bundle structures on $S$ invariant by $G$, $G$ acts trivially on the base of exactly one of the two conic bundles.
\end{enumerate}
\end{prop}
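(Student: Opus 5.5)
The plan is to decompose a birational conjugacy $\sigma\colon S\dashrightarrow S'$ with $G'=\sigma G\sigma^{-1}$ into $G$-equivariant Sarkisov links, using \cite[Theorem 2.5]{isk}: $\sigma=\sigma_n\circ\dots\circ\sigma_1$ with $\sigma_k\colon S_{k-1}\dashrightarrow S_k$ and conjugated groups $G_k$. Each intermediate pair is either a conic bundle with $rk(Pic)^{G_k}=2$ or a del Pezzo surface with $rk(Pic)^{G_k}=1$. Links of type $II$ between conic bundles (elementary transformations) are de Jonquières maps, so the triples are de Jonquières conjugate as long as the chain never leaves the conic bundle world. The goal is to show that any passage through a del Pezzo surface, or any type $IV$ link (change of conic bundle on the same surface), either can be replaced by an isomorphism preserving the fibration or forces the exceptional configuration in \eqref{2elemconj22}.

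\textbf{Constraining the surfaces.} Let $h\in G$ be a non-trivial element fixing a non-rational curve of genus $g\ge1$. By Lemma~\ref{ultimatelemma}, $h$ acts trivially on the base. Proposition~\ref{preli} then gives $K_{S_k}^2\le 6-2g\le 4$ for every conic bundle $S_k$ in the chain. By Lemma~\ref{ultimatelemma3} the curve property is preserved, so the same bound holds at every step.

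\textbf{Analysing the links.} Using \cite[Theorem 2.6]{isk}, I will go through the possible links leaving a conic bundle $(G_k,S_k,\pi_k)$ of degree $\le4$.
\begin{itemize}
\item A type $III$ link contracts onto a del Pezzo surface with invariant Picard rank $1$ and degree $K^2+1\le5$. It is then followed by a type $I$ link back, or by type $II$ links between del Pezzo surfaces.
\item Links between del Pezzo surfaces are removed by Lemma~\ref{replacinglinkbyiso}, and by Proposition~\ref{conjdplowdeg} in degrees $\le3$.
\item A type $III$ link to a quartic surface followed by a type $I$ link from it is handled by Lemma~\ref{conjcubickleinredbis}. It can be replaced by an isomorphism respecting the unique conic bundle on the cubic surfaces. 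The quartic case needs $G$ to be Klein with fixed points; for $r>2$, Lemma~\ref{nofixedpoint2elemquartic} excludes type $I$ links.
\end{itemize}
After these reductions, only type $IV$ links remain, that is, surfaces carrying two $G$-invariant conic bundles. By Lemma~\ref{433adapted}, such a surface is $\PP^1\times\PP^1$ or has $K_S^2\in\{1,2,4\}$. $\PP^1\times\PP^1$ is excluded by the degree bound. I will also have to handle degrees $1$ and $2$: there the aim is to show that the two bundles are exchanged by an element normalizing $G$, for instance using the Bertini or Geiser involution (Lemma~\ref{bertinigeiser}), which commutes with $\Aut(S)$. The link is then again an isomorphism of triples.

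\textbf{Degree $4$ and the converse.} In degree $4$, Lemma~\ref{propquarticlink} and Lemma~\ref{propquarticlinkbis} show that the two invariant bundles are equivalent except for $G=G_{49}$ on $S_t$, where $G$ is Klein and acts trivially on the base of exactly one bundle. This gives $\eqref{2elemconj21}\Rightarrow\eqref{2elemconj22}$. For the converse, the two fibrations on $S_t$ yield triples that are not de Jonquières conjugate. The reason is that a de Jonquières conjugation preserves whether $G$ acts trivially on the base, i.e.\ whether $\pi(G)$ is trivial, and Lemma~\ref{propquarticlink}\eqref{propquarticlink5} says the two triples differ in exactly this respect.

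\textbf{Main obstacle.} I expect the hardest step to be the degree $1$ and $2$ type $IV$ links, since no earlier lemma treats them directly. There I would try to show, via the determinant classification (Proposition~\ref{resultatgeneral}) and the fixed curve structure, that the Bertini or Geiser involution swaps the two fibrations while commuting with $G$.
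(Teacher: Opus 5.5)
Your proposal follows essentially the same route as the paper. You use the Sarkisov decomposition, the bound $K_S^2\le 4$ from Proposition~\ref{preli}, Lemmas~\ref{conjcubickleinredbis} and~\ref{replacinglinkbyiso} for the type $III$ detour through a quartic, Lemma~\ref{propquarticlinkbis} for degree-$4$ type $IV$ links, and the base-action argument for the converse. The degree $1$ and $2$ type $IV$ step you flag as the main obstacle is in fact immediate and needs neither Proposition~\ref{resultatgeneral} nor fixed-curve arguments: by \cite[Theorem 2.6]{isk} such a link is an isomorphism composed with the Bertini/Geiser involution $\tau$, and $\tau$ swaps the two bundles because it acts as $-\mathrm{id}$ on $K_S^\perp$ and so sends $f$ to the class $-aK_S-f$ of Lemma~\ref{433adapted}. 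Since $\tau$ commutes with $\Aut(S)$ by Lemma~\ref{bertinigeiser}, you get $G_1=\phi G\phi^{-1}$ for an isomorphism of conic bundles $\phi$.
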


\begin{proof} $ $

\ref{2elemconj22} $\Rightarrow$ \ref{2elemconj21}: A subgroup with a trivial action on the base can not be conjugate by a de Jonquières map to a subgroup with a non-trivial action on the base. \\

\ref{2elemconj21} $\Rightarrow$ \ref{2elemconj22}:
Let $\sigma : S \DashedArrow S'$ be a birational map such that $G' = \sigma \circ G \circ \sigma^{-1}$.
Using  \cite[Theorem 2.5]{isk}, we decompose $\sigma = \sigma_n \circ \dots \circ \sigma_1$ where $\sigma_k : S_{k-1} \DashedArrow  S_k$ are elementary Sarkisov links, and $S_0 =S$, $S_{n} = S'$. Let $G_k = \sigma_k \circ \dots \circ \sigma_1 G \sigma_1^{-1} \circ \dots \circ \sigma_k^{-1} \subset \Aut(S_k)$.
According to Proposition \ref{preli}, $K_S^2 \leq 4$. Hence using \cite[Theorem 2.6]{isk}, there are three possibilities for $\sigma_1$:

\begin{itemize}[label=\textbullet]
\item $\sigma_1$ is a link of type $IV$, $K_S^2 \in \{1,2,4 \}$.
\begin{itemize}
\item If $K_S^2 \in \{1,2 \}$ then according to \cite[Theorem 2.6]{isk}, the link is represented by a Bertini or Geiser involution \textit{i.e.} there exist two isomorphisms $\phi : S_0 \rightarrow S_1, \tau : S_0 \rightarrow S_0$ such that $\sigma_1 = \phi \circ \tau$, $\tau$ is the Bertini or Geiser involution. Then:

$
\begin{array}{llr}
G_1 & = \sigma_1 \circ G \circ \sigma_1^{-1} & \\
& = \phi \circ \tau  G  \tau^{-1} \circ  \phi^{-1} & \\
& = \phi G \phi^{-1} & \mbox{ \quad because $\tau  G  \tau^{-1} \subset \Aut(X)$} \\
&  & \mbox{and $\tau$ commutes with $\Aut(X)$ (Lemma \ref{bertinigeiser})} \\
\end{array}
$

Hence it can be replaced by a de Jonqui\`eres conjugation.
\item
If $K_S^2 = 4$, then we have a conjugation permuting two conic bundles. If the conjugation can't be replaced by a de Jonqui\`eres element, then according to Lemma \ref{propquarticlinkbis}, we have only one possibility for the surface and the group up to isomorphism: $$S_0 = S_1 = \left\{X_0^2 + X_1^2 + X_2^2 + X_3^2 + X_4^2 = X_1^2 + t X_2^2 - t X_3^2 - X_4^2 = 0 \right\}$$
$$G = \langle [-1:1:1:1:1] , [X_0 : X_4 : X_3 : X_2 : X_1] \rangle \simeq (\mathbb{Z}/2)^2$$
\end{itemize}
\item $\sigma_1$ is a link of type $III$, $K_S^2=3$, $S_1$ is a quartic del Pezzo surface and $rk(Pic(S_1)^{G_1} )=1$. Assuming $n > 1$, according to \cite[Theorem 2.6]{isk}, there are two possibilities for the next Sarkisov link $\sigma_2$:
\begin{itemize} 
\item $\sigma_2$ is a link of type $I$, $S_2$ is a cubic surface and $rk(Pic(S_2)^{G_2} )=2$. According to Lemma \ref{conjcubickleinredbis} the composition $\sigma_2 \circ \sigma_1$ can be replaced by an isomorphism. 
\item $\sigma_2$ is a link of type $(II,\mathbb{D})$.
According to Lemma \ref{replacinglinkbyiso}, $\sigma_2$ can be replaced by an isomorphism.
\end{itemize}
\item $\sigma_1$ is a link of type $(II,\mathbb{C})$. Then it corresponds to a de Jonqui\`eres conjugation.
\end{itemize}
Hence we can replace $(G_0,S_0)$ by $(G_1,S_1)$ or $(G_2,S_2)$. We can then apply the previous step to the next Sarkisov link ($\sigma_2$ or $\sigma_3$) assuming it exists. Hence by a finite induction we get the result.

The last assertion is a direct consequence of Lemma \ref{propquarticlink} Asssertion \ref{propquarticlink5}.
\end{proof}

\newpage

\part{Klein subgroups of automorphisms of $\mathbb{P}^2$ in characteristic $2$}  \label{part5} $ $

 \bigskip\bigskip

We assume $\car(\kk) = 2$ in all this part. The goal of this part is to determine the Klein subgroups of $\Aut(\PP^2)$ when $\car(\kk) = 2$. This corresponds to Theorem \ref{theorem3} Assertion \ref{theorem32}. For the convenience of the reader, the results are summarized in Proposition \ref{mitterandtheoremC}.

 \bigskip 

\begin{defi} \label{definitionkleinP2} $ $
Let $t \in \kk \setminus \left\{ 0,1\right\}$.
We define the following Klein subgroups of $\Aut(\PP^2)$:

$G_{1} = \langle \left[\begin{smallmatrix}  1 & 0 & 0   \\
 0 & 1 & 1  \\
 0 & 0 & 1
\end{smallmatrix} \right]  , \left[\begin{smallmatrix}  1 & 0 & 1   \\
 0 & 1 & 0  \\
 0 & 0 & 1
\end{smallmatrix} \right] \rangle  , \quad G_{2} = \langle \left[\begin{smallmatrix}  1 & 1 & 0   \\
 0 & 1 & 0 \\
 0 & 0 & 1
\end{smallmatrix} \right] , \left[\begin{smallmatrix}  1 & 0 & 1  \\
 0 & 1 & 0 \\
 0 & 0 & 1
\end{smallmatrix} \right] \rangle, \quad G_{3,t} = \langle \left[\begin{smallmatrix}  1 & 1 & 0  \\
 0 & 1 & 0 \\
 0 & 0 & 1
\end{smallmatrix} \right] , \left[\begin{smallmatrix}  1 & t & 0  \\
 0 & 1 & 0 \\
 0 & 0 & 1
\end{smallmatrix} \right] \rangle$
\\
\end{defi}

\begin{prop} \label{mitterandtheoremC} $ $ Let $t,t' \in \kk \setminus \{0,1\}$.

\begin{enumerate} 
\item Every Klein subgroup of $\Aut(\mathbb{P}^2)$ is conjugate in $\Aut(\mathbb{P}^2)$ to one of the subgroups of Definition \ref{definitionkleinP2}.
\item
The three subgroups $G_1, G_2, G_{3,t}$ are pairwise not conjugate in $\Aut(\mathbb{P}^2)$.
\item
The two subgroups $G_{3,t}$ and $G_{3,t'}$ are conjugate in $\Aut(\mathbb{P}^2)$ if and only if:
$$t' \in \left\{t,t+1,\frac{1}{t},\frac{t+1}{t},\frac{1}{t+1},\frac{t}{t+1}\right\}$$
\item
The subgroups $G_1$ and $G_{3,t}$ are conjugate by a birational map.
\item
The subgroups $G_1$ and $G_2$ are not conjugate by a birational map.
\end{enumerate}
\end{prop}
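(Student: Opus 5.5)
Assertion (1) will come from Proposition \ref{prop242} with $p=2$: every Klein subgroup $G$ is conjugate into $U_a=\{[\begin{smallmatrix}1&a&b\\ &1&0\\ &&1\end{smallmatrix}]\}$ or $U_c=\{[\begin{smallmatrix}1&0&b\\ &1&c\\ &&1\end{smallmatrix}]\}$. Each of these is a $2$-dimensional $\mathbb{F}_2$-vector space via $(a,b)$ or $(b,c)$. The rest of the classification is then linear algebra on the stabilizers.

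For $U_a$, the group $G$ corresponds to a $2$-dimensional $\mathbb{F}_2$-subspace $V$ of $\kk^2$. If $V$ spans $\kk^2$ over $\kk$, I will use a matrix acting on the $(Y,Z)$ block, which acts on $(a,b)$ by $\GL(2,\kk)$, to send a basis to $(1,0),(0,1)$; this gives $G_2$. Otherwise $V$ lies on a $\kk$-line, and I rotate that line to $b=0$; the group is then $\langle (1,0),(t,0)\rangle$ after scaling with $t\notin\{0,1\}$, which is $G_{3,t}$. The case $U_c$ is the transpose-inverse (dual) situation. There the same argument gives either $G_1$ (when $V$ spans) or again a group on a single line. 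I will check that the latter is conjugate to some $G_{3,t}$ using the permutation swapping coordinates together with the duality, or I will compute directly.

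For (2), I will compare fixed-point loci via Proposition \ref{prop243}. $G_1$ fixes a line pointwise, $G_2$ has the unique fixed point $[1:0:0]$, and $G_{3,t}$ fixes the line $\{y=0\}$ pointwise. So $G_2$ is distinguished from the other two. To separate $G_1$ from $G_{3,t}$, I will use dual fixed loci or the set of fixed lines: in $G_1$ the nontrivial elements all have distinct axes through a common centre, while in $G_{3,t}$ they share the same axis.

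For (3), the normalizer of $\{y=0\}$ and of the centre acts on the $\kk$-line containing $V=\{0,1,t,t+1\}\subset\kk$ by scalars. Conjugacy of $G_{3,t}$ and $G_{3,t'}$ therefore means $\lambda V=V'$ for some $\lambda\in\kk^\times$. Normalizing so that $1\in\lambda V$, we get $\lambda\in\{1,1/t,1/(t+1)\}$, and the choice of basis then yields exactly the six listed values. This is the anharmonic-group orbit.

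For (4), I will exhibit an explicit birational map, probably a quadratic transformation centred at fixed points, conjugating $G_1$ to $G_{3,t}$. Alternatively I will pass through $\PP^1\times\PP^1$, realizing both groups as translations $(x,y)\mapsto(x+\alpha,y+\beta)$ on $\mathbb{A}^2$ and using a de Jonquières map $(x,y)\mapsto(x,y+f(x))$ with an additive polynomial $f$ such as $x^2+x$ or $x^2+tx$ adapted to $t$. This is the step I expect to be the main obstacle, since the right map must be found.

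For (5), I will use a birational invariant. The most promising is the existence of a fixed point on every smooth projective model, in the spirit of \cite{kollar}, though in characteristic $2$ this must be justified differently, because the remark notes that fixed points alone fail. I would try the non-rational fixed-curve invariant (Lemma \ref{ultimatelemma3}) or the invariant of whether $G$ preserves a pencil of lines with trivial base action. A fallback is to compare quotient surfaces $\PP^2/G$ and their singularities up to birational equivalence.
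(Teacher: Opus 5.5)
\textbf{Assertion (5) is the genuine gap:} none of the invariants you propose separates $G_1$ from $G_2$.
\begin{itemize}
\item Both groups have fixed points: $G_1$ fixes $\{z=0\}$ pointwise, and $G_2$ fixes $[1:0:0]$.
\item Every non-trivial element of either group is an elation whose fixed locus is a line. So neither group contains an element fixing a non-rational curve. This is exactly what the remark after Theorem~\ref{theorem3} says.
\item ``Preserving a pencil of lines with trivial base action'' is not a birational invariant. $G_{3,t}$ acts trivially on the pencil through its common centre $[1:0:0]$, whereas $G_1$ has no such pencil because its three elations have distinct centres. Yet $G_1$ and $G_{3,t}$ are birationally conjugate by (4). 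In the birationally invariant form (an invariant rational fibration with trivial base action), both $G_1$ and $G_2$ have one.
\item The quotients are both rational, e.g.\ $\kk(x,y)^{G_1}=\kk(x^2+x,y^2+y)$. Singularities of a particular quotient model are not birational invariants.
\end{itemize}

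The missing ingredient is the classification argument of Proposition~\ref{klein1000}. It runs the equivariant Sarkisov program from $(G_2,\PP^2)$ and checks each possible link.
\begin{itemize}
\item A type~I link to $dP_5$ needs an orbit of four points in general position, which $G_2$ lacks.
\item A type~I link to $\mathbb{F}_1$ must blow up the unique fixed point $[1:0:0]$, and gives $K_2$. The fibre translations of $K_2$ form the space $\{0,1,s,s+1\}$, with $s$ the base coordinate. Chains of conic-bundle links back to $\mathbb{F}_1$ are fibrewise up to automorphisms of $\mathbb{F}_1$. Hence they can only rescale this space by a constant (Lemmas~\ref{klein10000lemme} and~\ref{klein10000}).
\item A type~II link to $\mathbb{F}_0$ gives $R_2$, the only Klein group on $\mathbb{F}_0$ with invariant Picard rank one. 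So excursions through $\mathbb{F}_0$ return to $R_2$. Links from $(\mathbb{F}_0,R_2)$ back to $\PP^2$ only produce $G_2$, because $G_1$ and $G_{3,t}$ have no orbit of size two (Lemma~\ref{klein10007}).
\end{itemize}
Hence any Klein subgroup of $\Aut(\PP^2)$ birationally conjugate to $G_2$ is conjugate to it in $\Aut(\PP^2)$, and (2) concludes. The difference with $G_{3,t}$ lies in the fibre translations: those of $G_{3,t}$ are the constants $\{0,1,t,t+1\}$, which no element of $\kk(s)^\times$ carries to $\{0,1,s,s+1\}$. Showing that this difference survives every birational conjugation is precisely what the link analysis does, and none of your candidate invariants captures it.

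In (2), your picture of $G_1$ is reversed. Its non-trivial elements $[x+bz:y+cz:z]$ all have the axis $\{z=0\}$ and three distinct centres $[1:0:0],[0:1:0],[1:1:0]$. ``Distinct axes through a common centre'' is the configuration of $G_2$. So ``sharing an axis'' does not distinguish $G_1$ from $G_{3,t}$. Compare centres instead, or invariant lines ($G_1$ leaves only $\{z=0\}$ invariant, $G_{3,t}$ a whole pencil), or use the paper's orbit of four points in general position.

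The remaining parts are sound.
\begin{itemize}
\item Deriving (1) from Proposition~\ref{prop242} plus the $\GL(2,\kk)$-action on the parameters is a valid, more algebraic alternative to the paper's orbit argument (Lemma~\ref{moscou3} and the $\mathfrak{S}_4$ analysis).
\item (3) is essentially the paper's computation.
\item Your idea for (4) is in fact the paper's map: in the chart $y=1$, $\omega_t$ reads $(x,z)\mapsto(x,t(t+1)z+x^2+x)$. To finish, write $G_1=\{(x,y)\mapsto(x+b,y+c)\}$. Conjugate it by $(x,y)\mapsto(x+ty,y)$ to the translations by $\langle(1,0),(t,1)\rangle$. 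Then conjugate by $(x,y)\mapsto\bigl(x,y+\frac{x^2+x}{t^2+t}\bigr)$ to the translations by $\langle(1,0),(t,0)\rangle$, which is $G_{3,t}$ in the chart $y=1$.
\end{itemize}
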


\begin{proof} $ $

\begin{enumerate}
\item This is proven in Proposition \ref{kleininchar2} Assertion \ref{kleininchar21}.
\item This is proven in Proposition \ref{klein100}.
\item This is proven in Proposition  \ref{klein100bis}.
\item This is proven in Corollary \ref{g13conj}.
\item This is proven in Corollary \ref{klein1000corr}.
\end{enumerate}
\end{proof}


 \bigskip 
This part is organized as follows:
\begin{itemize}

\item We show in  
Section \ref{section52} that the subgroups from Definition \ref{definitionkleinP2} are representatives up to conjugation by automorphisms of Klein subgroups of $\Aut(\PP^2)$ and we study the possible conjugations by automorphism.

\item In Sections \ref{section53} and \ref{section54} we study the possible conjugations by a birational map between these Klein subgroups.
\end{itemize}

\newpage

\section{Classification of Klein subgroups up to conjugation in $\Aut(\PP^2)$} \label{section52} \label{section51} $ $

The goal of this section is to get a list of representatives of Klein subgroups of $\Aut(\mathbb{P}^2)$ up to conjugacy in $\Aut(\mathbb{P}^2)$. The main result is Proposition \ref{kleininchar2} which gives a list of representatives of Klein subgroups. We also give the results about the conjugations by automorphism between these subgroups in Propositions \ref{klein100} and \ref{klein100bis}.

Recall that four points of $\PP^2$ are in general position if no three of them are collinear. This is equivalent to ask that we can change coordinates and put the four points on $[1:0:0]$, $[0:1:0]$, $[0:0:1]$ and $[1:1:1]$.

\subsection{Action of subgroups of $\Aut(\mathbb{P}^2)$} $ $

We start this section with two independent preliminary results about subgroups of $\Aut(\mathbb{P}^2)$ in characteristic $2$.

\begin{lemme} \label{moscou3} $ $
Every subgroup of $\Aut(\mathbb{P}^2)$ of order greater or equal than three has an orbit of size at least three.
\end{lemme}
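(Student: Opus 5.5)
We argue by contradiction. Suppose $H \subseteq \Aut(\PP^2)$ has order at least three and every $H$-orbit in $\PP^2$ has size at most two. The plan is to show that $H$ then acts trivially on $\PP^2$. Since $H$ is a subgroup of $\PGL(3,\kk)$, which acts faithfully, this would force $H$ to be trivial, contradicting $|H| \geq 3$.

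\emph{Step 1: reduce to index-$\leq 2$ stabilisers.} For each point $x$ the orbit has size $1$ or $2$, so the stabiliser $H_x$ has index $1$ or $2$ in $H$. Every subgroup of index at most $2$ is normal and contains the subgroup $H^2$ generated by squares. Hence $H^2$ fixes every point of $\PP^2$, so $H^2$ is trivial. Thus every element of $H$ has order at most $2$, and $H$ is a $2$-elementary group. Since $|H| \geq 3$, $H$ contains a Klein subgroup $V = \langle g, h \rangle$. It therefore suffices to find a point of $\PP^2$ whose $V$-orbit has size $4$. The orbit of $x$ has size $4$ exactly when $x$ is not fixed by any of the three non-trivial elements $g$, $h$, $gh$.

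\emph{Step 2: fixed loci are proper closed subsets.} Each non-trivial element of $\PGL(3,\kk)$ has fixed locus a proper closed subset of $\PP^2$. Concretely, this locus is the union of the projectivised eigenspaces of a lifting matrix, and it is not all of $\PP^2$ because the element is not the identity.

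\emph{Step 3: conclude by irreducibility.} The union of three proper closed subsets of the irreducible space $\PP^2$ is still a proper subset, since $\kk$ is algebraically closed and hence infinite. So some point $x$ lies outside all three fixed loci, and its $V$-orbit has size $4 \geq 3$. This contradicts the assumption that all orbits have size at most two.

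The argument uses neither $\car(\kk)=2$ nor any classification. The only delicate point I expect is the first step: passing from "every orbit has size at most $2$" to "$H^2$ acts trivially". This needs the normality of index-$2$ subgroups, and I would write that argument out carefully.
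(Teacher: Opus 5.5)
Your proof is correct. The paper argues more directly. It takes two distinct non-trivial elements $g,h$ and looks at points $x$ outside $\mathrm{Fix}(g)\cup\mathrm{Fix}(h)$. For such $x$, the three points $x, g(x), h(x)$ lie in an orbit of size at most two, with $g(x)\neq x\neq h(x)$. Hence $g(x)=h(x)$ on a dense open set, which forces $g=h$.

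Your Step 1 is a valid but dispensable detour. That is the step where stabilisers of index at most $2$ contain all squares, so $H$ has exponent $2$ and contains a Klein group. For any two distinct non-trivial $g,h$ you could choose $x$ outside $\mathrm{Fix}(g)\cup\mathrm{Fix}(h)\cup\mathrm{Fix}(g^{-1}h)$. Then $x, g(x), h(x)$ are three distinct points of one orbit, with no group theory needed.

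Your reduction does buy a little extra structure. Any faithful action in which all orbits have size at most $2$ must have exponent at most $2$. You also get the conclusion as a free orbit of size $4$. The geometric input is the same in both arguments: non-trivial elements of $\mathrm{PGL}(3,\mathbf{k})$ have proper closed fixed loci, and $\mathbb{P}^2$ is irreducible.

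One minor point on Step 3. What makes a finite union of proper closed subsets proper is the irreducibility of $\mathbb{P}^2$, not the fact that $\mathbf{k}$ is infinite.
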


\begin{proof} $ $
By contradiction, we assume there exists a subgroup $G \subset \Aut(\mathbb{P}^2)$ with only orbits of size one and two and with at least three elements. Then there exist $g,h \in G$ such that $id \neq g \neq h \neq id$. For every $x \in \mathbb{P}^2$, we define $X_x = \{ f(x) | f \in G \}$, it is a subset of $\mathbb{P}^2$ of cardinal at most two.
Let $U = \mathbb{P}^2 \setminus (Fix(g) \cup  Fix(h))$. For every $x \in U$, we have $\left\{ x,g(x),h(x) \right\} \subset X_x, g(x) \neq x \neq h(x)$ and $|X_x| \leq 2$, hence $g(x) = h(x)$.
Since $g$ and $h$ are not the identity, $U$ is a dense subset of $\mathbb{P}^2$, hence $g =h$. It is a contradiction.
\end{proof}

\begin{lemme} \label{moscou111} $ $
Let $p_1 = [1 :0: 0], p_2 = [0:1:0] , p_3 = [0:0:1], p_4 =[1:1:1]$.
Let $X = \left\{  p_1 , p_2 , p_3 , p_4 \right\} \subset \mathbb{P}^2$.
Let $G_X =\left\{ g \in \Aut(\mathbb{P}^2) | g(X) =X \right\} \subset \Aut(\mathbb{P}^2)$.

\begin{enumerate}
\item \label{moscou1111}
The subgroup $G_X$  is isomorphic to $\mathfrak{S}_4$ and its transpositions are the following elements:
$$(12) = \left[\begin{smallmatrix}  0 & 1 & 0   \\
 1 & 0 & 0  \\
  0&0 & 1
\end{smallmatrix} \right], (13) = \left[\begin{smallmatrix}  0 & 0 & 1   \\
 0 & 1 & 0  \\
  1&0 & 0
\end{smallmatrix} \right], (23) =  \left[\begin{smallmatrix}  1 & 0 & 0   \\
 0 & 0 & 1  \\
  0&1 & 0
\end{smallmatrix} \right]$$
$$(14) = \left[\begin{smallmatrix}  1 & 0 & 0   \\
 1 & 1 & 0  \\
  1&0 & 1
\end{smallmatrix} \right], (24) = \left[\begin{smallmatrix}  1 & 1 & 0   \\
 0 & 1 & 0  \\
  0&1& 1
\end{smallmatrix} \right], (34) = \left[\begin{smallmatrix}  1 & 0 & 1   \\
 0& 1 & 1  \\
  0&0 & 1
\end{smallmatrix} \right]$$

\item \label{moscou1112} Let $H_{1} = \langle \left[\begin{smallmatrix}  0 & 1 & 1   \\
 1 & 0 & 1  \\
  0&0 & 1
\end{smallmatrix} \right]  , \left[\begin{smallmatrix}  0 & 1 & 1   \\
 0 & 1 & 0  \\
 1 &1 & 0
\end{smallmatrix} \right]  \rangle \subset \Aut(\mathbb{P}^2), H_{2} = \langle \left[\begin{smallmatrix}  0 & 1 & 0   \\
 1 & 0 & 0  \\
  0&0 & 1
\end{smallmatrix} \right]  , \left[\begin{smallmatrix}  1 & 0 & 1   \\
 0 & 1 & 1  \\
  0&0 & 1
\end{smallmatrix} \right]  \rangle \subset \Aut(\mathbb{P}^2)$.
$H_1$ and $H_2$ are Klein subgroups of $G_X$. Any $2$-elementary non-cyclic subgroup of $G_X$ is conjugate in $G_X$ to $H_1$ or $H_2$. $H_1$ and $H_2$ are not conjugate in $G_X$. $H_1$ contains the three double transpositions of $G_X$, $H_2$ is generated by two transpositions.
\end{enumerate}
\end{lemme}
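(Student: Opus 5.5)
The plan is to identify $G_X$ with the stabilizer of four points in general position and exploit the fact that $\PGL(3,\kk)$ acts simply transitively on ordered $4$-tuples of points in general position. For \eqref{moscou1111}: the points $p_1,\dots,p_4$ are in general position, so for every permutation $\sigma \in \mathfrak{S}_4$ there is a unique $g_\sigma \in \Aut(\PP^2)$ with $g_\sigma(p_i)=p_{\sigma(i)}$. The map $\sigma \mapsto g_\sigma$ is therefore a group isomorphism $\mathfrak{S}_4 \to G_X$: it is a homomorphism by uniqueness, and injective because an automorphism fixing four points in general position is trivial. It then remains only to check, by direct evaluation on the four points (working in characteristic $2$, so $-1=1$), that each of the six displayed matrices induces the stated transposition. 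For instance, $(34)$ sends $[0:0:1]\mapsto[1:1:1]$ and $[1:1:1]\mapsto[1+1:1+1:1]=[0:0:1]$, while fixing $p_1$ and $p_2$.

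For \eqref{moscou1112}, everything reduces to group theory in $\mathfrak{S}_4$ via this isomorphism. A $2$-elementary non-cyclic subgroup of $\mathfrak{S}_4$ has order $4$, since $(\ZZ/2)^3$ would have order $8$, and the Sylow $2$-subgroup $D_8$ is not elementary abelian. So it is a Klein four-subgroup. The Klein subgroups of $\mathfrak{S}_4$ are, up to conjugacy, exactly two:
\begin{itemize}
\item the normal one, $V_4=\{e,(12)(34),(13)(24),(14)(23)\}$;
\item the three non-normal ones $\langle (ij),(kl)\rangle$ with $\{i,j,k,l\}=\{1,2,3,4\}$, which are conjugate to each other.
\end{itemize}
These two classes are not conjugate, because conjugation preserves cycle type, and one class contains transpositions while the other does not. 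The final step is to identify the given matrices. $H_2=\langle(12),(34)\rangle$ can be read off directly from the list in \eqref{moscou1111}. For $H_1$, I will compute the images of $p_1,\dots,p_4$ under each generator and check that they give two distinct double transpositions; for example, the first generator sends $p_1\mapsto[0:1:0]$, $p_2\mapsto[1:0:0]$, $p_3\mapsto[1:1:1]$, $p_4\mapsto[0:0:1]$, which is $(12)(34)$. The generated group is then $V_4$, so it contains all three double transpositions.

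The main (mild) obstacle is this last bookkeeping step: computing the permutations induced by the two generators of $H_1$ correctly in characteristic $2$, and confirming that the two generators are distinct double transpositions. Everything else is either the standard simple-transitivity fact or elementary facts about $\mathfrak{S}_4$.
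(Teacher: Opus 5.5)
Your proposal is correct and follows essentially the paper's route: you use that an element of $G_X$ is determined by its action on four points in general position to identify $G_X$ with $\mathfrak{S}_4$, you check the displayed matrices in characteristic $2$, and you classify the Klein subgroups of $\mathfrak{S}_4$. The only difference is how surjectivity onto $\mathfrak{S}_4$ is obtained. The paper notes that the six displayed transpositions lie in $G_X$ and generate $\mathfrak{S}_4$, whereas you use the existence half of simple transitivity on frames. You should also state explicitly that every $g\in G_X$ induces some $\sigma$, so $g=g_\sigma$ by uniqueness.
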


\begin{proof} $ $

\begin{enumerate}
\item
The subset $X$ contains four points and no three of these four points are collinear. Hence an element of $G_X$ is determined by the image of the points of $X$, hence $G_X \subset \mathfrak{S}_4$.
Furthermore, $G_X$ contains all the transpositions of $X$:
$$\left[\begin{smallmatrix}  0 & 1 & 0   \\
 1 & 0 & 0  \\
  0&0 & 1
\end{smallmatrix} \right] , \left[\begin{smallmatrix}  0 & 0 & 1   \\
 0 & 1 & 0  \\
  1&0 & 0
\end{smallmatrix} \right] , \left[\begin{smallmatrix}  1 & 0 & 0   \\
 0 & 0 & 1  \\
  0&1 & 0
\end{smallmatrix} \right], \left[\begin{smallmatrix}  1 & 0 & 0   \\
 1 & 1 & 0  \\
  1&0 & 1
\end{smallmatrix} \right] , \left[\begin{smallmatrix}  1 & 1 & 0   \\
 0 & 1 & 0  \\
  0&1& 1
\end{smallmatrix} \right] , \left[\begin{smallmatrix}  1 & 0 & 1   \\
 0& 1 & 1  \\
  0&0 & 1
\end{smallmatrix} \right]$$
Hence $G_X \simeq \mathfrak{S}_4$.

\item 
We first observe that $H_1$ corresponds to $\langle (12)(34), (13)(24) \rangle$ and $H_2$ corresponds to $\langle (12), (34) \rangle$.
Hence $H_1$ and $H_2$ are Klein subgroups of $G_X$, $H_1$ contains the three double transpositions of $G_X$, and $H_2$ is generated by two transpositions. The rest comes from Lemma \ref{moscou111} Assertion \ref{moscou1111} and the group structure of $\mathfrak{S}_4$.
\end{enumerate}
\end{proof}


%
%
%
%

\subsection{A list of representatives} $ $

\medskip

We begin this subsection by stating Lemma~\ref{lis0bis} concerning the Heisenberg group, which corresponds to two of the four assertions of Lemma~\ref{lis0}.

\begin{lemme} \label{lis0bis} $ $
Let $a,b,c, a', b', c' \in \kk$.

Let $A = \left[\begin{smallmatrix}  1 & a & b  \\
 0 & 1 & c \\
 0 & 0 & 1
\end{smallmatrix} \right] \in \GL(3,\kk), A' = \left[\begin{smallmatrix}  1 & a' & b'  \\
 0 & 1 & c' \\
0 & 0 & 1
\end{smallmatrix} \right] \in \GL(3,\kk)$.
Then:
\begin{enumerate}
\item \label{lis2bis} $A, A'$ commute if and only if $a c' = a' c$.
\item \label{lis4bis} $A^2 = I$ if and only if $a = 0$ or $c = 0$.
\end{enumerate}
\end{lemme}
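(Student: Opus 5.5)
The plan is to compute both products $AA'$ and $A'A$ explicitly, and then the square $A^2$, entrywise in $\GL(3,\kk)$. For upper unitriangular matrices, multiplying out gives
\[
AA' = \left[\begin{smallmatrix} 1 & a+a' & b+b'+ac' \\ 0 & 1 & c+c' \\ 0 & 0 & 1 \end{smallmatrix}\right], \qquad
A'A = \left[\begin{smallmatrix} 1 & a+a' & b+b'+a'c \\ 0 & 1 & c+c' \\ 0 & 0 & 1 \end{smallmatrix}\right].
\]

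For Assertion \ref{lis2bis}, the two products agree in every entry except possibly the top-right one. There they differ by $ac' - a'c$. Hence $AA' = A'A$ if and only if $ac' = a'c$. This part does not depend on the characteristic.

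For Assertion \ref{lis4bis}, set $A' = A$ in the formula above. This gives
\[
A^2 = \left[\begin{smallmatrix} 1 & 2a & 2b+ac \\ 0 & 1 & 2c \\ 0 & 0 & 1 \end{smallmatrix}\right].
\]
Since $\car(\kk)=2$, this reduces to the matrix with $ac$ in the top-right corner and zeros elsewhere above the diagonal. It equals $I$ exactly when $ac=0$, that is, when $a=0$ or $c=0$. Alternatively, one can simply cite Lemma \ref{lis0}, Assertions \ref{lis2} and \ref{lis4}, of which this lemma is a restatement.

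There is no real obstacle here. The only point needing care is to remember that the statement concerns equality in $\GL(3,\kk)$ and not in $\PGL(3,\kk)$. For unitriangular matrices the two notions coincide anyway, since a scalar multiple $\lambda A$ of a unitriangular matrix is unitriangular only when $\lambda=1$.
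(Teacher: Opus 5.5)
Your proof is correct and is essentially the paper's own argument, which consists only of ``direct computations'' of $AA'$, $A'A$ and $A^2$. You are also right to rely on the standing assumption $\car(\kk)=2$ of Part~\ref{part5}, since without it Assertion~\ref{lis4bis} would fail.
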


\begin{proof} $ $
Direct computations
\end{proof}

We now state the major result of this subsection:

\begin{prop} \label{kleininchar2} $ $
\begin{enumerate} 
\item \label{kleininchar21} Let $G$ be a Klein subgroup of $\Aut(\mathbb{P}^2)$.
Then $G$ is conjugate in $\Aut(\mathbb{P}^2)$ to one of the groups of Definition \ref{definitionkleinP2}.

\item \label{kleininchar22} The subgroup $G_1$ has an orbit of size four whose points are in general position and every orbit of $G_2$ and $G_{3,t}$ is contained in a line through $[1:0:0]$.
\item \label{kleininchar23} The subgroups $G_1$ and $G_{3,t}$ don't have an orbit of size two.
\item \label{kleininchar24}  The set of fixed points of $G_1$ is $\{[x:y:0] | [x:y] \in \mathbb{P}^1 \}$. $[1:0:0]$ is the only fixed point of $G_2$. The set of fixed points of $G_{3,t}$ is $\{[x:0:z] | [x:z] \in \mathbb{P}^1 \}$. 
\end{enumerate}
\end{prop}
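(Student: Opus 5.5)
For Assertion \ref{kleininchar21} I would start from Proposition \ref{prop242} with $p=2$: up to conjugation, $G$ lies either in $U_1=\{[\begin{smallmatrix}1&a&b\\&1&0\\&&1\end{smallmatrix}]\}$ or in $U_2=\{[\begin{smallmatrix}1&0&b\\&1&c\\&&1\end{smallmatrix}]\}$. Both groups are isomorphic to $(\kk,+)^2$ and are $2$-elementary by Lemma \ref{lis0bis}. The two groups are exchanged by the transpose-inverse composed with the antidiagonal permutation. That operation is an outer automorphism of $\PGL(3,\kk)$, not a conjugation, so I will keep the two cases separate.

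In the case $G\subset U_2$, the group $G$ is a $2$-dimensional $\mathbb F_2$-subspace of the pairs $(b,c)\in\kk^2$. Conjugating by $\left[\begin{smallmatrix}A&0\\0&1\end{smallmatrix}\right]$ with $A\in\GL(2,\kk)$ sends $(b,c)^T$ to $A(b,c)^T$. The two generating vectors of $G$ either are $\kk$-linearly independent, and can then be moved to $(1,0),(0,1)$, which gives $G_1$; or they are $\kk$-proportional, $(v,tv)$. In the proportional case I move $v$ to $(1,0)$ and obtain the pairs $(1,0),(t,0)$, with $t\notin\{0,1\}$ because $G$ has order $4$. This is the translation group along the first coordinate. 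I then conjugate by the permutation swapping the second and third coordinates to land in $U_1$ with $a\in\{1,t\}$, $b=0$, which is $G_{3,t}$.

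In the case $G\subset U_1$, the group $G$ is a $2$-dimensional $\mathbb F_2$-subspace of the pairs $(a,b)$. Here conjugation by $\left[\begin{smallmatrix}1&0\\0&B\end{smallmatrix}\right]$ acts on the row vector $(a,b)$ by right multiplication by $B^{-1}$. If the two vectors are $\kk$-independent I normalize them to $(1,0),(0,1)$, which gives $G_2$. If they are dependent, $(\lambda u,\lambda' u)$, I normalize $u$ to $(1,0)$ and get $G_{3,t}$ again.

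Assertions \ref{kleininchar22}--\ref{kleininchar24} are direct computations on the explicit groups, using Proposition \ref{prop243} for the fixed points. $G_1$ is the group of translations $(x,y)\mapsto(x+\epsilon_1,y+\epsilon_2)$ with $\epsilon_i\in\mathbb F_2$ on the chart $z=1$. Its fixed locus is the line $z=0$. The orbit of $[0:0:1]$ is $\{[0:0:1],[1:0:1],[0:1:1],[1:1:1]\}$, which is in general position. For the other points of the chart $z=1$ the orbit has size $4$, since the translations act freely there, so $G_1$ has no orbit of size $2$.

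For $G_{3,t}$, an element sends $[x:y:z]$ to $[x+sy:y:z]$ with $s\in\{0,1,t,1+t\}$. The fixed points are therefore those with $y=0$. Off that line the four values $x+sy$ are distinct, so every orbit has size $1$ or $4$, never $2$. Each orbit keeps $[y:z]$ constant, so it lies in a line through $[1:0:0]$.

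$G_2$ acts by $[x:y:z]\mapsto[x+\epsilon_1y+\epsilon_2z:y:z]$. Its orbits also keep $[y:z]$ constant, so they lie in lines through $[1:0:0]$. The point $[1:0:0]$ is its only fixed point, since the two conditions $y=0$ and $z=0$ are both needed.

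The main obstacle is the bookkeeping in the normalization step. I must check that the conjugating matrices really stay within $\PGL(3,\kk)$ and that the degenerate rank-$1$ subcases of both $U_1$ and $U_2$ land on the same family $G_{3,t}$. If the transpose trick is needed at all, it has to be replaced by an explicit conjugation.
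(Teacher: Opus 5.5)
Your proposal is correct. For Assertions \ref{kleininchar22}--\ref{kleininchar24} it is essentially the paper's argument: explicit fixed loci, plus the observation that the action off the fixed line is free, so there are no orbits of size two.

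For Assertion \ref{kleininchar21} you take a genuinely different route. The paper does not invoke Proposition \ref{prop242}; Section \ref{24new} is explicitly included only for completeness. Instead it argues geometrically:
\begin{itemize}
\item Lemma \ref{moscou3} forces an orbit of size four.
\item If that orbit is in general position, its stabilizer is $\mathfrak{S}_4$ (Lemma \ref{moscou111}). Since the four points form a single $G$-orbit, $G$ must be the double-transposition subgroup $H_1$, which is conjugate to $G_1$.
\item If three of the points are collinear, then all four are, and the line is $G$-invariant. Projecting to $\PGL(2,\kk)$ and applying Proposition \ref{dublin1} and Lemma \ref{lis0bis} places $G$ in your $U_1$. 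The same rank dichotomy then yields $G_2$ or $G_{3,t}$.
\end{itemize}

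Your route replaces this orbit analysis with the structural fact that $G$ lies in one of two groups: $U_1$, the elations with centre $[1:0:0]$, or $U_2$, the elations with axis $z=0$. The problem then reduces to $\GL(2,\kk)$ acting on $2$-dimensional $\mathbb{F}_2$-subspaces of $\kk^2$, on columns for $U_2$ and on rows for $U_1$. This is more uniform. It exhibits $G_1$ and $G_2$ as the full-$\kk$-rank cases in the two dual groups, with $G_{3,t}$ as the common degenerate case.

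Your bookkeeping checks out. The block-diagonal conjugations act as you claim, and the swap of the last two coordinates sends $[x:y:z]\mapsto[x+sz:y:z]$ to $[x:y:z]\mapsto[x+sy:y:z]$. Both rank-one subcases therefore land on $G_{3,t}$, and the transpose trick is never needed.

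The paper's route does not depend on the trigonalization argument of Proposition \ref{prop242}. It also directly produces the geometric invariant distinguishing the cases: a size-four orbit in general position versus orbits contained in lines through $[1:0:0]$. That invariant is reused in Assertion \ref{kleininchar22} and later in the Sarkisov-link analysis of Section \ref{section54}.
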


\begin{proof} $ $

\begin{enumerate}
\item
Let $G$ be a Klein subgroup of $\Aut(\mathbb{P}^2)$.
According to Lemma \ref{moscou3}, $G$ has an orbit of size at least three. The order of $G$ is four, so $G$ has no orbit of size three. Hence $G$ has an orbit of size four.
\begin{itemize}
\item
If $G$ has an orbit $X$ of size four where no three of these four points are collinear, then up to conjugation in $\Aut(\mathbb{P}^2)$, $X =\{ [1 :0: 0], [0:1:0], [0:0:1], [1:1:1] \}$.
According to Lemma \ref{moscou111}, up to conjugation we have $G  \in \{ H_1 , H_2 \}$. We observe that $X$ is not an orbit for the subgroup $H_2$ (it is in fact the union of two orbits of size two). Hence, $G = H_1$.

Let $A =
\left[\begin{smallmatrix}  1 & 0 & 0  \\
 0 & 1 & 0  \\
 1 &1 & 1
\end{smallmatrix} \right] $.
We observe that $A^{-1} = \left[\begin{smallmatrix}  1 & 0 & 0  \\
 0 & 1 & 0  \\
 1 &1 & 1
\end{smallmatrix} \right] $.

Then, $A^{-1}\left[\begin{smallmatrix}  0 & 1 & 1   \\
 1 & 0 & 1  \\
  0&0 & 1
\end{smallmatrix} \right] A = \left[\begin{smallmatrix}  1 & 0 & 1   \\
 0 & 1 & 1  \\
 0 & 0 & 1
\end{smallmatrix} \right]$ and $A^{-1} \left[\begin{smallmatrix}  0 & 1 & 1   \\
 0 & 1 & 0  \\
 1 &1 & 0
\end{smallmatrix} \right] A = \left[\begin{smallmatrix}  1 & 0 & 1   \\
 0 & 1 & 0  \\
 0 & 0 & 1
\end{smallmatrix} \right]$.

Then, $A^{-1} H_1 A = \langle \left[\begin{smallmatrix}  1 & 0 & 1   \\
 0 & 1 & 1  \\
 0 & 0 & 1
\end{smallmatrix} \right]  , \left[\begin{smallmatrix}  1 & 0 & 1   \\
 0 & 1 & 0  \\
 0 & 0 & 1
\end{smallmatrix} \right] \rangle = \langle \left[\begin{smallmatrix}  1 & 0 & 0   \\
 0 & 1 & 1  \\
 0 & 0 & 1
\end{smallmatrix} \right]  , \left[\begin{smallmatrix}  1 & 0 & 1   \\
 0 & 1 & 0  \\
 0 & 0 & 1
\end{smallmatrix} \right] \rangle  = G_1 $.

\item
Now we assume that $G$ has an orbit of size four where three of these four points are collinear. Let $X$ be that orbit. Let $x \in X$ such that $X \setminus \left\{x\right\}$ lies on a line $L$.
$x$ is not fixed by $G$ so there exists $g \in G$ such that $g(x) \in L$.
Let $y,z \in L$ such that $X = \{x, g(x),y,z\}$.
We have $\{x,y,z\} = \{x,g(y),g(z)\} = g(\{ g(x) , y , z \}) \subset g(L)$.
By linearity, $g(L)$ is a line. As $\{y,z\} \subset L \cap g(L)$, we get $g(L) = L$ and $x \in L$.
Hence $X \subset L$, and up to conjugation we can assume $X \subset \left\{ [x:y:0] | [x:y] \in \mathbb{P}^1 \right\}$. This is the unique line containing the orbit $X$, hence it is preserved by the subgroup $G$.
Hence we have $G \subset \left\{ \left[\begin{smallmatrix}  * & * & *  \\
 * & * & * \\
 0 & 0 & *
\end{smallmatrix} \right]  \right\} \simeq \GL(2,\kk) \rtimes \kk^2$.

We have the following group homomorphism:
 $$\begin{array}{ccccc}
\pi & : & \left\{ \left[\begin{smallmatrix}  a & b & e  \\
 c & d & f \\
 0 & 0 & g
\end{smallmatrix}\right] \in \PGL(3,\kk) | ad-bc \neq 0 , g \neq 0, e ,f \in \kk   \right\} & \twoheadrightarrow & \PGL(2,\kk) \\
 & &  \left[\begin{smallmatrix}  a & b & *  \\
 c & d & * \\
 0 & 0 & *
\end{smallmatrix} \right]  & \mapsto &  \left[\begin{smallmatrix}  a & b   \\
 c & d 
\end{smallmatrix} \right]  \\
\end{array}$$

Since $G$ acts non-trivially on $L$, $\pi(G)$ is non-trivial.
Hence according to Proposition \ref{dublin1}, $\pi(G)$ is conjugate to a subgroup of $\left\{\left[\begin{smallmatrix}  1 & t   \\
 0 & 1
\end{smallmatrix} \right] | t \in \kk \right\}$ containing $\left[\begin{smallmatrix}  1 & 1   \\
 0 & 1 
\end{smallmatrix} \right]$.
Hence up to conjugation, we can assume that $G$ is a subgroup of $\left\{ \left[\begin{smallmatrix}  1 & * & *   \\
 0 & 1 & * \\
 0 & 0 & *
\end{smallmatrix} \right]\right\}$ containing an element of the form $\left[\begin{smallmatrix}  1 & 1 & *   \\
 0 & 1 & * \\
 0 & 0 & *
\end{smallmatrix} \right]$.
Every non-trivial element of $G$ is of order two, hence $G$ is a subgroup of $\left\{ \left[\begin{smallmatrix}  1 & * & *   \\
 0 & 1 & * \\
 0 & 0 & 1
\end{smallmatrix} \right]\right\}$ containing an element of the form $\left[\begin{smallmatrix}  1 & 1 & *   \\
 0 & 1 & * \\
 0 & 0 & 1
\end{smallmatrix} \right]$.
Since every element of $G$ is of order two, $G$ contains an element of the form $\left[\begin{smallmatrix}  1 & 1 & *   \\
 0 & 1 & 0 \\
 0 & 0 & 1
\end{smallmatrix} \right]$ according to Lemma \ref{lis0bis} Assertion \ref{lis4bis}.
$G$ is abelian, hence $G$ is a subgroup of $\left\{ \left[\begin{smallmatrix}  1 & * & *   \\
 0 & 1 & 0 \\
 0 & 0 & 1
\end{smallmatrix} \right] \right\}$ containing an element of the form $\left[\begin{smallmatrix}  1 & 1 & *   \\
 0 & 1 & 0 \\
 0 & 0 & 1
\end{smallmatrix} \right]$ according to Lemma \ref{lis0bis} Assertion \ref{lis2bis}.
We define the following group isomorphism:
 $$\begin{array}{ccccc}
\rho & : & \left\{ \left[\begin{smallmatrix}  1 & x & y  \\
 0 & 1 & 0 \\
 0 & 0 & 1
\end{smallmatrix} \right] | x,y \in \kk \right\} & \rightarrow & \kk^2\\
 & &   \left[\begin{smallmatrix}  1 & x & y  \\
 0 & 1 & 0 \\
 0 & 0 & 1
\end{smallmatrix} \right] & \mapsto &  (x,y) \\
\end{array}$$

And we define the following subgroup: $$\mathcal{P} = \left\{ \left[\begin{smallmatrix}  1 & 0 & 0  \\
 0 & a & b \\
 0 & c & d
\end{smallmatrix} \right] \in \PGL(3,\kk) | ad-bc \neq 0 \right\} \simeq \GL(2,\kk)$$
We observe that: $\left[\begin{smallmatrix}  1 & 0 & 0  \\
 0 & a & b \\
 0 & c & d
\end{smallmatrix} \right]^{-1}  \left[\begin{smallmatrix}  1 & x & y  \\
 0 & 1 & 0 \\
 0 & 0 & 1
\end{smallmatrix} \right] \left[\begin{smallmatrix}  1 & 0 & 0  \\
 0 & a & b \\
 0 & c & d
\end{smallmatrix} \right]= \left[\begin{smallmatrix}  1 & x' & y' \\
 0 & 1 & 0 \\
 0 & 0 & 1
\end{smallmatrix} \right]$, where $(x',y')^T = P^T (x,y)^T$, and $P = \left[\begin{smallmatrix}  a & b  \\
 c & d
\end{smallmatrix} \right]$.

\begin{itemize}
\item
If $\rho(G)$ spans $\kk^2$ as a $\kk$-vector space, then up to conjugation by an element of $\mathcal{P}$, $G$ contains $\left[\begin{smallmatrix}  1 & 1 & 0  \\
 0 & 1 & 0 \\
 0 & 0 & 1
\end{smallmatrix} \right]$ and $\left[\begin{smallmatrix}  1 & 0 & 1  \\
 0 & 1 & 0 \\
 0 & 0 & 1
\end{smallmatrix} \right]$, hence $G_2 \subset G$.
Since $G$ is a Klein subgroup, we get $G = G_2$.
\item
If $\rho(G)$ does not span $\kk^2$ as a $\kk$-vector space, since $G$ is not trivial, then $span(\rho(G))$ is a line of $\kk^2$.
Up to conjugation by an element of $\mathcal{P}$, we get that $G \subset \left\{ \left[\begin{smallmatrix}  1 & t  & 0  \\
 0 & 1 & 0 \\
 0 & 0 & 1
\end{smallmatrix} \right] | t \in \kk \right\}$ and contains $\left[\begin{smallmatrix}  1 & 1 & 0 \\
 0 & 1 & 0 \\
 0 & 0 & 1
\end{smallmatrix} \right]$.
Since $G$ is a Klein subgroup, we get $G = G_{3,t}$ for a certain $t \in \kk \setminus \{0,1\}$.
\end{itemize}
 \end{itemize}

\item By construction $G_1$ has an orbit of size four whose points are in general position.
Furthermore, if $X$ is an orbit of $G_2$ or $G_{3,t}$ different than $\{[1:0:0]\}$, then there exists $(b,c) \neq (0,0)$ such that every point $[x:y:z] \in X$ satisfies the equation $b y + c z =0$.

\item For the last assertion about the absence of orbits of size two for $G_1$ and $G_{3,t}$, we observe that all non-trivial elements of $G_1$ have the same set of fixed points: $\{ z = 0\}$; likewise, all non-trivial elements of $G_{3,t}$ have the same set of fixed points: $\{ y = 0\}$. 

\item All non-trivial elements of $G_1$ have the same set of fixed points: $\{ [x:y:0] | [x:y] \in \mathbb{P}^1 \}$, hence the result for $G_1$.

The set of fixed points of  $\left[\begin{smallmatrix}  1 & 1 & 0   \\
 0 & 1 & 0 \\
 0 & 0 & 1
\end{smallmatrix} \right]$ is $\{ y = 0 \}$. The set of fixed points of $\left[\begin{smallmatrix}  1 & 0 & 1  \\
 0 & 1 & 0 \\
 0 & 0 & 1
\end{smallmatrix} \right]$ is $\{z = 0 \}$. The intersection of these two subsets is $\{[1:0:0]\}$, hence the result for $G_2$.

All non-trivial elements of $G_{3,t}$ have the same set of fixed points: $\{ [x:0:z] | [x:z] \in \mathbb{P}^1 \}$, hence the result for $G_{3,t}$.
\end{enumerate}
\end{proof}



\subsection{Conjugation by automorphisms between the subgroups $G_1, G_2, G_{3,t}$} $ $



Let $t \in \kk \setminus \left\{0,1\right\}$.

\begin{prop} \label{klein100} $ $

 The three subgroups $G_1, G_2, G_{3,t}$ are pairwise not conjugate in $\Aut(\mathbb{P}^2)$.
\end{prop}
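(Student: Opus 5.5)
The plan is to separate the three subgroups by invariants that are preserved under conjugation in $\Aut(\mathbb{P}^2)$: the shape of the fixed-point set and the geometry of the orbits. These are exactly what Proposition \ref{kleininchar2} records. If $\phi G \phi^{-1} = G'$ with $\phi \in \Aut(\mathbb{P}^2)$, then $\phi$ maps the fixed locus of $G$ onto that of $G'$. It also maps orbits to orbits, and it sends lines to lines, so it preserves collinearity and general position.

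First I would separate $G_2$ from the other two using fixed points. By Proposition \ref{kleininchar2} Assertion \ref{kleininchar24}, $G_2$ has a single fixed point, $[1:0:0]$. By contrast, $G_1$ fixes the whole line $\{z=0\}$ and $G_{3,t}$ fixes the whole line $\{y=0\}$. A conjugating automorphism would give a bijection between these fixed loci, which is impossible between a point and a line. Hence $G_2$ is conjugate to neither $G_1$ nor $G_{3,t}$.

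Next I would separate $G_1$ from $G_{3,t}$ using orbits. By Proposition \ref{kleininchar2} Assertion \ref{kleininchar22}, $G_1$ has an orbit of size four whose points are in general position. Every orbit of $G_{3,t}$, on the other hand, lies on a line through $[1:0:0]$, so no four points of a single orbit are in general position. Since $\phi \in \PGL(3,\kk)$ takes collinear triples to collinear triples, it would carry the general-position orbit of $G_1$ to an orbit of $G_{3,t}$ in general position, which does not exist. So $G_1$ and $G_{3,t}$ are not conjugate.

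The main step requiring care is confirming that the invariants in Proposition \ref{kleininchar2} are stated for all orbits and the full fixed locus, not only for particular ones. This holds: Assertion \ref{kleininchar22} covers every orbit of $G_{3,t}$, and Assertion \ref{kleininchar24} gives the fixed loci exactly. With that checked, the argument above is complete.
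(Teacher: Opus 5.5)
Your proposal is correct and is essentially the paper's proof. The paper also separates $G_2$ from $G_1$ and $G_{3,t}$ by their fixed loci (a single point versus a line, Proposition~\ref{kleininchar2} Assertion~\ref{kleininchar24}), and separates $G_1$ from $G_{3,t}$ by the existence of an orbit of size four in general position (Assertion~\ref{kleininchar22}).
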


\begin{proof} $ $
Using Proposition \ref{kleininchar2} Assertion \ref{kleininchar24} we know that 
$G_1$ has an infinite number of fixed points: $[x:y:0]$,
$G_2$ has only one fixed point: $[1:0:0]$,
$G_{3,t}$ has an infinite number of fixed points: $[x:0:z]$.
Hence $G_1$ and $G_2$ are not conjugate, and $G_2$ and $G_{3,t}$ are not conjugate.

Furthermore, $G_1$ has an orbit of size four whose points are in general position whereas $G_{3,t}$ has none (Proposition \ref{kleininchar2} Assertion \ref{kleininchar22}). Hence $G_1$ and $G_{3,t}$ are not conjugate.
\end{proof}

\subsection{Conjugation by automorphisms between $G_{3,t}$ and $G_{3,t'}$} $ $

Let $t,t' \in \kk \setminus \left\{0,1\right\}$.

\begin{lemme}  \label{klein100bislemme} $ $
The following are equivalent:
\begin{enumerate}
\item
There exists $\lambda\in \kk^\times$ such that $\{1,t',t'+1\}=\{\lambda,\lambda t, \lambda(t+1)\}$.
\item
$t'\in \{t,t+1,\frac{1}{t},\frac{t+1}{t},\frac{1}{t+1},\frac{t}{t+1}\}$.
\end{enumerate}
\end{lemme}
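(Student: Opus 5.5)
We work in characteristic $2$, so $-1=1$ and $t+1$ plays the role of $t-1$. The key observation is that $\{1,t,t+1\}$ is exactly the set of nonzero elements of the $\mathbb{F}_2$-span $V_t=\{0,1,t,t+1\}$ of $1$ and $t$ in $\kk$ (these three are distinct because $t\notin\{0,1\}$). Condition (1) then says that $V_{t'}=\lambda V_t$ as subsets of $\kk$, since multiplication by $\lambda$ is $\mathbb{F}_2$-linear and sends nonzero elements to nonzero elements.

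For (1)$\Rightarrow$(2), we use that $1\in\{\lambda,\lambda t,\lambda(t+1)\}$. This forces $\lambda\in\{1,\frac1t,\frac1{t+1}\}$, and we treat each value in turn.
\begin{itemize}
\item If $\lambda=1$, the set is $\{1,t,t+1\}$, so $t'\in\{t,t+1\}$.
\item If $\lambda=\frac1t$, the set is $\{\frac1t,1,\frac{t+1}{t}\}$, so $t'\in\{\frac1t,\frac{t+1}t\}$.
\item If $\lambda=\frac1{t+1}$, the set is $\{\frac1{t+1},\frac t{t+1},1\}$, so $t'\in\{\frac1{t+1},\frac t{t+1}\}$.
\end{itemize}
In each case $t'$ must be one of the two elements of the set different from $1$. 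Together these give exactly the six values listed in (2).

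For (2)$\Rightarrow$(1), we reverse the bookkeeping. If $t'\in\{t,t+1\}$ we take $\lambda=1$. If $t'\in\{\frac1t,\frac{t+1}t\}$ we take $\lambda=\frac1t$. If $t'\in\{\frac1{t+1},\frac t{t+1}\}$ we take $\lambda=\frac1{t+1}$. In each case we check that $\{1,t',t'+1\}$ equals $\lambda\{1,t,t+1\}$. The only computation needed is that $t'+1$ is the remaining element. For instance, $\frac1t+1=\frac{t+1}t$ and $\frac1{t+1}+1=\frac t{t+1}$, both using $2=0$.

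There is no real obstacle. The only points needing care are to use characteristic $2$ consistently in these sums, and to note that $t\neq0,1$ ensures all denominators are nonzero and the three listed elements are distinct.
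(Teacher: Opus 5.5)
Your proof is correct and is essentially the paper's argument. The paper enumerates the six ways of matching $(1,t',t'+1)$ with $(\lambda,\lambda t,\lambda(t+1))$ and solves each for $(t',\lambda)$; you organize the same case analysis by first fixing $\lambda\in\{1,\frac1t,\frac1{t+1}\}$ from the position of $1$, and you correctly use characteristic $2$ and $t\neq 0,1$.
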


\begin{proof} $ $
The six possible permutations for the equality: $$\{1,t',t'+1\}=\{\lambda,\lambda t, \lambda(t+1)\}$$ are the following:

\[\begin{array}{lllll}
(1,t',t'+1)=(\lambda,\lambda t,\lambda(t+1))&\Leftrightarrow& (t',\lambda)=(t,1)\vphantom{\Big)}\\
(1,t',t'+1)=(\lambda,\lambda(t+1),\lambda t)&\Leftrightarrow& (t',\lambda)=(t+1,1)\vphantom{\Big)}\\

(1,t',t'+1)=(\lambda t,\lambda ,\lambda(t+1))&\Leftrightarrow& (t',\lambda)=(\frac{1}{t},\frac{1}{t})\vphantom{\Big)}\\
(1,t',t'+1)=(\lambda t,\lambda(t+1),\lambda )&\Leftrightarrow& (t',\lambda)=(\frac{t+1}{t},\frac{1}{t})\vphantom{\Big)}\\

(1,t',t'+1)=(\lambda (t+1),\lambda ,\lambda t)&\Leftrightarrow& (t',\lambda)=(\frac{1}{t+1},\frac{1}{t+1})\vphantom{\Big)}\\
(1,t',t'+1)=(\lambda (t+1),\lambda t,\lambda )&\Leftrightarrow& (t',\lambda)=(\frac{t}{t+1},\frac{1}{t+1}).\vphantom{\Big)}
\end{array}\]
\end{proof}

\begin{prop} \label{klein100bis}  $ $
The following are equivalent:
\begin{enumerate}
\item
\label{G3tConjG3tprim}
The subgroups $G_{3,t}$, $G_{3,t'}$ are conjugate in $\PGL(3,\kk)$.
\item\label{tprimeis}
$t'\in \{t,t+1,\frac{1}{t},\frac{t+1}{t},\frac{1}{t+1},\frac{t}{t+1}\}$.
\end{enumerate}
\end{prop}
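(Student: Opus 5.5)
By Lemma \ref{klein100bislemme}, it suffices to show that $G_{3,t}$ and $G_{3,t'}$ are conjugate in $\PGL(3,\kk)$ if and only if there exists $\lambda \in \kk^\times$ such that $\{1,t',t'+1\}=\{\lambda,\lambda t,\lambda(t+1)\}$. The key observation is that $G_{3,t}$ is the image, under the isomorphism $\rho$ from the proof of Proposition \ref{kleininchar2}, of the $\mathbb{F}_2$-subspace $\{(0,0),(1,0),(t,0),(t+1,0)\}$ of $\kk^2$. Its set of non-zero first coordinates is exactly $\{1,t,t+1\}$.

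For the implication \ref{tprimeis} $\Rightarrow$ \ref{G3tConjG3tprim}, I take $\lambda$ as given by the lemma and conjugate by a diagonal element. The direct computation of Lemma \ref{lis0} Assertion \ref{lis1} shows that a suitable diagonal matrix (with entries $\lambda, 1, 1$ up to the ordering convention) sends $\left[\begin{smallmatrix} 1 & x & 0 \\ 0 & 1 & 0 \\ 0 & 0 & 1 \end{smallmatrix}\right]$ to $\left[\begin{smallmatrix} 1 & \lambda^{\pm1} x & 0 \\ 0 & 1 & 0 \\ 0 & 0 & 1 \end{smallmatrix}\right]$. It therefore sends $G_{3,t}$ to the group whose non-zero entries form the set $\lambda^{\pm1}\{1,t,t+1\}$. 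Choosing the correct power of $\lambda$, this set equals $\{1,t',t'+1\}$, so the image is $G_{3,t'}$.

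For the converse, suppose $PG_{3,t}P^{-1}=G_{3,t'}$. By Proposition \ref{kleininchar2} Assertion \ref{kleininchar24}, the fixed line $\{y=0\}$ is common to both groups, so $P$ preserves it. Every orbit of both groups lies on a line through $[1:0:0]$ (Assertion \ref{kleininchar22}), and $[1:0:0]$ is the unique common point of these lines: indeed it is the image of the kernel direction $e_1$, the image of $g-I$ for every non-trivial $g$. Hence $P$ also fixes $[1:0:0]$.

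I then write $P$ in block form, preserving the flag $e_1 \in \langle e_1, e_3\rangle$, and conjugate a generic element $\left[\begin{smallmatrix} 1 & x & 0 \\ 0 & 1 & 0 \\ 0 & 0 & 1 \end{smallmatrix}\right]$ explicitly. The elements of $G_{3,t}$ are the maps $v\mapsto v + x\,\ell(v)e_1$, with the fixed linear form $\ell=y$. Conjugating by $P$ gives $v\mapsto v + x\,(\ell\circ P^{-1})(v)\,Pe_1$. Since $Pe_1=\mu e_1$, and since the image must again be of the form $v \mapsto v + x'\,y\,e_1$ (the identity coordinate being $1$ after normalisation), we get $\ell\circ P^{-1}=\nu\, y$. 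Hence $x'=\mu\nu x=\lambda x$. Consequently the conjugation multiplies the set $\{1,t,t+1\}$ by the scalar $\lambda$, giving $\{1,t',t'+1\}=\lambda\{1,t,t+1\}$, and Lemma \ref{klein100bislemme} concludes.

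The main obstacle is this matrix computation. One must check that projective rescaling does not introduce extra freedom, which is handled by noting that unipotent elements have the unique scalar representative with eigenvalue $1$.
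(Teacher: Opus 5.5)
Your proof is correct and takes essentially the paper's route. The direction from the list of values of $t'$ to conjugacy uses the same diagonal conjugation by $\mathrm{diag}(\lambda,1,1)$. In the other direction, your identity $P(I+x\,e_1 y)P^{-1}=I+x\,(Pe_1)(y\circ P^{-1})$, together with the eigenvalue-$1$ normalisation, is the coordinate-free form of the paper's entrywise computation $M_{s'}A=AM_s$: both show that conjugation rescales the parameter by a single scalar $\lambda$, after which Lemma \ref{klein100bislemme} concludes. Your preliminary appeal to Proposition \ref{kleininchar2}, to get $Pe_1\in\kk e_1$ and that $P$ preserves $\{y=0\}$, is harmless but unnecessary. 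The equality of rank-one matrices $x\,(Pe_1)(y\circ P^{-1})=x'\,e_1 y$ already forces both.
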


\begin{proof} $ $

$\ref{G3tConjG3tprim}\Rightarrow \ref{tprimeis}$:
For each $s\in \kk$, we write $M_s=\begin{bsmallmatrix} 0 & s& 0\\ 0 & 0 & 0\\ 0& 0 & 0\end{bsmallmatrix}$.
There is $A\in \GL(3,\kk)$ whose class conjugates $G_{3,t}$ to $G_{3,t'}$. For each $s\in \{1,t,t+1\}$, there is $s'\in \{1,t',t'+1\}$ and $\mu\in \kk^\times$ such that $\mu (I_3+M_{s'})=A(I_3+M_s) A^{-1}=I_3 +AM_sA^{-1}$. As $1$ is the only eigenvalue of $I_3+M_s$, we find $\mu=1$ and thus: \[\begin{bsmallmatrix} s'a_{21} & s'a_{22}& s'a_{23}\\ 0 & 0 & 0\\ 0& 0 & 0\end{bsmallmatrix}=M_{s'}A=AM_s=\begin{bsmallmatrix} 0 & s a_{11}& 0\\ 0 & sa_{21} & 0\\ 0& sa_{31} & 0\end{bsmallmatrix}.\]
so $A=\begin{bsmallmatrix} a_{11} & a_{12}& a_{13}\\ 0 & a_{22}& 0\\ 0 & a_{32}& a_{33}\end{bsmallmatrix}$, with $a_{11}a_{22}\not=0$ and $\lambda=\frac{s'}{s}=\frac{a_{22}}{a_{11}}$ does not depend on $s$. Hence we have $\{1,t',t'+1\}=\{\lambda,\lambda t, \lambda(t+1)\}$ with $\lambda \in \kk^\times$.
We conclude using Lemma \ref{klein100bislemme}.

$\ref{tprimeis}\Rightarrow\ref{G3tConjG3tprim}$: Using Lemma \ref{klein100bislemme}, we have $\lambda\in \kk^\times$ such that $\{1,t',t'+1\}=\{\lambda,\lambda t, \lambda(t+1)\}$ for some $\lambda \in \kk^\times$. Then the element $\begin{bsmallmatrix} \lambda & 0& 0\\ 0 & 1& 0\\ 0 & 0& 1\end{bsmallmatrix} \in \PGL(3,\kk)$ conjugates $G_{3,t}$ to $G_{3,t'}$.
\end{proof}

\newpage

 \section{Conjugation of the Klein subgroups $G_1$ and $G_{3,t}$ by birational map} \label{section53} $ $

We fix a scalar $t \in \kk \setminus \left\{0,1\right\}$.

In this section, we show that the subgroups $G_1$ and $G_{3,t}$ (from Definition \ref{definitionkleinP2}) are conjugate by a birational map via an \textit{ex machina} proof. The reader will find a discussion on the geometric intuition behind this approach in Remark \ref{ex machina} below.

\bigskip

\begin{defi} $ $

\begin{enumerate}
\item Let $\kappa_t = \left[\begin{smallmatrix}  t & 1 & 0  \\
0 &  0 & 1 \\
1 & 0 & 0
\end{smallmatrix} \right] \in \Aut(\mathbb{P}^2)$ with inverse given by $\kappa_t^{-1} = \left[\begin{smallmatrix}  0 & 0 & 1  \\
1 &  0 & t \\
0 & 1 & 0
\end{smallmatrix} \right]$.

\item Let $\omega_t \in \Bir(\mathbb{P}^2)$ given by:

 $$\begin{array}{ccccc}
\omega_t & : & \mathbb{P}^2 & \DashedArrow & \mathbb{P}^2 \\
 & & [x:y:z] & \DashedArrow & [xy:y^2:t(t+1)zy + x(x+y)]  \\
\end{array}$$
With inverse given by: 
 $$\begin{array}{ccccc}
\omega_t^{-1} & : & \mathbb{P}^2 & \DashedArrow & \mathbb{P}^2 \\
 & & [x:y:z] & \DashedArrow & [xy:y^2:\dfrac{zy+x(x+y)}{t(t+1)}]  \\
\end{array}$$
\end{enumerate}
\end{defi}

%
%

\begin{lemme} \label{explicitconjg1g3t} $ $
Let $s \in \{1,t\}$. Then we have:
$$\begin{array}{rcl} \kappa_t \circ  [x:y+z:z] &=&  [x+y:y:z] \circ  \kappa_t \\
\kappa_t \circ  [x+z:y:z] & = & [x+ty:y:y+z] \circ  \kappa_t \\
\omega_t \circ  [x+sy:y:y+z] & = & [x+sy:y:z]  \circ  \omega_t \end{array}$$
\end{lemme}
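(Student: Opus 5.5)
All three identities are equalities of (birational) self-maps of $\mathbb{P}^2$, so the plan is a direct computation in homogeneous coordinates. For each identity I evaluate both sides on a general point $[x:y:z]$ and show the resulting triples of homogeneous polynomials are proportional (in fact equal). Throughout, $\car(\kk)=2$ is used systematically: signs disappear, $2=0$, and $(a+b)^2=a^2+b^2$.

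\textbf{The two identities involving $\kappa_t$.} These are linear, so I would verify them as matrix identities in $\PGL(3,\kk)$. Concretely, I check that
\[
\kappa_t\left[\begin{smallmatrix}1&0&0\\0&1&1\\0&0&1\end{smallmatrix}\right]
=\left[\begin{smallmatrix}1&1&0\\0&1&0\\0&0&1\end{smallmatrix}\right]\kappa_t
\quad\text{and}\quad
\kappa_t\left[\begin{smallmatrix}1&0&1\\0&1&0\\0&0&1\end{smallmatrix}\right]
=\left[\begin{smallmatrix}1&t&0\\0&1&0\\0&1&1\end{smallmatrix}\right]\kappa_t .
\]
Equivalently, I compute $\kappa_t[x:y:z]=[tx+y:z:x]$ and compare the composites pointwise. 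For the first identity, $\kappa_t[x:y+z:z]=[tx+y+z:z:x]$, and applying $[x+y:y:z]$ to $[tx+y:z:x]$ gives the same triple. The second identity is handled in the same way: both sides give $[tx+y+tz:z:x+z]$. These are one-line checks.

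\textbf{The identity involving $\omega_t$.} Here I write $L_s=[x+sy:y:y+z]$ and $R_s=[x+sy:y:z]$ and compute $\omega_t\circ L_s$ and $R_s\circ\omega_t$ as triples of quadratic forms.
\begin{itemize}
\item The first two coordinates agree immediately on both sides: $(x+sy)y$ and $y^2$.
\item In the third coordinate, expanding in characteristic $2$ gives
\[
t(t+1)(y+z)y+(x+sy)(x+(s+1)y)=t(t+1)zy+x(x+y)+\bigl(t(t+1)+s(s+1)\bigr)y^2 .
\]
\item The right-hand side $R_s\circ\omega_t$ has third coordinate $t(t+1)zy+x(x+y)$.
\end{itemize}
So the identity reduces to the scalar condition $t(t+1)=s(s+1)$ in $\kk$.

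\textbf{Main obstacle.} The delicate step is this final scalar condition. For $s=t$ it holds trivially. For $s=1$, however, the condition reads $t(t+1)=0$, which is excluded since $t\notin\{0,1\}$. I would therefore recheck this case most carefully:
\begin{itemize}
\item first confirm the expansion of $\omega_t$ composed with the translation, in case I have misread the formula for $\omega_t$;
\item then check whether the intended generator for $s=1$ should be $[x+(t+1)y:y:y+z]$, which corresponds to the third nontrivial element of $G_{3,t}$; this makes no difference for the subsequent conjugacy application, because $G_{3,t}$ is generated by any two of its nontrivial elements.
\end{itemize}
Finally, since $\omega_t$ is only birational, I would note that equality of the polynomial triples on a dense open set suffices. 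I would also confirm that the stated $\omega_t^{-1}$ composes with $\omega_t$ to the identity, by the same kind of expansion.
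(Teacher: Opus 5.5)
Your computations are correct, and your approach (direct expansion in coordinates) is the same as the paper's, whose proof is just ``Direct computations.'' The problem you flagged for $s=1$ is a genuine error in the statement, not a misreading on your part.

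With $\omega_t[x:y:z]=[xy:y^2:t(t+1)zy+x(x+y)]$ exactly as defined, both composites have identical, nonzero first and second coordinates, namely $xy+sy^2$ and $y^2$. So the only possible proportionality factor is $1$. The third coordinates differ by $\bigl(t(t+1)+s(s+1)\bigr)y^2=(s+t)(s+t+1)\,y^2$ in characteristic $2$. Hence the third identity holds exactly for $s\in\{t,t+1\}$ and fails for $s=1$, since $t\notin\{0,1\}$. Concretely, at $[0:1:0]$ the left side gives $[1:1:t(t+1)]$ and the right side gives $[1:1:0]$. The statement as written therefore cannot be proved, and the paper's ``direct computation'' overlooks this.

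What Corollary~\ref{g13conj} actually needs concerns the group $\kappa_tG_1\kappa_t^{-1}=\{\mathrm{id},[x+y:y:z],[x+ty:y:y+z],[x+(t+1)y:y:y+z]\}$. By the first identity, this group contains $[x+y:y:z]$, not $[x+y:y:y+z]$. Two corrections work:

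\begin{enumerate}
\item Take $s\in\{t,t+1\}$, as you propose. The two elements $[x+ty:y:y+z]$ and $[x+(t+1)y:y:y+z]$ generate $\kappa_tG_1\kappa_t^{-1}$, and their images $[x+ty:y:z]$ and $[x+(t+1)y:y:z]$ generate $G_{3,t}$.
\item Keep $s=t$ and replace the $s=1$ instance by $\omega_t\circ[x+y:y:z]=[x+y:y:z]\circ\omega_t$. This holds because the same expansion for $[x+sy:y:z]$ gives a third-coordinate discrepancy of $s(s+1)y^2$, which vanishes for $s=1$.
\end{enumerate}

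Either way, the conclusion $\omega_t\kappa_tG_1=G_{3,t}\,\omega_t\kappa_t$ of the corollary remains true. Only the lemma's $s=1$ case needs correcting.
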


\begin{proof} $ $
Direct computations.
\end{proof}

\begin{corr} \label{g13conj} $ $
The subgroups $G_1$ and $G_{3,t}$ are conjugate by a birational map.
\end{corr}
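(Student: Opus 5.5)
The plan is to exhibit an explicit birational map conjugating $G_{3,t}$ to $G_1$, assembled from $\kappa_t$ and $\omega_t$. The candidate is $\psi = \omega_t \circ \kappa_t$ (or its inverse, depending on direction), and everything rests on Lemma \ref{explicitconjg1g3t}.

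First I would rewrite the generators in the notation of that lemma. The generators of $G_1$ are $[x:y+z:z]$ and $[x+z:y:z]$, and those of $G_{3,t}$ are $[x+y:y:z]$ and $[x+ty:y:z]$. By Lemma \ref{explicitconjg1g3t}, conjugation by $\kappa_t$ sends $[x:y+z:z]$ to $[x+y:y:z]$ and $[x+z:y:z]$ to $[x+ty:y:y+z]$. This only gives $\kappa_t G_1 \kappa_t^{-1} = \langle [x+y:y:z], [x+ty:y:y+z]\rangle$, so one generator does not yet have the required form.

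Next I would apply $\omega_t$. The third identity with $s=t$ sends $[x+ty:y:y+z]$ to $[x+ty:y:z]$. For $[x+y:y:z]$, I would write it as the product of $[x+y:y:y+z]$ (the case $s=1$, which $\omega_t$ sends to $[x+y:y:z]$) and $[x:y:y+z]$. Alternatively, and more simply, I would note that
\[
\kappa_t G_1 \kappa_t^{-1} = \langle [x+ty:y:y+z], [x+(t+1)y:y:y+z]\rangle,
\]
since $[x+y:y:z]\circ[x+ty:y:y+z] = [x+(t+1)y:y:y+z]$.

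This second description exposes the only real obstacle: the third identity is stated only for $s\in\{1,t\}$, while this generating set needs $s=t+1$. I would handle it by a direct check that $\omega_t\circ[x+sy:y:y+z] = [x+sy:y:z]\circ\omega_t$ holds for $s\in\{t,t+1\}$. Concretely, $\omega_t$ sends $[x+sy:y:y+z]$ to $[(x+sy)y : y^2 : t(t+1)(y+z)y + (x+sy)(x+sy+y)]$. Its third coordinate differs from that of $\omega_t$ by $y^2\bigl(t(t+1)+s(s+1)\bigr)$ plus $xy$-terms, and in characteristic $2$ these terms cancel exactly when $s^2+s = t^2+t$, that is, when $s\in\{t,t+1\}$. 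Equivalently, I can use the two stated cases $s=1$ and $s=t$ and combine them by group closure, taking the products that the group structure provides. In either form, $\psi = \omega_t\kappa_t$ sends the two generators of $G_1$ to $[x+ty:y:z]$ and $[x+(t+1)y:y:z]$, which generate $G_{3,t}$. Hence $\psi G_1 \psi^{-1} = G_{3,t}$, and $\psi$ is birational because $\omega_t$ has the explicit inverse given in the definition.
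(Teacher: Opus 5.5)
Your main argument is correct and follows the paper's route: the conjugator is $\omega_t\circ\kappa_t$, and everything rests on the identities of Lemma~\ref{explicitconjg1g3t}. Your direct check is in fact more accurate than the paper. In characteristic $2$, the third coordinate of $\omega_t\circ[x+sy:y:y+z]$ is $t(t+1)yz+x^2+xy+\bigl(t(t+1)+s(s+1)\bigr)y^2$, and its first two coordinates coincide with those of $[x+sy:y:z]\circ\omega_t$. So the third identity holds exactly for $s\in\{t,t+1\}$. It does not hold for $s\in\{1,t\}$, which is what the lemma and the one-line proof of the corollary (``taking $s=1$ and $s=t$'') state. For $s=1$ it genuinely fails: at $[0:1:0]$ the two sides give $[1:1:t(t+1)]$ and $[1:1:0]$, and $t(t+1)\neq 0$. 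Your generating set $\kappa_tG_1\kappa_t^{-1}=\langle[x+ty:y:y+z],[x+(t+1)y:y:y+z]\rangle$, used with $s=t$ and $s=t+1$, is the correct version of the argument. Equivalently, $\omega_t$ commutes with $[x+y:y:z]$, which is what the generator $[x:y+z:z]$ actually requires.

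You should therefore delete everything in your proposal that leans on the stated case $s=1$. This includes your first attempt, which writes $[x+y:y:z]$ via $[x+y:y:y+z]$ and $[x:y:y+z]$; the case $s=0$ needed there fails too. It also includes the sentence ``Equivalently, I can use the two stated cases $s=1$ and $s=t$'', which your own computation refutes, since $1^2+1=0\neq t^2+t$.

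One wording fix as well. $\psi$ sends the listed generators $[x:y+z:z]$ and $[x+z:y:z]$ of $G_1$ to $[x+y:y:z]$ and $[x+ty:y:z]$. The elements sent to $[x+ty:y:z]$ and $[x+(t+1)y:y:z]$ are $[x+z:y:z]$ and $[x+z:y+z:z]$. Either pair generates, so $\psi G_1\psi^{-1}=G_{3,t}$ holds as you conclude.
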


\begin{proof} $ $
By taking $s = 1$ and $s = t$ in Lemma \ref{explicitconjg1g3t}, we get $\omega_t \circ \kappa_t \circ G_1 = G_{3,t} \circ \omega_t \circ \kappa_t$.
\end{proof}

\bigskip

\begin{remark} \label{ex machina} $ $
The reader may find this proof somewhat unmotivated, as it merely exhibits an explicit conjugacy without detailing its derivation. To construct this conjugation from scratch, one should observe that these two Klein subgroups are birational to subgroups of the de Jonquières group, and then either applying already proven results regarding the conjugacy of subgroups within the de Jonquières group to get the existence of such a conjugation 
(Proposition~\ref{cohomologie0}), or find an explicit de Jonquières conjugation by hand.
\end{remark} %

\bigskip

\begin{remark} $ $
It is also instructive to decompose the conjugation $\omega_t$ into Sarkisov links.
 We obtain the following decomposition:


 \[
\begin{tikzcd}
\mathbb{P}^2 \ar[r, dashed, "\beta"'] & \mathbb{F}_1 \ar[r, dashed, "\alpha"'] & \mathbb{F}_2 \ar[r, "\Omega_t"', "\sim"] & \mathbb{F}_2 \ar[r, dashed, "\alpha^{-1}"'] & \mathbb{F}_1 \ar[r, dashed, "\beta^{-1}"'] & \mathbb{P}^2
\end{tikzcd}
\]

Where:



$$\begin{array}{ccccc}
\alpha  & : & \mathbb{F}_1 & \DashedArrow & \mathbb{F}_2 \\
 & & [x_0:x_1;y_0:y_1] & \DashedArrow & [x_0 : x_1 y_0 ; y_1 : y_0] \\ \\
\beta  & : & \mathbb{P}^2 & \DashedArrow & \mathbb{F}_1 \\
 & & [x:y:z] & \DashedArrow & [1: z ; y : x] \\ \\
\Omega_t  & : & \mathbb{F}_2 & \xrightarrow{\sim} & \mathbb{F}_2 \\
 & & [x_0: x_1 ; y_0 : y_1] & \mapsto & [x_0:t(t+1)x_1 + x_0 y_0 (y_0 + y_1);y_0:y_1]\\
 \end{array}$$

\end{remark}

\newpage

\section{The non-conjugation of the Klein subgroups $G_1$ and $G_2$ by birational map} \label{section54} $ $

This section is organized into four subsections. Subsections \ref{631tashkent},  \ref{632tashkent}, and  \ref{633tashkent} are mutually independent and serve to establish several preliminary lemmas. These results will subsequently be used to prove Proposition  \ref{klein1000}, the major result of Subsection  \ref{634tashkent}, which leads directly to Corollary \ref{klein1000corr}.


\subsection{Results} $ $

\begin{corr} \label{klein1000corr} $ $
The subgroups $G_1$ and $G_2$ are not conjugate by a birational map.
\end{corr}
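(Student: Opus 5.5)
We need to prove that $G_1$ and $G_2$ are not birationally conjugate in characteristic $2$. Note that Kollár's fixed-point criterion (used for $\car(\kk)\neq p$) does not apply here: both groups have fixed points. Indeed, $G_1$ fixes the line $\{z=0\}$ pointwise and $G_2$ fixes only $[1:0:0]$, and the Remark after Theorem \ref{theorem3} says fixed points do not decide the question. We therefore need a birational invariant that distinguishes the two groups.

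The candidate invariant is the presence of a non-trivial element fixing pointwise a curve. Each non-trivial element of $G_1$ fixes a line, since all three fix $\{z=0\}$. By contrast, $G_2$ contains $\left[\begin{smallmatrix}1&1&1\\0&1&0\\0&0&1\end{smallmatrix}\right]$, whose fixed locus is the line $\{y+z=0\}$. So the naive invariant "some element fixes a curve" does not separate them, and we must refine it. The refinement is to ask for a curve fixed pointwise by the \emph{whole group}, or, more invariantly, for the structure of the subgroup of elements fixing a common curve. For $G_1$ the entire group fixes the line $\{z=0\}$ pointwise, whereas for $G_2$ the fixed curves of the three involutions are the three distinct lines $\{y=0\}$, $\{z=0\}$, $\{y+z=0\}$ through $[1:0:0]$. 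The key point is that "a curve (of any genus, in particular a rational one) fixed pointwise by all of $G$" is preserved by $G$-equivariant birational maps, except when the curve is contracted.

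Rational fixed curves can be contracted, so this needs care, and I would handle it with the Sarkisov-type argument below. The plan has three steps.
\begin{enumerate}
\item Suppose $\varphi$ conjugates $G_1$ to $G_2$. Take a $G_1$-equivariant resolution $X\to\PP^2$, $X\to\PP^2$ of $\varphi$, obtained by blowing up $G$-orbits of base points; this is possible by Proposition \ref{prop1} Assertion \eqref{Existence}.
\item On $X$, the strict transform of $\{z=0\}$ is still fixed pointwise by $G_1$. Under the second contraction it either maps onto a curve fixed pointwise by $G_2$, which is impossible because $G_2$ has only one fixed point, or it is contracted to a point.
\item The obstacle is therefore to rule out the contraction. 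The plan is to show by induction on the blow-ups that the exceptional curves lying over a point fixed by all of $G_1$ carry a non-trivial, in fact faithful, action of the unipotent group on the normal directions. The argument goes through the local tangent action: at a point $p$ of the fixed line, $G_1$ acts on $T_p$ by transvections with common fixed direction the tangent to the line. Consequently the strict transform of a pointwise-fixed curve keeps a pointwise-fixed neighbourhood structure, and the exceptional divisor over $p$ is not pointwise fixed.
\end{enumerate}
The argument should track the divisorial valuation associated with the fixed curve: it is a $G_1$-invariant valuation whose residue field is acted on trivially by all of $G_1$. This property is birationally invariant because it concerns $\kk(\PP^2)$ only. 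For $G_2$, no divisorial valuation has residue action trivial for the whole group. Valuations centered on a curve are excluded because $G_2$ fixes only one point. Valuations centered at $[1:0:0]$ are excluded by a computation in the blow-up tower at $[1:0:0]$, where $G_2$ acts on $\PP(T_{[1:0:0]})$ through $y\mapsto y+z$ and $z\mapsto z+\dots$. This last step, excluding an infinitely near divisor over $[1:0:0]$ fixed pointwise by all of $G_2$, is the hard part.

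I expect that step to be the main obstacle, and I would attack it first by explicit local computation in the affine chart $x=1$. There $G_2$ acts as the translations $(y,z)\mapsto(y+a,z+b)$ after normalizing, which gives the induced actions on the successive blow-ups. Each translation is a free action on $\mathbb{A}^2$ minus a point, so no exceptional divisor is pointwise fixed by the whole group. Combining this with steps 1–3 yields the contradiction.
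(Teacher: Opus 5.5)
Your valuative invariant is a legitimate birational invariant, but it does not separate the two groups. The step you single out as the hard part is not hard; it is false. Every element $[x:y:z]\mapsto[x+ay+bz:y:z]$ of $G_2$ leaves $y$ and $z$ unchanged, so it preserves each line $\{\alpha y+\beta z=0\}$ through $p=[1:0:0]$; it is an elation with centre $p$.

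In the chart $x=1$ it reads $(y,z)\mapsto\bigl(y/(1+ay+bz),\,z/(1+ay+bz)\bigr)$. This map fixes the origin $p$ and has identity differential there, so it is not a translation. Hence $G_2$ acts trivially on $\PP(T_p\PP^2)$, and the exceptional curve $E$ of $\mathrm{Bl}_p\PP^2\simeq\mathbb{F}_1$ is fixed pointwise by all of $G_2$. This lifted group is exactly the paper's $K_2$, which is the identity on the $(-1)$-curve $\{x_0=0\}$.

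So $\mathrm{ord}_E$ is a $G_2$-invariant divisorial valuation with trivial residue action, just as $\mathrm{ord}_L$, with $L=\{z=0\}$, is for $G_1$. Dually, $G_1$ consists of elations with a common axis and $G_2$ of elations with a common centre. In your step 2 nothing prevents $L$ from being contracted onto $p$ and matched with $E$, so the plan cannot produce a contradiction. Your step 3 is also inaccurate: $G_1$ does not act faithfully on $T_q$ for $q\in L$, since the elation with centre $q$ acts trivially there.

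Natural local refinements do not obviously repair this. For instance, $G_2$ acts trivially to first order along $E$: in suitable coordinates $g^*\pi=\pi/(1+\pi\ell)$ and $g^*s=s$. But so does $G_1$ along the exceptional curve over the point $\tilde L\cap E_q$ of $\mathrm{Bl}_q\PP^2$, where $q\in L$ and $E_q$ is the exceptional curve.

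The paper argues globally with the Sarkisov program (Proposition \ref{klein1000}):
\begin{itemize}
\item Starting from $(G_2,\PP^2)$, the link to $dP_5$ is impossible, because $G_2$ has no orbit of four points in general position.
\item Blowing up the fixed point gives $K_2$, which acts trivially on the base of $\mathbb{F}_1$ and has all its fixed points on the $(-1)$-curve. So chains of elementary transformations return to $\mathbb{F}_1$ and collapse to automorphisms (Lemma \ref{klein10000}).
\item A chain through $\mathbb{F}_0$ can only return to $\PP^2$ by contracting onto an orbit of size two, which $G_1$ and $G_{3,t}$ do not have.
\end{itemize}
Thus every birational conjugate of $G_2$ lying in $\Aut(\PP^2)$ is already $\Aut(\PP^2)$-conjugate to $G_2$, and Proposition \ref{klein100} concludes.
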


\begin{proof} $ $
According to Proposition \ref{klein100}, these two subgroups are not conjugate by an automorphism of $\mathbb{P}^2$. Hence according to Proposition \ref{klein1000}, they are not conjugate by a birational map. \\
\end{proof}

\subsection{Notations} $ $


We we will need the two following Klein subgroups of automorphisms of Hirzebruch surfaces for our study:

\begin{defi} $ $
We define the following Klein subgroups of birational maps:
\[ K_2 = \langle \begin{array}{cccc}
& [x_0 : x_1 ;y_0 : y_1] & \mapsto & [x_0 : x_1 + x_0 y_0 ;y_0:y_1] , \\
& [x_0 : x_1 ;y_0 : y_1] & \mapsto & [x_0 : x_1 + x_0 y_1 ;y_0:y_1] 
 \end{array}
 \rangle
 \subset \Aut(\mathbb{F}_1)  \]
 \[ R_2 = \langle \begin{array}{cccc}
& ( [x_0  : x_1] , [y_0  : y_1]) & \mapsto & ([x_0 + x_1 : x_1] , [y_0 + y_1 : y_1]) , \\
& ( [x_0  : x_1] , [y_0  : y_1]) & \mapsto & ([y_0  : y_1] , [x_0  : x_1]) \
 \end{array}
 \rangle
  \subset  \Aut(\mathbb{F}_0)  \]
\end{defi}

We refer to Definition \ref{sublinks} for the notions of links of type $(II,\mathbb{D})$ and $(II,\mathbb{C})$.

\subsection{Sarkisov links of type $I$ for $G_2$} \label{631tashkent} $ $

\begin{lemme} \label{klein10001} $ $

Let $\beta : \mathbb{P}^2 \DashedArrow \mathbb{F}_1$ be a link of type $I$ that is the blow-up of a fixed point of $G_2$. 

Then $\beta G_2 \beta^{-1}$ and $K_2$ are conjugate by an automorphism of $\mathbb{F}_1$.
\end{lemme}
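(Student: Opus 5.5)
By Proposition \ref{kleininchar2} Assertion \ref{kleininchar24}, the point $[1:0:0]$ is the only fixed point of $G_2$. So $\beta$ is, up to an automorphism of $\mathbb{F}_1$, the blow-up of $[1:0:0]$. Any two such blow-ups differ by an automorphism of $\mathbb{F}_1$ that commutes with the projection. It therefore suffices to fix one explicit model of this blow-up and compute the conjugate of the two generators of $G_2$.

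I would use the coordinates from the Remark after Corollary \ref{g13conj}, where $\mathbb{F}_1$ has coordinates $[x_0:x_1;y_0:y_1]$. There I would take the blow-up map
\[
\mathbb{F}_1 \to \mathbb{P}^2,\qquad [x_0:x_1;y_0:y_1]\mapsto [x_1:x_0y_0:x_0y_1].
\]
Here $x_0=0$ is the exceptional curve over $[1:0:0]$, and $[y_0:y_1]$ is the pencil of lines through $[1:0:0]$. Call its inverse $\beta$. The generators of $G_2$ are $g=[x+y:y:z]$ and $h=[x+z:y:z]$ (recall $\car(\kk)=2$). Pulling back, $g$ sends $x_1\mapsto x_1+x_0y_0$ and fixes the other coordinates. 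Likewise $h$ sends $x_1\mapsto x_1+x_0y_1$. These are exactly the two generators of $K_2$, so $\beta G_2\beta^{-1}=K_2$ on the nose for this choice.

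For a general $\beta$, I write $\beta=\psi\circ\beta_0$, where $\beta_0$ is the model above and $\psi\in\Aut(\mathbb{F}_1)$. This holds because any two blow-ups of the same point agree up to an isomorphism of the blown-up surfaces. Hence $\beta G_2\beta^{-1}=\psi K_2\psi^{-1}$, which is what we want.

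The only delicate point is checking that the explicit map is a morphism and realises the blow-up in the paper's weighted-coordinate convention for $\mathbb{F}_1$. One has to confirm that the scaling $(x_0,x_1,y_0,y_1)\sim(\lambda x_0,\lambda\mu^{-1}... )$ makes the formula well defined. If the convention differs, I would adjust the formula by swapping the roles of $x_0$ and $x_1$. The computation of the conjugated generators is routine.
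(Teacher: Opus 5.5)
Your proposal is correct and matches the paper's proof. The paper likewise uses that $[1:0:0]$ is the unique fixed point of $G_2$ and normalises $\beta$, up to an automorphism of $\mathbb{F}_1$, so that $\beta^{-1}([x_0:x_1;y_0:y_1])=[x_1:x_0y_0:x_0y_1]$. It then checks directly that the two generators of $G_2$ conjugate to the two generators of $K_2$.
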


\begin{proof} $ $

The only fixed point of $G_2$ is $[1:0:0]$. 
Hence, up to composition by an automorphism of $\mathbb{F}_1$, we can assume that:
$$\begin{array}{ccccc}
\beta  & : & \mathbb{P}^2 & \DashedArrow & \mathbb{F}_1 \\
 & & [x:y:z] & \DashedArrow & [1: x ; y : z] \\
 \end{array}$$
And:
$$\begin{array}{ccccc}
\beta^{-1}  & : & \mathbb{F}_1 & \DashedArrow &\mathbb{P}^2  \\
 & &  [x_0:x_1;y_0:y_1] & \DashedArrow & [x_1: x_0 y_0 : x_0 y_1 ] \\
 \end{array}$$

Then $\beta G_2 \beta^{-1} = \langle \,
\begin{aligned}
    [x_0 : x_1 ; y_0 : y_1] &\mapsto [x_0 : x_1 + y_0 x_0 ; y_0 : y_1], \\
    [x_0 : x_1 ; y_0 : y_1] &\mapsto [x_0 : x_1 + y_1 x_0 ; y_0 : y_1]
\end{aligned}
\, \rangle = K_2. $

\end{proof}


\subsection{Sarkisov links of type $(II,\mathbb{C})$ from $K_2$} \label{632tashkent} $ $

\begin{defi} $ $

We define the subgroup of elements in $\Aut(\mathbb{F}_1)$ that fix the base: $$F = \{[x_0 : x_1 ; y_0 : y_1] \mapsto [x_0 : x_1 + x_0 p(y_0,y_1) ; y_0 : y_1] | p \in \kk[y_0,y_1]_1 \} \subset \Aut(\mathbb{F}_1)$$

We define the following map: $$\begin{array}{ccccc}
\iota  & : & \mathbb{A}^2 & \xhookrightarrow{} &\mathbb{F}_1  \\
 & &  (x,y) & \mapsto & [1: x ; 1 : y ] \\
 \end{array}$$
 
 Finally we define $F' = \iota^{-1} \circ F \circ \iota$.
\end{defi}

\begin{remark} $ $
We have: $$F' = \{\left[\begin{smallmatrix} 1 & a x + b   \\
0 &  1
\end{smallmatrix} \right]  | a,b \in \kk \} \subset \PGL(2,\kk(x)) \subset \J$$
\end{remark}

\begin{lemme} \label{klein10000lemme} $ $

Let $f_1' = \left[\begin{smallmatrix}  1 & a_1 x + b_1  \\
0 &  1
\end{smallmatrix} \right] , f_2' = \left[\begin{smallmatrix}  1 & a_2 x + b_2  \\
0 &  1
\end{smallmatrix} \right] \in F' \setminus \{ \left[\begin{smallmatrix}  1& 0 \\
0 &  1
\end{smallmatrix} \right]  \}$ where $a_1, b_1, a_2, b_2 \in \kk^\times$. Let $\left[\begin{smallmatrix}  \alpha & \beta   \\
\gamma &  \delta
\end{smallmatrix} \right] \in \PGL(2,\kk(x))$ such that $f_2' = \left[\begin{smallmatrix}  \alpha & \beta   \\
\gamma &  \delta
\end{smallmatrix} \right] f_1' \left[\begin{smallmatrix}  \alpha & \beta   \\
\gamma &  \delta
\end{smallmatrix} \right]^{-1}$.
Then $\alpha \delta \neq 0$ and $a_2 x + b_2 = \dfrac{\alpha}{\delta} (a_1 x + b_1) $.
\end{lemme}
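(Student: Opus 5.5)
We compare the two sides of the conjugacy relation directly as matrices in $\GL(2,\kk(x))$. Write $M=\left[\begin{smallmatrix} \alpha & \beta \\ \gamma & \delta \end{smallmatrix}\right]$ and $u_i = a_i x + b_i$ for $i=1,2$, so that $f_i' = \left[\begin{smallmatrix} 1 & u_i \\ 0 & 1 \end{smallmatrix}\right]$. The relation $f_2' = M f_1' M^{-1}$ holds in $\PGL(2,\kk(x))$. It therefore lifts to an equality $M f_1' = \lambda f_2' M$ in $\GL(2,\kk(x))$ for some $\lambda \in \kk(x)^\times$.

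First, $\lambda = 1$. Both $f_1'$ and $f_2'$ are unipotent with the single eigenvalue $1$, and conjugation preserves the eigenvalue, so $\lambda = 1$. This is the same argument as in the proof of Proposition~\ref{klein100bis}.

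Next, we expand $M f_1' = f_2' M$:
\[
M f_1' = \left[\begin{smallmatrix} \alpha & \alpha u_1 + \beta \\ \gamma & \gamma u_1 + \delta \end{smallmatrix}\right], \qquad f_2' M = \left[\begin{smallmatrix} \alpha + u_2 \gamma & \beta + u_2 \delta \\ \gamma & \delta \end{smallmatrix}\right].
\]
Comparing entries gives three conditions:
\begin{itemize}
\item the $(1,1)$ entry gives $u_2 \gamma = 0$;
\item the $(2,2)$ entry gives $\gamma u_1 = 0$;
\item the $(1,2)$ entry gives $\alpha u_1 = u_2 \delta$.
\end{itemize}

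Since $f_1'$ and $f_2'$ are nontrivial, $u_1, u_2 \neq 0$ in $\kk(x)$, so $\gamma = 0$. This is also exactly Lemma~\ref{buda3}\eqref{bla2} applied over $\KK = \kk(x)$: any matrix conjugating one nontrivial element of $\Delta_{\KK}$ to another lies in $U_{\KK}$. Invertibility of $M$ with $\gamma = 0$ forces $\alpha\delta \neq 0$. The last condition then gives $a_2 x + b_2 = \frac{\alpha}{\delta}(a_1 x + b_1)$, which is the claim. Equivalently, one may conjugate by the diagonal matrix as in Lemma~\ref{buda3}\eqref{bla1}.

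There is no real obstacle. The only point needing care is the passage from $\PGL$ to $\GL$, which introduces the scalar $\lambda$; it is settled by the eigenvalue argument above. The hypothesis $a_i, b_i \in \kk^\times$ is used only to ensure $u_i \neq 0$.
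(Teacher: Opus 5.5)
Your proof is correct and is essentially the paper's argument: a direct matrix computation that forces $\gamma = 0$ (hence $\alpha\delta \neq 0$), followed by reading off the upper-right entry to get $a_2x+b_2 = \frac{\alpha}{\delta}(a_1x+b_1)$. The only difference is cosmetic. The paper multiplies $f_1'$ on the left by $M$ and on the right by the adjugate of $M$, and uses that a vanishing lower-left entry is unaffected by scaling, so it never has to normalize the scalar $\lambda$ as your eigenvalue step does.
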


\begin{proof} $ $
We write $f_2' = \left[\begin{smallmatrix}  \alpha & \beta   \\
\gamma &  \delta
\end{smallmatrix} \right] \left[\begin{smallmatrix}  1 & a_1 x + b_1   \\
0 &  1
\end{smallmatrix} \right] \left[\begin{smallmatrix}  \delta & - \beta   \\
- \gamma &  \alpha
\end{smallmatrix} \right] = \left[\begin{smallmatrix}  * & *   \\
-\gamma^2 (a_1 x + b_1) &  *
\end{smallmatrix} \right]$, hence $\gamma = 0$.
\newline Hence $f_2' = \left[\begin{smallmatrix}  \alpha & \beta   \\
0 &  \delta
\end{smallmatrix} \right] \left[\begin{smallmatrix}  1 & a_1 x + b_1   \\
0 &  1
\end{smallmatrix} \right] \left[\begin{smallmatrix}  \delta & - \beta   \\
0 &  \alpha
\end{smallmatrix} \right] = \left[\begin{smallmatrix}  \alpha \delta & \alpha^2 (a_1 x + b_1)   \\
0 &  \alpha \delta
\end{smallmatrix} \right]$, hence the result.
\end{proof}

\begin{lemme} \label{klein10000} $ $

If $\sigma : \mathbb{F}_1 \DashedArrow \mathbb{F}_1$ is a birational map that induces the identity on the base and $\sigma K_2 \sigma^{-1} \subset \Aut(\mathbb{F}_1)$, then $\sigma K_2 \sigma^{-1}$ and $K_2$ are conjugate by an automorphism of $\mathbb{F}_1$.
\end{lemme}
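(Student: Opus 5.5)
The plan is to work entirely in the affine chart $\iota$, where $\sigma$ becomes an element of $\PGL(2,\kk(x)) \subset \J$ (it induces the identity on the base) and $K' := \iota^{-1} K_2 \iota$ is the Klein group $\langle \left[\begin{smallmatrix} 1 & 1 \\ 0 & 1 \end{smallmatrix}\right], \left[\begin{smallmatrix} 1 & x \\ 0 & 1 \end{smallmatrix}\right] \rangle \subset F'$. Indeed, $[x_0:x_1+x_0y_0;y_0:y_1]$ becomes $y \mapsto y+1$ after setting $x_0=y_0=1$ (with the roles of coordinates as in $\iota$), and the second generator becomes $y \mapsto y + x$. The conjugate $\sigma K_2 \sigma^{-1}$ lies in $\Aut(\mathbb{F}_1)$ and fixes the base. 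I first want to argue that it lies in $F$. The automorphisms of $\mathbb{F}_1$ acting trivially on the base are the maps $[x_0:x_1;y_0:y_1] \mapsto [x_0 : \lambda x_1 + x_0 p(y_0,y_1); y_0:y_1]$ with $\lambda \in \kk^\times$. An involution of this form in characteristic $2$ must have $\lambda^2=1$, hence $\lambda = 1$. So $\sigma K_2 \sigma^{-1} \subset F$, and in the chart this reads $\sigma K' \sigma^{-1} \subset F'$.

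Next I apply Lemma~\ref{klein10000lemme} to each non-trivial element of $K'$. Writing $\sigma = \left[\begin{smallmatrix} \alpha & \beta \\ \gamma & \delta \end{smallmatrix}\right]$, the lemma (its proof only uses that $f_1'$ is a non-trivial element of $F'$, so I will allow $a_i$ or $b_i$ to vanish) gives $\gamma=0$, $\alpha\delta \neq 0$, and $\sigma f \sigma^{-1}$ equal to the translation by $\frac{\alpha}{\delta}(ax+b)$. Set $r = \alpha/\delta \in \kk(x)^\times$. Then $r \cdot 1$ and $r \cdot x$ must both be polynomials of degree $\le 1$, i.e. in $\kk + \kk x$. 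From $r\cdot x \in \kk + \kk x$ and $r \in \kk + \kk x$ we get $r = c \in \kk^\times$, since $r = u + vx$ gives $rx = ux + vx^2$, which forces $v=0$. Hence $\sigma K' \sigma^{-1} = \langle y\mapsto y+c, \ y \mapsto y + cx\rangle$.

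Finally, the diagonal automorphism $[x_0:x_1;y_0:y_1] \mapsto [x_0: c x_1; y_0:y_1]$ of $\mathbb{F}_1$ (in the chart, $y \mapsto cy$) conjugates $K_2$ onto this group, which is the required automorphism. The main point needing care is the first step: identifying the base-preserving automorphisms of $\mathbb{F}_1$ and checking that the group indeed lands in $F$, using characteristic $2$ to kill the scalar $\lambda$. Everything after that is the direct computation from Lemma~\ref{klein10000lemme}.
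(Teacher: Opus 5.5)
Your proposal is correct and follows the paper's proof. Lemma~\ref{klein10000lemme} shows that $\sigma$ acts on $F \simeq \kk[x]_{\leq 1}$ by multiplication by $\alpha/\delta$. This factor must be a scalar because both $1$ and $x$ are sent into $\kk[x]_{\leq 1}$, and a diagonal automorphism of $\mathbb{F}_1$ then realizes the conjugation. Your extra checks fill in points the paper leaves implicit: that $\sigma K_2 \sigma^{-1}$ actually lands in $F$ (base-trivial involutions in characteristic $2$ have $\lambda = 1$), and that Lemma~\ref{klein10000lemme} only needs $f_1'$ to be non-trivial rather than $a_1, b_1 \in \kk^\times$.
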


\begin{proof} $ $
If we identify the elements of $F$ with $\kk[x]_{\leq 1}$, then $\sigma$ acts by multiplication with an element of $\kk(x)^\times$ on $\kk[x]_{\leq 1}$ by Lemma \ref{klein10000lemme}. Furthermore, if $K_2$ is preserved, then because $K_2$ corresponds to $\{0,1,x,x+1\}$, $\sigma$ has to acts by multiplication by a scalar. Hence $\sigma K_2 \sigma^{-1} = \phi_{\lambda} K_2 \phi_{\lambda}^{-1}$, where $\phi_{\lambda} \in \Aut(\mathbb{F}_1)$ is given by $[x_0 : x_1 ; y_0 : y_1] \mapsto [\lambda x_0 : x_1 ;  y_0 : y_1]$ for some $\lambda \in \kk^\times$.
\end{proof}

\subsection{Sarkisov links of type $(II,\mathbb{D})$ from $(\mathbb{F}_0, \mathbb{D})$} \label{633tashkent} $ $

\begin{lemme} \label{klein10007} $ $ 
\begin{enumerate}
\item \label{klein100071}
Let $G$ be a Klein subgroup of automorphism of $\Aut(\mathbb{F}_0)$. If $rk(Pic(\mathbb{F}_0)^G)=1$ then $G$ is conjugate by an automorphism of $\mathbb{F}_0$ to $R_2$.
\item \label{klein100072}
Let $\beta : \mathbb{F}_0 \DashedArrow \mathbb{P}^2$ be a link of type $(II,\mathbb{D})$ for $R_2$. Then $\beta \circ R_2 \circ \beta^{-1}$ and $G_2$ are conjugate by an automorphism of $\mathbb{P}^2$.
\end{enumerate}
\end{lemme}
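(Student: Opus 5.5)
For Assertion (1), write $G\subset \Aut(\mathbb{F}_0)=(\PGL(2,\kk)\times\PGL(2,\kk))\rtimes\langle s\rangle$ with $\car(\kk)=2$. If $G\subset \PGL(2,\kk)\times\PGL(2,\kk)$, then both rulings are $G$-invariant. Their classes are then two independent invariant classes, which gives $rk(Pic(\mathbb{F}_0)^G)=2$ and contradicts the hypothesis. Hence $G$ is not contained in $\PGL(2,\kk)\times\PGL(2,\kk)$. By Lemma \ref{autP1P12elem} we may conjugate so that $s\in G$ and $G=\langle G',s\rangle$, where $G'$ lies in the diagonal of $\PGL(2,\kk)\times\PGL(2,\kk)$. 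Since $G$ is a Klein group, $G'$ has order $2$. Its generator is $(A,A)$ with $A$ an involution of $\PGL(2,\kk)$. By Proposition \ref{dublin1} Assertion \ref{hukfg100}\ref{hukfg3} ($\kk$ is algebraically closed), $A$ is conjugate to $\left[\begin{smallmatrix}1&1\\0&1\end{smallmatrix}\right]$, and we conjugate by the diagonal element $(P,P)$, which commutes with $s$. This gives $G=R_2$. Conversely $R_2$ swaps the two rulings, so its invariant rank is indeed $1$. This step is routine.

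For Assertion (2), I will use \cite[Theorem 2.6]{isk} to list the links of type $II$ between del Pezzo surfaces that start from $\mathbb{F}_0$ with $rk(Pic)^G=1$. Such a link blows up a $G$-orbit of size $d$ in general position and contracts onto another del Pezzo surface. Since $|R_2|=4$, the orbit has size $1$, $2$ or $4$, and a link landing on $\mathbb{P}^2$ must be the blow-up of one point of $\mathbb{F}_0$ followed by the contraction of the strict transforms of the two rulings through that point. So $\beta$ is the blow-up of a fixed point $q$ of $R_2$ followed by this contraction. In particular $\beta$ is the classical birational map $\mathbb{F}_0\DashedArrow\mathbb{P}^2$, namely projection from $q$ after viewing $\mathbb{F}_0$ as a quadric in $\mathbb{P}^3$.

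Next I compute the fixed points of $R_2$. The fixed points of $s$ form the diagonal $\{x=y\}$. On the diagonal, the element $([x_0+x_1:x_1],[y_0+y_1:y_1])$ fixes only the point $([1:0],[1:0])$, because a unipotent element of $\PGL(2,\kk)$ in characteristic $2$ has a single fixed point. Hence $q=([1:0],[1:0])$ is the unique fixed point, and up to automorphisms of $\mathbb{F}_0$ normalizing $R_2$ the link $\beta$ is unique up to post-composition by $\Aut(\mathbb{P}^2)$. I will write $\beta$ in affine coordinates $x=x_0/x_1$, $y=y_0/y_1$, centred at $q$ after inversion: put $u=1/x$, $v=1/y$, so that $q=(0,0)$. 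A concrete choice is $\beta(x,y)=[x+y:xy:1]$ or similar, using the symmetric functions, which are natural because $s$ swaps $x$ and $y$. Under this map $s$ acts trivially on the coordinates $x+y$ and $xy$, so I will choose a non-symmetric chart instead. With $\beta:(u,v)\mapsto[uv:u:v]$, i.e. projection from $q$ on the quadric $\{X_0X_3=X_1X_2\}$, the element $s$ becomes $[X:Y:Z]\mapsto[X:Z:Y]$. The unipotent element becomes an explicit linear map that fixes the image of $q$'s tangent data.

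The main obstacle is this explicit computation, and verifying that the result has a single fixed point. Once $\beta R_2\beta^{-1}$ is shown to be a Klein subgroup of $\Aut(\mathbb{P}^2)$, I will not match matrices by hand. Instead I will invoke Proposition \ref{kleininchar2} and Proposition \ref{klein100}, which say that it is conjugate to exactly one of $G_1$, $G_2$, $G_{3,t}$. By Proposition \ref{kleininchar2} Assertion \ref{kleininchar24}, $G_2$ is the only one of these with a finite (indeed single) set of fixed points. So it suffices to show that $\beta R_2\beta^{-1}$ has finitely many fixed points. Since $\beta$ is an isomorphism outside $q$ and the two rulings through $q$, the fixed points in that open set correspond to fixed points of $R_2$ away from $q$, of which there are none. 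It then remains to check the images of the exceptional curves: the point $\beta$ of the contracted rulings gives finitely many points, and the exceptional line over $q$ maps to a line on which $R_2$ acts through its action on the tangent space at $q$. On that tangent space $s$ swaps the two ruling directions and the unipotent element acts as a nontrivial unipotent map, so only finitely many points are fixed there. Hence the fixed locus is finite, and therefore $\beta R_2\beta^{-1}$ is conjugate to $G_2$.
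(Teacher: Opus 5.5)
Your proof is correct. Assertion~(1) is the paper's argument: the paper just cites Proposition~\ref{moscou1}, whose proof is exactly Lemma~\ref{autP1P12elem} followed by Proposition~\ref{dublin1}.

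For Assertion~(2) you take a different route.
\begin{itemize}
\item The paper does no computation on $\mathbb{F}_0$. By the shape of the link in \cite[Theorem 2.6]{isk}, the two contracted $(-1)$-curves go to an orbit of size two of $\beta R_2\beta^{-1}$. Proposition~\ref{kleininchar2} Assertion~\ref{kleininchar23} says $G_1$ and $G_{3,t}$ have no such orbit.
\item You use Assertion~\ref{kleininchar24} instead: only $G_2$ has finitely many fixed points. This forces you to show that $q=([1:0],[1:0])$ is the unique fixed point of $R_2$. You must also analyse the action on the line $L$, the image of the exceptional curve.
\end{itemize}
Your route is valid and even shows the fixed locus is a single point. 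But it is longer than the paper's one-line orbit argument. You essentially had that argument in hand: the images of the two contracted rulings are swapped by $s$.

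One claim in your last step is wrong, though it does no harm. The unipotent generator does not act nontrivially on $T_q\mathbb{F}_0$. In the coordinate $u=x_1/x_0$ it is $u\mapsto u/(1+u)$, whose derivative at $0$ is $1$, so it fixes $L$ pointwise. Explicitly, with your chart $\beta(u,v)=[uv:u:v]$ one gets $\beta R_2\beta^{-1}=\langle [X:X+Y:X+Z],[X:Z:Y]\rangle$ and $L=\{X=0\}$. Finiteness on $L$ comes from $s$ alone: it swaps the two ruling directions, and in characteristic $2$ it has exactly one fixed point $[0:1:1]$ on $L$.

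Similarly, the map $[x+y:xy:1]$ you first proposed is the degree-two quotient by $s$, not a birational map, so you were right to discard it.
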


\begin{proof} $ $

\begin{enumerate}
\item
Since $rk(Pic(\mathbb{F}_0)^G)=1$, it implies that $G$ is not included in $\Aut(\mathbb{P}^1) \times \Aut(\mathbb{P}^1)$. Hence we can use Proposition \ref{moscou1} to get that $G = R_2$ up to an automorphism.
\item Let $G = \beta \circ R_2 \circ \beta^{-1} \subset \Aut(\mathbb{P}^2)$. According to Proposition \ref{kleininchar2} Assertion \ref{kleininchar21}, up to automorphism we can assume that $G$ is $G_1$ or $G_2$ or $G_{3,t}$ for some $t \in \kk \setminus \{0,1\}$.  According to \cite[Theorem 2.6]{isk}, such a link can be decomposed as $\beta = \pi_2^{-1} \circ \pi_1$ where $\pi_1^{-1}$ is a blowup of $\mathbb{F}_0$ at a fixed point of $R_2$ and $\pi_2^{-1}$ is a blowup of $\mathbb{P}^2$ at an orbit of size two of $G$. Since $G_1$ and $G_{3,t}$ don't have an orbit of size two (Proposition \ref{kleininchar2} Assertion \ref{kleininchar23}), we get $G = G_2$.
\end{enumerate}
\end{proof}

\subsection{Birational conjugacy of the Klein subgroup $G_2$} \label{634tashkent} $ $



\begin{defi} $ $

We denote $dP_5$ \textit{resp} $dP_6$ the \textit{unique} del Pezzo surface of degree five \textit{resp} six.
\end{defi}

\begin{prop} \label{klein1000} $ $
Let $G$ be a Klein subgroup of $\Aut(\mathbb{P}^2)$.

\noindent If $G_2$ and $G$ are conjugate by a birational map, then they are conjugate in $\Aut(\mathbb{P}^2)$.
\end{prop}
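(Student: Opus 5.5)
We decompose a birational map $\sigma:\mathbb{P}^2 \DashedArrow \mathbb{P}^2$ with $\sigma G_2\sigma^{-1} = G$ into $G_2$-equivariant Sarkisov links, $\sigma = \sigma_n\circ\dots\circ\sigma_1$, as in \cite[Theorem 2.5]{isk}. We then show by induction that every intermediate pair $(G_k,S_k)$ lies in a small closed list, up to automorphism of $S_k$. The list is:
\begin{itemize}
\item $(G_2,\mathbb{P}^2)$;
\item $(K_2,\mathbb{F}_1)$ with its conic bundle structure;
\item $(R_2,\mathbb{F}_0)$ with $\rank \Pic(\mathbb{F}_0)^{R_2}=1$;
\item possibly some pairs on $dP_5$, $dP_6$ or $\mathbb{F}_0$ with a conic bundle.
\end{itemize}
Since $G_k$ is a Klein group in characteristic $2$, its orbits have size $1$, $2$ or $4$. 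Each link is the blow-up of an orbit of size at most $8-K^2$ in general position, followed by contractions. This restricts the possible links at each stage.

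\textbf{Links from $(G_2,\mathbb{P}^2)$.}
\begin{itemize}
\item Type I, blowing up a fixed point: $[1:0:0]$ is the only fixed point, so Lemma \ref{klein10001} gives $K_2$ on $\mathbb{F}_1$ up to automorphism.
\item Type II, blowing up an orbit of size $2$ and landing on $\mathbb{F}_0$: this is the inverse of the situation in Lemma \ref{klein10007}, giving $(R_2,\mathbb{F}_0)$ up to automorphism. Its Picard rank is $1$ because the swap $s$ lies in $R_2$.
\item Blowing up an orbit of size $4$: the orbits of $G_2$ lie on lines through $[1:0:0]$ (Proposition \ref{kleininchar2} Assertion \ref{kleininchar22}). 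So there is no orbit of size $4$ in general position, hence no link to $dP_5$ or to degree $1$ surfaces. Orbits of size $5$ or more do not exist.
\end{itemize}

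\textbf{Links from $(K_2,\mathbb{F}_1)$.}
\begin{itemize}
\item Type III: this returns to $\mathbb{P}^2$ by contracting the exceptional curve. We get $G_2$ back, up to an automorphism of $\mathbb{P}^2$ fixing $[1:0:0]$.
\item Type $(II,\mathbb{C})$ over the identity of the base: Lemma \ref{klein10000} gives back $K_2$ up to automorphism. $K_2$ acts trivially on the base, so any base change must lie in the normalizer of the trivial group, and can be absorbed into an automorphism.
\item Elementary transformations to other $\mathbb{F}_n$: here I would argue that they compose back. The elements of $K_2$ are the unipotent maps $x_1\mapsto x_1+x_0p(y)$ with $p$ linear, and the same parametrization by $\kk[y]_{\le 1}$ and the multiplier argument of Lemmas \ref{klein10000lemme} and \ref{klein10000} apply on any $\mathbb{F}_n$. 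This forces the transformed group to be $\{x_1\mapsto x_1+\lambda x_0 p\}$. When the group returns to an $\mathbb{F}_1$ (or $\mathbb{P}^2$) model, it is again conjugate to $K_2$ (or $G_2$).
\item Type IV: this cannot occur, since $\mathbb{F}_1$ has only one ruling.
\end{itemize}

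\textbf{Links from $(R_2,\mathbb{F}_0)$.}
\begin{itemize}
\item Type II to $\mathbb{P}^2$: Lemma \ref{klein10007} Assertion \ref{klein100072} gives $G_2$.
\item Other links would blow up orbits of size $2$ or $4$ and reach $dP_6$ or $dP_5$, or be type I links from a fixed point to conic bundles on $dP_6$. These must be checked: a Klein group on $dP_6$ or $dP_5$ coming from $R_2$ should either have invariant Picard rank greater than $1$, which kills the link, or lead back to $R_2$ or $G_2$. A cheap way to exclude them is to use orbit structure. I would compute the fixed points and orbits of $R_2$ on $\mathbb{F}_0$ directly. The swap fixes the diagonal and the translation fixes $\{x_1=0\}\times\{y_1=0\}$-type loci, giving a unique fixed point $([1:0],[1:0])$. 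I would also show that no orbit of size $4$ is in general position.
\end{itemize}

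\textbf{Conclusion.} With the list closed under links, the last pair $(G_n,S_n)=(G,\mathbb{P}^2)$ must be of the form $(G_2,\mathbb{P}^2)$ up to automorphism, which proves the claim.

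\textbf{Main obstacle.} I expect the hardest step to be the completeness of the case analysis for links leaving $\mathbb{F}_1$ and $\mathbb{F}_0$. This means controlling all $(II,\mathbb{C})$ links through other Hirzebruch surfaces, and excluding the $dP_5$ and $dP_6$ detours. I would handle it by an invariant, the \emph{unique fixed point plus no general size-$4$ orbit}, which I would try to show is preserved along the allowed links. This mirrors the distinction between $G_2$ and $G_1$ from Proposition \ref{kleininchar2}.
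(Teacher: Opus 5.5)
\textbf{Verdict.} Your skeleton is the paper's: Sarkisov decomposition, induction over links, Lemma~\ref{klein10001} for the link to $\mathbb{F}_1$, and Lemma~\ref{klein10007} for the passage through $\mathbb{F}_0$. However, the step you flag as the main obstacle is a genuine gap, and your proposed fix cannot close it.

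\textbf{Where the argument fails.} From $(R_2,\mathbb{F}_0)$, the type~I link to a conic bundle on $dP_6$ blows up an orbit of size $2$, not a fixed point. $R_2$ has such orbits with the two points on different rulings, e.g.\ $\{(a,a),(a+1,a+1)\}$ in the coordinates $x=x_0/x_1$, $y=y_0/y_1$. Blowing one up gives $dP_6$ with the invariant conic bundle $|H_1+H_2-E_1-E_2|$, so this link really occurs. Your invariant ``unique fixed point, no size-$4$ orbit in general position'' does hold for $R_2$: every size-$4$ orbit lies on the $(1,1)$-curve $x+y=a+b$. But it only rules out links that blow up size-$4$ orbits, not this one. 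Consequently:
\begin{itemize}
\item your list keeps undetermined pairs on conic bundles, and closedness under links is never proved;
\item the sentence that such a group ``should either have invariant Picard rank greater than $1$ \dots\ or lead back to $R_2$ or $G_2$'' is a hope, not an argument;
\item unidentified conic-bundle pairs on $dP_5$ would be fatal, since a type~III link from them to $\mathbb{P}^2$ can produce $G_1$.
\end{itemize}

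\textbf{The missing idea.} You never need to identify the intermediate groups, only where a detour must come back.
\begin{itemize}
\item Links of type $(II,\mathbb{C})$ preserve the number of singular fibres. So after $(\mathbb{F}_0,\mathbb{D})\dashrightarrow(dP_6,\mathbb{C})$ you stay among conic bundles with $K^2=6$.
\item These admit no type~IV link, and their only type~III exit is back to $(\mathbb{F}_0,\mathbb{D})$ (see \cite[Theorem~2.6]{isk} and Lemma~\ref{433adapted}).
\item On $(\mathbb{F}_0,\mathbb{D})$ the group has invariant Picard rank $1$, so by Lemma~\ref{klein10007} Assertion~\ref{klein100071} it is conjugate to $R_2$ by an automorphism.
\item Hence the whole detour, and likewise any $(II,\mathbb{D})$ link $\mathbb{F}_0\dashrightarrow\mathbb{F}_0$, can be replaced by an automorphism of $\mathbb{F}_0$. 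This is exactly how the paper treats the $\mathbb{F}_0$ branch.
\end{itemize}
The same degree bookkeeping shows that conic bundles with $K^2=5$ are never entered. From the models that actually occur, the only way in is a size-$4$ orbit in general position on $(\mathbb{P}^2,G_2)$, and there is none. In the step $\mathbb{P}^2\dashrightarrow\mathbb{F}_0$ your logic is reversed: the target is of type $\mathbb{D}$, hence has rank $1$, hence the group is $R_2$ by Lemma~\ref{klein10007}. Rank $1$ is not a consequence of $s\in R_2$.

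\textbf{The $\mathbb{F}_1$ branch.} This needs the same loop reasoning, made precise in two places.
\begin{itemize}
\item A single $(II,\mathbb{C})$ link never goes $\mathbb{F}_1\dashrightarrow\mathbb{F}_1$, so Lemma~\ref{klein10000} must be applied to a composite chain.
\item You must rule out reaching $\mathbb{F}_0$ with a conic bundle, where type~IV links exist. The paper does this by noting that all fixed points of $K_2$ lie on the $(-1)$-curve. So the first elementary transformation lands on $\mathbb{F}_2$, and from $\mathbb{F}_n$ with $n\geq 2$ only $(II,\mathbb{C})$ links to $\mathbb{F}_{n\pm1}$ exist. The chain therefore returns through $\mathbb{F}_1$, where Lemma~\ref{klein10000} gives back $K_2$ up to automorphism.
\end{itemize}
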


\begin{proof} $ $
Let $\sigma : \mathbb{P}^2 \DashedArrow \mathbb{P}^2$ be a birational map such that $\sigma G_2 \sigma^{-1} = G$.
According to \cite[Theorem 2.5]{isk}, let $\sigma = \sigma_n \circ \dots \circ \sigma_1$ be a decomposition of $\sigma$ into Sarkisov links.
According to \cite[Theorem 2.6]{isk}, the link $\sigma_1$ is one of the following:
\begin{itemize}
\item $\sigma_1$ is a link of type $I$ from $(\mathbb{P}^2,\mathbb{D})$ to $(dP_5,\mathbb{C})$. According to Proposition \ref{kleininchar2} Assertion \ref{kleininchar22}, the subgroup $G_2$ has no orbit of size four in general position. Hence this link is not possible.
\item $\sigma_1$ is a link of type $I$ from $(\mathbb{P}^2,\mathbb{D})$ to $(\mathbb{F}_1,\mathbb{C})$. According to Lemma \ref{klein10001}, we have $\sigma_1 G_2 \sigma_1^{-1} = K_2$.
According to \cite[Theorem 2.6]{isk}, the next link $\sigma_2$ is either a link of type $III$ to $\mathbb{P}^2$, or a link of type $(II,\mathbb{C})$ (see Definition \ref{sublinks}). Let's assume this is a link of type $(II,\mathbb{C})$ to some Hirzebruch surface $\mathbb{F}_n, n \in \mathbb{N}_{\geq 0}$. Since the only fixed points of $K_2$ are on the $(-1)$-curve of $\mathbb{F}_1$, we cannot have $n = 0$. Since the action on the base is trivial, we can blow-up only orbits of size $1$, hence $n = 2$. The same logic applies to the next link $\sigma_3$: According to \cite[Theorem 2.6]{isk} it has to be a link of type $(II,\mathbb{C})$ to $(\mathbb{F}_m,\mathbb{C}), m = n \pm 1$. After finitely many steps we have to go back to $\mathbb{F}_1$, and according to Lemma \ref{klein10000} this composition can be replaced by an automorphism of $\mathbb{F}_1$.
When going back to $\mathbb{P}^2$ we then get a subgroup conjugate by an automorphism of $\mathbb{P}^2$ to $G_2$.
\item $\sigma_1$ is a link of type $(II,\mathbb{D})$ from $(\mathbb{P}^2,\mathbb{D})$ to $(\mathbb{F}_0,\mathbb{D})$. Assuming we don't go back to $(\mathbb{P}^2,\mathbb{D})$ directly, according to \cite[Theorem 2.6]{isk}, $\sigma_2$ is one of the following:
\begin{itemize}
\item A link of type $(II,\mathbb{D})$ from $(\mathbb{F}_0,\mathbb{D})$ to $(dP_5,\mathbb{D})$ or to $(dP_6,\mathbb{D})$. This is not possible for a Klein subgroup because it requires to blowup an orbit of size not a power of $2$ according to \cite[Theorem 2.6]{isk}.
\item A link of type $(II,\mathbb{D})$ from $(\mathbb{F}_0,\mathbb{D})$ to itself.
\item A link of type $I$ from $(\mathbb{F}_0,\mathbb{D})$ to $(dP_6,\mathbb{C})$. In this case, according to \cite[Theorem 2.6]{isk}, we could have a composition of links of type $(II,\mathbb{C})$ before being forced to go back to $(\mathbb{F}_0,\mathbb{D})$: there exists $k \geq 3$ such that $\sigma_k$ is a link of type $III$ from $(dP_6,\mathbb{C})$ to $(\mathbb{F}_0,\mathbb{D})$ and for all $3 \geq m \geq k-1, \sigma_{m}$ is a link of type $(II,\mathbb{C})$.
\end{itemize}
Hence in all the cases, we go back to $(\mathbb{F}_0,\mathbb{D})$. According to Lemma \ref{klein10007} Assertion \ref{klein100071}, up to automorphism there exists exactly one Klein subgroup in $(\mathbb{F}_0,\mathbb{D})$. Hence the link $\sigma_2$ in the second case, or the composition $\sigma_k \circ \dots \circ \sigma_2$ in the third case, can be replaced by an automorphism and the subgroup has not changed.
After a finite number of these sequences of links, we are then forced to go back to $(\mathbb{P}^2,\mathbb{D})$. According to Lemma \ref{klein10007} Assertion \ref{klein100072}, we get again the subgroup $G_2$ up to automorphism.
\end{itemize}
\end{proof}

\newpage

\part{Proofs of the main results}  \label{part6} $ $


\bigskip\bigskip

\section{Proof of Theorem \ref{theorem1}} $ $


Let $G$ be a $p$-elementary non-cyclic subgroup of the Cremona group.
\\

First we assume $p > 3$. According to Proposition \ref{prop1}, we can conjugate $G$ to a subgroup of the de Jonqui\`eres group, to a subgroup of the automorphism group of a del Pezzo surface of degree $1$, $5$ or $7$ with Picard rank equal to $1$, to a subgroup of $\Aut(\mathbb{P}^1 \times \mathbb{P}^1)$ or to a subgroup of $\Aut(\mathbb{P}^2)$.
Using Proposition \ref{prop27}, if $G$ is a subgroup of $\Aut(\mathbb{P}^1 \times \mathbb{P}^1)$ then it is conjugate to a subgroup of the de Jonqui\`eres subgroup.
If $G$ is conjugate to a subgroup of the de Jonqui\`eres subgroup, we can conjugate $G$ to $\langle (\left[\begin{smallmatrix}  1 & 0  \\
0 &   \xi
\end{smallmatrix} \right] ,I),(I,\left[\begin{smallmatrix}  1 & 0  \\
0 &    \xi
\end{smallmatrix} \right] ) \rangle$ using Proposition \ref{buda5}. This last group is conjugate to the $p$-torsion subgroup of the diagonal torus of $\PGL(3,\kk)$.
If $G$ is conjugate to a subgroup of the automorphism group of the del Pezzo surface of degree $7$, then $G$ is conjugate to a subgroup of $\Aut(\mathbb{P}^2)$.
Since the automorphism group of a del Pezzo surface of degree $5$ is isomorphic to $\mathfrak{S}_5$ when the field is algebraically closed (This result is proven for example in \cite[Proposition 5.1.]{blancarticle2} when $\kk$ is the field of complex numbers, but the proof holds for any algebraically closed field), it implies that $G$ is not a subgroup of the automorphism group of a del Pezzo surface of degree $5$.
According to Proposition \ref{propdegree1pgeq5} (when $\car(\kk) \neq 2$) or Proposition \ref{resultdp1char2} (when $\car(\kk)=2$), $G$ is not a subgroup of the automorphism group of a del Pezzo surface of degree $1$.
If $G$ is conjugate to a subgroup of $\Aut(\mathbb{P}^2)$, using Proposition \ref{prop3} we can conjugate $G$ to the $p$-torsion subgroup of the diagonal torus of $\PGL(3,\kk)$.
\\

Then we assume $p=3$. According to Proposition \ref{prop1}, we can conjugate $G$ to a subgroup of the de Jonqui\`eres group, to a subgroup of automorphism of a del Pezzo surface of degree $1$ or $3$ with Picard rank equal to $1$, to a subgroup of $\Aut(\mathbb{P}^1 \times \mathbb{P}^1)$ or to a subgroup of $\Aut(\mathbb{P}^2)$.
Using Proposition \ref{prop27}, if $G$ is a subgroup of $\Aut(\mathbb{P}^1 \times \mathbb{P}^1)$ then it is conjugate to a subgroup of the de Jonqui\`eres subgroup.
If $G$ is conjugate to a subgroup of $\Aut(\mathbb{P}^2)$, using Proposition \ref{p21} we can conjugate $G$ to the $3$-torsion subgroup of the diagonal torus of $\PGL(3,\kk)$ (family \ref{A}) or to: \begin{center} $\langle [X:Y:Z] \mapsto [X:j Y: j^2 Z] , [X:Y:Z] \mapsto [Z:X:Y] \rangle$

(family \ref{B}) \end{center}
If $G$ is conjugate to a subgroup of automorphism of a del Pezzo surface of degree $3$ with Picard rank equal to $1$, using Proposition \ref{resultcubic}, $G$ is conjugate to the $3$-torsion subgroup of the diagonal torus of $\PGL(4,\kk)$ acting on the Fermat cubic surface given by the equation $W^3 + X^3 + Y^3 + Z^3 = 0$ (isomorphic to $(\mathbb{Z}/3)^3$), or to the subgroup: \begin{center}$\langle [W:X:Y:Z] \mapsto [jW:X:Y:Z] , [W:X:Y:Z] \mapsto [W:jX:Y:Z] \rangle$

(family \ref{C}) \end{center} acting on the Fermat cubic surface:
\[ \left\{ W^3 + X^3  + Y^3 + Z^3 = 0 \right\} \subset \mathbb{P}^3 \] or to the subgroup: \begin{center} $\langle [W:X:Y:Z] \mapsto [jW:X:Y:Z] , [W:X:Y:Z] \mapsto [W:jX:j^2 Y:Z] \rangle$

(family \ref{D}) \end{center} acting on the cubic surface: \[ \left\{ W^3 + X^3 + Y^3 + Z^3 + \mu X Y Z = 0 \right\} \subset \mathbb{P}^3 \] where $\mu$ is a scalar such that $\mu^3 \neq -27$. Using Proposition \ref{resultcubic}, we get the possible conjugations by isomorphisms inside the family \ref{D}.
If $G$ is conjugate to a subgroup of the automorphism group of a del Pezzo surface of degree $1$ and $\car(\kk) \neq 2$, using Proposition \ref{1result}, $G$ is conjugate to the subgroup: \begin{center} $\langle [W:X:Y:Z] \mapsto [W:jX:Y:Z] , [W:X:Y:Z] \mapsto [W:X:jY:Z] \rangle$

(family \ref{E}) \end{center} acting on the surface: \[ \left\{W^2 = Z^3 + X^6 + Y^6 +  c X^3 Y^3 \right\} \subset \mathbb{P}(3,1,1,2) \] where $c$ is a scalar not equal to $\pm 2$.
Using Proposition  \ref{1result} Assertion \ref{1resultbis}, we get the possible conjugations by isomorphisms inside the family \ref{E}.
If $G$ is conjugate to a subgroup of the automorphism group of a del Pezzo surface of degree $1$ and $\car(\kk) = 2$, using Proposition \ref{resultdp1char2}, $G$ is conjugate to the subgroup: \begin{center} $\langle [u:v:x:y] \mapsto [u:v:j x : y] , [u:v:x:y] \mapsto [v:u+v:x:y+\sqrt{e}(u^2 v + u v^2 + v^3)] \rangle$

(family \ref{E}) \end{center} acting on the surface:  \[ \left\{ (u,v,x,y) | 0 = y^2 + uv(u+v) y + x^3 + (e + \sqrt{e}) (u^5 v + u v^5) + e u^3 v^3\right\} \subset \mathbb{P}(1,1,2,3) \] where $e \in \kk \setminus \{0,1\}$.
If $G$ is conjugate to a subgroup of the de Jonqui\`eres subgroup, we can conjugate $G$ to $\langle (\left[\begin{smallmatrix}  1 & 0  \\
0 &   j
\end{smallmatrix} \right] ,I),(I,\left[\begin{smallmatrix}  1 & 0  \\
0 &    j
\end{smallmatrix} \right] ) \rangle$ using Theorem \ref{buda5}. This last group is conjugate to the $p$-torsion subgroup of the diagonal torus of $\PGL(3,\kk)$.
\\

Finally we assume $p =2$. According to Proposition \ref{prop1}, we can conjugate $G$ to a subgroup of the de Jonqui\`eres group, to a subgroup of the automorphism group of a del Pezzo surface of degree $1$, $2$, $4$ or $\mathbb{F}_1$ with Picard rank equal to $1$, or to a subgroup of $\Aut(\mathbb{P}^2)$ or $\Aut(\mathbb{P}^1 \times \mathbb{P}^1)$.
If $G$ is conjugate to a subgroup of $\Aut(\mathbb{F}_1)$, then $G$ is conjugate to a subgroup of $\Aut(\mathbb{P}^2)$.
If $G$ is conjugate to a subgroup of $\Aut(\mathbb{P}^2)$ then according to Proposition \ref{prop3} it is conjugate to the subgroup
$\left\{ [X:Y:Z] \mapsto [\pm X: \pm Y: \pm Z] \right\}$. This group is conjugate by a birational map to the group $\langle (x,y) \mapsto (-x,y), (x,y) \mapsto (x,-y) \rangle$ acting on $\mathbb{A}^2$ which is a subgroup of the de Jonqui\`eres group.
If $G$ is conjugate to a subgroup of the automorphism group of $\mathbb{P}^1 \times \mathbb{P}^1$, according to Propositions \ref{p11}, $G$ is conjugate to a subgroup of the de Jonqui\`eres group.
If $G$ is conjugate to a subgroup of the automorphism group of a del Pezzo surface of degree $4$ with Picard rank equal to $1$, by using Proposition \ref{quarticresult} we can conjugate $G$ to the subgroup: \begin{center}
    $\left\{ [X_0:X_1:X_2:X_3:X_4] \mapsto [ \pm X_0: \pm X_1: \pm X_2:X_3:X_4] \right\}$ (family \ref{K})
\end{center}
or to the subgroup:
\begin{center}
    $\left\{ [X_0:X_1:X_2:X_3:X_4] \mapsto [ \pm X_0: \pm X_1: \pm X_2:\pm X_3:\pm X_4] \right\}$ (family \ref{P})
\end{center}
or to the subgroup: \[G = \left\{  [X_0 :  X_1 : X_2 : X_3 : X_4] \mapsto [\pm X_0 : \pm X_1 : X_2 : X_3 : X_4 ] \right\} \] acting on the quartic del Pezzo surface: \[S =  \left\{ X_0^2 + X_1^2 + X_2^2 + X_3^2 + X_4^2 = a_0 X_0^2 + a_1 X_1^2 + a_2 X_2^2 + a_3 X_3^2 + a_4 X_4^2  = 0 \right\} \] where $a_0, \dots, a_4 \in \kk$ are pairwise distinct.
Using Proposition \ref{conjcubickleinblue} Assertion \ref{conjcubickleinblue1}, in this last subcase we can conjugate $G$ to a subgroup of the de Jonquières group, hence eliminating this subcase.
If $G$ is conjugate to a subgroup of the automorphism group of a del Pezzo surface of degree $2$ with Picard rank equal to $1$, by using Proposition \ref{2results} we can conjugate $G$ to the subgroup: \begin{center} $\left\{ [W:X:Y:Z] \mapsto  [\pm W: \pm X:Y:Z] \right\}$
(family \ref{G}) \end{center} acting on the surface $\{W^2 = X^4 + L_2(Y,Z) X^2 + L_4(Y,Z)\} \subset \mathbb{P}(2,1,1,1)$, where $L_2$ \textit{resp} $L_4$ is a form of degree $2$ \textit{resp} $4$, or to the subgroup: \begin{center} 
$\left\{ [W:X:Y:Z] \mapsto  [\pm W: \pm X: \pm Y:Z] \right\}$
 (family \ref{L}) \end{center} acting on the surface: 
\[ \left\{W^2 = X^4 + Y^4 + Z^4 + d X^2 Y^2 + e X^2 Z^2 + f Y^2 Z^2 \right\} \subset \mathbb{P}(2,1,1,1) \] where $d,e,f \in \kk \setminus \{-2,2\}$ such that $ 4 - f^2 - d^2 - e^2 + def \neq 0$.
If $G$ is conjugate to a subgroup of the automorphism group of a del Pezzo surface of degree $1$ with Picard rank equal to $1$, by using Proposition \ref{12result} we can conjugate $G$ to the subgroup: \begin{center} 
$\left\{ [W:X:Y:Z] \mapsto [\pm W: \pm X:Y:Z] \right\}$
(family \ref{H}) \end{center} acting on the surface: \[ \{W^2 = Z^3 + L_1(X^2,Y^2) Z^2 + L_2 (X^2,Y^2)Z + L_3 (X^2,Y^2)\} \subset \mathbb{P}(2,1,1,3) \] where $L_1$ \textit{resp} $L_2$ \textit{resp} $L_3$ is a form of degree 1 \textit{resp} 2 \textit{resp} 3.

If $G$ is conjugate to a subgroup of the de Jonqui\`eres subgroup with a non-trivial element fixing a non-rational curve, by using Proposition \ref{ultimateproposition}, we can conjugate $G$ to a group of the families  \ref{I},  \ref{J},  \ref{M},  \ref{N} or  \ref{Q}.

If $G$ is conjugate to a subgroup of the de Jonqui\`eres subgroup with no non-trivial element fixing a non-rational curve, we use the proof of \cite[Proposition 8.4.]{blancarticle2}. Indeed we observe that the proof requires only the field to be algebraically closed and of characteristic different than $2$. This shows that such a subgroup is conjugate to a subgroup of automorphisms of $\mathbb{P}^1 \times \mathbb{P}^1$. Then we use Proposition \ref{detailedclassificationsubgsF0} for the classification of the $2$-elementary non-cyclic subgroups of automorphisms of $\mathbb{P}^1 \times \mathbb{P}^1$. We get then the families \ref{Iprime}, \ref{Mprime}, \ref{Qprime}.
\qed

\bigskip
\bigskip
\section{A prerequisite for the proof of Theorem \ref{theorem2}} $ $

The following Proposition \ref{lemmetheorem2} will be used in the proof of Theorem \ref{theorem2}. We did not state it before because it requires Theorem \ref{theorem1}.

\bigskip
\begin{prop} \label{lemmetheorem2} $ $
Let $G, G'$ be two subgroups from the list of Theorem \ref{theorem1}. \newline
We assume $G$ is in the case $\mathbb{D}$ and $G$ and $G'$ are conjugate by a birational map. \newline Then $G'$ is in the case $\mathbb{D}$ and $G$ and $G'$ are conjugate by an isomorphism. 
\end{prop}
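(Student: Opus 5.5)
The plan is to split according to the del Pezzo surface $S$ on which $G$ acts with $\rank(\Pic(S)^G)=1$. The families in case $\mathbb{D}$ live on surfaces of degree $1$, $2$, $3$ (families \ref{G}, \ref{H}, \ref{L}, \ref{C}, \ref{D}, \ref{E}), of degree $4$ with $G\simeq(\ZZ/2)^3$ or $(\ZZ/2)^4$ (families \ref{K}, \ref{P}), or on $\PP^2$ (families \ref{A}, \ref{B}, together with the $(\ZZ/3)^3$ and $(\ZZ/p)^2$ cases). Each $G'$ in the list is itself a minimal pair: in case $\mathbb{D}$ by the Picard rank one proposition, and in cases $\mathbb{C}\mathbb{A}$ and $\mathbb{C}\mathbb{P}$ as a minimal conic bundle or a minimal pair on $\PP^1\times\PP^1$.

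\textbf{Low degree.} If $K_S^2\in\{1,2,3\}$, Proposition~\ref{conjdplowdeg} applies directly. It gives an isomorphism $S\to S'$ conjugating $G$ to $G'$. Then $S'$ is a del Pezzo surface of the same degree and $\rank(\Pic(S')^{G'})=1$, so $G'$ is again in case $\mathbb{D}$.

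\textbf{Quartic surfaces.} If $S$ is a quartic del Pezzo surface, then $r>2$, because the Klein case was removed from the list in the proof of Theorem~\ref{theorem1}. Proposition~\ref{conjofbig2elemquartic} then applies and gives the isomorphism.

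\textbf{$\PP^2$: Sarkisov decomposition.} This case is not covered by the earlier results and is the main obstacle. I would decompose a conjugating map $\sigma$ into Sarkisov links using \cite[Theorem 2.5, 2.6]{isk}, starting from $(G,\PP^2)$ with $G$ of order $9$, $27$ or $p^2$ ($p>3$). A first link of type $I$ or $II$ blows up a $G$-orbit of size $1$, $2$, $3$, $4$, $5$, $6$, $7$ or $8$.
\begin{itemize}
\item Orbits of size $2,4,5,6,7,8$ are excluded, since orbit sizes must divide $|G|$.
\item An orbit of size $1$ is a fixed point; I then land in $\mathbb{F}_1$ with a conic bundle, i.e.\ in $\J$.
\item An orbit of size $3$ gives a link to $dP_6$.
\end{itemize}

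\textbf{$\PP^2$: comparison with the other families.} I would compare $G'$ with $G$ through two birational invariants.
\begin{itemize}
\item By Lemma~\ref{ultimatelemma3}, elements fixing non-rational curves are preserved.
\item By \cite[Proposition A.2]{kollar}, the existence of fixed points is preserved.
\end{itemize}
Every subgroup in case $\mathbb{C}\mathbb{A}$ has an element fixing a non-rational curve, while the diagonal groups on $\PP^2$ have none. So $G'$ cannot be in $\mathbb{C}\mathbb{A}$. Case $\mathbb{C}\mathbb{P}$ consists only of $2$-groups, so it is excluded whenever $p$ is odd. For $p$ odd, a $\mathbb{D}$-family with a different isomorphism type is excluded at once, and families of the same type are separated by fixed points or by curves of positive genus.
\begin{itemize}
\item \ref{A} has fixed points and \ref{B} has none (Proposition~\ref{p21}).
\item \ref{C} has three fixed points and \ref{D} has none (Proposition~\ref{resultcubic}).
\item \ref{E} contains elements fixing curves of positive genus (Lemma~\ref{fixedpointsg11}), whereas the groups on $\PP^2$ do not.
\item \ref{C} versus \ref{A}: here I would check fixed-curve genera, since the element $[j:1:1:1]$ on the Fermat cubic fixes an elliptic curve.
\end{itemize}

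\textbf{Within a single family on $\PP^2$.} Families \ref{A} and \ref{B}, and the $(\ZZ/3)^3$ and $(\ZZ/p)^2$ cases, each consist of a single group up to isomorphism. In the $(\ZZ/3)^3$ case the surface is the Fermat cubic, so it is already handled by Proposition~\ref{conjdplowdeg}.

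The main work is therefore the bookkeeping for $\PP^2$ and the cross-family invariants. The step I expect to be hardest is distinguishing \ref{C} from \ref{A} by a clean invariant. I would first try fixed curves of elements of $G$: in \ref{C} some element fixes an elliptic curve, while in \ref{A} all fixed curves are lines.
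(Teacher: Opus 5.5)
Your treatment of surfaces of degree $1,2,3$ and of quartic surfaces is the same as the paper's. You diverge only on the case $S=\PP^2$, which you treat as the main obstacle. The paper settles it with no new argument by applying Proposition~\ref{conjdplowdeg} with the roles of the two pairs exchanged. That proposition only needs \emph{one} pair to live on a del Pezzo surface of degree $1,2$ or $3$ with invariant Picard rank $1$, and the other pair to be minimal. For $p\geq 5$ the list has a single representative. For $p=3$, the group $G'\simeq G\simeq(\ZZ/3)^2$ lies in one of \ref{A}--\ref{E}. If it lies in \ref{C}, \ref{D} or \ref{E}, the proposition applied to $(G',S')$ would give an isomorphism $S'\to\PP^2$, which is absurd. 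If both groups live on $\PP^2$, Proposition~\ref{p21} applies.

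Your alternative is also valid: you separate \ref{A}, \ref{B} from \ref{C}, \ref{D}, \ref{E} by the invariant of Lemma~\ref{ultimatelemma3}, and \ref{A} from \ref{B} by fixed points. However, you must run it for every pair, not only \ref{C} versus \ref{A}. Both \ref{C} and \ref{D} contain $[j:1:1:1]$, which fixes the smooth plane cubic $\{W=0\}\cap S$ (table in Proposition~\ref{rkcubic}). Non-trivial elements of $\PGL(3,\kk)$ fix only points and lines, so this one observation separates \ref{C} and \ref{D} from both \ref{A} and \ref{B}. For \ref{E}, Lemma~\ref{fixedpointsg11} alone does not give a positive genus. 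You need $\alpha+\beta+\gamma=4$ from the proof of Proposition~\ref{1result}, or $\alpha+2\beta=5$ from Lemma~\ref{rkg12prime} when $\car(\kk)=2$, where Lemma~\ref{fixedpointsg11} does not apply.

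The paper's route is shorter and reuses only what is already proved. Yours exhibits explicit birational invariants that distinguish the families.

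Finally, drop the Sarkisov-link paragraph for $\PP^2$. Your conclusion never uses it, and as written it is wrong for $p=5,7$, where orbits of size $5$ or $7$ do divide $|G|$.
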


\begin{proof} $ $

If we consider a $G$ subgroup of the de Jonquières group, we assume implicitly that it acts on a rational surface $S$, such that the pair $(G,S)$ is minimal.
\begin{itemize}
\item If $p \geq 5$ then there exists exactly one representative in the list of Theorem \ref{theorem1}.

\item If $p = 3$ then according to Theorem \ref{theorem1}, $G'$ is in the case $\mathbb{D}$. Let $S, S'$ be the del Pezzo surfaces on which $G,G'$ act. According to Theorem \ref{theorem1} we have $K_S^2, K_{S'}^2 \in \{1,3,9\}$.
If $K_S^2 \neq 9$ or $K_{S'}^2 \neq 9$ then we can use Proposition \ref{conjdplowdeg} (recall that the subgroups from the list of Theorem \ref{theorem1} are minimal).
If $K_S^2 = K_{S'}^2 = 9$ then we can use Proposition \ref{p21}.

\item If $p = 2$ then according to Theorem \ref{theorem1} we have $K_S^2 \in \{1,2,4\}$. If $K_S \in  \{ 1,2\}$ then we can use Proposition \ref{conjdplowdeg} (recall that the subgroups from the list of Theorem \ref{theorem1} are minimal). If $K_S^2 = 4$, since the subgroups are from the list of Theorem \ref{theorem1}, it implies that $G$ is not a Klein subgroup. Hence we can use Proposition \ref{conjofbig2elemquartic} (recall that the subgroups from the list of Theorem \ref{theorem1} are minimal).
\end{itemize}
\end{proof}

\newpage

\section{Proof of Theorem \ref{theorem2}} $ $


 If we consider a subgroup $G$ of the de Jonquières group, we assume implicitly that it acts on a rational surface $S$, such that the pair $(G,S)$ is minimal.
\begin{enumerate}[label=\textbf{(\arabic*)}, leftmargin=2em, labelsep=1em]
\item Let $G, G'$ be two subgroups in two different families from the list of Theorem \ref{theorem1}. We assume they are conjugate by a birational map.

If $G$ or $G'$ is in the case $\mathbb{D}$ then by using Proposition \ref{lemmetheorem2} we get that they are both in the case $\mathbb{D}$, and by denoting $S, S'$ their corresponding del Pezzo surfaces on which they act, we have that $(G,S)$ and $(G',S')$ are isomorphic. It leaves then the following two cases:
\begin{itemize}
\item families \ref{A} and \ref{B}. We use Proposition \ref{p21} to get that they can't be conjugate.
\item families \ref{C} and \ref{D}. We use Proposition \ref{resultcubic} to get that they can't be conjugate.
\end{itemize}

Now we assume that $G$ and $G'$ are both in the case $\mathbb{C}$. According to Theorem \ref{theorem1}, this implies that $p=2$.
Observe that every family in the case $\mathbb{C} \mathbb{A}$ has at least one non-trivial element fixing a non-rational curve whereas every family in the case $\mathbb{C} \mathbb{P}$ has no non-trivial element fixing a non-rational curve (Theorem \ref{theorem1}). This implies that no subgroup in the case $\mathbb{C} \mathbb{A}$ can be conjugate to a subgroup in the case $\mathbb{C} \mathbb{P}$ (Lemma \ref{ultimatelemma3}).
Since there is exactly one family in the case $\mathbb{C} \mathbb{P}$ for each isomorphism class, this implies that $G, G'$ are both in the case $\mathbb{C} \mathbb{A}$.
It leaves two possibilities:
\begin{itemize}
\item families \ref{I} and \ref{J}.
\item families \ref{M} and \ref{N}.
\end{itemize}
Hence $G, G'$ are not conjugate in the de Jonquières group (Lemma \ref{ultimatelemma3}). 
They also both have at least one non-trivial element fixing a non-rational curve because they are in the case $\mathbb{C} \mathbb{A}$ (see Theorem \ref{theorem1}).
Hence Proposition \ref{2elemconj2} applies and we get $G \simeq G' \simeq (\mathbb{Z}/2)^2$. Hence $G$ or $G'$ is in family \ref{I} and the other subgroup is in family \ref{J}. The second part of the assertion is a direct consequence of Proposition \ref{2elemconj2}.

\item 
This is a direct consequence of Proposition \ref{lemmetheorem2}.

\item According to Proposition \ref{ultimateproposition}, two such subgroups will have at least one non-trivial element fixing a non-rational curve. By contradiction let's assume that these two subgroups are not conjugate by a de Jonquières map. Then using Proposition \ref{2elemconj2}, one of the subgroup acts trivially on the base whereas the other doesn't. It means that they belong to the same family.

\item This is a direct consequence of Proposition \ref{detailedclassificationsubgsF0}.

\item This is a direct consequence of Proposition \ref{resultatgeneral}.
\end{enumerate}

\qed

\bigskip
\bigskip

\section{Proof of Theorem \ref{theorem3}} $ $

\begin{enumerate}[label=\textbf{(\arabic*)}, leftmargin=2em, labelsep=1em]
\item 
This is proven in Proposition \ref{theo2}.
\item 
This is proven in Proposition \ref{mitterandtheoremC}.
\end{enumerate}

\qed

\newpage

\part{Appendices} \label{part7} $ $

\bigskip\bigskip

\appendix

\section{The $2$-elementary subgroups of the group $\ZZ/q \rtimes \ZZ/q^\times$} \label{bratac2} \label{bratac0}  $ $

This appendix is devoted to group theory and is independent of algebraic geometry. It establishes a result on some finite groups that is required for our study of subgroups of automorphisms of quartic del Pezzo surfaces. 
The result of this section is Corollary \ref{pirouz3}. It is used in the proof of Proposition \ref{propquartic5points} Assertion \ref{propquartic5points2}.
\medskip

Let $q$ be an odd prime number. 
\begin{defi} $ $
We denote by $\ZZ/q \rtimes \ZZ/q^\times$ the holomorph of $\ZZ/q$, \textit{i.e.} the semi-direct product where $\ZZ/q^\times$ acts by multiplication on $\ZZ/q$. We identify it with the following group of affine transformations of the finite field $\mathbb{F}_q$:
$$\left\{\begin{array}{ccccc}
f_{a,b}& : & \mathbb{F}_q & \rightarrow &  \mathbb{F}_q \\
 & & x & \mapsto & a x + b  \\
\end{array} | \quad a \in \mathbb{F}_q^\times, b \in \mathbb{F}_q \right\}$$
\end{defi}

\noindent \makebox[0pt][l]{Now we take:} \hfill $K = \langle -1 \rangle \subset \mathbb{Z}/q^\times \text{ or } K = \mathbb{Z}/q^\times$, \hfill \mbox{}

\medskip

\noindent \makebox[0pt][l]{and:} \hfill $G = \mathbb{Z}/q \rtimes K \subset \mathbb{Z}/q \times \mathbb{Z}/q^\times$. \hfill \mbox{}

\begin{lemme} \label{pirouz1} $ $
The non-trivial involutions of $G$ are the elements $f_{-1,b}$ where $b \in \mathbb{F}_q$.
\end{lemme}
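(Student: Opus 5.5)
The plan is a direct computation inside the group of affine maps $f_{a,b}\colon x\mapsto ax+b$ of $\mathbb{F}_q$, using only the composition law and the fact that $q$ is odd.

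First I record the composition rule
\[
f_{a,b}\circ f_{a',b'} = f_{aa',\,ab'+b}.
\]
In particular $f_{a,b}^2 = f_{a^2,\,ab+b} = f_{a^2,(a+1)b}$.

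An element $f_{a,b}\in G$ is a non-trivial involution exactly when $f_{a,b}^2=f_{1,0}$ and $f_{a,b}\neq f_{1,0}$. The condition $f_{a,b}^2=f_{1,0}$ means $a^2=1$ and $(a+1)b=0$ in $\mathbb{F}_q$. Since $\mathbb{F}_q$ is a field, $a^2=1$ forces $a\in\{1,-1\}$, and these two values are distinct because $q$ is odd.

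There are two cases. If $a=1$, then $(a+1)b=2b=0$ gives $b=0$, because $2$ is invertible for $q$ odd. So $f_{a,b}=f_{1,0}$ is trivial, and this case contributes nothing. If $a=-1$, the condition $(a+1)b=0$ holds for every $b\in\mathbb{F}_q$, so $f_{-1,b}^2=\mathrm{id}$. Moreover $f_{-1,b}\neq\mathrm{id}$, since its linear part is $-1\neq 1$.

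It remains to check that every $f_{-1,b}$ actually lies in $G$. This holds because in both choices of $K$, namely $K=\langle -1\rangle$ and $K=\mathbb{Z}/q^\times$, we have $-1\in K$, and $b$ ranges over all of $\mathbb{F}_q$. Putting the two cases together, the non-trivial involutions of $G$ are exactly the elements $f_{-1,b}$ with $b\in\mathbb{F}_q$.

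There is no real obstacle. The only point needing care is the use of $q$ odd, both to get $-1\neq 1$ and to cancel the factor $2$ in the case $a=1$.
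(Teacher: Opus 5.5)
Your proof is correct and takes the same approach as the paper: compute $f_{a,b}^2=f_{a^2,(a+1)b}$ and use that $2$ is invertible in $\mathbb{F}_q$ to rule out $a=1$, $b\neq 0$. Your explicit check that $-1\in K$ for both choices of $K$, so that every $f_{-1,b}$ actually lies in $G$, is a small point the paper leaves implicit.
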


\begin{proof} $ $ 
Let $a \in \mathbb{F}_q^\times, b \in \mathbb{F}_q$. We have $f_{a,b}^2 = f_{a^2,ab+b}$. So $f_{a,b}^2 = id_{\mathbb{F}_q}$ if and only if $(a,b) = (1,0)$ or $a = -1$ (observe that $2 \in \mathbb{F}_q^\times$). Hence our result.
\end{proof}

\begin{prop} \label{pirouz2} $ $

\begin{enumerate}
\item  \label{pirouz21} All non-trivial involutions of $G$ are conjugate in $G$. 
\item  \label{pirouz22} Let $s$ be a non-trivial involution of $G$. The only non-trivial involution that commutes with $s$ is $s$ itself.
\end{enumerate}
\end{prop}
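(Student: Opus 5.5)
By Lemma \ref{pirouz1}, the non-trivial involutions of $G$ are exactly the affine maps $f_{-1,b}$ with $b \in \mathbb{F}_q$. Both assertions will follow from explicit composition formulas in the affine group. The basic identity is
\[
f_{a,b} \circ f_{c,d} = f_{ac,\, ad+b},
\]
and everything is a short calculation from it. Note that $G$ always contains the translations $f_{1,b}$, since $\mathbb{Z}/q \subset G$, and it always contains $f_{-1,0}$, since $-1 \in K$ in both choices of $K$.

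For Assertion \ref{pirouz21}, I will conjugate $f_{-1,0}$ by the translation $f_{1,c}$. We have $f_{1,c}^{-1} = f_{1,-c}$, so
\[
f_{1,c}\, f_{-1,0}\, f_{1,-c} = f_{1,c}\, f_{-1,c} = f_{-1,2c}.
\]
Since $q$ is odd, $2$ is invertible in $\mathbb{F}_q$. Hence $2c$ runs through all of $\mathbb{F}_q$ as $c$ does, and every $f_{-1,b}$ is conjugate to $f_{-1,0}$. Only translations are used here, so the argument works for both choices of $K$.

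For Assertion \ref{pirouz22}, I will take two involutions $s = f_{-1,b}$ and $s' = f_{-1,b'}$ and compute both products:
\[
s s' = f_{1,\,-b'+b}, \qquad s' s = f_{1,\,-b+b'}.
\]
These are equal exactly when $2(b-b') = 0$. Since $2$ is invertible in $\mathbb{F}_q$, this forces $b = b'$, that is, $s = s'$.

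There is no real obstacle in this plan. The only point to watch is that invertibility of $2$ modulo the odd prime $q$ is used in both steps. The case $q = 2$ would genuinely fail, but it is excluded by hypothesis.
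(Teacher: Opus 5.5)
Your proof is correct and follows the paper's argument essentially verbatim. The paper also conjugates $f_{-1,0}$ by the translation $f_{1,b/2}$ to reach $f_{-1,b}$, and for the second assertion it computes $f_{-1,b}\circ f_{-1,c}\circ f_{-1,b}\circ f_{-1,c} = f_{1,2b-2c}$, which is equivalent to your comparison of $ss'$ with $s's$, with invertibility of $2$ in $\mathbb{F}_q$ as the only extra input beyond Lemma \ref{pirouz1}.
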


\begin{proof} $ $

\begin{enumerate} 
\item Let $b \in \mathbb{F}_q$. Let $c = \dfrac{b}{2}$ (observe that $2 \in \mathbb{F}_q^\times$). Then we have:
\[f_{1,c} \circ f_{-1,0} \circ f_{1,c}^{-1} = f_{-1,2c} = f_{-1,b}.\]
We conclude using Lemma \ref{pirouz1}.
\item Let $b, c \in  \mathbb{F}_q$. We have $f_{-1,b} \circ f_{-1,c} \circ f_{-1,b} \circ f_{-1,c} = f_{1,2b-2c}$. This element is the identity $\Leftrightarrow b=c$ (observe that $2 \in \mathbb{F}_q^\times$). We conclude using Lemma \ref{pirouz1}.
\end{enumerate}
\end{proof}

\begin{prop} \label{pirouz2bis} $ $

All $2$-elementary non-trivial subgroups of $G$ are cyclic and conjugate in $G$.
\end{prop}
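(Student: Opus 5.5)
The plan is to combine Lemma \ref{pirouz1} with Proposition \ref{pirouz2}. Let $H \subset G$ be a non-trivial $2$-elementary subgroup, so every non-trivial element of $H$ is an involution of $G$. We first show that $H$ is cyclic of order $2$, and then that any two such subgroups are conjugate.

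For cyclicity, suppose $H$ contained two distinct non-trivial involutions $s \neq s'$. Since $H$ is abelian, $s$ and $s'$ commute. This contradicts Proposition \ref{pirouz2} Assertion \ref{pirouz22}, which says the only non-trivial involution commuting with $s$ is $s$ itself. Hence $H \setminus \{id\}$ consists of a single involution $s$, and $H = \langle s \rangle \simeq \ZZ/2$.

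For conjugacy, let $H = \langle s \rangle$ and $H' = \langle s' \rangle$ be two such subgroups. By Proposition \ref{pirouz2} Assertion \ref{pirouz21}, there exists $g \in G$ with $g s g^{-1} = s'$. Then $g H g^{-1} = H'$.

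There is no real obstacle. The only point to check is that the hypotheses of Proposition \ref{pirouz2} hold for both choices of $K$; this is automatic, since that proposition was stated for the same $G$.
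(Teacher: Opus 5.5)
Your proof is correct and follows the paper's argument. Cyclicity comes from Proposition~\ref{pirouz2} Assertion~\ref{pirouz22}, since an abelian subgroup cannot contain two distinct non-trivial commuting involutions, and conjugacy comes from Assertion~\ref{pirouz21}; you just write out the details the paper leaves implicit.
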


\begin{proof} $ $
According to Proposition \ref{pirouz2} Assertion \ref{pirouz22}, any $2$-elementary subgroup of $P$ is cyclic. Then we use Proposition \ref{pirouz2} Assertion \ref{pirouz21} to ensure the conjugacy between these subgroups.
\end{proof}

\begin{corr} \label{pirouz3} $ $
Let $P$ be one of the three following groups:
$$\mathfrak{S}_3, \quad D_{10},  \quad \ZZ/5 \rtimes \ZZ/5^\times.$$
Then all $2$-elementary non-trivial subgroups of $P$ are cyclic and conjugate in $P$.
\end{corr}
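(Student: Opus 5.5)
The plan is to identify each of the three groups $P$ with a group of the form $G = \ZZ/q \rtimes K$ treated above, and then apply Proposition \ref{pirouz2bis} directly.

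\textbf{Step 1: identification.} For $\mathfrak{S}_3$ take $q = 3$ and $K = \ZZ/3^\times = \langle -1 \rangle$. The affine group $\{x \mapsto ax+b\}$ of $\mathbb{F}_3$ has order $6$ and acts faithfully on the three points of $\mathbb{F}_3$, so it is $\mathfrak{S}_3$. For $D_{10}$ take $q = 5$ and $K = \langle -1 \rangle$. Then $\ZZ/5 \rtimes \langle -1 \rangle$ is generated by the translation $x \mapsto x+1$ of order $5$ and the inversion $x \mapsto -x$, which conjugates the translation to its inverse; this is the standard presentation of $D_{10}$. For $\ZZ/5 \rtimes \ZZ/5^\times$ take $q = 5$ and $K = \ZZ/5^\times$. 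This is literally the holomorph appearing in the definition. Here $q \in \{3,5\}$ is an odd prime, as required.

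\textbf{Step 2: the abstract isomorphism is enough.} The statement "all $2$-elementary non-trivial subgroups are cyclic and conjugate" is invariant under group isomorphism. In the application (Proposition \ref{propquartic5points}) the group $P$ is only given up to isomorphism by \cite[Theorem 3.1]{dolcubic}, so any isomorphism $P \simeq G$ may be used.

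\textbf{Step 3: apply Proposition \ref{pirouz2bis}.} It gives that every $2$-elementary non-trivial subgroup of $G$ is cyclic, generated by a single involution $f_{-1,b}$, and that all of these are conjugate in $G$. Transporting this along the isomorphism of Step 1 gives the claim for $P$.

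The only point needing care is Step 1, namely recognising $D_{10}$ and $\mathfrak{S}_3$ as affine groups. This is routine: an order count together with the dihedral presentation suffices.
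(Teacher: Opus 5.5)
Your proposal is correct and follows the paper's proof: the paper's proof is just the remark that the hypotheses of Proposition~\ref{pirouz2bis} hold for the three groups. You additionally make explicit the identifications the paper leaves to the reader: $q=3$, $K=\langle -1\rangle=\ZZ/3^\times$ for $\mathfrak{S}_3$; $q=5$, $K=\langle -1\rangle$ for $D_{10}$; and $q=5$, $K=\ZZ/5^\times$ for the holomorph.
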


\begin{proof} $ $
The hypotheses of Proposition \ref{pirouz2bis} are satisfied for these three groups.
\end{proof}

\newpage

\section{Results on automorphisms of del Pezzo surfaces} \label{appendixB} $ $

We show two important results in this appendix:
\begin{itemize}
\item We show that Bertini \textit{resp} Geiser involution commutes with the automorphism group of del Pezzo surfaces of degree $1$ \textit{resp} $2$ in Subsection \ref{ardoines5}.
\item
We also inject the automorphism group of del Pezzo surfaces of low degree (less or equal than three) in a bigger group in Subsection \ref{ardoines0}.
\end{itemize}

First we state Lemma \ref{Lem.Aaction_global_section_power_of_canonical} in Subsection \ref{ardoines1}. It will be used in all other proofs of this appendix.

\subsection{A preliminary result about the action of automorphisms on the sections of $-dK_X$} \label{ardoines1} $ $

The following Lemma \ref{Lem.Aaction_global_section_power_of_canonical} will be used in all other subsections of this appendix.

\begin{lemme} \label{Lem.Aaction_global_section_power_of_canonical} $ $
Let $d \geq 1$ and let $X$ be a smooth irreducible surface. Then every automorphism of $X$ induces naturally an automorphism of the space $H^0(X, -d K_X)$ of global sections of the line bundle $-d K_X$.
\end{lemme}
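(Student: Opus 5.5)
The plan is to use functoriality of the canonical sheaf under automorphisms. Let $g \in \Aut(X)$. Since $g$ is an isomorphism of smooth varieties, pulling back differential forms gives a canonical isomorphism $g^*\Omega^1_X \xrightarrow{\sim} \Omega^1_X$. Taking the top exterior power yields $g^*\omega_X \simeq \omega_X$, where $\omega_X = \Omega^2_X$ is the canonical line bundle $K_X$. Concretely, in local coordinates $(u,v)$ a local $2$-form $f\,du\wedge dv$ is sent to $(f\circ g)\, d(u\circ g)\wedge d(v\circ g)$. Dualizing and taking the $d$-th tensor power then gives a canonical isomorphism $\phi_g : g^*\mathcal{O}_X(-dK_X) \xrightarrow{\sim} \mathcal{O}_X(-dK_X)$. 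This is natural, because it comes from the differential of $g$ rather than from any choice of divisor.

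Next we define the action on sections. For $s \in H^0(X,-dK_X)$, set $g\cdot s := \phi_{g^{-1}}\big((g^{-1})^* s\big)$, or equivalently use $s \mapsto \phi_g(g^*s)$ with the inverse convention. This map is $\kk$-linear. It is bijective because $g^{-1}$ gives the inverse.

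We then check compatibility with composition: $(gh)\cdot s = g\cdot(h\cdot s)$. This follows from the chain rule, namely $(gh)^* = h^*g^*$ on forms, together with the functoriality of $\Lambda^2$, duals and tensor powers. This step is where care is needed. The only real subtlety is to avoid the ambiguity that would arise if one viewed $-dK_X$ merely as an abstract line bundle. An isomorphism $g^*L\simeq L$ is only determined up to a scalar in $\kk^\times$, which would give a projective action at best. Working with $\omega_X$ defined intrinsically as $\Omega^2_X$ removes this ambiguity, and so it makes the action a genuine linear representation $\Aut(X)\to \GL(H^0(X,-dK_X))$.

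Finally, $H^0$ is finite dimensional since the varieties considered are projective, so the result is an honest linear action. This is what the subsequent subsections use to embed $\Aut(X)$, via the anticanonical or pluri-anticanonical maps, into automorphisms of weighted projective spaces.
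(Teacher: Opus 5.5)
Your proposal is correct and follows essentially the same route as the paper. The paper also builds the action from the differential of the automorphism: it applies $\mathrm{d}\varphi\wedge\mathrm{d}\varphi$ to $T_X\wedge T_X \simeq -K_X$ and takes the $d$-fold tensor power, whereas you pull back $2$-forms and dualize (taking the inverse transpose, since dualizing reverses the arrow), which gives the same bundle map; your explicit formula for the action on sections and the chain-rule check of the group law spell out what the paper's commutative diagram leaves implicit.
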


\begin{proof} $ $
Note that $-dK_X$ corresponds to the $d$-fold tensor product of $T_X \bigwedge T_X \to X$, where $T_X \to X$ denotes the tangent bundle of $X$. Let $\varphi$ be an automorphism of $X$. The claim follows from the following commutative diagram:
$$ 
\xymatrix{
T_X \bigwedge T_X \otimes \cdots \otimes T_X \bigwedge T_X \ar[d] \ar[rrrr]^{{\mathrm{d}\varphi \bigwedge \mathrm{d}\varphi \otimes \dots \otimes \mathrm{d}\varphi \bigwedge \mathrm{d}\varphi}} &&&& T_X \bigwedge T_X \otimes \cdots \otimes T_X \bigwedge T_X \ar[d] \\
X \ar[rrrr]^{\varphi} &&&& X
}
 $$
\end{proof}

\subsection{The Geiser and Bertini involutions} \label{ardoines5}

\begin{lemme} \label{bertinigeiser} $ $
Let $S$ be a del Pezzo surface of degree $1$ \textit{resp} of degree $2$.

\noindent Then the Bertini \textit{resp} the Geiser involution commutes with the automorphism group.
\end{lemme}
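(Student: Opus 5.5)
The plan is to characterise the Bertini (resp. Geiser) involution $\tau$ intrinsically, using only the anticanonical linear systems. Then any automorphism $\varphi$ of $S$ automatically respects that characterisation. By Lemma \ref{Lem.Aaction_global_section_power_of_canonical}, every $\varphi\in\Aut(S)$ acts linearly on $H^0(S,-dK_S)$ for every $d\geq 1$. It therefore acts compatibly on the graded ring $R=\bigoplus_{d\geq 0}H^0(S,-dK_S)$, since the multiplication maps are natural.

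In degree $2$, the morphism $\phi_{|-K_S|}\colon S\to\mathbb{P}(H^0(S,-K_S)^\vee)=\mathbb{P}^2$ is a finite double cover branched along the quartic $\Gamma$. The Geiser involution $\tau$ is by definition the deck transformation of this cover. Since $\varphi$ induces a linear automorphism $\bar\varphi$ of $H^0(S,-K_S)$, we get $\phi_{|-K_S|}\circ\varphi=\bar\varphi\circ\phi_{|-K_S|}$. So $\varphi\tau\varphi^{-1}$ satisfies $\phi_{|-K_S|}\circ(\varphi\tau\varphi^{-1})=\phi_{|-K_S|}$. Hence $\varphi\tau\varphi^{-1}$ is a nontrivial deck transformation of a degree-$2$ cover, and it must equal $\tau$.

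In degree $1$, I would argue the same way with $|-2K_S|$. The morphism $\phi_{|-2K_S|}$ is a double cover of the quadric cone $\mathbb{P}(1,1,2)$, and the Bertini involution is its deck transformation. Equivariance again comes from Lemma \ref{Lem.Aaction_global_section_power_of_canonical} with $d=2$, giving $\varphi\tau\varphi^{-1}=\tau$.

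An alternative route, which also works in characteristic $2$, goes through the weighted embeddings. One takes $S=\operatorname{Proj}R$, embedded in $\mathbb{P}(1,1,1,2)$ or $\mathbb{P}(1,1,2,3)$. Then $\tau$ is the unique nontrivial automorphism acting trivially on the subring generated in degree $1$ (resp. in degrees $\leq 2$). That subring is $\Aut(S)$-stable, so conjugation preserves $\tau$.

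The main obstacle is justifying that these anticanonical maps really are degree-$2$ covers whose deck involution is exactly the Geiser/Bertini involution, uniformly in all characteristics. In characteristic $2$ the cover may be inseparable-looking in coordinates, but it is still generically étale of degree $2$. I would cite \cite[Section 3]{martin2} for the definitions and the degree-$2$ property, and only need the covering-group argument above. A secondary point is that the deck group of a degree-$2$ finite morphism between normal varieties has order at most $2$; this follows from the induced extension of function fields having degree $2$.
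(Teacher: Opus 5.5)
Your proposal is correct and is essentially the paper's argument. Both use Lemma \ref{Lem.Aaction_global_section_power_of_canonical} to obtain $\bar\varphi$ with $|D|\circ\varphi=\bar\varphi\circ|D|$ (where $D=-K_S$, resp.\ $D=-2K_S$), deduce $|D|\circ(\varphi\tau\varphi^{-1})=|D|$, and conclude because only $\mathrm{id}$ and $\tau$ satisfy this. The one difference is that the paper treats this last fact as part of the definition of the Geiser/Bertini involution, whereas you justify it via the degree-$2$ bound on $\Aut(\kk(S)/\kk(Z))$.
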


\begin{proof} $ $
Let $D = \left\{
    \begin{array}{rl}
        -K_S  & \mbox{if $S$ is of degree $2$} \\
- 2 K_S & \mbox{if $S$ is of degree $1$}
    \end{array}
\right.$

 Let $Z = \left\{
    \begin{array}{ll}
        \mathbb{P}^2  & \mbox{if $S$ is of degree $2$} \\
Q \subset \mathbb{P}^3 & \mbox{if $S$ is of degree $1$, where $Q$ is a quadric cone}
    \end{array}
\right.$

Let $\sigma_0$ be the Geiser involution if $S$ is of degree $2$, \textit{resp} the Bertini involution if $S$ is of degree $1$.
We know that the relation $|D| \circ \sigma = |D|$ is true if and only if $\sigma \in \{ id , \sigma_0\}$ by definition of the Geiser \textit{resp} the Bertini involution.
We also know from Lemma \ref{Lem.Aaction_global_section_power_of_canonical} that for $\sigma \in \Aut(S)$, there exists $\overline{\sigma} \in \Aut(Z)$
such that the following diagram commutes:
\begin{center}\begin{tikzcd}
S \arrow[r, "\sigma"] \arrow[d, "|D|"] & S \arrow[d, "|D|"] \\
Z \arrow[r, "\overline{\sigma}"] & Z
\end{tikzcd} \end{center}

We have the following:

$
\begin{array}{lllr}
|D| \circ (\sigma \circ \sigma_0 \circ \sigma^{-1})& = (|D| \circ \sigma) \circ \sigma_0 \circ \sigma^{-1}  \\ & = (\overline{\sigma} \circ |D| ) \circ \sigma_0 \circ \sigma^{-1}  \\ &  = \overline{\sigma} \circ (|D|  \circ \sigma_0) \circ \sigma^{-1} \\ & = \overline{\sigma} \circ |D| \circ \sigma^{-1}  \\ & = (\overline{\sigma} \circ |D|) \circ \sigma^{-1} \\ & = (|D| \circ \sigma) \circ \sigma^{-1}  \\ & = |D| \end{array}
$

Hence we have $\sigma \circ \sigma_0 \circ \sigma^{-1} \in \{id,\sigma_0\}$. Since $\sigma_0 \neq id$, we get $\sigma \circ \sigma_0 \circ \sigma^{-1} = \sigma_0$.

Hence the result.
\end{proof}

\subsection{Embedding of the automorphism group of low degree del Pezzo surfaces into a linear group} \label{ardoines0}

\begin{lemme} \label{2lemconj40} $ $

The automorphism group of any cubic del Pezzo surface is a subgroup of $\GL(4,\kk)$.
\end{lemme}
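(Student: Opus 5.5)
The plan is to use the anticanonical embedding. For a smooth cubic surface $S$, the divisor $-K_S$ is very ample, and $h^0(S,-K_S)=4$. The associated map $|{-K_S}|\colon S\hookrightarrow \mathbb{P}^3=\mathbb{P}(H^0(S,-K_S)^\vee)$ is exactly the realisation of $S$ as a cubic hypersurface. By Lemma \ref{Lem.Aaction_global_section_power_of_canonical} with $d=1$, every automorphism $\varphi$ of $S$ induces a linear automorphism $\varphi^*$ of the four-dimensional space $H^0(S,-K_S)$. The construction is functorial: it comes from $\mathrm{d}\varphi\wedge\mathrm{d}\varphi$ on the line bundle $T_S\wedge T_S$, so $(\varphi\circ\psi)^*=\psi^*\circ\varphi^*$. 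This gives a group homomorphism $\Aut(S)\to\GL(H^0(S,-K_S))\simeq\GL(4,\kk)$, after inverting or transposing to fix the variance.

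The next step is injectivity. If $\varphi^*$ is the identity on $H^0(S,-K_S)$, then the morphism $|{-K_S}|$ satisfies $|{-K_S}|\circ\varphi=|{-K_S}|$. This holds because the commutative square in the proof of Lemma \ref{bertinigeiser} has $\overline{\varphi}$ induced by $\varphi^*$, so here $\overline{\varphi}=\mathrm{id}$. Since $|{-K_S}|$ is a closed embedding for degree $3$, it follows that $\varphi=\mathrm{id}$.

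The main point to be careful about is that we genuinely obtain a homomorphism into $\GL$ and not merely into $\PGL$. This is why we use the canonical linearisation of $-K_S$ given by the differential, rather than picking arbitrary lifts of projective transformations. With that linearisation, there is no scalar ambiguity. The image is therefore a subgroup of $\GL(4,\kk)$ isomorphic to $\Aut(S)$.

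Finally, its image in $\PGL(4,\kk)$ is the group of projective transformations preserving $S\subset\mathbb{P}^3$. This matches the convention used in Section \ref{cubicsection}, where automorphisms are written as elements of $\PGL(4,\kk)$ together with Lemma \ref{lemmacubic1}.
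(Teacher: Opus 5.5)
Your proposal is correct and follows the same route as the paper. The paper also uses the linear action on the $4$-dimensional space $H^0(S,-K_S)$ given by Lemma \ref{Lem.Aaction_global_section_power_of_canonical} to get a homomorphism $\Aut(S)\to\GL(4,\kk)$. The paper only asserts that this homomorphism is injective; your argument through the closed embedding $|{-K_S}|$ supplies that missing step.
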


\begin{proof}
We use the arguments of the proof of \cite[Ch. III, Theorem~3.5]{Kol}. The space $H^0(X, -K_X)$ is of dimension $4$. Since every automorphism induces naturally an automorphism of  $H^0(X, -K_X)$, see Lemma~\ref{Lem.Aaction_global_section_power_of_canonical}, we get that every automorphism of $X$ induces an automorphism of $H^0(X, -K_X)$, and that this association is an injective group morphism, hence our result.
\end{proof}

\begin{lemme} \label{autodp2} $ $
Let $X$ be a del Pezzo surface of degree $2$.

Let $\varphi \colon X \to \mathbb{P}(1,1,1,2)$ be the closed embedding coming from the proof of  \cite[Ch. III, Theorem~3.5]{Kol}. Then every automorphism of  $X$ extends to an automorphism of $\mathbb{P}(1,1,1,2)$ via $\varphi$ of the form $[x:y:z:w] \mapsto [\eta(x, y, z):\lambda w]$, where $\eta$ is a linear automorphism of $\kk^3$ and $\lambda \in \kk^\times$.
\end{lemme}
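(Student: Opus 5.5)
We follow the proof of Lemma~\ref{2lemconj40}, now using the anticanonical ring instead of the single space $H^0(X,-K_X)$.

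The embedding $\varphi$ from the proof of \cite[Ch. III, Theorem~3.5]{Kol} is built from generators of $R=\bigoplus_{d\geq 0} H^0(X,-dK_X)$. For a del Pezzo surface of degree $2$ we have $h^0(-K_X)=3$ and $h^0(-2K_X)=7$. We take a basis $x,y,z$ of $H^0(X,-K_X)$. The six monomials of degree two in $x,y,z$ span a $6$-dimensional subspace of $H^0(X,-2K_X)$. We complete this subspace with one further section $w$, so that $\varphi=[x:y:z:w]$ maps $X$ isomorphically onto a quartic $\{w^2=F(x,y,z)\}$ after normalising.

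Let $\sigma\in\Aut(X)$. By Lemma~\ref{Lem.Aaction_global_section_power_of_canonical}, $\sigma$ acts linearly on each $H^0(X,-dK_X)$. These actions are compatible with multiplication in $R$, because they are induced by the same bundle map $\mathrm{d}\sigma\wedge\mathrm{d}\sigma$ on tensor powers.

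\begin{itemize}
\item On $H^0(X,-K_X)$, the pullback $\sigma^*$ is a linear automorphism $\eta$ of $\kk^3$.
\item On $H^0(X,-2K_X)$, $\sigma^*$ preserves the image $\mathrm{Sym}^2H^0(X,-K_X)$, acting there by $\mathrm{Sym}^2\eta$. Hence $\sigma^*w=\lambda w+q(x,y,z)$ for some quadratic form $q$ and some $\lambda\in\kk^\times$; $\lambda$ is nonzero because $\sigma^*$ is invertible.
\end{itemize}

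Thus $\sigma$ is the restriction, via $\varphi$, of the automorphism $[x:y:z:w]\mapsto[\eta(x,y,z):\lambda w+q]$ of the weighted projective space. It remains to show $q=0$.

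\textbf{Main step: $q=0$.} This is where the normal form enters. When $\car(\kk)\neq 2$, we choose $w$ so that the equation reads $w^2=F(x,y,z)$, that is, with no $w\cdot(\text{quadratic})$ term; this is possible by completing the square. Now $\sigma^*$ preserves the ideal of $\varphi(X)$, so $(\lambda w+q)^2-F(\eta)$ is a scalar multiple of $w^2-F$. The coefficient of $w$ gives $2\lambda q=0$, hence $q=0$.

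In characteristic $2$ the equation is instead $w^2+a(x,y,z)\,w=F$, and the term $q$ need not vanish. There I would either restrict the statement to $\car(\kk)\neq 2$, which is the only case in which the lemma is used (Section~\ref{orsay44}), or allow the more general form $[\eta:\lambda w+q]$.

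Finally, the condition $\sigma^*(\text{ideal})\subseteq\text{ideal}$ forces $F\circ\eta=\lambda^2F$ (up to the fixed scalar). This is consistent with the description $\Aut(S)=\Aut(\Gamma)\times\langle[-1:1:1:1]\rangle$ in Lemma~\ref{structureautdp2}.
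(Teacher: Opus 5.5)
Your argument is correct for $\car(\kk)\neq 2$ with $w$ normalised so that the equation is $w^2=F(x,y,z)$. Its first half coincides with the paper's proof: Lemma~\ref{Lem.Aaction_global_section_power_of_canonical} gives compatible linear actions on $H^0(X,-K_X)$ and $H^0(X,-2K_X)$, and these preserve the $6$-dimensional image of the quadrics.

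The two proofs diverge at the end. The paper passes to $H^0(X,-K_X)\oplus H^0(X,-2K_X)/\mathrm{Span}(\mathrm{Im}(f))$ and reads off the form $[\eta:\lambda w]$ from the induced action there. That quotient only records $\lambda$, whereas $\varphi$ is defined by a specific lift $w$. So the paper's argument never controls the quadratic term $q$ in $\sigma^*w=\lambda w+q$.

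Your extra step closes exactly this gap. The degree-$4$ part of the ideal of $\varphi(X)$ is spanned by $w^2-F$, so the coefficient of $w$ in $(\lambda w+q)^2-F\circ\eta$ gives $2\lambda q=0$.

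Your approach also makes explicit two things the statement leaves implicit. First, the conclusion depends on the normalisation of $w$: with $w'=w+b$ for a nonzero quadratic form $b$, the Geiser involution becomes $w'\mapsto -w'+2b$. Second, in characteristic $2$ the lemma as stated is false. There the Geiser involution is $w\mapsto w+a(x,y,z)$ with $a\neq 0$ for smooth $X$, and it is not of the form $[\eta:\lambda w]$ for any choice of $w$. Restricting to $\car(\kk)\neq 2$, as you propose, costs nothing, because the lemma is only used in Section~\ref{orsay44}.
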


\begin{proof} $ $
We use the arguments of the proof of \cite[Ch. III, Theorem~3.5]{Kol}.

The space $H^0(X, -K_X)$ is of dimension $3$. The image of the natural product map: $$f : H^0(X, -K_X) \oplus H^0(X, -K_X) \to H^0(X, -2K_X)$$ is of dimension $6$.
The space $H^0(X, -2K_X)$ is of dimension $7$. Since every automorphism induces naturally an automorphism of the space $H^0(X, -d K_X)$ for every $d \geq 1$, see Lemma~\ref{Lem.Aaction_global_section_power_of_canonical}, we get that every automorphism of $X$ induces an automorphism of:
$$H^0(X, -K_X) \oplus H^0(X, -2K_X)/ Spann_{\kk}(\Img(f)) \, $$
Taking the geometric $\mathbb{G}_m$-quotient with respect to the weights $1,1,1,2$ yields an automorphism of $\mathbb{P}(1,1,1,2)$ that extends $\varphi$ via $X$ and it has the claimed form.
\end{proof}

\begin{lemme} \label{autodp1} $ $
Let $X$ be a del Pezzo surface of degree $1$.

Let $\varphi \colon X \to \mathbb{P}(1,1,2,3)$ be the closed embedding coming from the proof of  \cite[Ch. III, Theorem~3.5]{Kol}. Then every automorphism of $X$ extends to an automorphism of $\mathbb{P}(1,1,2,3)$ via $\varphi$ of the form $[x:y:u:v] \mapsto [\eta(x, y):\lambda u : \mu v]$, where $\eta$ is a linear automorphism of $\kk^2$ and $\lambda, \mu \in \kk^\times$.
\end{lemme}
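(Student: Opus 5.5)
The plan is to copy the argument of Lemma~\ref{autodp2}, replacing the degree-$2$ data by the degree-$1$ data from the proof of \cite[Ch. III, Theorem~3.5]{Kol}. Write $R=\bigoplus_{d\ge 0}H^0(X,-dK_X)$ for the anticanonical ring. By Lemma~\ref{Lem.Aaction_global_section_power_of_canonical}, an automorphism $\sigma$ of $X$ acts linearly on each graded piece $H^0(X,-dK_X)$. These actions are compatible with the multiplication maps, since they are induced from the action on the tensor powers of $T_X\bigwedge T_X$. Hence $\sigma$ acts on $R$ by a graded $\kk$-algebra automorphism. The embedding $\varphi$ is exactly $\mathrm{Proj}$ of a presentation of $R$ by generators $x,y$ in degree $1$, $u$ in degree $2$ and $v$ in degree $3$. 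So it is enough to understand how $\sigma$ acts on these generators.

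First, I record the dimensions from Riemann--Roch on a degree-$1$ del Pezzo surface: $h^0(-dK_X)=1+\tfrac{d(d+1)}{2}$, which gives $2,4,7$ for $d=1,2,3$. In degree $2$, the image of $\mathrm{Sym}^2H^0(-K_X)\to H^0(-2K_X)$ is spanned by $x^2,xy,y^2$ and has dimension $3$. In degree $3$, the image of the products of lower degree is spanned by the four cubics in $x,y$ together with $xu,yu$, and has dimension $6$. Thus the spaces of new generators, $H^0(-2K_X)/\mathrm{Im}$ and $H^0(-3K_X)/\mathrm{Im}$, are one-dimensional. These are the same facts used in \cite{Kol}, so I take them as given.

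Next, $\sigma$ induces a linear automorphism $\eta$ of $H^0(-K_X)\simeq\kk^2$. It also induces linear automorphisms of the two one-dimensional quotients above, that is, multiplication by scalars $\lambda,\mu\in\kk^\times$. Exactly as in Lemma~\ref{autodp2}, $\sigma$ therefore acts on
\[
H^0(X,-K_X)\oplus H^0(X,-2K_X)/\mathrm{Spann}_\kk(\mathrm{Im})\oplus H^0(X,-3K_X)/\mathrm{Spann}_\kk(\mathrm{Im})\simeq \kk^2\oplus\kk\oplus\kk
\]
by $(\eta,\lambda,\mu)$. Taking the geometric $\mathbb{G}_m$-quotient with weights $1,1,2,3$, this action descends to the automorphism $[x:y:u:v]\mapsto[\eta(x,y):\lambda u:\mu v]$ of $\mathbb{P}(1,1,2,3)$.

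The main obstacle is the final step: checking that this automorphism actually extends $\sigma$ through $\varphi$. This amounts to identifying the generators $u,v$ with chosen lifts of the quotient spaces. In that identification, the action of $\sigma$ on the true generators a priori has the shape $u\mapsto\lambda u+q(x,y)$ and $v\mapsto\mu v+r(x,y,u)$. So I must justify that the embedding $\varphi$ of \cite{Kol} is built from these quotient coordinates, so that the correction terms $q,r$ do not appear. I would follow the paper's degree-$2$ convention in Lemma~\ref{autodp2} and argue at the level of the quotient spaces, as above, checking compatibility with the equation of $X$ in $\mathbb{P}(1,1,2,3)$. I expect this identification to be the delicate point.
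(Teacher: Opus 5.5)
Your setup follows the paper's route and is correct as far as it goes. The automorphism $\sigma$ acts on $R=\bigoplus_d H^0(X,-dK_X)$ by a graded algebra automorphism, and $R\simeq\kk[x,y,u,v]/(f)$ with $\deg f=6$. Your dimension count is also the right one: the degree-$3$ products span a $6$-dimensional subspace of $H^0(X,-3K_X)$, and the paper's ``full $7$-dimensional space'' is a slip. But the step you postpone is a genuine gap, and it is the whole content of the lemma. Lifting the ring automorphism to $\kk[x,y,u,v]$ gives an automorphism $\Phi=[\eta(x,y):\lambda u+q(x,y):\mu v+r(x,y,u)]$ of $\mathbb{P}(1,1,2,3)$ extending $\sigma$. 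The quotient spaces only record $(\eta,\lambda,\mu)$. Moreover, the weighted projective space built from $H^0(X,-K_X)\oplus H^0(X,-2K_X)/\mathrm{Im}\oplus H^0(X,-3K_X)/\mathrm{Im}$ receives $X$ only after you choose lifts $u,v$, i.e.\ splittings. So working ``at the level of the quotient spaces'' cannot give $q=r=0$. The paper's proof makes exactly the same jump, so its degree-$2$ convention will not close the gap for you.

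This is not a formality: for arbitrary lifts, which the construction in \cite{Kol} allows, the conclusion is false. Replacing $v$ by $v-x^3$ turns the Bertini involution $v\mapsto -v$ into $v\mapsto -v-2x^3$. No diagonal automorphism restricts to it, since $v$ and $x^3$ are independent in $H^0(X,-3K_X)$. In characteristic $2$ the conclusion fails for every embedding. The Bertini involution is trivial on the quadric cone $\mathbb{P}(1,1,2)$, so a diagonal extension of it would be $[x:y:u:\mu v]$ with $\mu^2=1$, i.e.\ trivial (compare the map $b$ in Definition~\ref{defi361}).

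What is missing is a normalization of $\varphi$, together with the hypothesis $\car(\kk)\notin\{2,3\}$. Complete the square in $v$ and then remove the $u^2$-term, so that $f=v^2-u^3-F_4(x,y)u-F_6(x,y)$. Since $(f)$ is zero in degrees $<6$ and $\Phi^*$ preserves it, $\Phi^*f=cf$ for some $c\in\kk^\times$. The part of $\Phi^*f$ linear in $v$ is $2\mu v\,r$, so $r=0$. Then the $u^2$-coefficient of $\Phi^*f$ is $-3\lambda^2q$, so $q=0$. For $v$ there is also an intrinsic choice: take it to span the eigenline of the Bertini involution on $H^0(X,-3K_X)$ complementary to the products. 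Every automorphism preserves this line because the Bertini involution is central (Lemma~\ref{bertinigeiser}).
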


\begin{proof} $ $
We use the arguments of the proof of \cite[Ch. III, Theorem~3.5]{Kol}.

 The space $H^0(X, -K_X)$ is of dimension $2$.
The image of the natural product map: $$f : H^0(X, -K_X) \oplus H^0(X, -K_X) \to H^0(X, -2K_X)$$ spans a $3$-dimensional subspace of $H^0(X, -2K_X)$. Furthermore, the image of the natural product map:  $$g : H^0(X, -K_X) \oplus H^0(X, -2K_X) \to H^0(X, -3K_X)$$ spans the full $7$-dimensional space $H^0(X, -3K_X)$.
Since every automorphism induces naturally an automorphism of  $H^0(X, -d K_X)$ for every $d \geq 1$, see Lemma~\ref{Lem.Aaction_global_section_power_of_canonical}, we get that every automorphism of $X$ induces an automorphism of:
$$H^0(X, -K_X) \oplus H^0(X, -2K_X)/ Spann_{\kk} (\Img(f)) \oplus H^0(X, -3K_X)/ Spann_{\kk} (\Img(g)) \, $$
Taking the geometric $\mathbb{G}_m$-quotient with respect to the weights $1,1,2,3$ yields an automorphism of $\mathbb{P}(1,1,2,3)$ that extends $\varphi$ via $X$ and it has the claimed form.
\end{proof}

\newpage

\section{A formula for the rank of the invariant Picard group \\ \normalfont{(by Immanuel van Santen)}} \label{appendixA} $ $

Let $X$ be a smooth rational projective surface over an algebraically closed field $\kk$ equipped with a faithful algebraic action of a finite group $G$ such that the characteristic of $\kk$ and $|G|$ are coprime. 
The goal of this appendix is to give a formula of the rank of the invariant picard group $\Pic(X)^G$ in terms
of the topological Euler-characteristic of the fixed point varieties $X^g$ for $g \in G$.

We fix a prime $\ell$ different from the characteristic of $\kk$, denote 
by $\ZZ_{\ell}$ the $\ell$-adic completion of $\ZZ$ and by $\QQ_{\ell}$ the quotient field of $\ZZ_{\ell}$. 
For every integer $i \geq 0$ 
and every projective variety $Y$ we denote by $H^i(Y_{\et}, \QQ_{\ell})$ the $\ell$-adic \'etale cohomology of $Y$, see e.g. 
\cite[\S19, p. 126]{Mi2013Lectures-on-Etale-}. In fact, $H^i(Y_{\et}, \QQ_{\ell})$ is a
finite dimensional $\QQ_{\ell}$-vector space that vanishes for $i > 2 \dim Y$, see 
\cite[Theorems~15.1, 19.1, 19.2]{Mi2013Lectures-on-Etale-}. If $Y$ is smooth, projective and connected, 
then there is a unique isomorphism $H^{2 \dim Y}(Y, \QQ_{\ell}) \to \QQ_{\ell}$ that sends the class of a point to $1$
in $\QQ_{\ell}$, see~\cite[Theorem~24.1]{Mi2013Lectures-on-Etale-}. 
We identify in the following $H^{2 \dim Y}(Y, \QQ_{\ell})$ with $\QQ_{\ell}$ via this isomorphism.
The topological Euler characteristic of a projective
variety $Y$ is defined by
\[
	\chi(Y) = \sum_{i=0}^{2 \dim Y} (-1)^i \dim_{\QQ_{\ell}} H^i(Y_{\et}, \QQ_{\ell}) \, .
\]

\begin{example}
	Let $Y$ be a smooth projective curve of genus $g$. Then $\chi(Y) = 2-2g$ by~\cite[Proposition 14.2]{Mi2013Lectures-on-Etale-}. If $Y$ is just a point, then $\chi(Y) = 1$.
\end{example}

\begin{prop} \label{rankformula}
	\label{prop.Inv_Picarad_rank}
	Let $X$ be a smooth projective rational surface equipped with a faithful algebraic action of a finite group $G$
	such that the characteristic of $\kk$ and $|G|$ are coprime. Then 
	\[
		\rank \Pic(X)^G + 2 = \frac{1}{|G|} \sum_{g \in G} \chi(X^g) \, ,
	\]
	where $X^g \subseteq X$ denotes the closed subvariety of those points that are fixed by $g$.
\end{prop}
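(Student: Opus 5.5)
The plan is to combine the Lefschetz fixed point formula in $\ell$-adic cohomology with the character-averaging formula for invariants. For a rational surface, $H^1(X_{\et},\QQ_\ell)=H^3(X_{\et},\QQ_\ell)=0$, and $H^0$ and $H^4$ are one-dimensional with trivial $G$-action, since $G$ preserves the class of a point and $X$ is connected. Moreover the cycle class map $\Pic(X)\otimes\QQ_\ell \to H^2(X_{\et},\QQ_\ell(1))$ is an isomorphism. Indeed $\Pic(X)$ is finitely generated and free, and numerical and homological equivalence agree, so the map is injective. Its image has full dimension because $b_2=\rank\Pic(X)$ for a rational surface: both sides are invariant under blow-ups (each adds $1$), and the equality holds for $\PP^2$ and $\mathbb{F}_n$. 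The map is $G$-equivariant, and twisting by $\QQ_\ell(1)$ does not change the $G$-module structure, since $G$ acts $\kk$-linearly. It follows that $\rank\Pic(X)^G=\dim H^2(X_{\et},\QQ_\ell)^G$.

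Since $|G|$ is invertible in $\QQ_\ell$, averaging gives
\[
\dim V^G=\frac{1}{|G|}\sum_{g\in G}\Tr(g^*\mid V)
\]
for every finite-dimensional representation $V$. Applying this to each $H^i$ and taking the alternating sum yields
\[
2+\rank\Pic(X)^G=\frac{1}{|G|}\sum_{g\in G}\sum_i(-1)^i\Tr(g^*\mid H^i(X_{\et},\QQ_\ell)).
\]
It therefore remains to show the Lefschetz-type identity $\sum_i(-1)^i\Tr(g^*\mid H^i)=\chi(X^g)$ for every $g\in G$.

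This identity is the main step. The order of $g$ is prime to $\car(\kk)$, so $\langle g\rangle$ is linearly reductive, and by the smoothness of fixed loci (\cite[Proposition A.8.10]{reductive}) $X^g$ is a disjoint union of smooth points and smooth curves. I would first try to cite the Lefschetz fixed point theorem for automorphisms of finite order prime to the characteristic (SGA $4\tfrac12$ / SGA 5, or the tame-case results of Deligne--Lusztig), which gives $\sum(-1)^i\Tr(g^*)=\chi(X^g)$ directly. If a self-contained argument is preferred, I would use the Lefschetz trace formula with graph intersection, $\sum(-1)^i\Tr(g^*)=\Gamma_g\cdot\Delta$, and compute the intersection via excess intersection. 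Near an isolated fixed point the differential $dg$ has no eigenvalue $1$, since $g$ is diagonalizable, so the intersection is transversal and contributes $1$. Along a fixed curve $C$ the excess bundle is $T_X|_C/T_C=N_C$, with $g$ acting on it by $-1$ or another root of unity. The contribution is then $\deg\bigl(T_C\bigr)=2-2g(C)=\chi(C)$, coming from the tangent part, because the normal directions meet transversally, the eigenvalue on $N_C$ being $\neq 1$.

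The delicate points are making the excess intersection computation rigorous and confirming the identification $\Pic\otimes\QQ_\ell\cong H^2$ in positive characteristic. Of these, citing a tame Lefschetz theorem is the cleanest route.
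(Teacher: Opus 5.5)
Your proposal follows the paper's proof essentially step by step: the cycle-class isomorphism $\Pic(X)\otimes\QQ_\ell\cong H^2(X_{\et},\QQ_\ell)$, the averaging projector, the vanishing $H^1=H^3=0$, the Lefschetz trace formula, and the identity $\Delta_X\cdot\Gamma_g=\chi(X^g)$ obtained from smoothness of $X^g$ plus excess intersection. So the argument works.

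One correction to your self-contained sketch. Along a fixed curve $C$ the excess bundle is
\[
N_{\Delta}|_C/N_{C/\Gamma_g}\cong T_X|_C/N'\cong T_C .
\]
Here $T_X|_C=T_C\oplus N'$ is the $dg$-eigenspace splitting, and $N'=\mathrm{Im}(dg-1)\cong N_C$. The excess bundle is therefore not $T_X|_C/T_C=N_C$, as you wrote. Using $N_C$ would give $C^2$ instead of $\chi(C)=2-2g(C)$; for the Geiser ramification curve this is $8$ rather than $-4$. Your final contribution $\deg T_C$ is right, but the bundle you named is not.
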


For the proof we use the following formula, known as the Lefschetz fixed point formula in $\ell$-adic 
\'etale cohomology:

\begin{theo}[{\cite[Theorem~25.1]{Mi2013Lectures-on-Etale-} and \cite[Proposition~1.3.6]{Kl1968Algebraic-cycles-a}}]
	\label{Thm.Lefschetz}
	Let $\varphi$ be an endomorphism of a smooth projective connected variety $Y$ and let $\Delta_Y$, $\Gamma_{\varphi}$
	be the diagonal and the graph of $\varphi$ in $Y \times Y$, respectively. Then the intersection product 
	of $\Delta_Y$ and $\Gamma_\varphi$ can be written as
	\[
		\Delta_Y \cdot \Gamma_{\varphi } = \sum_{i = 0}^{2 \dim Y} (-1)^i \Tr(\varphi^\ast \colon 
		H^i(Y_{\et}, \QQ_{\ell}) \to H^i(Y_{\et}, \QQ_{\ell})) \, ,
	\]
	where $\Tr(\varphi^\ast \colon H^i(Y_{\et}, \QQ_{\ell}) \to H^i(Y_{\et}, \QQ_{\ell}))$ denotes the trace of the 
	$\QQ_{\ell}$-linear automorphism $\varphi^\ast$
	induced by $\varphi$ on $H^i(Y_{\et}, \QQ_{\ell})$.
\end{theo}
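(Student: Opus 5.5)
The plan is to reduce the statement to linear algebra in the cohomology of $Y \times Y$, using four standard properties of $\ell$-adic \'etale cohomology of smooth projective connected varieties over $\kk$. These are: the cycle class map $\mathrm{cl}$ from algebraic cycles to even-degree cohomology; its compatibility with intersection products (cup product) and with pullback and pushforward; Poincar\'e duality, with the trace isomorphism $H^{2n}(Y,\QQ_\ell)\cong \QQ_\ell$ fixed as in the paper, where $n=\dim Y$; and the K\"unneth isomorphism $H^\ast(Y\times Y)\cong H^\ast(Y)\otimes H^\ast(Y)$. With these in hand, the left-hand side becomes $\Delta_Y\cdot\Gamma_\varphi=\deg\bigl(\mathrm{cl}(\Delta_Y)\cup \mathrm{cl}(\Gamma_\varphi)\bigr)$, the degree being read off in $H^{4n}(Y\times Y)\cong\QQ_\ell$. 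The task is then to compute both classes explicitly.

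First I would write down the class of the diagonal. For each $i$, choose a basis $(e^i_a)$ of $H^i(Y)$, and let $(f^{2n-i}_a)$ be the Poincar\'e-dual basis of $H^{2n-i}(Y)$, normalized so that $\deg(e^i_a\cup f^{2n-i}_b)=\delta_{ab}$. Then I claim
\[
\mathrm{cl}(\Delta_Y)=\sum_{i}\sum_a (-1)^{i}\, e^i_a\otimes f^{2n-i}_a
\]
up to a sign convention that has to be fixed once and kept throughout. To prove this, use that $p_{2\ast}\bigl(p_1^\ast\alpha\cup\mathrm{cl}(\Delta_Y)\bigr)=\alpha$ for every $\alpha$, which holds because $\Delta_Y$ is the graph of the identity and pushforward along the diagonal embedding is compatible with cup product. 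Then expand this identity using K\"unneth and Poincar\'e duality, and compare coefficients.

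Next I would treat the graph. Since $\Gamma_\varphi=(1\times\varphi)^{-1}(\Delta_Y)$, and the graph is transversal in the appropriate sense (equivalently, $\Gamma_\varphi$ is the image of $(1,\varphi)\colon Y\to Y\times Y$), one gets $\mathrm{cl}(\Gamma_\varphi)=(1\times\varphi)^\ast\mathrm{cl}(\Delta_Y)$ from functoriality of cycle classes. Alternatively, and more cleanly, I would avoid writing this class out at all. The number $\deg\bigl(\mathrm{cl}(\Delta_Y)\cup\mathrm{cl}(\Gamma_\varphi)\bigr)$ equals $\deg\bigl((1,\varphi)^\ast\mathrm{cl}(\Delta_Y)\bigr)$ by the projection formula for the closed embedding $(1,\varphi)$. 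Moreover $(1,\varphi)^\ast(e\otimes f)=e\cup\varphi^\ast f$. So I would compute
\[
\sum_i\sum_a (-1)^i \deg\bigl(e^i_a\cup\varphi^\ast f^{2n-i}_a\bigr),
\]
and then move $\varphi^\ast$ across the pairing using the adjointness of $\varphi^\ast$ and $\varphi_\ast$ under Poincar\'e duality, together with graded commutativity of cup product. This rewrites each inner sum as the trace of $\varphi^\ast$ (or of its Poincar\'e transpose, which has the same trace) on $H^i(Y)$. That yields exactly $\sum_i(-1)^i\Tr(\varphi^\ast\mid H^i)$.

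The main obstacle is not conceptual but foundational and sign-theoretic. One must justify that the cycle class map converts the intersection product $\Delta_Y\cdot\Gamma_\varphi$ into the cup product, including the case where the intersection is not proper (e.g.\ $\varphi=\mathrm{id}$), so that intersection theory in the sense of Fulton or Serre's Tor formula matches the cup product. This is where I would rely on the cited results \cite[Theorem~25.1]{Mi2013Lectures-on-Etale-} and \cite[Proposition~1.3.6]{Kl1968Algebraic-cycles-a}, which establish the compatibility of $\mathrm{cl}$ with products and Gysin maps. The bookkeeping of the Koszul signs in the K\"unneth formula also needs care, so that the alternating sign $(-1)^i$ comes out correctly. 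I would fix the convention $\deg(e\cup f)=\delta$ on the left factor and check the sign on the example $Y=\PP^1$, $\varphi=\mathrm{id}$, where both sides must equal $2$.
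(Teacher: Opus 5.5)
The paper gives no proof of its own here: it simply quotes \cite[Theorem~25.1]{Mi2013Lectures-on-Etale-} and \cite[Proposition~1.3.6]{Kl1968Algebraic-cycles-a}. Your outline is the standard argument behind those references and is correct: expand $\mathrm{cl}(\Delta_Y)$ via K\"unneth and Poincar\'e duality, use the projection formula along the embedding $(1,\varphi)$ instead of writing out $\mathrm{cl}(\Gamma_\varphi)$, and take compatibility of the cycle class with intersection products as the black box.

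There are two harmless slips. First, $\Gamma_\varphi=(\varphi\times 1)^{-1}(\Delta_Y)$, not $(1\times\varphi)^{-1}(\Delta_Y)$; the latter is the transposed graph. Second, $\sum_a\deg\bigl(e^i_a\cup\varphi^\ast f^{2n-i}_a\bigr)$ equals $\Tr(\varphi^\ast\mid H^{2n-i})$ directly, not $\Tr(\varphi^\ast\mid H^i)$. After adjointness it becomes $\Tr(\varphi_\ast\mid H^i)$, which differs from $\Tr(\varphi^\ast\mid H^i)$ when $\varphi$ is not invertible; for example, take a degree-$d$ self-map of $\PP^1$ with $i=0$. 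So you finish by reindexing $i\mapsto 2n-i$, which leaves $(-1)^i$ unchanged.
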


Moreover, we need the following proposition that relates the intersection of the diagonal and the graph of 
a certain automorphism with the topological Euler characteristic of the fixed point variety:

\begin{prop}
	\label{Prop.Intersection}
	Let $Y$ be a smooth projective variety and let $\varphi$ be an automorphism of $Y$ such that the order
	of $\varphi$ is finite and coprime to the characteristic of $\kk$. Then we have
	\[
			\Delta_Y \cdot \Gamma_{\varphi } = \chi(Y^{\varphi}) \, .
	\]
\end{prop}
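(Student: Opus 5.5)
We prove $\Delta_Y \cdot \Gamma_\varphi = \chi(Y^\varphi)$ by computing both sides through the Lefschetz formula, Theorem~\ref{Thm.Lefschetz}. The plan is to show that the local intersection contribution of $\Delta_Y \cap \Gamma_\varphi$ along each connected component $F$ of $Y^\varphi$ equals $\chi(F)$. Since $\varphi$ has finite order $n$ prime to $\car(\kk)$, the group $\langle\varphi\rangle$ is linearly reductive, so \cite[Proposition A.8.10]{reductive} applies. Thus $Y^\varphi$ is smooth, a disjoint union of smooth closed subvarieties $F$. At each $y\in F$ the differential $\mathrm{d}\varphi_y$ is diagonalizable with $n$-th roots of unity as eigenvalues. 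Moreover, $T_yF$ is exactly the eigenvalue-$1$ eigenspace.

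\textbf{Step 1: excess intersection.} In $Y\times Y$, the intersection $\Delta_Y\cap\Gamma_\varphi$ is identified with $Y^\varphi$ scheme-theoretically. This uses the fact that $\mathrm{d}\varphi - \mathrm{id}$ is injective on the normal directions, which gives cleanness of the intersection. We then apply Fulton's excess intersection formula. The contribution of a component $F$ is
\[
\int_F c_{\mathrm{top}}(E_F),\qquad E_F = \frac{T_{Y\times Y}|_F}{T_{\Delta}|_F + T_{\Gamma_\varphi}|_F},
\]
where $E_F$ is the excess bundle, of rank $\dim F$.

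\textbf{Step 2: identify the excess bundle.} Identify $T_{Y\times Y}|_F = T_Y|_F\oplus T_Y|_F$. The diagonal gives the tangent vectors $(v,v)$ and the graph gives $(v,\mathrm{d}\varphi v)$. The quotient is therefore $T_Y|_F/\operatorname{Im}(\mathrm{d}\varphi-\mathrm{id})$.

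By diagonalizability, $T_Y|_F$ splits equivariantly as $T_F\oplus N_F$. The image of $\mathrm{d}\varphi-\mathrm{id}$ is exactly $N_F$, since the non-trivial eigenspaces span $N_F$. Hence $E_F\cong T_F$, and the contribution of $F$ is $\int_F c_{\mathrm{top}}(T_F)$.

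\textbf{Step 3: Gauss–Bonnet.} We need $\int_F c_{\mathrm{top}}(T_F)=\chi(F)$ for $\ell$-adic Euler characteristics. This follows from Theorem~\ref{Thm.Lefschetz} applied to $\mathrm{id}_F$. On the geometric side, $\Delta_F\cdot\Delta_F = \int_F c_{\mathrm{top}}(N_{\Delta_F}) = \int_F c_{\mathrm{top}}(T_F)$ by self-intersection. On the cohomological side, the alternating sum of traces of the identity is $\chi(F)$. This step must be done component by component, since each $F$ is smooth projective and connected. Points contribute $1$. Summing over the components $F$ gives the claim.

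The main obstacle is Step 1. We must justify that the intersection is clean, so that the excess formula applies with reduced structure $Y^\varphi$. This is done by showing that the Zariski tangent space of $\Delta_Y\cap\Gamma_\varphi$ at $y$ is $\ker(\mathrm{d}\varphi_y-\mathrm{id})=T_yF$. That identification rests on the linearization of the tame action, i.e.\ the same argument as \cite[Proposition A.8.10]{reductive}: choose formal coordinates on which $\varphi$ acts linearly, possible because $n$ is invertible in $\kk$.
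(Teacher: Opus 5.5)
Your proposal is correct and follows the paper's proof: use linear reductivity (Conrad--Gabber--Prasad, Proposition~A.8.10) to get that $Y^\varphi$ is smooth and that $\Delta_Y\cap\Gamma_\varphi$ is clean, apply Fulton's excess intersection formula one connected component at a time, and identify each local contribution with $\chi$ of that component. The only differences are that you make explicit the identification of the excess bundle with $T_F$, which the paper leaves implicit, and that you derive $\int_F c_{\mathrm{top}}(T_F)=\chi(F)$ from Theorem~\ref{Thm.Lefschetz} applied to $\mathrm{id}_F$ together with the self-intersection formula, where the paper instead cites SGA~5, Exp.~VII, Cor.~4.9.
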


\begin{proof}
	As the order of $\varphi$ is coprime to the characteristic of $\kk$, the subgroup of
	$\Aut(Y)$ generated by $\varphi$ is linearly reductive and thus the fixed point variety 
	$Y^{\varphi}$ is smooth, see \cite[Proposition~A.8.10]{CoGaPr2010Pseudo-reductive-g}. 
	Let $C$ be any connected component of $Y^{\varphi}$ and consider the open subvariety
	$Y_C = Y \setminus (Y^\varphi \setminus C)$ of $Y$. As the points of $Y^\varphi$ are fixed by $\varphi$,
	it follows that $\varphi$ restricts to an automorphism of $Y_C$ and we denote this restriction by $\varphi_C$.
	Again using~\cite[Proposition~A.8.10]{CoGaPr2010Pseudo-reductive-g}, we get a pull-back diagram
	\[
		\xymatrix@R=10pt{
			C \ar[r]^-{c \mapsto (c, c)} \ar[d]_-{c \mapsto (c, c)}  & \Gamma_{\varphi_C} \ar@{}[d] |-{\bigcap}  \\
			\Delta_{Y_C}  \ar@{}[r] |-{\subset} & Y_C \times Y_C
		}
	\]
	where the horizontal morphisms are regular embeddings. By the excess intersection formula 
	(\cite[Theorem~6.3]{Fu1998Intersection-theor}) applied to the above diagram  and by \cite[Exp. VII, Corollaire~4.9]{1977Cohomologie-l-adiq} 
	we get $\Delta_{Y_C} \cdot \Gamma_{\varphi_C} = \chi(C)$.
	In summary
	\[
		\Delta_Y \cdot \Gamma_{\varphi } = \sum_C 
		\Delta_{Y_C} \cdot \Gamma_{\varphi_C} = \sum_C \chi(C) = \chi(Y^\varphi) \, .
	\]
\end{proof}

\begin{proof}[Proof of Proposition~\ref{prop.Inv_Picarad_rank}]
	Let $V = H^2(X_{\et}, \QQ_{\ell})$. As $X$ is a smooth rational projective surface, numerical equivalence and linear 
	equivalence of divisors are the same and hence the cycle map $\Pic(X) \otimes_{\ZZ} \QQ_{\ell} \to V$
	is an isomorphism, see \cite[Example~1.2.6]{Kl1968Algebraic-cycles-a} and also \cite[Proposition~1.17]{EiHa20163264-and-all-that-}.
	Since $\Pic(X)$ is a finitely generated free abelian group, the same holds for $\Pic(X)^G$ and thus
	we have 
	\begin{equation}
		\label{Eq.rank}
		\rank \Pic(X)^G = \dim_{\QQ_{\ell}} V^G \, .
	\end{equation}
	Consider $P \colon V \to V$ which is given by
	\[
		P(v) = \frac{1}{|G|} \sum_{g \in G} g v \, .
	\]
	Then $P \colon V \to V$ is $\QQ_{\ell}$-linear, $G$-invariant, the image is $V^G$ and it restricts to the identity on $V^G$.
	In particular we get a decomposition into $G$-subrepresentations $V = \ker(P) \oplus V^G$ and thus
	\begin{equation}
		\label{Eq.Rep}
		\dim_{\QQ_{\ell}} V^G = \Tr(P) = 
		\frac{1}{|G|} \sum_{g \in G} \Tr(V \xrightarrow{v \mapsto gv} V) \, .
	\end{equation}
	
	Note that $H^1(X_{\et}, \QQ_{\ell})$ vanishes, since $H^1(\PP^2_{\et}, \QQ_{\ell})$ vanishes 
	(see \cite[Example 16.3]{Mi2013Lectures-on-Etale-}) and since
	$H^1(\cdot, \QQ_{\ell})$ is invariant under blow-ups, see 
	\cite[Exp. VII, Proposition~8.5b)]{1977Cohomologie-l-adiq}. By Poincaré duality (see \cite[Theorem~24.1]{Mi2013Lectures-on-Etale-}) we get thus $H^0(X_{\et}, \QQ_{\ell}) \simeq H^4(X_{\et}, \QQ_{\ell}) = \QQ_{\ell}$ and $H^3(X_{\et}, \QQ_{\ell}) = H^1(X_{\et}, \QQ_{\ell}) = 0$.

	In summary
	\begin{eqnarray*}
			\rank \Pic(X)^G = \dim_{\QQ_{\ell}} V^G &\stackrel{\eqref{Eq.Rep}}{=}& 
			\frac{1}{|G|} \sum_{g \in G} \Tr(g^\ast \colon  H^2(X_{\et}, \QQ_\ell) \to H^2(X_{\et}, \QQ_\ell)) \\
			&\stackrel{\textrm{Thm.}\ref{Thm.Lefschetz}}{=}& \frac{1}{|G|} \sum_{g \in G} (\Delta_X \cdot \Gamma_g - 2) \\
			&\stackrel{\textrm{Prop.}\ref{Prop.Intersection}}{=} & -2 + \frac{1}{|G|} \sum_{g \in G} \chi(X^g) \, .
	\end{eqnarray*}
\end{proof}

\begin{remark}
The proof above shows that $\chi(X) = 2 + \rank \Pic(X)$.
\end{remark}

\par\bigskip
\renewcommand{\MR}[1]{}

\newpage

\bibliographystyle{alpha}
\bibliography{bib}

\end{document}